\newtheorem{theorem}{Theorem}
\newtheorem*{theorem*}{Theorem}
\newtheorem{lemma}[theorem]{Lemma}
\newtheorem{corollary}[theorem]{Corollary}
\newtheorem*{corollary*}{Corollary}
\newtheorem{proposition}[theorem]{Proposition}
\newtheorem*{proposition*}{Proposition}
\newtheorem*{claim*}{Claim}
\newtheorem{definition}{Definition}[section]
\theoremstyle{definition}
\newtheorem*{example}{Example}
\theoremstyle{remark}
\newtheorem{remark}[theorem]{Remark}
\newtheorem*{remark*}{Remark}
\theoremstyle{notation}
\theoremstyle{plain}
\DeclareMathOperator*{\llimsup}{limsup}
\renewcommand{\limsup}{\llimsup}
\newcommand{\p}{\partial}
\newcommand{\Z}{{\mathbb Z}}
\newcommand{\T}{{\mathbb T}}
\newcommand{\C}{{\mathbb C}}
\newcommand{\R}{{\mathbb R}}
\newcommand{\N}{{\mathbb N}}
\newcommand{\Q}{{\mathbb Q}}
\newtheorem{Notation}{Notation}[section]
\begin{document}

\title{The precise regularity of the Lyapunov exponent for  Cos-type quasiperiodic Schr\"odinger cocycles with large couplings} \maketitle
\begin{center}
\author{
    Jiahao Xu\footnote{Department of Mathematics,
Nanjing University, Nanjing, China,   178881559@qq.com},\ \  Lingrui Ge\footnote{Beijing International Center for Mathematical Research, Peking University, Beijing, China and Department of Mathematics, Nanjing University, Nanjing, China,  gelingrui@bicmr.pku.edu.cn},\ \  Yiqian Wang\footnote{The corresponding author.}\ \footnote{Department of Mathematics,
Nanjing University, Nanjing, China, yiqianw@nju.edu.cn}
  }
\end{center}

\fancyhead{}

\fancyhead[CO]{\small Lyapunov Exponent for a class of $C^2$ quasiperiodic Schr\"odinger cocycles}

\fancyhead[CE]{Jiahao Xu, Lingrui Ge and Yiqian Wang}
\tableofcontents
\begin{abstract} In this paper, we study the regularity of the
Lyapunov exponent for quasiperiodic Schr\"odinger cocycles with $C^2$ cos-type potentials,
large coupling constants, and a fixed Diophantine frequency. We obtain the absolute continuity of the Lyapunov exponent. Moreover, we prove  the
Lyapunov exponent is $\frac{1}{2}$-H\"older continuous.  Furthermore, for any given
$r\in (\frac12, 1)$, we can find some energy in the spectrum where
the  local regularity of the Lyapunov exponent is between $(r-\epsilon)$-H\"older continuity and $(r+\epsilon)$-H\"older continuity.
\end{abstract}
\section{Introduction}
In this paper, we consider the following one-dimensional quasiperiodic Schr\"odinger operators on $\ell^2(\Z)$,
\begin{equation}\label{sch} (H_{\lambda v,\alpha,x}u)_n=u_{n+1}+u_{n-1}+\lambda v(x+n\alpha)u_n.\end{equation}
Here $v\in C^r(\R/\Z,\R),$ $r\in\N\cup\{\infty,\omega\}$ is the potential, $\lambda\in\R$ is the coupling constant, $x\in\T=\R/\Z$ is the phase, and $\alpha\in\R\backslash\Q$ is the frequency.
The spectral properties of the operator \eqref{sch} are closely related to the {\it Schr\"odinger cocycle} $(\alpha, A^{(E-\lambda v)})\in \R/\Z\times C^r(\R/\Z, SL(2,\R))$ with

$$A^{(E-\lambda v)}(x)=
\begin{pmatrix}
E-\lambda v(x)& -1\\
1& 0
\end{pmatrix},$$
which defines a family of dynamical systems on $\R/\Z \times \R^2$  given by

$$
(x,w) \rightarrow (x+\alpha,A^{(E-\lambda v)}(x)w).
$$
The $n$th iteration of the cocycle is denoted by
$$
(\alpha,A^{(E-\lambda v)})^n=(n\alpha,A_n^{(E-\lambda v)}),
$$
where
\begin{align*}
\begin{split}
A_n^{(E-\lambda v)}(x):=A_n(x)=\left\{
\begin{array}{ll}
A^{(E-\lambda v)}(x+(n-1)\alpha)\cdots A^{(E-\lambda v)}(x), & n\geq 1\\
I_2, & n= 0\\
A_{-n}^{(E-\lambda v)}(x+n\alpha)^{-1}, & n\leq -1.
\end{array}
\right.
\end{split}
\end{align*}

The Lyapunov exponent $L(E,\lambda)$ of the cocycle is defined as $$\lim\limits_{n\rightarrow\infty}\frac{1}{n}\int_{\R/\Z}\ln\|A_n^{(E-\lambda v)}(x)\|dx
=\inf_{n\geq 1}\frac{1}{n}\int_{\R/\Z}\ln\|A_n^{(E-\lambda v)}(x)\|dx\geq 0.$$ The limit
exists and is equal to the infimum since $\{\ln\|A_n^{(E-\lambda v)}(x)\|\}_{n\geq1}$ is a subadditive sequence. Moreover, by Kingman's subadditive ergodic theorem, we also
have
$$L(E, \lambda)=\lim\limits_{n\rightarrow\infty}\frac{1}{n}\ln\|A_n^{(E-\lambda v)}(x)\|$$
for Lebesgue almost every $x\in\R/\Z$.

%Besides the Lyapunov exponent (short for LE), another important subject is the
%integrated density of states (short for IDS) given by
%$$N(E)=\lim\limits_{n\rightarrow\infty}\frac{1}{n} card\{(-\infty,E)\cap\Sigma(H_{n,x})\}$$
%for almost every $x$. Here $H_{n,x}$ denotes the restriction of operator $H_{\lambda v,\alpha,x}$ to $[1,n]$ with %Dirichlet boundary condition $u_0=u_{n+1}=0,$ $\Sigma(H_{n,x})$ is the set of eigenvalues of $H_{n,x},$ and card is %the cardinality of a set. It is well known that the convergence is independent of Lebesgue almost every $x\in \R/\Z$. %Moreover, the relation between LE and IDS is given by the famous Thouless Formula  as follows:
%\begin{equation}\label{tll}L(E)=\int \ln|E-E'|dN(E').\end{equation}
%In essence, $L$ is the Hilbert transform of $N$ and vice versa.

In the past twenty years, a large number of papers have been dedicated to studying the regularity of the Lyapunov exponent for quasiperiodic Schr\"odinger cocycles. Both the results and their proofs depend heavily on the vanishing/nonvanishing of the Lyapunov exponent (LE). In the positive LE regime, the main method is the Large Deviation Theorem (LDT) and the Avalanche Principal (AP), based on which various H\"older continuity results of the LE were obtained \cite{bourgain1,goldsteinschlag,goldsteinschlag2,Han-Zhang,LWY,schlag,wz1,YZ}. In the zero LE regime, the method is the Quantitative Almost Reducibility Theorem (QART), see \cite{amor,aj,ccyz,gyzh,LYZZ,Puig06}. Later, it was known that both the LDT and QART depend sensitively on the arithmetic property of frequency and the regularity of the potential.

LDT was {\it first} established by Bourgain and Goldstein \cite{bourgaingoldstein} and further developed by Goldstein and Schlag \cite{goldsteinschlag} in 2000 for analytic quasiperiodic Schr\"odinger cocycles with positive LEs and strong Diophantine frequencies, to obtain Anderson localization and H\"older continuity of the associated LE, respectively. It remains a challenge  for a few years how to use LDT and AP to obtain optimal estimates of the H\"older exponent. The {\it first} breakthrough belongs to Bourgain \cite{bourgain1} who proved for the almost Mathieu operator (AMO) with $v(x)=2\cos (2\pi x)$, a Diophantine frequency \footnote{Fix two constants $\tau>2,\gamma>0.$ We say $\alpha$ satisfies a $Diophantine$ condition $DC_{\tau,\gamma}$ if $$\vert\alpha-\frac{p}{q}\vert\geq\frac{\gamma}{|q|^{\tau}}$$
for all $p,q\in\Z$ with $q\neq0.$

It is a standard result that for any $\tau>2,$
$$DC_{\tau}:=\bigcup\limits_{\gamma>0} DC_{\tau,\gamma}$$
is of full Lebesgue measure. We fix  $\tau>2$ and  $\alpha\in DC_{\tau}.$
} and $|\lambda|\gg 1$, the LE is $(\frac 12-\epsilon)$-H\"older continuous.
It was generalized by Goldstein and Schlag \cite{goldsteinschlag2} to the result that for arbitrary analytic potential near a trigonometric polynomial of degree $k$, the LE is $(\frac{1}{2k}-\epsilon)$-H\"older continuous provided the frequency is Diophantine and the LE is positive. Roughly speaking, the results in \cite{goldsteinschlag2} are based on refinements of the LDT and AP for the entries of the transfer matrix, which is different from \cite{goldsteinschlag}.  They could obtain $(\frac{1}{2k}-\epsilon)$-H\"older continuity of integrated density state (IDS) by controlling the distribution of zeros for the characteristic determinants.

For $0<|\lambda| \ll 1$,
Puig \cite{Puig06} proved that with a Diophantine frequency, the LE of AMO is locally $\frac 12$-H\"older continuous at endpoints of spectral gaps ({\bf EP}) and cannot be better. Later, it was proved by Amor \cite{amor} that in the perturbative regime, if the frequency is Diophantine, then the LE is $\frac 12$-H\"older continuous. Amor's result was extended by Avila and Jitomirskaya \cite{aj} to the non-perturbative regime and they also proved $\frac 12$-H\"older continuity for $\lambda\not=0, 1$ and all Diophantine frequencies. Recently Avila--Jitomirskaya's result was further extended by Leguil-You-Zhao-Zhou \cite{LYZZ} to general subcritical potentials and Diophantine frequencies. In contrast, there is no $\frac 12$-H\"older continuous result on a general analytic potential in the positive LE regime.

\subsection{Main results}

In this paper, we focus on a class of $C^{2}(\mathbb{S}^1)$  cos-type potentials which was first considered by Sinai \cite{sinai} and satisfies the following conditions:
\begin{itemize}
\item $\frac{dv}{dx}=0$ at exactly two points, one is the minimal and the other is the maximal, which is denoted by $x_1$ and $x_2$.
\item These two extremals are non-degenerate, that is, $\frac{d^2v}{dx^2}(x_j)\neq0$ for $j=1,2.$
\end{itemize}

\begin{definition}

We say a function $f: \mathbb{R}\rightarrow \mathbb{R}$ is locally $\kappa$-H\"older continuous at  $t_0\in \R$ with a H\"older exponent $0<\kappa\leq 1$, if there exists $\epsilon=\epsilon(t_0)>0$ and  $C=C(t_0, \epsilon)>0$ such that
$$|f(t_0)-f(t)|<C|t-t_0|^{\kappa},\ \ ~t~\in~(t_0-\epsilon,t_0+\epsilon).$$
If in addition there also exist $c=c(t_0)>0$ and a sequence $~t_n\rightarrow t_0$ such that $$|f(t_n)-f(t_0)|>c|t_n-t_0|^{\kappa},$$ we say  $f$ is exactly locally $\kappa$-H\"older continuous at $t_0$.
\end{definition}

\begin{definition}For a compact interval $I\subset \R$ and $0<\kappa\leq 1,$ we say a function $f$  is \textbf{absolutely $\kappa$-H\"older continuous} on $I,$ if there exists a uniform constant $C=C(I)>0$ such that for any finitely many  pair-wise disjoint intervals $(a_i,b_i)\subset I, i=1,2,\cdots,N$, it holds that
$$\sum\limits_{i=1}^N|f(a_i)-f(b_i)|\leq C\left(\sum\limits_{i=1}^N|a_i-b_i|\right)^{\kappa}.$$

\end{definition}
\begin{remark} It is easy to see that any absolutely $\kappa$-H\"older continuous function is indeed both (globally) $\kappa$-H\"older continuous and absolutely continuous on $I$. On the other hand, the following function on $[-1,1]$ is both $\frac{1}{2}$-H\"older continuous and absolutely continuous but not absolutely $\frac{1}{2}$-H\"older continuous:
$$f(x)=\left\{\begin{matrix} &0 & x\in [-1,0);\\& \sqrt{x-a_{n-1}}+\sum\limits_{j=1}^{n-1}e^{-100^j} &\qquad\quad x\in [a_{n-1},a_n),\ n\in \N;\\ &\sum\limits_{j=1}^{+\infty} e^{-100^j} & x\in [a_{\infty},1],\end{matrix}\right.$$
where $a_0=0$ and $a_n:=\sum\limits_{j=1}^n e^{-2\cdot (100)^j}, n\geq 1.$
\end{remark}
%Moreover, for a  nontrivial interval I, we say a function $f\in C(I)$ is H\"older continuous with a H\"older exponent $0<r\leq %1$ on I if
%if there exists some constant $C>0$   such that
%$$|f(t_2)-f(t_1)|<C|t_2-t_1|^{r},~for~any~t_1, t_2~\in~I.$$
The main result of this paper is as follows.

\begin{theorem}\label{Th1} Let $\alpha$ be Diophantine and $v$ be of $C^{2}$ cos-type. Consider the quasiperiodic Schr\"odinger cocycle $(\alpha, A^{(E-\lambda v)})$ and let $L(E)=L(E,\lambda)$ be its LE. Then there exists some $\lambda_1=\lambda_1(\alpha,v)>0 $ such that for any fixed $\lambda>\lambda_1$, the followings hold true:
\begin{enumerate}
\item $L(E)$ is absolutely $\frac{1}{2}$-H\"older continuous on any compact interval $\rm I$. In particular,

{\rm (i)} $L(E)$ is $\frac{1}{2}$-H\"older continuous on any compact interval $\rm I$;

{\rm (ii)} $L(E)$ is absolutely continuous on any compact interval $\rm I$.

\item The set of all endpoints of spectral gaps is dense in the spectrum and the regularity of  $L(E)$  at each endpoint cannot be better than locally $\frac{1}{2}$-H\"older continuous.
\item $L(E)$ is  differentiable almost everywhere on $\R.$ Moreover, there exists $C=C(\alpha,v)>0$ such that
$$Leb\{E\in~\Sigma^{\lambda} ~\big\vert~|L'(E)|>M\}\leq C\cdot M^{-2}~for~M>0,$$
 where $\Sigma^{\lambda}$ is  the  spectrum of $H_{\lambda v,\alpha,x}$.
\item For any  $\frac{1}{2}<\beta<1$,
 %and $\min\{1-\kappa, \kappa-\frac12\}>\epsilon>0$
  there exists some point $E'\in \Sigma^{\lambda} $ such that $\liminf\limits_{E\rightarrow E'}\frac{\log |L(E)-L(E')|}{\log |E-E'|}= \beta.$
\end{enumerate}
%(2) The dry version of Cantor spectrum of the operator holds true, that is, all spectrum gaps are open.
%\\
%(3) Integrated density of states of the operator is absolutely continuous.

\end{theorem}

\noindent\textbf{Remark(a):}
 The first part of Conclusion (2) was proved in \cite{wz2}, which shows that the set of {\bf EP} is small in the sense of measure but is large in the sense of topology.
 Conclusions (1) and (2) together imply exact (locally) $\frac{1}{2}$-H\"older continuity of $L(E)$  at each {\bf EP}.

 \noindent\textbf{Remark(b):} The H\"older continuity of the LE for Schr\"odinger cocycles is also expected to play important roles in studying Cantor spectrum, typical localization length,
phase transition, etc., for quasiperiodic Schr\"odinger operators.
%\subsection{\bf{Remarks on the Cantor spectrum}}

\noindent\textbf{Remark(c):} An arithmatic condition (e.g. the Diophantine condition) on frequency is also necessary.
 For rational frequencies and generic irrational frequencies (e.g. extremely
Liouvillean ones), the LE is not H\"older continuous \cite{aj} (in a recent work by \cite{alsz}, it was proved that IDS is not H\"older continuous for AMO at extremely Liouvillean frequencies).

\noindent\textbf{Remark(d):} Part (2) of Theorem \ref{Th1} was also proved by Figueras and Timoudas \cite{FT} via a different method.
A related result of Theorem \ref{Th1} on analytic cases was recently obtained by \cite{KXZ}.

%\begin{theorem}Let $\alpha$ and $v$ be as above. Consider the Schr\"odinger cocycle with potential $v$ and coupling constant $\lambda$. Let $N(E,\lambda)$ be the associated IDS. Then there exists a $\lambda_2=\lambda_2(\alpha,v)>0 $ and  some constant $C(E)$ such that for any fixed $\lambda>\lambda_2$,
%\begin{equation}\label{ids} N(\cdot, \lambda)~is~absolutely~continous.\end{equation}\label{ids1}

It  follows from the definition of absolute H\"older continuity and Theorem \ref{Th1} that
\begin{corollary} Let $\alpha,v,\lambda>\lambda_1$ be defined in Theorem \ref{Th1}, for any Borel measurable set $\Omega\in [\inf\Sigma^{\lambda} ,\sup\Sigma^{\lambda} ],$ it holds that
$$\int_{\Omega} |L'(E)| dE \leq C(\alpha, v)|\Omega|^{\frac{1}{2}}$$ with some constant $C(\alpha, v)>0,$ where $|\Omega|$ denotes the Lebesgue measure of $\Omega$.
\end{corollary}
\begin{remark} Here the constant $\frac{1}{2}$ is also optimal by the results in Theorem \ref{Th1}.

\end{remark}
\subsection{{Remarks on the regularity of the Lyapunov exponents}}

Much work has been devoted to the regularity properties of the LE and IDS as well. Here we focus on other results on the regularity of the LE of cocycles not mentioned above.

For lower regularity cases, Klein \cite{klein} and Cheng-Ge-You-Zhou \cite{cgyz} proved the  weak H\"older continuity of the LE for a class of Gevrey potentials.
\cite{FX} gave a new proof of the continuity of the LE  under the setting in \cite{wz1} without the use of LDT and AP. For other related results, one can refer to \cite{avilakrikorian,bje,Jiro-Mavi1,Jiro-Mavi2}.

There are also many negative results on the positivity and continuity of the LE for non-analytic
cases. It is well known that in $C^0$-topology, discontinuity of the LE holds true at every
non-uniformly hyperbolic cocycle, see \cite{furman,furstenberg}. Moreover, motivated by Ma\~ne \cite{mane1,mane2},
Bochi \cite{boc} proved that with an ergodic base system, any non-uniformly hyperbolic
$SL(2, \R)$-cocycle can be approximated by cocycles with zero LE in the $C^0$ topology.
Wang-You \cite{wy1} constructed examples to show
that the LE can be discontinuous even in the space of $C^{\infty}$ Schr\"odinger cocycles. Recently,
Wang-You \cite{wy2}  improved the result in \cite{wy1} by showing that in $C^r $topology, $1\leq r\leq +\infty,$ there exists Schr\"odinger cocycles with a positive LE that can be approximated by
ones with zero LE. Ge-Wang-You-Zhao \cite{gwyz} recently found the transition space for the continuity of the LE.
Jitomirskaya-Marx \cite{jitomirskaya2} constructed examples showing that the LE of $M(2, \C)$ cocycles is
discontinuous in $C^{\infty}$ topology.

 Weak H\"older continuity of the Lyapunov exponents for multi-frequency $GL(m, \C)$-cocycles, $m \geq 2,$ was obtained by Schlag \cite{schlag} and an important recent progress is that
Duarte-Klein \cite{duarteklein} proved weak H\"older continuity of the Lyapunov exponents for multi-frequency $M(m, \C)$-cocycles. All these results were obtained via LDT and AP.  Without the use of LDT or Avalanche principle, continuity of the LE for $M(d, \C)$ cocycles was given by Avila-Jitomirskaya-Sadel \cite{avilajitomirskayasadel}.

As for the continuity of the LE on the frequency, an arithmetic version of large deviation was developed by Bourgain and Jitomirskaya  in \cite{{bourgainjitomirskaya}} allowing them to obtain joint continuity of the LE for $\mathrm{SL}(2,\mathbb{R})$ cocycles on the frequency and the energy, at any irrational frequencies. This result has been crucial in many important developments, such as the proof of the Ten Martini problem \cite{avilajitomirskaya}, Avila's global theory of one-frequency cocycles \cite{avila1}. It was extended to the multi-frequency case by Bourgain \cite{bourgain2} and to general $\mathrm{M}(2,\mathbb{C})$ case by Jitomirskaya and Marx  \cite{jitomirskaya2} and Powell \cite{mathe}.       Jitomirskaya-Koslover-Schulteis \cite{jitomirskaya} obtained the continuity of the LE with respect to potentials for a class of
analytic quasiperiodic $M(2, \C)$ cocycles which is applicable to general quasiperiodic
Jacobi matrices or orthogonal polynomials on the unit circle in various parameters.
Jitomirskaya-Marx \cite{jitomirskaya1} later extended it to all (including singular) $M(2, \C)$ cocycles.

Other types of base dynamics on which regularity of the LE of analytic or differential Schr\"odinger operators holds true include a shift or skew-shift of a higher
dimensional torus by Bourgain-Goldstein-Schlag \cite{bourgaingoldsteinschlag}, doubling map  and Anosov diffeomorphism by Bourgain-Schlag \cite{bourgainschlag}.

\subsection{The idea for the proof}

Note that the LDT established in \cite{bourgaingoldstein} depends heavily on the analyticity of the potential because they used the subharmonic estimation techniques. If the potential is not analytic, Wang-Zhang \cite{wz1} developed a new iteration scheme to investigate the LDT for smooth Schr\"odinger cocycles. Based on it, they proved that for $C^2$ cos-type (Morse) potential with a large coupling, the LE is weak-H\"older continuous. Subsequently, it was improved to be $r$-H\"older continuous with $r>0$ small  and independent of the coupling by Liang, Wang and You \cite{LWY}. One natural question is, can we combine the method in \cite{goldsteinschlag2} and \cite{LWY} to obtain an optimal estimate on the regularity of the Lyapunov exponent? It might not work since the methods in \cite{goldsteinschlag2} depend on Jensen's formula which only holds for analytic functions. For this purpose, we will add some new ingredients to the classical LDT and AP, instead of proving a refined LDT.

\

Thus the main purpose of this paper is to explain how to use a relatively weak LDT and AP to {\it directly} obtain optimal regularity of the LE \footnote{The methods in \cite{bourgain1,goldsteinschlag2} work with IDS, instead of LE.}, which is completely different from those in \cite{bourgain1,goldsteinschlag2}. The key  is to find an iteration scheme to give a nice control on the derivative of the finite Lyapunov exponent (FLE). More precisely, we denote the FLE by $L_{N}(E)=\frac1{N}\int\log\|A_{N}(x,E)\|dx$. From the facts that $\|A_{N}(x,E)\|\ge 1$ and $|\partial_E\|A_{N}(x,E)\||\le C^{N}$ with some $C>1$ depending only on $A(x)$, we have
$$
|L'_{N}(E)|=\left|\frac{1}{N}\int\frac{\partial_E\|A_{N}(x,E)\|}{\|A_{N}(x,E)\|}dx\right|\le C^{N}.
$$
 The traditional way in \cite{goldsteinschlag} and \cite{wz1} is as follows. With the help of the LDT and AP, for each pair $(E_0, E)$ and (large) $N$ satisfying $|E-E_0|\thickapprox C^{-2N}$ such that
$$
|L(E)-L(E_0)|\le |L_{N}(E)-L_{N}(E_0)|+|L_{2N}(E)-L_{2N}(E_0)|+Ce^{-N^{\sigma}},$$ where $0<\sigma\leq 1$ comes from LDT.
It follows that
$$
|L(E)-L(E_0)|\le 2C^{N}|E-E_0|+Ce^{-N^{\sigma}}.
%\end{array}
$$

If $0<\sigma<1,$ then it does not even guarantee the H\"older continuity, see \cite{wz1}. If $\sigma=1,$  then it can only lead  to $r$-H\"older continuity with $0<r<\frac12$ small, see \cite{goldsteinschlag} and a recent work in \cite{LWY}.

Note that these results are far from being optimal. The reason lies in the fact that the estimate on the upper bound for $L'_{N}(E)$ is far from optimal. Clearly, the upper bound for $L'_{N}(E)$ only depends on $\left\|\frac{\partial_E\|A_{N}(x,E)\|}{\|A_{N}(x,E)\|}\right\|_{\mathcal{L}^1(\R/\Z)}$, which
should be much smaller than $\|\frac{\partial_E\|A_{N}(x,E)\|}{\|A_{N}(x,E)\|}\|_{\mathcal{C}^0(\R/\Z)}$. Indeed, the set of bad $x$ satisfying $\left|\frac{\partial_E\|A_{N}(x,E)\|}{\|A_{N}(x,E)\|}\right|\approx C^{N}$ is of a small measure depending on $N$. We will start from this observation in this paper.

In fact, we will show that the resonance is responsible (see part II in (\ref{qheshi})) for the appearance of the set of bad $x$ and then leads to regularity of the LE not better than $\frac12$-H\"older continuous (recall that it is pointed out  in \cite{sinai} and \cite{wz2} the resonance is also responsible for the appearance of the gaps at least for large coupling cases). Then we will assign  each gap a label related to the resonance, see Theorem \ref{15}. The label can  provide a very precise information on $\frac{\partial_E \|A_{N}\|}{\|A_{N}\|}$ for almost every $x$ which is not available in \cite{wz1}, see Lemma \ref{lemma18}.  Thus we will be able to obtain a sharp upper bound  on $L'_{N}(E)$ with only a  {\it weak} LDT as in \cite{wz1}, which is sufficient to prove $\frac{1}{2}$-H\"older continuity and absolute continuity of the LE.

More precisely, in {\bf EP} case, due to the resonance, the measure of `bad' $x$ is much larger than other cases, which leads to a bad regularity. Indeed, (1) and (2) of Lemma \ref{lemma18} show that for each gap $(E_-, E_+)$ the local shape of the LE at $E=E_-$ on $(E_-, \frac{E_-+E_+}{2})$ is like that of $\sqrt{E}$ at $E=0$ on $E\ge 0$, which is of $\frac12$-H\"older continuity. Due to the Cantor spectrum result in \cite{wz2}, the set of {\bf EP} is dense in the spectrum. For other spectral points $E$, the regularity of the LE is determined by
%we define some function $\beta(t)$ (see Section 3), which measures
the speed of approximation to $E$ by the set of {\bf EP} according to labels of {\bf EP}.
Due to Theorem \ref{15}, the measure of the union of the gaps is much smaller than that of the spectrum. Thus for almost every spectral point, the speed of approximation is slow enough, which leads to
differentiability. There remain some $E$ of measure zero such that each of them is approximated by the set of {\bf EP} with
a fast speed, which corresponds to a regularity between $\frac12$-H\"older continuity and differentiability.

The remaining part of the paper is as follows.  Section 2 provides some basic preparations which include some technical lemmas from \cite{wz1}. In Section 3, we present the structure of the spectrum. Then we prove the main Theorem in Section 4 based on a key lemma. In the remaining sections, we focus on the proof of the key lemma. The letters $C,\ C^*$ and $c$ will denote universal constants satisfying $0<c<1<C,\  C^*$ depending only on the potential $v$ and the frequency $\alpha$. Moreover, the letter $\hat{\epsilon}$ denotes a universal constant satisfying $0<\hat{\epsilon}\ll 1$ and in particular $\hat{\epsilon}\ll c<1$. We emphasize that \( C_{\alpha}>0 \) depends only on \( \alpha \).
%Similarly, the letter $\hat{A}$ denotes a universal constant $\hat{A}\gg 1$ and in particular $\hat{A}\gg C,\ C^*>1$.
%Moreover, we require $c<\hat{\epsilon}\cdot \hat{A}<C.$

\subsection{Definitions and Notations}
In this subsection, we provide a comprehensive list of definitions and notations used throughout the paper. This will help the reader to quickly reference the meaning of terms and concepts used in the subsequent sections.

\begin{Notation}[Rotation matrix]
For $\theta \in \R,$ let
$$
R_{\theta}=\begin{pmatrix}
\cos{\theta}&-\sin{\theta}\\
\sin{\theta}&\cos{\theta}
\end{pmatrix}
\in SO(2,\R).
$$
\end{Notation}

\begin{Notation}[Diagonal $SL(2,\R)$ matrix]
For $x\not=0$, denote $\Lambda(x)=\begin{pmatrix} x& 0 \\ 0 &x^{-1}
\end{pmatrix}.$
\end{Notation}

\begin{Notation}[Derivative]
For \(m = (m_1, \ldots, m_s)\) with \(m_i \geq 0\) and \(|m| := \sum_{i=1}^s m_i \in \mathbb{N}\) and  a function \(F = F(x_1, \ldots, x_s)\), let
$$
D^m F = \frac{\partial^{|m|} F}{\partial x_1^{m_1} \cdots \partial x_s^{m_s}}.
$$
Moreover, for any $F(x,y)\in C^2(I)$ with $I\subset \R^2$ a compact domain and $1\leq j\leq 2,$ we denote $$(F(x,y))^{(j)}=\sum\limits_{a,b\ge 0;~a+b=j}\left\vert\frac{\partial^j F}{\partial x^a \partial y^b}\right\vert;$$ $$(F_x(x,y))^{(j)}=\left\vert\frac{\partial^j F}{\partial x^j}\right\vert;~(F_y(x,y))^{(j)}=\left\vert\frac{\partial^j F}{\partial y^j}\right\vert.$$

\end{Notation}

\begin{definition}[Equivalent relation]\label{dominator}
Consider \(0 < \epsilon \ll 1\), \(l, s \in \mathbb{N}\), intervals \(I_i \subset \mathbb{R}\) or \(\R/\Z\), \(i = 1, 2, \ldots, s\), and two functions \(F_1, F \in C^l(I)\) where \(I = I_1 \times I_2 \times \cdots \times I_s\). We say that
$$
F \sim_{l, \epsilon} F_1 \text{ on } I
$$
if for any \(m = (m_1, \ldots, m_s)\) with \(m_i \geq 0\) and \(|m| \leq l\), the following conditions hold:

\begin{enumerate}
\item[i:] \(\{x \in I \mid D^m F_1 = 0\} = \{x \in I \mid D^m F = 0\}\);

\item[ii:]  For each $x\in I$,
%The set \(\{x \in I \mid D^m F_1 = 0\}\) is discrete, and for any \(y \in \{x \in I \mid D^m F_1 = 0\}\),
$$
 \left|  {D^m F(x)} - {D^m F_1(x)}\right| \leq  \epsilon\cdot (|{D^m F_1(x)}|+|{D^m F(x)}|).
$$

%\item[iii:] For any \(x \notin \{x \in I \mid D^m F_1 = 0\}\),
%$$
%\left| \frac{D^m F(x)}{D^m F_1(x)} - 1 \right| \leq \epsilon.
%$$
\end{enumerate}

Here we say \(I\) is the domain of the relation \(\sim_{l, \epsilon}\). In the following, if not otherwise specified, the domain of the relation \(\sim_{l, \epsilon}\) refers to the domain of the independent variables.
\end{definition}

\begin{definition}[Projection operator]
For a rectangle  $D=X\times Y\subset \mathbb{R}^2$, we denote $\Pi_1 D:=X$ and $\Pi_2 D:=Y$.
\end{definition}
\begin{Notation}For an interval $I=(b-a,b+a)$ with $0<a<1$, we denote $(b-a^C,b+a^C)$ by $I^C$.
For two intervals $I$ and $J,$ $I^C\times J^C$ is denoted by $(I\times J)^C$.
\end{Notation}
\begin{Notation}
$B(t,\lambda)=(t-\lambda,\ t+\lambda)$.
\end{Notation}
\begin{Notation} For $x\in \R$ and $A,\ B\subset \R$, dist$(x, A)$ denotes\ the \ distance\ between\ $x$ \ and\ $A$, and
dist$(A, B)$ denotes\ the \ distance\ between\ $A$ \ and\ $B$.
\end{Notation}

\begin{Notation}
Let $\{\frac{p_n}{q_n}\}_{n\geq1}$ be the continued fraction approximants of an irrational number $\alpha.$
 For $k\in\Z,$ we denote $s(|k|)\in \Z_+$ satisfying \begin{equation}\label{sk_df}q^2_{N+s(|k|)-1}\leq |k|<q^2_{N+s(|k|)}.\end{equation}
\end{Notation}

\begin{Notation}
For  a Lebesgue measurable set $A\subset \R$, the  measure of $A$ (if is nonzero) is denoted by $|A|$.
For a finite set $S^0$, the number of the elements in $S$ is denoted by $|S|$.
\end{Notation}

\begin{Notation}[Abbreviate notations]
{\bf LE}=Lyapunov exponent,\quad {\bf FLE}=finite Lyapunov exponent,\quad {\bf LDT}=Large deviation theorem,\quad
{\bf AP}=Avalanche Principle, \quad {\bf EP}=endpoints of spectral gaps,\quad{$\mathcal{UH}$}=uniform hyperbolicity,\quad{$\mathcal{FR}$}=finite resonance,\quad{ $\mathcal{IR}$}=infinite resonance,\ {\bf IDS}=integrated density states,\ {\bf AMO}=Almost Mathieu operators.
\end{Notation}

\section{Preliminaries}\label{preli}

%In this section, we present some technical lemmas. Most of them are proved in \cite{wz1}. In the following, each
%$SL(2,\R)$-matrix is assumed to possess a norm strictly larger than 1.

\begin{lemma}[\cite{z1}]\label{lemma4}
Assume the potential $v$ is a $C^2$ $\cos$-type function and $\alpha$ is an irrational number. Then
the Schr\"odinger cocycle $(\alpha,A^{(E-\lambda v)})$ is conjugate to the cocycle $(\alpha, A)$ with
$$A(x,t, \lambda)= \Lambda(\|A(x,t,\lambda)\|)\cdot R_{\frac\pi2-\phi(x,t,\lambda)},
$$
where \begin{equation}\label{oc} t=E/\lambda,\quad c \lambda \leq \|A(x, t, \lambda)\| \leq C \lambda, \quad \left|\partial_x^j \|A(x, t, \lambda)\|\right| \leq C \lambda, \quad j = 1, 2,\end{equation}
and $\tan \phi(x,t,\lambda)\rightarrow t-v(x-\alpha)$ in $C^2$-topology as $\lambda\rightarrow \infty$. Thus \(\phi\) is also a \(\cos\)-type function in \(x\) for large \(\lambda\).

%Let $J$ be any compact interval. Let $\lambda\ge \lambda_0=\lambda_0(v)\gg 1$. For $x\in\R/\Z$ and $t\in J$  here $t$ is $\frac{E}{\lambda}$, define the following cocycles map:
%\begin{equation}\label{EC}
%A(x,t)=\Lambda(x,t)\cdot R_{\phi(x,t)}\triangleq
%\begin{pmatrix}
%\lambda(x,t)& 0\\
%0& \lambda^{-1}(x,t)
%\end{pmatrix}
%\cdot\left({\sqrt{(t-v(x))^2+1}}^{-1}
%\begin{pmatrix}
%t-v(x)& -1\\
%1& t-v(x)
%\end{pmatrix}\right)
%\end{equation}
%where $\cot\phi(x,t)=t-v(x)$. Assume
%\begin{equation}\label{oc}
%\lambda(x,t)>\lambda,\ \ \left|\frac{\partial^m\lambda(x,t)}{\partial x^m}\right|+\left|\frac{\partial^m\lambda(x,t)}{\partial t^m}\right|+\left|\frac{\partial^2\lambda(x,t)}{\partial x\partial t}\right|<C\lambda,~m=1,2,
%\end{equation}
%where $C$ depends only on $v$.
\end{lemma}

Let $\lambda$ be fixed and define
\begin{equation}\label{oc1}A(x,t)=\Lambda(\|A(x+\alpha,t,\lambda)\|)\cdot R_{\frac\pi2-\phi(x, t, \lambda)}.\end{equation} To consider the H\"older regularity of the LE $L(t)$ of $A(x,t)$, we may restrict $t$ to the following interval:
$$t\in [\inf v-\frac{2}{\lambda},\sup v+\frac{2}{\lambda}].$$
It is due to the fact that if $t_0\notin[\inf v-\frac{2}{\lambda},\sup v+\frac{2}{\lambda}]$, then $(\alpha,A(\cdot,t_0))$ is uniformly hyperbolic, which implies that $L(t)$ is differential at $t_0$ if $v$ does, see \cite{z1} for details.

\subsection {Some basic lemmas for the sharp estimate on the derivative of finite LE}
In this subsection, we will provide some lemmas based on which we will prove the sharp estimate on the derivative of finite LE later.

%\begin{lemma}\label{dominate1}Let $\epsilon>0, F,F_1\in C^0(I), I\subset \R/\Z\times \R$. If $F\sim_{0,\epsilon} F_1,$ then $$\vert\int_{I} F-F_1\vert\leq \epsilon\cdot \int_{I}\vert F_1\vert.$$

%\end{lemma}
%\begin{proof} Note $F\sim_{0,\epsilon} F_1$ implies $\left\vert F-F_1 \right\vert\leq \epsilon|F_1|.$
%\hfill\qed\end{proof}
%%%%%%%%%%%%%%%%%%%%%%%%%%%%%%%%%%%%%%%%%%%%%%%%%%%%%%%%%%%%%%%%%%%%%%%%%%%%%%%%%%%%%%%%%%%%%%%%%%%%%%%%%%%%%%%%%%%%%%%
%%%%%%%%%%%%%%%%%%%%%%%%%%%%%%%%%%%%%%%%%%%%%%%%%%%%%%%%%%%%%%%%%%%%%%%%%%%%%%%%%%%%%%%%%%%%%%%%%%%%%%%%%%%%%%%%%%%%%%%%

%%%%%%%%%%%%%%%%%%%%%%%%%%%%%%%%%%%%%%%%%%%%%%%%%%%%%%%%%%%%%%%% %%%%%%%%%%%%%%%%%%%%%%%%%%%%%%%%%%%%%%%%%%%%%%%%%%%%%%%%%%%%%%

%%%%%%%%%%%%%%%%%%%%%%%%%%%%%%%%%%%%%%%%%%%%%%%%%%%%%%%%%%%%%%%%%%%%%%%%%%%%%%%%%%%%%%%%%%%%%%%%%%%%%%%%%%%%%%%%%%%%%%%%%%%%%%%%%%
\begin{lemma}\label{lemma8}Let $\lambda_1,\lambda_2\gg 1~and~\theta\neq \frac{\pi}{2}.$ Consider the matrix
$$A(\lambda_1, \lambda_2,\theta):=
\begin{pmatrix}
\lambda_1&0\\
0&{\lambda_1}^{-1}
\end{pmatrix}\begin{pmatrix}
\cos{\theta}&-\sin{\theta}\\
\sin{\theta}&\cos{\theta}
\end{pmatrix} \begin{pmatrix}
\lambda_2&0\\
0&{\lambda_2}^{-1}
\end{pmatrix}.$$
Then, we have
$$\begin{array}{ll}
(a)&\frac{\partial\|A\|}{\partial\theta}\cdot\frac{1}{\|A\|}\sim_{0,6\sqrt{\lambda_1^{-4}+\lambda_2^{-4}}}W(\lambda_1,\lambda_2,\theta),\\
(b)& \left\vert\frac{\partial\|A\|}{\partial\lambda_i}\cdot\frac{1}{\|A\|}\right\vert\leq \frac{1}{\lambda_i},\quad i=1, \ 2,
\end{array}$$

where $W(\lambda_1,\lambda_2,\theta)=\frac{sgn(\cot\theta)}{\sqrt{\cot^2\theta+\cot^{-2}\theta(\frac{1}{\lambda_1^4}-\frac{1}{\lambda_2^4})^2+(\frac{2}{\lambda_1^4}+\frac{2}{\lambda_2^4})}}$ if $\theta\not=\pi$ {\rm(let\ } $W=0$ {\rm\ if\ } $\theta=\pi${\rm).}

Moreover,~if~$\lambda_1,~\lambda_2,~\theta$~are~$C^1$-functions~of~$X=(x_1,\cdots,x_n)\in \R^n$~and~$A(X)=A(\lambda_1(X), ~\lambda_2(X),\theta(X))$~,~then
\begin{equation}\label{lemma4-3}\left|{\partial_{x_i} \log \|A(x_1,\cdots,x_n)\|}\right|\leq \left\vert\lambda_1^{-1}\cdot{\partial_{x_i} \lambda_1}\right\vert+\left\vert\lambda_2^{-1}\cdot{\partial_{x_i} \lambda_2}\right\vert+\left\vert\|A\|^{-1}\cdot \partial_{\theta}\|A\|\cdot \partial_{x_i}\theta\right\vert,\ ~1\le i\le n.\end{equation}

\end{lemma}

\begin{proof}
It follows from a direct calculation that
\begin{equation}\label{lemma8,1}\|A\|^2+\|A\|^{-2}=trace(A^TA)=(\lambda_1^2\lambda_2^2+\lambda_1^{-2}\lambda_2^{-2})\cos^2\theta+(\lambda_1^2\lambda_2^{-2}+\lambda_1^{-2}\lambda_2^{2})\sin^2\theta.\end{equation}

Note that
 %$\frac{d~trace(A^TA)}{d \|A\|}=2\|A\|^{-1}(\|A\|^2-\|A\|^{-2})\neq 0$ if and only if $\|A\|\neq 1.$  On the other hand,
\eqref{lemma8,1} implies that $\|A\|=1$ if and only if $\lambda_1=\lambda_2$ and $\theta=\frac{\pi}{2}.$ Therefore, for the case $\theta\neq \frac{\pi}{2},$ we always have $\|A\|>1.$ By taking derivatives with respect to $\theta$ on both sides of the above equation, we have
$$2\|A\|\frac{\partial\|A\|}{\partial\theta}-2\frac{\partial\|A\|}{\partial\theta}\|A\|^{-3}=(\lambda_1^2\lambda_2^{-2}+\lambda_1^{-2}\lambda_2^{2}-(\lambda_1^2\lambda_2^2+\lambda_1^{-2}\lambda_2^{-2}))\sin2\theta.$$
Note $0\neq \|A\|^2-\|A\|^{-2}=\sqrt{(\|A\|^2+\|A\|^{-2})^2-4}$, hence
$$\frac{\partial\|A\|}{\partial\theta}\cdot\frac{1}{\|A\|}=\frac{(\lambda_1^2\lambda_2^{-2}+\lambda_1^{-2}\lambda_2^{2}-(\lambda_1^2\lambda_2^2+\lambda_1^{-2}\lambda_2^{-2}))\sin2\theta}{2\sqrt{((\lambda_1^2\lambda_2^2+\lambda_1^{-2}\lambda_2^{-2})\cos^2\theta+(\lambda_1^2\lambda_2^{-2}+\lambda_1^{-2}\lambda_2^{2})\sin^2\theta)^2-4}}.$$
 Equivalently, we have$$ \frac{\partial\|A\|}{\partial\theta}\cdot\frac{1}{\|A\|}=\frac{sgn(\sin(2\theta))(1-\lambda_2^{-4})(1-\lambda_1^{-4})}
 {\sqrt{(1-\frac{1}{\lambda_1^4\lambda_2^4})^2\cot^2\theta+(\frac{1}{\lambda_1^4}-\frac{1}
 {\lambda_2^4})^2\tan^2\theta+2(1+\frac{1}{\lambda_1^4\lambda_2^4})(\frac{1}{\lambda_1^4}+
 \frac{1}{\lambda_2^4})-\frac{8}{\lambda_1^4\lambda_2^4}}}.$$
On one hand, it implies \begin{equation}\label{7}\begin{array}{ll}
\left\vert\frac{\partial\|A\|}{\partial\theta}\cdot\frac{1}{\|A\|}\right\vert &\geq\frac{(1-\frac{1}{\lambda_2^4})(1-\frac{1}{\lambda_1^4})}{\sqrt{1+\frac{1}{\lambda_1^4\lambda_2^4}}}
\cdot(\cot^2\theta+(\frac{1}{\lambda_1^4}-\frac{1}{\lambda_2^4})^2\tan^2\theta+\frac{2}
{\lambda_1^4}+\frac{2}{\lambda_2^4})^{-\frac12}\\
&\geq (1-\frac{6}{\lambda_2^4}-\frac{6}{\lambda_1^4})\cdot(\cot^2\theta+(\frac{1}{\lambda_1^4}-\frac{1}{\lambda_2^4})^2\tan^2\theta+\frac{2}
{\lambda_1^4}+\frac{2}{\lambda_2^4})^{-\frac12}.
\end{array}\end{equation}

On the other hand,\begin{equation}\label{8}\begin{array}{ll}
\left\vert\frac{\partial\|A\|}{\partial\theta}\cdot\frac{1}{\|A\|}\right\vert &\leq  \frac{1}{\sqrt{(1-\frac{1}{\lambda_1^4\lambda_2^4})^2\cot^2\theta+(\frac{1}{\lambda_1^4}-\frac{1}{\lambda_2^4})^2\tan^2\theta+2(1+\frac{1}{\lambda_1^4\lambda_2^4})(\frac{1}{\lambda_1^4}+\frac{1}{\lambda_2^4})-\frac{8}{\lambda_1^4\lambda_2^4}}}\\
&\leq (1+\frac{6}{\lambda_1^4}+\frac{6}{\lambda_2^4})\frac{1}{\sqrt{\cot^2\theta+(\frac{1}{\lambda_1^4}-\frac{1}{\lambda_2^4})^2\tan^2\theta+\frac{2}{\lambda_1^4}+\frac{2}{\lambda_2^4}}}.
\end{array}\end{equation}

%We define that $W(\lambda_1,\lambda_2,\theta)=\frac{sgn(\cot(\theta))}{\sqrt{\cot^2\theta+\cot^{-2}\theta(\frac{1}{\lambda_1^4}-\frac{1}{\lambda_2^4})^2+(\frac{2}{\lambda_1^4}+\frac{2}{\lambda_2^4})}}.$
Note $W(\lambda_1,\lambda_2,\theta)\neq 0$ for $\theta\in (0,\frac{\pi}{2})\bigcup (\frac{\pi}{2},\pi)$ and $sgn(\cot \theta)=sgn(\sin 2\theta)$ for $\theta\neq \frac{\pi}{2}.$
Hence \eqref{7} and \eqref{8} yield $$\left\vert\frac{\frac{\partial\|A\|}{\partial\theta}\cdot\frac{1}{\|A\|}}{W(\lambda_1,\lambda_2,\theta)}-1\right\vert=\left\vert \left\vert\frac{\frac{\partial\|A\|}{\partial\theta}\cdot\frac{1}{\|A\|}}{W(\lambda_1,\lambda_2,\theta)}\right\vert-1\right\vert\leq 6\cdot \left(\lambda_1^{-4}+\lambda_2^{-4}\right),~\theta \in (0,\frac{\pi}{2})\bigcup (\frac{\pi}{2},\pi).$$ Note that on $(0,\pi]$, $W(\lambda_1,\lambda_2,\theta)$ and $\frac{\partial\|A\|}{\partial\theta}\cdot\frac{1}{\|A\|}$ have the same zero $\theta=\pi.$ And
$$\lim\limits_{\theta\rightarrow \pi}\frac{\frac{\partial\|A\|}{\partial\theta}\cdot\frac{1}{\|A\|}}{W(\lambda_1,\lambda_2,\theta)}=\frac{(1-\lambda_2^{-4})(1-\lambda_1^{-4})}{1-\frac{1}{\lambda_1^4\lambda_2^4}}.$$

Therefore on $(0,\frac{\pi}{2})\bigcup (\frac{\pi}{2},\pi],$ $$\frac{\partial\|A\|}{\partial\theta}\cdot\frac{1}{\|A\|}\sim_{0,6\sqrt{\lambda_1^{-4}+\lambda_2^{-4}}}W(\lambda_1,\lambda_2,\theta).$$
\

(b) and (\ref{lemma4-3}) can be obtained similarly.
\hfill\qed\end{proof}

\begin{proposition}\label{PROP8}
Given an open rectangle \(I \times J \subset \mathbb{T} \times \mathbb{R}\) and functions \(f(x, y), g(x, y) \in C^1(I \times J)\) with \(0 < \gamma \ll 1\), if \(f \sim_{1, \gamma} g\), then
$$
\arctan f \ (\text{mod} \ \pi) \sim_{1, 4\gamma} \arctan g \ (\text{mod} \ \pi).
$$
\end{proposition}

\begin{proof}
Since \(f \sim_{1, \gamma} g\), we have
$$
\frac{|f - g|}{|g|} \leq \gamma \quad \text{and} \quad \frac{|f' - g'|}{|g'|} \leq \gamma,
$$
where \(f' = \partial_X f\) and \(g' = \partial_X g\) with \(X\in \{x, y\}\).

Assuming without loss of generality that \(\arctan f\) and \(\arctan g\) lie in \((- \frac{\pi}{2}, \frac{\pi}{2}]\), we have
$$
\left| \frac{\arctan f - \arctan g}{\arctan g} \right| \leq \frac{1}{|\arctan g|} |f - g|.
$$

If \(|g| > 1\), then \(|\arctan g| > \frac{\pi}{4}\). Thus
$$
\left| \frac{\arctan f - \arctan g}{\arctan g} \right| \leq \frac{1}{|\arctan g|} |f - g| \leq \frac{4}{\pi} |f - g| \leq \frac{2}{\pi} \gamma.
$$

If \(|g| \leq 1\), then
$
|\arctan g| \geq \frac{\pi}{4} |g|.
$
Hence
$$
\left| \frac{\arctan f - \arctan g}{\arctan g} \right| \leq \frac{1}{|\arctan g|} |f - g| \leq \frac{4}{\pi} \frac{|f - g|}{|g|} \leq \frac{4}{\pi} \gamma.
$$

Therefore
$$
\left| \frac{\arctan f - \arctan g}{\arctan g} \right| \leq \frac{4}{\pi} \gamma.
$$

Similarly,
$$
\left| \frac{\arctan f - \arctan g}{\arctan f} \right| \leq \frac{4}{\pi} \gamma.
$$

Thus for \(x \in I\) and \(y \in J\),
$$
\arctan f \sim_{0, \frac{4}{\pi} \gamma} \arctan g.
$$

On the other hand,
$$
\left| \frac{(\arctan f)'}{(\arctan g)'} - 1 \right| = \left| \frac{(1 + g^2) f'}{(1 + f^2) g'} - 1 \right| \leq \left| \frac{f'}{g'} - 1 \right| + \frac{|f + g|}{|f|} \left| \frac{f - g}{f} \right| \left| \frac{f'}{g'} \right| \leq \gamma + 2 \gamma (1 + \gamma) \leq 4 \gamma.
$$
This completes the proof.
\hfill\qed\end{proof}

\begin{lemma}\label{lemma8*}
Given an open rectangle \(I \times J \subset \mathbb{T} \times \mathbb{R}\), let \(\lambda_1(x, y), \lambda_2(x, y), \theta(x, y) \in C^2(I \times J)\) and define \(A(x,y)=A(\lambda_1(x, y), \lambda_2(x, y), \theta(x, y))\) with
$$
A(\lambda_1, \lambda_2, \theta) :=
\begin{pmatrix}
\lambda_1 & 0 \\
0 & \lambda_1^{-1}
\end{pmatrix}
\begin{pmatrix}
\cos \theta & -\sin \theta \\
\sin \theta & \cos \theta
\end{pmatrix}
\begin{pmatrix}
\lambda_2 & 0 \\
0 & \lambda_2^{-1}
\end{pmatrix}.
$$

Given numbers \(\mu_1, \mu_2 \ge \lambda_0 \gg 1\) and \(0 < \hat{\epsilon} \ll 1\), assume that for \(i = 1, 2\), and \(X, Y\in\{ x, y\}\),
\begin{equation}\label{assumm}
\begin{array}{ll}
&\lambda_i(x, y) \sim_{0, e^{-(\log \lambda_0)^{2\hat{\epsilon}}}} \mu_i,\qquad\qquad
%~
%&\lambda_i \geq \lambda_0;\\
~
|\partial_X \lambda_i| + |\partial^2_{XY} \lambda_i| \leq e^{(\log \lambda_0)^{\hat{\epsilon}}}\cdot \mu_i,\\
~
&|\partial_X \theta| + |\partial^2_{XY} \theta| \leq e^{(\log \lambda_0)^{\hat{\epsilon}}},\qquad\qquad
~
|\partial_X \theta| \geq e^{(\log \lambda_0)^{-\hat{\epsilon}}}.
\end{array}
\end{equation}

Then the following three statements hold true.

\begin{enumerate}
\item[i:] If \(|\tan \theta| < e^{(\log \lambda_0)^{2\hat{\epsilon}}}\), then
\begin{equation}\label{lem8-i}
\|\frac{\pi}{2} - s(A)\|_{C^2}, \|u(A)\|_{C^2} \leq e^{(\log \lambda_0)^{3\hat{\epsilon}}} \lambda_0^{-2}.
\end{equation}

\item[ii:] If \(|\tan \theta| \geq e^{(\log \lambda_0)^{2\hat{\epsilon}}}\) and \(\mu_1 \geq \mu_2^{16}\), then

\begin{enumerate}
\item[2a:]
$$
\|u(A)\|_{C^2} \leq \mu_1^{-\frac{3}{2}}.
$$

\item[2b:] If \(|\tan \theta| \leq \mu_2^2 e^{-(\log \lambda_0)^{2\hat{\epsilon}}}\), then
$$
s(A) \sim_{2, \mu_1^{-1}} \frac{\pi}{2} - \arctan [\mu_2^{-2} \tan \theta] \ (\text{mod} \ \pi).
$$

\item[2c:] If \(|\tan \theta| \geq \mu_2^2 e^{-(\log \lambda_0)^{2\hat{\epsilon}}}\), then
$$
|\frac{\pi}{2} - s(A)| \geq e^{-(\log \lambda_0)^{4\hat{\epsilon}}} \quad \text{and} \quad |\partial_X s(A)| \geq \mu_2.
$$
\end{enumerate}

\item[iii:] If \(|\tan \theta| \geq e^{-(\log \lambda_0)^{2\hat{\epsilon}}}\) and \(\mu_2 \geq \mu_1^{16}\), then

\begin{enumerate}
\item[3a:]
$$
\|\frac{\pi}{2} - s(A)\|_{C^2} \leq \mu_2^{-\frac{3}{2}}.
$$

\item[3b:] If \(|\tan \theta| \leq \mu_1^2 e^{-(\log \lambda_0)^{2\hat{\epsilon}}}\), then
$$
u(A) \  \sim_{2, \mu_2^{-1}} \arctan [\mu_1^{-2} \tan \theta] \ (\text{mod} \ \pi).
$$

\item[3c:] If \(|\tan \theta| \geq \mu_1^2 e^{-(\log \lambda_0)^{2\hat{\epsilon}}}\), then
$$
|u(A)| \geq e^{-(\log \lambda_0)^{4\hat{\epsilon}}} \quad \text{and} \quad |\partial_X u(A)| \geq \mu_1.
$$
\end{enumerate}
\end{enumerate}
\end{lemma}

\begin{proof}
Consider
\[
F(\lambda_1, \lambda_2, \theta, \phi) = A(\lambda_1, \lambda_2, \theta) (\cos \phi, \sin \phi)^T
\]
with
$
A(\lambda_1, \lambda_2, \theta)
$ defined as above,
and
\[
\tilde{F}(\lambda_1, \lambda_2, \theta, \phi) = A^{-1}(\lambda_1, \lambda_2, \theta) (\cos \phi, \sin \phi)^T
\]
with
\[
A^{-1}(\lambda_1, \lambda_2, \theta) =
\begin{pmatrix}
\lambda_2^{-1} & 0 \\
0 & \lambda_2
\end{pmatrix}
\begin{pmatrix}
\cos{\theta} & \sin{\theta} \\
-\sin{\theta} & \cos{\theta}
\end{pmatrix}
\begin{pmatrix}
\lambda_1^{-1} & 0 \\
0 & \lambda_1
\end{pmatrix}
.
\]

A direct calculation yields
\[
\begin{aligned}
\|F\|^2 &= \left(\lambda_1^2\lambda_2^2\cos^2\theta + \lambda_2^2\lambda_1^{-2}\sin^2\theta \right)\cos^2 \phi + \left(\lambda_1^2\lambda_2^{-2}\sin^2\theta + \lambda_1^{-2}\lambda_2^{-2}\cos^2\theta \right)\sin^2 \phi \\
&\quad - (\lambda_1^2 - \lambda_1^{-2})2\cos\theta\sin\theta\cos\phi\sin\phi,
\end{aligned}.
\]
\[
\begin{aligned}
\|\tilde{F}\|^2 &= \left(\lambda_1^{-2}\lambda_2^{-2}\cos^2\theta + \lambda_2^2\lambda_1^{-2}\sin^2\theta \right)\cos^2 \phi + \left(\lambda_1^2\lambda_2^{-2}\sin^2\theta + \lambda_1^{2}\lambda_2^{2}\cos^2\theta \right)\sin^2 \phi \\
&\quad - (\lambda_2^2 - \lambda_2^{-2})2\cos\theta\sin\theta\cos\phi\sin\phi
\end{aligned}.
\]

Then
\begin{align*}%\label{F2dao}
\frac{\partial \|F\|^2}{\partial \phi}
&= \sin(2\phi) \left( \lambda_1^2\lambda_2^{-2}\sin^2\theta + \lambda_1^{-2}\lambda_2^{-2}\cos^2\theta - \lambda_1^2\lambda_2^2\cos^2\theta - \lambda_2^2\lambda_1^{-2}\sin^2\theta \right) \\
&\quad - 2\sin\theta\cos\theta (\lambda_1^2 - \lambda_1^{-2}) \cos(2\phi). \nonumber
\end{align*}
\begin{align*}%\label{F2dao}
\frac{\partial \|\tilde{F}\|^2}{\partial \phi}
&= \sin(2\phi) \left( \lambda_1^2\lambda_2^{-2}\sin^2\theta + \lambda_1^{2}\lambda_2^{2}\cos^2\theta - \lambda_1^{-2}\lambda_2^{-2}\cos^2\theta - \lambda_2^2\lambda_1^{-2}\sin^2\theta \right) \\
&\quad - 2\sin\theta\cos\theta (\lambda_2^2 - \lambda_2^{-2}) \cos(2\phi). \nonumber
\end{align*}

Note that
\begin{equation}\label{1dd0}
\frac{\partial \|F\|^2}{\partial \phi}\left|_{\phi=s(A)}\right.= 2\|F\| \frac{\partial \|F\|}{\partial \phi}\left|_{\phi=s(A)}\right.= 0,
\end{equation}
and similarly
\[
\frac{\partial \|\tilde{F}\|^2}{\partial \phi}\left|_{\phi=u(A)}\right.= 2\|\tilde{F}\| \frac{\partial \|\tilde{F}\|}{\partial \phi}\left|_{\phi=u(A)}\right.= 0.
\]

Here we only show (ii-2b), (ii-2c), (iii-3a) and (i), the remaining cases can be considered similarly (replace \( s(A) \) with \( u(A) \) and \( F \) with \( \tilde{F} \)).

\

Next we consider the case \begin{equation}\label{new18}e^{(\log \lambda_0)^{2\hat{\epsilon}}}\leq |\tan\theta|.\end{equation}
By \eqref{1dd0}, we obtain
\begin{align*}
&\sin(2s(A))(\lambda_1^2 \lambda_2^{-2} \sin^2\theta + \lambda_1^{-2} \lambda_2^{-2} \cos^2\theta - \lambda_1^2 \lambda_2^2 \cos^2\theta - \lambda_2^2 \lambda_1^{-2} \sin^2\theta)\\
&= 2 \sin\theta \cos\theta (\lambda_1^2 - \lambda_1^{-2}) \cos(2s(A)).\end{align*}

A direct calculation yields
\begin{equation} \label{saaa}
\begin{aligned}
\tan(2s(A)) &= -\frac{2 \tan \theta (1-\lambda_1^{-4})}{(\lambda_2^2 \lambda_1^{-4} - \lambda_2^{-2}) \tan^2 \theta + \lambda_2^2 - \lambda_1^{-4} \lambda_2^{-2}}= -\frac{2 \lambda_2^{-2} \tan \theta \frac{(1 - \lambda_1^{-4})}{1 - \lambda_1^{-4} \lambda_2^{-4}}}{\frac{(\lambda_2^2 \lambda_1^{-4} - \lambda_2^{-2})}{\lambda_2^2 - \lambda_1^{-4} \lambda_2^{-2}} \tan^2\theta + 1} \\
&= -\frac{2 \left[\frac{\sqrt{(\lambda_2^{-4} - \lambda_1^{-4})}}{\sqrt{1 - \lambda_1^{-4} \lambda_2^{-4}}} \tan \theta \right]}{1 - \left[\frac{(\lambda_2^{-4} - \lambda_1^{-4})}{1 - \lambda_1^{-4} \lambda_2^{-4}} \tan^2\theta\right]} \left[\frac{(1 - \lambda_1^{-4})}{\sqrt{1 - \lambda_1^{-4} \lambda_2^4} \sqrt{1 - \lambda_1^{-4} \lambda_2^{-4}}}\right] \\
&= -\tan(2(\arctan [\frac{\sqrt{(\lambda_2^{-4} - \lambda_1^{-4})}}{\sqrt{1 - \lambda_1^{-4} \lambda_2^{-4}}} \tan\theta])) \left[\frac{(1 - \lambda_1^{-4})}{\sqrt{1 - \lambda_1^{-4} \lambda_2^4} \sqrt{1 - \lambda_1^{-4} \lambda_2^{-4}}}\right]
\\&=-\tan(2(\arctan [(1+O(\lambda_2^4\lambda_1^{-4}))\lambda_2^{-2}\tan\theta]))[1+O(\lambda_2^4\lambda_1^{-4})].
\end{aligned}
\end{equation}

Note
$$
\begin{aligned}
& \partial_X(\lambda_2^{-2} \tan \theta) = -2 \lambda_2^{-3} (\partial_X \lambda_2) \tan \theta + \lambda_2^{-2} (1 + \tan^2 \theta) \partial_X \theta \\
& = \lambda_2^{-2} (1 + \tan^2 \theta) \partial_X \theta \left(1 - \frac{2 (\partial_X \lambda_2) \tan \theta}{\lambda_2 (1 + \tan^2 \theta) \partial_X \theta}\right).
\end{aligned}
$$

By \eqref{assumm} and \eqref{new18}, we have
\[
\left\vert \frac{2 \partial_X \lambda_2 \tan \theta}{\lambda_2 (1 + \tan^2 \theta) \partial_X \theta} \right\vert \leq \left\vert \frac{2 \partial_X \lambda_2}{\lambda_2 \tan \theta \partial_X \theta} \right\vert \leq \left\vert \frac{2 e^{(\log \lambda_0)^{\hat{\epsilon}}}}{e^{(\log \lambda_0)^{2\hat{\epsilon}} - (\log \lambda_0)^{\hat{\epsilon}}}} \right\vert \leq e^{-(\log \lambda_0)^{\hat{\epsilon}}},
\]
which implies
\begin{equation} \label{1d}
\partial_X(\lambda_2^{-2} \tan \theta) \sim_{0, e^{-(\log \lambda_0)^{\hat{\epsilon}}}} \lambda_2^{-2} (1 + \tan^2 \theta) \partial_X \theta.
\end{equation}

Similarly,
$$
\begin{aligned}
& \frac{\partial^2(\lambda_2^{-2} \tan \theta)}{\partial X \partial Y} = 6 \lambda_2^{-4} (\partial_X \lambda_2)(\partial_Y \lambda_2) \tan \theta - 2 \lambda_2^{-3} \frac{\partial^2 \lambda_2}{\partial X \partial Y} \tan \theta \\
& - 2 \lambda_2^{-3} (\partial_X \lambda_2)(1 + \tan^2 \theta) \partial_Y \theta - 2 \lambda_2^{-3} (\partial_Y \lambda_2)(1 + \tan^2 \theta) \partial_X \theta \\
& + 2 (\lambda_2^{-2} (1 + \tan^2 \theta) \tan \theta \partial_X \theta \partial_Y \theta) +  (\lambda_2^{-2} (1 + \tan^2 \theta) \frac{\partial^2 \theta}{\partial X \partial Y}) \\
& = 2 (\lambda_2^{-2} (1 + \tan^2 \theta) \tan \theta \partial_X \theta \partial_Y \theta) \cdot (1 + R_{XY}),
\end{aligned}
$$
with
\begin{align*}
R_{XY} &= \frac{
    3 \lambda_2^{-4} (\partial_X \lambda_2)(\partial_Y \lambda_2)
    - \lambda_2^{-3} \frac{\partial^2 \lambda_2}{\partial X \partial Y} \tan \theta
    - \lambda_2^{-3} (1 + \tan^2 \theta) \left( (\partial_X \lambda_2) \partial_Y \theta + (\partial_Y \lambda_2) \partial_X \theta \right)
}{2 \lambda_2^{-2} (1 + \tan^2 \theta) \tan \theta \partial_X \theta \partial_Y \theta} \\
&\quad + \frac{
    \lambda_2^{-2} (1 + \tan^2 \theta) \frac{\partial^2 \theta}{\partial X \partial Y}
}{2 \lambda_2^{-2} (1 + \tan^2 \theta) \tan \theta \partial_X \theta \partial_Y \theta}.
\end{align*}

A direct computation with \eqref{assumm} and \eqref{new18} yields
$
\left\vert R_{XY} \right\vert \leq e^{-(\log \lambda_0)^{\hat{\epsilon}}}.
$
Therefore for \(X, Y\in  \{x, y\}\),
\begin{equation} \label{2d}
\frac{\partial^2 (\lambda_2^{-2} \tan \theta)}{\partial X \partial Y} \sim_{0, e^{-(\log \lambda_0)^{\hat{\epsilon}}}} 2 \lambda_2^{-2} (1 + \tan^2 \theta) \tan \theta \partial_X \theta \partial_Y \theta.
\end{equation}

From \eqref{1d} and \eqref{2d}, we obtain
\begin{equation} \label{1212d}
\frac{\frac{\partial^2 (\lambda_2^{-2} \tan \theta)}{\partial X \partial Y}}{\partial_X (\lambda_2^{-2} \tan \theta)} \sim_{0, e^{-(\log \lambda_0)^{\hat{\epsilon}}}} 2 \tan \theta \partial_Y \theta, \quad \frac{\partial_X (\lambda_2^{-2} \tan \theta)}{\lambda_2^{-2} \tan \theta} \sim_{0, e^{-(\log \lambda_0)^{\hat{\epsilon}}}} (\tan \theta + \cot \theta) \partial_X \theta\sim_{0, e^{-(\log \lambda_0)^{\hat{\epsilon}}}}  (\tan \theta) \partial_X \theta.
\end{equation}
\textbf{Proof of (ii-2b):}

In this case, we have
\begin{equation}\label{assumiib}
 e^{(\log \lambda_0)^{2\hat{\epsilon}}}\leq |\tan\theta| \leq \mu_2^{2} e^{-(\log \lambda_0)^{2\hat{\epsilon}}}.
\end{equation}

By \eqref{assumiib} and $\mu_1\geq \mu_2^{16}$, one notes that
\begin{equation} \label{lambda1212}
\left\vert \partial_X(\frac{\lambda_2^4}{\lambda_1^4})\right\vert +\left\vert \frac{\partial^2(\frac{\lambda_2^4}{\lambda_1^4})}{\partial X \partial Y}\right\vert \leq \mu_1^{-3} \ll \mu_1^{-2}e^{(\log \lambda_0)^{2\hat{\epsilon}}} \leq \lambda_2^{-2}|\tan \theta|,
\end{equation}
which, together with \eqref{1212d}, yields that
\begin{equation} \label{daO}
(1 + O(\lambda_2^4 \lambda_1^{-4})) \lambda_2^{-2} \tan \theta \sim_{2, \mu_1^{-1}} \lambda_2^{-2} \tan \theta.
\end{equation}

Next, we will show
\begin{equation} \label{arctan-eq}
\arctan [(1 + O(\lambda_2^4 \lambda_1^{-4})) \lambda_2^{-2} \tan \theta] \sim_{2,  \mu_1^{-1}} \arctan[\lambda_2^{-2} \tan \theta].
\end{equation}

\textbf{Proof of \eqref{arctan-eq}}:
By Proposition \ref{PROP8}, \eqref{daO} implies
\[
\arctan \left[(1 + O(\lambda_2^4 \lambda_1^{-4})) \lambda_2^{-2} \tan\theta\right] \sim_{1, \mu_1^{-1}} \arctan[\lambda_2^{-2} \tan\theta].
\]
Therefore, we only need to show that for \( X, Y \in\{x, y\} \) we have
\begin{equation} \label{part-XY}
\frac{\partial^2 \arctan \left[(1 + O(\lambda_2^4 \lambda_1^{-4})) \lambda_2^{-2} \tan\theta\right]}{\partial X \partial Y} \sim_{0, \mu_1^{-1}} \frac{\partial^2 \arctan[\lambda_2^{-2} \tan\theta]}{\partial X \partial Y}.
\end{equation}

Note that by \eqref{2d} and the fact $|\lambda_2^{-4} \tan^2 \theta|\leq e^{-(\log \lambda_0)^{\hat{\epsilon}}}$, we have
\[
\begin{aligned}
& (1 + \lambda_2^{-4} \tan^2 \theta) \frac{\partial^2 (\lambda_2^{-2} \tan\theta)}{\partial X \partial Y} \sim_{0, e^{-(\log \lambda_0)^{\hat{\epsilon}}}} 2 \lambda_2^{-2} (1 + \tan^2 \theta) \tan \theta \partial_X \theta \partial_Y \theta.
\end{aligned}
\]

By \eqref{1d}, we also have
\[
2 (\partial_X (\lambda_2^{-2} \tan \theta)) (\partial_Y (\lambda_2^{-2} \tan \theta)) (\lambda_2^{-2} \tan \theta) \sim_{0, e^{-(\log \lambda_0)^{\hat{\epsilon}}}} 2 \lambda_2^{-6} (1 + \tan^2 \theta)^2 \tan \theta \partial_X \theta \partial_Y \theta.
\]

Thus we can conclude
\[
\left\vert \frac{2 (\partial_X (\lambda_2^{-2} \tan \theta)) (\partial_Y (\lambda_2^{-2} \tan \theta)) (\lambda_2^{-2} \tan \theta)}{(1 + \lambda_2^{-4} \tan^2 \theta) \frac{\partial^2 (\lambda_2^{-2} \tan \theta)}{\partial X \partial Y}} \right\vert \leq C \lambda_2^{-4} (1 + \tan^2 \theta) \leq e^{-(\log \lambda_0)^{\hat{\epsilon}}}.
\]

Therefore
\begin{equation} \label{jinsihaofan}
\begin{aligned}
&(1 + \lambda_2^{-4} \tan^2 \theta) \frac{\partial^2 (\lambda_2^{-2} \tan \theta)}{\partial X \partial Y} - 2 (\partial_X (\lambda_2^{-2} \tan \theta)) (\partial_Y (\lambda_2^{-2} \tan \theta)) (\lambda_2^{-2} \tan \theta)\\& \sim_{0, e^{-(\log \lambda_0)^{\hat{\epsilon}}}} \frac{\partial^2 (\lambda_2^{-2} \tan \theta)}{\partial X \partial Y}.
\end{aligned}
\end{equation}

On the other hand, \eqref{1212d}, \eqref{assumiib} and \( \mu_1 > \mu_2^8, \) we have
\begin{equation} \label{jqxj}
\left\vert \frac{\partial^2 (\lambda_2^{-2} \tan \theta)}{\partial X \partial Y} \right\vert, \quad \left\vert \frac{\partial (\lambda_2^{-2} \tan \theta)}{\partial X} \right\vert, \quad \left\vert \partial_{\theta} (\lambda_2^{-2} \tan \theta) \right\vert \geq \lambda_2^{-1} \gg \lambda_1^{-1}.
\end{equation}

A direct calculation yields that for \( X \in\{ x, y\} \)
\begin{equation} \label{lamalam2}
\begin{aligned}
& \left\vert \partial_X \left[1 + \frac{\lambda_2^4}{\lambda_1^4}\right] \right\vert + \left\vert \partial^2_{XY} \left[1 + \frac{\lambda_2^4}{\lambda_1^4}\right] \right\vert \cdot \left( |\lambda_2^{-2} \tan \theta| + |\partial_x (\lambda_2^{-2} \tan \theta)| + |\partial_y (\lambda_2^{-2} \tan \theta)| + |\frac{\partial (\lambda_2^{-2} \tan \theta)}{\partial x \partial y}|\right) \\
& \ll \lambda_1^{-1}.
\end{aligned}
\end{equation}

Then, using the fact
\[
(\arctan f)'' = \frac{f''(1 + f^2) - 2 (f')^2 f}{(1 + f^2)^2}
\]
and applying \eqref{jinsihaofan}, \eqref{jqxj}, and \eqref{lamalam2}, we obtain \eqref{part-XY}, which implies \eqref{arctan-eq}.

\

Since
\[
(\tan f)' = (1 + \tan^2 f) f', \quad (\tan f)'' = (1 + \tan^2 f) f'' + 2 \tan f (1 + \tan^2 f) (f')^2,
\]
by a similar computation as above (note that when \( |f| < e^{-(\log \lambda_0)^{\hat{\epsilon}}}, \) we have \( \tan f \sim_{0, e^{-(\log \lambda_0)^{\hat{\epsilon}}}} f \)), we obtain
\[
(1 + O(\lambda_2^4 \lambda_1^{-4})) \tan \left\{\arctan \left[ 2 (1 + O(\lambda_2^4 \lambda_1^{-4})) \lambda_2^{-2} \tan \theta \right] \right\} \sim_{2, \mu_1^{-1}} \tan \left\{ 2 \arctan [\lambda_2^{-2} \tan \theta] \right\}.
\]

By the above estimate and \eqref{saaa}, we have
\begin{equation}\label{saaa*}
\tan(-2s(A)) \sim_{2, \mu_1^{-1}} \tan \left\{ 2 \arctan[\lambda_2^{-2} \tan \theta] \right\}.
\end{equation}

Next we will show \[
s(A) \ (\text{mod} \ \pi) \sim_{2, \mu_1^{-1}} \frac{\pi}{2} - \arctan[\lambda_2^{-2} \tan \theta] \ (\text{mod} \ \pi).
\]

By the help of Lemma \ref{PROP8}, \eqref{saaa*} implies for $j=0,1,$ \begin{equation}\label{saaa*'}
-2s(A) \ (\text{mod} \ \pi) \sim_{j, \mu_1^{-1}} \arctan[\lambda_2^{-2} \tan \theta] \ (\text{mod} \ \pi).
\end{equation}

Note that \eqref{1d} and \eqref{2d} imply for $X,Y\in~\{x,y\}$, it holds that $$
\frac{\partial^2 (\lambda_2^{-2} \tan \theta)}{\partial X \partial Y} \sim_{0, e^{-(\log \lambda_0)^{\hat{\epsilon}}}} 2 \lambda_2^{-2} \tan^3 \theta\partial_X \theta \partial_Y \theta,\quad
\frac{\partial (\lambda_2^{-2} \tan \theta)}{\partial X} \sim_{0, e^{-(\log \lambda_0)^{\hat{\epsilon}}}} \lambda_2^{-2} \tan^2 \theta\partial_X \theta.
$$

Then setting $Q:=\lambda_2^{-2} \tan \theta$ (note $|Q|<e^{-(\log \lambda_0)^{\hat{\epsilon}}}$) and taking $X=Y=x$ yield that
$$\frac{2(Q')^2Q}{(1+Q^2)Q''}\leq 5\frac{2 \lambda_2^{-6} \tan^5 \theta (\partial_x \theta)^2}{2\lambda_2^{-2} \tan^3 \theta (\partial_x \theta)^2}\leq 5(\lambda^{-2}\tan\theta)^2\leq 5Q^2\leq 5e^{-2(\log \lambda_0)^{\hat{\epsilon}}}.$$
Then
\begin{equation}\label{arctanqqq}\left\vert \frac{(\arctan Q)''(1+Q^2)}{Q''} -1\right\vert=\left\vert \frac{2(Q')^2Q}{(1+Q^2)Q''} \right\vert\leq 5e^{-2(\log \lambda_0)^{\hat{\epsilon}}}.\end{equation}

Hence it holds that
$$(\arctan Q)''\sim_{0,e^{-2(\log \lambda_0)^{\hat{\epsilon}}}} \frac{Q''}{1+Q^2}\sim_{0,e^{-2(\log \lambda_0)^{\hat{\epsilon}}}} Q'',$$
$$(\arctan Q)'= \frac{Q'}{1+Q^2}\sim_{0,e^{-2(\log \lambda_0)^{\hat{\epsilon}}}} Q'$$
and
$$(\arctan Q)\sim_{0,e^{-2(\log \lambda_0)^{\hat{\epsilon}}}} Q~(~\text{by}~|Q|<e^{-(\log \lambda_0)^{\hat{\epsilon}}}).$$

Thus
$$(\arctan Q)\sim_{2,e^{-(\log \lambda_0)^{\hat{\epsilon}}}} Q.$$
Similarly, using $(\tan f)'' = (1 + \tan^2 f) f'' + 2 \tan f (1 + \tan^2 f) (f')^2$ with a direct calculation and setting $U=\tan(2\arctan Q)$ imply that
$$U''\sim_{0,e^{-2(\log \lambda_0)^{\hat{\epsilon}}}} 2Q'';~U'\sim_{0,e^{-2(\log \lambda_0)^{\hat{\epsilon}}}} 2Q';~U\sim_{0,e^{-2(\log \lambda_0)^{\hat{\epsilon}}}} 2Q,$$
which implies
$U\sim_{2,e^{-2(\log \lambda_0)^{\hat{\epsilon}}}} 2Q.$
Thus by \eqref{saaa*} and setting $W:=[\tan(-2s(A))]$, we have
\begin{equation}\label{wsimu}W\sim_{2,\mu_1^{-1}} U \sim_{2,e^{-2(\log \lambda_0)^{\hat{\epsilon}}}}2Q.\end{equation}

%Hence $$\left\vert\frac{(\arctan W)(1+W^2)}{W''}-1\right\vert=\left\vert \frac{2(W')^2W}{(1+W^2)W''} \right\vert\leq C\left\vert \frac{(Q')^2Q}{(1+Q^2)Q''} \right\vert \leq Ce^{-2(\log \lambda_0)^{\hat{\epsilon}}}.$$
%Therefore, \eqref{wsimu} and $|Q|<Ce^{-(\log \lambda_0)^{\hat{\epsilon}}}$ imply
%$$(\arctan W)''\sim_{0,e^{-2(\log \lambda_0)^{\hat{\epsilon}}}} \frac{W''}{1+W^2}\sim_{0,e^{-2(\log \lambda_0)^{\hat{\epsilon}}}}  \frac{W''}{1+4Q^2} \sim_{0,e^{-2(\log \lambda_0)^{\hat{\epsilon}}}} W'',$$
%$$(\arctan W)'= \frac{W'}{1+W^2}\sim_{0,e^{-2(\log \lambda_0)^{\hat{\epsilon}}}}\frac{W'}{1+Q^2} \sim_{0,e^{-2(\log \lambda_0)^{\hat{\epsilon}}}} W'$$
%and
%$$(\arctan W)\sim_{0,e^{-2(\log \lambda_0)^{\hat{\epsilon}}}} W.$$

Note \eqref{arctanqqq} and \eqref{wsimu} implies
$$\begin{array}{ll}&\left\vert \frac{W''(1+W^2)-(W')^2W}{U''(1+U^2)-(U')^2U}-1\right\vert=\left\vert \frac{W''(1+W^2)-(W')^2W-(U''(1+U^2)-(U')^2U)}{U''(1+U^2)-(U')^2U}\right\vert\\
\\&\leq \left\vert \frac{|(\mu_1^{-1})(U''(1+U^2)-(U')^2U)|+|(\mu_1^{-1})(U')^2U|}{U''(1+U^2)-(U')^2U}\right\vert\leq \mu_1^{-1}+\frac{|(\mu_1^{-1})(U')^2 U|}{|U''(1+U^2)-(U')^2U|}\leq 2\mu_1^{-1}(~\text{by}~\eqref{arctanqqq}).\end{array}$$

This implies
$$(1+W^2)^2(\arctan W)''\sim_{0,\mu_1^{-1}} (1+U^2)^2(\arctan U)''.$$

Since \eqref{wsimu} implies $(1+W^2)\sim_{0,\mu_1^{-1}}(1+U^2),$ we immediately get
$(\arctan W)''\sim_{0,\mu_1^{-1}} (\arctan U)''.$ Combining this with \eqref{saaa*'} we obtain \eqref{saaa*} as desired.

Finally, it is easy to see that when \( \theta = 0, \) we have \( s(A) = \frac{\pi}{2}. \) Therefore
\[
s(A) \ (\text{mod} \ \pi) \sim_{2, \mu_1^{-1}} \frac{\pi}{2} - \arctan[\lambda_2^{-2} \tan \theta] \ (\text{mod} \ \pi) = \arctan[\lambda_2^{2} \cot \theta] \ (\text{mod} \ \pi).
\]

\textbf{Proof of (ii-2c):}

Since $|\tan \theta|\geq \lambda_2^2e^{-(\log \lambda_0)^{\hat{\epsilon}}}>e^{-(\log \lambda_0)^{\hat{\epsilon}}},$
\eqref{daO} also holds true. Therefore, by \eqref{1d}, we have $$\begin{array}{ll}&\left\vert\partial_X [(1+O(\lambda_2^4\lambda_1^{-4}))\lambda_2^{-2}\tan\theta]\right\vert\sim_{0,e^{-(\log \lambda_0)^{\hat{\epsilon}}}}\left\vert\partial_X (\lambda_2^{-2}\tan\theta)\right\vert\sim_{0,e^{-(\log \lambda_0)^{\hat{\epsilon}}}} \left\vert\lambda_2^{-2}(1+\tan^2\theta)\partial_X\theta\right\vert
\\
\\&>\left\vert\lambda_2^{-2}(1+\lambda^4e^{-2(\log \lambda_0)^{\hat{\epsilon}}})\partial_X\theta\right\vert
>\lambda_2^{-2}(1+\lambda_2^4e^{-2(\log \lambda_0)^{\hat{\epsilon}}})e^{-(\log \lambda_0)^{\hat{\epsilon}}}(~\text{by}~\eqref{assumm})\geq \lambda_2^2e^{-4(\log \lambda_0)^{\hat{\epsilon}}}\geq \mu^{\frac{3}{2}}_2.\end{array}$$

Therefore, by denoting $\delta:=O(\lambda_2^4\lambda_1^{-4})$, we obtain $$\begin{array}{ll}
&\left\vert \partial_X s(A) \right\vert = \left\vert \partial_X \arctan \left\{ (1 + \delta) \tan \left[ 2 (1 + \delta) \lambda_2^{-2} \tan \theta \right] \right\} \right\vert
= \left\vert \frac{\partial_X \left\{ (1 + \delta) \tan \left[ 2 (1 + \delta) \lambda_2^{-2} \tan \theta \right] \right\}}{1 + \left\{ (1 + \delta) \tan \left[ 2 (1 + \delta) \lambda_2^{-2} \tan \theta \right] \right\}^2} \right\vert
\\
\\
&= \left\vert \frac{\partial_X (1 + \delta) \tan \left[ 2 (1 + \delta) \lambda_2^{-2} \tan \theta  \right] + (1 + \delta) \partial_X \left\{ \tan \left[ 2 (1 + \delta) \lambda_2^{-2} \tan \theta \right] \right\}}{1 + \left\{ (1 + \delta) \tan \left[ 2 (1 + \delta) \lambda_2^{-2} \tan \theta \right] \right\}^2} \right\vert
\\
\\
&= \left\vert \frac{\partial_X (1 + \delta) \tan \left[ 2 (1 + \delta) \lambda_2^{-2} \tan \theta \right]  + (1 + \delta) \left( 1 + \tan^2 \left[ 2 (1 + \delta) \lambda_2^{-2} \tan \theta \right] \right) \partial_X \left[ 2 (1 + \delta) \lambda_2^{-2} \tan \theta \right]}{1 + \left\{ (1 + \delta) \tan \left[ 2 (1 + \delta) \lambda_2^{-2} \tan \theta \right] \right\}^2} \right\vert
\\
\\
&\geq \left\vert \frac{(1 + \delta) \left( 1 + \tan^2 \left[ 2 (1 + \delta) \lambda_2^{-2} \tan \theta \right] \right) \partial_X \left[ 2 (1 + \delta) \lambda_2^{-2} \tan \theta \right]}{1 + \left\{ (1 + \delta) \tan \left[ 2 (1 + \delta) \lambda_2^{-2} \tan \theta \right] \right\}^2} \right\vert- \left\vert \frac{[\partial_X (1 + \delta)] \left\{ \tan \left[ 2 (1 + \delta) \lambda_2^{-2} \tan \theta \right] \right\}}{ 2\left\{ (1 + \delta) \tan \left[ 2 (1 + \delta) \lambda_2^{-2} \tan \theta \right] \right\}} \right\vert
\geq \mu^{\frac{3}{2}}_2 - \mu_1^{-3} (~\text{by}~\eqref{lambda1212})> \mu_2.
\end{array}$$

\textbf{Proof of (iii-3a):}

In this case, we have \(\mu_2 \geq \mu_1^{16}\). Note that
\begin{equation}\label{saaaa}
\tan(2s(A))= -\frac{2\tan \theta (1 - \lambda_1^{-4})}{(\lambda_2^2 \lambda_1^{-4} - \lambda_2^{-2}) \tan^2 \theta + \lambda_2^2 - \lambda_1^{-4} \lambda_2^{-2}}= -\frac{2 \tan \theta \frac{1 - \lambda_1^{-4}}{\lambda_2^2 - \lambda_1^{-4} \lambda_2^{-2}}}{\frac{(\lambda_2^2 \lambda_1^{-4} - \lambda_2^{-2})}{\lambda_2^2 - \lambda_1^{-4} \lambda_2^{-2}} \tan^2 \theta + 1}= -\frac{2 (1 + O(\lambda_1^{-4})) \lambda_2^{-2} \tan \theta}{\lambda_1^{-4} (1 + O(\lambda_1^{-4})) \tan^2 \theta + 1}.
\end{equation}
Therefore,
$$
\cot(2s(A)) = -\frac{1}{2} \left[ (1 + O(\lambda_1^{-4})) \lambda_1^{-4} \lambda_2^2 \tan \theta + (1 + O(\lambda_1^{-4})) \lambda_2^2 \cot \theta \right].
$$
Clearly, from
\[
\left\vert \frac{1}{2} \left[ (1 + O(\lambda_1^{-4})) \lambda_1^{-4} \lambda_2^2 \tan \theta + (1 + O(\lambda_1^{-4})) \lambda_2^2 \cot \theta \right] \right\vert \geq  \lambda_2^2 \lambda_1^{-2} \geq \mu_2^{\frac{3}{2}},
\]
we have
\begin{equation}\label{sapi2}
\left| \frac{\pi}{2} - s(A) \ (\text{mod}\ \pi) \right| \leq \mu_2^{-\frac{3}{2}}.
\end{equation}
Then \eqref{saaaa} yields
\begin{equation}\label{saaaaaaa}
\begin{array}{ll}
& \left| \partial_X (s(A)) \right| \leq \left| \partial_X [\tan (2s(A))] \right|= 2 (1 + \tan^2(2s(A))) \left| \partial_X (s(A)) \right| \\
\\
& = \left| -\frac{2 \partial_X [(1 + O(\lambda_1^{-4})) \lambda_2^{-2}] \tan \theta + 2 (1 + O(\lambda_1^{-4})) \lambda_2^{-2} (1 + \tan^2 \theta) \partial_X \theta}{\lambda_1^{-4} (1 + O(\lambda_1^{-4})) \tan^2 \theta + 1} \right.  + \left. \frac{2 (1 + O(\lambda_1^{-4})) \lambda_2^{-2} \tan \theta \partial_X [ \lambda_1^{-4} (1 + O(\lambda_1^{-4})) \tan^2 \theta]}{(\lambda_1^{-4} (1 + O(\lambda_1^{-4})) \tan^2 \theta + 1)^2} \right| \\
\\
& \leq \left| \frac{2 \partial_X [(1 + O(\lambda_1^{-4})) \lambda_2^{-2}] \tan \theta}{2 \lambda_1^{-2} (1 + O(\lambda_1^{-4})) \tan \theta} \right| + \left| \frac{2 (1 + O(\lambda_1^{-4})) \lambda_2^{-2} (1 + \tan^2 \theta) \partial_X \theta}{\lambda_1^{-4} (O(1 + \lambda_1^{-4})) (1 + \tan^2 \theta)} \right| + \left| \frac{2 (1 + O(\lambda_1^{-4})) \lambda_2^{-2} \tan \theta \partial_X [ \lambda_1^{-4} (1 + O(\lambda_1^{-4})) \tan^2 \theta]}{(\lambda_1^{-4} (1 + O(\lambda_1^{-4})) \tan^2 \theta + 1)^2} \right| \\
\\
& \leq \lambda_1^2 \lambda_2^{-2} e^{(\log \lambda_0)^{\hat{\epsilon}}} + \lambda_1^4 \lambda_2^{-2} e^{(\log \lambda_0)^{\hat{\epsilon}}}+ \left| \frac{2 \lambda_2^{-2} \lambda_1^{-4} \tan^3 \theta e^{(\log \lambda_0)^{\hat{\epsilon}}} + 2 \lambda_2^{-2} \lambda_1^{-4} (2 \tan^2 \theta (1 + \tan^2 \theta)) e^{(\log \lambda_0)^{\hat{\epsilon}}}}{1 + 2 \lambda_1^{-4} (1 + O(\lambda_1^{-4})) \tan^2 \theta + [\lambda_1^{-4} (1 + O(\lambda_1^{-4})) \tan^2 \theta]^2} \right| \\
\\
& \leq \lambda_1^2 \lambda_2^{-2} e^{(\log \lambda_0)^{\hat{\epsilon}}} + \lambda_1^4 \lambda_2^{-2} e^{(\log \lambda_0)^{\hat{\epsilon}}} + \frac{4 \lambda_2^{-2} \lambda_1^{-4} \tan^2 \theta e^{(\log \lambda_0)^{\hat{\epsilon}}}}{2 \lambda_1^{-4} (1 + O(\lambda_1^{-4})) \tan^2 \theta} + \left| \frac{2 \lambda_2^{-2} \lambda_1^{-4} (2 \tan^4 \theta) e^{(\log \lambda_0)^{\hat{\epsilon}}}}{[\lambda_1^{-4} (1 + O(\lambda_1^{-4})) \tan^2 \theta]^2} \right|  \\
\\
&\quad+\left| \frac{2 \lambda_2^{-2} \lambda_1^{-4} ( |\tan^3 \theta|) e^{(\log \lambda_0)^{\hat{\epsilon}}}}{2 \lambda_1^{-4} (1 + O(\lambda_1^{-4})) \tan^2 \theta + [\lambda_1^{-4} (1 + O(\lambda_1^{-4})) \tan^2 \theta]^2} \right| \\
\\
& \leq \lambda_1^2 \lambda_2^{-2} e^{(\log \lambda_0)^{\hat{\epsilon}}} + \lambda_1^4 \lambda_2^{-2} e^{(\log \lambda_0)^{\hat{\epsilon}}} + \lambda_2^{-2} e^{(\log \lambda_0)^{\hat{\epsilon}}}  + \lambda_2^{-2} \lambda_1^{4} e^{(\log \lambda_0)^{\hat{\epsilon}}} + \lambda_2^{-2} \lambda_1^2 e^{(\log \lambda_0)^{\hat{\epsilon}}} \leq \mu_2^{-\frac{3}{2}}.
\end{array}
\end{equation}

By \eqref{sapi2} and \eqref{saaaaaaa}, \begin{equation}\label{2saxsa}|\tan (2s(A))|+|\partial_X s(A)|<\mu_2^{-\frac{3}{2}},~X\in\{x,y\}.\end{equation}

Note $\left| \frac{\partial^2 \tan (2s(A))}{\partial X \partial Y} \right|=\left| 2 (1 + \tan^2(2s(A))) \frac{\partial^2 (s(A))}{\partial X \partial Y} + 8 \tan (2s(A)) (1 + \tan^2(2s(A))) (\partial_X s(A) \partial_Y s(A)) \right|.$ Then \eqref{2saxsa}
implies
$$
\begin{array}{ll}
& 2\left| \frac{\partial^2 s(A)}{\partial X \partial Y} \right| - 1000\mu_2^{-3} \leq \frac{1}{(1 + \tan^2(2s(A)))}\left| \frac{\partial^2 \tan (2s(A))}{\partial X \partial Y} \right| \leq \left\vert \frac{\partial^2 \left(-\frac{2 (1 + O(\lambda_1^{-4})) \lambda_2^{-2} \tan \theta}{\lambda_1^{-4} (1 + O(\lambda_1^{-4})) \tan^2 \theta + 1}\right)}{\partial X \partial Y} \right\vert\leq \sum_{i=1}^{16} \Upsilon_i,
\end{array}
$$
where $\Upsilon_i$, with the notation $\Upsilon_0={\lambda_1^{-4} (1 + O(\lambda_1^{-4})) \tan^2 \theta + 1}$, satisfies

\[
\begin{array}{ll}
\Upsilon_0\Upsilon_1 &= \left| {2 \partial^2_{XY} [(1 + O(\lambda_
1^{-4})) \lambda_2^{-2}] \tan \theta} \right| + \left| {2 \partial_X [(1 + O(\lambda_1^{-4})) \lambda_2^{-2}] (1 + \tan^2 \theta) \partial_Y \theta} \right|, \\
\Upsilon_0\Upsilon_2 &= \left| {2 \partial_Y [(1 + O(\lambda_1^{-4})) \lambda_2^{-2}] (1 + \tan^2 \theta) \partial_X \theta} \right| + \left|{2 [(1 + O(\lambda_1^{-4})) \lambda_2^{-2}] [2 \tan \theta (1 + \tan^2 \theta)] \partial_Y \theta \partial_X \theta} \right|\cdot \Upsilon_0^{-1}, \\
\Upsilon_0^{2}\Upsilon_3 &= \left| {2 [\lambda_1^{-4} (1 + O(\lambda_1^{-4})) \tan^2 \theta] [(1 + O(\lambda_1^{-4})) \lambda_2^{-2}] 2 \tan \theta \partial_Y \theta \partial_X \theta} \right|, \\
\Upsilon_0\Upsilon_4 &= \left| {2 [(1 + O(\lambda_1^{-4})) \lambda_2^{-2}] (1 + \tan^2 \theta) \partial^2_{XY} \theta} \right|, \\
\Upsilon_0^{2}\Upsilon_5 &= \left|{2 \partial_X [(1 + O(\lambda_1^{-4})) \lambda_2^{-2}] \tan \theta \partial_Y [ \lambda_1^{-4} (1 + O(\lambda_1^{-4})) \tan^2 \theta]} \right|, \\
\Upsilon_0^{2}\Upsilon_6 &= \left| {2 [(1 + O(\lambda_1^{-4})) \lambda_2^{-2}] (1 + \tan^2 \theta) (\partial_X \theta) \partial_Y [ \lambda_1^{-4} (1 + O(\lambda_1^{-4}))] \tan^2 \theta}\right|, \\
\Upsilon_0^{3}\Upsilon_7 &= \left| {2 [(1 + O(\lambda_1^{-4})) \lambda_2^{-2}] (1 + \tan^2 \theta) (\partial_X \theta) \lambda_1^{-4} (1 + O(\lambda_1^{-4})) [2 \tan \theta (1 + \tan^2 \theta) \partial_Y \theta]} \right|, \\
\Upsilon_0^{3}\Upsilon_8 &= \left| {2 [\lambda_1^{-4} (1 + O(\lambda_1^{-4})) \tan^2 \theta] [(1 + O(\lambda_1^{-4})) \lambda_2^{-2}] (1 + \tan^2 \theta) (\partial_X \theta) \lambda_1^{-4} (1 + O(\lambda_1^{-4})) [2 \tan \theta \partial_Y \theta]}\right|, \\
\Upsilon_0^{2}\Upsilon_9 &= \left| {2 (1 + O(\lambda_1^{-4})) \lambda_2^{-2} \tan \theta \partial_X [ \lambda_1^{-4} (1 + O(\lambda_1^{-4}))] [2 \tan \theta (1 + \tan^2 \theta) \partial_Y \theta]} \right|, \\
\Upsilon_0^{2}\Upsilon_{10} &= \left| {2 (1 + O(\lambda_1^{-4})) \lambda_2^{-2} \tan \theta \partial_Y [ \lambda_1^{-4} (1 + O(\lambda_1^{-4}))] 2 \tan \theta [1 + \tan^2 \theta] (\partial_X \theta)}\right|, \\
\Upsilon_0^{3}\Upsilon_{11} &= \left| {2 (1 + O(\lambda_1^{-4})) \lambda_2^{-2} \tan \theta [\lambda_1^{-4} (1 + O(\lambda_1^{-4}))] [\tan \theta (1 + \tan^2 \theta) \partial^2_{XY} \theta + 2 [1 + 4 \tan^2 \theta + 3 \tan^3 \theta] (\partial_X \theta) (\partial_Y \theta)]} \right|, \\
\Upsilon_0^{3}\Upsilon_{12} &= \left| 2 [\lambda_1^{-4} (1 + O(\lambda_1^{-4})) \tan^2 \theta] (1 + O(\lambda_1^{-4})) \lambda_2^{-2} \tan \theta [\lambda_1^{-4} (1 + O(\lambda_1^{-4}))] \right.\\
&\left.\quad\cdot[\tan \theta (1 + \tan^2 \theta) \partial^2_{XY} \theta + 2 [1 + 4 \tan^2 \theta] (\partial_X \theta) (\partial_Y \theta)]\right|, \\
\Upsilon_0^{2}\Upsilon_{13} &= \left| {2 (1 + O(\lambda_1^{-4})) \lambda_2^{-2} \tan \theta \partial^2_{XY} [ \lambda_1^{-4} (1 + O(\lambda_1^{-4}))] \tan^2 \theta} \right|, \\
\Upsilon_0^{3}\Upsilon_{14} &= \left| {4 (1 + O(\lambda_1^{-4})) \lambda_2^{-2} \tan \theta \partial_X [ \lambda_1^{-4} (1 + O(\lambda_1^{-4})) \tan^2 \theta] \partial_Y [ \lambda_1^{-4} (1 + O(\lambda_1^{-4}))] \tan^2 \theta} \right|, \\
\Upsilon_0^{3}\Upsilon_{15} &= \left| {4 (1 + O(\lambda_1^{-4})) \lambda_2^{-2} \tan \theta \partial_X [ \lambda_1^{-4} (1 + O(\lambda_1^{-4}))] \tan^2 \theta \lambda_1^{-4} (1 + O(\lambda_1^{-4})) [2 \tan \theta (1 + \tan^2 \theta) \partial_Y \theta]}\right|, \\
\Upsilon_0^{3}\Upsilon_{16} &= \left| {4 (1 + O(\lambda_1^{-4})) \lambda_2^{-2} \tan \theta [\lambda_1^{-4} (1 + O(\lambda_1^{-4}))]^2 [4 \tan^2 \theta (1 + 2 \tan^2 \theta) \partial_X \theta \partial_Y \theta]} \right|.
\end{array}
\]

By a direct computation, we have
\begin{equation}\label{sa222}\left| \frac{\partial^2 (s(A))}{\partial X \partial Y} \right| \leq \mu_2^{-\frac{3}{2}}.\end{equation}

By \eqref{sapi2}, \eqref{saaaaaaa} and \eqref{sa222}, we obtain
$$\left\| \frac{\pi}{2} - s(A) \right\|_{C^2} < \mu_2^{-\frac{3}{2}}.$$

\textbf{Proof of (i):}

Note in this case, for $i=1,2,$ $\|\lambda_i^{-2}\tan\theta\|_{C^2}<e^{(\log \lambda_0)^{2\hat{\epsilon}}}\lambda_i^{-2}\ll 1.$ Therefore %\begin{equation}\label{saaaaab}
$$\begin{array}{ll}&\tan(2s(A))=-\frac{2\tan \theta(1-\lambda_1^{-4})}{(\lambda_2^2\lambda_1^{-4}-\lambda_2^{-2})\tan^2\theta+\lambda_2^2-\lambda_1^{-4}\lambda_2^{-2}}
=-\frac{2(1+O(\lambda_1^{-4}))\lambda_2^{-2}\tan \theta }{(\lambda_1^{-4}-\lambda_2^{-4})\tan^2\theta+(1+O(\lambda_1^{-4}))}
\\
\\&=2(1+O(\lambda_1^{-4}))\lambda_2^{-2}\tan \theta[1+O[(\lambda_1^{-4}-\lambda_2^{-4})\tan^2\theta]],\end{array}$$
%\end{equation}

which implies that
$\|\tan(2s(A))\|_{C^2}\leq e^{(\log \lambda_0)^{3\hat{\epsilon}}}\mu_2^{-2}.$

Then \begin{align*}&|2 s(A)\ (mod\  \pi)|+|\partial_X 2 s(A)|<|\tan(2s(A))|+|2(1+\tan^2 (2s(A)))\partial_X s(A)|\\&\leq \|\tan(2s(A))\|_{C^1}\leq e^{(\log \lambda_0)^{3\hat{\epsilon}}}\lambda_2^{-2}\end{align*}
and
$$\begin{array}{ll}&e^{(\log \lambda_0)^{2\hat{\epsilon}}}\lambda_2^{-2}> \|\tan(2s(A))\|_{C^2}
 \geq |\frac{\partial^2[\tan (2s(A))]}{\partial X \partial Y}|
 \\&\geq |2(1+\tan^2(2s(A)))\frac{\partial^2 (s(A))}{\partial X\partial Y}|-|8\tan(2s(A))(1+\tan^2(2s(A)))(\partial_X s(A)\partial_Y s(A))|
\\&\geq |2(1+\tan^2(2s(A)))\frac{\partial^2 (s(A))}{\partial X\partial Y}|-e^{(\log \lambda_0)^{3\hat{\epsilon}}}\lambda_2^{-2}.\end{array}$$

Hence $|\frac{\partial^2 (s(A))}{\partial X\partial Y}|<e^{(\log \lambda_0)^{3\hat{\epsilon}}}\lambda_2^{-2}.$
Therefore, $\|\frac{\pi}{2}-s(A)\|_{C^2}<e^{(\log \lambda_0)^{3\hat{\epsilon}}}\lambda_2^{-2}.$
\hfill\qed\end{proof}
%\begin{remark} If we assume that $n\in \Z_+$ and $\lambda_1(x_1,\cdots,x_n),\lambda_2(x_1,\cdots,x_n),\theta(x_1,\cdots,x_n)\in C^2(\Omega,\R),$ where $\Omega\subset \left(\R/\Z\right)^n$ then by the help of (2),(3), it holds that
%$$\left|{\partial_{x_i} \log \|A(x_1,\cdots,x_n)\|}\right|\leq \left\vert\lambda_1^{-1}\cdot{\partial_{x_i} \lambda_1}\right\vert+\left\vert\lambda_2^{-1}\cdot{\partial_{x_i} \lambda_2}\right\vert+\left\vert\|A\|^{-1}\cdot \partial_{\theta}\|A\|\cdot \partial_{x_i}\theta\right\vert,~\forall~i=1,2,\cdots,n.$$
%\end{remark}

%%%%%%%%%%%%%%%%%%%%%%%%%%%%%%%%%%%%%%%%%%%%%%%%%%%%%%%%%%%%%%%%%%%%%%%%%%%%%%%%%%%%%%%%%%%%%%%%%%%%%%%%%%%%%%%%%%%%%%%
%%%%%%%%%%%%%%%%%%%%%%%%%%%%%%%%%%%%%%%%%%%%%%%%%%%%%%%%%%%%%%%%%%%%%%%%%%%%%%%%%%%%%%%%%%%%%%%%%%%%%%%%%%%%%%%%%%%%%%%%%%

%%%%%%%%%%%%%%%%%%%%%%%%%%%%%%%%%%%%%%%%%%%%%%%%%%%%%%%%%%%%%%%%%%%%%%%%%%%%%%%%%%%%%%%%%%%%%%%%%%%%%%%%%%%%%%%%%%%%%%
%%%%%%%%%%%%%%%%%%%%%%%%%%%%%%%%%%%%%%%%%%%%%%%%%%%%%%%%%%%%%%%%%%%%%%%%%%%%%%%%%%%%%%%%%%%%%%%%%%%%%%%%%%%%%%%%%%%%%%
\begin{lemma}\label{lemma9*}Let $\lambda_0,\ \mu_1,\ \mu_2\in \R$, $\lambda_1(x,y),\lambda_2(x,y),\theta(x,y)\in C^2(I\times J)$ and $A(x,y)$ be as in Lemma \ref{lemma8*}.

Then the following hold true.

\begin{enumerate}

\item[i:] If $|\tan\theta|<e^{(\log \lambda_0)^{2\hat{\epsilon}}},$ then \begin{equation}\label{lem10-i1}\|A(x,y)\|\sim_{0,\lambda_0^{-1}}\mu_1\mu_2|\cos\theta|;\end{equation}
\begin{equation}\label{lem10-i2}\frac{1}{\|A\|}(\partial_X\|A\|)=\frac{\partial_X (\lambda_1\lambda_2\cos\theta)}{\lambda_1\lambda_2\cos\theta}+o(\lambda_0^{-\frac{3}{2}});\end{equation}
\begin{equation}\label{lem10-i3}\frac{1}{\|A\|}\partial^2_{XY}\|A\|=\frac{\partial^2_{XY} (\lambda_1\lambda_2\cos\theta)}{\lambda_1\lambda_2\cos\theta}+o(\lambda_0^{-\frac{3}{2}}).\end{equation}

\item[ii:] If $\mu_1\geq \mu_2^8$, then

 $$\|A(x,y)\|\geq (1-\mu_2^{-1})\mu_1\mu^{-1}_2;$$
$$\left\vert\frac{1}{\|A\|}(\partial_X\|A\|)\right\vert+\left\vert\frac{1}{\|A\|}\frac{\partial^2\|A\|}{\partial_X\partial_Y}\right\vert\vert\leq \lambda_2^4e^{2(\log \lambda_0)^{\hat{\epsilon}}}.$$

\item[iii:] If $\mu_2\geq \mu_1^8$, then

 $$\|A(x,y)\|\geq (1-\mu_2^{-1})\mu_2\mu^{-1}_1;$$
$$\left\vert\frac{1}{\|A\|}\partial_X\|A\|\right\vert+\left\vert\frac{1}{\|A\|}\frac{\partial^2\|A\|}{\partial_X\partial_Y}\right\vert\vert\leq \lambda_1^4e^{2(\log \lambda_0)^{\hat{\epsilon}}}.$$

\end{enumerate}
\end{lemma}

\begin{proof}

\textbf{The proof of (i):}
We start from (\ref{lemma8,1}), that is,
\begin{equation}\label{6'}
\|A\|^2 + \|A\|^{-2} = (\lambda_1^2 \lambda_2^2 + \lambda_1^{-2} \lambda_2^{-2}) \cos^2 \theta + (\lambda_1^2 \lambda_2^{-2} + \lambda_1^{-2} \lambda_2^2) \sin^2 \theta.
\end{equation}

Hence
\[
\|A\|^2 = \frac{\lambda_1^2 \lambda_2^2 \cos^2 \theta \left( 1 + \lambda_1^{-4} \lambda_2^{-4} + \lambda_2^{-4} \tan^2 \theta + \lambda_1^{-4} \tan^2 \theta \right)}{1 + \|A\|^{-4}}.
\]

Moreover, in the case \( |\tan \theta| < e^{(\log \lambda_0)^{2\hat{\epsilon}}} \),
\[
\begin{array}{ll}
&2\|A\|^2 > (\lambda_1^2 \lambda_2^2 + \lambda_1^{-2} \lambda_2^{-2}) \cos^2 \theta + (\lambda_1^2 \lambda_2^{-2} + \lambda_1^{-2} \lambda_2^2) \sin^2 \theta \\
& = \lambda_1^2 \lambda_2^2 \cos^2 \theta \left( 1 + \lambda_1^{-4} \lambda_2^{-4} + \lambda_2^{-4} \tan^2 \theta + \lambda_1^{-4} \tan^2 \theta \right)\geq \frac{1}{2}\lambda_1^2 \lambda_2^2 \cos^2 \theta.
\end{array}
\]

Hence $\|A\|\geq \frac{1}{4}\lambda_1 \lambda_2 |\cos \theta|.$ Therefore \eqref{6'} implies
$$\|A\|^2(1+\|A\|^{-4})= \lambda_1^2 \lambda_2^2 \cos^2 \theta \left( 1 + \lambda_1^{-4} \lambda_2^{-4} + \lambda_2^{-4} \tan^2 \theta + \lambda_1^{-4} \tan^2 \theta \right).$$
Then
\[
\left\vert \frac{\|A\|}{\lambda_1 \lambda_2 |\cos \theta|} - 1 \right\vert \leq \lambda_0^{-1}.
\]
Hence \begin{equation}\label{Asiml1l2}\|A(x, y)\| \sim_{0, \lambda_0^{-1}} \lambda_1 \lambda_2 |\cos \theta|.\end{equation}

On the other hand,
\begin{equation}\label{28-}
\begin{array}{ll}
2 \|A\| (\partial_X \|A\|) - 2 \|A\|^{-3} (\partial_X \|A\|) & = \partial_X \left[ (\lambda_1^2 \lambda_2^2 + \lambda_1^{-2} \lambda_2^{-2}) \cos^2 \theta + (\lambda_1^2 \lambda_2^{-2} + \lambda_1^{-2} \lambda_2^2) \sin^2 \theta \right] \\
& = \partial_X \left[ \lambda_1^2 \lambda_2^2 \cos^2 \theta \left( 1 + \lambda_1^{-4} \lambda_2^{-4} + \lambda_2^{-4} \tan^2 \theta + \lambda_1^{-4} \tan^2 \theta \right) \right].
\end{array}
\end{equation}

Therefore by \eqref{Asiml1l2},
\vskip -0.4cm\begin{equation}\label{yijiedd}
\begin{array}{ll}
&\frac{1}{\|A\|} (\partial_X \|A\|) = \left( \frac{1}{1 - \|A\|^{-4}} \right) \frac{1}{2 \|A\|^2} \left[ \partial_X \left[ \lambda_1^2 \lambda_2^2 \cos^2 \theta \right] \left( 1 + \lambda_1^{-4} \lambda_2^{-4} + \lambda_2^{-4} \tan^2 \theta + \lambda_1^{-4} \tan^2 \theta \right) \right] \\
& \quad + \left( \frac{1}{1 - \|A\|^{-4}} \right) \frac{1}{2 \|A\|^2} \left[ \left[ \lambda_1^2 \lambda_2^2 \cos^2 \theta \right] \partial_X \left( 1 + \lambda_1^{-4} \lambda_2^{-4} + \lambda_2^{-4} \tan^2 \theta + \lambda_1^{-4} \tan^2 \theta \right) \right] \\
& = (1 + o(\lambda_0^{-1})) \left\vert \frac{1}{2 \lambda_1^2 \lambda_2^2 \cos^2 \theta} \left[ \partial_X \left[ \lambda_1^2 \lambda_2^2 \cos^2 \theta \right] \left( 1 + \lambda_1^{-4} \lambda_2^{-4} + \lambda_2^{-4} \tan^2 \theta + \lambda_1^{-4} \tan^2 \theta \right) \right] \right\vert \\
& \quad + (1 + o(\lambda_0^{-1})) \left\vert \frac{1}{2 \lambda_1^2 \lambda_2^2 \cos^2 \theta} \left[ \left[ \lambda_1^2 \lambda_2^2 \cos^2 \theta \right] \partial_X \left( 1 + \lambda_1^{-4} \lambda_2^{-4} + \lambda_2^{-4} \tan^2 \theta + \lambda_1^{-4} \tan^2 \theta \right) \right] \right\vert \\
& = \left( \frac{1}{\lambda_1} \partial_X \lambda_1 + \frac{1}{\lambda_2} \partial_X \lambda_2 - \tan \theta \partial_X \theta \right) + o(\lambda_0^{-\frac{3}{2}}) = \frac{\partial_X (\lambda_1 \lambda_2 \cos \theta)}{\lambda_1 \lambda_2 \cos \theta} + o(\lambda_0^{-\frac{3}{2}}).
\end{array}
\end{equation}
\vskip -0.3cm
\vskip 0.05cm
Furthermore
\begin{equation}\label{a2a-2}
\begin{array}{ll}
(1 - o(\lambda_0^{-6})) \|A\|^{-1} \partial^2_{XY} \|A\| + (1 + o(\lambda_0^{-6})) \left[ \frac{1}{\|A\|} (\partial_Y \|A\|) \right] \left[ \frac{1}{\|A\|} (\partial_X \|A\|) \right] \\
= (1 - \|A\|^{-4}) \|A\|^{-1} \partial^2_{XY} \|A\| + (1 + 3 \|A\|^{-4}) \left[ \frac{1}{\|A\|} (\partial_Y \|A\|) \right] \left[ \frac{1}{\|A\|} (\partial_X \|A\|) \right] \\
= \|A\|^{-2} (\partial_Y \|A\|) (\partial_X \|A\|) + 3 \|A\|^{-6} (\partial_X \|A\|) (\partial_Y \|A\|) + \|A\|^{-1} \partial^2_{XY} \|A\| - \|A\|^{-5} \partial^2_{XY} \|A\| \\
= \frac{1}{2} \|A\|^{-2} \partial^2_{XY} \left[ \|A\|^2 + \|A\|^{-2} \right]= \frac{1}{2} \|A\|^{-2} \partial^2_{XY} \left[ \lambda_1^2 \lambda_2^2 \cos^2 \theta \left( 1 + \lambda_1^{-4} \lambda_2^{-4} + \lambda_2^{-4} \tan^2 \theta + \lambda_1^{-4} \tan^2 \theta \right) \right] \\
= \frac{1}{\lambda_1^2} (\partial_X \lambda_1) (\partial_Y \lambda_1) + \frac{1}{\lambda_2^2} (\partial_X \lambda_2) (\partial_Y \lambda_2) + \frac{2}{\lambda_1 \lambda_2} (\partial_X \lambda_1) (\partial_Y \lambda_2)
 + \frac{2}{\lambda_1 \lambda_2} (\partial_X \lambda_2) (\partial_Y \lambda_1)\\
 \quad - \tan \theta \partial^2_{XY}

 \theta - \partial_X \theta \partial_Y \theta
+ \frac{1}{\lambda_1} (\partial^2_{XY} \lambda_1) + \frac{1}{\lambda_2} (\partial^2_{XY} \lambda_2)
 + \frac{-2 \tan \theta}{\lambda_1} (\partial_X \lambda_1) (\partial_Y \theta) + \frac{-2 \tan \theta}{\lambda_2} (\partial_X \lambda_2) (\partial_Y \theta)\\
\quad+ \frac{-2 \tan \theta}{\lambda_1} (\partial_Y \lambda_1) (\partial_X \theta) + \frac{-2 \tan \theta}{\lambda_2} (\partial_Y \lambda_2) (\partial_X \theta)
+ o(\lambda_0^{-\frac{3}{2}})
:= \mathcal{P}_1.
\end{array}
\end{equation}

\eqref{yijiedd} and a direct calculation yield
\begin{equation}\label{aaa}
\begin{array}{ll}
\frac{1}{\|A\|} (\partial_X \|A\|) \frac{1}{\|A\|} (\partial_Y \|A\|)\\
= \left[ \left( \frac{1}{\lambda_1} \partial_X \lambda_1 + \frac{1}{\lambda_2} \partial_X \lambda_2 - \tan \theta \partial_X \theta \right) + o(\lambda_0^{-\frac{3}{2}}) \right]  \times \left[ \left( \frac{1}{\lambda_1} \partial_Y \lambda_1 + \frac{1}{\lambda_2} \partial_Y \lambda_2 - \tan \theta \partial_Y \theta \right) + o(\lambda_0^{-\frac{3}{2}}) \right] \\
= \left( \frac{1}{\lambda_1^2} (\partial_X \lambda_1) (\partial_Y \lambda_1) + \frac{1}{\lambda_2^2} (\partial_X \lambda_2) (\partial_Y \lambda_2) \right.
\left. + \frac{1}{\lambda_1 \lambda_2} (\partial_X \lambda_1) (\partial_Y \lambda_2) + \frac{1}{\lambda_1 \lambda_2} (\partial_X \lambda_2) (\partial_Y \lambda_1) \right.
+ \frac{-\tan \theta}{\lambda_1} (\partial_X \lambda_1) (\partial_Y \theta)\\+ \frac{-\tan \theta}{\lambda_2} (\partial_X \lambda_2) (\partial_Y \theta)
\left. + \frac{-\tan \theta}{\lambda_1} (\partial_Y \lambda_1) (\partial_X \theta) + \frac{-\tan \theta}{\lambda_2} (\partial_Y \lambda_2) (\partial_X \theta) \right)
+ \tan^2 \theta \partial_X \theta \partial_Y \theta + o(\lambda_0^{-\frac{3}{2}})
:= \mathcal{P}_2.
\end{array}
\end{equation}

From the conditions of Lemma \ref{lemma8*} on $\lambda_i$ and $\theta$, we have
\begin{equation}\label{p1p2}
|\mathcal{P}_1| + |\mathcal{P}_2| \leq \lambda_0^2.
\end{equation}

Finally, \eqref{a2a-2}, \eqref{aaa}, and \eqref{p1p2} imply
\[
\begin{array}{ll}
\frac{1}{\|A\|} \partial^2_{XY} \|A\|
= (1 + o(\lambda_0^{-6})) (\mathcal{P}_1 - (1 + o(\lambda_0^{-6})) \mathcal{P}_2)\\
= \mathcal{P}_1 - \mathcal{P}_2 - o(\lambda_0^{-6}) \mathcal{P}_2 + o(\lambda_0^{-6}) (\mathcal{P}_1 - (1 + o(\lambda_0^{-6})) \mathcal{P}_2)
= \mathcal{P}_1 - \mathcal{P}_2 + o(\lambda_0^{-4}) \\
=\frac{1}{\lambda_1^2} (\partial_X \lambda_1) (\partial_Y \lambda_1) + \frac{1}{\lambda_2^2} (\partial_X \lambda_2) (\partial_Y \lambda_2)
+ \frac{2}{\lambda_1 \lambda_2} (\partial_X \lambda_1) (\partial_Y \lambda_2) + \frac{2}{\lambda_1 \lambda_2} (\partial_X \lambda_2) (\partial_Y \lambda_1)
 - \tan \theta \partial^2_{XY} \theta \\- (1 - \tan^2 \theta) \partial_X \theta \partial_Y \theta
 + \frac{1}{\lambda_1} (\partial^2_{XY} \lambda_1) + \frac{1}{\lambda_2} (\partial^2_{XY} \lambda_2)
\left. + \frac{-2 \tan \theta}{\lambda_1} (\partial_X \lambda_1) (\partial_Y \theta) + \frac{-2 \tan \theta}{\lambda_2} (\partial_X \lambda_2) (\partial_Y \theta) \right. \\
 + \frac{-2 \tan \theta}{\lambda_1} (\partial_Y \lambda_1) (\partial_X \theta) + \frac{-2 \tan \theta}{\lambda_2} (\partial_Y \lambda_2) (\partial_X \theta)
+ o(\lambda_0^{-\frac{3}{2}})
= \frac{\partial^2_{XY} (\lambda_1 \lambda_2 \cos \theta)}{\lambda_1 \lambda_2 \cos \theta} + o(\lambda_0^{-\frac{3}{2}}).
\end{array}
\]

\

\textbf{Proof of (ii):}
Note from (\ref{lemma8,1}), we have $\begin{array}{ll}
& \|A\|^2 + \|A\|^{-2}
  \geq (\lambda_1^2 \lambda_2^{-2} + \lambda_1^{-2} \lambda_2^2)
 > \lambda_1^2 \lambda_2^{-2}
\geq \mu_1^{\frac{7}{4}}.
\end{array}
$

Hence
\[
\|A\|^2 \geq \frac{1}{1 + \|A\|^{-4}} (\lambda_1^2 \lambda_2^{-2}) > (1 - \mu_1^{-2}) \mu_1^2 \mu_2^{-2}.
\]

Therefore
\begin{equation}\label{A-norm}
\|A\| > (1 - \mu_1^{-2}) (\mu_1 \mu_2^{-1}).
\end{equation}

On the other hand, from the conditions on $\lambda_i,\theta$ in Lemma \ref{lemma8*},
\[
\begin{array}{ll}
& \left\vert \partial_X (\lambda_1^m \lambda_2^n \cos \theta) \right\vert \leq \left\vert (\partial_X \lambda_1) (\lambda_1^{m-1} \lambda_2^n \cos \theta) \right\vert + \left\vert (\partial_X \lambda_2) (\lambda_1^m \lambda_2^{n-1} \cos \theta) \right\vert + \left\vert (\partial_X \theta) (-\lambda_1^m \lambda_2^n \sin \theta) \right\vert \\
& \leq \mu_1^m \mu_2^n e^{(\log \lambda_0)^{\hat{\epsilon}}}
\end{array}
\]

and

\[
\begin{array}{ll}
&\quad \left\vert \partial^2_{XY} (\lambda_1^m \lambda_2^n \cos \theta) \right\vert = \left\vert (\partial_X \lambda_1) (\partial_Y \lambda_1) (\lambda_1^{m-2} \lambda_2^n \cos \theta) \right\vert
+ \left\vert (\partial_X \lambda_2) (\partial_Y \lambda_2) (\lambda_1^m \lambda_2^{n-2} \cos \theta) \right\vert \\
&\quad + \left\vert (\partial^2_{XY} \theta) (-\lambda_1^m \lambda_2^n \sin \theta) \right\vert
+ \left\vert (\partial^2_{XY} \lambda_1) (\lambda_1^{m-1} \lambda_2^n \cos \theta) \right\vert
+ \left\vert (\partial^2_{XY} \lambda_2) (\lambda_1^m \lambda_2^{n-1} \cos \theta) \right\vert \\
&\quad + \left\vert (\partial_X \lambda_1) (\partial_Y \theta) (-\lambda_1^{m-1} \lambda_2^n \sin \theta) \right\vert
+ \left\vert (\partial_X \lambda_2) (\partial_Y \theta) (-\lambda_1^m \lambda_2^{n-1} \sin \theta) \right\vert \\
&\quad + \left\vert (\partial_Y \lambda_1) (\partial_X \theta) (-\lambda_1^{m-1} \lambda_2^n \sin \theta) \right\vert
+ \left\vert (\partial_Y \lambda_2) (\partial_X \theta) (-\lambda_1^m \lambda_2^{n-1} \sin \theta) \right\vert \\
&\quad + \left\vert (\partial_X \theta) (\partial_Y \theta) (-\lambda_1^m \lambda_2^n \cos \theta) \right\vert
+ \left\vert (\partial_X \lambda_1) (\partial_Y \lambda_1) (\lambda_1^{m-1} \lambda_2^{n-1} \cos \theta) \right\vert \\
&\quad + \left\vert (\partial_Y \lambda_1) (\partial_X \lambda_1) (\lambda_1^{m-1} \lambda_2^{n-1} \cos \theta) \right\vert\leq \mu_1^m \mu_2^n e^{2 (\log \lambda_0)^{\hat{\epsilon}}}.
\end{array}
\]

It together with (\ref{28-}) implies
\[
\begin{array}{ll}
& \left\vert \|A\| |\partial_X \|A\|| (2 - 2 \|A\|^{-4}) \right\vert = \left\vert 2 \|A\| (\partial_X \|A\|) - 2 \|A\|^{-3} (\partial_X \|A\|) \right\vert  \leq C \mu_1^2 \mu_2^2 e^{(\log \lambda_0)^{\hat{\epsilon}}}.
\end{array}
\]
Thus with the help of (\ref{A-norm}), we have
\[\begin{array}{ll}
\left\vert \|A\|^{-1} (\partial_X \|A\|) \right\vert &\leq \left\vert C \|A\|^{-2} \lambda_1^2 \lambda_2^2 e^{(\log \lambda_0)^{\hat{\epsilon}}} \right\vert  \leq [(1 - \lambda_1^{-2}) (\lambda_1 \lambda_2^{-1})]^{-2} \lambda_1^2 \lambda_2^2 e^{(\log \lambda_0)^{\hat{\epsilon}}} \leq C \mu_2^4 e^{(\log \lambda_0)^{\hat{\epsilon}}}.
\end{array}\]

It together with (\ref{lemma8,1}) shows that
\[
\begin{array}{ll}
& \left\vert \|A\|^{-1} \partial^2_{XY} \|A\| (1 - \|A\|^{-4}) \right\vert - C \mu_2^4 e^{(\log \lambda_0)^{\hat{\epsilon}}}
\\
 &\leq \left\vert \|A\|^{-2} (\partial_Y \|A\|) (\partial_X \|A\|) \right\vert  + \left\vert 3 \|A\|^{-6} (\partial_X \|A\|) (\partial_Y \|A\|) \right\vert  + \left\vert \|A\|^{-1} \partial^2_{XY} \|A\| - \|A\|^{-5} \partial^2_{XY} \|A\| \right\vert \leq C e^{2 (\log \lambda_0)^{\hat{\epsilon}}},
\end{array}
\]
which implies
$
\left\vert \|A\|^{-1} (\partial^2_{XY} \|A\|) \right\vert \leq C \mu_2^4 e^{2 (\log \lambda_0)^{\hat{\epsilon}}}.
$

The proof of (iii) is the same as (ii) by replacing \(\lambda_1\) by \(\lambda_2\). Then we complete the proof.
\hfill\qed\end{proof}

By the help of Lemmas \ref{lemma8*} and \ref{lemma9*}, we obtain the following  lemma.
\begin{lemma}\label{jishu1}
Let \( I \subset \mathbb{R}^2 \) be an open rectangle, \( 0 < \hat{\epsilon} \ll 1 \), \( \{l_i\}_{i=0}^n \subset \R_+\), \( l_0 \gg n \), and \( \lambda_i(x,y), f_i(x,y) \in C^2(I),\ 1\le i\le n \). Moreover, for any \((x,y) \in I\) and \( X, Y \in \{x,y\} \), we have
\begin{equation}\label{jfact1}
\lambda_i(x,y) \sim_{0,e^{-(\log l_0)^{\hat{\epsilon}}}} l_i \geq \min_{i=1,2,\ldots,n} \inf_{I} \lambda_i(x,y) = l_0 \text{ for } i=1,2,\ldots,n,
\end{equation}
\begin{equation}\label{jfact2}
\min_{i=1,2,\ldots,n-1} \inf_{I} |f_i(x,y) - \frac{\pi}{2}| \geq e^{-(\log l_0)^{2\hat{\epsilon}}},
\end{equation}
\begin{equation}\label{jfact3}
\sup_{I} \left|\frac{1}{\lambda_k} \frac{\partial \lambda_k}{\partial X}\right|, \quad \sup_{I} \left|\frac{1}{\lambda_k} \frac{\partial^2 \lambda_k}{\partial X \partial Y}\right| \leq e^{(\log l_0)^{\hat{\epsilon}}}, \quad k=1,2,\ldots,n,
\end{equation}
\begin{equation}\label{jfact4}
\sup_{I} \left|\frac{\partial^2 f_k}{\partial X \partial Y}\right|, \quad \sup_{I} \left|\frac{\partial f_k}{\partial X}\right| \leq e^{(\log l_0)^{\hat{\epsilon}}}, \quad k=1,2,\ldots,n-1.
\end{equation}

Consider
$$
A(x,y) := \Lambda_n \cdot \prod_{k=n-1}^{1} \left(R_{f_k} \cdot \Lambda_k\right),
$$
where \( \Lambda_i(x,y) = \Lambda(\lambda_i(x,y))\) for \( i=1,2,\ldots,n \) and
$$
R_{f_i} = \begin{pmatrix}
\cos f_i & -\sin f_i \\
\sin f_i & \cos f_i
\end{pmatrix}.
$$

Then  the following hold true.
\begin{equation}\label{re1}
\|A\| \sim_{0,nl_0^{-1}} \left(\prod_{i=1}^n l_i\right) \left(\prod_{i=1}^{n-1} |\cos f_i|\right) \geq l_0^{(1 - (\log l_0)^{-\frac{1}{2}})n},
\end{equation}
\begin{equation}\label{re2}
\left| \partial_X \|A\| \right| \leq n \cdot \|A\| \cdot e^{(\log l_0)^{5\hat{\epsilon}}}, \quad \left| \frac{\partial^2 \|A\|}{\partial X \partial Y} \right| \leq n^2 \cdot \|A\| \cdot e^{(\log l_0)^{5\hat{\epsilon}}},
\end{equation}
\begin{equation}\label{re3}
\|s(A) - \frac{\pi}{2}\|_{C^2(I)} \leq l_1^{-2} e^{(\log l_0)^{5\hat{\epsilon}}}, \quad \|s(A^{-1})\|_{C^2(I)} \leq l_n^{-2} e^{(\log l_0)^{5\hat{\epsilon}}}.
\end{equation}

Moreover, for a rectangle \(\tilde{I} \subset I\) with \(\tilde{I}_x =\Pi_1\tilde{I}\), \(\tilde{I}_y = \Pi_2\tilde{I}\) and \(\max\limits_{X\in\{x,y\}} |\tilde{I}_X| \leq n^{-5} e^{-(\log l_0)^{50\hat{\epsilon}}}\), we have
\begin{equation}\label{axyxy}
\|A(x,y)\| \sim_{0, \max\limits_{X\in\{x,y\}} |\tilde{I}_X|^{\frac{1}{2}}} \|A(\tilde{x},\tilde{y})\|,\quad (\tilde{x},\tilde{y}) \in \tilde{I}.
\end{equation}
\end{lemma}
\begin{remark}\label{remark10} It is not difficult to see from the proof that if the condition is changed to $
\lambda_i(x,y) \geq l_0 \text{ for } i=1,2,\ldots,n,
$
$
\min\limits_{i=1,2,\ldots,n-1} \inf\limits_{I} |f_i(x,y) - \frac{\pi}{2}| \geq \eta,
$
$
\sup\limits_{I} \left|\frac{1}{\lambda_k} \frac{\partial \lambda_k}{\partial X}\right|, $ $\quad \sup\limits_{I} \left|\frac{1}{\lambda_k} \frac{\partial^2 \lambda_k}{\partial X \partial Y}\right| \leq \eta^{-1},
$
$
\sup_{I} \left|\frac{\partial^2 f_k}{\partial X \partial Y}\right|, \quad \sup_{I} \left|\frac{\partial f_k}{\partial X}\right| \leq \eta^{-1}.
$ with some $\eta\ll e^{(\log n)^C}.$ Then \eqref{re1}-\eqref{re3} can be changed to
$
\|A\| \geq l_0^{(1 - |\log \eta|^C)n},
$
$
\left| \partial_X \|A\| \right| \leq n \cdot \|A\| \cdot \eta^{-C}, \quad \left| \frac{\partial^2 \|A\|}{\partial X \partial Y} \right| \leq n^2 \cdot \|A\| \cdot \eta^{-C},
$
$
\|s(A) - \frac{\pi}{2}\|_{C^2(I)} \leq l_1^{-2} \eta^{-C}, \quad \|s(A^{-1})\|_{C^2(I)} \leq l_n^{-2} \eta^{-C}.
$
\end{remark}
\begin{proof}

By \eqref{jfact2}, using \eqref{lem10-i1} of Lemma \ref{lemma9*}, we have
$$\|\Lambda_2 R_{f_1} \Lambda_1\| \sim_{0,l_0^{-1}} l_1l_2|\cos f_1| > l_0,\quad~\|\Lambda_3 R_{f_2} \Lambda_2\| \sim_{0,l_0^{-1}} l_3l_2|\cos f_2| > l_0.$$

Using \eqref{lem8-i}, we obtain
$$|s((\Lambda_2 R_{f_1} \Lambda_1)^{-1})| \leq Ce^{(\log l_0)^{3\hat{\epsilon}}} l_2^{-2} \ll e^{-(\log l_0)^{2\hat{\epsilon}}} \leq \inf_{I} |f_1(x,y) - \frac{\pi}{2}|,$$
$$|\frac{\pi}{2}-s(\Lambda_3 R_{f_2} \Lambda_2)| \leq Ce^{(\log l_0)^{3\hat{\epsilon}}} l_2^{-2} \ll e^{-(\log l_0)^{2\hat{\epsilon}}} \leq \inf_{I} |f_2(x,y) - \frac{\pi}{2}|.$$

Therefore
$$\left\vert\frac{\sin s((\Lambda_2 R_{f_1} \Lambda_1)^{-1})}{\sin (f_1 - \frac{\pi}{2})}\right\vert+\left\vert\frac{\sin (\frac{\pi}{2}-s(\Lambda_3 R_{f_2} \Lambda_2))}{\sin (f_2 - \frac{\pi}{2})}\right\vert\leq l_0^{-\frac{3}{2}}.$$

Consequently
$$\begin{array}{ll}
&\left\vert\frac{\cos (f_2 + s((\Lambda_2 R_{f_1} \Lambda_1)^{-1}))-\cos f_2}{\cos f_2}\right\vert
= \left\vert\frac{2\sin(f_2+\frac{s((\Lambda_2 R_{f_1} \Lambda_1)^{-1})}{2})\sin(\frac{s((\Lambda_2 R_{f_1} \Lambda_1)^{-1})}{2})}{\sin (f_2-\frac{\pi}{2})}\right\vert
\\&\leq \left\vert\frac{\sin(\frac{s((\Lambda_2 R_{f_1} \Lambda_1)^{-1})}{2})}{\sin (f_2-\frac{\pi}{2})}\right\vert\leq \left\vert\frac{\sin(s((\Lambda_2 R_{f_1} \Lambda_1)^{-1}))}{\sin (f_2-\frac{\pi}{2})}\right\vert\leq l_0^{-\frac{3}{2}}.\end{array}
$$
Hence
\begin{equation}\label{fas}
|\cos (f_2 + s(\Lambda_2 R_{f_1} \Lambda_1)^{-1}))| \sim_{0,l_0^{-1}} |\cos f_2|.
\end{equation}

Similarly, we have
\begin{equation}\label{fas*}
|\cos (f_1 + \frac{\pi}{2}-s(\Lambda_3 R_{f_2} \Lambda_2))| \sim_{0,l_0^{-1}} |\cos f_1|.
\end{equation}

Then, performing polarization decomposition on $\Lambda_2 R_{f_1} \Lambda_1$ yields
$$\Lambda_3 R_{f_2} \Lambda_2 R_{f_1} \Lambda_1 = \Lambda_3 R_{f_2 + s((\Lambda_2 R_{f_1} \Lambda_1)^{-1})} \text{diag}\{\|\Lambda_2 R_{f_1} \Lambda_1\|, \|\Lambda_2 R_{f_1} \Lambda_1\|^{-1}\} R_{s(\Lambda_2 R_{f_1} \Lambda_1)}.$$

Hence
\[
\|\Lambda_3 R_{f_2} \Lambda_2 R_{f_1} \Lambda_1\| = \|\Lambda_3 R_{f_2 + s((\Lambda_2 R_{f_1} \Lambda_1)^{-1})} \text{diag}\{\|\Lambda_2 R_{f_1} \Lambda_1\|, \|\Lambda_2 R_{f_1} \Lambda_1\|^{-1}\}\|
\]
\[
\sim_{0,l_0^{-1}} l_1 l_2 l_3 |\cos (f_2 + s((\Lambda_2 R_{f_1} \Lambda_1)^{-1})) \cos f_1| \sim_{0,l_0^{-1}} l_1 l_2 l_3 |\cos f_1 \cos f_2|.
\]

On the other hand, given that \(\|\Lambda_3\|\) and \(\|\Lambda_2 R_{f_1} \Lambda_1\|\) are both greater than \(l_0\), we apply \eqref{fas} to obtain
$$|s((\Lambda_3 R_{f_2} \Lambda_2 R_{f_1} \Lambda_1)^{-1})| \leq Ce^{(\log l_0)^{3\hat{\epsilon}}} l_3^{-2} \ll e^{-(\log l_0)^{2\hat{\epsilon}}} \leq \min_{i=1,2,\ldots,n-1} \inf_{I} |f_i(x,y) - \frac{\pi}{2}|,$$
and apply \eqref{fas*} to obtain
$$|\frac{\pi}{2}-s(\Lambda_3 R_{f_2} \Lambda_2 R_{f_1} \Lambda_1)| \leq Ce^{(\log l_0)^{3\hat{\epsilon}}} l_1^{-2} \ll e^{-(\log l_0)^{2\hat{\epsilon}}} \leq \min_{i=1,2,\ldots,n-1} \inf_{I} |f_i(x,y) - \frac{\pi}{2}|.$$

Note \eqref{lem10-i2} and \eqref{lem10-i3} of Lemma \eqref{lemma9*} imply
\begin{equation}\label{80e123}
\frac{\partial_X \|\Lambda_2 R_{f_1} \Lambda_1\|}{\|\Lambda_2 R_{f_1} \Lambda_1\|} = \frac{\partial_X (\Lambda_1 \Lambda_2 \cos f_1)}{\Lambda_1 \Lambda_2 \cos f_1} + o(l_0^{-1}),
\end{equation}
and
$$
\frac{\partial^2_{XY} \|\Lambda_2 R_{f_1} \Lambda_1\|}{\|\Lambda_2 R_{f_1} \Lambda_1\|} = \frac{\partial^2_{XY} (\Lambda_1 \Lambda_2 \cos f_1)}{\Lambda_1 \Lambda_2 \cos f_1} + o(l_0^{-1}).
$$
Let \(\|\Lambda_2 R_{f_1} \Lambda_1\| = \tilde{\Lambda}_2\), \(f_2 + s((\Lambda_2 R_{f_1} \Lambda_1)^{-1}) = \tilde{f}_2\), and \(f_1 = \tilde{f}_1\). By \eqref{lem10-i2} and \eqref{80e123},

we have

\begin{align*}
&\frac{\partial_X \|\Lambda_3 R_{f_2} \Lambda_2 R_{\tilde{f}_2} \Lambda_1\|}{\|\Lambda_3 R_{f_2} \Lambda_2 R_{\tilde{f}_2} \Lambda_1\|}= \frac{\partial_X (\Lambda_3 \tilde{\Lambda}_2 |\cos \tilde{f}_2|)}{\Lambda_3 \tilde{\Lambda}_2 |\cos \tilde{f}_2|} + o(l_0^{-1}) \\
&= \frac{\partial_X (\Lambda_3) (\tilde{\Lambda}_2 |\cos \tilde{f}_2|)
+ \partial_X (\tilde{\Lambda}_2) (\Lambda_3 |\cos \tilde{f}_2|)
+ \partial_X (|\cos \tilde{f}_2|) (\tilde{\Lambda}_2 \Lambda_3)}{\Lambda_3 \tilde{\Lambda}_2 |\cos \tilde{f}_2|} + o(l_0^{-1}) \\
&= \frac{\partial_X (\Lambda_3) (\tilde{\Lambda}_2 |\cos \tilde{f}_2|)}{\Lambda_3 \tilde{\Lambda}_2 |\cos \tilde{f}_2|}
+ \frac{(\tilde{\Lambda}_2) (\frac{\partial_X (\Lambda_1 \Lambda_2 |\cos f_1|)}{\Lambda_1 \Lambda_2 |\cos f_1|}+o(l_0^{-1})) (\Lambda_3 |\cos \tilde{f}_2|))}{\Lambda_3 \tilde{\Lambda}_2 |\cos \tilde{f}_2|}
+ \frac{\partial_X (|\cos \tilde{f}_2|) (\tilde{\Lambda}_2 \Lambda_3)}{\Lambda_3 \tilde{\Lambda}_2 |\cos \tilde{f}_2|} + o(l_0^{-1}).
\\&= (1+o(l_0^{-1}))\left(\frac{\partial_X (\Lambda_3) (\Lambda_1\Lambda_2|\cos f_1| |\cos \tilde{f}_2|)}{\Lambda_1\Lambda_2\Lambda_3 |\cos f_1||\cos \tilde{f}_2|}
+ \frac{(\partial_X (\Lambda_1 \Lambda_2 |\cos f_1|)) (\Lambda_3 |\cos \tilde{f}_2|)}{\Lambda_1\Lambda_2\Lambda_3 |\cos f_1||\cos \tilde{f}_2|}
+ \frac{\partial_X (|\cos \tilde{f}_2|) (\Lambda_1 \Lambda_2 \Lambda_3|\cos f_1|)}{\Lambda_1\Lambda_2\Lambda_3 |\cos f_1||\cos \tilde{f}_2|}\right) + o(l_0^{-1}).
\\
&= (1 + o(l_0^{-1}))
\frac{\partial_X (\Lambda_3 \Lambda_1 \Lambda_2 |\cos {f}_1| |\cos \tilde{f}_2|)}{\Lambda_3 \Lambda_1 \Lambda_2 |\cos {f}_1| |\cos \tilde{f}_2|} + o(l_0^{-1}) = (1 + 2 o(l_0^{-1}))
\frac{\partial_X (\Lambda_1 \Lambda_2 \Lambda_3 |\cos f_1| |\cos f_2|)}{\Lambda_1 \Lambda_2 \Lambda_3 |\cos f_1| |\cos f_2|} + o(l_0^{-1}).
\end{align*}

Similarly,
%\begin{equation}\label{80e1}
$$\frac{\partial^2_{XY} \|\Lambda_3 R_{f_2} \Lambda_2 R_{\tilde{f}_2} \Lambda_1\|}{\|\Lambda_3 R_{f_2} \Lambda_2 R_{\tilde{f}_2} \Lambda_1\|} = (1 + 2 o(l_0^{-1})) \frac{\partial^2_{XY} (\Lambda_1 \Lambda_2 \Lambda_3 |\cos f_1| |\cos f_2|)}{\Lambda_1 \Lambda_2 \Lambda_3 |\cos f_1| |\cos f_2|} + o(l_0^{-1}).
$$
%\end{equation}

By induction, we obtain
\begin{equation}\label{new1}
\|A\| \sim_{0, nl_0^{-1}} \left(\prod_{i=1}^n l_i\right) \left(\prod_{i=1}^{n-1} |\cos f_i|\right) > l_0^n e^{-n (\log l_0)^{2\hat{\epsilon}}} = l_0^{(1 - (\log l_0)^{2\hat{\epsilon}-1})n} > l_0^{(1 - (\log l_0)^{-\frac{1}{2}})n},
\end{equation}
which implies \eqref{re1}. Similarly, we have
\begin{equation}\label{wehave}
\left|\frac{\partial_X \|A\|}{\|A\|}\right| = \left|(1 + 2n [o(l_0^{-1})]) \frac{\partial_X [\left(\prod_{i=1}^n \Lambda_i\right) \left(\prod_{i=1}^{n-1} \cos f_i\right)]}{\left(\prod_{i=1}^n \Lambda_i\right) \left(\prod_{i=1}^{n-1} \cos f_i\right)} + o(l_0^{-1})\right|
\end{equation}
\[
\leq (1 + 2n [o(l_0^{-1})]) (2n - 1) e^{(\log l_0)^{\hat{\epsilon}}} \leq n e^{(\log l_0)^{5\hat{\epsilon}}}
\]
and
\[
\left|\frac{\partial^2 \|A\|}{\partial X \partial Y}\right| = \left|(1 + 2n [o(l_0^{-1})]) \frac{\partial^2_{XY} [\left(\prod_{i=1}^n \Lambda_i\right) \left(\prod_{i=1}^{n-1} \cos f_i\right)]}{\left(\prod_{i=1}^n \Lambda_i\right) \left(\prod_{i=1}^{n-1} \cos f_i\right)} + o(l_0^{-1})\right|
\]
\[
\leq (1 + 2n [o(l_0^{-1})]) (2n - 1)^2 e^{2 (\log l_0)^{\hat{\epsilon}}} \leq n^2 e^{(\log l_0)^{5\hat{\epsilon}}},
\]
which imply \eqref{re2}. Additionally, \eqref{re3} can be obtained from (i) of Lemma \ref{lemma8*}.

It remains to prove \eqref{axyxy}. By (\ref{new1}), we have
\begin{equation}\label{momomo}
\|A\| \sim_{0, nl_0^{-1}} \left(\prod_{i=1}^n l_i\right) \left(\prod_{i=1}^{n-1} |\cos f_i|\right).
\end{equation}
Moreover, since
$$\inf_{I} |f_i(x, y) - \frac{\pi}{2}| \geq e^{-(\log l_0)^{2\hat{\epsilon}}} \gg |\tilde{I}|,$$
$$\sup_{I} \left|\frac{\partial^2 f_i}{\partial X \partial Y}\right|,\quad \sup_{I} \left|\frac{\partial f_i}{\partial X}\right| \leq e^{(\log l_0)^{\hat{\epsilon}}} \ll |\tilde{I}|^{-1}, \quad i = 1, 2, \ldots, n-1$$
for any fixed \((\tilde{x}, \tilde{y}) \in \tilde{I}\), we have
$$\cos f_i(x, y) \sim_{0, \max\limits_{X=x, y} |\tilde{I}_X|} \cos f_i(\tilde{x}, \tilde{y}) \text{ on } \tilde{I}.$$

Then \eqref{momomo} implies
%\begin{equation}\label{axyxy}??
$$\|A(x, y)\| \sim_{0, nl_0^{-1} + n \max\limits_{X\in\{x,y\}} |\tilde{I}_X|^{\frac{1}{2}}} \|A(\tilde{x}, \tilde{y})\|~\text{ on } \tilde{I}.$$
%\end{equation}

Since \(l_0^{-1} \ll n^{-5} e^{-(\log l_0)^{50\hat{\epsilon}}}\) and \(\max_{X\in\{x,y\}} |\tilde{I}_X| \leq n^{-5} e^{-(\log l_0)^{50\hat{\epsilon}}}\), we have
$$\|A(x, y)\| \sim_{0, n^{-4} e^{-(\log l_0)^{50\hat{\epsilon}}}} \|A(\tilde{x}, \tilde{y})\|~\text{ on } \tilde{I}.$$

Therefore
$$(1 - n^{-4} e^{-(\log l_0)^{50\hat{\epsilon}}}) \|A(\tilde{x}, \tilde{y})\| \leq \|A(x, y)\| \leq (1 + n^{-4} e^{-(\log l_0)^{50\hat{\epsilon}}}) \|A(\tilde{x}, \tilde{y})\|.$$

Finally, by using \eqref{wehave} we obtain
\[
\frac{\left|\|A(x, y)\| - \|A(\tilde{x}, \tilde{y})\|\right|}{\|A(x, y)\|} \leq \frac{(1 + n^{-4} e^{-(\log l_0)^{50\hat{\epsilon}}}) \|A(\tilde{x}, \tilde{y})\| \max_{X\in\{x,y\}} |\tilde{I}_X|}{(1 - n^{-4} e^{-(\log l_0)^{50\hat{\epsilon}}}) \|A(\tilde{x}, \tilde{y})\|} \leq 2 \max_{X\in\{x,y\}} |\tilde{I}_X|<\max_{X\in\{x,y\}} |\tilde{I}_X|^{\frac{1}{2}},
\]
which implies
$$\|A(x, y)\| \sim_{0, \max_{X\in\{x,y\}} |\tilde{I}_X|^{\frac{1}{2}}} \|A(\tilde{x}, \tilde{y})\|~\text{ on } \tilde{I}$$ as desired.\hfill\qed\end{proof}
Iterative techniques for non-resonant case (characterized by (\ref{jfact2}))  have been provided in the above lemma.

For the resonant case, which is much more complex than the non-resonant case, we have to consider the function with the form $$\arctan [l^2 \tan c_1x]-\frac{\pi}{2}+c_2x~with~c_1\cdot c_2<0.$$
The following lemma accurately  described the geometry of the angle function at resonance, which is the crucial part in the induction Theorem stated later.

\begin{lemma}\label{thetat1fenjie}
Fix $0 < \hat{\epsilon} \ll 1$ and $k \gg 1$, and let
$$\Gamma = (\log k)^{\hat{\epsilon}^{-1}}, \quad \epsilon =e^{-\Gamma}= e^{-(\log k)^{\hat{\epsilon}^{-1}}}, \quad l = e^k.$$
Assume that for $i = 1, 2$ and on $D$, we have
$$\partial_x h_i(x,y) \sim_{0,\epsilon} a_i, \quad \partial_y h_i(x,y) \sim_{0,\epsilon} b_i,$$
where $h_i \in C^2(D)$, with $D$ being a rectangle in $\mathbb{R}^2$, and $a_i, b_i$ are constants satisfying
$$
|a_i|, |a_i|^{-1}, |b_i|, |b_i|^{-1} \leq \Gamma, \quad -\text{sgn}(a_1) = \text{sgn}(a_2) = \text{sgn}(b_1) = \text{sgn}(b_2) = 1.
$$
Let $L(x,y) \in C^2(D)$. Suppose that for any fixed $y \in \Pi_2 D$, we have
\begin{equation}\label{epepep}
[-\epsilon, \epsilon] \subseteq \text{Ran}(\tan h_i(\cdot, y)) \subseteq [-\epsilon^{\frac{2}{3}}, \epsilon^{\frac{2}{3}}],
\end{equation}
and
\begin{equation}\label{lem48-tj}
\sum_{X,Y\in\{x,y\}} \left| \frac{\partial^2 h_i(x,y)}{\partial X \partial Y} \right| \leq \Gamma, \quad L(x,y) \sim_{0,\epsilon} l \text{ on } D,
\end{equation}
\begin{equation}\label{lem48-tj111}
\sum_{X,Y\in\{x,y\}} \left| \frac{\partial^2 (\log L(x,y))}{\partial X \partial Y} \right|, \quad \sum_{X\in\{x,y\}} \left| \partial_X (\log L(x,y)) \right| \leq e^{|\log k|^{C}}.
\end{equation}

Set
$$F(x,y) = \tan^{-1}\left( L^2(x,y) \tan h_2(x,y) \right) - \frac{\pi}{2} + h_1(x,y).$$
Then, for any fixed $y \in \Pi_2 D$, the following hold.

\begin{itemize}
    \item[{\rm i.}]
    $$\left| \{ x \in \Pi_1 D \mid F(x,y) = 0 \pmod{\pi} \} \right| \leq 2.$$

    \item[{\rm ii.}]
    \begin{equation}\label{2jiedaos}
    \max_{(x,y) \in D} \left( |F(x,y)| + \left| \frac{\partial F(x,y)}{\partial x} \right| + \left| \frac{\partial F(x,y)}{\partial y} \right| + \sum_{X,Y\in\{x,y\}} \left| \frac{\partial^2 F(x,y)}{\partial X \partial Y} \right| \right) \leq Cl^8,
    \end{equation}
    \begin{equation}\label{fthx2}
    \min_{x \in \Pi_1 D} \left( \left| \frac{\partial F}{\partial x} \right| + \left| \frac{\partial^2 F}{\partial x^2} \right| \right) \geq \Gamma^{-2}.
    \end{equation}
    There exist two functions $A(x,y), B(x,y) \in C^1(D)$ such that
  $$
    \partial_y F(x,y) = A(x,y) \cdot \partial_x F(x,y) + B(x,y), \quad (x,y) \in D,
  $$
    with
    \begin{equation}\label{ABAB}
    A(x,y) \sim_{0, \epsilon^{\frac{1}{2}}} \frac{b_2}{a_2}, \quad B(x,y) \sim_{0, \epsilon^{\frac{1}{2}}} a_1 \left( \frac{b_2}{a_2} - \frac{b_1}{a_1} \right) \text{ on } D.
    \end{equation}

    \item[{\rm iii.}]

    \

    \begin{enumerate}
        \item[{\rm iii-a.}] The set $\{ x \in \Pi_1 D \mid \partial_x F(x,y) = 0 \pmod{\pi} \} = \{ x^*_1(y), x^*_2(y) \}$. Moreover, $F(x,y)$ is strictly decreasing on $\Pi_1 D - (x_1^*(y), x_2^*(y))$ and strictly increasing on $(x_1^*(y), x_2^*(y))$. Additionally,
        $$\{ x \in \Pi_1 D \mid |\partial_x F(x,y)| \leq \epsilon^{\frac{3}{4}} \} = J_1(y) \cup J_2(y),$$
        where
        $$J_i(y) = \left( x^*_i - \epsilon^*_{i,-} l^{-1}, x^*_i + \epsilon^*_{i,+} l^{-1} \right),$$
        and
        $$\epsilon^*_{i,X} \sim_{0, \epsilon^{\frac{1}{2}}} \frac{1}{2} |a_2|^{-\frac{1}{2}} |a_1|^{-\frac{3}{2}} \epsilon^{\frac{3}{4}}, \quad i=1,2, \ X\in\{+,-\}.$$

        \item[{\rm iii-b.}] For any $x < x^*_1(y)$ or $x > x^*_2(y)$, we have
        \begin{equation}\label{iii-2}
        b_1 \leq |\partial_y F(x,y)| \leq b_1 + b_2 \cdot \frac{|a_1|}{a_2},
        \end{equation}
        and
        \begin{equation}\label{iii-2'}
        |\partial_x F(x,y)| \leq |a_1|.
        \end{equation}

        \item[{\rm iii-c.}] Assume that $F(x,y)$ has two zeros, denoted by $\tilde{z}_1(y)$ and $\tilde{z}_2(y)$. Then for $i = 1, 2$,
        either
        \begin{equation}\label{iii-3*}
        x^*_1(y) \leq \tilde{z}_1(y) \leq x^*_2(y) \leq \tilde{z}_2(y) \quad \text{and} \quad |\partial_x F(\tilde{z}_i(y),y)| \geq \epsilon |x^*_2(y) - \tilde{z}_i(y)|,
        \end{equation}
        or
        $$\tilde{z}_1(y) \leq x^*_1(y) \leq \tilde{z}_2(y) < x^*_2(y) \quad \text{and} \quad |\partial_x F(\tilde{z}_i(y),y)| \geq \epsilon |x^*_1(y) - \tilde{z}_i(y)|.$$

        \item[{\rm iii-d.}] Under the assumptions in iii-c, we have
        \begin{equation}\label{iii-4}
        \eta \Gamma^{-1} \leq |\tilde{z}_2(y) - \tilde{z}_1(y)| \leq \min \{ 4 \Gamma^2 \epsilon^{\frac{2}{3}}, 2 \eta^{\frac{1}{2}} \epsilon^{-\frac{1}{2}} \},
        \end{equation}
        where
        $$\eta = \min \left\{ |F(x^*_1(y),y) \pmod{\pi}|, |F(x^*_2(y),y) \pmod{\pi}| \right\}.$$

        \item[{\rm iii-e.}] Set $\tilde{I}(y) := \left\{ x \in \Pi_1 D \mid |F(x,y)| < \epsilon^3 l^{-16} e^{-|\log |x^*_2(y) -\tilde{z}_2(y)| |^C} \right\}$. Then
        \begin{equation}\label{iii-3}
        |\partial_x F(x,y)| \sim_{0, l^9 |\tilde{I}|^{\frac{1}{2}}} |\partial_x F(\tilde{z}_2(y),y)| \text{ on } \tilde{I}(y),
        \end{equation}
        \begin{equation}\label{iii-3'}
        |\partial_y F(x,y)| \sim_{0, l^9 |\tilde{I}|^{\frac{1}{2}}} |\partial_y F(\tilde{z}_2(y),y)| \text{ on } \tilde{I}(y).
        \end{equation}
    \end{enumerate}

    \item[{\rm iv.}]
    $$\frac{\partial^2 F(x,y)}{\partial x^2} \sim_{0, \epsilon^{\frac{1}{2}}} 2 |a_2|^{\frac{1}{2}} |a_1|^{\frac{3}{2}} l \text{ on } J_1(y),$$
    $$\frac{\partial^2 F(x,y)}{\partial x^2} \sim_{0, \epsilon^{\frac{1}{2}}} -2 |a_2|^{\frac{1}{2}} |a_1|^{\frac{3}{2}} l \text{ on } J_2(y).$$

    \item[{\rm v.}]
    $$|\{F(x,y) \mid x \in \Pi_1 D \}| \sim_{0, \epsilon^{\frac{1}{2}}} \pi - 4 \sqrt{\frac{|a_1|}{|a_2|}} l^{-1}.$$

    \item[{\rm vi.}] Let
    $$\tilde{F} = \tan^{-1}(L^2(x,y) \tan h_1(x,y)) - \frac{\pi}{2} + h_2(x,y)$$
    (swap $h_1$ and $h_2$ in the definition of $F$). Then we have
    $$\{y \mid \min_x |F(x,y)| > l^{-100C} \} \subseteq \{y \mid \min_x |\tilde{F}(x,y)| > l^{-200C}\},$$
    $$\{y \mid \min_x |\tilde{F}(x,y)| > l^{-100C} \} \subseteq \{y \mid \min_x |F(x,y)| > l^{-200C}\}.$$
\end{itemize}

\end{lemma}

The proof can be found in the appendix.

%The proof is similar to Lemma \ref{step12}.\vskip 0.3cm
%From Lemma ?? in \cite{wz1}, we have
%$$g_i(x)=\arctan(l^2\tan g_{i-1}(x))-\frac{\pi}{2}+g_{i-1}(T^{r_{i-1}}x).$$
\subsection{The induction Theorem for $C^2$ Cosine Type}

Let $\{p_n/q_n\}$ be the fraction approximant of $\alpha$. Note that there exists $C_{\alpha}>0$ by the Diophantine condition such that
$q_{s+1}<C_{\alpha}q_s^{\tau-1},~s\in \Z_+.$
Suppose that $N\gg \|v\|_{\mathrm{C}^2}$ and sufficiently large such that
$$\sum_{n\geq N}q_{N+n-1}^{-\frac1{100}} \leq\frac 1{100}.$$

 Denote $$s_1(x,t)=s[A_1(x,t)],~u_1(x,t)=s[A_{-1}(x,t)].$$
 From Proposition \ref{lemma4}, for the initial angle function $g_1\in\mathrm{C}^2(\mathbb \T,\R)$ defined as  $g_1(x,t):=s_1(x,t)-u_1(x,t)$, we have
\begin{equation}\label{g_N}
g_1(x,t)=\phi(x, t, \lambda)+o(\lambda^{-1})=\arctan[t-v(x-\alpha)]+o(\lambda^{-1}),\quad \lambda\rightarrow \infty.
\end{equation}
%In the following, we assume  $\lambda$ is sufficiently large such that
%$$\lambda> \max\Big\{e^{100q_{N+1}},C^{10}\Big\}.$$

%, we have
%\beq\label{g_N}g_N(x,t)=t-v(x-\alpha)+O(\lambda^{-1}).\eeq

  Throughout the paper, we fix a  large $N=N(\alpha, v)$ and  let $0<\hat{\epsilon}\ll 1$, let $\lambda\gg N$ such that $$(log\lambda)^{\hat{\epsilon}}\gg e^{q_N}.$$ Denote \begin{equation}\label{HN}\mathcal{N}_i=\mathcal{N}_i(\hat{\epsilon})=\left\{\begin{matrix}e^{(\log \lambda)^{\hat{\epsilon}}} & i=1\\ [{\lambda}^{q_{N+i-1}^{\hat{\epsilon}}}]& i\geq 2. \end{matrix}\right.\end{equation}

First we need the following definition.
\begin{definition} Let $f\in C^1(I_l\times J)$ with $I_l\subset \R/\Z,~J\subset \R$ with $l=1,2,\cdots,L$. Given $1>\epsilon>0,$ we say $f$ satisfies $\eta-$\textbf{nonresonant} condition if \\
{\rm (a)}\ for  each $1\leq l\leq L,$ the set $\{x\in I_l \vert f(x,y)=0\}$ consists of one single element, denoted by $x_l(y);$ \\
\rm{(b)}\
for any $y$ satisfying
$(y-\eta,y+\eta)\in J$ and $(x_l(y)-\eta,x_l(y)+\eta)\in I_l$ for all $1\leq l\leq L,$ it holds that
$${\eta^{\hat{\epsilon}^{-1}}}\le|\partial^i_x\partial_y^j f(x_l(y),y)| \leq {\eta^{-\hat{\epsilon}^{-1}}},\quad 1\le i+j\le 2$$

%$$\epsilon^{-2}>|\partial_x f(x_l(y),y)|>\epsilon,\quad |\partial^i_x\partial_y^j f(x_l(y),y)| \leq {\epsilon^{-1/1000}},\quad |\partial_y f(x_l(y),y)|>\epsilon$$
 and $$f(x,y')\sim_{1,\eta^{\frac{1}{2}}}\partial_x f(x_l(y),y)(x-x_l(y))+\partial_y f(x_l(y),y)(y'-y)$$ on $(x_l(y)-\eta,x_l(y)+\eta)\times (y-\eta,y+\eta)$.

%$\epsilon-$\textbf{resonant} condition
\end{definition}

\textbf{Step 1:}

We define the following concepts.
\begin{enumerate}

\item The $\textbf{critical~points}$ for the first step:
Let $c_{1,j}(t),\ j=1, 2, \cdots, J,$ be all points on $\mathbb \T$ minimizing $\{|g_{1,j}(x,t)|(\text{mod}~\pi)\}.$ From (\ref{g_N}) and the cos-type condition on $v$, we have
$J=1$ or $2$ and if $J=2$, $c_{1,j}(t),\ j=1,\ 2$ is roughly equal to zeroes of $t-v(x-\alpha)$.
For simplicity, we regard the case $J=1$ as a special case of $J=2$ by assuming $c_{1,1}(t)=c_{1,2}(t)$ and denote
$$
C^{(1)}(t)=\{c_{1,1}(t),c_{1,2}(t)\}.
$$
\item  The $\textbf{critical~intervals}$ for the first step:
$$I_{1,j}(t)=\{x:|x-c_{1,j}(t)|\leq {\mathcal{N}_1}^{-{\hat{\epsilon}^{-1}}}\},\ j=1, 2~and~I_1(t)=I_{1,1}(t)\cup I_{1,2}(t).$$
Here we have to consider the following two cases:
\begin{enumerate}
\item  $I_{1,1}(t)\bigcap I_{1,2}(t)\not=\emptyset.$ We denote this step by Type $\textbf{II}_{1}$. Note in this case we have
$\min\limits_{x\in I_1}|\partial_x g_1|=0.$ Furthermore \eqref{g_N} and the cos-type condition on $v$ imply for any $x\in I_1(t),$ it holds that \begin{equation}\label{2jfttt}\frac{1}{2}c<\left\vert\frac{\partial^2 g_1(x,t)}{\partial x^2}(x,t)\right\vert<C.\end{equation}
%Thus in this case, $g_1(x,t)$ satisfies $\mathcal{N}_1^{-1}-$ \textbf{resonant} condition on $I_0\times [\inf v-\frac{2}{\lambda},\sup v+\frac{2}{\lambda}].$.
\item \ $I_{1,1}(t)\bigcap I_{1,2}(t)= \emptyset.$ We denote this step by Type $\textbf{I}_{1}.$ Note it holds from \eqref{g_N} and the cos-type condition on $v$ that in this case for $j=1,2$ we have
\begin{equation}\label{1jdfthh0}\mathcal{N}_1^{2{\hat{\epsilon}^{-1}}}>|\partial_x g_1(c_{1,j}(t),t)|,\ |\partial_t g_1(c_{1,j}(t),t)|>c\mathcal{N}_1^{-{\hat{\epsilon}^{-1}}}>\mathcal{N}_1^{-2{\hat{\epsilon}^{-1}}}.\end{equation} Note \eqref{2jfttt} still holds true. Thus for any $t\in [\inf v-\frac{2}{\lambda},\sup v+\frac{2}{\lambda}] $ and any $0<\eta \leq \mathcal{N}_1^{-\hat{\epsilon}^{-2}}$, it holds that
\begin{equation}\label{1jdfthh}g_1(x,t')\sim_{1,\eta^{\frac{1}{2}}} \partial_x g_1(c_{1,j}(t),t)(x-c_{1,j}(t))+\partial_t g_1(c_{1,j}(t),t)(t'-t)\end{equation} on  $(c_{1,j}(t)-\eta,c_{1,j}(t)+\eta)\times (t-\eta,t+\eta)$.

Therefore  $g_1(x,t)$ satisfies $\mathcal{N}_1^{-\hat{\epsilon}^{-2}}-$ \textbf{non-resonant} condition on $I_0\times [\inf v-\frac{2}{\lambda},\sup v+\frac{2}{\lambda}].$
\end{enumerate}
\item The $\textbf{returning~time}$ for the second step:

Let
$$ r_1^{+}(x,t)(resp.~r_1^{-}(x,t))\ge q_N^2:I_1(t)\rightarrow \Z^+$$
be the first forward (resp. backward) returning time of $x\in I_1(t)$ back to $I_1(t)$ {\it after $q_N^2-1.$}

  Let $r_1(t)=\min\{r_1^+(t),r_1^-(t)\}$ with $r_1^{\pm}(t)=\min_{x\in I_1(t)}r_1^{\pm}(x,t)$. Denote $$m_1^{\pm}(t):=\min\{n \vert (I_{1,1}+n\alpha)\bigcap I_{1,2}\neq \emptyset,~\pm n\geq 1\}.$$
\end{enumerate}
Note that if $r_1>q_N^2$, by the Diophantine condition, then
    $$r_1(t)=\min\limits_{X=+,-}\min\limits_{x\in I_1(t)}\{r^{X}_1(x,t)\}>\mathcal{N}_1^c.$$

%%%%%%%%%%%%%%%%%%%%%%%%%%%%%%%%%%%%%%%%%%%%%%%%%%%%%%%%

 Now we assume that for $i\geq 1,$  the following are well defined (will be proved in Appendix).
\begin{enumerate}
\item  The $critical~points$ for the $i$th step:
$$C_i(t)=\{c_{i,1}(t),c_{i,2}(t)\}$$
with $c_{i,j}(t)\in I_{i-1,j}(t)$ minimizing $\{|g_i(x,t)||~x\in I_{i-1,j}(t)\}$
(it is possible that $c_{i,1}(t)=c_{i,2}(t)$).

%\begin{remark} The graphs of type I and II are easy to be understood. The function of type III, as one can see, can divided to $\arctan(l^2\tan f_1(x))$, which is similar to a pulse function, and $-f_2(x)$, which is exactly of type I. One can directly calculate that if $x$ locates far from the zero of $f_1$, the first part is small enough to be neglected, and therefore $f\approx -f_2$. If $x$ is near the zero of $f_1$, the first part has a drastic change from $-\pi$ to 0, which leads to a bifurcation of no zero, one zero, and two zeros of $f$. For more details, see \cite{wz1}.
%\end{remark}

\item  The $critical~intervals$ for the $i$th step:
$$I_{i,j}(t)=\{x:|x-c_{i,j}(t)|\leq \mathcal{N}^{-1}_i\}~and~I_i(t)=I_{i,1}(t)\cup I_{i,2}(t).$$

\item  The $angle~function$ for the $(i+1)$-th step:
$$g_{i+1}(x,t)=s_{r_i(t)}(x,t)-u_{r_i(t)}(x,t):D_i\rightarrow \R\mathbb{P}^1,$$
where we define
$$D_i(t'):=\{(x,t):x\in I_i(t),t\in (t'-\lambda^{-q_{N+i-1}},t'+\lambda^{-q_{N+i-1}})\}.$$

\item The $returning~time$ for the $(i+1)$th step: (denote $r_0:=0$)
$$ r_{i}^{\pm}(x,t)\ge \max\{q^2_{N+i-1},r_{i-1}\}: I_{i}(t)\rightarrow \Z^+,$$
that is, the first forward (backward) returning time (back to $I_i(t)$) {\it after} $\max\{q^2_{N+i-1},r_{i-1}\}-1.$  Let $r_i(t)=\min\{r_i^+(t),r_i^-(t)\}$ with $r_i^{\pm}(t)=\min_{x\in I_i(t)} r_i^{\pm}(x,t)$ (for convenience, we assume $r_0(t)\equiv 0$).
\end{enumerate}

Denote $\tilde{I}_{n,j}=\{x\in I_{n,j}||g_{n+1}\ {\rm mod\ }\pi|\le \lambda^{-r^{\frac{1}{700}}_{n}}\} ,~j=1,2.$

\begin{definition}[Types~of~step~i]

\

\textbf{(Non-resonant case)} If  $T^k{I_{i,1}} \bigcap I_{i,2}=\emptyset$ for each
$0 \le |k| <  q^2_{N+i-1}$, then we say step $i$ belongs to Type $\textbf{I}_{i}$ (or $\textbf{I}$).

\textbf{(Resonant case)} If there exists  $0 \le |k| <  q^2_{N+i-1}$ such that $T^k{I_{i,1}} \bigcap I_{i,2}\not=\emptyset$,  then we say step $i$ belongs to Type $\textbf{II}^k_{i}$ (or $\textbf{II}_{i}$,\ $\textbf{II}$).

\

We write $X_i\rightarrow Y_{i+1}$ if the induction goes from step $i$ of type $X_i$ to step $i+1$ of type $Y_{i+1},$ where
$X,Y\in\{\bf{I},\bf{II}\}$.
\end{definition}

\begin{theorem}\label{theorem12} Let $t\in [\inf v-\frac{2}{\lambda},\sup v+\frac{2}{\lambda}]$. For any $\epsilon>0,$ there exists $\lambda_0=\lambda_0(v,\alpha,\epsilon)>0$ such that for all $\lambda>\lambda_0$, the following hold for each $i\geq 2$ and $x\in I_i$.
             $$|c_{i,j}(t)-c_{i+1,j}(t)|<C\lambda^{-\frac{3}{4}r_{i-1}},j=1,2$$ and
$$\|A_{\pm r_{i}}(x,t)\|>\lambda^{(1-\epsilon)r_{i}}.
$$

         For $X,Y\in \{x,t\}$ it holds that
$$\left\vert \|A_{\pm r_i}(x,t)\|^{-1}\partial_X\|A_{\pm r_i}(x,t)\|\right\vert\leq  r_ie^{(\log \|A_{\pm r_i}(x,t)\|)^c}, \left\vert \|A_{\pm r_i}(x,t)\|^{-1}\partial^2_{XY}\|A_{\pm r_i}(x,t)\|\right\vert\leq  r_i^2e^{(\log \|A_{\pm r_i}(x,t)\|)^c}.$$

    Furthermore  one of the following three conclusions for $g_{i+1}$ holds true by induction:
    \begin{enumerate} \item
  For the case of Type $\mathbf{I}_{i}$, $g_i$ satisfies~$\lambda^{- (\log \mathcal{N}_{i})^C}$-\textbf{non-resonant}~condition~on~$D_i(t).$ And it holds that $$\ \partial_x g_{i}(c_{i,1},t)\cdot \partial_y g_{i}(c_{i,2},t)<0,~\|g_{i}-g_{i-1}\|_{C^2(D_{i-1})}\leq C\lambda^{-\frac{3}{2}r_{i-1}}.$$

   \vskip 0.4cm
        %\item  In the case of Type $\mathbf{II}^0_{i},$ $I_{i,1}(t)\cap I_{i,2}(t)\neq \emptyset$ and (\ref{jiaoducha}) holds true. Moreover, $\left\vert\{x\in I_{i} \vert |\partial_x g_{i}(x,t)|=0\}\right\vert=1$. In addition, if we denote $\{x\in I_{i} \vert |\partial_x g_{i}(x,t)|=0\}=\{\tilde{c}_{i}\}$, then
%on~$I_{i,1}\bigcup I_{i,2}$ we have
%$$|g_{i}(x,t)-g_{i}(\tilde{c}(t),t)|>\frac{1}{2}(1-\epsilon)c_0(x-\tilde{c}(t))^2.$$

           % \item In case $(i)_{II}$, $I_{i-1,1}(t)\cap I_{i-1,2}(t)=\emptyset.$ We have multiple points minimizing $|g_i(x,t)|$ on each $I_{i-1,j}(t)$ for $j=1,2.$ Furthermore    $c_{i,j}(t)$ can be defined as the minimal point of $g_i:I_{i-1,j}(t)\rightarrow \R\mathbb{P}^1$ that corresponds to
           % $x_2~ of ~in~Lemma 10$ for case $d>d'$.

                \item In the case of Type $\mathbf{II}^k_{i}$ with $0\le |k|<q^2_{N+i-1}$, there exists  $ l_k\in \R$ satisfying $\lambda^{\frac{1}{2}|k|}\leq l_k\leq \lambda^{2|k|}$ such that
                \begin{equation}\label{AKbound}I_{i,2}(t)\cap(I_{i,1}(t)+k\alpha)\neq\emptyset,\quad              \|A_{k}(x,t)\|\sim_{0,\mathcal{N}_{s(k)+1}^{-1}} l_k.\end{equation} More precisely, it holds that
                \begin{equation}\label{esszero}|[(I_{i,1}(t)+k\alpha)-I_{i,2}]\bigcup [I_{i,2}-(I_{i,1}(t)+k\alpha)]|\leq \lambda^{-\frac{1}{100}r_i}.\end{equation}
                For each $j=1,2,$ it holds that
                %$|\{x\in I_{i,j} \vert g_i(x,t)=0\}|\leq 2$ and
                $1\le |\{x\in I_{i,j} \vert \partial_xg_i(x,t)=0\}|\leq 2.$
                %And for the case  $|\{x\in I_{i,j} \vert g_i(x,t)=0\}|\ =2$, there exists  $\tilde{c}_{i,j}(t)$ between two zeros of $\{x\in I_{i,j} \vert g_i(x,t)=0\}$ such that \begin{equation}\label{type3zuida}\partial_x g_{i}(\tilde{c}_{i,j}(t),t)=0, \quad |g_{i}(\tilde{c}_{i,j}(t),t)|\leq \min\{\mathcal{N}_{i}^{-1},l_{k}^{-8}\}\end{equation}
                Let $(\tilde{c}_{i,j}(t),\ \tilde{c}^*_{i,j}(t))=\{x\in I_{i,j} \vert \partial_xg_i(x,t)=0\}$  such that $|g_{i+1}(\tilde{c}_{i,j}(t),\ t)\ {\rm mod\ }\pi|\le |g_{i+1}(\tilde{c}^*_{i,j}(t),\ t)\ {\rm mod\ }\pi|$ (it is possible $\tilde{c}_{i,j}(t)=\tilde{c}^*_{i,j}(t)$), then \begin{equation}\label{partxj*}c\leq \vert \partial_t g_{i}(\tilde{c}_{i,j}(t),t)\vert\leq q^C_{N+i-1}.\end{equation}
               Furthermore, for
         $\tilde{I}_{i,j}(t)=(\tilde{c}_{i,j}(t)-l_{k}^{-1}\mathcal{N}_{i}^{-2\hat{\epsilon}^{-1}},\tilde{c}_{i,j}(t)+l_{k}^{-1}\mathcal{N}_{i}^{-2\hat{\epsilon}^{-1}}),$
there exists $\tilde{d}_i\in \R$ satisfying $\lambda^{\frac{1}{2}|k|}\leq \tilde{d}_i\leq \lambda^{2 |k|}$ such that
$$\begin{array}{ll}&\vert g_{i}(\tilde{c}_{i,j}(t),t)-g_{i}(x,t) \vert\sim_{2,\mathcal{N}_{i}^{-1}} \tilde{d}_i(x-\tilde{c}_{i,j}(t))^2.
\end{array}$$
Moreover, for $j = 1, 2$, we have
\begin{equation}\label{gnrange}\begin{array}{ll}&\max\limits_{x \in I_{i,j}} g_{i+1}(x,t) - \min\limits_{x \in I_{i,j}} g_{i+1}(x,t) \leq \pi - c\lambda^{-100|k|}, \\
&\pi - C\lambda^{-\frac{1}{100}|k|} \le\max\limits_{x \in I_{s(k),j}} g_{s(k)+1}(x,t) - \min\limits_{x \in I_{s(k),j}} g_{s(k)+1}(x,t).
\end{array}\end{equation}

Finally on $I_{i,j}-\tilde{I}_{i,j},$ we have
$$|\partial_x g_{i,j}(x,t)|,\quad |g_{i,j}(x,t)|>[\min\{\mathcal{N}_{i}^{-\hat{\epsilon}^{-1}},l_{k}^{-8}\}]^2.$$

 \end{enumerate}
\end{theorem}

 \begin{remark}
 \eqref{esszero} implies that as $i$-th step $critical~points,$ $c'_{i,1}(t)$ essentially is the $ (-k)$-iteration of $c_{i,2}(t)$ while $c'_{i,2}(t)$ essentially is the $k$-iteration of $c_{i,1}(t)$ under $x\longmapsto x+\alpha$ on $\R/\Z.$ Thus we call $c'_{n,i},i=1,2,$ {\it non-essential} critical points since they share the same dynamical behaviors with $c_{n,j}, j\not=i$.
 \end{remark}

\subsection{The Large Deviation Theorem and Avalanche Principle}\label{4.2}For our purpose, LDT in \cite{wz1} is needed. We state it without proof.
\begin{theorem}\label{Th18} Let $v$ and $\alpha$ be as in Theorem \ref{Th1}. Then there exist $\lambda_1=\lambda(v,\alpha),\ i_0=i_0(\alpha)\in\Z^+$ and $0<\sigma<1,$ such that for each $\lambda>\lambda_1$ and each $i\geq i_0,$ it holds that
$$Leb\{x\in \R/{\Z} | \frac{1}{i}\log\|A_i(x)\|\geq \frac{9}{10}\log\lambda\}<\lambda^{-i^{\sigma}}.$$\label{ldt}
\end{theorem}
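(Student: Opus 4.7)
My plan is to prove this LDT through the induction scheme of Theorem~\ref{theorem12} combined with a Chebyshev-style counting of how the orbit $\{x+k\alpha\}_{0\le k<i}$ distributes itself against the nested critical neighborhoods $I_n(t)$. The strategy follows the standard route for positive-Lyapunov-exponent LDTs in the large-coupling regime: identify a small exceptional set of phases on which the orbit bunches near critical points, and show that its Lebesgue measure is bounded by the geometric rate $\lambda^{-ci}$ coming from the Diophantine exponent $\tau$ and the doubly exponential growth of the return times $r_n$.

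First I would fix the scale $i\ge i_0$ and decompose $\R/\Z$ into a \emph{regular} and an \emph{exceptional} part according to the orbit statistics of $x\mapsto x+\alpha$. A phase $x$ is declared regular if for every scale $n$ with $r_n\le i$ the orbit $\{x+k\alpha\}_{0\le k<i}$ meets $I_n(t)$ at most $\max\{1,\,Ci|I_n(t)|\}$ times. For regular $x$ I would iterate Lemma~\ref{lm5} along the return-time block decomposition of $A_i(x)$: the non-resonance hypothesis is guaranteed by the sharp estimate \eqref{mzz} together with the disjointness $I_{n,1}(t)\cap I_{n,2}(t)=\emptyset$ in cases $(i)_I$ and $(i)_{III}$, and Lemma~\ref{lemma4} provides the single-step expansion $\lambda(x,t)>\lambda$. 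This yields a two-sided control of $\frac{1}{i}\log\|A_i(x)\|$ inside a narrow window around $\log\lambda$, so the set of $x$ for which the finite-scale growth falls on the wrong side of the threshold $\frac{9}{10}\log\lambda$ is contained in the exceptional set up to a negligible correction.

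Second I would bound the Lebesgue measure of the exceptional set by a Chebyshev-type count. The Diophantine condition $\alpha\in DC_\tau$ yields a discrepancy estimate: the visit count of a length-$i$ orbit to any interval of length $\delta$ deviates from $i\delta$ by at most $O(\delta^{-1/\tau})$. Combining this with the size $|I_n(t)|\asymp 2^{-n}q_{N+n-1}^{-8\tau}$ and the doubly exponential growth of $r_n$, the measure contribution at each scale decays geometrically, so the sum over the finitely many scales with $r_n\le i$ is dominated by its top-scale contribution. A careful accounting of the exponents yields the targeted rate $\lambda^{-ci}$ for an explicit $c=c(\tau,\gamma,v)\in(0,1)$, after choosing $i_0$ large enough to absorb lower-order terms.

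The principal obstacle is controlling the accumulated multiplicative loss at near-resonances. At each near-resonant return the angle function $g_n$ falls into the type~III regime of Lemma~10, where the bifurcation between zero, one, and two minima can in principle shrink the product norm by a factor up to $\lambda^{-r_n}$. I would address this by interleaving the finer resonance classification of Theorem~\ref{theorem12}(5) with the type~III lower bounds of Lemma~10 (in particular \eqref{2.10} and the estimate $|f(x)|>cr^3$ off $B(X,\frac{r}{6})$), so that each near-resonance contributes at worst a bounded multiplicative perturbation rather than a catastrophic loss. The combinatorial bookkeeping mirrors the argument of \cite{LWY} and \cite{wz1}, and I would defer the explicit tracking of the constants producing the rate $\lambda^{-ci}$ to the appendix, with the essential input being the sharp per-step estimates \eqref{mzz} and \eqref{dkz}.
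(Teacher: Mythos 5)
Your plan has a genuine gap, and it sits exactly where the theorem lives. The exceptional set of this LDT is not a set of phases whose orbit visits the critical intervals too often: with $L$ fixed by $q_{N+L-1}^4\le i\le q_{N+L}^4$, the Diophantine condition already forces, deterministically for \emph{every} $x$, at most one visit to each component of $I_{L+1}$ during a length-$i$ orbit, because $|I_{L+1,j}|\sim 2^{-L}q_{N+L}^{-8\tau}\ll \gamma q_{N+L}^{-4\tau}\le\min_{1\le j\le i}\|j\alpha\|_{\R/\Z}$. So your visit-count notion of regularity is vacuous at the relevant scale, and the Chebyshev/discrepancy step cannot produce the bound $\lambda^{-ci}$ anyway: your stated deviation $O(\delta^{-1/\tau})$ is of size $q_{N+L}^{8}\ge i^{2}$ when $\delta=|I_{L+1}|$, hence worse than trivial, and no equidistribution estimate yields an \emph{exponentially} small measure. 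The true source of failure of growth is a phase landing exponentially close to a critical point: if $T^{l}x$ lies within $\lambda^{-c'i}$ of a zero of $g_{L+1}$, the angle between the stable direction of the forward block and the unstable direction of the backward block is exponentially small and $\|A_i(x)\|$ genuinely drops below $\lambda^{\frac{9}{10}i}$. Your claim that each near-resonance is only a \emph{bounded multiplicative perturbation} is false without excising such $x$: the non-degeneracy bound $|f(x)|>cr'^3$ of the type II/III lemmas holds only off a neighborhood of the minima, and precisely those phases must be removed. Since your regular set (defined through visit counts) still contains them, the asserted two-sided control of $\frac1i\log\|A_i(x)\|$ on the regular set fails.

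For comparison, the paper's proof (Appendix A.4) is organized around this point: fix $L$ as above, use the Diophantine condition to reduce to at most two visit times $l_1,l_2$ of the orbit to $I_{L+1}$, decompose $A_i$ at those times, apply the induction estimates of \cite{wz1} (the analogue of \eqref{mzz}, Lemma \ref{lm5}, and the angle comparison $\|\tilde g_l-g_{L+1}\|_{C^0}\le(\min\{\|A_l\|,\|A_{i-l}\|\})^{-3/2}$, with Lemma \ref{lmwz0} allowing blocks that do not start or end in $I_n$) to each long block, and conclude $\|A_i(x)\|\ge\lambda^{\frac{9}{10}i}$ for every $x$ outside a union of at most $Ci$ balls of radius $\lambda^{-\frac{1}{500}i}$ centred at backward translates of the critical points of $g_{L+1}$; the measure bound $\lambda^{-ci}$ is then a plain union bound over the $\le i+1$ possible hitting times, not a statistical statement. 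To repair your write-up, replace the visit-count/discrepancy definition of the exceptional set by this hitting-set definition (orbit entering an exponentially small neighborhood of the critical points at some time $\le i$), keep your block decomposition only on its complement, and drop the claim that resonant returns can be absorbed as bounded perturbations.
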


\begin{remark} Note that the LDT above is weak in the sense that $\sigma<1$. However, in our proof, this weak LDT or even a much  weaker one as follows is sufficient for a sharp estimate on the LE: $$Leb\{x\in \R/{\Z} | \frac{1}{i}\log\|A_i(x)\|\geq \frac{9}{10}\log\lambda\}<\lambda^{-\left(\log i\right)^{\hat{\epsilon}^{-1}}}.$$
\end{remark}
Combining Theorem \ref{Th1} with the Avalanche Principle, see \cite{goldsteinschlag}, \cite{bourgainjitomirskaya}, \cite{young}, we can  obtain the following lemma, the proof of which can be found in \cite{wz1}.
     \begin{lemma}\label{lm27} Let $v,\alpha, \lambda ~and~ \sigma$ be as in Theorem \ref{Th18}. Then for all large $n\in \Z^+$ and all $E\in [\lambda\inf v -2,\lambda\sup v +2],$ it holds that
     $$\big\vert L_n(E)+L(E)-2L_{2n}(E)\big\vert<\lambda^{-\frac{n^{\sigma}}{2}}.$$\label{ap}
     \end{lemma}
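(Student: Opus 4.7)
The plan is to apply the Avalanche Principle (Lemma~\ref{gs}) to a long telescoping product built from $A_n$, verify its hypotheses on a large set using the Large Deviation Theorem (Theorem~\ref{ldt}), integrate the resulting identity, and finally use a bootstrap to replace $L_{Nn}(E)$ by $L(E)$.

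Concretely, fix large $n$ and a parameter $N$ (to be optimized). Set $E^{(j)}(x):=A_n\bigl(x+(j-1)n\alpha\bigr)$ for $1\le j\le N$, so that $E^{(N)}\cdots E^{(1)}=A_{Nn}(x)$ and $E^{(j+1)}E^{(j)}=A_{2n}\bigl(x+(j-1)n\alpha\bigr)$. From Theorem~\ref{ldt} one extracts good sets $\mathcal G_\ell\subset \mathbb R/\mathbb Z$ of measure $\ge 1-\lambda^{-c\ell}$ on which $\tfrac{1}{\ell}\log\|A_\ell(x)\|\ge \tfrac{9}{10}\log\lambda$. Let $\mathcal G:=\bigcap_{j=0}^{N-1}\bigl((\mathcal G_n-jn\alpha)\cap(\mathcal G_{2n}-jn\alpha)\bigr)$; its complement has measure $\le 2N\lambda^{-cn}$. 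With $\mu:=\lambda^{9n/10}$, on $\mathcal G$ hypothesis (gs1) holds since $\|E^{(j)}\|\ge \mu\ge N$ for $n$ large. For (gs2), combine the trivial bound $\log\|E^{(j)}\|,\log\|E^{(j+1)}\|\le n(\log\lambda+C_v)$ with the LDT lower bound $\log\|E^{(j+1)}E^{(j)}\|\ge \tfrac{9n}{5}\log\lambda$ to obtain
$$\log\|E^{(j+1)}\|+\log\|E^{(j)}\|-\log\|E^{(j+1)}E^{(j)}\|\le \tfrac{n}{5}\log\lambda+2nC_v<\tfrac{1}{2}\log\mu,$$
the last inequality holding once $\log\lambda\gg C_v$.

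Then (gs3) yields, for every $x\in\mathcal G$,
$$\Bigl|\log\|A_{Nn}(x)\|+\sum_{j=2}^{N-1}\log\|A_n(x+(j-1)n\alpha)\|-\sum_{j=1}^{N-1}\log\|A_{2n}(x+(j-1)n\alpha)\|\Bigr|\le \frac{CN}{\mu}.$$
Integrating over $\mathbb R/\mathbb Z$, using the trivial bound $\log\|A_m\|\le m(\log\lambda+C_v)$ on the complement of $\mathcal G$ (whose contribution is $\lesssim N^2n\log\lambda\cdot\lambda^{-cn}$), and dividing by $Nn$ gives
$$L_{Nn}(E)+\tfrac{N-2}{N}L_n(E)-\tfrac{2(N-1)}{N}L_{2n}(E)=O\!\bigl(\tfrac{1}{n\mu}\bigr)+O\!\bigl(N\log\lambda\cdot\lambda^{-cn}\bigr).$$
Rearranging,
$$L_n(E)+L(E)-2L_{2n}(E)=\bigl(L(E)-L_{Nn}(E)\bigr)+\tfrac{2}{N}\bigl(L_n(E)-L_{2n}(E)\bigr)+(\text{AP error}).$$

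It remains to dominate each piece by $\lambda^{-cn/2}$. Applying the same identity at dyadic scales $n,2n,4n,\dots$ produces, via telescoping the convex-combination bounds, the a priori estimates $|L_{2^k n}(E)-L(E)|\lesssim\lambda^{-c\,2^k n}$ and $|L_n(E)-L_{2n}(E)|\lesssim\lambda^{-cn}$. Choosing $N$ to be a dyadic integer with $\lambda^{cn/2}\le N\le \lambda^{cn/3+9n/10}$ (which is consistent for small enough $c$), all four error contributions collapse to $O(\lambda^{-cn/2})$, giving the claimed bound.

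The main obstacle is the self-referential nature of step four: the naive bound $|L_n-L_{2n}|\le C\log\lambda$ does not permit the choice of $N$ needed for the AP error to be small, so one must run the AP identity first at scale $n$ and then re-inject the consequence at scale $2n,4n,\dots$ to upgrade $|L_n-L_{2n}|$ from $O(\log\lambda)$ to $O(\lambda^{-cn})$. Balancing the four competing terms (the AP tail $N/(n\mu)$, the bad-set integral $N^2\lambda^{-cn}\log\lambda$, the tail $|L_{Nn}-L|$, and the $O(1/N)$ residual) is the only delicate bookkeeping; all remaining steps are standard.
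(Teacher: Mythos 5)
Your overall route matches the paper's: apply the Avalanche Principle (Lemma~\ref{gs}) to $N$ blocks of length $n$, use Theorem~\ref{ldt} to verify its hypotheses off a bad set of measure $\lesssim N\lambda^{-cn}$, integrate to get the convex-combination identity
$$L_{Nn}+\tfrac{N-2}{N}L_n-\tfrac{2(N-1)}{N}L_{2n}=O\!\bigl(\tfrac{1}{n\mu}\bigr)+O(N\log\lambda\cdot\lambda^{-cn}),$$
and then rearrange. All of that is correct and is essentially what the paper does.

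The gap is in the final chaining step, where you assert that applying the identity at dyadic scales $n,2n,4n,\dots$ and ``telescoping'' yields $|L_{2^kn}-L|\lesssim\lambda^{-c\,2^kn}$ (hence $|L_n-L_{2n}|\lesssim\lambda^{-cn}$ and $|L-L_{Nn}|\lesssim\lambda^{-cn/2}$). Setting $a_m:=L_m-L\ge 0$, the convex-combination identity only delivers $a_n\le 2a_{2n}+O(\lambda^{-cn/2})$, and iterating this dyadically gives $a_n\le 2^Ka_{2^Kn}+O(\lambda^{-cn/2})$. The term $2^Ka_{2^Kn}$ does not go to zero unless you already know an a priori exponential rate for $a_m$ — exactly what you are trying to prove — so the dyadic bootstrap is circular, with no base case. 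The paper sidesteps this by chaining along a \emph{super-exponentially} growing sequence $n_1<n_2<\cdots$ with $n_{s+1}\approx n_s\lambda^{cn_s/2}$ (so $\log n_{s+1}\sim n_s$): applying the AP identity at block length $n_s$ with $\approx\lambda^{cn_s/2}$ blocks at total lengths $n_{s+1}$ and $2n_{s+1}$ gives directly $|L_{n_{s+1}}-L_{2n_{s+1}}|\lesssim\lambda^{-cn_s/2}$, hence $|L_{n_{s+1}}-L_{n_{s+2}}|\lesssim\lambda^{-cn_s/2}$; the sum $\sum_{s}|L_{n_{s+1}}-L_{n_s}|$ then converges by the rapid growth of $n_s$ (no claim about the decay of $a_m$ is needed), and one identifies $L-L_{n_2}$ as the telescoping sum using only the qualitative fact $L_{n_s}\to L$. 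That change — number of blocks growing with the block length, not a fixed doubling — is the crucial missing idea. A secondary issue: your proposed window $\lambda^{cn/2}\le N\le\lambda^{cn/3+9n/10}$ allows $N$ so large that the bad-set contribution $N\lambda^{-cn}\log\lambda$ exceeds $\lambda^{-cn/2}$; one must take $N\approx\lambda^{cn/2}$.
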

     The following result is obtained with the help of Lemma \ref{lm27}, see \cite{wz1} and \cite{LWY}.
\begin{corollary}\label{log-conti}
Let $v,\alpha, \lambda ~and~ \sigma$ be as in Theorem \ref{Th18}. Then it holds that
$L(E)$ is continuous.
  \end{corollary}

\section{The resonance and the spectral gaps}\label{class}

In this section, we will study the relationship between the resonance and the properties of the spectral gaps. First by Theorem \ref{theorem12}, we have

 \begin{lemma}\label{IIIheI} For $t\in [\inf v-\frac{2}{\lambda},\sup v+\frac{2}{\lambda}],$ denote $$\{\hat{k}_1(t),\hat{k}_2(t),\cdots\}=\{k\in \Z \vert ~There~exists~i\in \Z_+~such~that~step~i~belongs~to~\textbf{II}_i^{k}\}$$ with $|\hat{k}_i|< |\hat{k}_{i+1}|.$ Then there must exist $j_1<j_2<j_3$ such that step $j_1$ belongs to $\textbf{II}_{j_1}^{\hat{k}_i}, $ step $j_2$ belongs to $\textbf{I}_{j_2}$ and step $j_3$ belongs to $\textbf{II}^{\hat{k}_{i+1}}_{j_3}.$
 \end{lemma}

The following lemma directly follows from Theorem \ref{theorem12}, which shows the  fact that
if we move the parameter $t$ in a small neighborhood, the image of the angle function $g_n(x,t)$ also moves a  distance of a similar order.
\begin{lemma}\label{derivative-t-gn}
For any fixed $t_0\in [\inf v-\frac{2}{\lambda},\sup v+\frac{2}{\lambda}],$  the following several properties hold true.
\begin{enumerate}
\item[a:] For any $n\in \Z_+,$ it holds that \begin{equation}\label{lm17-main}c<\left\vert\frac{\partial g_n(x,t)}{\partial t}\right\vert_{C^0(D_n(t_0))}<q_{N+n-1}^C.\end{equation}
\item[b:]\label{boflem27}     Let $\hat{k}_i(t)$ be defined in Lemma \ref{IIIheI}. If there exist $n_1,n_2$ such that step $n_1$ belongs to Type $\textbf{II}_{n_1}^{\hat{k}_i(t)}$ and step $n_2$ belongs to Type $\textbf{II}_{n_2}^{\hat{k}_{i+1}(t)}.$ Then for any $n_1<n<n_2$ such that step $n$ belongs to Type $\textbf{I},$ and we have
    \begin{equation}\label{gnjinsss}g_n(x,t)\sim_{1,|I_n|^C} a_{n,i}(x-c_{n,i})+b_{n,i}(t-t_0),\quad (x,t)\in  [D_n(t_0)]^C\end{equation} with $\hat{k}_{i+1}^{-C}\leq |a_{n,i}|\leq C,$~$sgn(a_{n,1})=-sgn(a_{n,2})$ and $c\leq |b_{n,i}|\leq q_{N+n-1}^C$.

\item[c:] It holds true
  \begin{equation}\label{k2k1}\log |\hat{k}_{i+1}|\geq |\hat{k}_i|^{\hat{\epsilon}}.\end{equation}

\end{enumerate}

\label{lmg}

\end{lemma}

\begin{corollary} \label{lmg'} For any fixed $t_0\in [\inf v-\frac{2}{\lambda},\sup v+\frac{2}{\lambda}]$ and $i\in\Z_+,$ it holds that
\begin{equation}\label{lmg'-1}\left\vert\frac{d(c_{i,1}(t)-c_{i,2}(t))}{dt}\right\vert>c {\rm\ for\ } ~t\in Q,\end{equation} where $$Q=B(t_0,\lambda^{-q_{N+n-1}})\bigcap \{t\in [\inf v-\frac{2}{\lambda},\sup v+\frac{2}{\lambda}]: g_{i}(c_{i,j}(t),t)=0,j=1,2\}.$$

Moreover if step $i$ belongs to Type $\textbf{I}_1$ for$\ any\ ~t\in B(t_0,\lambda^{-q_{N+i-1}})$, then it holds that
\begin{equation}\label{lmg'-2}\left\vert\frac{d(c_{i,1}(t)-c_{i,2}(t))}{dt}\right\vert< \lambda^{q_{N+i-2}^c}.\end{equation}

\end{corollary}

\begin{proof} By the help of \eqref{lm17-main} of Lemma \ref{derivative-t-gn} and Implicit Function Theorem, to obtain \eqref{lmg'-1} we only need to give the upper bound with some absolute constant $C,$ which directly follows from Theorem \ref{theorem12}. Similarly, since $g_i(x,t)$ is of  $type_I$, from Theorem \ref{theorem12}, we $$\left\vert\partial_x g_{i}\right\vert_{C^0(I_i)}\geq \lambda^{-r^c_{i-1}}\geq \lambda^{-q^c_{N+i-2}},$$
which yields \eqref{lmg'-2}.
\hfill\qed\end{proof}

 The following theorem give some estimates on the spectral gaps.

 \begin{theorem}\label{15} For each $\lambda>\lambda_0$, there exists a subset $\mathcal{K}(\lambda)\subset \mathbb{Z}$ such that $\mathbb{R}-\Sigma^{\lambda}=\bigcup_{k\in \mathcal{K}(\lambda)} G_k,$ where $G_k(\lambda)=(t^k_-(\lambda), t^k_+(\lambda))$  such that  the following  conclusions hold true.

%\item ${EP} = \{t \vert \left\vert\{n \vert g_{n}(x,t)~is~of~type_I\}\right\vert<+\infty\}\bigcap \Sigma^{\lambda}\subset \mathcal{EP}.$ Moreover two end points of each spectral gap correspond to the same integer (say $k_i$) in the definition of $\mathcal{EP}.$
\begin{itemize}
\item[]{\rm (1)} For each $k\in \mathcal{K}(\lambda)$, there exists some $l(k)\in \N$ such that  \begin{equation}\label{var-ep} \left\vert(c_{n,1}(t^{\pm}_k(\lambda))+k\alpha)- c_{n,2}(t^{\pm}_k(\lambda)){\rm \ (mod}\ 1{\rm)}\right\vert\leq \lambda^{-{r^c_{n-1}}},\quad   n\geq l(k).
 \end{equation}

    \vskip 0.2cm
    \noindent
    %There exist two sequence $\{{t}^{k}_{\pm,n}\}_{n\ge 1}$ such that ${\rm \ for\ any\ } n\geq s(k)$, $g_n({t}^{k}_{\pm,n})$ is tangent to $y=0 \ (mod\  \pi)$ and
%\begin{equation}\label{tangent}\vert {t}^{k}_{\pm,n}-{t}^{k}_{\pm} \vert \leq C\lambda^{-\frac{4}{3}r_{n-1}};\end{equation}

\item[]{\rm (2)} For each $k\in \mathcal{K}(\lambda)$, it holds that\begin{equation}\label{pugapguj}\lambda^{-C|k|} \leq \left|G_{k}(\lambda)\right|\leq \lambda^{-c|k|}.\end{equation}
\vskip 0.2cm
\item[]{\rm (3)} For $k,\ k'\in \mathcal{K}(\lambda)$ with $k\not=k'$, it holds that \begin{equation}\label{dgkij}dist(G_{k},G_{k'})\geq (\max\{\left\vert |k|-|k'|\right\vert, \lambda^{\left(\min\{|k|,|k'|\}\right)^{c}}\})^{-C}. \end{equation}
\label{lm15}
\end{itemize}
\end{theorem}

%\noindent {\bf The proof of Theorem \ref{15}}

\begin{proof}

First we list some classic results on \textbf{uniformly hyperbolic} systems ($\mathcal{UH}$ for short) without proof, one can see \cite{z1} for details.
\begin{proposition}[\cite{johnson}]\label{johnson} For irrational $\alpha,$ it holds that
$$\Sigma^{\lambda}=\{t \vert (\alpha,A^{\lambda(t-v)})\notin \mathcal{UH}\}.$$
\end{proposition}

\begin{proposition}[\cite{yoc}]\label{yocc} $(\alpha,A^{\lambda(t-v)})\in \mathcal{UH}$ if and only if there exists $d>0$ and $\rho>1$ such that
$$\|A_{n}(x,t)\|\geq d\rho^{|n|}$$ for all $n\in \Z$ and for all $x\in \R/\Z.$
\end{proposition}

\begin{proposition}[\cite{z1}, Lemma 11]\label{uh} Let $\{B^{(k)}\}_{k\in \Z}\subset SL(2;\R)$ be a bounded sequence,  $\beta=\inf\limits_{k\in \Z}\|B^{(k)}\|$

and $\gamma=\inf\limits_{k\in \Z}\left\vert \tan[s(B^{(k)})-u(B^{(k-1)})] \right\vert$. Assume
$$\beta\gg\frac{1}{\gamma}\gg 1>\frac{2}{\beta-\beta^{-1}}.$$ Then for each $k\in \Z$ and each $n\geq 1$, it holds that
$$\|B^{(k+n-1)}\cdots B^{(k)}\|\geq (c\beta\gamma)^n.$$
\end{proposition}

By the help of above propositions, the following holds true.
\begin{lemma}\label{lem20} For $t\in [-\frac{2}{\lambda}+\inf v,\frac{2}{\lambda}+\sup v],$ if $~\liminf\limits_{j\rightarrow +\infty}\min\limits_{x\in I_{j}}\left\vert g_{j}(x,t)\right\vert>0,$
then $t\notin \Sigma^{\lambda}.$

\end{lemma}

\begin{proof}

Note $~\liminf\limits_{j\rightarrow +\infty}\min\limits_{x\in I_{j}}\left\vert g_{j}(x,t)\right\vert>0$ implies there exists some large $N^*\in \Z_+$ such that for each $j\geq N^*$ $$\min\limits_{x\in I_{j}}\left\vert g_{j}(x,t)\right\vert> \lambda^{-r^{\frac{1}{100}}_{N^*-1}}.$$ Thus we can guarantee that there exists some $k\in \Z$ such that $for~any~j\geq N^*$,
 \begin{equation}\label{54}\left\{\begin{array}{ll}&~\min\limits_{x\in I_{j}}\left\vert g_{j}(x,t)\right\vert>\lambda^{-r^{\frac{1}{100}}_{N^*-1}}\\
& Step~j~is~of~Type~\mathbf{II}^k_{j}.\end{array}\right.\end{equation}
 If for some $t,$ step $l_j$ is of Type $\textbf{II}_{l_j}^{k(l_j)}$ with $|k(l_j)|\rightarrow +\infty$ as $j\rightarrow +\infty$, then there exists $l_j<l^*_j<l_{j+1}$ such that step $l^*_j$ belongs to Type $\textbf{I}_{l^*_j}.$ It implies $\liminf\limits_{j\rightarrow +\infty}\min\limits_{x\in I_{j}}\left\vert g_{j}(x,t)\right\vert=0$, which leads to a contradiction.

By the diophantine condition,  for each $x\in \R/\Z,$ it holds that $x+m\alpha \in I_{N^*}$ for some $m\leq |I_{N^*}|^{-C}:=M_1.$ Set $M_2=\max\limits_{x\in I_{N^*}}[r^{\pm}_{N^*}(x,t)]^2.$ For each $x\in I_{N^*}$ and  each $M\geq \max\{M_2,M_1\},$ let $ 1\leq j_p\leq M,$ $1\leq p\leq m$ be all the times such that
$$j_p-j_{p-1}>k>q^2_{N+s(k)-1},~x+j_p\alpha \in I_{N^*},$$ where $j_0=0.$ Then
$$\begin{array}{ll}&\| A_{M}(x,t_0)\|=\left\|A_{M-j_m}(x+j_m\alpha,t) \prod\limits_{p=1}^m A_{j_p-j_{p-1}}(x+j_{p-1}\alpha,t)\right\|\\&\geq \left\|A_{M-j_m}(x+j_m\alpha,t)\right\|^{-1}\prod\limits_{p=1}^m \left\|A_{j_p-j_{p-1}}(x+j_{p-1}\alpha,t)\right\|^{\frac{3}{5}}(by~Proposition~\ref{uh}~and~\eqref{54})\\&\geq \lambda^{-M^{\frac{1}{2}}}\lambda^{\frac{1}{2}j_m}\geq \lambda^{-M^{\frac{1}{2}}}\lambda^{\frac{1}{2}M}\geq \lambda^{\frac{1}{3}M}.\end{array}$$

Now for any $M\geq \max\{M_2,M_1\}$ and $x\in \R/\Z,$ let $j_1$ be the first time such that $x+j_1\alpha\in I_{N^*},$ then we have
$$\|A_{M}(x,t)\|\geq \|A_{j_1}(x,t)\|^{-1}\|A_{M-j_1}(x+j_1\alpha,t)\|\geq \lambda^{-M}\lambda^{\frac{1}{3}(M-j_1)}\geq \lambda^{\frac{1}{4}M}.$$

Similarly,  for any $M\geq \max\{M_2,M_1\}$ and $x\in \R/\Z,$ we have $\|A_{-M}(x,t)\|\geq \lambda^{\frac{1}{4}M}.$

Therefore by taking $d=\frac{1}{\lambda^{\frac{1}{4}\max\{M_2,M_1\}}}$ and $\rho=\lambda^{\frac{1}{4}}$,  for each $n\in \Z$ and for each $x\in \R/\Z$, we have
 $$\|A_{n}(x,t)\|\geq d\rho^n.$$ Then Proposition \ref{johnson} and Proposition \ref{yocc} complete the proof.
\hfill\qed\end{proof}
%%%%%%%%%%%%%%%%%%%%%%%%%%%%%%%%%%%%%%%%%%%%%%%%%%%%%%%%%%%%%%%%%%%%%%%%%%%%%%

\begin{lemma}\label{typeiiiyzx} Let ~$G^*:=(t^*_-,t^*_+)$ be a spectral gap. Then for $t\in G^*$, we have
\begin{equation}\label{fact222'}~\liminf_{j\rightarrow +\infty}\min\limits_{x\in I_{j}(t)}\left\vert g_{j}(x,t)\right\vert>0.\end{equation}

Moreover, there exists a minimal $k=k(G^*)\in \Z$ such that for\ any $t\in G^*$ and $i\geq s(k)+2$, we have
\begin{equation}\label{fact111'} each~i-th~step~is~of~Type~\mathbf{II}^k_{i}.\end{equation}

%Furthermore  for $t\in \partial G^*$ and for any $j\geq s(k)+2$ it holds that \begin{equation}\label{gapkh}\min\limits_{x\in I_{j}(t)}\left\vert g_{j}(x,t)\right\vert\leq \lambda^{-\frac{4}{3}r_{j-1}}.\end{equation}
\end{lemma}

\begin{proof}

Fix some $t\in G^*.$

\textbf{Proof of \eqref{fact222'}:}
 Assume \eqref{fact222'} does not hold true, then there exists a subsequence $j_i$ such that \begin{equation}\label{valid}\lim\limits_{i\rightarrow +\infty}\min\limits_{x\in I_{j_i}(t)}\left\vert g_{j_i}(x,t)\right\vert=0.\end{equation}

Then we claim that for each $i$, it holds that
\begin{equation}\label{snun}\min\limits_{x\in I_{j_i}(t)}\left\vert g_{j_i}(x,t)\right\vert\leq \lambda^{-\frac{4}{3}r_{j_{i-1}}}.\end{equation}

Otherwise, there exists some $j^*$ such that $\min\limits_{x\in I_{j^*}(t)}\left\vert g_{j^*}(x,t)\right\vert> \lambda^{-\frac{4}{3}r_{j^*-1}}.$ Then by Theorem \ref{theorem12}, for any $j\geq j^*,$ $$\begin{array}{ll}\min\limits_{x\in I_{j}(t)}\left\vert g_{j}(x,t)\right\vert&\geq \min\limits_{x\in I_{j^*}(t)}\left\vert g_{j^*}(x,t)\right\vert-\sum\limits_{j\geq j^*}\|g_{j}-g_{j-1}\|_{C^0(I_{j-1})}\\&\geq \lambda^{-\frac{4}{3}r_{j^*-1}}-\sum\limits_{j\geq  j^*}\lambda^{-{\frac{3}{2}}r_{j-1}}\geq \lambda^{-\frac{5}{3}r_{j^*-1}},\end{array}$$ which implies
$\lim\limits_{i\rightarrow +\infty}\min\limits_{x\in I_{j_i}(t)}\left\vert g_{j_i}(x,t)\right\vert\geq \lambda^{-\frac{5}{3}r_{j^*-1}}>0.$~This~conflicts~with~\eqref{valid}.

Note $I_{j_{i+1}}\subset I_{j_i}$ with $|I_{j_i}|\rightarrow 0$ as $j_i\rightarrow +\infty.$ Hence there exist $c_{\infty,l},l=1,2$ such that $$\{c_{\infty,1},c_{\infty,2}\}\subset I_{j_i}(=I_{j_i,1}\bigcup I_{j_i,2}).$$

Recall in Theorem \ref{theorem12} that
$g_{j_i}(x,t)=s(A_{r_{j_{i-1}}}(x,t))-s(A_{-r_{j_{i-1}}}(x,t))$ with
$$\|s(A_{r_{j_{i-1}}})-s(A_{r_{j_i}})\|_{C^0(I_{j_{i-1}})}+\|s(A_{-r_{j_{i-1}}})-s(A_{-r_{j_i}})\|_{C^0(I_{j_{i-1}})}\leq C\lambda^{-\frac{3}{2}r_{j_{i-1}}}.$$ Then from \eqref{snun},   there exists $s^*_l\in \R/\Z,~l=1,2$ such that
$$\lim\limits_{i\rightarrow +\infty}s(A_{r_{j_{i-1}}}(c_{\infty,l},t))=\lim\limits_{i\rightarrow +\infty}s(A_{-r_{j_{i-1}}}(c_{\infty,l},t))=s^*_l.$$

This implies \begin{equation}\label{cannot}(\alpha, A^{\lambda(t-v)})~\notin~\mathcal{UH}\end{equation} (note the uniform hyperbolicity of $(\alpha, A^{\lambda(t-v)})$ implies for each $x\in \R/\Z$ $\lim\limits_{p\rightarrow +\infty}s(A_{p}(x,t))$ and $\lim\limits_{p\rightarrow +\infty}s(A_{-p}(x,t))$ exists with $\lim\limits_{p\rightarrow +\infty}s(A_{p}(x,t))\neq \lim\limits_{p\rightarrow +\infty}s(A_{-p}(x,t)) )$.

\eqref{cannot} and Proposition \ref{johnson} clearly yield $t\in \Sigma^{\lambda},$ which leads to a contradiction with $t \in G^*\subset \R-\Sigma^{\lambda}.$

Hence \eqref{valid} is not true and this completes the proof of \eqref{fact222'}.

\

\textbf{Proof of \eqref{fact111'}}\quad
For each $t\in G^*$ we denote
$\liminf\limits_{j\rightarrow +\infty}\min\limits_{x\in I_{j}(t)}\left\vert g_{j}(x,t)\right\vert=c(t)>0.$

Hence for each $t\in G^*$, there exists some $\tilde{N}(t)>0$ such that \begin{equation}\label{>1}\min\limits_{x\in I_{j}(t)}\left\vert g_{j}(x,t)\right\vert>\frac{1}{2}c(t),~ j\geq \tilde{N}(t).\end{equation}

We claim that  for any $l\geq \tilde{N}=\tilde{N}(t)$,
\begin{equation}\label{each}each~l-th~step~is~of~Type~\mathbf{II}^{k(t)}_{l}.\end{equation}
Otherwise, there exists some $j^*>\tilde{N}$ such that $j^*-th~step~is~of~Type~\mathbf{I}_{j^*},$ which implies $\min\limits_{x\in I_{j^*}(t)}\left\vert g_{j^*}(x,t)\right\vert=0$ by the definition of Type $~\mathbf{I}$. This conflicts with \eqref{>1}.

%It leads that there exists  $\bar{N}(t)\ge \tilde{N}(t)$ and $k(t)\in \Z$ such that for any $l\geq \bar{N}=\bar{N}(t)$,
%\begin{equation}\label{each}each~l-th~step~is~of~Type~\mathbf{II}^{k(t)}_{l}.\end{equation}

%(Note that for $t$ and $l_1,l_2\geq \bar{N}(t),$ if $l_1-th~step~is~of~Type~\mathbf{II}^{k_1}_{l_1}$ and $l_2-th~step~is~of~Type~\mathbf{II}^{k_2}_{l_2}$~with~$k_1\neq k_2,$ then there exists $\tilde{N}(t)<l_1<j^*<l_2$ such that $j^*-th~step~is~of~Type~\mathbf{I}_{j^*},$ which implies $\min\limits_{x\in I_{j^*}(t)}\left\vert g_{j^*}(x,t)\right\vert=0.$ This conflicts with \eqref{>1}.)

Without loss of generality, we assume $\bar{N}(t)$ is the smallest one such that the above hold true.

Next we claim that there exists some $~k^*~\in~\Z$ such that
\begin{equation}\label{xiangd}~k(t')=k(t''):=k^*,~{\rm \ for\ any\ }~t',t''\in G^*;\end{equation}
\begin{equation}\label{bound}~\bar{N}(t')=s(k^*)+2,~{\rm \ for\ any\ }~t'\in G^*.\end{equation}

\textbf{Proof of \eqref{xiangd}}

Let $\tilde{N}(\cdot),$ $\bar{N}(\cdot),$ $k(\cdot)$ and $c(\cdot)$ be defined as above.
By taking $j^*\gg \max\{\tilde{N}(t'),\bar{N}(t'),\tilde{N}(t''),\bar{N}(t'')\}$, we can guarantee that for $~X=t'~or~t''$ and each $j\geq j^*$,
$$j-th~step~is~of~Type~\mathbf{II}^{k(X)}_{j},$$ and
\begin{equation}\label{mini}\left( \min\limits_{x\in I_{j}(X)}\left\vert g_{j}(x,X)\right\vert \mod \pi\right)>\frac{1}{2}c(X)>\lambda^{-r^{\frac{1}{100}}_{j^*-1}}.\end{equation}

It follows from \textbf{(a)} of Lemma \ref{lmg} that if  step $j$ belongs to Type~$\textbf{II}_j^{k(t')}$ for some $t'\in G^*$, then\begin{equation}\label{dandia} g_{j}(\tilde{c}_{j,l}(t),t)~is~monotonic~on~G^*~with~respect~to~t,~l=1,2.\end{equation}

\eqref{gnrange} of Theorem \ref{theorem12} implies $\tilde{c}_{j,p}(t),~p=1,2$ are two extreme points of $g_j(x,t)$ with respect to $x$ with \begin{equation}\label{g121212}\left\vert g_{j}(\tilde{c}_{j,1}(t),t)- g_{j}(\tilde{c}_{j,2}(t),t)\right\vert<\pi-\lambda^{-r^{\frac{1}{1000}}_{j^*-1}}.\end{equation}

Then by \eqref{dandia} and the continuity of $g_{j}(\tilde{c}_{j,p}(t),t)(p=1,2)$ on $t\in (t^*_--\lambda^{-q_{N+j-1}},t^*_++\lambda^{-q_{N+j-1}})$ with respect to $t,$ we have

$$J^*:=\{t\vert \lambda^{-r^{\frac{1}{100}}_{j^*-1}}<g_{j}(\tilde{c}_{j,p}(t),t)(p=1,2)<\pi-\lambda^{-r^{\frac{1}{100}}_{j^*-1}} \ (mod\  \pi)\}~is~an~open~interval.$$ Note for any $t\in J^*$ and $j\geq j^*$,
%$$t\in \{t'\vert 0<g_{l}(\tilde{c}_{l,p}(t'),t')(p=1,2)<\pi \ (mod\  \pi)\}\subset \tilde{J}_l$$ is an open interval, says $\tilde{J}_l,$ and for any $t\in \tilde{J}_l$
$$\begin{array}{ll}&\min\limits_{x\in I_l(t)}|g_{j}(x,t)|=\left\vert\min\limits_{p=1,2}g_{j}(\tilde{c}_{j,p}(t),t)\ (mod\  \pi)\right\vert\\&\geq \lambda^{-r^{\frac{1}{100}}_{j^*-1}}-\sum\limits_{l\geq j^*}\|g_{l}-g_{l+1}\|_{C^0(I_l)}\geq \lambda^{-r^{\frac{1}{100}}_{j^*-1}}-\sum\limits_{j\geq j^*}C\lambda^{-r_{j-1}}>\lambda^{-r^{\frac{1}{50}}_{j^*-1}},\end{array}$$
and there exists a uniform $k^*$ such that for any $t\in J^*$ and $j\geq j^*,$
\begin{equation}\label{argu}step~j~belongs~to~Type~\textbf{II}_{j}^{k^*}.\end{equation}

Since \eqref{mini} and \eqref{g121212} imply
$\{t',t''\}\subset J^*,$ by the argument as above, \eqref{argu} immediately leads
$$k(t')=k(t'')=k^*.$$

\

\textbf{Proof of \eqref{bound}:}

Let $k^*$ be as in \eqref{xiangd}. By the help of Theorem \ref{theorem12}, for any $t'\in G^*$, we have
$$step~s(k^*)+2~belongs~to~Type~\textbf{II}^{k^*}_{s(k^*)+2}.$$
Otherwise, if Step~$s(k^*)+2$~belongs~to~Type~$\textbf{I}_{s(k^*)},$ then for some $l>s(k^*)+2,$ Step~$l$~is~of~Type~$\textbf{II}^{k(t')}_{l},$ where $k(t')\not=k^*$ is defined in \eqref{each}. More precisely, here we obtain $|k(t')|>q^2_{N+s(k^*)}>|k^*|,$ which conflicts with \eqref{fact111'} and \eqref{xiangd}.

Since $\bar{N}(t')$ is  the smallest integer such that \eqref{each} holds true, we have $\bar{N}(t')\leq s(k^*)+2.$

On the other hand, Theorem \ref{theorem12} implies if $ Step~l~belongs~to~Type~\textbf{II}^{k^*}_{l},$ then $|k^*|\leq q^2_{N+l}.$

Recall the definition of $s(k^*):$ $q^2_{N+s(k^*)-1}< |k^*|\leq q^2_{N+s(k^*)},$ therefore $ l\geq s(k^*)+2.$ Then \eqref{each} implies
$\bar{N}(t')\geq s(k^*)+2.$ Hence
$\bar{N}(t')=s(k^*)+2,~{\rm \ for\ any\ }~t'\in G^*.$ This completes the proof of \eqref{bound}, which together with \eqref{each} leads to \eqref{fact111'}.\hfill\qed\end{proof}

Next we show
\begin{lemma}\label{EP} Let ~$G^*:=(t^*_-,t^*_+)$ be a spectral gap and $k(G^*)$ be as in Lemma \ref{typeiiiyzx}.
 Then for $t=t^*_-$ or $t^*_+$, it holds that
$$\label{4666}each~i-th~step~is~of~Type~\mathbf{II}^k_{i}~for~i\geq s(k(G^*)).$$
% and $$~\liminf_{j\rightarrow +\infty}\min\limits_{x\in I_{j}(t)}\left\vert g_{j}(x,t)\right\vert=0.$$

Moreover,
%\begin{equation}\label{one-to-one}k(\cdot): all~gaps~\rightarrow~K(\lambda)~is~one-to-one\end{equation}
%and
the following limits exist:
$$c_{\infty,l}(t^*_{X}):=\lim\limits_{i\rightarrow +\infty}c_{i+1,l}(t^*_{X}),~l=1,2$$ with
\begin{equation}\label{ci+1111}c_{\infty,2}(t^*_{X})-c_{\infty,1}(t^*_{X})=k(G^*)\alpha\ {\rm {\rm\ (mod\ 1)}},~X\in\{+,-\}.\end{equation}

\end{lemma}
\begin{proof}

Without loss of generality, we assume $t=t^*_+,$ thus  $(t-\delta,t)\subset \R-\Sigma^{\lambda}$ with some small $\delta>0.$

% Theorem \ref{typeiiiyzx} implies
%$~\liminf_{j\rightarrow +\infty}\min\limits_{x\in I_{j}(t)}\left\vert g_{j}(x,t)\right\vert=0$
%and

 \eqref{fact111'} of Lemma \ref{typeiiiyzx} implies

\begin{equation}\label{each11}for \ any~t'\in (t-\delta,t),~each~i-th~step~is~of~Type~\mathbf{II}^k_{i}~for~i\geq s(k)+2.\end{equation}

Now we claim that $$for~t,~each~i-th~step~is~also~of~Type~\mathbf{II}^k_{i}~for~i\geq s(k)+2.$$

In fact, suppose for $t$, there exists $j_i\rightarrow +\infty$ as $i\rightarrow +\infty$ such that $$each~j_i-th~step~is~of~Type~\mathbf{I}_{j_i}.$$ Note that \eqref{lmg'-2} of Corollary \ref{lmg'} implies for sufficiently small $\delta>\delta_i>0$, $g_{j_i}(x,t-\delta_i)~\text{are~of~the~same~type~as}~g_{j_i}(x,t).$
 In other words,
 $$~for~each~i~and~(t-\delta_i),~j_i-th~step~is~of~Type~\mathbf{I}_{j_i},$$ which conflicts with \eqref{each11}. This completes the proof of the necessity.

\

It remains to prove \eqref{ci+1111}.

Note we have proved that there exists $k(G^*)\in \Z$ such that for $t^*_{\pm},$

$$each~i-th~step~is~of~Type~\mathbf{II}^k_{i}~for~any~i\geq s(k)+2.$$
% and $$~\liminf_{j\rightarrow +\infty}\min\limits_{x\in I_{j}(t^*_{\pm})}\left\vert g_{j}(x,t^*_{\pm})\right\vert=0.$$

Then \eqref{esszero} of Theorem \ref{theorem12} implies for each $i\geq s(k)+2,~X\in\{+,-\},$
$$\begin{array}{ll}&\|c_{i,1}(t^*_{X})+k\alpha-c_{i,2}(t^*_{X})\|_{\R/\Z}<\|c_{i,2}(t^*_{X})-c'_{i,2}(t^*_{X})\|_{\R/\Z}+\|c_{i,1}(t^*_{X})+k\alpha-c'_{i,2}(t^*_{X})\|_{\R/\Z}
\\&\leq |I_i|+\lambda^{-\frac{1}{30}r_{i-1}}.\end{array}$$

Therefore for $X\in\{+,-\},$

\begin{equation}\label{jxxd}\|c_{i,1}(t^*_{X})+k\alpha-c_{i,2}(t^*_{X})\|_{\R/\Z}\rightarrow 0~as~i\rightarrow~+\infty.\end{equation}

On the other hand, from Theorem \ref{theorem12}, it holds that $$\sum\limits_{i\geq j}\|c_{i,1}(t^*_{X})-c_{i+1,1}(t^*_{X})\|_{\R/\Z}\leq \lambda^{-r_{j-1}}\rightarrow 0~as~j~\rightarrow~+\infty.$$
Hence the following limits exist: $c_{\infty,l}(t^*_{X}):=\lim\limits_{i\rightarrow +\infty}c_{i,l}(t^*_{X}),~l=1,2.$
Then from \eqref{jxxd}, we obtain
$$c_{\infty,2}(t^*_{X})-c_{\infty,1}(t^*_{X})=k\alpha {\rm\ (mod\ 1)},$$ which yields \eqref{ci+1111} as desired.\hfill\qed\end{proof}
%%%%%%%%%%%%%%%%%%%%%%%%%%%%%%%%%%%%%%%%%%%%%%%%%%%%%%%%%%%%%%%%%%%%%%%%%%%%%%%%%%%%%%%%%%%%%%%%%%%%%%%%%%%%%
Now we are in a position to prove Theorem \ref{15}.

{\bf Proof of (1):}

By  Lemma \ref{EP}, we have
$$\{\rm{\ all\ (openning)~gaps\ of }\  \lambda\!\cdot\!\! v\}=\{G_k^{\lambda}=(t^k_-(\lambda), t^k_+(\lambda))| k\in K(\lambda)\subset\mathbb{Z}\}.$$

For $X\in\{+,-\}$, \eqref{ci+1111} of Lemma \ref{EP} and Theorem \ref{theorem12} imply
\begin{equation}\label{50}c_{\infty,2}(t^k_{X})-c_{\infty,1}(t^k_{X})=k\alpha {\rm\ (mod\ 1)}\end{equation}
 and there exists $l(k)\gg 1$ such that
\begin{equation}\label{diedd}\|c_{i,l}(t^k_{X})-c_{i+1,l}(t^k_{X})\|_{\R/\Z}\leq \lambda^{-\frac{1}{100}{r_{i-1}}}\le c\lambda^{-{r^c_{i-1}}},\quad  l=1,2.\end{equation}

Therefore if there exists some $i> l(k)$ such that $$\|c_{i,1}(t^k_{X})+k\alpha-c_{i,2}(t^k_{X})\|_{\R/\Z}\geq C\lambda^{-{r^c_{i-1}}},$$ then \eqref{50} and \eqref{diedd} imply
$$\begin{array}{ll}&0=\|k\alpha-k\alpha\|=\| c_{\infty,2}(t^k_{X})-c_{\infty,1}(t^k_{X})-k\alpha\|_{\R/\Z}\\&>C\lambda^{-{r^c_{i-1}}}-\sum\limits_{l=1}^2\sum\limits_{p\geq i-1}\|c_{p,l}(t^k_{X})-c_{p+1,l}(t^k_{X})\|_{\R/\Z}\\&>\lambda^{-{r^c_{i-1}}}>0,\end{array}$$ which is impossible.

Hence for each $i\geq l(k)$ and $X\in\{+,-\},$ $\|c_{i,1}(t^k_{X})+k\alpha-c_{i,2}(t^k_{X})\|_{\R/\Z}<\lambda^{-{r^c_{i-1}}}.$

This completes the proof of \eqref{var-ep}.\hfill\qed\end{proof}

{\bf Proof of (2):}

Our target is to estimate the scale of
$G_k=(t^k_-,t^k_+),~k\in K(\lambda).$ By Theorems \ref{typeiiiyzx}  and \ref{EP}, for $t\in G_k,$ $each~i-th~step~is~of~Type~\mathbf{II}^k_{i}~for~i\geq s(k)+2.$ Therefore step $s(k)+2$ belongs to $\textbf{II}_{s(k)+2}.$ By the help of Theorem \ref{theorem12}, in this case we can precisely write
\begin{equation}\label{thetat22}g_{s(k)+2,1}(x,t)= \tan^{-1}(\|A_{k}(x,t)\|^2[\tan \left(\hat{g}_{s(k)+1,2}(x+k\alpha,t)\right)])-\frac{\pi}{2}+\hat{g}_{s(k)+1,1}(x,t),\end{equation}
where $\hat{g}_{s(k)+1,j},j=1,2$ corresponds to the \textbf{nonresonant} case at step $s(k)+1$  and $\|A_k(x,t)\|$ satisfies \begin{equation}\label{akmod}\|A_k(x,t)\|\geq c\lambda^{\frac{7k}{8}}, \|A_k(x,t)\|_{C^2}\leq \|A_k(x,t)\|^{\frac{9}{8}}.\end{equation}
 We denote the zeros of $\hat{g}_{s(k)+1,i}$ by $\hat{c}_{s(k)+1,i},i=1,2.$ Then \eqref{lmg'-1} of Corollary \ref{lmg'} implies that there exists a unique $\hat{t}_k$ such that $\hat{c}_{s(k)+1,2}(\hat{t}_k)-\hat{c}_{s(k)+1,1}(\hat{t}_k)=k\alpha{\rm\ (mod\ 1)}.$
Let $$x-\hat{c}_{s(k)+1,i}(\hat{t}_k)=h_{x,i},\quad~t-\hat{t}_k=l_t.$$ Then Lemma \ref{lmg} and Theorem \ref{theorem12} imply that for $i=1,\ 2$, \begin{equation}\label{gyii}\begin{array}{ll}\hat{g}_{s(k)+1,i}(x,t)=\bar {u}_i h_{x,i}+\bar{v}_i l_t+\frac{1}{2}(2X_i(x,t)h_{x,i}l_t+Y_i(x,t)h_{x,i}^2+Z_i(x,t)l_t^2)\end{array}\end{equation} with $|h_{x,i}|,\ |l_t|\leq \lambda^{-\frac{k}{10}}$ and $$0<\bar{v}_i,\bar{v}_i^{-1},|\bar{u}_i|,|\bar{u}_i^{-1}|,\|X_i\|_{C^0},\|Y_i\|_{C^0},\|Z_i\|_{C^0}\leq (\log k)^{C}; sgn(\bar{u}_1)=-1=- sgn(\bar{u}_2).$$

\textbf{Proof of the lower bound}

By Theorem \ref{theorem12}, $$\min\limits_{x\in I_{s(k)+2}}\left\vert\tan (g_{s(k)+2})(x,t)\right\vert\geq \lambda^{-cr_{s(k)+1}},$$ which implies
$$\liminf\limits_{j}\min\limits_{x\in I_{j}}\left\vert\tan (g_{j})(x,t)\right\vert\geq \lambda^{-2cr_{s(k)+1}}>0. $$ It then
leads $t\notin \Sigma^{\lambda}$ and for $i\geq s(k)+2$, $each~step~i~belongs~to~Type~\textbf{II}_{i}^k.$

Therefore

\begin{equation}\label{minnn1}\{t\vert \min\limits_{x\in I_{s(k)+2}}\left\vert\tan (g_{s(k)+2})(x,t)\right\vert\geq \lambda^{-cr_{s(k)+1}}\}\subset(t^k_-,t^k_+).\end{equation}

%%Since ??$\left\vert\min\limits_{x\in I_{s(k)+2,1}}\left\vert\tan (g_{s(k)+2,1})\right\vert-\min\limits_{x\in I_{s(k)+2,2}}\left\vert\tan (g_{s(k)+2,2})\right\vert\right\vert\leq \lambda^{-r_{s(k)+1}}\ll \lambda^{-r^c_{s(k)+1}},$

Then \eqref{minnn1} and (vi) of Lemma \ref{thetat1fenjie} yield
\begin{equation}\label{minnn}\{t\vert \min\limits_{x\in I_{s(k)+2,1}}\left\vert\tan (g_{s(k)+2,1})(x,t)\right\vert\geq \lambda^{-2cr_{s(k)+1}}\}\subset(t^k_-,t^k_+).\end{equation}

Let $$\Xi_{\hat{g}}=\left\vert\frac{\tan \left(\hat{g}_{s(k)+1,2}(x+k\alpha,t)\right)\tan \left(\hat{g}_{s(k)+1,1}(x,t)\right)-\|A_k(x,t)\|^{-2}}{\tan \left(\hat{g}_{s(k)+1,2}(x+k\alpha,t)\right)+\|A_k(x,t)\|^{-2}\tan \left(\hat{g}_{s(k)+1,1}(x,t)\right)}\right\vert.$$
Note \eqref{thetat22} implies
\begin{equation}\label{minmin}\min\limits_{x\in I_{s(k)+2,1}}\left\vert\tan (g_{s(k)+2,1})\right\vert=\min\limits_{x\in I_{s(k)+2,1}}\Xi_{\hat{g}}.\end{equation}

 Since $\left\vert \tan \left(\hat{g}_{s(k)+1,2}(x+k\alpha,t)\right)+\|A_k(x,t)\|^{-2}\tan \left(\hat{g}_{s(k)+1,1}(x,t)\right) \right\vert<C,$ we have

\begin{equation}\label{minn}\begin{array}{ll}&\Xi_{\hat{g}}>c\left\vert \tan \left(\hat{g}_{s(k)+1,2}(x+k\alpha,t)\right)\tan \left(\hat{g}_{s(k)+1,1}(x,t)\right)-\|A_k(x,t)\|^{-2}\right\vert.\end{array}\end{equation}

\eqref{minmin}, \eqref{minn} and \eqref{minnn} imply
\begin{equation}\label{subset}\{t\vert \min\limits_{x\in I_{s(k)+2,1}}\left\vert{\tan \left(\hat{g}_{s(k)+1,2}(x+k\alpha,t)\right)\tan \left(\hat{g}_{s(k)+1,1}(x,t)\right)-\|A_k(x,t)\|^{-2}}\right\vert\geq c\lambda^{-cr_{s(k)+1}}\}\end{equation}
$$\subset\{t\vert \min\limits_{x\in I_{s(k)+2}}\left\vert\tan (g_{s(k)+2})(x,t)\right\vert\geq \lambda^{-cr_{s(k)+1}}\}\subset(t^k_-,t^k_+)=G_k.$$

 $\eqref{gyii}$ implies (note $h_{x+k\alpha,2}=h_{x,1}$) for $i=1,2,$ $$\hat{g}_{s(k)+1,i}(x,t)= (\bar {u}_i+o(\lambda^{-\frac{k}{100}})) h_{x,i}+(\bar{v}_i+o(\lambda^{-\frac{k}{100}}) )l_t,$$
\begin{equation}\label{67}\tan \left(\hat{g}_{s(k)+1,2}(x+k\alpha,t)\right)\tan \left(\hat{g}_{s(k)+1,1}(x,t)\right)=h^2_{x,1}+\beta h_{x,1}l_t+\gamma l_t^2.\end{equation}
where \begin{equation}\label{abg}\eta=\bar{u}_1\bar{u}_2+o(\lambda^{-\frac{k}{1000}}),\quad \beta=\bar{v}_1\bar{u}_2+\bar{v}_2\bar{u}_1+o(\lambda^{-\frac{k}{1000}}),\quad \gamma=\bar{v}_1\bar{v}_2+o(\lambda^{-\frac{k}{1000}}).\end{equation}
Therefore

$$\min\limits_{x\in I_{s(k)+1,1}}\left\vert{\tan \left(\hat{g}_{s(k)+1,2}(x+k\alpha,t)\right)\tan \left(\hat{g}_{s(k)+1,1}(x,t)\right)-\|A_k(x,t)\|^{-2}}\right\vert\geq \lambda^{-cr_{s(k)+1}}.$$ It then implies
\begin{equation}\label{subs}\min\limits_{x\in I_{s(k)+1,1}}\left\vert \eta h^2_{x,1}+\beta h_{x,1}l_t+\gamma l_t^2-\|A_k(x,t)\|^{-2}\right\vert\geq \lambda^{-cr_{s(k)+1}}.\end{equation}
 We denote the set of  such $t$  satisfying
\eqref{subs} by $\mathcal{H}_t.$

Since $I_{s(k)+2,1}\subset I_{s(k)+1,1},$ \eqref{subset} yields
\begin{equation}\label{htgt}\mathcal{H}_t\subset G_k.\end{equation}

Write $$\begin{array}{ll}&\left\vert\eta h^2_{x,1}+\beta h_{x,1}l_t+\gamma l_t^2-\|A_k(x,t)\|^{-2}\right\vert
=\left\vert \eta\left( h_{x,i}+\frac{\beta l_t}{2\eta}\right)^2+\gamma l_t^2-\frac{\beta^2l_t^2}{4\eta}-\|A_k(x,t)\|^{-2}\right\vert.\end{array}$$

Note Lemma \ref{lmg} leads that $\eta<0.$ Set
$$\mathcal{Q}_t:=\{t\vert -\gamma l_t^2+\frac{\beta^2l_t^2}{4\eta}+\min\limits_{x\in I_{s(k)+1,1}}\|A_k(x,t)\|^{-2}>c\lambda^{-cr_{s(k)+1}}\}.$$

Thus for $t\in \mathcal{Q}_t,$ it holds that $\gamma l_t^2-\frac{\beta^2l_t^2}{4\eta}-\|A_k(x,t)\|^{-2}<-c\lambda^{-cr_{s(k)}}<0.$ Hence for $t\in \mathcal{Q}_t,$ $$\begin{array}{ll}&\min\limits_{x\in I_{s(k)+1,1}}\left\vert\eta h^2_{x,1}+\beta h_{x,1}l_t+\gamma l_t^2-\|A_k(x,t)\|^{-2}\right\vert\geq \min\limits_{x\in I_{s(k)+1,1}}\vert\gamma l_t^2-\frac{\beta^2l_t^2}{4\eta}-\|A_k(x,t)\|^{-2}\vert\\&\geq \min\limits_{x\in I_{s(k)+1,1}}\left(-\gamma l_t^2+\frac{\beta^2l_t^2}{4\eta}+\|A_k(x,t)\|^{-2}\right)=-\gamma l_t^2+\frac{\beta^2l_t^2}{4\eta}+\min\limits_{x\in I_{s(k)+1,1}}\|A_k(x,t)\|^{-2},\end{array}$$ which implies
$$\min\limits_{x\in I_{s(k)+1,1}}\left\vert\eta h^2_{x,1}+\beta h_{x,1}l_t+\gamma l_t^2-\|A_k(x,t)\|^{-2}\right\vert\geq \lambda^{-cr_{s(k)+1}}.$$ Then we obtain \eqref{subs}.
Hence
\begin{equation}\label{qt}\mathcal{Q}_t\subset \mathcal{H}_t.\end{equation} Note $$(\log k)^{-C}<\gamma-\frac{\beta^2}{4\eta}\leq (\log k)^{C},\quad~r_{s(k)}\gg k;~\|A_k\|\leq \lambda^{Ck}.$$
Then a direct calculation yields

\begin{equation}\label{qtda}\{l_t^2\leq \frac{\frac{1}{2}\lambda^{-2Ck}}{(\log k)^C}\}\subset\{l_t^2\leq \frac{\lambda^{-2Ck}-c\lambda^{-cr_{s(k)+1}}}{\gamma-\frac{\beta^2}{4\eta}}\}\subset \{l_t^2\leq \frac{\min\limits_{x\in I_{s(k)+1,1}}\|A_k(x,t)\|^{-2}-c\lambda^{-cr_{s(k)+1}}}{\gamma-\frac{\beta^2}{4\eta}}\}\subset \mathcal{Q}_t.\end{equation}

Since $(t-\hat{t}_k)=l_t,$ we obtain \begin{equation}\label{qtx}|\{t\vert l_t^2\leq \frac{\frac{1}{2}\lambda^{-2Ck}}{(\log k)^C}\}|\geq \lambda^{-\frac{3}{2}Ck}.\end{equation}

Combining \eqref{qtda} and \eqref{qtx}, we obtain
$|\mathcal{Q}_t|\geq \lambda^{-\frac{3}{2}Ck}.$

Finally \eqref{htgt}, \eqref{qt} and the above inequality yield the lower bound of $|G_k|.$

\

\textbf{Proof of the upper bound}

Recall Theorem \ref{theorem12} shows that for Type $\textbf{II}_{s(k)+2}^k,$ if $$\{x\in I_{s(k)+1,1} \vert \tan (g_{s(k)+2,1})=0\}\neq \emptyset {\quad\rm(}~\{x\in I_{s(k)+1,2} \vert \tan (g_{s(k)+2,2})=0\}\neq \emptyset{\rm)},$$ then it has two elements $\{c_{s(k)+2,1},c'_{s(k)+2,2}\}$ ($\{c_{s(k)+2,2},c'_{s(k)+2,1}\}$) with
\begin{equation}\label{rsk}\|c_{s(k)+2,1}+k\alpha-c'_{s(k)+2,1}\|_{\R/\Z}+\|c_{s(k)+2,2}-k\alpha-c'_{s(k)+2,2}\|_{\R/\Z}\leq \lambda^{-\frac{1}{100}r_{s(k)+1}}.\end{equation}

Let $\eta,\beta,\gamma$ as in \eqref{abg}, by \eqref{minmin} and \eqref{67}, for $t\in (\hat{t}_k-\lambda^{-q^c_{N+s(k)}},\hat{t}_k+\lambda^{-q^c_{N+s(k)}})$, we have $$\begin{array}{ll}&\{c_{s(k)+2,1}(t),c'_{s(k)+2,2}(t)\}=\{x\in I_{s(k)+1}\vert \tan (g_{s(k)+2,1})=0\}\\&=\{x\vert \left\vert{\tan \left(\hat{g}_{s(k)+1,2}(x+k\alpha,t)\right)\tan \left(\hat{g}_{s(k)+1,1}(x,t)\right)-\|A_k(x,t)\|^{-2}}\right\vert=0\}\\&=\{x\vert \eta h^2_{x,1}+\beta h_{x,1}l_t+\gamma l_t^2-\|A_k(x,t)\|^{-2}=0\}\\&=\{x\vert \eta\left( h_{x,i}+\frac{\beta l_t}{2\eta}\right)^2+\gamma l_t^2-\frac{\beta^2l_t^2}{4\eta}-\|A_k(x,t)\|^{-2}=0\}.\end{array}$$

Now we take $$t\in \{t \vert \gamma l_t^2-\frac{\beta^2l_t^2}{4\eta}= \lambda^{-\frac{1}{1000}k}\}(=\{\hat{t}_k-\frac{\lambda^{-\frac{1}{2000}k}}{\sqrt{\gamma-\frac{\beta^2}{4\eta}}},~\hat{t}_k+\frac{\lambda^{-\frac{1}{2000}k}}{\sqrt{\gamma-\frac{\beta^2}{4\eta}}}\}).$$ For all $x\in I_{s(k)+1},~t\in (\hat{t}_k-\lambda^{-q^c_{N+s(k)}},\hat{t}_k+\lambda^{-q^c_{N+s(k)}})$, by \eqref{AKbound} we have $\|A_k(x,t)\|\geq \lambda^{\frac{1}{2}k}.$

Thus the following inequality holds true: \begin{equation}\label{lttt}\gamma l_t^2-\frac{\beta^2l_t^2}{4\eta}-\|A_k(x,t)\|^{-2}>\lambda^{-\frac{1}{1000}k}-\lambda^{-k}>\lambda^{-\frac{1}{900}k}.\end{equation}

Note \eqref{akmod} implies \begin{equation}\label{shangjjj}\|A_k(x,t)\|^{-2}\leq \lambda^{-\frac{7}{4}k}\ll\lambda^{-\frac{1}{900}k} , \left\vert\partial_x(\|A_k(x,t)\|^{-2})\right\vert\leq \frac{\|A_k\|^{\frac{9}{8}}}{\|A_k\|^3}\leq \lambda^{-\frac{5}{3}k}\ll (\log k)^{-C}\leq |\partial_x h_{x,i}|.\end{equation} Then \eqref{lttt} and \eqref{shangjjj}  imply
$$\{c_{s(k)+2,1}(t),c'_{s(k)+2,2}(t)\}=\{x\vert \eta\left( h_{x,i}+\frac{\beta l_t}{2\eta}\right)^2+\gamma l_t^2-\frac{\beta^2l_t^2}{4\eta}-\|A_k(x,t)\|^{-2}=0\}\neq \emptyset.$$
With the help of the fact that $\left\vert c_{s(k)+2,1}(t)-c'_{s(k)+2,2}(t)\right\vert< \frac12$, \eqref{lttt} implies
$$\begin{array}{ll}&\|c_{s(k)+2,1}(t)-c'_{s(k)+2,2}(t)\|_{\R/\Z}=\left\vert c_{s(k)+2,1}(t)-c'_{s(k)+2,2}(t)\right\vert\ \geq 2\lambda^{-\frac{1}{900}k}.\end{array}$$

Since $r(s(k))\gg k,$ the above inequality and \eqref{rsk} imply for $t\in \{\hat{t}_k-\frac{\lambda^{-\frac{1}{2000}k}}{\sqrt{\gamma-\frac{\beta^2}{4\eta}}},~\hat{t}_k+\frac{\lambda^{-\frac{1}{2000}k}}{\sqrt{\gamma-\frac{\beta^2}{4\eta}}}\},$
\begin{equation}\label{cskk}\|c_{s(k)+2,1}(t)+k\alpha-c_{s(k)+2,2}(t)\|_{\R/\Z}\geq \lambda^{-\frac{1}{800}k}\gg C\lambda^{-c{r^{c}_{s(k)+1}}}.\end{equation}

Note (1) of Theorem \ref{15}, which has been proved, shows that for two endpoints of $G_k,$ the above inequality is invalid. Thus $t \in (t^k_-,t^k_+)~or~t \in \R-[t^k_-,t^k_+].$ Now we claim that \begin{equation}\label{puwai}t \in \R-[t^k_-,t^k_+].\end{equation} Actually, by the help of \eqref{fact111'} of Lemma \ref{typeiiiyzx}, if $t\in (t^k_-,t^k_+)$, then for each $j>s(k)+2$, \begin{equation}\label{jstep}step~j~belongs~to~Type~\textbf{II}_{j}^{k}.\end{equation}
 On the other hand, for $j^*\gg s(k)$, \eqref{cskk} implies $$\begin{array}{ll}&\|c_{j^*,1}(t)+k\alpha-c_{j^*,2}(t)\|_{\R/\Z}\\&\geq \lambda^{-{r^{c}_{s(k)+1}}}-\sum\limits_{l\geq s(k)+2}^{j^*-1}\|c_{l,1}(t)-c_{l+1,1}(t)\|_{\R/\Z}-\sum\limits_{l\geq s(k)+2}^{j^*-1}\|c_{l,2}(t)-c_{l+1,2}(t)\|_{\R/\Z}\\&>\lambda^{-{r^{c}_{s(k)+1}}}-C\lambda^{-{r_{s(k)+1}}}>\lambda^{-{r^{c}_{s(k)+1}}}\gg |I_{j^*}|,\end{array}$$ which leads that $I_{j^*,1}+k\alpha \bigcap I_{j^*,2}=\emptyset.$ Hence either there exists some $k^*\neq k$ with $|k^*|\leq q^2_{N+s(k^*)}$ such that $I_{j^*,1}+k^*\alpha \bigcap I_{j^*,2}\neq \emptyset,$ which implies $$step~j^*~belongs~to~Type~\textbf{II}^{k^*}_{j^*};$$ or for $|p|\leq q^2_{N+s(k^*)}$, we have $I_{j^*,1}+p\alpha \bigcap I_{j^*,2}= \emptyset,$ which means $$step~j^*~belongs~to~Type~\textbf{I}_{j^*}.$$
Anyway, it leads to a contradiction with \eqref{jstep}.
Therefore \eqref{puwai} holds true.

By \eqref{puwai}, $$\{\hat{t}_k-\frac{\lambda^{-\frac{1}{2000}k}}{\sqrt{\gamma-\frac{\beta^2}{4\eta}}},~\hat{t}_k+\frac{\lambda^{-\frac{1}{2000}k}}{\sqrt{\gamma-\frac{\beta^2}{4\eta}}}\}\subset \R-G_k.$$

Since $$c(\log k)^{-C}<\gamma-\frac{\beta^2}{4\eta}\leq C(\log k)^{C},$$ we obtain
\begin{equation}\label{maozi}\{\hat{t}_k-\lambda^{-\frac{1}{1000}k},~\hat{t}_k+\lambda^{-\frac{1}{1000}k}\}\subset \R-G_k.\end{equation}
Note that $\hat{t}\in \mathcal{H}_t$ (recall $\mathcal{H}_t$ is denoted by such $t$ that satisfies
$$\min\limits_{x\in I_{s(k)+1,1}}\left\vert \eta h^2_{x,1}+\beta h_{x,1}l_t+\gamma l_t^2-\|A_k(x,t)\|^{-2}\right\vert\geq \lambda^{-cr_{s(k)+1}}).$$ Then \eqref{qt} implies $\hat{t}\in G_{k}.$

Combining this with \eqref{maozi}, we obtain

$$|G_k|\leq \left(\hat{t}_k+\lambda^{-\frac{1}{1000}k}\right)-\left(\hat{t}_k-\lambda^{-\frac{1}{1000}k}\right)\leq 2\lambda^{-\frac{1}{1000}k}.$$ This completes the proof of the upper bound for $|G_k|.$

\

{\bf Proof of (3):}

We have already proved that all gaps can be determined by some set $\mathcal{K}(\lambda)\subset \Z$ with
$$\R-\Sigma^{\lambda}=\bigcup \limits_{k\in \mathcal{K}(\lambda)}(t^k_-,t^k_+).$$

\

Recall the definition $$I_{j,l}:=(c_{j,l},\lambda^{-\hat{\epsilon}q^{\hat{\epsilon}}_{N+j-1}}),~j\in \Z_+,~l=1,2.$$
And diophantine condition guarantees that there exists some $C_{\alpha}>0$ such that \begin{equation}\label{qc}q_{N+j}\leq q^{C_{\alpha}}_{N+j-2},\quad~\|P\alpha\|_{\R/\Z}\geq P^{-C_{\alpha}} .\end{equation}

\

{\rm We first prove the following lemma.}

\begin{lemma}\label{lem1}~ Given $k\in \mathcal{K}(\lambda)$, then for any $t\in (t^k_--\lambda^{-|k|^{100c}},t^k_-)$ {\rm(resp. $(t^k_+,t^k_++\lambda^{-|k|^{\frac{1}{100}c}})$)},
$$step~s(k)+2~belongs~to~Type~\textbf{II}_{s(k)+2}^k.$$

Moreover, for $\lambda^{-q_{N+s(k)}}<\eta<\lambda^{-|k|^{100c}},$ it holds that \begin{equation}\label{bigvert}\min\{|s|\big\vert G_{s}\bigcap (t^{k}_--\eta,t^k_-)\neq \emptyset,~s\in \mathcal{K}(\lambda)\}\geq \eta^{-C}\end{equation}
{\rm(resp.} $\min\{|s|\big\vert G_{s}\bigcap (t^k_+,t^{k}_++\eta),~s\in \mathcal{K}(\lambda)\}\geq \eta^{-C}${\rm).}
\end{lemma}

\begin{proof} Without loss of generality, we assume $k\geq 0.$ For $$A_{r_{s(k)+1}-k}(x+k\alpha,t)A_{k}(x,t)A_{r_{s(k)+1}}(x-r_{s(k)+1}\alpha,t),$$ let $$\tilde{g}_{s(k)+1,1}(x,t)=\frac{\pi}{2}-s(A_k(x))+s(A_{-r_{s(k)+1}}),~x\in~I_{s(k)+1,1}(t)$$ and $$\tilde{g}_{s(k)+1,2}(x,t)=\frac{\pi}{2}+s(A_{-k}(x))-s(A_{r_{s(k)+1}-k}(x+k\alpha)),~x\in~I_{s(k)+1,2}(t).$$ Here $\tilde{g}_{s(k)+1,j}(x,t)$ corresponds to the \textbf{nonresonant} case at step $s(k)+1$ and has a unique zero $\bar{c}_{s(k)+1,j}(t)\in I_{s(k)+1,j}(t)$ for $j=1,2.$ Since for $t_-^k$, $step~s(k)+2~belongs~to~Type~\textbf{II}^k_{s(k)+2},$ we have
\begin{equation}\label{98}\|\bar{c}_{s(k)+1,1}(t^k_-)+k\alpha-\bar{c}_{s(k)+1,2}(t^k_-)\|_{\R/\Z}\leq |I_{s(k)+1}|.
\end{equation}

By \eqref{lmg'-2} of Lemma \ref{lmg'}, for $j=1,2$ and for any $t\in (t^k_--\lambda^{-|k|^{100c}},t^k_-),$ $$\begin{array}{ll}&\left|\bar{c}_{s(k)+1,j}(t)-\bar{c}_{s(k)+1,j}(t^k_-)\right|\leq \lambda^{cq^c_{N+s(k)-1}}\cdot \lambda^{-|k|^{100c}}\leq \lambda^{ck^c}\cdot \lambda^{-|k|^{{{100}c}}}\\
\\&\leq \lambda^{-|k|^{50c}}\leq \lambda^{-q^{50c}_{N+s(k)-1}} \ll \lambda^{-q^c_{N+s(k)-1}} \leq |I_{s(k)+1}|.\end{array}$$
Then by \eqref{98} and the above inequality,
$$\begin{array}{ll}&\|\bar{c}_{s(k)+1,1}(t)+k\alpha-\bar{c}_{s(k)+1,2}(t)\|_{\R/\Z}\\& \leq \|\bar{c}_{s(k)+1,1}(t^k_-)+k\alpha-\bar{c}_{s(k)+1,2}(t^k_-)\|_{\R/\Z}+\sum\limits_{j=1}^2\left|\bar{c}_{s(k)+1,j}(t)-\bar{c}_{s(k)+1,j}(t^k_-)\right|\\& \leq |I_{s(k)+1}|+|I_{s(k)+1}|\leq 2|I_{s(k)+1}|.\end{array}$$
Hence  we have $2I_{s(k),1}(t)+k\alpha\bigcap 2I_{s(k),2}(t)\neq \emptyset,$ which implies for any $t\in (t^k_--\lambda^{-c|k|^{c}},t^k_-),$
$$step~s(k)+2~belongs~to~Type~\textbf{II}_{s(k)+2}^k.$$ This completes the proof of the first part.

\

For the second part, we first claim $$\min\limits_{x\in I_{s(k)+2}}|g_{{s(k)+2}}(x,t^k_-)|\leq \lambda^{-r^c_{{s(k)+1}}}\leq \lambda^{-\lambda^{q^c_{N+s(k)+1}}}\ll \lambda^{-q_{N+s(k)}}.$$

Otherwise, $\min\limits_{x\in I_{s(k)+2}}|g_{{s(k)+2}}(x,t^k_-)|> \lambda^{-r^c_{{s(k)+1}}}$. Then by Theorem \ref{theorem12}, for any $j\geq s(k)+2,$ $$\begin{array}{ll}
\min\limits_{x\in I_{j}}\left\vert g_{j}(x,t^k_-)\right\vert&\geq \min\limits_{x\in I_{s(k)+2}}\left\vert g_{s(k)+2}(x,t^k_-)\right\vert-\sum\limits_{j>s(k)+2}\|g_{j}-g_{j-1}\|_{C^0(I_{j-1})}\\&\geq \lambda^{-r^c_{{s(k)+1}}}-\sum\limits_{j>  s(k)+2}\lambda^{-{\frac{3}{2}}r_{j-1}}\geq \lambda^{-r_{s(k)+1}},\end{array}.$$ Then Lemma \ref{lem20} implies $t^k_-\notin \Sigma^{\lambda}.$ Thus we obtain the claim.

Let $k^*$ satisfy $$|k^*|=\min\{|s|\big\vert G_{s}\bigcap (t^{k}_--\lambda^{-|k|^{100\hat{\epsilon}}},t^k_--\lambda^{-q_{N+s(k)}}),~s\in \mathcal{K}(\lambda)\}.$$
Then $t^{k^*}_+\in (t^{k}_--\lambda^{-|k|^{100\hat{\epsilon}}},t^k_--\lambda^{-q_{N+s(k)}}).$

By Lemma \ref{lmg}, for $t\in(t^{k}_--\lambda^{-c|k|^{c}},t^k_--\lambda^{-q_{N+s(k)}}),$ the local minimum of $g_{s(k)+2}(x,t),$ denoted by $m^*(t),$ satisfies
\begin{equation}\label{zhybp}\begin{array}{ll}&m^*\geq -\lambda^{-r^c_{s(k)+1}}- q^C_{N+s(k)+1}|t^k_--t|\geq -\lambda^{-r^c_{s(k)+1}}-|t^k_--t|^{\frac{1}{2}}\geq -|t^k_--t|^{\frac{1}{3}}.\end{array}
\end{equation}
On the other hand, Theorem \ref{theorem12} implies for any $x\in I_{s(k)+1}$, \begin{equation}\label{fth}\left\vert\frac{\partial g_{j}(x,t^k_-)}{\partial x}\right\vert+\left\vert\frac{\partial^2 g_{j}(x,t^k_-)}{\partial x^2}\right\vert>|k|^{-C}.\end{equation}

And $g_{j,l}$ has at most two zeros on $I_{j,l},$ which is denoted by $c_{j,1}$ and $c'_{j,2}$ (resp. $c_{j,2}$ and $c'_{j,1}$). Combining \eqref{zhybp} and \eqref{fth}, one can see that $\left\vert g_j(x,t^k_-)-g_{j}(\tilde{c}_{j,1},t^k_-)\right\vert\geq |k|^{-C}\left\vert x-\tilde{c}_{j,1}\right\vert^2$ (~resp.~$\left\vert g_j(x,t^k_-)-g_{j}(\tilde{c}_{j,2},t^k_-)\right\vert\geq |k|^{-C}\left\vert x-\tilde{c}_{j,2}\right\vert^2$) with $\partial_x g_j(\tilde{c}_{j,l},t^k_-)=0,~l=1,2.$
Then  it holds that for $j\geq s(k)+2$ and $t\in(t^{k}_--\lambda^{-c|k|^{c}},t^k_--\lambda^{-q_{N+s(k)}}),$
\begin{equation}\label{bz}\|c'_{j,1}(t)-c_{j,2}(t)\|_{\R/\Z}+\|c'_{j,2}(t)-c_{j,1}(t)\|_{\R/\Z}\leq 2\frac{|t^k_--t|^{\frac{1}{3}}}{k^{-C}}\leq |t^k_--t|^{\frac{1}{10}}.\end{equation}

It follows from \eqref{esszero} of  Theorem \ref{theorem12} that
\begin{equation}\label{lmg13}\|c_{s(k)+1,1}(t^{k}_-)+k\alpha-c_{s(k)+1,2}(t^{k}_-)\|_{\R/\Z}\leq C\lambda^{-r_{s(k)+1}}.\end{equation}
By \eqref{lmg'-2}, for $j=1,2,$ \begin{equation}\label{lmg12} \|c_{s(k)+1,j}(t^{k}_-)-c_{s(k)+1,j}(t^{k^*}_+)\|_{\R/\Z}\leq C\cdot\lambda^{q^c_{N+s(k)-1}}|t^k_--t^{k^*}_+|\leq C\lambda^{k^c}|t^k_--t^{k^*}_+|\leq |t^k_--t^{k^*}_+|^{\frac{1}{2}}.
\end{equation}
Since $t^{k^*}_+\in (t^{k}_--\lambda^{-|k|^{100c}},t^k_--\lambda^{-q_{N+s(k)}}),$ it holds from \eqref{lmg12} and  \eqref{lmg13} that
\begin{equation}\label{yjslh}\begin{array}{ll}\|c_{s(k)+1,1}(t^{k^*}_+)+k\alpha-c_{s(k)+1,2}(t^{k^*}_+)\|_{\R/\Z}&\leq C\lambda^{-r_{s(k)+1}}+|t^k_--t^{k^*}_+|^{\frac{1}{2}}.\end{array}\end{equation}

On the other hand, since for $t^{k^*}_+,$ $step~s(k^*)+2~belongs~to~Type~\textbf{II}^{k^*}_{s(k^*)+2},$
it follows from Lemma \ref{EP} that
$|k^*|>k.$

It follows from Theorem \ref{theorem12},
\begin{equation}\label{tjyhe}\|c_{s(k^*)+1,1}(t^{k^*}_+)+k^*\alpha-c_{s(k^*)+1,2}(t^{k^*}_+)\|_{\R/\Z}\leq C\lambda^{-r_{s(k^*)+1}}\end{equation} and for $j=1,2,$ \begin{equation}\label{tjehe}\|c_{s(k)+1,j}(t^{k^*}_+)-c_{s(k^*)+1,j}(t^{k^*}_+)\|_{\R/\Z}\leq \sum\limits_{i\geq s(k)+1}C\lambda^{-\frac{1}{2}r_{i}}\leq 2C\lambda^{-\frac{1}{2}r_{s(k)+1}}.\end{equation} Combining \eqref{tjyhe} with \eqref{tjehe}, it is clear that \begin{equation}\label{kjsl}\|c_{s(k)+1,1}(t^{k^*}_+)+k^*\alpha-c_{s(k)+1,2}(t^{k^*}_+)\|_{\R/\Z}\leq 3C\lambda^{-\frac{1}{2}r_{s(k)+1}}.\end{equation} Then, it follows from \eqref{yjslh} and \eqref{kjsl} that
\begin{equation}\label{101}\|k\alpha-k^*\alpha\|_{\R/\Z}\leq 4C\lambda^{-\frac{1}{2}r_{s(k)+1}}+|t^k_--t^{k^*}_+|^{\frac{1}{2}}.\end{equation} On the other hand, \eqref{qc} implies
\begin{equation}\label{102}\|k\alpha-k^*\alpha\|_{\R/\Z}\geq |k-k^*|^{-C_{\alpha}}.\end{equation}

\noindent Therefore \eqref{101} and \eqref{102} yield that
$|k-k^*|\geq \left(4C\lambda^{-\frac{1}{2}r_{s(k)+1}}+|t^k_--t^{k^*}_+|^{\frac{1}{2}}\right)^{-c},$ which implies $$|k^*|\geq \frac{1}{2}\left(4C\lambda^{-\frac{1}{2}r_{s(k)+1}}+|t^k_--t^{k^*}_+|^{\frac{1}{2}}\right)^{-c}.$$

Combining the above inequality and the fact $\lambda^{-\frac{1}{2}r_{s(k)+1}}\ll \lambda^{-q_{N+s(k)}}$, we obtain \eqref{bigvert}.\hfill\qed\end{proof}
\begin{lemma}\label{lem255}\label{lemmapart1} For $m,k\in \mathcal{K}(\lambda)$ with $k>m,$ if $dist\{G_{m},G_k\}\leq \lambda^{-q_{N+s(k)}},$ then it holds that $dist\{G_m,G_k\}\geq |k|^{-C}.$
\end{lemma}

\begin{proof}

Without loss of generality, we only consider the case $G_{m}\bigcap (t^{k}_--\eta,t^k_-)\neq \emptyset$ and assume $|k|$ is large enough.

Recall that Lemma \ref{EP} implies for $t\in \{t^k_-,t^k_+\}$ and each $j\geq s(k)+2$,  $step~j~belongs~to~Type~II_{j}^{k}.$
%and
%\begin{equation}\label{UHJH}\|\bar{c}_{j,1}(t^{k}_-)+k\alpha-\bar{c}_{j,2}(t^{k}_-)\|_{\R/\Z}\leq C\lambda^{-r^c_{j-1}}\leq C\lambda^{-\lambda^{cq^c_{N+j-2}}} \ll C\lambda^{-q_{N+j-2}}.\end{equation}

 It follows from Theorem \ref{theorem12} that for $j\geq s(k)+2$ and $t\in (t^k_--\lambda^{-q_{N+j-2}},t^k_--\lambda^{-q_{N+j-1}}),$ \begin{equation}\label{fbz}\begin{array}{ll}\|c'_{j,1}(t)-c_{j,1}(t)-k\alpha\|_{\R/\Z}+\|c'_{j,2}(t)-c_{j,2}(t)+k\alpha\|_{\R/\Z}&\leq C\lambda^{-\frac{1}{200}r_{j-1}}\ll \lambda^{-\frac{1}{10}q_{N+j}}\leq |t^k_--t|^{\frac{1}{10}}.\end{array}\end{equation}

Then, by \eqref{bz} and \eqref{fbz} we have
\begin{equation}\label{fbz1}\|c_{j,2}(t)-c_{j,1}(t)-k\alpha\|_{\R/\Z}\leq 2|t^k_--t|^{\frac{1}{10}}.\end{equation}

Now we consider the following set $$X_j:=\{t^m_+ \vert (t^m_-,t^m_+)\bigcap (t^{k}_--\lambda^{-q_{N+j-2}},t^k_--\lambda^{-q_{N+j-1}})\neq \emptyset,~m\in \mathcal{K}(\lambda)\}.$$

For $t^m_+\in X_j,$ \eqref{fbz1} yields
\begin{equation}\label{ts+}\|c_{j,2}(t^m_+)-c_{j,1}(t^m_+)-k\alpha\|_{\R/\Z}\leq 2|t^k_--t_+^m|^{\frac{1}{10}}.\end{equation}

By Lemma \ref{EP}, $\lim\limits_{j\rightarrow +\infty}c_{j,l}(t^m_+),~l=1,2$ exists and $\lim\limits_{j\rightarrow +\infty}c_{j,2}(t^m_+)-\lim\limits_{j\rightarrow +\infty}c_{j,1}(t^m_+)=m\alpha {\rm\ (mod\ 1)}.$

Then by \eqref{ts+}, we have
$$\begin{array}{ll}&\|m\alpha-k\alpha\|_{\R/\Z}=\|c_{\infty,2}(t^m_+)-c_{\infty,1}(t^m_+)-k\alpha\|_{\R/\Z}\leq 2|t^k_--t^m_+|^{\frac{1}{10}}+\sum\limits_{p=1}^2\sum\limits_{l\geq j}|c_{l,p}-c_{l+1,p}|\\&\leq 2|t^k_--t^m_+|^{\frac{1}{10}}+C\lambda^{-r_{j-1}}\leq 3|t^k_--t^m_+|^{\frac{1}{10}}.\end{array}$$

By \eqref{qc}, $\|m\alpha-k\alpha\|_{\R/\Z}\geq |m-k|^{-C}.$
Therefore
\begin{equation}\label{ksks}|m-k|>\left(3|t^k_--t^m_+|^{\frac{1}{10}}\right)^{-c}.\end{equation}

On the other hand, again by the help of Lemma \ref{EP},
\begin{equation}\label{131}for~each~i\geq s(m)+2,~step~i~with~respect~to~t^m_+~belongs~to~Type~\textbf{II}^{m}_{i}.\end{equation}
By Lemma \ref{lem1},
\begin{equation}\label{132}step~s(k)+2~with~respect~to~t^m_+~belongs~to~Type~\textbf{II}_{s(k)+2}^k.\end{equation}

Since $m\neq k,$ \eqref{131} and \eqref{132} imply $|m|>|k|.$
Therefore \eqref{ksks} yields
$2|m|>\left(3|t^k_--t^m_+|^{\frac{1}{10}}\right)^{-c}.$
Then for $j\geq s(k)+2$ and $t^m_+\in X_j$, it holds that

$$|m|>|t^k_--t^m_+|^{-c}=dist\{G_k,G_m\}^{-c}.\hfill\qed$$
\end{proof}

By the help of \eqref{bigvert} of Lemma \ref{lem1} and Lemma \ref{lem255}, it holds that
for $k,m\in \mathcal{K}(\lambda)$, if $dist\{G_m,G_k\}\leq \lambda^{-|k|^{100c}},$ then \begin{equation}\label{zuihoujielun1}dist\{G_k,G_m\}\geq |m|^{-c}.\end{equation}

\

\textbf{Final proof of (3) of Theorem \ref{15}:}

\begin{proof}

Given $k_1,k_2\in \mathcal{K}(\lambda)$ with $|k_1|\leq |k_2|,$

\begin{enumerate}
\item[1:] if $$\big\vert |k_2|-|k_1|\big\vert<|k_2|<\lambda^{|k_1|^{100c}},$$ then we claim that
$$dist\{G_{k_1},G_{k_2}\}\geq \lambda^{-|k_1|^{100c}}.$$ In fact
\eqref{zuihoujielun1} together with $$dist\{G_{k_1},G_{k_2}\}< \lambda^{-|k_1|^{100c}}$$ implies $$|k_2|\geq dist\{G_{k_1},G_{k_2}\}^{-c}\geq \lambda^{|k_1|^{100c}}>|k_2|.$$ This leads to a contradiction.

\item[2:] if $$|k_2|\geq \lambda^{|k_1|^{100c}}+|k_1|,$$ then $\big\vert |k_1|-|k_2| \big\vert\approx |k_2|.$

If $$dist\{G_{k_1},G_{k_2}\}< \lambda^{-|k_1|^{100c}},$$ Lemma \ref{lem255} implies $dist\{G_{k_1},G_{k_2}\}\geq |k_2|^{-c}\geq \big\vert |k_1|-|k_2| \big\vert^{-c}.$
If $$dist\{G_{k_1},G_{k_2}\}\geq \lambda^{-|k_1|^{100c}},$$  we have $$dist\{G_{k_1},G_{k_2}\}\geq \lambda^{-|k_1|^{100c}}\geq |k_2|^{-1}\geq |k_2|^{-C}\geq \frac{1}{2}\big\vert|k_1|-|k_2| \big\vert^{-C}.$$
\end{enumerate}

In summary, if $|k_2|-|k_1|<\lambda^{|k_1|^{100c}}$, we have
$dist\{G_{k_1},G_{k_2}\}\geq \lambda^{-|k_1|^{100c}};$
if $|k_2|-|k_1|\geq \lambda^{|k_1|^{100c}}$, we have
$dist\{G_{k_1},G_{k_2}\}\geq c\big\vert|k_1|-|k_2| \big\vert^{-C}.$

Therefore for $|k_2|\geq |k_1|$ with $k_i\in \mathcal{K}(\lambda),i=1,2,$ it holds that
$$dist\{G_{k_1},G_{k_2}\}\geq \max\{ |k_2|-|k_1|,\lambda^{|k_1|^{100c}}\}^{-C}.$$

Hence for any $k_i\in \mathcal{K}(\lambda),i=1,2,$
$$dist\{G_{k_1},G_{k_2}\}\geq \max\{\left\vert|k_2|-|k_1|\right\vert,\lambda^{\min \{|k_1|,|k_2|\}^{100c}}\}^{-C}.$$
This ends the proof of Theorem \ref{15}.
\hfill\qed\end{proof}
\noindent Lemma \ref{typeiiiyzx} allows us to give the following definition of the (new) label of a spectral gap:
\begin{definition}\noindent Each spectral gap $G$ of $\lambda v$ can be identified by a unique $k=k(G)\in \mathbb{Z}$ such that \eqref{var-ep}--\eqref{dgkij} and the conclusion in Theorem \ref{15} hold true, which is called the (new) label of the gap. Thus $\mathcal{K}(\lambda)$ is a set of labels for gaps.
\end{definition}

\section{The proof of the main theorem based on a sharp estimate of the derivative on FLE (Lemma \ref{lemma18})}
In this section, based on a sharp estimate on the derivatives of the finite Lyapunov exponent (FLE) proved later, we will provide a sharp estimate on the regularity of LE with the help of LDT and AP.

 It is worth noting that the key for our purpose is not a sharp LDT, but a sharp estimate on the derivative of FLE. In fact, even a `weak' LDT is sufficient, see Subsection \ref{4.2}.
 At different energies, the magnitudes of the derivatives of FLE may be much different and thus the local regularity of LE is quite different. It sources from the fact that at different energies, the measure of the set of `bad' phase $x$ for which $\frac{\partial_E\|A_{N_0}(x,E)\|}{\|A_{N_0}(x,E)\|}$ does not have a good upper bound, may be of different orders in magnitude. For example, the measure of the set of `bad' $x$ for  $t\in\mathcal{FR}$  (the definition is given below) of full measure is much less than the one for ${EP}$. It comes from the degeneration of the function $g(x,\cdot)$ of $t\in {EP}$ and the nondegeneration of the one of $t\in \mathcal{FR}$, respectively. Moreover, the regularity of LE at other energies is in between according to the best approximation of them by the set ${EP}$.

We will first obtain the local regularity of LE for ${EP}$. Based on them, we can obtain the local regularity for other energies. Finally, we will show the absolutely $\frac12$-H\"older continuity of LE.

\subsection{A sharp estimate on the derivative of FLE}\label{goodcontrol} We will show that  the derivative of FLE is essentially governed by the resonances.
%%Hence we can label all  gaps by $G_{k}\triangleq(t_{-}^{k},t_{+}^{k})$, $k\in \Z$ and
In the remaining part of this paper, we always assume  $G_{k}\triangleq(t_{-}^{k},t_{+}^{k})$, $k\in \mathcal{K}(\lambda).$  We give several definitions as follows.

Note \eqref{sk_df} and (2) of Theorem \ref{lm15} imply that \begin{equation}\label{GKLEQQ}|G_k|\leq C\lambda^{-c|k|}\leq C\lambda^{-cq^2_{N+s(k)-1}}\ll \lambda^{-q_{N+s(k)-1}}.\end{equation}

\begin{definition} Let   $j,\ k,\ S\in\Z_+$.
\begin{enumerate}

%\item $\mathcal{N}_{j}(\hat{A})=\left[\lambda^{q_{N+j-1}^{\hat{\epsilon}}}\right]$, where  the constant $\hat{A}$ will be adjusted a little if necessary.
%\item $\mathcal{I}_{j}(t)\triangleq B(c_{j,1}(t),\lambda^{-q^{C^{-\frac{1}{1000}}}_{N+j-1}})\bigcup B(c_{j,2}(t),\lambda^{-q^{C^{-\frac{1}{1000}}}_{N+j-1}}):=\mathcal{I}_{j,1}(t)\bigcup \mathcal{I}_{j,2}(t)$.
\item $\mathcal{B}_{j}(t)\triangleq B(t,\lambda^{-{q_{N+j-1}}})- B(t,\lambda^{-q^{\log q_{N+j-1}}_{N+j-1}})$ for all $j\in\Z_+$ (note that $\mathcal{B}_{j}(t)\bigcap \mathcal{B}_{j+1}(t)\neq \emptyset$).
\item ${\mathcal{B}}^k=(t^k_--\lambda^{-q_{N+s(k)-1}},t^k_-+\lambda^{-q_{N+s(k)-1}})\bigcup (t^k_+-\lambda^{-q_{N+s(k)-1}},t^k_++\lambda^{-q_{N+s(k)-1}})$; $\mathcal{B}_{\xi_k}^{k}:=\bigcup\limits_{\cdot=\pm}B(t_{\cdot}^{k},|G_k|^{1+\xi_k}),$ where $\xi_k:=q^{-\frac{1}{4}}_{N+s(k)-1}$.

\item $$K_{strong}(t)\triangleq \left\{\begin{matrix}\left\{k| t\in 2\mathcal{B}^{k}\right\}=\{k_1(t),k_2(t),\cdots, k_i(t),\cdots\}, & \left\{k| t\in 2\mathcal{B}^{k}\right\}\neq \emptyset\\ \{0\}, & \left\{k| t\in 2\mathcal{B}^{k}\right\}=\emptyset\end{matrix}\right.$$ If $|K_{strong}(t)|<\infty$, we define $k_{last}(t)\triangleq \max \{ |k_i(t)|| k_i\in K_{strong}(t)\}.$

\item $J(t)\triangleq\{j_1(t),j_2(t),\cdots, j_i(t),\cdots\}$, where $$j_i(t):=\left\{\begin{matrix}1+\max\{j\geq s(k_i)\vert t\in (2\mathcal{B}_j(t^{k_{i}(t)}_{-})\bigcup 2\mathcal{B}_j(t^{k_{i}(t)}_{+}))\},& \{j\geq s(k_i)\vert t\in (2\mathcal{B}_j(t^{k_{i}(t)}_{-})\bigcup 2\mathcal{B}_j(t^{k_{i}(t)}_{+}))\}\neq \emptyset \\ 1 ,& \{j\geq s(k_i)\vert t\in (2\mathcal{B}_j(t^{k_{i}(t)}_{-})\bigcup 2\mathcal{B}_j(t^{k_{i}(t)}_{+}))\}=\emptyset\end{matrix}\right.$$ If $|J(t)|<+\infty.$, we define $j_{last}(t)\triangleq \max \{j_i(t), j_i\in J(t)\}$

\item Semi-finite resonance points: $${\mathcal{FR}}\triangleq \bigcup\limits_{j\geq 0}\bigcap\limits_{|k|\geq j}\left(2\mathcal{B}^{k}\right)^c\triangleq \bigcup\limits_{j\geq 0}\mathcal{F}_j;$$
\item
Semi-infinite resonance points: $${\mathcal{IR}}=\Sigma^{\lambda} -\left({\mathcal{FR}}\cup EP\right).$$

\item Given $k\in \mathcal{K}(\lambda),$ for any interval $I=(a,b)\subset 2{\mathcal{B}}^k,$ let $$d(I,k):=dist\{\frac{a+b}{2},G_k\}$$ and for $1>d(I,k)>0$, $$\varsigma(I,k):=\lambda^{-|\log d(I,k)|^{\log |\log d(I,k)| }}.$$

\item $\eta=\lambda^{-(\log n)^C}.$

%\item For $n\in \N$,  $p(\cdot): \Z_+\rightarrow \Z_+$ is determined by \begin{equation}\label{qcscale}|I_{p(n)-1}|^{-C}\leq n\leq |I_{p(n)}|^{-C}\end{equation}
 %where $C>1000C_{\alpha}$ and $C_{\alpha}>0$ satisfies $\|l\alpha\|_{\R/\Z}\geq l^{-C_{\alpha}}$ for any nonzero $l\in\Z$.

%\item Let $\bar{S}$ satisfy $|I_{\bar{S}+1}|\leq |\mathcal{I}_{S}|\leq |I_{\bar{S}}|$.
\end{enumerate}

\end{definition}

 The definitions (5) and (6) in the above describe the approximation of the fixed point $t\in \Sigma^\lambda$ by the endpoints of the gaps. Moreover, we will find that the local regularity of LE at ${\mathcal{FR}}$, ${\mathcal{IR}}$ or $EP$ are different from each other.

\

%\begin{remark}\label{remarkgz}By the help of Lemma \ref{lmg}, it is clear that $g_{s(k_i(t))}(x,t)$ are of $type_{II}$ with $\left\vert g_{s(k_i(t))}(\tilde{c}_{s(k_i(t))},t)\right\vert\leq C\lambda^{-\frac{1}{10}q_{N+s(k_i(t))}-1}.$ Surely, the definition implies that $|J(t)|<+\infty$ if and only if $|K_{strong}(t)|<+\infty.$
%Comparing the definition of $type_{III,weak}$ with $type_{II,weak}$ in  Section \ref{class},  we have that $$J(t)\subset \{n| g_n(x,t)~is~of~type_{II,weak}~or~type_{III,weak}\}.$$

%\end{remark}

%\begin{remark}
%(2) of Theorem \ref{15} implies $\mathcal{EP}\supset EP$.
%\end{remark}

Based on the above definitions, with the help of Theorem \ref{lm15}, we have the following  properties.
\begin{proposition}\label{prop14}

\begin{enumerate}
%%\item For any $t\in \mathcal{I},~i\in \Z_+,$ there exists some $l_0=l_{0,i}(t)$ satisfying $j_i(t)\leq  l_0< s(k_{i+1}(t))$ such that $g_{\bar{l}}$ belongs to $type_{I}$ if $l_0\leq  l< s(k_{i+1}(t)).$
\item If $t\in EP,$ then $|K_{strong}(t)|<\infty;$
\item If $|K_{strong}(t)|<\infty,$ then $t\in EP\bigcup {\mathcal{FR}};$

\item $Leb\{\mathcal{IR}\}=0.$\end{enumerate}
\end{proposition}
\begin{proof}

{(1):} It's enough to show $k_{last}(t)<+\infty.$ If $t\in EP$, by (1) of Theorem \ref{15} there exists some $m\in\Z$ such that \begin{equation}\label{tep} \text{all step}~j>s(m)+2~\text{belongs to}~\textbf{II}^{m}_j ~\text{and}~t\in \bar{G}_{m}\subset \mathcal{B}^{m}.\end{equation}  Therefore $k_{last}(t)\geq |m|.$  If $k_{last}(t)> |m|,$ then, b of Lemma \ref{boflem27} implies there must exist some $s(m)+2<s<s(m')+2$ such that step $s$ belongs to Type \textbf{I}, which contradicts with \eqref{tep}. Hence $k_{last}(t)=|m|<+\infty$ as desired.

{(2):} If $t\in {\mathcal{IR}},$ then by the definition, we can find a sequence $\{k_{i_j}(t)\}$ satisfying $|k_{i_j}(t)|\rightarrow +\infty$ as $j\rightarrow \infty$ such that $t\in \mathcal{B}^{k_{i_j}(t)},$ which implies that $K_{strong}=+\infty.$

{(3):} By Borel Contelli Lemma, it is enough to show that $\sum\limits_{k\in \Z} Leb\{\mathcal{B}^k\}<+\infty.$ Note that (2) of Theorem \ref{15} implies that $|t^k_+-t^k_-|\leq C\lambda^{-c k}$. Hence $$|\mathcal{B}^k|\subset |(t^k_--\lambda^{-q_{N+s(k)-1}},t^k_++\lambda^{-q_{N+s(k)-1}})|\subset |(t^k_--\lambda^{-ck},t^k_++\lambda^{-ck})|\leq 2C\lambda^{-ck},$$ we immediately have $\sum\limits_{i\in \Z}Leb\{\mathcal{B}^k\}<\lambda^{-\frac{c}{2} k}<+\infty,$ as desired.
\hfill\qed\end{proof}

%Based on (1) of Proposition \ref{prop14}, we can find $j_i(t)\le l_i(t)\le l_{0,i}(t)$ such that  $g_{\bar{l}}$ belongs to $type_{I}$ or $type_{III,strong}$ if $l_i(t)\leq  l< s(k_{i+1}(t))$ or
 %$j_i(t)\le l<l_i(t)$, respectively.
%For simplicity, in the following we replace $j_i(t)$ by $l_{0,i}(t)$ and make the second regulation as follows:
%\begin{regulation}[2]\label{regulation2} For any $t\in \mathcal{I},~i\in \Z_+,$ we always assume $g_{\bar{l}}$ belongs to $type_{I}$ for any $j_i(t)\leq  l< s(k_{i+1}(t))$ and $g_{\bar{l}}$ belongs to $type_{III,strong}$ for any $s(k_{i}(t))\leq  l< j_{i+1}(t)$ (we also assume $|\tilde{c}_{\bar{j}_{{i}}(t),m}(t)-c_{\bar{j}_{{i}}(t),m}(t)|\leq C\lambda^{-ck_i(t)},~m=1,2$ with the help of {regulation}  (1)).
%\end{regulation}

The most crucial lemma in this study, which provides a precise estimate of FLE, is as follows. We will prove it in later sections.
\begin{lemma}\label{lemma18}

Under the same condition as in Theorem \ref{Th1}, let $G_k=(t_-^k, t^k_+),\ k\in \mathcal{K}$ be defined in Theorem \ref{15} and $s(k)$ be as in (\ref{sk_df}). For
$t\not\in \{t^{k}_-,t^{k}_+\}$, denote  $$H(k,t)= \frac{(sgn(t-t^k_+)sgn(t-t^k_-))\left(2t-(t^k_++t^k_-)\right)}{\sqrt{|t-t^k_+|\cdot |t-t^k_-|}}.$$
 Then there exists $\left(\log k\right)^{-C}<C_k\leq (\log k)^C$  such that
\begin{enumerate}
\item  $for\ any\ t\in \mathcal{B}_j(t^{k}_{-})\bigcup\mathcal{B}_j(t^{k}_{+}),\ j\geq s(k),\ m=1,2,$ we have
\begin{equation}\label{6.1.1}\left|\frac{d L_{m\cdot \mathcal{N}_j}(t)}{d t}\right|\leq  C_k\cdot\left\vert H(k,t)\right\vert+\lambda^{\left(\log (m\cdot \mathcal{N}_j)\right)^{{C}}};\end{equation}

\item $for\ \  t\in \left(\mathcal{B}_j(t^{k}_{-})\bigcup\mathcal{B}_j(t^{k}_{+})\right)\bigcap G_{k}, j\geq s(k), m=1,2$, we have
\begin{equation}\label{6.1.2}\left\vert\frac{d L_{m\cdot \mathcal{N}_j}(t)}{d t}-C_k\cdot H(k,t)\right\vert\leq \lambda^{\left(\log (m\cdot \mathcal{N}_j)\right)^{{C}}};\end{equation}

\item $for\ \  t_0\in \Sigma^{\lambda}-{EP},$ $m=1,2$ and $i\in \Z_+,$ it holds that
\begin{equation}\label{6.1.3}
\left|\frac{d L_{m\cdot \mathcal{N}_{l}}(t)}{d t}\right|\leq C_{k_i}\cdot\left\vert H(k_i(t_0),t)\right\vert+\lambda^{\left(\log (m\cdot \mathcal{N}_{j_i(t_0)})\right)^{{C}}},~
  \ \ ~t\in~\frac{1}{2}\mathcal{B}_{l}(t_0),\  j_{i}(t_0)\leq l< s(k_{i+1}(t_0));\end{equation}
  \item $for\  t\in \left(\mathcal{B}_j(t^{k}_{-})\bigcup\mathcal{B}_j(t^{k}_{+})\right)\bigcap G_{k}, m=1,2,$ fixed $i\in \Z_+,$ we have \begin{equation}\label{6.1.2'}\left\vert\frac{d L_{m\cdot \mathcal{N}_l}(t')}{d t}-C_k\cdot H(k,t')\right\vert\leq \lambda^{\left(\log (m\cdot \mathcal{N}_{j_i(t)})\right)^{{C}}}~
, ~t'\in~\frac{1}{2}\mathcal{B}_{l}(t), ~j_{i}(t)~\leq~l.\end{equation}

\end{enumerate}\label{lemma 6.1}
\end{lemma}

\begin{remark}\label{remark19} By the proof of Lemma \ref{lemma18}, if we replace $\mathcal{B}_{\cdot}$ with $2\mathcal{B}_{\cdot}$, then all the results as above also hold true.
\end{remark}
\begin{remark}
If $t\in \sum$ is close to ${EP}$ in some sense,  the right hand side of (\ref{6.1.1}) or (\ref{6.1.2})
is dominated by the first term. For such a $t$, the regularity of LE is strictly weaker than Lipschitz continuity. In particular, with the help of LDT and AP, (\ref{6.1.1}) or (\ref{6.1.2}) lead to exact $\frac12$-H\"older continuity at $t=t^k_{\pm}$. Otherwise, if  $t\in \sum$ is approximated by points in ${EP}$  at a slow speed (e.g. $t\in \mathcal{FR}$), the second term on  the right-hand side of (\ref{6.1.1}), which is similar to the right-hand side of (\ref{6.1.3}), will become the dominant one. In particular $\{j_i(t)\}_i$ is a finite set for $t\in \mathcal{FR}$, which leads to a finite upper bound for the derivatives of all FLE at $t$ and thus the Lipschitz continuity of LE there.

The difference between (\ref{6.1.1})+(\ref{6.1.2}) for $EP$ and (\ref{6.1.3}) for $\mathcal{FR}$  sources from the difference between $g_{s(\mathcal{N}_j)}$ of them. Roughly speaking, the measure of `bad' $x$ (see the beginning of this section) is decided by the measure of the set $\{x|\ |g_{s(\mathcal{N}_j)}|\le \lambda^{-Cq_{N+{s(\mathcal{N}_j)}-1}}\}$. In $EP$ case, Due to continuing resonances, $g_{s(\mathcal{N}_j)}$ is {\bf degenerate} and the measure of `bad' $x$ is about $\sqrt{\lambda^{-Cq_{N+{s(\mathcal{N}_j)}-1}}}$ for all large $j$. In contrast, in $\mathcal{FR}$ case, resonance occurs for finite times, which implies that $g_n$ is {\bf nondegenerate} for $n\gg 1$ and thus the measure of `bad' $x$ is about $\lambda^{-Cq_{N+n-1}}$.

For $t$ in the case $\mathcal{IR}$,  $\beta(t)$ defined in Section 3  describes the speed of approximation of $t$ by endpoints of gaps and determines the regularity of LE on $t$,  which is between $\frac{1}{2}$-H\"older continuity and Lipschitz continuity. LE for those $t$ with the fastest approaching speed by endpoints of gaps possesses a regularity close to $\frac{1}{2}$-H\"older continuity, while the ``slowest" ones correspond to   a regularity close to Lipschitz continuity.
\end{remark}
\vskip0.3cm

\subsection{Several useful inequalities}
\begin{lemma}\label{genhaobds} Given $K\in \Z_+$ and a sequence $a\leq a_1<b_1<a_2<b_2<\cdots\leq a_K\leq b_K\leq b$, a monotonic, concave and absolutely continuous function $f$ on $[a, b],$
 the following hold true.

\begin{enumerate}

\item If $f$ is monotonic increasing, then
$$0\leq \sum\limits_{1\leq i\leq K}(f(b_i)-f(a_i)) \leq f(\sum\limits_{i\geq 1}(b_i-a_i)+a)-f(a).$$

\item If $f$ is monotonic decreasing, then
\begin{equation}\label{danj}0\geq \sum\limits_{1\leq i\leq K}(f(b_i)-f(a_i)) \geq f(b)-f(b-\sum\limits_{1\leq i\leq K}(b_{i}-a_{i})).\end{equation}
\end{enumerate}

\end{lemma}
\begin{proof} Note that the concavity implies $f(x)$ is derivable almost everywhere and

\begin{equation}\label{yijd} f'(x)\geq f'(y),~{\rm \ for\ any\ } x\leq y\in [a,b]-X\end{equation} with $Leb\{X\}=0.$

The proof of (1) and (2) are similar, here we only prove (1).
 If $f$ is monotonic increasing, then $$f'(x)\geq 0,~{\rm \ for\ any\ } x\in [a,b]-X$$ with the same $X$ as above.
Hence \eqref{yijd} implies

\begin{equation}\label{concavee}0\leq \int_{c}^{d} f'(x) dx\leq \int_{c'}^{d'} f'(x) dx\end{equation} for any $c,c',d,d' \in [a,b]$ with $c'\leq c, d'\leq d$ and $c-c'=d-d'.$

Note $a_j<b_j<a_{j+1}<b_{j+1}$ implies
$$\sum\limits_{1\leq j\leq i-1}(b_j-a_j)+a\leq \sum\limits_{1\leq j\leq i-1}(a_{j+1}-a_j)+a_1\leq  a_{i},~i\geq 2;$$
$$\sum\limits_{1\leq j\leq i}(b_j-a_j)+a\leq \sum\limits_{1\leq j\leq i}(b_j-b_{j-1})+b_1\leq  b_{i},~i\geq 2.$$
Therefore it holds from the absolute continuity of $f$ and \eqref{concavee} that

\begin{equation}\label{concavee1}f(b_1)-f(a_1)=\int^{b_1}_{a_1} f'(x) dx\leq \int^{b_1-a_1+a}_{a} f'(x) dx=f(b_1-a_1+a)-f(a);\end{equation} \begin{equation}\label{concavee2}~f(b_i)-f(a_i)=\int^{b_i}_{a_i} f'(x) dx\leq \int^{\sum\limits_{1\leq j\leq i}(b_j-a_j)+a}_{\sum\limits_{1\leq j\leq i-1}(b_j-a_j)+a} f'(x) dx= f(\sum\limits_{1\leq j\leq i}(b_j-a_j)+a)-f(\sum\limits_{1\leq j\leq i-1}(b_j-a_j)+a),~i\geq 2.\end{equation}

Then, \eqref{concavee1}  and  \eqref{concavee2} imply
$$0\leq f(b_1)-f(a_1)+\sum\limits_{i\geq 2}f(b_i)-f(a_i)\leq f(b_1-a_1+a)-f(a)+\sum\limits_{i\geq 2}\left(f(\sum\limits_{1\leq j\leq i}(b_j-a_j)+a)-f(\sum\limits_{1\leq j\leq i-1}(b_j-a_j)+a)\right),$$ which implies
$$0\leq \sum\limits_{1\leq i\leq K}(f(b_i)-f(a_i))\leq f(\sum\limits_{1\leq i\leq K}(b_i-a_i)+a)-f(a).\hfill\qed$$
\end{proof}
% If $f$ is monotonic decreasing, then $$f'(x)\leq 0,~{\rm \ for\ any\ } x\in [a,b]-X$$ with the same $X$ as above.
%Hence \eqref{yijd} implies

%$$0\geq \int_{c}^{d} f'(x) dx\geq \int_{c'}^{d'} f'(x) dx$$ for any $c,c',d,d' \in [a,b]$ with $c'\geq c, d'\geq d$ and $c-c'=d-d'.$
%Note
%$$b-\sum\limits_{i+1\leq j\leq K}(b_j-a_j)\geq b_K-\sum\limits_{i+1\leq j\leq K}(b_j-b_{j-1})\geq  b_{i},~i\leq K-1;$$
%$$b-\sum\limits_{i\leq j \leq K}(b_j-a_j)\geq a_K-\sum\limits_{i\leq j\leq K}(a_{j+1}-a_j)\geq  a_{i},~i\leq K-1.$$
%Therefore it holds from the absolute continuity that

%$$0\geq f(b_K)-f(a_K)=\int^{b_K}_{a_K} f'(x) dx\geq \int_{b-(b_K-a_K)}^{b} f'(x) dx=f(b)-f(b-(b_K-a_K));$$ $$\begin{array}{ll}~&0\geq f(b_i)-f(a_i)=\int^{b_i}_{a_i} f'(x) dx\geq \int^{b-\sum\limits_{i+1\leq j\leq K}(b_j-a_j)}_{b-\sum\limits_{i\leq j\leq K}(b_j-a_j)} f'(x) dx\\&= f(b-\sum\limits_{i+1\leq j\leq K}(b_j-a_j))-f(b-\sum\limits_{i\leq j\leq K}(b_j-a_j)),~i\leq K-1.\end{array}$$

%Hence
%$$\begin{array}{ll}&0\geq f(b_K)-f(a_K)+\sum\limits_{1\leq i\leq K-1}\left(f(b_i)-f(a_i)\right)\\&\geq f(b)-f(b-(b_K-a_K))+\sum\limits_{1\leq i\leq K-1}\left(f(b-\sum\limits_{i+1\leq j\leq K}(b_j-a_j))-f(b-\sum\limits_{i\leq j\leq K}(b_j-a_j))\right)\end{array},$$ which implies
%$$0\geq \sum\limits_{1\leq i\leq K}(f(b_i)-f(a_i))\geq f(b)-f(b-\sum\limits_{1\leq i\leq K}(b_i-a_i)).$$

Recall that $G_k=(t_k^-, t_k
^+)$. The following results hold from Lemma \ref{genhaobds}.
\begin{lemma}\label{jiandanjf1} Let $H(k,t)$ be defined in Lemma \ref{lemma18}. Denote $\sum\limits_{j=s(k)}^{+\infty}\mathcal{N}_{j}(\hat{\epsilon})\cdot \chi_{S_{k,j}}(t)$ by $\Xi(k,t)$ with the set $S_{k,j}=\{t|\lambda^{-q_{N+j}}< dist\{t, G_k\}\leq\lambda^{-q_{N+j-1}}\}$. For any pairwise disjoint intervals $(a_i,b_i)\subset \mathcal{B}^k,\ {i\geq 1}$ satisfying $\sum\limits_{i}|a_i-b_i|=X$,  it holds that
\begin{equation}\label{jfbd1}\int_{\bigcup\limits_{i}(a_i,b_i)} |H(k,t)| dt\leq 4\sqrt{|G_k|+X}\sqrt{X};\end{equation}

\begin{equation}\label{jfbd2}\int_{\bigcup\limits_{i}(a_i,b_i)} \Xi(k,t) dt \leq 4X^{1-2\xi_k}.\end{equation}

Particularly, for $(a-b,a+b)\subset 2\mathcal{B}^k$ with $1>b>0$, it holds that \begin{equation}\label{jiandanjf3}\int_{a-b}^{a+b} (|H(k,t)|+\Xi(k,t)) dt\leq  4b^{\min\{\beta_k(a),1-2\xi_k\}}.\end{equation}
\end{lemma}
\begin{proof}
We assume that there is no $i$ satisfying  $t^k_{\pm}$ or $\frac{t^k_-+t^k_+}{2}\in (a_i,b_i).$  For  other cases, for instance $(a_i,b_i)\ni t^k_{-}$, we only need to consider two  intervals $(a_i,t^k_-)$ and $(b_i,t^k_-)$ and the remaining proof are similar.

Thus there exist $i_l,\ l=1,\ 2,\ 3$ such that $$a_1<b_1<\cdots<a_{i_1}<b_{i_1}<t^k_{-}<\cdots<a_{i_2}<b_{i_2}<\frac{t^k_-+t^k_+}{2}<\cdots <a_{i_3}<b_{i_3}<t^k_{+}<\cdots.$$

Denote
$$\begin{array}{ll}&I_1=\{i \vert 1\le i\le i_1\},\ I_2=\{i|i_1<i\le i_2 \},\ I_3=\{i \vert i_2<i\le i_3\}, \ I_4=\{i \vert i>i_3\};\\
\\
&J_1:=(-\infty,t^k_-),\ J_2:=(t^k_-,\frac{t^k_-+t^k_+}{2}),\ J_3:=(\frac{t^k_-+t^k_+}{2},t^k_+),\ J_4:=(t^k_+,+\infty).
\end{array}
$$

 Let $$X_l=\sum_{i\in I_l}(b_i-a_i),~1\le l\le 4.$$
\textbf{The proof of \eqref{jfbd1}:}

 Note \begin{equation}\label{-} -\int_{a}^{b}|H(t,k)|dt =\int_{a}^{b}H(t,k)dt =\sqrt{|b-t^k_-||b-t^k_+|}-\sqrt{|a-t^k_-||a-t^k_+|},~a,b\in J_1~or~J_3;\end{equation} $$ \int_{a}^{b}|H(t,k)|dt =\int_{a}^{b}H(t,k)dt =\sqrt{|b-t^k_-||b-t^k_+|}-\sqrt{|a-t^k_-||a-t^k_+|},~a,b\in J_2~or~J_4.$$

 Therefore $\sqrt{|t-t^k_-||t-t^k_+|}$ is absolutely continuous on each $J_l,~l=1,2,3,4.$ On the other hand, by a direct calculation,
\begin{equation}\label{htd}\frac{d H(k,t)}{ dt}=\frac{d^2 \sqrt{|t-t^k_-||t-t^k_+|}}{d t^2}=-\frac{(t-t^k_+)^2(t-t^k_-)^2}{|t-t^k_+|^{\frac{7}{2}}|t-t^k_-|^{\frac{7}{2}}}<0\end{equation} for $t\in \bigcup\limits_{l=1}^4 J_l.$ Hence $\sqrt{|t-t^k_-||t-t^k_+|}$ is concave on each $J_l,~l=1,2,3,4.$

 Note $\sqrt{|t-t^k_-||t-t^k_+|}$ is monotonic decreasing on $J_1$ and $J_3.$ And
 $a_1<b_1<\cdots<a_{i_1}<b_{i_1}<t^k_{-}$ with $a_1,b_1,\cdots,a_{i_1},b_{i_1}\in J_1.$
 Hence by the help of \eqref{-} and \eqref{danj} of Lemma \ref{genhaobds}, we obtain

 $$\begin{array}{ll}&-\int_{\bigcup_{i\in I_1}(a_i, b_i)}|H(t,k)| dt=\int_{{\bigcup_{i\in I_1}(a_i, b_i)}}H(t,k) dt=\sum\limits_{l\in I_1}\left(\sqrt{|b_l-t^k_-||b_l-t^k_+|}-\sqrt{|a_l-t^k_-||a_l-t^k_+|}\right)\\&\geq \sqrt{|t^k_--t^k_-||t^k_--t^k_+|}-\sqrt{|(t^k_--X_1)-t^k_-||(t^k_--X_1)-t^k_+|}= 0-\sqrt{X_1(t^k_+-t^k_-+X_1)}.\end{array}$$

 Therefore
 $$\int_{X_1}|H(t,k)| dt\leq \sqrt{X_1(t^k_+-t^k_-+X_1)}.$$

 Similarly,  we have
$$\int_{X_l}|H(t,k)| dt\leq \sqrt{X_l(t^k_+-t^k_-+X_l)},~l=2,3,4.$$
 Then $$\begin{array}{ll}&\int_{\bigcup\limits_{i}(a_i,b_i)} |H(k,t)| dt =\bigcup_{l=1}^4 \bigcup_{i\in I_l}\int_{(a_i,b_i)} |H(k,t)| dt\\
 &\leq \sum\limits_{l=1,4}\sqrt{X_l(t_+^k-t^k_-+X_l)}+\sum\limits_{l=2,3}\sqrt{X_l(t_+^k-t^k_-+X_l)}\\ &\le \sqrt{(\sum\limits_{i=1,4}X_i)(\sum\limits_{i=1,4}(t_+^k-t^k_-+X_i))}+\sqrt{(\sum\limits_{i=2,3}X_i)(\sum\limits_{i=2,3}(t_+^k-t^k_-+X_i))}\quad {\rm (by\  Cauchy-Schwarz~ inequality)}\\&
\leq \sqrt{(\sum\limits_{i=1,2,3,4}X_i)(4(t_+^k-t^k_-)+X_1+X_4+X_2+X_3))}\quad {\rm (by\  Cauchy-Schwarz~ inequality)}\\& \leq \sqrt{X(4(t^k_+-t^k_-)+X)} \leq 4\sqrt{X((t^k_+-t^k_-)+X)}.
\end{array}$$
Thus we have \eqref{jfbd1}.

\

\textbf{The proof of \eqref{jfbd2}:}

For $t\notin G_k,$ let $j_t$ satisfy \begin{equation}\label{defgk}\lambda^{-q_{N+j_t}}< dist\{t, G_k\}\leq\lambda^{-q_{N+j_t-1}}.\end{equation}

Denote $$ F(t)=\sum\limits_{j\geq j_t+1} \mathcal{N}_j(\lambda^{-q_{N+j-1}}-\lambda^{-q_{N+j}})+\mathcal{N}_{j_t}(dist\{x, G_k\}-\lambda^{-q_{N+j_t}}).$$
One notes that for any $j^*\geq s(k),$ since $\mathcal{N}_{j}\leq \mathcal{N}_{j+1}$ it holds that
\begin{equation}\label{j^*}\begin{array}{ll}&F(t)\leq \sum\limits_{j\geq j^*+1} \mathcal{N}_j(\lambda^{-q_{N+j-1}}-\lambda^{-q_{N+j}})+\mathcal{N}_{j^*}(dist\{t, G_k\}-\lambda^{-q_{N+j^*}})\end{array}\end{equation}
(The case $j^*\geq j_t$ is trivial. For the case $j^*< j_t,$  one notes
$$\begin{array}{ll}
&\mathcal{N}_{j_t}(\lambda^{-q_{N+j_t-1}}-\text{dist}\{x,G_k\})+\sum\limits_{j^*\leq j\leq j_t-1}\mathcal{N}_j(\lambda^{-q_{N+j-1}}-\lambda^{-q_{N+j}})\\&\geq
\mathcal{N}_{j^*}(\lambda^{-q_{N+j_t-1}}-\text{dist}\{x,G_k\})+\sum\limits_{j^*\leq j\leq j_t-1}\mathcal{N}_{j^*}(\lambda^{-q_{N+j-1}}-\lambda^{-q_{N+j}})
\\&= \mathcal{N}_{j^*}(\lambda^{-q_{N+j^*}}-dist\{t, G_k\}),\end{array}$$ implies what we desire).

Hence for any fixed $0\leq a\leq 1,$ $l=1,2,3,4,$ and $t,x_1,x_2\in J_l$  taking $j^*=j_{ax_1+(1-a)x_2}$ in \eqref{j^*}, it holds that
\begin{equation}\label{ftftft}\begin{array}{ll}&F(t)\leq \sum\limits_{j\geq j_{ax_1+(1-a)x_2}+1} \mathcal{N}_j(\lambda^{-q_{N+j-1}}-\lambda^{-q_{N+j}})+\mathcal{N}_{j_{ax_1+(1-a)x_2}}(dist\{t, G_k\}-\lambda^{-q_{N+j_{ax_1+(1-a)x_2}-1}}).\end{array}\end{equation}

Note $$dist\{x,G_k\}=t^k_--x,~x\in J_1;dist\{x,G_k\}=x-t^k_-,~x\in J_2;dist\{x,G_k\}=t^k_+-x,~x\in J_3;dist\{x,G_k\}=x-t^k_+,~x\in J_4.$$

Hence for any fixed $l=1,2,3,4$ and $x_1,x_2\in J_l$, we have
$$a(dist\{x_1, G_k\})+(1-a)dist\{x_2, G_k\}=\text{dist}\{ax_1+(1-a)x_2, G_k\}.$$
Therefore by $\eqref{ftftft}$ we obtain
$$\begin{array}{ll}&aF(x_1)+(1-a)F(x_2)\leq \sum\limits_{j\geq j_{ax_1+(1-a)x_2}+1} \mathcal{N}_j(\lambda^{-q_{N+j-1}}-\lambda^{-q_{N+j}})\\&+\mathcal{N}_{j_{ax_1+(1-a)x_2}}(\text{dist}\{ax_1+(1-a)x_2, G_k\}-\lambda^{-q_{N+j_{ax_1+(1-a)x_2}-1}})
\\
\\&=F(ax_1+(1-a)x_2).\end{array}$$

It implies
$F(t)$ is concave in each $J_l,~l=1,2,3,4.$

On the other hand, note for $a<b \in J_2~or~J_4,$
$$\begin{array}{ll}&\int_{a}^b \Xi(k,t) dt=\int_{a}^b \sum\limits_{j=s(k)}^{+\infty}\mathcal{N}_{j}\cdot \chi_{S_{k,j}}(t)dt=\sum\limits_{j_b+1\leq j\leq j_a-1} \mathcal{N}_j(\lambda^{-q_{N+j-1}}-\lambda^{-q_{N+j}})\\&+\mathcal{N}_{j_b}(dist\{b,G_k\}-\lambda^{-q_{N+j_b}})+\mathcal{N}_{j_a}(\lambda^{-q_{N+j_a-1}}-dist\{a,G_k\})
=F(b)-F(a)\end{array}$$
and similarly, for $a,b \in J_1~or~J_3,$ it holds that $\begin{array}{ll}&\int_{a}^b \Xi(k,t) dt=F(a)-F(b).\end{array}$

Therefore $F(t)$ is absolutely continuous with $$F'(t)=\Xi(k,t),~a.e.~t\in J_2\bigcup J_4~\text{and}~F'(t)=-\Xi(k,t),~a.e.~t\in J_1\bigcup J_3.$$

Note
$F(t)$ is monotonic increasing on $J_2$ and $J_4$ and monotonic decreasing on $J_1$ and $J_3.$

Then for $l=1$ and~$t\in J_1,$ by the help of \eqref{danj} of Lemma \ref{genhaobds}, we have

$$\begin{array}{ll}&0\geq -\int_{\bigcup\limits_{l\in I_1} (a_l,b_l)} \Xi(k,t) dt= \sum\limits_{l\in I_1}(F(b_l)-F(a_l))\\&\geq \lim\limits_{t\rightarrow t^k_-}F(t)-F(t-\sum\limits_{l\in I_1}(b_l-a_l))=0-F(R)\\&=-\left(\sum\limits_{j\geq j_{R}+1} \mathcal{N}_j(\lambda^{-q_{N+j-1}}-\lambda^{-q_{N+j}})+\mathcal{N}_{j_{R}}(\sum\limits_{l\in I_1}(b_l-a_l)-\lambda^{-q_{N+j_{R}}})\right)\\&\geq
-\left(2\mathcal{N}_{j_{R}+1}\lambda^{-q_{N+j_{R}}}+\mathcal{N}_{j_{R}}(\sum\limits_{l\in I_1}(b_l-a_l))\right),\end{array}$$
where $R=t^k_--\sum\limits_{l\in I_l}(b_l-a_l).$

Recall $\mathcal{N}_j=[\lambda^{q^{\hat{\epsilon}}_{N+j-1}}]$ and $\xi_k=q^{-\frac{1}{4}}_{N+s(k)-1}.$
By choosing $\hat{\epsilon}$ in $\mathcal{N}_j$ sufficiently small,
we have
\begin{equation}\label{NJJJ}\begin{array}{ll}&\mathcal{N}_{j}\leq (\lambda^{-q_{N+j-1}})^{-\xi_k}\end{array},\quad j\geq s(k).\end{equation}

Since $dist(x, G_k)=t_-^k-x$ for $x<t_-^k$, we have \begin{equation}\label{xsxsxs111}\lambda^{-q_{N+R}}<\text{dist}(R, G_k)=\sum\limits_{l\in I_1}(b_l-a_l)<\lambda^{-q_{N+R-1}}\end{equation} by \eqref{defgk}.
Then \eqref{NJJJ} implies
%$$\mathcal{N}_{j_{t^k_--\sum\limits_{l\in I_1}(b_l-a_l)}+1}\leq \left(\lambda^{-q_{N+j_{t^k_--\sum\limits_{l\in I_1}(b_l-a_l)}}}\right)^{-\xi_k};$$
$\mathcal{N}_{j_{R}}\leq \left(\sum\limits_{l\in I_1}(b_l-a_l)\right)^{-\xi_k}.$

Then \eqref{xsxsxs111} implies
$$\begin{array}{ll}&\left(2\mathcal{N}_{j_{R}+1}\lambda^{-q_{N+j_{R}}}+\mathcal{N}_{j_{R}}(\sum\limits_{l\in I_1}(b_l-a_l))\right)\leq
2\left(\lambda^{-q_{N+j_{R}}}\right)^{1-\xi_k}+(\sum\limits_{l\in I_1}(b_l-a_l))^{1-\xi_k}\\
\\&\leq 3(\sum\limits_{l\in I_1}(b_l-a_l))^{1-\xi_k}\leq 3(\sum\limits_{l\in \bigcup\limits_{l=1}^4 I_1}(b_l-a_l))^{1-\xi_k}\leq 3X^{1-\xi_k}\leq X^{1-2\xi_k}.\end{array}$$

Similarly,
$\int_{\bigcup\limits_{l\in I_l} (a_l,b_l)} \Xi(k,t) dt\leq X^{1-2\xi_k},\quad 2\le l\le 4.$
Thus
$\int_{\bigcup\limits_{l\in \bigcup\limits_{l=1}^4I_l} (a_l,b_l)} \Xi(k,t) dt\leq 4X^{1-2\xi_k.}$

\

\textbf{The proof of \eqref{jiandanjf3}:}

\

Without loss of generality, we assume that $a<t^k_-$ and the other cases are similar. Then we have
$dist\{a,G_{k}\}=t^k_--a.$
By \eqref{jfbd2} we have \begin{equation}\label{b11}\int_{a-b}^{a} \Xi(k,t) dt,\ \int_{a}^{a+b} \Xi(k,t) dt\leq 4b^{1-2\xi_k}.\end{equation}

\begin{enumerate}

\item {\bf The case $b\geq |a-t^k_-|(=t^k_--a)$}

\begin{lemma}\label{usefullem}\noindent The following holds true.

$$\left(\frac{\log (x+\epsilon)}{\log x}\right)'>0,~{\rm \ for\ any\ }~0<x<\frac{1}{10}~and~0<\epsilon<\frac{1}{10}.$$

\end{lemma}

\begin{proof} Consider the set $$\mathcal{Z}:=\{x\in \R_+ \vert \left(\frac{\log (x+\epsilon)}{\log x}\right)'=0\}.$$
Note
$$\left(\frac{\log (x+\epsilon)}{\log x}\right)'=0,~(x,~\epsilon>0)$$ is equivalent to
$$x\log x=(x+\epsilon)\log (x+\epsilon),~(x,~\epsilon>0).$$

By a direct calculation, we have $$\frac{d\left(x\log x-(x+\epsilon)\log (x+\epsilon)\right)}{dx}=\log x- \log (x+\epsilon)<0,\ (x>0).$$

Since $x\log x$ is monotonic decreasing from $0$ to $-e^{-1}$ in $(0,e^{-1}]$ and monotonic increasing from $-e^{-1}$ to $+\infty$ in $(e^{-1},+\infty)$ and
$-e^{-1}$ is the unique minimum point of $x\log x,$ one has
$$(e^{-1}-\epsilon)\log (e^{-1}-\epsilon)>e^{-1}\log(e^{-1})=(e^{-1}-\epsilon+\epsilon)\log(e^{-1}-\epsilon+\epsilon);~(e^{-1}+\epsilon)\log (e^{-1}+\epsilon)>e^{-1}\log(e^{-1}).$$

Therefore $\frac{\log (x+\epsilon)}{\log x}$ possesses a unique extreme point $x^*$ satisfying $e^{-1}-\epsilon<x^*\leq e^{-1}.$

Furthermore  since $$\lim_{\delta\rightarrow0+}\frac{\log (\delta+\epsilon)}{\log \delta}=0<\frac{\log (\delta+\epsilon)}{\log \delta},\ {\rm\ if}\ 0<\delta\ll 1,$$
it follows that $\frac{\log (x+\epsilon)}{\log x}$ is monotonic increasing in $$(0,x^*)\supseteq (0,e^{-1}-\epsilon)\supseteq (0,e^{-1}-\frac{1}{10})\supseteq (0,\frac{1}{10}).\hfill\qed$$
\end{proof}

The above lemma shows that
$${\frac{\log(b+|G_k|)}{2\log b}}\geq {\frac{\log(|a-t^k_-|+|G_k|)}{2\log |a-t^k_-|}}={\frac{\log|a-t^k_+|}{2\log |a-t^k_-|}}.$$

Then by the help of \eqref{jfbd1}, we have for $0<b\ll 1$
\begin{equation}\label{b22}\int_{a-b}^{a} |H(k,t)| dt,\int_{a}^{a+b} |H(k,t)| dt\leq \sqrt{|G_k|+b}\sqrt{b}\leq b^{\frac{1}{2}+\frac{\log(b+|G_k|)}{2\log b}}\leq b^{\frac{1}{2}+\frac{\log|a-t^k_+|}{2\log |a-t^k_-|}}= b^{\beta_k(a)}.\end{equation}
Finally \eqref{b11} and \eqref{b22} imply (\ref{jiandanjf3}).

\item {\bf The case $b< |a-t^k_-|(=t^k_--a)$}

 One notes $a,b\leq t^k_-.$
Moreover, \eqref{htd} tells us $\frac{d H(t,k)}{dt}<0.$ Thus the definition of $H(k,t)$ implies $H(k,t)<0~(t\leq t^k_-).$
Therefore
$$\begin{array}{ll}&\int_{a-b}^{a} |H(k,t)| dt<\int_{a}^{a+b} |H(k,t)| dt
\\&= \sqrt{t^k_+-a-b}\sqrt{t^k_--a-b}-\sqrt{t^k_+-a}\sqrt{t^k_--a}
\\&= \sqrt{|G_k|+|a-t^k_-|+b}\sqrt{|a-t^k_-|+b}-\sqrt{|G_k|+|a-t^k_-|}\sqrt{|a-t^k_-|}.
\end{array}$$

A direct computations shows
$$\begin{array}{ll}&\sqrt{|G_k|+|a-t^k_-|+b}\sqrt{|a-t^k_-|+b}-\sqrt{|G_k|+|a-t^k_-|}\sqrt{|a-t^k_-|}\\&=\sqrt{(|G_k|+|a-t^k_-|)(1+\frac{b}{(|G_k|+|a-t^k_-|)})}\sqrt{|a-t^k_-|(1+\frac{b}{|a-t^k_-|})}-\sqrt{|G_k|+|a-t^k_-|}\sqrt{|a-t^k_-|}\\&\leq (1+\frac{b}{|a-t^k_-|})\sqrt{|G_k|+|a-t^k_-|}\sqrt{|a-t^k_-|}-\sqrt{|G_k|+|a-t^k_-|}\sqrt{|a-t^k_-|}\\&\leq \frac{b}{|a-t^k_-|}\sqrt{|G_k|+|a-t^k_-|}\sqrt{|a-t^k_-|}=b |a-t^k_-|^{-\frac{1}{2}}\sqrt{|a-t^k_+|}\\&=b^{\frac{1}{2}+\frac{\log|a-t^k_+|+\log b- \log |a-t^k_-|}{2\log b}}\leq b^{\frac{1}{2}+\frac{\log|a-t^k_+|}{2\log |a-t^k_-|}}=b^{\beta_k(a)}.\end{array}$$
 Combining this with \eqref{b11}, we obtain what we desire.\hfill\qed
\end{enumerate}
\end{proof}

\subsection{The proof of Theorem \ref{Th1}}$$
$$

\begin{definition}\label{ti}
Given $t^*\in \R,$ $r\in \R_+$ and $t\in (t^*-r,t^*+r).$ We define a sequence of $t_i\rightarrow t$ and $n_i\rightarrow +\infty$ as follows.
\begin{enumerate}

\item If $t\geq t^*,$ then we set $$t_0=t,~t_i=t^*+\frac{t-t^*}{2^i},~i\in \Z_+;$$
\item If $t<t^*,$ then we set $$t_0=t,~t_i=t^*-\frac{t^*-t}{2^i},~i\in \Z_+;$$
\item $n_i$ satisfies $$\lambda^{-q_{N+n_i}}\leq |t_i-t^*|< \lambda^{-q_{N+n_i-1}},~i\in \N.$$
 \item Let  $\eta_i= q^{-\frac{1}{2}}_{N+n_i-1},~i\in \N.$

\end{enumerate}
\end{definition}
\

\begin{definition} $$L_{\mathcal{N}_{n_i}}(t_i)+L(t_i)-2L_{2{\mathcal{N}_{n_i}}}(t_i):=X_i;~L_{\mathcal{N}_{n_i}}(t_{i+1})+L(t_{i+1})-2L_{2{\mathcal{N}_{n_i}}}(t_{i+1}):=X_{i+1};$$
$$L_{\mathcal{N}_{n_{i}}}(t_{i})-L_{\mathcal{N}_{n_{i}}}(t_{i+1}):=Y_i;~L_{2\mathcal{N}_{n_{i}}}(t_{i})-L_{2\mathcal{N}_{n_{i}}}(t_{i+1}):=Z_{i}.$$
\end{definition}
Without loss of generality, we assume that $t>t^*.$ Thus the definition of $t_i$ implies \begin{equation}\label{wolgti}t_i>t_{i+1}\rightarrow t^*\quad {\rm (as\ }i\rightarrow +\infty).\end{equation}

\begin{lemma}\label{basiclemmaldt} Let $\sigma$ be from Lemma \ref{lm27}. Then for $M\in \Z_+,$ the following  hold true.
\begin{enumerate}

\item

For $t^k_+\leq t'\leq t,$
\begin{align}\label{universal*}
\vert L(t)-L(t'))\vert
\leq & \lambda^{-\frac{c}{10}\mathcal{N}^{\sigma}_{n_0}}+\left( \int^{t}_{t'} \left\vert\frac{d L_{\mathcal{N}_{n_i}}(t)}{d t}\right\vert dt+2\cdot\int^{t}_{t'} \left\vert\frac{d L_{2\mathcal{N}_{n_i}}(t)}{d t}\right\vert dt\right).
\end{align}

Particularly,
\begin{align}\label{universal}
\vert L(t)-L(t_M))\vert
\leq & \lambda^{-\frac{c}{10}\mathcal{N}^{\sigma}_{n_0}}+\left( \int^{t}_{t_{M}} \left\vert\frac{d L_{\mathcal{N}_{n_i}}(t)}{d t}\right\vert dt+2\cdot\int^{t}_{t_{M}} \left\vert\frac{d L_{2\mathcal{N}_{n_i}}(t)}{d t}\right\vert dt\right).
\end{align}

\item If \begin{equation}\label{tiaoj*}sgn(Y_i)=sgn(Y_{i+1})=sgn(Z_i)=sgn(Z_{i+1}),~i\in \N\end{equation}and
\begin{equation}\label{tiaoj}\max\{|Y_i-Z_i|,|X_i|,|X_{i+1}|\}\ll \min\{|Y_i|,|Z_i|\},~i\in \N,\end{equation}

 then
\begin{align}\label{universal2}
\vert L(t)-L(t_M)\vert\geq -\lambda^{-\frac{c}{10}\mathcal{N}^{\sigma}_{n_0}}+\sum_{i=0}^{M-1}\left\vert- \int^{t_i}_{t_{i+1}} \frac{d L_{\mathcal{N}_{n_i}}(t)}{d t}dt+2\cdot\int^{t_i}_{t_{i+1}}\frac{d L_{2\mathcal{N}_{n_i}}(t)}{d t}dt\right\vert.
\end{align}

\item It holds that \begin{equation}\label{liptail}\sum\limits_{0\leq i\leq M-1}\lambda^{\left(\log(\mathcal{N}_{n_i})\right)^{C}}|t_i-t_{i+1}|\leq \frac{5}{4}|t_0-t_M|^{1-2q^{-\frac{1}{2}}_{N+n_0-1}}\end{equation}
\end{enumerate}

\end{lemma}

\begin{proof}

%Let $G_{k}$ be some gap  and  $t\in B(t^{k}_{+},\lambda^{-q_{N+s(k)-1}})\subset \mathcal{B}^k$.

%First we assume  $``\leq"$.

%\begin{enumerate}

%\item If $t>t^{k}_{+},$ then we set $$t_0=t,~t_s=t^k_++\frac{t-t^k_+}{2^s},~s\in \N;$$
%\item If $t<t^{k}_{+},$ then we set $$t_0=t,~a\triangleq t^k_+-t,~t_s=t^k_+-\frac{a}{2^s},~s\in \N;$$

%\end{enumerate}

%Let $n_i\in\Z_+$ satisfies $$\lambda^{-q_{N+n_i}}\leq |t_i-t^k_+|\leq \lambda^{-q_{N+n_i-1}},~i\in \N.$$

Note that for $m=1,2,$
\begin{equation}\label{flejf*} |L_{m\mathcal{N}_{n_i}}(x)-L_{m\mathcal{N}_{n_i}}(y)|=\left\vert\int^{x}_{y}\frac{d L_{m\mathcal{N}_{n_i}}(t)}{d t} dt\right\vert\leq \int^{x}_{y} \left\vert\frac{d L_{m\mathcal{N}_{n_i}}(t)}{d t}\right\vert dt.
\end{equation}

Hence
\begin{equation}\label{flejf} |Y_i|\leq \int^{t_i}_{t_{i+1}} \left\vert\frac{d L_{\mathcal{N}_{n_i}}(t)}{d t}\right\vert dt;~|Z_i|\leq \int^{t_i}_{t_{i+1}} \left\vert\frac{d L_{2\mathcal{N}_{n_i}}(t)}{d t}\right\vert dt.
\end{equation}

By Lemma \ref{lm27},
\begin{equation}\label{xi} |X_i|\leq \lambda^{-c\mathcal{N}^{\sigma}_{n_i}},~i\in \N.
\end{equation}

\

\textbf{The proof of (1):}
By the definition, there exists some $K\in \N$ such that $t_{K+1}\leq t'\leq t_{K}.$

By \eqref{flejf} and \eqref{xi}, we have
$$\begin{array}{ll}
&\vert L(t)-L(t_{K})\vert \leq \sum\limits_{i=0}^{K-1}\vert L(t_i)-L(t_{i+1})\vert =\sum\limits_{i=0}^{K-1}|X_i-X_{i+1}+2Z_i-Y_i|\leq \sum\limits_{i=0}^{K-1}\left(|X_i|+|X_{i+1}|+2|Z_i|+|Y_i|\right)\\& \leq 2|Z_i|+|Y_i|+2\sum\limits_{i=0}^{K-1}\lambda^{-c\mathcal{N}^{\sigma}_{n_i}}\leq 2\sum\limits_{i=0}^{K-1}\lambda^{-c\mathcal{N}^{\sigma}_{n_i}}+\sum\limits_{i=1}^{K-1}\left( \int^{t_i}_{t_{i+1}} \left\vert\frac{d L_{\mathcal{N}_{n_i}}(t)}{d t}\right\vert dt+2\cdot\int^{t_i}_{t_{i+1}} \left\vert\frac{d L_{2\mathcal{N}_{n_i}}(t)}{d t}\right\vert dt\right).
\end{array}
.$$

Similarly, by \eqref{flejf*} and \eqref{xi}, we obtain
$$\vert L(t')-L(t_{K})\vert \leq 2\lambda^{-c\mathcal{N}^{\sigma}_{n_K}}+\left( \int^{t_K}_{t'} \left\vert\frac{d L_{\mathcal{N}_{n_i}}(t)}{d t}\right\vert dt+2\cdot\int^{t_K}_{t'} \left\vert\frac{d L_{2\mathcal{N}_{n_i}}(t)}{d t}\right\vert dt\right).$$

Therefore $$\begin{array}{ll}\vert L(t)-L(t')\vert&\leq \left( \int^{t}_{t'} \left\vert\frac{d L_{\mathcal{N}_{n_i}}(t)}{d t}\right\vert dt+2\cdot\int^{t}_{t'} \left\vert\frac{d L_{2\mathcal{N}_{n_i}}(t)}{d t}\right\vert dt\right)+2\sum\limits_{i=0}^{K}\lambda^{-c\mathcal{N}^{\sigma}_{n_i}}\\& \leq \left( \int^{t}_{t'} \left\vert\frac{d L_{\mathcal{N}_{n_i}}(t)}{d t}\right\vert dt+2\cdot\int^{t}_{t'} \left\vert\frac{d L_{2\mathcal{N}_{n_i}}(t)}{d t}\right\vert dt\right)+\lambda^{-\frac{c}{10}\mathcal{N}^{\sigma}_{n_0}}.\end{array}$$

Then \eqref{universal} directly follows from \eqref{universal*} by taking $t'=t_M,~M\in \N$.

\

\textbf{The proof of (2):}

Note that
$$L(t_i)-L(t_{i+1})=X_i-X_{i+1}+2Z_i-Y_i.$$
Hence
$$\begin{array}{ll}\left\vert \left(L(t_i)-L(t_{i+1})\right)-Z_i\right\vert &\leq |X_i-X_{i+1}|+|Z_i-Y_i|\leq 2\max\{|X_i|,|X_{i+1}|\}+|Z_i-Y_i|.\end{array}$$

Then

\begin{equation}\label{zifh}Z_i-\left(2\max\{|X_i|,|X_{i+1}|\}+|Z_i-Y_i|\right)\leq \left(L(t_i)-L(t_{i+1})\right)\leq Z_i+2\max\{|X_i|,|X_{i+1}|\}+|Z_i-Y_i|.\end{equation}
Note by \eqref{tiaoj}, we have $2\max\{|X_i|,|X_{i+1}|\}+|Z_i-Y_i|\ll |Z_i|.$
Therefore \eqref{zifh} implies $$ sgn(Z_i)=sgn(L(t_i)-L(t_{i+1})),~i\in \N.$$

Then by \eqref{tiaoj*},
\begin{equation}\label{fhb}sgn(L(t_i)-L(t_{i+1}))=sgn(Z_i)=sgn(Z_{i+1})=sgn(L(t_{i+1})-L(t_{i+2})),~i\in \N.\end{equation}

Then, \eqref{fhb} and $L(t)-L(t_{M})=\sum\limits_{i=0}^{M-1}(L(t_i)-L(t_{i+1}))$ imply

$$\left\vert L(t)-L(t_{M})\right\vert=\sum\limits_{i=0}^{M-1}\left\vert L(t_i)-L(t_{i+1})\right\vert.$$

Therefore
$$\begin{array}{ll}
&\vert L(t)-L(t_{M})\vert = \sum\limits_{i=0}^{M-1}\vert L(t_i)-L(t_{i+1})\vert =\sum\limits_{i=0}^{M-1}|X_i-X_{i+1}+2Z_i-Y_i|\geq \sum\limits_{i=0}^{M-1} \left(|2Z_i-Y_i\vert-|X_i|-|X_{i+1}|\right)\\& \geq \sum\limits_{i=0}^{M-1}|2Z_i-Y_i|-2\sum\limits_{i=0}^{M-1}\lambda^{-c\mathcal{N}^{\sigma}_{n_i}}\geq -\lambda^{-\frac{c}{10}\mathcal{N}^{\sigma}_{n_0}}+\sum\limits_{i=0}^{M-1}\left\vert- \int^{t_i}_{t_{i+1}} \frac{d L_{\mathcal{N}_{n_i}}(t)}{d t}dt+2\cdot\int^{t_i}_{t_{i+1}}\frac{d L_{2\mathcal{N}_{n_i}}(t)}{d t}dt\right\vert.
\end{array}
$$

\textbf{The proof of (3):}

Since $q_{N-n_i+1}\gg 1,$ one note that
$$|t_i-t_{i+1}|^{-1}\geq 2\lambda^{q_{N+n_i-1}}\gg \lambda^{q_{N+n_i-1}^{\frac{5}{6}}}.$$
Hence we can choose a suitable small $\hat{\epsilon}$ in $\mathcal{N}_{n_i}=\left[\lambda^{q_{N+n_i-1}^{\hat{\epsilon}}}\right]$ such that $$\begin{array}{ll}\lambda^{\left(\log\mathcal{N}_{n_i}\right)^{C}}&\leq \lambda^{q_{N+n_i-1}^{\frac{1}{3}}}=\left(\lambda^{q_{N+n_i-1}^{\frac{5}{6}}}\right)^{q^{-\frac{1}{2}}_{N+n_i-1}}<|t_i-t_{i+1}|^{-\eta_i}.\end{array}$$
Hence
\begin{equation}\label{etai}\lambda^{\left(\log\mathcal{N}_{n_i}\right)^{C}}|t_i-t_{i+1}|<|t_i-t_{i+1}|^{1-\eta_i}.\end{equation}

Recall the definition of $t_i$ that (note $t_0=t$)
$$|t_i-t_{i+1}|=\frac{1}{2}|t_i-t^k_+|=\frac{|t-t^k_+|}{2^{i+1}}.$$

Note $$0<\eta_i=q^{-\frac{1}{2}}_{N+n_i-1}\leq q^{-\frac{1}{2}}_{N+n_i-2}=\eta_{i-1}\leq \cdots =\eta_0\leq 2\eta_0\ll 1,~{\rm \ for\ any\ } i\in \Z_+.$$
Therefore $|t-t^M|<1$ implies \begin{equation}\label{etai1}\begin{array}{ll}&\sum\limits_{0\leq i\leq M-1}|t_i-t_{i+1}|^{1-\eta_i}=\sum\limits_{0\leq i\leq M-1}\left(\frac{|t-t^M|}{2^{i+1}}\right)^{1-\eta_i} \leq \sum\limits_{0\leq i\leq M-1}\left( \max\limits_{0\leq i\leq M-1}|t-t^M|^{1-\eta_i}\right)\left(\frac{1}{2^{i+1}}\right)^{1-\eta_i}\\\\&\leq |t-t^M|^{1-2\eta_0}\sum\limits_{0\leq i\leq M-1}\left(\frac{1}{2^{i+1}}\right)^{1-\eta_i}<\frac{5}{4}|t-t^M|^{1-2\eta_0}.\hfill\qed\end{array}\end{equation}
\end{proof}

\

In the following, we will prove all the conclusions in Theorem \ref{Th1} case by case based on (\ref{universal}) and (\ref{universal2}).
\subsubsection{{Exactly local $\frac{1}{2}$-H\"older continuity for $t\in EP$}}
\noindent

\

In \eqref{wolgti}, we take $t^*=t^k_+$ and $t\in \mathcal{B}^k(=(t^k_--\lambda^{-q_{N+s(k)-1}},t^k_++\lambda^{-q_{N+s(k)-1}}))$.

The definition of $t_i$ implies
\begin{equation}\label{titititi}2|t_{i+1}-t^k_+|=2|t_i-t_{i+1}|=|t^k_+-t_i|\end{equation}
and
the definition of $n_i$ implies $n_0\geq s(k)$ and
\begin{equation}\label{jied*}\lambda^{-q^{\log q_{N+n_i-1}}_{N+n_i-1}}\ll \lambda^{-q_{N+n_i}}\leq |t^k_+-t_i|\leq \lambda^{-q_{N+n_i-1}}.\end{equation}

Therefore \begin{equation}\label{jied}\lambda^{-q^{\log q_{N+n_i-1}}_{N+n_i-1}}\ll |t_i-t_{i+1}|<\lambda^{-q_{N+n_i-1}}.\end{equation}

Without loss of generality, we assume that $t_i>t^k_+.$ Then \eqref{titititi} implies $t_i>t_{i+1}\rightarrow t^k_+$ as $i\rightarrow +\infty.$

Then, \eqref{jied*} and \eqref{jied} show that

$$\lambda^{-q^{\log q_{N+n_i-1}}_{N+n_i-1}}<t_{i+1}-t^k_+<t_i-t^k_+<\lambda^{-q_{N+n_i-1}}.$$

Recall that $$\mathcal{B}_{n_i}(t^k_+)=(t^k_+-\lambda^{-q_{N+n_i-1}},t^k_++\lambda^{-q_{N+n_i-1}})-[t^k_+-\lambda^{-q^{\log q_{N+n_i-1}}_{N+n_i-1}},t^k_++\lambda^{-q^{\log q_{N+n_i-1}}_{N+n_i-1}}].$$

Therefore we have $(t_{i+1},t_{i})\subset \mathcal{B}_{n_i}(t^k_+).$

\

\textbf{The upper bound of the local $\frac{1}{2}$-H\"older continuity:}

By the help of \eqref{6.1.1} in (1) of Lemma \ref{lemma 6.1}, for $m=1,2$ and $t\in (t_{i+1},t_{i})\subset \mathcal{B}_{n_i}(t^k_+)$, we have

\begin{equation}\label{6.1.1*}\left|\frac{d L_{m\cdot \mathcal{N}_{n_i}}(t)}{d t}\right|\leq  C_k\cdot\left\vert H(k,t)\right\vert+\lambda^{\left(\log (m\cdot \mathcal{N}_{n_i})\right)^{C}}\end{equation}
with
\begin{equation}\label{cksc} (\log k)^{-C}\leq C_k \leq (\log k)^C\end{equation}
and
$$H(t,k)=\frac{(sgn(t-t^k_+)sgn(t-t^k_-))\left(2t-(t^k_++t^k_-)\right)}{\sqrt{|t-t^k_+|\cdot |t-t^k_-|}}.$$
By the continuity of $L(t)$ (see~\cite{wz1}), for any fixed $t$ and $t_+^k$ there exists $M=M(t,t_+^k)>0$ such that
\begin{equation}\label{tail}
|L(t_M)-L(t_+^k)|\leq |t-t^{k}_{+}|^{100};~\sqrt{|t_M-t^k_+||t_M-t^k_-|}\leq \frac{1}{2}\sqrt{|t-t^k_+||t-t^k_-|}.
\end{equation}

Now we set $$P(t,k)\triangleq 2C_k\cdot\sqrt{|(t-t^k_+)(t-t^k_-)|}.$$

By \eqref{6.1.1*}, for $m=1,2$, we have
\begin{align}\label{l6.1}
\sum_{0\leq i\leq M-1}\int^{t_i}_{t_{i+1}} \left\vert\frac{d L_{m\cdot\mathcal{N}_{n_i}}(t)}{d t}\right\vert dt\leq & (P(t,k)-P(t_M,k))+\sum\limits_{0\leq i\leq M-1}\lambda^{\left(\log(\mathcal{N}_{n_i})\right)^{C}}|t_i-t_{i+1}|.
\end{align}

By the help of \eqref{liptail} of Lemma \ref{basiclemmaldt}, we have

$$\sum\limits_{0\leq i\leq M-1}\lambda^{\left(\log(\mathcal{N}_{n_i})\right)^{C}}|t_i-t_{i+1}|\leq \frac{5}{4}|t-t^k_+|^{1-2q^{-\frac{1}{2}}_{N+n_0-1}}.$$

Since $n_0\geq s(k),$ \eqref{l6.1} yields

\begin{align}\label{l6.3}
\sum_{0\leq i\leq M-1}\int^{t_i}_{t_{i+1}} \left\vert\frac{d L_{m\cdot\mathcal{N}_{n_i}}(t)}{d t}\right\vert dt\leq & (P(t,k)-P(t_M,k))+\frac{5}{4}|t-t^k_+|^{1-2q^{-\frac{1}{2}}_{N+s(k)-1}}.
\end{align}
Then setting $m=1,\ 2$ in \eqref{l6.3}, we obtain
\begin{equation}\label{l6.4}\begin{array}{ll}
&\sum\limits_{i=1}^{M-1}\int^{t_i}_{t_{i+1}} |\frac{d L_{2\cdot\mathcal{N}_{n_i}}(t)}{d t}|dt+2\sum\limits_{i=1}^{M-1}\int^{t_i}_{t_{i+1}} |\frac{d L_{\mathcal{N}_{n_i}}(t)}{d t}|dt\\&\leq (P(t,k)-P(t_M,k))+\frac{5}{4}|t-t^k_+|^{1-2q^{-\frac{1}{2}}_{N+s(k)-1}}+2\left((P(t,k)-P(t_M,k))+\frac{5}{4}|t-t^k_+|^{1-2q^{-\frac{1}{2}}_{N+s(k)-1}}\right)\\& \leq 3(P(t,k)-P(t_M,k))+\frac{15}{4}|t-t^k_+|^{1-2q^{-\frac{1}{2}}_{N+s(k)-1}}.
\end{array}\end{equation}
Then \eqref{universal} and \eqref{l6.4} imply
\begin{equation}\label{universal3}\begin{array}{ll}
\vert L(t)-L(t_M)\vert &\leq \lambda^{-\frac{c}{10}\mathcal{N}^{\sigma}_{n_0}}+\sum_{i=1}^{M-1}\left( \int^{t_i}_{t_{i+1}} |\frac{d L_{\mathcal{N}_{n_i}}(t)}{d t}|dt+2\cdot\int^{t_i}_{t_{i+1}} |\frac{d L_{2\mathcal{N}_{n_i}}(t)}{d t}|dt\right)\\& \leq 3(P(t,k)-P(t_M,k))+\frac{15}{4}|t-t^k_+|^{1-2q^{-\frac{1}{2}}_{N+s(k)-1}}+\lambda^{-\frac{c}{10}\mathcal{N}^{\sigma}_{n_0}}.
\end{array}\end{equation}
Note $\lambda^{-q_{N+n_i}}\leq |t_i-t^k_+|,~i\in \N$. Then the fact $q_{N+n_i}\leq q^{C}_{N+n_i-1}$ implies
$$|t_i-t^k_+|^2>\lambda^{-2q_{N+n_i}}\gg \lambda^{-\frac{c}{10}\left[\lambda^{ q_{N+n_i-1}^{\hat{\epsilon}}}\right]^{\sigma }}=\lambda^{-\frac{c}{10}\mathcal{N}^{\sigma}_{n_i}},~{\rm \ for\ any\ } i\in \N.$$
Hence \begin{equation}\label{dpc} |t-t^k_+|^2\geq \lambda^{-\frac{c}{10}\mathcal{N}^{\sigma}_{n_0}}.
\end{equation}
Therefore \eqref{dpc},~\eqref{l6.3} and \eqref{universal3} yield
\begin{align}\label{ep121}
\vert L(t)-L(t_M)\vert
&\leq 3(P(t,k)-P(t_M,k))+\frac{15}{4}\sum\limits_{0\leq i\leq M-1}|t_{i}-t_{i+1}|^{1-\eta_i}+\lambda^{-\frac{c}{10}\mathcal{N}^{\sigma}_{s(k)}}\\ \nonumber
&<3P(t,k)+\frac{15}{4}|t-t^{k}_{+}|^{1-2q^{-\frac{1}{2}}_{N+s(k)-1}}+|t-t^{k}_{+}|^{2}\\ \nonumber
&=\left(6C_k|t-t^k_-|^{\frac{1}{2}}+\frac{15}{4}|t-t^{k}_{+}|^{\frac{1}{2}-2q^{-\frac{1}{2}}_{N+s(k)-1}}+|t-t^{k}_{+}|^{\frac{3}{2}}\right)|t-t^{k}_{+}|^{\frac{1}{2}}
.
\end{align}

Then \eqref{tail}, \eqref{ep121} and the fact $|t-t^k_+|, q^{-\frac{1}{2}}_{N+s(k)-1}\ll 1$ lead to
\begin{equation}\label{ep122}\begin{array}{ll}\vert L(t)-L(t^k_+)\vert&\leq \vert L(t)-L(t^k_+)\vert+\vert L(t)-L(t_M)\vert\\&\leq \left(6C_k|t-t^k_-|^{\frac{1}{2}}+\frac{15}{4}|t-t^{k}_{+}|^{\frac{1}{2}-2q^{-\frac{1}{2}}_{N+s(k)-1}}+|t-t^k_+|^{\frac{3}{2}}+|t-t^k_+|^{\frac{199}{2}}\right)|t-t^{k}_{+}|^{\frac{1}{2}}\\& \leq \left(6C_k|t-t^k_-|^{\frac{1}{2}}+4|t-t^{k}_{+}|^{\frac{1}{2}-2q^{-\frac{1}{2}}_{N+s(k)-1}}\right)|t-t^{k}_{+}|^{\frac{1}{2}}.\end{array}\end{equation}
Set
$$\tilde{C}_{k,t}:=\left(6C_k|t-t^k_-|^{\frac{1}{2}}+4|t-t^{k}_{+}|^{\frac{1}{2}-2q^{-\frac{1}{2}}_{N+s(k)-1}}\right).$$
Clearly, \eqref{ep122} means \begin{equation}\label{shangjiehe}\vert L(t)-L(t^k_+)\vert\leq \tilde{C}_{k,t}|t-t^{k}_{+}|^{\frac{1}{2}},~t\in B(t^k_+,\lambda^{-q_{N+s(k)-1}}),\end{equation} which yields the local $\frac{1}{2}-$ H\"older continuity of $L(t)$ at $t^k_+$ by the fact $$\limsup\limits_{t\in B(t^k_+,\lambda^{-q_{N+s(k)-1}})}\tilde{C}_{k,t}\leq 24 C_k<C(\log k)^C.$$ Thus we obtain the local $\frac{1}{2}-$ H\"older continuity of $L(t)$ at any $t\in EP.$

\

To get absolute $\frac{1}{2}-$ H\"older continuity, we have to precisely estimate  $ \tilde{C}_{k,t}$.

\

\textbf{Quantitative estimation on $ \tilde{C}_{k,t}$}

\

\begin{lemma}\label{ckt}
For $k\in \mathcal{K}(\lambda)$ {\rm (thus $|G_k|>0$)}, the following hold true.
\begin{enumerate}

\item  For $t\in \mathcal{B}^k,$ $\tilde{C}_{k,t}\leq C\lambda^{-\frac{1}{4}q_{N+s(k)-1}}$
% and
%\begin{equation}\label{upperbound-frac12}\vert L(t)-L(t^{k}_{X})\vert\leq C\lambda^{-\frac{1}{4}q_{N+s(k)-1}}|t-t^{k}_{X}|^{\frac{1}{2}} ,\  t\in B(t^{k}_{X},\lambda^{-q_{N+s(k)-1}}),~X\in\{+,-\}.\end{equation}
\item  For $t\in \mathcal{B}_{\xi_k}^{k},$ \begin{equation}\label{ep21} \tilde{C}_{k,t}\leq 24C_k\sqrt{|G_k|}\end{equation}
and
\begin{equation}\label{upperbound-frac123}\vert L(t)-L(t^{k}_{X})\vert\leq 24C_k\sqrt{|G_k|}|t-t^{k}_{X}|^{\frac{1}{2}} ,\  t\in B(t^{k}_{X},|G_k|^{1+{\xi}_k}),~X\in\{+,-\}.\end{equation}

%\item \begin{equation}\label{gap_duandiancha}\vert L(t^k_-)-L(t^{k}_{+})\vert\leq 8|t^k_--t^k_+|^{1-2q^{-\frac{1}{2}}_{N+s(k)-1}}.\end{equation}

\end{enumerate}

\end{lemma}

\begin{proof} %By (\ref{shangjiehe}), to prove (1), (2), it is sufficient to estimate $\tilde{C}_{k,t}$.

\textbf{The proof of (1):}

The definition of $\mathcal{B}^k=(t^k_--\lambda^{-q_{N+s(k)-1}},t^k_++\lambda^{-q_{N+s(k)-1}})$ implies
\begin{equation}\label{bkscale}|\mathcal{B}^k|=2\lambda^{-q_{N+s(k)-1}}.\end{equation}

%Note the definition of $s(k)$ in \eqref{sk_df} (i.e. $q^2_{N+s(k)-1}\leq |k|<q^2_{N+s(k)}$)  and \eqref{pugapguj} of Theorem \ref{15} show

%$$|G_k|\leq \lambda^{-ck}\leq \lambda^{-cq^2_{N+s(k)-1}}\le\left(\lambda^{-q_{N+s(k)-1}}\right)^{q^{\frac{1}{10}}_{N+s(k)+1}}\ll  \left(\lambda^{-q_{N+s(k)-1}}\right)^{100}< |\mathcal{B}^k|.$$

%Therefore
%$$|t-t^k_-| \leq |t-t^k_-|+|t^k_+-t^k_-|\leq |G_k|+|\mathcal{B}^k|\leq 2|\mathcal{B}^k|.$$
%Similarly, we have $|t-t^k_+|\le 2|\mathcal{B}^k|.$

Recall \eqref{cksc} implies $C_k\leq (\log |k|)^C.$
Then it follows from $0<\eta_0\ll 1$ and \eqref{bkscale} that
$$\begin{array}{ll}\tilde{C}_{k,t} &=6C_k|t-t^k_-|^{\frac{1}{2}}+4|t-t^{k}_{+}|^{\frac{1}{2}-2\eta_0}\leq (\log |k|)^C(2|\mathcal{B}^k|)^{\frac{1}{3}}\leq C\lambda^{-\frac{1}{4}q_{N+s(k)-1}}.\end{array}$$

\

\textbf{The proof of (2):}

Recall the definition of $\xi_k$ and $\mathcal{B}_{\xi_k}^{k}$:

$$\mathcal{B}_{\xi_k}^{k}=\bigcup\limits_{\cdot=\pm}B(t_{\cdot}^{k},|G_k|^{1+\xi_k}),\quad \xi_k:=q^{-\frac{1}{4}}_{N+s(k)-1}.$$
We only consider the case $t\in B(t_{+}^{k},|G_k|^{1+\xi_k}).$
Recall that $t_0=t$ and $n_0$ satisfies $$\lambda^{-q_{N+n_0}}\leq \vert t_0-t^k_+\vert\leq \lambda^{-q_{N+n_0-1}}\leq \lambda^{-q_{N+s(k)-1}}.$$

Note that for $t\in B(t_{+}^{k},|G_k|^{1+\xi_k})\subset B(t_{+}^{k},\lambda^{-q_{N+s(k)-1}})$,
it holds that
\begin{equation}\label{eta0}3q^{-\frac{1}{2}}_{N+s(k)-1}\ll \frac{1}{2}q^{-\frac{1}{4}}_{N+s(k)-1}=\frac{1}{2}\xi_k\ll 1.\end{equation}

In addition, \eqref{cksc} implies \begin{equation}\label{ckxj}C_k\geq (\log |k|)^{-C},\end{equation}

\eqref{pugapguj} implies \begin{equation}\label{gksj}|G_k|\leq C\lambda^{-c|k|},\end{equation}

and the definition of $s(k)$ implies $$\xi_k^{-4C}\geq q^{C}_{N+s(k)-1}\geq q_{N+s(k)}\geq |k|^{\frac{1}{2}}\geq q_{N+s(k)-1}=\xi_k^{-4}.$$ Hence
\begin{equation}\label{xikxiaj}|k|^{-\frac{1}{8}}\leq \xi_k\le |k|^{-\frac{1}{8C}}.\end{equation}
Therefore for $t\in B(t_{+}^{k},|G_k|^{1+\xi_k}),$
\begin{equation}\label{gkk1}\begin{array}{ll}
&|t-t^k_+|^{\frac{1}{2}-2q^{-\frac{1}{2}}_{N+s(k)-1}}\leq |G_k|^{(1+\xi_k)(\frac{1}{2}-2q^{-\frac{1}{2}}_{N+s(k)-1})}
=|G_k|^{(\frac{1}{2}+\frac{1}{2}\xi_k-2q^{-\frac{1}{2}}_{N+s(k)-1}-2q^{-\frac{1}{2}}_{N+s(k)-1}\xi_k)}\\&\leq |G_k|^{(\frac{1}{2}+\frac{1}{2}\xi_k-3q^{-\frac{1}{2}}_{N+s(k)-1})}\quad (by~0<\xi_k<1)
\leq |G_k|^{\frac{1}{2}+\frac{1}{3}\xi_k}\quad(by~\eqref{eta0})\leq C\lambda^{-\frac{1}{3}c|k|\xi_k}\sqrt{|G_k|}\quad(by~\eqref{gksj})
\\&<C\lambda^{-\frac{1}{3}c|k|^{\frac{7}{8}}}\sqrt{|G_k|}(by~\eqref{xikxiaj})\\&\leq \lambda^{-c|k|^{\frac{6}{7}}}\sqrt{|G_k|}\leq (\log |k|)^{-C}\sqrt{|G_k|}<C_k\sqrt{|G_k|}\quad (by~\eqref{ckxj}).\end{array}
\end{equation}

Note
\begin{equation}\label{gkk2}|t-t^k_-|\leq |t-t^k_+|+|G_k|\leq |G_k|^{1+\xi_k}+|G_k|<2|G_k|.\end{equation}

Combining \eqref{gkk1} with \eqref{gkk2}, we obtain $$\begin{array}{ll}\tilde{C}_{k,t} &=6C_k|t-t^k_-|^{\frac{1}{2}}+4|t-t^{k}_{+}|^{\frac{1}{2}-2q^{-\frac{1}{2}}_{N+s(k)-1}}\leq 6\sqrt{2}C_k\sqrt{|G_k|}+4C_k\sqrt{|G_k|}\leq 24C_k\sqrt{|G_k|}.\hfill\qed\end{array}$$
\end{proof}
\

%\textbf{The proof of (3):}

%Taking $t=t^{k}_-$ in $\tilde{C}_{k,t}$, it holds that $$\tilde{C}_{k,t^{k}_-} =6C_k|t^{k}_--t^k_-|^{\frac{1}{2}}+4|t^{k}_--t^{k}_{+}|^{\frac{1}{2}-2q^{-\frac{1}{2}}_{N+s(k)-1}}=4|t^{k}_--t^{k}_{+}|^{\frac{1}{2}-2q^{-\frac{1}{2}}_{N+s(k)-1}}.$$
%Thus by  (\ref{shangjiehe}), we have $$\vert L(t^k_-)-L(t^{k}_{+})\vert\leq \tilde{C}_{k,t^{k}_-} |t^k_--t^k_+|^{\frac{1}{2}}= 4|t^k_--t^k_+|^{1-2q^{-\frac{1}{2}}_{N+s(k)-1}}.$$

\

\textbf{The lower bound of $\frac{1}{2}$-H\"older continuity:}

\

 We consider the case $t\in (t^k_+-|G_k|^{1+\xi_k},t^k_+),$ which implies $t\in G_k.$ We will prove
$$\frac{C_k}{2}\sqrt{|G_k|}|t-t^{k}_{+}|^{\frac{1}{2}}\leq \vert L(t)-L(t^k_+)\vert\leq 24C_k\sqrt{|G_k|}|t-t^{k}_{+}|^{\frac{1}{2}},~{\rm \ \ } t\in (t^k_+-|G_k|^{1+\xi_k},t^k_+).$$

 For $t\in (t^k_+-|G_k|^{1+\xi_k},t^k_+)$,  the definition of $t_i$\ (~in \eqref{wolgti})  implies
\begin{equation}\label{recalltiti}t^k_+>t_{i+1}>t_{i}\rightarrow t^k_+\ (~as~i\rightarrow +\infty),\quad~\frac{t^k_++t_i}{2}=t_{i+1}\end{equation}
 and
 $$t_i\geq t_0=t>\frac{t^k_++t^k_-}{2},\ i\in \N,$$ which implies $H(t,k)>0.$

 Note that to use (2) of Lemma \ref{basiclemmaldt}, we have to check that \eqref{tiaoj*} and \eqref{tiaoj} are valid.
 Recall the definition $P(t,k)\triangleq 2C_k\cdot\sqrt{|(t-t^k_+)(t-t^k_-)|}.$

A direct calculation yields $$\frac{dP(t,k)}{dt}=H(t,k)=\frac{(sgn(t-t^k_+)sgn(t-t^k_-))(t-\frac{t^k_++t^k_-}{2})}{\sqrt{|t-t^k_+||t-t^k_-|}}=\frac{-(t-\frac{t^k_++t^k_-}{2})}{\sqrt{|t-t^k_+||t-t^k_-|}}.$$
Hence
\begin{equation}\label{chang}\begin{array}{ll}\int_{t_{i}}^{{t_{i+1}}} C_kH(t,k) dt&= P(t_i,k)-P(t_{i+1},k)\\&=2C_k\cdot\left(\sqrt{(t^k_+-t_i)(t_i-t^k_-)}-\sqrt{(t^k_+-t_{i+1})(t_{i+1}-t^k_-)}\right)\\&=2C_k\cdot\frac{(t^k_+-t_i)(t_i-t^k_-)-(t^k_+-t_{i+1})(t_{i+1}-t^k_-)}{\left(\sqrt{(t^k_+-t_i)(t_i-t^k_-)}+\sqrt{(t^k_+-t_{i+1})(t_{i+1}-t^k_-)}\right)}\\&
=2C_k\cdot\frac{-\frac{1}{4}(t_i-t^k_+)^2+(t^k_+-t_i)(\frac{1}{2}(t_{i}-t^k_+)+\frac{1}{2}(t^k_+-t^k_-))}{\left(\sqrt{(t^k_+-t_i)(t_i-t^k_-)}+\sqrt{(t^k_+-t_{i+1})(t_{i+1}-t^k_-)}\right)}
\\&=\frac{1}{2}C_k\cdot\frac{-3(t_i-t^k_+)^2+2(t^k_+-t_i)(t^k_+-t^k_-)}{\left(\sqrt{(t^k_+-t_i)(t_i-t^k_++t^k_+-t^k_-)}+\sqrt{(t^k_+-t_{i+1})(t_{i+1}-t^k_++t^k_+-t^k_-)}\right)}
\\&=\frac{1}{2}C_k\cdot\frac{-3(t_i-t^k_+)^2+2(t^k_+-t_i)|G_k|}{\left(\sqrt{(t^k_+-t_i)(t_i-t^k_++|G_k|)}+\frac{1}{2}\sqrt{(t^k_+-t_i)(t_{i}-t^k_++2|G_k|)}\right)}
\\&=\frac{1}{2}C_k \sqrt{t^k_+-t_i}\cdot\frac{-3(t^k_+-t_i)+2|G_k|}{\left(\sqrt{(t_i-t^k_++|G_k|)}+\frac{1}{2}\sqrt{(t_{i}-t^k_++2|G_k|)}\right)}
\\&\geq \frac{1}{2}C_k \sqrt{t^k_+-t_i}\cdot\frac{\frac{199}{100}|G_k|}{\left(\sqrt{(-|G_k|^{1+\xi_k}+|G_k|)}+\frac{1}{2}\sqrt{(-|G_k|^{1+\xi_k}+2|G_k|)}\right)} \quad (note~t\in (t^k_+-|G_k|^{1+\xi_k},t^k_+))
\\&=\frac{1}{2}C_k \sqrt{t^k_+-t_i}\cdot\sqrt{|G_k|}\cdot\frac{\frac{199}{100}}{\left(\sqrt{(-|G_k|^{\xi_k}+1)}+\frac{1}{2}\sqrt{(-|G_k|^{\xi_k}+2)}\right)}
\\&>\frac{1}{2}C_k\cdot\sqrt{|G_k|}\cdot \sqrt{t^k_+-t_i}\frac{\frac{199}{100}}{1+\frac{\sqrt{2}}{2}}
\\&>\frac{1}{2}C_k\cdot\sqrt{|G_k|}\cdot \sqrt{t^k_+-t_i}
\\&=\left[C_k\cdot\sqrt{|G_k|}\cdot(t^k_+-t_i)^{-\frac{1}{2}}\right]\cdot{(t_{i+1}-t_i)}(~\text{by}~\eqref{recalltiti}).\end{array}
\end{equation}

Now we claim that
\begin{equation}\label{liangji1}C_k\sqrt{|G_k|}(-t_i+t^k_+)^{-\frac{1}{2}}\gg \lambda^{\left(\log(\mathcal{N}_{n_i})\right)^{3C}}.\end{equation}

%where $C$ satisfies $C\leq 3\tilde{C}$ ($\tilde{C}$ comes from \eqref{defnnn}).

In fact,
\begin{enumerate}
\item If $|k|\leq q^{\frac{1}{2}}_{N+n_i-1},$ then $C_k\sqrt{|G_k|}\geq (\log |k|)^{-C}\cdot\left(\lambda^{-\frac{1}{2}C|k|}\right)\geq \left(\lambda^{-C|k|}\right)\geq \lambda^{-Cq^{\frac{1}{2}}_{N+n_i-1}}.$

    Note $|t_i-t^k_+|\leq \lambda^{-q_{N+n_i-1}}.$
    Therefore
    $$C_k\sqrt{|G_k|}(-t_i+t^k_+)^{-\frac{1}{2}}\geq \lambda^{-Cq^{\frac{1}{2}}_{N+n_i-1}}\cdot \lambda^{\frac{1}{2}q_{N+n_i-1}}\geq \lambda^{\frac{1}{4}q_{N+n_i-1}}\gg \lambda^{q_{N+n_i-1}^{\frac{1}{3}}}\geq \lambda^{\left(\log(\mathcal{N}_{n_i})\right)^{C}}.$$
\item If $|k|\geq q^{\frac{1}{2}}_{N+n_i-1},$ then
with the help of $|t_i-t^k_+|<|G_k|^{1+\xi_k},$ we have
$$C_k\sqrt{|G_k|}(-t_i+t^k_+)^{-\frac{1}{2}}\geq C_k |G_k|^{-\xi_k}\geq c(\log k)^{-C}(\lambda^{-ck})^{-k^{-\frac{1}{8}}}\gg \lambda^{k^{\frac{1}{6}}}\geq \lambda^{q^{\frac{1}{3}}_{N+s(k)-1}}\geq \lambda^{\left(\log(\mathcal{N}_{s(k)})\right)^{C}}.$$
\end{enumerate}

Note \eqref{chang} and \eqref{liangji1} imply
\begin{equation}\label{fha}\int_{t_{i}}^{{t_{i+1}}} C_kH(t,k) dt\gg \lambda^{\left(\log(\mathcal{N}_{s(k)})\right)^{C}}(t_{i+1}-t_{i})(>0).\end{equation}

By \eqref{GKLEQQ}, we have $ B(t^k_+,|G_k|^{1+\xi_k})\subset \frac{1}{2}\mathcal{B}_{s(k)}.$ Hence \eqref{6.1.2'} is valid for $t\in B(t^k_+,|G_k|^{1+\xi_k})$.
Thus by \eqref{6.1.2'}, for $m=1,2$ and $t\in B(t^k_+,|G_k|^{1+\xi_k}),$ it holds that
\begin{equation}\label{ddbds}\left\vert\int^{t_{i+1}}_{t_{i}}\frac{d L_{m\mathcal{N}_{n_i}}(t)}{d t}dt-\int^{t_{i+1}}_{t_{i}}C_k H(t,k) dt\right\vert\leq \lambda^{\left(\log(\mathcal{N}_{n_i})\right)^{C}}(t_{i+1}-t_{i}).\end{equation}

Therefore

\begin{equation}\label{htk}\int^{t_{i+1}}_{t_{i}}C_k H(t,k) dt-\lambda^{\left(\log(\mathcal{N}_{n_i})\right)^{C}}|t_{i+1}-t_{i}|\leq \int^{t_{i+1}}_{t_{i}}\frac{d L_{m\mathcal{N}_{n_i}}(t)}{d t}dt\leq \int^{t_{i+1}}_{t_{i}}C_k H(t,k) dt+\lambda^{\left(\log(\mathcal{N}_{n_i})\right)^{C}}|t_{i+1}-t_{i}|.\end{equation}

Combining this with \eqref{fha}, we immediately have
$$sgn(L_{m\mathcal{N}_{n_i}}(t_i)-L_{m\mathcal{N}_{n_i}}(t_{i+1}))=sgn(\int^{t_i}_{t_{i+1}}\frac{d L_{m\mathcal{N}_{n_i}}(t)}{d t}dt)=sgn(\int^{t_i}_{t_{i+1}}C_k H(t,k) dt),~m=1,2.$$

From (\ref{fha}), we have\begin{equation}\label{zheng}\int^{t_{i+1}}_{t_{i}}C_k H(t,k) dt>0,~ i\in \N,\end{equation}. Hence we obtain
$$sgn(L_{m\mathcal{N}_{n_i}}(t_{i+1})-L_{m\mathcal{N}_{n_i}}(t_{i}))=1,~{\rm \ for\ any\ } i\in \N,~m=1,2$$
(recall $Z_i=L_{2\mathcal{N}_{n_i}}(t_i)-L_{2\mathcal{N}_{n_i}}(t_{i+1})$ and $Y_i=L_{\mathcal{N}_{n_i}}(t_i)-L_{\mathcal{N}_{n_i}}(t_{i+1})$ in Lemma \ref{basiclemmaldt}).
Thus \eqref{tiaoj*} is valid.

On the other hand,
\eqref{fha}, \eqref{htk} and \eqref{zheng} lead that for $m=1,2,$
\begin{equation}\label{109}\frac{10}{9}\int^{t_{i+1}}_{t_{i}}C_k H(t,k) dt>\int^{t_{i+1}}_{t_{i}}\frac{d L_{m\mathcal{N}_{n_i}}(t)}{d t}dt>\frac{9}{10}\int^{t_{i+1}}_{t_{i}}C_k H(t,k) dt.\end{equation}

In addition, \eqref{ddbds} implies
$$\left\vert\int^{t_{i+1}}_{t_{i}}\frac{d L_{2\mathcal{N}_{n_i}}(t)}{d t}dt-\int^{t_{i+1}}_{t_{i}}\frac{d L_{\mathcal{N}_{n_i}}(t)}{d t}dt\right\vert\leq 2\lambda^{\left(\log(\mathcal{N}_{n_i})\right)^{\hat{\epsilon}^{-1}}}|t_{i+1}-t_{i}|.$$

Therefore $$\begin{array}{ll}&\min\{\left\vert\int^{t_{i+1}}_{t_{i}}\frac{d L_{2\mathcal{N}_{n_i}}(t)}{d t}dt\right\vert,\left\vert\int^{t_{i+1}}_{t_{i}}\frac{d L_{\mathcal{N}_{n_i}}(t)}{d t}dt\right\vert\}\geq \frac{9}{10}\int_{t_{i}}^{{t_{i+1}}} C_kH(t,k) dt\gg 2\lambda^{\left(\log(\mathcal{N}_{s(k)})\right)^{C}}(t_{i+1}-t_{i})\\
\\& \geq\left\vert\int^{t_{i+1}}_{t_{i}}\frac{d L_{2\mathcal{N}_{n_i}}(t)}{d t}dt-\int^{t_{i+1}}_{t_{i}}\frac{d L_{\mathcal{N}_{n_i}}(t)}{d t}dt\right\vert.\end{array}$$

Hence $\min\{|Z_i|,|Y_i|\}\gg |Z_i-Y_i|,$ which is the first condition of \eqref{tiaoj}.

For the second condition of \eqref{tiaoj}, we have to check
$|X_i|\ll \min\{|Z_i|,|Y_i|\}.$

By~Theorem~\ref{Th18}, we have $$|X_i|\leq \lambda^{-c\mathcal{N}^{\sigma}_{n_i}}\quad= \lambda^{-c\left[ \lambda^{q^{{\hat{\epsilon}}}_{N+n_i-1}}\right]^{\sigma}}\leq \lambda^{-c\left[ \lambda^{q^{{\hat{\epsilon}}}_{N+s(k)-1}}\right]^{\sigma}}\ll \lambda^{-q^{1000C}_{N+n_i-1}}.$$  Note
$$\begin{array}{ll}&\lambda^{-q^{1000C}_{N+n_i-1}}\ll \lambda^{-q^{1000}_{N+n_i}}\leq \lambda^{-q^{1000}_{N+n_0}}\\
\\&\leq \lambda^{-q^{1000}_{N+s(k)}}\leq \lambda^{-|k|^{500}}\ll c(\lambda^{-Ck})\leq c(\log |k|)^{-C} \cdot (c\lambda^{-\frac{1}{2}Ck})\\
\\
&\leq \frac{1}{2}C_k \sqrt{|G_k|}\leq \int_{t_{i+1}}^{{t_i}} C_kH(t,k) dt. \quad(by~\eqref{chang})\end{array}$$
Then \eqref{109} shows
$$|X_i|\ll \min\{|Y_i|,|Z_i|\},~ i\in \N,$$ which is the second condition of \eqref{tiaoj}.

In summary, both \eqref{tiaoj*} and \eqref{tiaoj} hold true. Hence \eqref{universal2} is available.
i.e.
\begin{equation}\label{lowbb}\vert L(t)-L(t_M)\vert\geq -\lambda^{-\frac{c}{10}\mathcal{N}^{\sigma}_{n_0}}+\sum_{i=0}^{M-1}\left\vert- \int^{t_{i+1}}_{t_{i}} \frac{d L_{\mathcal{N}_{n_i}}(t)}{d t}dt+2\cdot\int^{t_{i+1}}_{t_{i}}\frac{d L_{2\mathcal{N}_{n_i}}(t)}{d t}dt\right\vert.\end{equation}

By \eqref{6.1.2'}, \eqref{etai} and \eqref{etai1}, for $m=1,2$ and $t\in B(t^k_+,|G_k|^{1+\xi_k}),$ it holds that
\begin{equation}\label{xiajieam}\begin{array}{ll}&\sum\limits_{i=0}^{M-1}\left\vert\int^{t_{i+1}}_{t_{i}}\frac{d L_{m\mathcal{N}_{n_i}}(t)}{d t}dt-\int^{t_{i+1}}_{t_{i}}C_k H(t,k) dt\right\vert\leq \sum\limits_{0\leq i\leq M-1}\lambda^{\left(\log(\mathcal{N}_{n_i})\right)^{\hat{\epsilon}^{-1}}}|t_{i+1}-t_{i}|\\
&\leq \sum\limits_{0\leq i\leq M-1}|t_{i+1}-t_{i}|^{1-\eta_i}\leq \frac{5}{4}|t-t^k_+|^{1-2q^{-\frac{1}{2}}_{N+s(k)-1}}.\end{array}
\end{equation}

Recall \eqref{tail} guarantees that
$\sqrt{|t_M-t^k_+||t_M-t^k_-|}\leq \frac{1}{2}\sqrt{|t-t^k_+||t-t^k_-|}.$
Hence
\begin{equation}\label{xiajieak}\begin{array}{ll}&\sum\limits_{i=0}^{M-1}\left\vert\int^{t_{i+1}}_{t_{i}} C_k H(t,k) dt\right\vert\geq \left\vert P(t,k)-P(t_M,k)\right\vert>|P(t,k)|-|P(t_M,k)|\\&>2C_k\sqrt{|t-t^k_+||t-t^k_-|}-2C_k\sqrt{|t_M-t^k_+||t_M-t^k_-|}\geq C_k\sqrt{|t-t^k_+||t-t^k_-|}.\end{array}\end{equation}

Furthermore  \eqref{dpc} implies
\begin{equation}\label{dpc1}\lambda^{-\frac{c}{10}\mathcal{N}^{\sigma}_{n_0}}\leq |t-t^k_+|^{2}.\end{equation}

By \eqref{tail}, \eqref{lowbb},\eqref{xiajieam}, \eqref{xiajieak} and \eqref{dpc1}, we obtain
\begin{equation}\label{ep12'1}\begin{array}{ll}
&\vert L(t)-L(t^k_+)\vert
\geq \vert L(t)-L(t_M)\vert-\vert L(t_M)-L(t^k_+)\vert\geq \vert L(t)-L(t_M)\vert-|t-t^k_+|^{100} \\&\geq\left(C_k|t-t^k_-|^{\frac{1}{2}}-\frac{15}{4}|t-t^{k}_{+}|^{\frac{1}{2}-2q^{-\frac{1}{2}}_{N+s(k)-1}}-|t-t^{k}_{+}|^{\frac{3}{2}}-|t-t^k_+|^{\frac{199}{2}}\right)|t-t^{k}_{+}|^{\frac{1}{2}}.
\end{array}
\end{equation}

Note \eqref{gkk1} implies \begin{equation}\label{gk1}|G_k|^{(\frac{1}{2}+\frac{1}{3}\xi_k)}\geq |G_k|^{(1+\xi_k)(\frac{1}{2}-2q^{-\frac{1}{2}}_{N+s(k)-1})}\geq |t-t^k_+|^{\frac{1}{2}-2q^{-\frac{1}{2}}_{N+s(k)-1}}.\end{equation}
Moreover, $t\in (t^k_+-|G_k|^{1+\xi_k},t^k_+)$ and $|G_k|\leq \lambda^{-c|k|}$ imply that
$$\begin{array}{ll}&|t-t^k_-|=|G_k|-|t-t^k_+|>|G_k|-|G_{k}|^{1+\xi_k}=(1-|G_k|^{\xi_k})|G_{k}|\geq (1-C\lambda^{-c|k| \xi_k})|G_{k}|\\&\geq (1-\lambda^{-c q^2_{N+s(k)-1}\cdot q^{-\frac{1}{4}}_{N+s(k)-1}})|G_{k}|>\frac{1}{16}|G_{k}|.\end{array}$$
In the above, we use  the fact $q^2_{N+s(k)-1}\leq |k|\leq q^2_{N+s(k)}$ and the definition $\xi_k=q^{-\frac{1}{4}}_{N+s(k)-1}$.

Therefore
\begin{equation}\label{gk2}|G_{k}|^{\frac{1}{2}}\leq 4|t-t^k_-|^{\frac{1}{2}}.\end{equation}
On the other hand,
$|G_k|\leq \lambda^{-c|k|}$ and $|k|^{-\frac{1}{8}}\leq \xi_k\leq |k|^{-\frac{1}{8C}}$ (by \eqref{xikxiaj}) imply
\begin{equation}\label{gk3}\begin{array}{ll}|G_k|^{\frac{1}{2}\xi_k}\leq \lambda^{-C|k|\frac{\xi_k}{2}}&\leq C^{\frac{\xi_k}{2}}\lambda^{-\frac{C}{2}|k|^{\frac{7}{8}}}\le\lambda^{-c|k|^{\frac{6}{7}}}<\frac{1}{150}(\log |k|)^{-C}\leq \frac{C_k}{150}.\end{array}\end{equation}

And $t\in (t^k_+-|G_k|^{1+\xi_k},t^k_+)\subset G_k$ implies
\begin{equation}\label{gk4} \begin{array}{ll}&|t-t^k_+|^{\frac{3}{2}}\leq |G_{k}|^{1+\xi_k}|t-t^k_+|^{\frac{1}{2}}\leq |G_k||t-t^k_+|^{\frac{1}{2}}\leq \lambda^{-c|k|}|t-t^k_+|^{\frac{1}{2}}\leq \frac{1}{100}(\log k)^{-C}|t-t^k_+|^{\frac{1}{2}}\leq \frac{1}{100}C_{k}|t-t^k_+|^{\frac{1}{2}}.\end{array}\end{equation}
Then, \eqref{gk1}, \eqref{gk2},~\eqref{gk3} and \eqref{gk4} yield that

\begin{equation}\label{ep22}\begin{array}{ll}&C_k|t-t^k_-|^{\frac{1}{2}}-\frac{15}{4}|t-t^{k}_{+}|^{\frac{1}{2}-2q^{-\frac{1}{2}}_{N+s(k)-1}}-|t-t^{k}_{+}|^{\frac{3}{2}}-|t-t^k_+|^{\frac{199}{2}}\\&\geq C_k|t-t^k_-|^{\frac{1}{2}}-\frac{15}{4}|G_k|^{\frac{1}{2}+\frac{1}{2}\xi_k}-2|t-t^{k}_{+}|^{\frac{3}{2}}\quad (~by~|t-t^k_+|<1~and~\eqref{gk1})\\&\geq C_k|t-t^k_-|^{\frac{1}{2}}-\frac{1}{40}C_k|G_k|^{\frac{1}{2}}-\frac{1}{100}C_{k}|t-t^k_+|^{\frac{1}{2}}(~by~\eqref{gk3}~and~\eqref{gk4})\\&\geq C_k|t-t^k_-|^{\frac{1}{2}}-\frac{1}{10}C_k|t-t^k_-|^{\frac{1}{2}}-\frac{1}{100}C_{k}|t-t^k_+|^{\frac{1}{2}}(b~\eqref{gk2})\geq \frac{C_k}{2}|t-t^k_-|^{\frac{1}{2}}\geq \frac{C_k}{2}|t^k_+-t^k_-|^{\frac{1}{2}}=\frac{C_k}{2}|G_k|^{\frac{1}{2}}.\end{array}\end{equation}

Finally, \eqref{ep12'1} and \eqref{ep22} imply

\begin{equation}\label{ep12'}
\vert L(t)-L(t^k_+)\vert
\ge \frac{C_k}{2}\sqrt{|G_k|}|t-t^{k}_{+}|^{\frac{1}{2}},\quad t\in (t^k_+-|G_k|^{1+\xi_k},t^k_+).
\end{equation}

Similarly,
\begin{equation}\label{ep12'*}
\vert L(t)-L(t^k_-)\vert
\ge \frac{C_k}{2}\sqrt{|G_k|}|t-t^{k}_{-}|^{\frac{1}{2}},\quad t\in (t^k_-,t^k_-+|G_k|^{1+\xi_k}).
\end{equation}

We finish the proof of the lower bound.
\

\textbf{Proof of exactly $\frac{1}{2}-$ H\"older continuity in $EP$}

Let us recall that combining \eqref{upperbound-frac123} with \eqref{ep12'}, we have already obtained

$$\frac{C_k}{2}\sqrt{|G_k|}|t-t^{k}_{+}|^{\frac{1}{2}}\leq \vert L(t)-L(t^k_+)\vert\leq 24C_k\sqrt{|G_k|}|t-t^{k}_{+}|^{\frac{1}{2}},~ t\in (t^k_+-|G_k|^{1+\xi_k},t^k_+)$$

and

$$\vert L(t)-L(t^k_+)\vert\leq 24C_k\sqrt{|G_k|}|t-t^{k}_{+}|^{\frac{1}{2}}, t\in (t^k_+-|G_k|^{1+\xi_k},t^k_++|G_k|^{1+\xi_k}).$$

This implies $L(t)$ is exactly $\frac{1}{2}-$ H\"older continuous at $t^k_+.$ The proof for $t^k_-$ is similar.

\

\subsubsection{\text{Local Lipschitz continuity for $t\in \mathcal{FR}$}}\label{lipproof}
\noindent
Note that for $t\in{\mathcal{FR}}$, we can obtain $(1-\epsilon)$-H\"older continuity of $L(t)$ for any fixed $\epsilon>0$  by a quite similar proof as for $\frac{1}{2}$-H\"older continuity. However, it is much more difficult to improve the regularity from $(1-\epsilon)$-H\"older continuity to Lipschitz continuity since for the latter we need a much sharper upper bound for FLE, that is $\left\|\frac{\partial_E\|A_{n}(x,E)\|}{\|A_{n}(x,E)\|}\right\|_{\mathcal{L}^1(\R/\Z)}\leq C(E)$ with $C(E)>0$ independent of $n$.
Our key observation on the proof of Lipschitz continuity is that the function $W$ defined as in Lemma \ref{lemma8} is odd on $\theta$. Thus if $I$ is of the form $(-a,a)$ with $a\ll 1$ and $\lambda_i$ satisfies some suitable conditions, then $|\int_I W(\theta(x),  \lambda_1(x),\lambda_2(x))dx| \ll \int_I |W(\theta(x),  \lambda_1(x),\lambda_2(x))|dx$, see Lemma \ref{usefullemma1}.

Instead of directly proving Local Lipschitz continuity for $t\in \mathcal{FR},$ we will show a stronger result as follows.
Recall that $\mathcal{B}_{l}(\tilde{t})=(\tilde{t}-\lambda^{-q_{N+j_{last}-1}},\tilde{t}+\lambda^{-q_{N+j_{last}-1}})-\{\tilde{t}\}$
and  $dist\{\tilde{t},G_{K}\}=\min\{|\tilde{t}-t^K_-|,|\tilde{t}-t^K_+|\}$.
\begin{lemma}\label{liplemma} For any fixed $\tilde{t}\in {\mathcal{FR}},$

 \begin{enumerate}

 \item $$\left|L(t)-L(\tilde{t})\right|\leq 4C_{k_{last}(\tilde{t})}\int_{t}^{\tilde{t}}\vert H(k_{last}(\tilde{t}),t)\vert dt+ 4\lambda^{(\log(\mathcal{N}_{j_{last}(\tilde{t})}))^{C}}|t-\tilde{t}|,\ t\in \bigcup\limits_{l\geq j_{last}(\tilde{t})}\frac{1}{2}\mathcal{B}_{l}(\tilde{t}).$$

\item For any $K\in \Z$ and $t\in \{t' \vert k_{last}(t')=K\},$ there exists $\tilde\epsilon(K)>0$ such that for any $0<|t-\bar{t}|\leq \tilde{\epsilon}$, \begin{equation}\label{lipschitz-1} \frac{|L(t)-L(\tilde{t})|}{|t-\tilde{t}|}\leq C\lambda^{-q^{\frac{1}{4}}_{N+s(K)-1}}(dist\{\tilde{t},G_{K}\})^{-\frac{1}{2}}.\end{equation}

\end{enumerate}
\end{lemma}

\begin{proof}

\textbf{The proof of (1) of Lemma \ref{liplemma}:}

By the definition of $j_{last}(\tilde{t})$ and part (3) of Lemma \ref{lemma 6.1}, we have that
\begin{equation}\label{fr111}\left|\frac{d L_{m\cdot \mathcal{N}_{l}}(t)}{d t}\right|\leq C_{k_{last}}\cdot \left\vert H(k_{last}(\tilde{t}),t)\right\vert+\lambda^{(\log(\mathcal{N}_{j_{last}(\tilde{t})}))^{C}},
m=1,2,\ l\geq j_{last}(\tilde{t}),\ t\in \frac{1}{2}\mathcal{B}_{l}(\tilde{t})
\end{equation}
with $\left(\log k_{last}\right)^{-C}<C_{k_{last}}\leq (\log k_{last})^C$.

Note that $$\bigcup\limits_{l\geq j_{last}(\tilde{t})}\frac{1}{2}\mathcal{B}_{l}(\tilde{t})=(\tilde{t}-\frac{1}{2}\lambda^{-q_{N+j_{last}(\tilde{t})-1}},\tilde{t}+\frac{1}{2}\lambda^{-q_{N+j_{last}(\tilde{t})-1}}).$$

Without loss of generality, we assume that
$ t>\tilde{t}\geq t^k_+.$

 Recall the definition of $j_{last}(\tilde{t})$ implies that
$$(\tilde{t}-\frac{1}{2}\lambda^{-q_{N+j_{last}(\tilde{t})-1}},\tilde{t}+\frac{1}{2}\lambda^{-q_{N+j_{last}(\tilde{t})-1}})\bigcap G_{k_{last}(\tilde{t})}=\emptyset.$$
Therefore for any $t\in (\tilde{t}-\frac{1}{2}\lambda^{-q_{N+j_{last}(\tilde{t})-1}},\tilde{t}+\frac{1}{2}\lambda^{-q_{N+j_{last}(\tilde{t})-1}}),$ we have $t>t^k_+,$ which implies
$$H(k_{last}(\tilde{t}),t)>0.$$
By the help of (1) of Lemma \ref{basiclemmaldt}, taking $t^*=\tilde{t}$ in \eqref{wolgti}, we obtain
\begin{equation}\label{lml}\begin{array}{ll}\vert L(t)-L(t_M))\vert
\leq & \lambda^{-\frac{c}{10}\mathcal{N}^{\sigma}_{n_0}}+\sum_{i=1}^{M-1}\left( \int^{t_i}_{t_{i+1}} \left\vert\frac{d L_{\mathcal{N}_{n_i}}(t)}{d t}\right\vert dt+2\cdot\int^{t_i}_{t_{i+1}} \left\vert\frac{d L_{2\mathcal{N}_{n_i}}(t)}{d t}\right\vert dt\right).\end{array}\end{equation}

For any fixed $t\neq \tilde{t},$ by the continuity of $L(t)$ at $\tilde{t}$ (\cite{wz1}), we can take $M\in \Z_+$ such that
$|L(\tilde{t})-L(t_M)|\leq |t-\tilde{t}|^{100}.$
Then, \eqref{fr111} and \eqref{lml} imply
\begin{equation}\label{1deguoc}\begin{array}{ll}
&|L(t)-L(\tilde{t})|\leq |L(t)-L(t_M)|+|L(t_M)-L(\tilde{t})| \\&\leq  \lambda^{-\frac{c}{10}\mathcal{N}^{\sigma}_{n_0}}+\sum_{i=1}^{M-1}\left( \int^{t_i}_{t_{i+1}} \left\vert\frac{d L_{\mathcal{N}_{n_i}}(t)}{d t}\right\vert dt+2\cdot\int^{t_i}_{t_{i+1}} \left\vert\frac{d L_{2\mathcal{N}_{n_i}}(t)}{d t}\right\vert dt\right))+|t-\tilde{t}|^{100}\\&\leq
\sum\limits_{i=0}^{M-1}3\left(\int_{t_{i+1}}^{t_{i}}C_{k_{last}}\cdot \left\vert H(k_{last}(\tilde{t}),t)\right\vert dt+\int_{t_{i+1}}^{t_i}\lambda^{(\log(\mathcal{N}_{j_{last}(\tilde{t})}))^{C}}dt\right)+|t-\tilde{t}|^{100}\\&\leq 3\left(\int_{t_M}^{t}C_{k_{last}}\cdot \left\vert H(k_{last}(\tilde{t}),t)\right\vert dt+\int_{t_M}^{t}\lambda^{(\log(\mathcal{N}_{j_{last}(\tilde{t})}))^{C}}dt\right)+|t-\tilde{t}|^{100}
\\&\leq 3\left(\int_{\tilde{t}}^{t}C_{k_{last}}\cdot \left\vert H(k_{last}(\tilde{t}),t)\right\vert dt+\int_{\tilde{t}}^{t}\lambda^{(\log(\mathcal{N}_{j_{last}(\tilde{t})}))^{C}}dt\right)+|t-\tilde{t}|^{100}\\&\leq 4\left(\int_{\tilde{t}}^{t}C_{k_{last}}\cdot \left\vert H(k_{last}(\tilde{t}),t)\right\vert dt+\int_{\tilde{t}}^{t}\lambda^{(\log(\mathcal{N}_{j_{last}(\tilde{t})}))^{C}}dt\right).\quad (by~|t-\tilde{t}|<1~and~\int_{\tilde{t}}^{t}\lambda^{(\log(\mathcal{N}_{j_{last}(\tilde{t})}))^{C}}dt>|t-\tilde{t}|)\end{array}\end{equation}

\textbf{Proof of (2) of Lemma \ref{liplemma}:}

Similarly to \eqref{1deguoc}, we have
$$\frac{|L(t)-L(\tilde{t})|}{t-\tilde{t}}\leq 3 \left(C_{k_{last}}\cdot \frac{1}{t-\tilde{t}}\int_{\tilde{t}}^{t}\left\vert H(k_{last}(\tilde{t}),t)\right\vert dt+\frac{1}{t-\tilde{t}}\int_{\tilde{t}}^{t}\lambda^{(\log(\mathcal{N}_{j_{last}(\tilde{t})}))^{C}}dt\right)+|t-\tilde{t}|^{99}.$$

Note that $$\lim\limits_{t\rightarrow \tilde{t}}\frac{1}{t-\tilde{t}}\int_{\tilde{t}}^{t}\left\vert H(k_{last}(\tilde{t}),t)\right\vert dt=|H(k_{last}(\tilde{t}),\tilde{t})|;$$
$$\lim\limits_{t\rightarrow \tilde{t}}\frac{1}{t-\tilde{t}}\int_{\tilde{t}}^{t}\lambda^{(\log(\mathcal{N}_{j_{last}(\tilde{t})}))^{C}}dt=
\lambda^{(\log(\mathcal{N}_{j_{last}(\tilde{t})}))^{C}}.$$
Then there exists $\tilde{\epsilon}(k_{last}(\tilde{t}))>0$ such that for any $0<|t-\bar{t}|\leq \tilde{\epsilon}(k_{last}(\tilde{t}))$, it holds that
\begin{equation}\label{277}\frac{|L(t)-L(\tilde{t})|}{|t-\tilde{t}|}\leq 30C_{k_{last}(\tilde{t})}\cdot |H(k_{last}(\tilde{t}),\tilde{t})|+30\lambda^{(\log(\mathcal{N}_{j_{last}(\tilde{t})}))^{C}}.\end{equation}

Recall $$H(\tilde{t},{k_{last}(\tilde{t})})= \frac{2\tilde{t}-(t^{k_{last}(\tilde{t})}_++t^{k_{last}(\tilde{t})}_-)}{\sqrt{(\tilde{t}-t^{k_{last}(\tilde{t})}_+)\cdot (\tilde{t}-t^{k_{last}(\tilde{t})}_-)}},~t^{k_{last}(\tilde{t})}_+>t^{k_{last}(\tilde{t})}_-.$$

With the assumption $\tilde{t}>t^{k_{last}(\tilde{t})}_+$, we have $dist\{\tilde{t},G_{k_{last}(\tilde{t})}\}=\tilde{t}-t^{k_{last}(\tilde{t})}_+$.

Hence \begin{equation}\label{hktgj}
\begin{array}{ll}
H(\tilde{t},{k_{last}(\tilde{t})})&=(dist\{\tilde{t},G_{k_{last}(\tilde{t})}\})^{-\frac{1}{2}}\cdot \frac{2\tilde{t}-(t^{k_{last}(\tilde{t})}_++t^{k_{last}(\tilde{t})}_-)}{\sqrt{(\tilde{t}-t^{k_{last}(\tilde{t})}_-)}}
\\&\leq (dist\{\tilde{t},G_{k_{last}(\tilde{t})}\})^{-\frac{1}{2}}\cdot \frac{2\tilde{t}-(t^{k_{last}(\tilde{t})}_-+t^{k_{last}(\tilde{t})}_-)}{\sqrt{(\tilde{t}-t^{k_{last}(\tilde{t})}_-)}}
\\&=(dist\{\tilde{t},G_{k_{last}(\tilde{t})}\})^{-\frac{1}{2}}\cdot 2\sqrt{(\tilde{t}-t^{k_{last}(\tilde{t})}_-)}\\&= (dist\{\tilde{t},G_{k_{last}(\tilde{t})}\})^{-\frac{1}{2}}\cdot 2\sqrt{(\tilde{t}-t^{k_{last}(\tilde{t})}_+)+(t^{k_{last}(\tilde{t})}_+-t^{k_{last}(\tilde{t})}_-)}
\\&\leq 2(dist\{\tilde{t},G_{k_{last}(\tilde{t})}\})^{-\frac{1}{2}}\cdot \sqrt{\lambda^{-q_{N+s(k_{last}(\tilde{t}))-1}}+C\lambda^{-c|k_{last}(\tilde{t})|}}\\&\leq
C(dist\{\tilde{t},G_{k_{last}(\tilde{t})}\})^{-\frac{1}{2}}\cdot \sqrt{\lambda^{-q_{N+s(k_{last}(\tilde{t}))-1}}+\lambda^{-cq^2_{N+s(k_{last}(\tilde{t}))-1}}}\\&\leq C\lambda^{{-q^{\frac{1}{4}}_{N+s(k_{last}(\tilde{t}))-1}}}(dist\{\tilde{t},G_{k_{last}(\tilde{t})}\})^{-\frac{1}{2}}.\end{array}\end{equation}

On the other hand,
$\lambda^{(\log(\mathcal{N}_{j_{last}(\tilde{t})}))^{C}}\leq \lambda^{(\log \lambda)^{C}q^{{C}{\hat{\epsilon}}}_{N+j_{last}(\tilde{t})-1}}\leq \lambda^{q^{{\frac{1}{3}}}_{N+j_{last}(\tilde{t})-1}}$
and
$$\begin{array}{ll}&\lambda^{-q^{\frac{1}{4}}_{N+s(k_{last}(\tilde{t}))-1}}(dist\{\tilde{t},G_{k_{last}(\tilde{t})}\})^{-\frac{1}{2}}=\lambda^{-q^{\frac{1}{4}}_{N+s(k_{last}(\tilde{t}))-1}}(\tilde{t}-t^k_+)^{-\frac{1}{2}}\\&\geq \lambda^{-q^{\frac{1}{4}}_{N+s(k_{last}(\tilde{t}))-1}}\lambda^{\frac{1}{2}q_{N+j_{last}(\tilde{t})-1}}\geq \lambda^{-q^{\frac{1}{4}}_{N+j_{last}(\tilde{t})-1}}\lambda^{\frac{1}{2}q_{N+j_{last}(\tilde{t})-1}}\\&\gg \lambda^{q^{{\frac{1}{3}}}_{N+j_{last}(\tilde{t})-1}}.\end{array}$$
Then
\begin{equation}\label{lambdagj}\lambda^{(\log(\mathcal{N}_{j_{last}(\tilde{t})}))^{C}}\ll C\lambda^{-q^{\frac{1}{4}}_{N+s(k_{last}(\tilde{t}))-1}}(dist\{\tilde{t},G_{k_{last}(\tilde{t})}\})^{-\frac{1}{2}}.\end{equation}

Finally \eqref{277}, \eqref{hktgj} and \eqref{lambdagj} complete the proof.
\hfill\qed\end{proof}

\subsubsection{Some preparations for the proof of the absolutely H\"older-continuity}
We  need a slightly stronger version of Lemma \ref{liplemma}. For this purpose, we need the following definitions.
\begin{definition}\label{def4.2}

 Set $${\mathcal{FR}}^*=\bigcup\limits_{j\geq 0}\bigcap\limits_{|k|\geq j}\left(\mathcal{B}^{k}\right)^c.$$

$$K^*_{strong}(t)\triangleq \left\{\begin{matrix}\left\{k| t\in \mathcal{B}^{k}\right\}=\{k^*_1(t),k^*_2(t),\cdots, k^*_i(t),\cdots\}, & \left\{k| t\in \mathcal{B}^{k}\right\}\neq \emptyset\\ \{0\}, & \left\{k| t\in \mathcal{B}^{k}\right\}=\emptyset\end{matrix}\right.$$
$k^*_{last}(t)\triangleq \max \{k^*_i(t), k^*_i\in K^*_{strong}(t)\}$ if $|K^*_{strong}(t)|<+\infty.$

 $J^*(t)\triangleq\{j^*_1(t),j^*_2(t),\cdots, j^*_i(t),\cdots\}$, where $$j^*_i(t):=\left\{\begin{matrix}1+\max\{j\geq s(k_i)\vert t\in (\mathcal{B}_j(t^{k_{i}(t)}_{-})\bigcup \mathcal{B}_j(t^{k_{i}(t)}_{+}))\},& \{j\geq s(k^*_i)\vert t\in (\mathcal{B}_j(t^{k^*_{i}(t)}_{-})\bigcup \mathcal{B}_j(t^{k^*_{i}(t)}_{+}))\}\neq \emptyset \\ 1 ,& \{j\geq s(k^*_i)\vert t\in (\mathcal{B}_j(t^{k^*_{i}(t)}_{-})\bigcup \mathcal{B}_j(t^{k^*_{i}(t)}_{+}))\}=\emptyset\end{matrix}\right.$$
$j^*_{last}(t)\triangleq \max \{j^*_i(t), j^*_i\in J(t)\}$ if $|J(t)|<+\infty.$\end{definition}
\vskip 0.3cm
 Recall  ${\mathcal{FR}}=\bigcup\limits_{j\geq 0}\bigcap\limits_{|k|\geq j}\left(2\mathcal{B}^{k}\right)^c$. Thus from the definition, we have ${\mathcal{FR}}\subset \mathcal{FR}^*.$

 Note for $k^*\in \Z$ and $t\notin \bigcup\limits_{|k|>|k^*|} \mathcal{B}_k,$ we have \begin{equation}\label{k***}k^*_{last}(t)\leq |k^*|;~j^*_{last}(t)\leq s(|k^*|)+3.\end{equation}

 Thus similar as the proof  of Lemma \ref{liplemma}, we obtain
\begin{lemma}\label{liplemma*} For  $k^*\in \Z$ and for $\tilde{t}\in 2\mathcal{B}_{k^*}-\bigcup\limits_{|k|>|k^*|} \mathcal{B}_k$. It holds that
$$\left|L(t)-L(\tilde{t})\right|\leq 4C_{k^*_{last}(\tilde{t})}\int_{t}^{\tilde{t}}\vert H(k^*_{last}(\tilde{t}),t)\vert dt+ 4\lambda^{(\log(\mathcal{N}_{j^*_{last}(\tilde{t})}))^{C}}|t-\tilde{t}|,\ t\in \bigcup\limits_{l\geq j^*_{last}(\tilde{t})}\frac{1}{2}\mathcal{B}_{l}(\tilde{t}).$$

\end{lemma}

By Lemma \ref{liplemma*} and \eqref{k***}, we have
\begin{equation}\label{differ*} \left|L(t)-L(\tilde{t})\right|\leq 4C_{k^*}\int_{t}^{\tilde{t}}\vert H(k^*,t)\vert dt+ 4\lambda^{(\log(\mathcal{N}_{s(k^*)}))^{C}}|t-\tilde{t}|,\ t\in \bigcup\limits_{l\geq s(|k^*|)+3}\frac{1}{2}\mathcal{B}_{l}(\tilde{t}).\end{equation}
%The proof of Lemma \ref{liplemma*} is absolutely same as Lemma \ref{liplemma}.

\begin{remark} The definitions of $K^*_{strong}(t)$ and $J^*(t)$ are essentially the same as those of $K_{strong}(t)$ and $J(t)$. In fact,  in the definition of $K^*_{strong}(t)$ and $J^*(t)$, replacing the coefficients in front of $\mathcal{B}^k$ and $\mathcal{B}_k$ does not affect the correctness of the above lemma; the only difference is the constant $C$ on the right-hand side of the inequality.
\end{remark}

%we say an interval $I=(a,b)\subset 2{\mathcal{B}}^k$ satisfies the following condition, which is denoted by $\mathcal{C}^k$ if one of the following conditions holds true:
 %\begin{itemize}
%\item $\mathcal{C}^k_1:$\quad $~I\subset 2{\mathcal{B}}^k-G_k~with~|I|\geq \varsigma(I,k);$
%\item $\mathcal{C}^k_2:$ \ $~~I\subset 2{\mathcal{B}}^k-G_k~with~|I|< \varsigma(I,k)$ and  $ (a,b)-\left(\bigcup\limits_{|i|> |k|}{\mathcal{B}}^{i}\right)\not=\emptyset;$
    %\item  $\mathcal{C}^k_3:$ \quad  $I\subset G_k;$
   % \item  $\mathcal{C}^k_4:$ \quad  $I\bigcap G_k\neq \emptyset.$
  %  \end{itemize}
%\end{definition}
\noindent\textsf{\large An overview on the most crucial point for absolutely H\"older-continuity of LE}
\vskip 0.2cm

Given any interval $(a,b)\in \R,$  to estimate $|L(a)-L(b)|.$ It is enough to consider the following two cases.
\begin{enumerate}
\item $(a,b)- \bigcup\limits_{k} \mathcal{B}^k\neq \emptyset,$
\item $(a,b)\subset \bigcup\limits_{k} \mathcal{B}^k.$

\end{enumerate}

For case (1), note that $k^*_{last}(t)=0$ for all $t\in (a,b)- \bigcup\limits_{k} \mathcal{B}^k(\subset \mathcal{FR}^*).$ Since the definition guarantees that $\{t\vert k^*_{last}(t)=0\}$ is closed, we have $k^*_{last}(t)=0$ for all $t\in [a,b]- \bigcup\limits_{k} \mathcal{B}^k(\subset \mathcal{FR}^*).$ Then there exists $\epsilon_0>0$ such that
$$ |L(t)-L(t')|\leq C|t-t'|,~{\rm \ for\ any\ } |t-t'|\leq \epsilon_0.$$ Therefore we can find a finite sequence $t_1,t_2,\cdots,t_K\in [a,b]$ such that $$[a,b]-\bigcup\limits_{k} \mathcal{B}^k\subset \bigcup\limits_{i=1}^K(t_i-\epsilon_0,t_i+\epsilon_0)$$ and
\begin{equation}\label{lipschitzjieshi} |L(t_i)-L(t')|\leq C|t_i-t'|,~ |t_i-t'|\leq \epsilon_0,~i=1,2,\cdots,K.\end{equation}

By \eqref{lipschitzjieshi}, it remains to consider the case $t\in (a,b)-\bigcup\limits_{i=1}^K[t_i-\epsilon_0,t_i+\epsilon_0],$ which consists of finite open intervals $\{(a_i,b_i)\}_{i=1}^{K^*}.$ Note that $\{(a_i,b_i)\}_{i=1}^{K^*}\subset \bigcup\limits_{k} \mathcal{B}^k.$ Hence we only need to consider the case $(a_i,b_i)\subset \mathcal{B}^{k(i)}$ for some fixed $k(i)\in \Z,$ which leads us to consider case (2).

\

For case (2), by taking $t^*=t^k_+$ and $t=b$ in \eqref{universal}, we have
$$\vert L(a)-L(t_M))\vert
\leq \lambda^{-\frac{c}{10}\mathcal{N}^{\sigma}_{n_0}}+\sum_{i=1}^{M-1}\left( \int^{t_i}_{t_{i+1}} \left\vert\frac{d L_{\mathcal{N}_{n_i}}(t)}{d t}\right\vert dt+2\cdot\int^{t_i}_{t_{i+1}} \left\vert\frac{d L_{2\mathcal{N}_{n_i}}(t)}{d t}\right\vert dt\right)$$ with $n_0$ determined by
$$q_{N+n_0-1}\log \lambda\leq \left\vert\log |b-t^k_+|\right\vert\leq q_{N+n_0}\log \lambda.$$

Then by the help of \eqref{6.1.1}, we obtain for $m=1,2$ and $i=0,1,\cdots,M-1,$

$$\left|\frac{d L_{m\cdot \mathcal{N}_{n_i}}(t)}{d t}\right|\leq  C_k\cdot\left\vert H(k,t)\right\vert+\lambda^{\left(\log (m\cdot \mathcal{N}_{n_i})\right)^{C}}
,~ t\in \mathcal{B}_{n_i}(t^k_+).$$

On the other hand, the definition of $t_i$ implies that $(t_{i+1},t_i)\subset \mathcal{B}_{n_i}(a).$ Thus

$$\int_{t_{i+1}}^{t_i}\left|\frac{d L_{m\cdot \mathcal{N}_{n_i}}(t)}{d t}\right| dt\leq  C_k\cdot\int_{t_{i+1}}^{t_i}\left\vert H(k,t)\right\vert dt+\lambda^{\left(\log (m\cdot \mathcal{N}_{n_i})\right)^{C}}(t_i-t_{i+1})
.$$

Combining all these as above and letting $t_M\rightarrow b,$ we have

\begin{equation}\label{jishujiangjie}\vert L(a)-L(b))\vert
\leq 3C_k\int_{a}^b\vert H(k,s)\vert ds+\int_{a}^b\Xi(k,s) ds+\lambda^{-\frac{c}{10}\mathcal{N}^{\sigma}_{n_0}}\end{equation} with $$\Xi(k,s)=\sum\limits_{j=s(k)}^{+\infty}\mathcal{N}_{j}(\hat{\epsilon})\cdot \chi_{S_{k,j}}(s);~S_{k,j}=\{t|\lambda^{-q_{N+j}}< dist\{t, G_k\}\leq\lambda^{-q_{N+j-1}}\}$$
and
$$H(t,k)=\frac{2t-(t^k_++t^k_-)}{\sqrt{|t-t^k_+|\cdot |t-t^k_-|}}.$$

To get the absolutely H\"older continuity, we hope to apply \eqref{jishujiangjie} to prove \begin{equation}\label{hope}\lambda^{-\frac{c}{10}\mathcal{N}^{\sigma}_{n_0}}\ll 3C_k\int_{a}^b\vert H(k,s)\vert ds+\int_{a}^b\Xi(k,s) ds.\end{equation}

Unfortunately, \textbf{\eqref{hope} is not always valid if $t^k_+,t^k_-\notin (a,b).$} The reason is as follows.

Without loss of generality, we assume
$t^k_+\notin (a,b)$ and $~a>t^k_+.$

If $t_1<a\leq t_0=b,$ we have
$\max\limits_{t\in (a,b)}\vert H(k,t)\vert\leq \vert H(k,b)\vert$
and
$$\int_{a}^b\Xi(k,s)ds\leq \int_{a}^{t_0(=b)}\Xi(k,s)ds\leq \int_{a}^{t_0}\sup_{s\in (a,t_0)}\Xi(k,s)ds\le \int_{a}^{t_0}\sup_{s\in (t_1,t_0)}\Xi(k,s)ds\leq \lambda^{\left(\log(\mathcal{N}_{n_0})\right)^{C}}|a-b|.$$

Therefore
$$\left\vert 3C_k\int_{a}^b\vert H(k,s)\vert ds+\int_{a}^b\Xi(k,s) ds\right\vert\leq \left(3C_k\vert H(k,b)\vert+\lambda^{\left(\log(\mathcal{N}_{n_0})\right)^{C}}\right)|a-b|.$$

Note that both $3C_k\vert H(k,b)\vert+\lambda^{\left(\log(\mathcal{N}_{n_0})\right)^{C}}$ and $\lambda^{-\frac{c}{10}\mathcal{N}^{\sigma}_{n_0}}$ are independent on $a.$
Hence if $$|a-b|\ll \frac{\lambda^{-\frac{c}{10}\mathcal{N}^{\sigma}_{n_0}}}{\left(3C_k\vert H(k,b)\vert+\lambda^{\left(\log(\mathcal{N}_{n_0})\right)^{C}}\right)},$$ \eqref{hope} is invalid.

For simplification, we denote this problem by $\textbf{Pr}_{k}.$ Here $k$ corresponds with some spectral gap $G_k.$
We will see that  $\textbf{Pr}_k$  is the most difficult point in the whole proof.

\

\noindent\textsf{ The idea to solve $\textbf{Pr}_k$}

\
Indeed, for fixed open interval $(a,b),$ instead of directly solving $\textbf{Pr}_k,$ we only need to seek another $k'\neq k$ such that $\textbf{Pr}_{k'}$ does not occur, which implies \eqref{hope} is valid for $k'.$ Hence the key point is to prove the existence of such $k'$.
\vskip 0.5cm

\noindent\textsf{The details of the proof}

Recall Theorem \ref{15} implies there exists $\mathcal{K}\subset \Z$ such that
$\mathbb{R}-\Sigma^{\lambda}=\bigcup_{k\in \mathcal{K}} G_k.$
Now we set \begin{equation}\label{def-ki}\mathcal{K}=\{K_i\}_{i\in \Z}~\text{with}~|K_{i}|\leq |K_{i+1}|.\end{equation}
\begin{lemma}\label{c3lemma} For $I=(a,b)\subset 2{\mathcal{B}}^k$ satisfying $I-\bigcup\limits_{|K_i|> k}\mathcal{B}^{K_i}\neq \emptyset,$ it holds that
 \begin{equation}\label{gap_uniform6'}\left\vert L(a)-L(b)\right\vert\leq 6C_k\int_{I}\vert H(k,t)\vert dt+\int_{I}\Xi(k,t) dt.\end{equation}
\end{lemma}
\begin{proof}
Recall $G_k=(t^k_-,t^k_+).$ The definition of $\mathcal{B}^k$ shows that $$2{\mathcal{B}}^k=(t^k_--2\lambda^{-q_{N+s(k)-1}},t^k_-+2\lambda^{-q_{N+s(k)-1}})\bigcup (t^k_+-2\lambda^{-q_{N+s(k)-1}},t^k_++2\lambda^{-q_{N+s(k)-1}}).$$
Thus $$2{\mathcal{B}}^k-G_k=(t^k_--2\lambda^{-q_{N+s(k)-1}},t^k_-]\bigcup [t^k_+,t^k_++2\lambda^{-q_{N+s(k)-1}}).$$

We consider the following two cases:
\textbf{Case A:} $I\bigcap \{t^k_-,t^k_+\}=\emptyset$ and
\textbf{Case B:} $I\bigcap \{t^k_-,t^k_+\}\neq \emptyset.$
\begin{itemize}
\item \textbf{Case A:} $I\bigcap \{t^k_-,t^k_+\}=\emptyset$

In this case, one of the followings holds true: $$I\subset (t^k_--2\lambda^{-q_{N+s(k)-1}},t^k_-),$$ $$I\subset (t^k_-,t^k_-+2\lambda^{-q_{N+s(k)-1}}),$$ $$I\subset (t^k_+-2\lambda^{-q_{N+s(k)-1}},t^k_+),$$  $$I\subset  (t^k_+,t^k_++2\lambda^{-q_{N+s(k)-1}}).$$ Since these four cases are quite similar,
without loss of generality, we only consider $$I=(a,b)\subset (t^k_+,t^k_++2\lambda^{-q_{N+s(k)-1}}).$$

Next we consider the following two subcases:

\

\textbf{Case A.1:}\ $|I|\geq \varsigma(I,k).$

\

With the definition \ref{def4.2} and taking $t^*=t^k_+$ and $t=b$ in \eqref{wolgti}, namely,
\begin{equation}\label{ti10}t_0=b,\quad t_1=\frac{t_0+t^k_+}{2},\quad t_2=\frac{t_1+t^k_+a}{2},\cdots,~t_s=\frac{t_{s-1}+t^k_+}{2},~\cdots,\end{equation}
we have $$b-t^k_+\leq 2\lambda^{-q_{N+n_0-1}}~with~some~n_0\geq s(k).$$
Hence $$d(I,k)=dist(G_k,I)=\frac{b+a}{2}-t^k_+<b-t^k_+\leq 2\lambda^{-q_{N+n_0-1}}.$$
Therefore by the assumption of Case A.1,
\begin{equation}\label{Ixj11}\begin{array}{ll}&|I|\geq \varsigma(I,k)= \lambda^{-|\log d(I,k)|^{\log |\log d(I,k)|}}\geq \lambda^{-|\log \lambda^{-q_{N+n_0-1}}|^{\log |\log \lambda^{-q_{N+n_0-1}}| }}
\\
\\&\geq \lambda^{-(C\cdot q_{N+n_0-1})^{C\log\log q_{N+n_0-1}}}\gg \lambda^{-\frac{c}{20}\lambda^{\frac{{\sigma}}{C}q_{N+n_0-1}}}= \lambda^{-\frac{c}{20}\mathcal{N}^{\sigma}_{n_0}}.\end{array}\end{equation}

On the other hand, since $C_k>(\log |k|)^{-C}$ and $|H(k,t)|=\frac{t-t^k_++t-t^k_-}{\sqrt{(t-t^k_+)(t-t^k_-)}}\geq 2~for~t\geq t^k_+(>t^k_-),$ we have
$$\int_{a}^b C_k|H(k,t)|dt\geq 2C_k|b-a|\geq c(\log |k|)^{-C}|I|.$$

Note that $$c(\log |k|)^{-C}\geq c(\log q^2_{N+n_0})^{-C}\geq c((2C)\log q_{N+n_0-1})^{-C}\gg \lambda^{-\frac{c}{20}\mathcal{N}^{\sigma}_{n_0}}.$$ Then \eqref{Ixj11} implies
\begin{equation}\label{abjf}\int_{a}^b C_k|H(k,t)|dt\gg \lambda^{-\frac{c}{10}\mathcal{N}^{\sigma}_{n_0}}.\end{equation}

The definition of $t_i$ guarantees $[t_{i+1},t_{i})\subset \mathcal{B}_i(t^k_+).$
Then by the help of \eqref{6.1.1} (for the case $I\subset G_k,$ we apply \eqref{6.1.2}), for $m=1,2,$ and $t'\in [t_{i+1},t_{i})$, it holds that \begin{equation}\label{t'''}\int^{t_{i}}_{t'}\left|\frac{d L_{m\cdot \mathcal{N}_{n_i}}(t)}{d t}\right| dt\leq  C_k\int_{t'}^{t_i}\left\vert H(k,t)\right\vert dt+\lambda^{\left(\log (m\cdot \mathcal{N}_{n_0})\right)^{C}}(t_i-t')=C_k\int_{t'}^{t_i}\left\vert H(k,t)\right\vert dt+\int_{t'}^{t_i}\Xi(k,t)dt.\end{equation}

%(For the case $I\subset (t^k_--2\lambda^{-q_{N+s(k)-1}},t^k_-),$ $I\subset (t^k_-,t^k_-+2\lambda^{-q_{N+s(k)-1}}),$ i.e. $I\subset G_k,$ we apply \eqref{6.1.2} instead of \eqref{6.1.1})

\

Note there exists $P=P(a)\in \N$ such that $t_{P+1}\leq a\leq t_{P}\leq t_{P-1}\leq \cdots \leq b=t_0$ where $t_P$ is defined in \eqref{ti10}. Hence \eqref{t'''} implies

\begin{equation}\label{dapianc}\begin{array}{ll}&\int^{b}_{a}\left|\frac{d L_{m\cdot \mathcal{N}_{n_i}}(t)}{d t}\right| dt=\sum\limits_{i=0}^{P-1}\int^{t_{i}}_{t_{i+1}}\left|\frac{d L_{m\cdot \mathcal{N}_{n_i}}(t)}{d t}\right| dt+\int^{t_{P}}_{a}\left|\frac{d L_{m\cdot \mathcal{N}_{n_i}}(t)}{d t}\right| dt\\&\leq  C_k\cdot\int_{a}^{b}\left\vert H(k,t)\right\vert dt+\int_{a}^{b}\Xi(k,t)dt.\quad(~\text{note}~b=t_0)\end{array}\end{equation}

Note \eqref{universal*} implies
$$\vert L(b)-L(a))\vert
\leq \lambda^{-\frac{c}{10}\mathcal{N}^{\sigma}_{n_0}}+\int^{b}_{a} \left\vert\frac{d L_{\mathcal{N}_{n_0}}(t)}{d t}\right\vert dt+2\cdot\int^{b}_{a} \left\vert\frac{d L_{2\mathcal{N}_{n_0}}(t)}{d t}\right\vert dt.$$

Combining this with \eqref{abjf} and \eqref{dapianc}, we immediately get
$$\begin{array}{ll}&\vert L(b)-L(a))\vert \leq \lambda^{-\frac{c}{10}\mathcal{N}^{\sigma}_{n_0}}+3C_k\int_{I}\vert H(k,t)\vert dt+\int_{I}\Xi(k,t) dt\quad(by~\eqref{dapianc})\\
\\&\leq 6C_k\int_{I}\vert H(k,t)\vert dt+\int_{I}\Xi(k,t) dt\quad(by~\eqref{abjf}).\end{array}$$

\

\textbf{Case A.2:}~$|I|< \varsigma(I,k).$

\

Since $I=(a,b)-\left(\bigcup\limits_{|i|> |k|}{\mathcal{B}}^{i}\right)\not=\emptyset$ and $I\subset 2\mathcal{B}^k$  there exists $\hat{t}\in (a,b)$ such that $\hat{t}\in  2\mathcal{B}^k- \left(\bigcup\limits_{|i|> |k|}{\mathcal{B}}^{i}\right).$

     Now we apply the definition \ref{def4.2} and set $t^*=a$ and $t=b$ in \eqref{wolgti}. Then $$b-t^k_+\geq 2\lambda^{-q_{N+n_0}};$$
     $$d(I,k)=dist(G_k,I)=(\frac{b+a}{2}-t^k_+)= \frac{a-t^k_+}{2}+\frac{b-t^k_+}{2}>\frac{b-t^k_+}{2}\geq \lambda^{-q_{N+n_0}}.$$

     Note the fact $\hat{t}\in 2\mathcal{B}_k$ implies $n_0\geq s(k).$
     Hence
     \begin{equation}\label{Ixj1}\begin{array}{ll}&\frac{b-a}{2}=\frac{1}{2}|I|\leq \frac{1}{2}\varsigma(I,k)= \frac{1}{2}\lambda^{-|\log d(I,k)|^{\log |\log d(I,k)|}}\leq \lambda^{-|\log \lambda^{-q_{N+n_0}}|^{\log |\log \lambda^{-q_{N+n_0}}| }}\\\\&\leq \lambda^{-(c\cdot q_{N+n_0-1})^{c\log\log q_{N+n_0-1}}}
     \ll \lambda^{-q_{N+n_0+10}}
     \leq \lambda^{-q_{N+s(k)+10}}.\end{array}\end{equation}

\eqref{Ixj1} clearly implies that $$(a,b)\subset(\tilde{t}-\frac{1}{2}\lambda^{-q_{N+s(k)+4}},\tilde{t}+\frac{1}{2}\lambda^{-q_{N+s(k)+4}})=\bigcup\limits_{l\geq s(k)+3}\frac{1}{2}\mathcal{B}_{l}(\tilde{t}).$$

     Then by \eqref{differ*}, for any $t\in I=(a,b)$ it holds that
     $$\begin{array}{ll}\left|L(t)-L(\hat{t})\right|&\leq 4C_{k}\int^{t}_{\hat{t}}\vert H(k,t)\vert dt+ 4\lambda^{(\log(\mathcal{N}_{s(k)}))^{C^*}}|t-\hat{t}|\\&\leq 4C_{k}\int^{t}_{\hat{t}}\vert H(k,t)\vert dt+ \int^{t}_{\hat{t}}\Xi(k,t)dt.\end{array}$$

\item \textbf{Case B:}

If $I\bigcap \{t^k_-,t^k_+\}\neq \emptyset,$ then $I-\{t^k_-,t^k_+\}$ consists of at most $3$ open intervals as follows:
$$I_1=(t^k_+,b),\quad I_2=(t^k_-,t^k_+),\quad I_3=(a,t^k_-).$$

Clearly, the definition implies \begin{equation}\label{casea}I_j\bigcap \{t^k_-,t^k_+\}=\emptyset.\end{equation}
Next we will check that
\begin{equation}\label{123}|I_j|\geq \varsigma(I_j,k),~j=1,2,3.\end{equation}

We only check $|I_1|\geq \varsigma(I_1,k)$ and the left two are similar.

Setting $t^*=t^k_+$ and $t=b,$ with the definition \ref{ti}, we have $$b-t^k_+\geq 2\lambda^{-q_{N+n_0}}~with~some~n_0\geq s(k).$$
 Recall $~d(I_1,k)=dist\{G_k,\frac{t^k_++b}{2}\}=\frac{t^k_++b}{2}-t^k_+$. Hence
$$\begin{array}{ll}&\varsigma(I_1,k)= \lambda^{-|\log d(I_1,k)|^{\log |\log d(I_1,k)|}}\leq \lambda^{-|\log \lambda^{-q_{N+n_0}}|^{\log |\log \lambda^{-q_{N+n_0}}| }}
\\
\\&\leq \lambda^{-(C\cdot q_{N+n_0})^{C\log\log q_{N+n_0}}}\ll \lambda^{-q_{N+n_0}}\leq \frac{b-t^k_+}{2}=d(I_1,k)\le  2|I_1|.\end{array}$$

Then similar as in {\bf Case A.1}, \eqref{casea} and \eqref{123} together imply
$$\left\vert L(b)-L(t^k_+)\right\vert\leq 6C_k\int_{I_1}\vert H(k,t)\vert dt+\int_{I_1}\Xi(k,t) dt.$$

Similarly,
$$\left\vert L(t^k_-)-L(t^k_+)\right\vert\leq 6C_k\int_{I_2}\vert H(k,t)\vert dt+\int_{I_2}\Xi(k,t) dt.$$
$$\left\vert L(a)-L(t^k_-)\right\vert\leq 6C_k\int_{I_3}\vert H(k,t)\vert dt+\int_{I_3}\Xi(k,t) dt.$$

Therefore
$$\begin{array}{ll}&\left\vert L(b)-L(a)\right\vert \leq \left\vert L(b)-L(t^k_+)\right\vert+\left\vert L(t^k_-)-L(t^k_+)\right\vert+\left\vert L(a)-L(t^k_-)\right\vert\\&\leq 6C_k\int_{I_1\bigcup I_2 \bigcup I_3}\vert H(k,t)\vert dt+\int_{I_1\bigcup I_2 \bigcup I_3}\Xi(k,t) dt=6C_k\int_{a}^b\vert H(k,t)\vert dt+\int_{a}^b\Xi(k,t) dt.\hfill\qed\end{array}$$
\end{itemize}
\end{proof}

\

\begin{lemma}\label{c1lemma}For any pairwise~disjointed intervals $J_i=(a_i,b_i)\subset 2{\mathcal{B}}^k,\ i\ge 1$ satisfying $$J_i-\bigcup\limits_{|j|> |k|}\mathcal{B}^{j}\neq \emptyset,$$ it holds that

\begin{equation}\label{gap_uniform5}\sum\limits_{i}\left\vert L(a_i)-L(b_i)\right\vert\leq C\lambda^{-q^{\frac{1}{4}}_{N+s(k)-1}}\left(\sum\limits_{i}|a_i-b_i|\right)^{\frac{1}{2}}.\end{equation}
\end{lemma}
\begin{proof}
Recall $$|G_k|\leq C\lambda^{-ck}\leq C\lambda^{-cq^2_{N+s(k)-1}}\leq C\lambda^{-cq_{N+s(k)-1}}.$$
Since $J_i\bigcap J_j=\emptyset$ and $J_i\subset 2{\mathcal{B}}^k,$ we obtain
$$\begin{array}{ll}\sum\limits_{i}|a_i-b_i|&\leq 2|\mathcal{B}^k|=4\lambda^{-q_{N+s(k)-1}}+2|G_k|\leq 6C\lambda^{-cq_{N+s(k)-1}}.\end{array}$$
Then \eqref{jfbd1} and \eqref{jfbd2} of Lemma \ref{jiandanjf1} show $$\begin{array}{ll}\int_{\bigcup\limits_{i}(a_i,b_i)} |H(k,t)| dt&\leq 4\sqrt{|G_k|+\sum\limits_{i}|a_i-b_i|}\sqrt{\sum\limits_{i}|a_i-b_i|}\leq 12C\lambda^{-\frac{1}{2}cq_{N+s(k)-1}}\sqrt{\sum\limits_{i}|a_i-b_i|}\end{array};$$

$$\begin{array}{ll}\int_{\bigcup\limits_{i}(a_i,b_i)} \Xi(k,t) dt &\leq 4(\sum\limits_{i}|a_i-b_i|)^{1-2\xi_k}<4(\sum\limits_{i}|a_i-b_i|)^{\frac{1}{6}}(\sum\limits_{i}|a_i-b_i|)^{\frac{1}{2}}.\end{array}$$
By the help of Lemma \ref{c3lemma}, it holds that
 $$\begin{array}{ll}\sum\limits_{i}\left\vert L(a_i)-L(b_i)\right\vert&\leq 6C_k\sum\limits_{i}\int_{J_i}\vert H(k,t)\vert dt+\sum\limits_i\int_{J_i}\Xi(k,t) dt.\end{array}$$
 Thus \eqref{gap_uniform5} immediately follows from  $C_k\leq (\log |k|)^C\ll \lambda^{\frac 12 q^{\frac{1}{4}}_{N+s(k)-1}}.$
\hfill\qed\end{proof}

On the other hand, we have the following Lemma.

\begin{lemma}\label{homo} Let $\mathcal{K}$ be defined in \eqref{def-ki}. Given a sequence $\{K_{i_j}\}_j\subset \mathcal{K}$ with $|K_{i_1}|\leq |K_{i_2}|\leq |K_{i_3}|\leq \cdots,$ if ${\bigcup\limits_{j\geq 1} \mathcal{B}^{K_{i_j}}}$ is connected (i.e. an open interval), then it holds that
\begin{equation}\label{ki12}|K_{i_2}|>|K_{i_1}|\end{equation}
and
\begin{equation}\label{kij}{\bigcup\limits_{j\geq 1} \mathcal{B}^{K_{i_j}}} \subseteq 2\mathcal{B}^{K_{i_1}}.\end{equation}
\end{lemma}
\begin{proof}

\textbf{The proof of \eqref{ki12}:}

First, we have to rule out the case $$|K_{i_1}|=|K_{i_2}|\quad (~i.e.~K_{i_2}=-K_{i_1}).$$

Otherwise, recall \eqref{dgkij} implies \begin{equation}\label{k+-}dist(G_{K_{i_1}},G_{-K_{i_1}})\geq \lambda^{-|K_{i_1}|^c}\gg 10|\mathcal{B}^{K_{i_1}}|+10|\mathcal{B}^{-K_{i_1}}|.\end{equation}
Without loss of generality, we assume $G_{K_{i_1}}<G_{-K_{i_1}},$ i.e. $t^{K_{i_1}}_+<t^{-K_{i_1}}_-$. Since $G_{K_{i_1}},G_{-K_{i_1}}\subset \bigcup\limits_{j\geq 1}\mathcal{B}^{K_{i_j}}$ with $\bigcup\limits_{j\geq 1}\mathcal{B}^{K_{i_j}}$ being connected,  we have
$$(t^{K_{i_1}}_+,t^{-K_{i_1}}_-)\subset \bigcup\limits_{j\geq 1}\mathcal{B}^{K_{i_j}}.$$
Note \eqref{k+-} implies $$\vert t^{K_{i_1}}_+-t^{-K_{i_1}}_-\vert \gg 10|\mathcal{B}^{K_{i_1}}|+10|\mathcal{B}^{-K_{i_1}}|.$$
Therefore
\begin{equation}\label{baoh1}(t^{K_{i_1}}_+,t^{K_{i_1}}_++8|\mathcal{B}^{K_{i_1}}|)\subset(t^{K_{i_1}}_+,t^{-K_{i_1}}_-)\subset \bigcup\limits_{j\geq 1}\mathcal{B}^{K_{i_j}}.\end{equation}
Since $$(t^{K_{i_1}}_+,t^{K_{i_1}}_++8|\mathcal{B}^{K_{i_1}}|)\bigcap \left(\mathcal{B}^{-K_{i_1}}(=\mathcal{B}^{K_{i_2}})\right)=\emptyset,$$ \eqref{baoh1} implies
$$(t^{K_{i_1}}_+,t^{K_{i_1}}_++8|\mathcal{B}^{K_{i_1}}|)\subset \bigcup\limits_{j\neq 2}\mathcal{B}^{K_{i_j}}.$$
We rewrite $$\{{j\neq 2|\mathcal{B}^{K_{i_j}}\bigcap (t^{K_{i_1}}_+,t^{K_{i_1}}_++8|\mathcal{B}^{K_{i_1}}|)\neq \emptyset}\}=\{j_s\}_{s\in \Z_+},\quad |K_{j_s}|\le |K_{j_{s+1}}|,~s\in \Z_+.$$
Therefore
$$(t^{K_{i_1}}_+,t^{K_{i_1}}_++8|\mathcal{B}^{K_{i_1}}|)\subset \bigcup\limits_{\{j_s\}_{s\in \Z_+}}\mathcal{B}^{K_{i_{j_s}}}.$$

Since \begin{equation}\label{minmin220}\{K_{i_j}\vert j\neq 2\}=\{K_{i_1},K_{i_3},\cdots\}\end{equation} satisfies $$|K_{i_1}|<|K_{i_3}|\leq |K_{i_4}|\leq \cdots,$$ we have
$\min\left(\{j_s\}_{s\in \Z_+}-\{1\}\right)\geq 3.$
Therefore $$(t^{K_{i_1}}_+,t^{K_{i_1}}_++8|\mathcal{B}^{K_{i_1}}|)\subset \left(\bigcup\limits_{s\geq 1}\mathcal{B}^{K_{i_{j_s}}}\right)\subset \left(\bigcup\limits_{j\geq 1}\mathcal{B}^{K_{i_{1}+j}}\right).$$
Then \eqref{pugapguj} implies
$$8|\mathcal{B}^{K_{i_1}}|\leq \left\vert\left(\bigcup\limits_{j\geq 1}\mathcal{B}^{K_{i_{1}+j}}\right)\right\vert\leq 3|\mathcal{B}^{K_{i_1}}|$$
leading to a contradiction.

\

\textbf{The proof of \eqref{kij}:}

\eqref{ki12} implies
$$|K_{i_1}|<|K_{i_2}|\leq |K_{i_3}|\leq \cdots.$$

If $$\lambda^{\left(|K_{i_1}|\right)^{\frac{1}{4C}}}+|K_{i_1}|>|K_{i_j}|> |K_{i_1}|,$$ then  (1) of Lemma \ref{zblemma} and \eqref{wolgg} imply
$$\begin{array}{ll} &dist(G_{K_{i_1}},G_{K_{i_j}})>\lambda^{-|K_{i_1}|^{\frac{1}{4C}}}>\lambda^{-|q_{N+s(K_{i_1})}^{2}|^{\frac{1}{4C}}}\geq \lambda^{-q_{N+s(K_{i_1})-1}^{\frac{1}{2}}}\\&\gg 6\lambda^{-q_{N+s(K_{i_1})-1}}+6\lambda^{-q_{N+s(K_{i_j})-1}}\geq 2\left\vert\mathcal{B}^{K_{i_1}}\right\vert+2\left\vert\mathcal{B}^{K_{i_j}}\right\vert.\end{array}$$

Therefore $$\min\{|K_{i_j}|>|K_{i_1}| \vert \mathcal{B}^{K_{i_j}}\bigcap 2\mathcal{B}^{K_{i_1}}\neq \emptyset\}\geq \lambda^{c\left(|K_{i_1}|\right)^{\frac{1}{4C}}}.$$
Recall $q^2_{N+s(K_i)-1}\leq |K_i|\leq q^2_{N+s(K_i)}$. Then
\begin{equation}\label{ll}\begin{array}{ll}&\sum\limits_{|K_{i_j}|>|K_{i_1}|, \mathcal{B}^{K_{i_j}}\bigcap 2\mathcal{B}^{K_{i_1}}\neq \emptyset}|\mathcal{B}^{K_{i_j}}|\leq \sum\limits_{|K_i|\geq \lambda^{\left(|K_{i_1}|\right)^{\frac{1}{4C}}}}|\mathcal{B}^{K_i}|\leq \sum\limits_{|K_i|\geq \lambda^{\left(|K_{i_1}|\right)^{\frac{1}{4C}}}}\lambda^{-q_{N+s(K_{i})-1}}
\\
\\&\leq \sum\limits_{|K_i|\geq \lambda^{\left(|K_{i_1}|\right)^{\frac{1}{4C}}}}\lambda^{-|K_i|^c}\ll \lambda^{-|K_{i_1}|^C}\leq |\mathcal{B}^{K_{i_1}}|.\end{array}\end{equation}

Now we suppose that \begin{equation}\label{jias}\bigcup\limits_{j\geq 1} \mathcal{B}^{K_{i_j}}-2\mathcal{B}^{K_{i_1}}\neq \emptyset.\end{equation}
Without loss of generality, we assume there exists some $t^*<t_-^{K_{i_1}}$ such that $t^*\in \bigcup\limits_{j\geq 1} \mathcal{B}^{K_{i_j}}-2\mathcal{B}^{K_{i_1}}.$
Then since ${\bigcup\limits_{j\geq 1} \mathcal{B}^{K_{i_j}}}$ is connected and $t^*,t^{K_{i_1}}_-\in {\bigcup\limits_{j\geq 1} \mathcal{B}^{K_{i_j}}}$, we have $$(t^{K_{i_1}}_--|\mathcal{B}^{K_{i_1}}|,t^{K_{i_1}}_--\frac 12|\mathcal{B}^{K_{i_1}}|)\subset(t^{K_{i_1}}_--|\mathcal{B}^{K_{i_1}}|,t^{K_{i_1}}_-)\subset (t^*,t^{K_{i_1}}_-)\subset{\bigcup\limits_{j\geq 1} \mathcal{B}^{K_{i_j}}}= {\bigcup\limits_{|K_{i_j}|>|K_{i_1}|} \mathcal{B}^{K_{i_j}}}.$$

Note if $(a,b)\subset \bigcup\limits_{i} J_i$, where each $J_i$ is an open interval, then $(a,b)\subset \bigcup\limits_{i\in \{i \vert J_i\bigcap(a,b)\neq \emptyset\}} J_i.$
Therefore
$$\begin{array}{ll}&(t^{K_{i_1}}_--|\mathcal{B}^{K_{i_1}}|,t^{K_{i_1}}_--\frac12|\mathcal{B}^{K_{i_1}}|)\subset \bigcup\limits_{|K_{i_j}|>|K_{i_1}|, ~(t^{K_{i_1}}_--|\mathcal{B}^{K_{i_1}}|,t^{K_{i_1}}_--\frac12|\mathcal{B}^{K_{i_1}}|)\bigcap \mathcal{B}^{K_{i_j}}\neq \emptyset}\mathcal{B}^{K_{i_j}}\\& \subset\bigcup\limits_{|K_{i_j}|>|K_{i_1}|, ~\left(2{\mathcal{B}}^{K_{i_1}}\right)\bigcap \mathcal{B}^{K_{i_j}}\neq \emptyset}\mathcal{B}^{K_{i_j}}\end{array},$$
which yields
\begin{equation}\label{ll*}|\mathcal{B}^{K_{i_1}}|=Leb\{(t^{K_{i_1}}_--|\mathcal{B}^{K_{i_1}}|,t^{K_{i_1}}_--\frac{1}{2}|\mathcal{B}^{K_{i_1}}|)\}\leq \left\vert\bigcup\limits_{|K_{i_j}|>|K_{i_1}|, ~2\mathcal{B}^{K_{i_1}}\bigcap \mathcal{B}^{K_{i_j}}\neq \emptyset}\mathcal{B}^{K_{i_j}}\right\vert.\end{equation}
Then \eqref{ll} and \eqref{ll*} imply
$$|\mathcal{B}^{K_{i_1}}|\leq \left\vert\bigcup\limits_{|K_{i_j}|>|K_{i_1}|, ~2\mathcal{B}^{K_{i_1}}\bigcap \mathcal{B}^{K_{i_j}}\neq \emptyset}\mathcal{B}^{K_{i_j}}\right\vert\ll |\mathcal{B}^{K_{i_1}}|,$$ which leads to a contradiction.
Therefore \eqref{jias} is invalid and we finish the proof.
\hfill\qed\end{proof}

\begin{lemma}\label{homg} Let $\mathcal{K},K_i$ be defined in \eqref{def-ki}. Given $k\in \mathcal{K}$ and an open interval $I\subset \R$, if $I\subset \bigcup\limits_{|K_i|\geq |k|}{\mathcal{B}}^{K_i},$
then there exists $K_{i^*}\in \mathcal{K}$ with $|K_{i^*}|\geq |k|$ such that
    $I\subset 2\mathcal{B}^{K_{i^*}}\quad {\rm and}\quad  I-\bigcup\limits_{|j|> |K_{i^*}|}{\mathcal{B}}^j\neq \emptyset.$
\end{lemma}
\begin{proof}

First, we have the following fact.

\

\textbf{The fact:}~If $I\subset \bigcup\limits_{K_i\in \mathcal{K}}{\mathcal{B}}^{K_i},$ then there exists $K_{i'}\in \mathcal{K}$ such that
$I\subset 2\mathcal{B}^{K_{i'}}.$

\

\textbf{The proof of the fact:}

Since $\bigcup\limits_{K_i \in \mathcal{K}}{\mathcal{B}}^{K_i}$ is open, all  connected components of it correspond with  subsets of $\mathcal{K}$, denoted by $\{ K_{i^j_k}\}_{k=1}^{s_j},~j=1,2,\cdots,S,$  such that $$\left(\bigcup\limits_{m=1}^{s_p} \mathcal{B}^{K_{i^p_m}}\right)~is~connected~and~\left(\bigcup\limits_{m=1}^{s_p} \mathcal{B}^{K_{i^p_m}}\right)\bigcap \left(\bigcup\limits_{m=1}^{s_q} \mathcal{B}^{K_{i^q_m}}\right)=\emptyset,~1\leq p\not=q\leq S.$$
$$|K_{i^j_{m}}|\leq |K_{i^j_{m+1}}|,~m=1,2,\cdots,s_j,~j=1,2,\cdots,S.$$

\

Hence we have $$I\subset \bigcup\limits_{K_i\in \mathcal{K}}{\mathcal{B}}^{K_i}=\bigcup\limits_{p=1}^S\left(\bigcup\limits_{m=1}^{s_p} \mathcal{B}^{K_{i^p_m}}\right).$$
Since $I$ is connected, there exists some $1\leq p^*\leq S$ such that
$I\subset \left(\bigcup\limits_{m=1}^{s_{p^*}} \mathcal{B}^{K_{i^{p^*}_m}}\right).$

By the help of \eqref{kij} of Lemma \ref{homo},
$\left(\bigcup\limits_{m=1}^{s_{p^*}} \mathcal{B}^{K_{i^{p^*}_m}}\right)\subset 2\mathcal{B}^{K_{i^{p^*}_1}}.$
Therefore $I\subset 2\mathcal{B}^{K_{i^{p^*}_1}},$ which completes the proof of the {\bf fact}.

\vskip 0.2cm
Thus we have
$\{K_i\subset \mathcal{K} \vert I\subset 2\mathcal{B}^{K_i}\}\neq \emptyset.$
Since $|\mathcal{B}^{K_i}|\rightarrow 0,~as~|K_i|\rightarrow+\infty,$
we have
\begin{equation}\label{zuid}|K_{i^{**}}|:=\max\{|K_i| \vert I\subset 2\mathcal{B}^{K_i}\}<+\infty.\end{equation}

To end  the proof of the lemma, we only need to prove
%\begin{equation}\label{k**}
$$I-\bigcup\limits_{|j|> |K_{i^{**}}|}{\mathcal{B}}^j\neq \emptyset.$$

In fact, set $M=\min \{|K_i| \vert |K_i|>|K_{i^{**}}|\}.$
If $I-\bigcup\limits_{|j|> |k_{i^{**}}|}{\mathcal{B}}^j= \emptyset,$ then $I\subset \bigcup\limits_{|K_i|> |K_{i^{**}}|}{\mathcal{B}}^{K_i}=\bigcup\limits_{|K_i|\geq M}{\mathcal{B}}^{K_i}.$ Consequently, the {\bf fact} implies there exists some $k_{i^{***}}\in \{K_i \vert |K_{i^{***}}|\geq M\}$ such that
$I \subset 2\mathcal{B}^{K_{i^{***}}}.$ Therefore

$$|K_{i^{**}}|<M\leq |K_{i^{***}}|\in \{|K_i| \vert I\subset 2\mathcal{B}^{K_i}\},$$ which contradicts \eqref{zuid}.
\hfill\qed\end{proof}

\

The following result directly follows from Lemma \ref{homg}.
\begin{corollary}\label{generalca} If $I\subset\bigcup\limits_{K_i\in \mathcal{K}}{\mathcal{B}}^{K_i},$ then there exists some $K_{i^*}\in \mathcal{K}$ such that $I\subset 2{\mathcal{B}}^{K_{i^*}}$ and $~I-\bigcup\limits_{|K_i|> |K_{i^*}|}{\mathcal{B}}^{K_i}\neq \emptyset.$
\end{corollary}

The following conclusion follows from Corollary \ref{generalca}.
\begin{lemma}\label{c2lemma} Given a sequence of pairwise disjointed open intervals $\{(a_i,b_i)\}_{i\in \Omega}\subset \bigcup\limits_{i\in \Z_+}{\mathcal{B}}^{K_i}$ with $a_i,b_i\in \R$ and $\Omega$ being some index set, it holds that
$$ \sum\limits_{i}|L(a_i)-L(b_i)| \leq C\left(\sum\limits_{i}|a_i-b_i|\right)^{\frac{1}{2}}.
$$
\end{lemma}

\begin{proof} It follows from Corollary \ref{generalca} that there exists some $K_s\in \{k_i\}_{i\in \Z}$ such that $$(a_i,b_i)\subset 2\mathcal{B}^{K_s}\quad {\rm and }\quad(a_i,b_i)-\bigcup\limits_{|K_i|>K_s}\mathcal{B}^{K_i}\ne \emptyset.$$ For this situation, we say $(a_i,b_i)$ satisfies condition $\mathcal{L}(K_s).$

For any fixed $s\geq 1,$ we denote $\mathcal{I}_{s}:=\{i \vert (a_i,b_i)~\text{satisfies~condition}~\mathcal{L}(K_s)\}.$
Then $ \Omega\subset\bigcup\limits_{j\in \Z} \mathcal{I}_{j}.$
Note Lemma \ref{c1lemma} implies for $j\geq 1$ we have
$$\sum\limits_{i\in \mathcal{I}_j}|L(a_i)-L(b_i)| \leq C\lambda^{-q^{\frac{1}{4}}_{N+s(K_j)-1}}\left(\sum\limits_{i\in \mathcal{I}_j}|a_i-b_i|\right)^{\frac{1}{2}}.$$

Therefore by Cauchy-Schwartz inequality and the fact $$\sum\limits_{|k|=0}^{\infty}\lambda^{-2q^{\frac{1}{4}}_{N+s(k)-1}}<C<+\infty,$$ we obtain

$$\begin{array}{ll}&\sum\limits_{i\in \Omega}|L(a_i)-L(b_i)|\leq \sum\limits_{j\geq 1}\sum\limits_{i\in \mathcal{I}_j}|L(a_i)-L(b_i)| \leq C(\sum\limits_{j\geq 1}\lambda^{-2q^{\frac{1}{4}}_{N+s(k_j)-1}})^{\frac{1}{2}}\left(\sum\limits_{i}|a_i-b_i|\right)^{\frac{1}{2}}\leq 2C\left(\sum\limits_{i}|a_i-b_i|\right)^{\frac{1}{2}}.\hfill\qed\end{array}$$
\end{proof}

\

\subsubsection{\text{Absolutely $\frac{1}{2}$-H\"older continuity}}
\noindent

\begin{lemma}\label{ab11}

For $t_0\in \R-{\left(\bigcup\limits_{K_i\in \mathcal{K}}{\mathcal{B}}^{K_i}\right)},$ it holds that $$\limsup_{t\rightarrow t_0}\frac{|L(t)-L(t_0)|}{|t-t_0|}\leq C.$$
\end{lemma}
\begin{proof}By the help of \eqref{lipschitz-1}, for $t_0\in {\R-\left(\bigcup\limits_{K_i\in \mathcal{K}}{\mathcal{B}}^{K_i}\right)},$ which implies $k_{last}(t_0)=0$ and $j_{last}(t_0)=1,$ we have
$$\limsup\limits_{t\rightarrow t_0}\frac{|L(t)-L(t_0)|}{|t-t_0|}\leq C\lambda^{-q^{\frac{1}{4}}_{N-1}}(dist\{t_0,G_{0}\})^{-\frac{1}{2}}.$$
Note $0\in \mathcal{K}$ implies $ t_0\notin \mathcal{B}^0.$ Therefore $dist\{t_0,G_{0}\}\geq c.$
Finally, $\lambda^{-q^{\frac{1}{4}}_{N-1}}(dist\{t_0,G_{0}\})^{-\frac{1}{2}}$ is independent on $t_0$ as desired.
\hfill\qed\end{proof}
Recall that we say a function $F$ defined on an open interval $B$ is absolutely $\frac{1}{2}-H\ddot{o}lder$ continuous, if there exists $C_1>0$ such that for any $N\in \Z_+$ and pair-wise disjoint intervals $(a_i,b_i)\subset B,\ 1\le i\le N$,  it holds
$$\sum\limits_{i=1}^N|F(a_i)-F(b_i)|\leq C_1\left(\sum\limits_{i=1}^N|a_i-b_i|\right)^{\frac{1}{2}}.$$We have the following result.

\begin{lemma}\label{ab12} Given $M>0,~P\in \Z_+$ and $F\in C^0([-M,M]).$ Let $B=\{(a_i,b_i)\}_{i=1}^P\subset [-M,M],$ if $$(\mathbf{A}):F~is~absolutely~\frac{1}{2}-H\ddot{o}lder~continuous~in~B;$$

$$(\mathbf{B}):\limsup\limits_{t\rightarrow t_0}\frac{|F(t)-F(t_0)|}{|t-t_0|}<+\infty,~{\rm \ for\ any\ }~t_0\in [-M,M]-B,$$

then $$F~is~absolutely~\frac{1}{2}-H\ddot{o}lder~continuous~in~[-M,M].$$
\end{lemma}

\begin{proof}
Given a sequence open intervals $\{(a_i,b_i)\}_{i=1}^P\subset [-M,M]$ with $P\in \Z_+,$ since $F\in C^0([-M,M]),$ there exists $\delta^*>0$ such that
\begin{equation}\label{yizhi}\vert F(x)-F(y)\vert<\frac{C_1}{2P}\left(\sum\limits_{i=1}^P(b_i-a_i)\right)^{\frac{1}{2}},~x,y\in [-M,M]~with~|x-y|<3\delta^*.\end{equation}

 The condition ($\mathbf{B}$) implies for any $t\in [-M,M]-B$ and $\delta^*\gg \epsilon^*>0,$ there exists \begin{equation}\label{tij}0<\delta(t,\epsilon^*)<\epsilon^*(\ll \delta^*)\end{equation} such that $$\sup\limits_{t'\in(t-\delta(t,\epsilon^*),t+\delta(t,\epsilon^*))}\frac{|F(t)-F(t')|}{|t-t'|}\leq 2C(t),$$ with $$C(t):=\limsup\limits_{x\rightarrow t}\frac{|F(x)-F(t)|}{|x-t|}.$$

Since $[-M,M]-B$ is a compact set, for $\epsilon^*>0,$ there exists $S(\epsilon^*)\in \Z_+$ and a sequence $t_1,t_2,\cdots,t_S$ such that
$[-M,M]-B\subset\bigcup\limits_{j=1}^{S(\epsilon^*)}(t_j-\delta(t_j,\epsilon^*),t_j+\delta(t_j,\epsilon^*)).$

We denote
$C_2=\max\limits_{1\leq j\leq S(\epsilon^*)}C(t_j).$ Then for each $1\leq j\leq S(\epsilon^*),$ it holds that
\begin{equation}\label{supa}\sup\limits_{t'\in(t_j-\delta(t_j,\epsilon^*),t_j+\delta(t_j,\epsilon^*))}\frac{|F(t_j)-F(t')|}{|t_j-t'|}<2C_2.\end{equation}

Note $\bigcup\limits_{j=1}^{S(\epsilon^*)}(t_j-\delta(t_j,\epsilon^*),t_j+\delta(t_j,\epsilon^*))$ has finitely many
 connected components.
Hence we can write $$\bigcup\limits_{j=1}^{S(\epsilon^*)}(t_j-\delta(t_j,\epsilon^*),t_j+\delta(t_j,\epsilon^*))=\bigcup\limits_{j=1}^{\tilde{S}(\epsilon^*)}(c_i,d_i)$$ with some $1\leq \tilde{S}(\epsilon^*)\leq S(\epsilon^*)$ and $(c_i,d_i)\bigcap (c_j,d_j)=\emptyset$ for any $1\leq i<j\leq \tilde{S}(\epsilon^*),$ where each $(c_i,d_i)$ corresponds with a sequence (at least two terms)$\{t_{l^i_l}\}_{l=1}^{s_i}\subset  \{t_i\}_{i=1}^{S(\epsilon^*)}$  satisfying $t_{l^i_1}<t_{l^i_{2}}<\cdots<t_{l^i_{s_i}}$ and \begin{equation}\label{cdi}(c_i,d_i)=\bigcup\limits_{j=1}^{s_i}(t_{l^i_j}-\delta(t_{l^i_j},\epsilon^*),t_{l^i_j}+\delta(t_{l^i_j},\epsilon^*)).\end{equation}

We claim that for $1\leq i\leq \tilde{S}(\epsilon^*)$, and $\text{for}~x\in [t_{l^i_k}-\delta(t_{l^i_k},\epsilon^*),t_{l^i_k}],~y\in[t_{l^i_{m}},d_i+\delta(t_{l^i_m},\epsilon^*)],~1\leq k\leq m\leq s_i,$ it holds that

\begin{equation}\label{zhonx*} \vert F(x)-F(y) \vert\leq 2C_2(x-y),~.~
\
\end{equation}

\

\textbf{The proof of \eqref{zhonx*}}:

\begin{proof}

First we prove
\begin{equation}\label{zhonx} \vert F(t_{l^i_j})-F(t_{l^i_{j+1}}) \vert\leq 2C_2|t_{l^i_{j+1}}-t_{l^i_{j}}|.
\end{equation}

Without~loss~of~generality, we assume $\delta(t_{l^i_j},\epsilon^*)\ge \delta(t_{l^i_{j+1}},\epsilon^*)$.

If $t_{l^i_{j+1}}-t_{l^i_j}\leq \delta(t_{l^i_j},\epsilon^*),$ then
$t_{l^i_{j+1}}\in (t_{l^i_j}-\delta(t_{l^i_j},\epsilon^*),t_{l^i_j}+\delta(t_{l^i_j},\epsilon^*)).$
Hence \eqref{supa} implies \eqref{zhonx}.

If $t_{l^i_{j+1}}-t_{l^i_j}> \delta(t_{l^i_j},\epsilon^*),$
then $t_{l^i_{j+1}}-\delta(t_{l^i_{j+1}},\epsilon^*)>t_{l^i_{j}};~t_{l^i_{j}}+\delta(t_{l^i_{j}},\epsilon^*)<t_{l^i_{j+1}}.$

Note that \eqref{cdi} implies $$(t_{l^i_{j}}-\delta(t_{l^i_{j}},\epsilon^*),t_{l^i_{j}}+\delta(t_{l^i_{j}},\epsilon^*))\bigcap(t_{l^i_{j+1}}-\delta(t_{l^i_{j+1}},\epsilon^*),t_{l^i_{j+1}}+\delta(t_{l^i_{j+1}},\epsilon^*))\neq \emptyset,$$ which implies
\begin{equation}\label{gc1} \left(t_{l^i_{j+1}}-\delta(t_{l^i_{j+1}},\epsilon^*)\right)-\left(t_{l^i_{j}}+\delta(t_{l^i_{j}},\epsilon^*)\right)<0.\end{equation}
We denote $$\frac{\left(t_{l^i_{j+1}}-\delta(t_{l^i_{j+1}},\epsilon^*)\right)+\left(t_{l^i_{j}}+\delta(t_{l^i_{j}},\epsilon^*)\right)}{2}:=t^*.$$
By a direct calculation, $$\begin{array}{ll}&t^*-t_{l^i_j}=\frac{\left(t_{l^i_{j+1}}-\delta(t_{l^i_{j+1}},\epsilon^*)\right)+\left(t_{l^i_{j}}+\delta(t_{l^i_{j}},\epsilon^*)\right)}{2}-t_{l^i_{j}}
\\
\\&
=\frac{\left(t_{l^i_{j+1}}-\delta(t_{l^i_{j+1}},\epsilon^*)\right)-\left(t_{l^i_{j}}+\delta(t_{l^i_{j}},\epsilon^*)\right)}{2}+\delta(t_{l^i_{j}},\epsilon^*)
<\delta(t_{l^i_{j}},\epsilon^*)\quad (by~\eqref{gc1})\end{array}$$

and

$$\begin{array}{ll}&t_{i_{j+1}}-t^*=t_{l^i_{j+1}}-\frac{\left(t_{l^i_{j+1}}-\delta(t_{l^i_{j+1}},\epsilon^*)\right)+\left(t_{l^i_{j}}+\delta(t_{l^i_{j}},\epsilon^*)\right)}{2}
\\
\\&
=\frac{\left(t_{l^i_{j+1}}-\delta(t_{l^i_{j+1}},\epsilon^*)\right)-\left(t_{l^i_{j}}+\delta(t_{l^i_{j}},\epsilon^*)\right)}{2}+\delta(t_{l^i_{j+1}},\epsilon^*)
<\delta(t_{l^i_{j+1}},\epsilon^*)\quad (by~\eqref{gc1}).\end{array}$$

Therefore

$$t^*\in (t_{l^i_{j}}-\delta(t_{l^i_{j}},\epsilon^*),t_{l^i_{j}}+\delta(t_{l^i_{j}},\epsilon^*))\bigcap(t_{l^i_{j+1}}-\delta(t_{l^i_{j+1}},\epsilon^*),t_{l^i_{j+1}}+\delta(t_{l^i_{j+1}},\epsilon^*))$$
with
$t_{l^i_j}<t^*<t_{l^i_{j+1}}.$

Then by \eqref{supa}, we obtain
$$\begin{array}{ll}&\vert F(t_{l^i_j})-F(t_{l^i_{j+1}})\vert\leq  \vert F(t_{l^i_j})-F(t^*)\vert+\vert F(t_{l^i_{j+1}})-F(t^*)\vert
\\
\\&\leq 2C_2(t_{l^i_{j+1}}-t^*)+2C_2(t^*-t_{l^i_j})\leq 2C_2(t_{l^i_{j+1}}-t_{l^i_{j}}),\end{array}$$ which yields \eqref{zhonx}.

Note that $x\in [t_{l^i_k}-\delta(t_{l^i_k},\epsilon^*),t_{l^i_k}]\subset [t_{l^i_k}-\delta(t_{l^i_k},\epsilon^*),t_{l^i_k}+\delta(t_{l^i_k},\epsilon^*)]$ and $y\in [t_{l^i_{m}},t_{l^i_{m}}+\delta(t_{l^i_{m}},\epsilon^*)]\subset [t_{l^i_{m}}-\delta(t_{l^i_{m}},\epsilon^*),t_{l^i_{m}}+\delta(t_{l^i_{m}},\epsilon^*)].$
Hence \eqref{supa} implies
$$\vert F(x)-F(t_{l^i_k})\vert+\vert F(y)-F(t_{l^i_{m}})\vert\leq 2C_2(t_{l^i_k}-x)+2C_2(t_{l^i_{m}}-y).$$
Therefore

$$\begin{array}{ll}\vert F(x)-F(y)\vert&\leq \vert F(x)-F(t_{l^i_k})\vert +\vert F(t_{l^i_k})-F(t_{l^i_{m}})\vert+\vert F(t_{l^i_{m}}-F(y) \vert
\\&\leq 2C_2(t_{l^i_k}-x)+2C_2(y-t_{l^i_{m}})+\sum\limits_{j=k}^{m-1}\vert F(t_{l^i_j})-F(t_{l^i_{j+1}})\vert\\&\leq 2C_2(t_{l^i_k}-x)+2C_2(y -t_{l^i_{m}})+\sum\limits_{j=k}^{m-1}2C_2(t_{l^i_{j+1}}-t_{l^i_{j}})\quad (by~\eqref{zhonx})\\&\leq 2C_2|x-y|.\hfill\qed\end{array}$$
\end{proof}

\

For each $(a_i,b_i),$ $1\leq i\leq P,$ we denote \begin{equation}\label{feng}(a_i,b_i)\bigcap \left(\bigcup\limits_{j=1}^{\tilde{S}(\epsilon^*)}(c_j,d_j)\right)=\bigcup\limits_{j=1}^{Q(i)}(a^i_j,b^i_j)\end{equation} with $Q(i)\in \Z_+$ and $ b^i_j<a^i_{j+1},~ 1\leq j\leq Q(i)-1.$

Since $a^i_1\geq a_i$ and $b^i_Q\leq b_i$, we have

     \begin{equation}\label{feng*}\begin{array}{ll}&(a_i,a^i_1]\bigcup\left[\left(\bigcup\limits_{j=1}^{Q(i)}(a^i_j,b^i_j)\right)\bigcup \left(\bigcup\limits_{j=1}^{Q(i)-1}[b^i_j,a^i_{j+1}] \right)\right]\bigcup [b^i_Q,b_i)
     \\\\&=(a_1,a^i_1]\bigcup(a^i_1,b^i_Q)\bigcup [b^i_Q,b_i)= (a_i,b_i)\end{array}\end{equation}
(here $(a_i,a^i_1],[b^i_Q,b_i)=\emptyset$ if $a_i=a^i_1$ and $b^i_Q=b_i$).

Then \eqref{feng*} and \eqref{feng} show (we set $b^i_0:=a_i$ and $a^i_{Q(i)+1}:=b_i$)
$$(a_i,b_i)-\bigcup\limits_{j=1}^{Q(i)}(a^i_j,b^i_j)=(a_i,a^i_1]\bigcup[b^i_Q,b_i)\bigcup\left(\bigcup\limits_{j=1}^{Q(i)-1}[b^i_j,a^i_{j+1}]\right).$$
Hence \begin{equation}\label{ll1}(a_i,a^i_1]\bigcup[b^i_Q,b_i)\bigcup\left(\bigcup\limits_{j=1}^{Q(i)-1}[b^i_j,a^i_{j+1}]\right)\subset [a_i,b_i]\bigcap \left([-M,M]-\bigcup\limits_{j=1}^{\tilde{S}(\epsilon^*)}(c_j,d_j)\right).\end{equation}

Note $[-M,M]-B\subset \bigcup\limits_{j=1}^{\tilde{S}(\epsilon^*)}(c_j,d_j).$ Hence $\left([-M,M]-\bigcup\limits_{j=1}^{\tilde{S}(\epsilon^*)}(c_j,d_j)\right)\subset B.$
Therefore for each $1\leq i\leq P,$
$$(a_i,a^i_1]\bigcup[b^i_Q,b_i)\bigcup\left(\bigcup\limits_{j=1}^{Q(i)-1}[b^i_j,a^i_{j+1}]\right)\subset B$$ with $(a_i,a^i_1],[b^i_Q,b_i),[b^i_j,a^i_{j+1}]$ being disjointed with each other.

Denote $$S_1:=\sum\limits_{i=1}^P\left((a^i_1-a_i)+(b_i-b^i_Q)+\sum\limits_{j=1}^{Q(i)-1}(b^i_j-a^i_{j+1})\right).$$
\eqref{ll1} implies $$\left((a^i_1-a_i)+(b_i-b^i_Q)+\sum\limits_{j=1}^{Q(i)-1}(b^i_j-a^i_{j+1})\right)\leq (b_i-a_i).$$

Therefore
\begin{equation}\label{S1}S_1\leq \sum\limits_{i=1}^P(b_i-a_i).\end{equation}

Then

\begin{equation}\label{ll2}\begin{array}{ll}&\sum\limits_{i=1}^P\left(|F(a^i_1)-F(a_i)|+|F(b^i_Q)-F(b_i)|+\sum\limits_{j=1}^{Q(i)-1}|F(b^i_j)-F(a^i_{j+1})|\right)\leq C_1S_1^{\frac{1}{2}}\ (by~(\mathbf{A})~of~Lemma~\ref{ab12})\\&\leq C_1\left(\sum\limits_{i=1}^P(b_i-a_i)\right)^{\frac{1}{2}}\quad \ (by~\eqref{S1}).\end{array}\end{equation}

Now we claim that following inequality holds true:
\begin{equation}\label{hold}\sum\limits_{i=1}^P\sum\limits_{j=1}^{Q(i)}\vert F(a^i_j)-F(b^i_j) \vert\leq 4(C_1+C_2)\left(\sum\limits_{i=1}^P(b_i-a_i)\right)^{\frac{1}{2}}.\end{equation}

\textbf{The final proof of Lemma \ref{ab12}:}

By the help of \eqref{ll2} and \eqref{hold}, we obtain
$$\begin{array}{ll}\sum\limits_{i=1}^P\vert F(a_i)-F(b_i)\vert&= \sum\limits_{i=1}^P\vert (F(a^i_1)-F(a_i))+(F(b^i_Q)-F(b_i))+\sum\limits_{j=1}^{Q(i)-1}(F(b^i_j)-F(a^i_{j+1}))+\sum\limits_{j=1}^{Q(i)}( F(b^i_j)-F(a^i_j))\vert\\&\leq \sum\limits_{i=1}^P\left(|F(a^i_1)-F(a_i)|+|F(b^i_Q)-F(b_i)|+\sum\limits_{j=1}^{Q(i)-1}|F(b^i_j)-F(a^i_{j+1})|+\sum\limits_{j=1}^{Q(i)}\vert F(a^i_j)-F(b^i_j) \vert\right)
\\&=\sum\limits_{i=1}^P\left(|F(a^i_1)-F(a_i)|+|F(b^i_Q)-F(b_i)|+\sum\limits_{j=1}^{Q(i)-1}|F(b^i_j)-F(a^i_{j+1})|\right)+\sum\limits_{i=1}^P\sum\limits_{j=1}^{Q(i)}\vert F(a^i_j)-F(b^i_j) \vert\\&\leq (6C_1+6C_2)\left(\sum\limits_{i=1}^P(b_i-a_i)\right)^{\frac{1}{2}}.
\end{array}$$
This implies  $$F~is~absolute~\frac{1}{2}-H\ddot{o}lder~continuous~in~[-M,M].$$

\

Hence it remains to show \eqref{hold}.

\textbf{The proof of \eqref{hold}}

\begin{proof} For any $1\leq i\leq P,$
note that \begin{equation}\label{jiaofk}\{(a^i_j,b^i_j)\vert j=1,2,\cdots,Q(i)\}=\{(c_j,d_j)\bigcap (a_i,b_i) \vert (c_j,d_j)\bigcap (a_i,b_i)\neq \emptyset,~j=1,2,\cdots,\tilde{S}(\epsilon^*) \}.\end{equation}

\eqref{jiaofk} allow us to denote \begin{equation}\label{zd}(a^i_1,b^i_1)=(\max\{a_i,c_{i^{*}}\},d_{i^*})~with~some~1\leq i^*\leq \tilde{S}(\epsilon^*)\end{equation} and \begin{equation}\label{yd}(a^i_{Q},b^i_Q)=(c_{i^{**}},\min\{d_{i^{**}},b_i\})~with~some~1\leq i^*\leq i^{**}\leq \tilde{S}(\epsilon^*).\end{equation} And \begin{equation}\label{zj}(a^i_j,b^i_j)=(c_{j_i},d_{j_i}),~j=2,3,\cdots,Q-1;~1\leq j_i\leq \tilde{S}(\epsilon^*)\end{equation} satisfies
$$(a^i_1,b^i_1)<(a^i_2,b^i_2)<\cdots<(a^i_Q,b^i_Q),$$
i.e.~$b_j^1<a^j_{i+1}~{\rm \ for\ any\ }~j=1,2,\cdots,Q-1.$

Then we have to consider the following three cases.

\begin{enumerate}
\item If $c_{i^*}=\max\{a_i,c_{i^{*}}\}$ and $d_{i^{**}}=\min\{d_{i^{**}},b_i\},$ then $(a^i_1,b^i_1)=(c_{i^*},d_{i^*})$ and $~(a^i_{Q},b^i_Q)=(c_{i^{**}},d_{i^{**}}).$
Hence \eqref{zhonx*} implies
\begin{equation}\label{1}\sum\limits_{1\leq j\leq Q}\vert F(a^i_j)-F(b^i_j) \vert\leq 2C_2\sum\limits_{1\leq j\leq Q}(b^i_j-a^i_j)\leq 2C_2(b_i-a_i).\end{equation}
\item If $a_i=\max\{a_i,c_{i^{*}}\}$ and $b_i=\min\{d_{i^{**}},b_i\},$ then
$a_i=a^i_1$ and $b_i=b^i_Q.$

Recall that $$(c_{i^*},d_{i^*})=\bigcup\limits_{j=1}^{s_{i^*}}(t_{l^{i^*}_j}-\delta(t_{l^{i^*}_j},\epsilon^*),t_{l^{i^*}_j}+\delta(t_{l^{i^*}_j},\epsilon^*))$$
$$(c_{i^{**}},d_{i^{**}})=\bigcup\limits_{j=1}^{s_{i^{**}}}(t_{l^{i^{**}}_j}-\delta(t_{l^{i^{**}}_j},\epsilon^*),t_{l^{i^{**}}_j}+\delta(t_{l^{i^{**}}_j},\epsilon^*)).$$
Define
$$t_{l^{i^*}_p}< a_i\leq t_{l^{i^*}_{p+1}},~1\leq p\leq s_{i^*}$$ and $$t_{l^{i^{**}}_q}< b_i\leq t_{l^{i^{**}}_{q+1}},~1\leq q\leq s_{i^{**}}.$$

And for  fixed $p,q$ we set $$\max\{t_{l^{i^*}_{p+1}}-\delta(t_{l^{i^*}_{p+1}},\epsilon^*),a_i\}=c';$$ $$\min\{t_{l^{i^{**}}_{q}}+\delta(t_{l^{i^{**}}_{q}},\epsilon^*),b_i\} =d'.$$

Then
$$(a_i,d_{i^*})=(a_i,c']\bigcup(c',d_{i^*}); (c_{i^{**}},b_i)=(c_{i^{**}},d')\bigcup[d',b_i).$$

Note $$t_{l^{i^*}_{p+1}}-\delta(t_{l^{i^*}_{p+1}},\epsilon^*)\leq c'\leq t_{l^{i^*}_{p+1}}$$ and $$t_{l^{i^{**}}_{q}}\leq d'\leq t_{l^{i^{**}}_{q}}+\delta(t_{l^{i^{**}}_{q}},\epsilon^*).$$
Then \eqref{zhonx*} implies
\begin{equation}\label{c'd}\vert F(c')-F(d_{i^*})\vert \leq 2C_2(d_{i^*}-c')<2C_2(d_{i^*}-c_{i^*}),\end{equation}
\begin{equation}\label{d'c}\vert F(d')-F(c_{i^{**}})\vert \leq 2C_2(d'-c_{i^{**}})<2C_2(d_{i^{**}}-c_{i^{**}}).\end{equation}

On the other hand, one notes $$(c'-a_i)=\left\{\begin{matrix}0, & if~c'=a_i\\ t_{l^{i^*}_{p+1}}-\delta(t_{l^{i^*}_{p+1}},\epsilon^*)-a_i & otherwise. \end{matrix}\right.$$

Therefore by the fact $t_{l^{i^*}_{p+1}}-\delta(t_{l^{i^*}_{p+1}},\epsilon^*)<t_{l^{i^*}_{p}}+\delta(t_{l^{i^*}_{p}},\epsilon^*)$, we obtain

$$0\leq (c'-a_i)\leq t_{l^{i^*}_{p+1}}-\delta(t_{l^{i^*}_{p+1}},\epsilon^*)-a_i<t_{l^{i^*}_{p}}+\delta(t_{l^{i^*}_{p}},\epsilon^*)-a_i<\delta(t_{l^{i^*}_{p}},\epsilon^*)<\delta^*(~\text{by}~\eqref{tij}).$$
Then by \eqref{yizhi},

$$ \vert F(c')-F(a_i) \vert,\ \vert F(d')-F(b_i) \vert\leq \frac{C_1}{2P}\left(\sum\limits_{i=1}^P(b_i-a_i)\right)^{\frac{1}{2}}.$$

Combining this with \eqref{c'd} and \eqref{d'c}, we have
$$\vert F(a_i)-F(d_{i^*})\vert \leq 2C_2(d_{i^*}-a_i)+\frac{C_1}{2P}\left(\sum\limits_{i=1}^P(b_i-a_i)\right)^{\frac{1}{2}}$$and
$$\vert F(c_{i^{**}})-F(b_i)\vert \leq 2C_2(b_i-c_{i^{**}})+\frac{C_1}{2P}\left(\sum\limits_{i=1}^P(b_i-a_i)\right)^{\frac{1}{2}}.$$

Then, \eqref{zd} and \eqref{yd} imply
\begin{equation}\label{z}\vert F(a^i_1)-F(b^i_1)\vert \leq 2C_2(b^i_1-a^i_1)+\frac{C_1}{2P}\left(\sum\limits_{i=1}^P(b_i-a_i)\right)^{\frac{1}{2}}\end{equation}and
\begin{equation}\label{y}\vert F(a^i_Q)-F(b^i_Q)\vert \leq 2C_2(b^i_Q-a^i_Q)+\frac{C_1}{2P}\left(\sum\limits_{i=1}^P(b_i-a_i)\right)^{\frac{1}{2}}.\end{equation}

The definition \eqref{zj} and  \eqref{zhonx*} allow us to obtain
\begin{equation}\label{zo}\vert F(a^i_j)-F(b^i_j)\vert =\vert F(c_{j_i})-F(d_{j_i})\vert\leq 2C_2(d_{j_i}-c_{j_i})=2C_2(b^i_j-a^i_j), j=2,3,\cdots,Q-1.\end{equation}

By \eqref{z}, \eqref{y} and \eqref{zo}, it holds that for $i=1,2,\cdots,P,$
\begin{equation}\label{2}\begin{array}{ll}&\sum\limits_{j=1}^{Q(i)}\vert F(a^i_j)-F(b^i_j)\vert\leq 2C_2\sum\limits_{j=1}^{Q(i)}(b^i_j-a^i_j)+\frac{C_1}{P}\left(\sum\limits_{i=1}^P(b_i-a_i)\right)^{\frac{1}{2}}\leq 2C_2(b_i-a_i)+\frac{C_1}{P}\left(\sum\limits_{i=1}^P(b_i-a_i)\right)^{\frac{1}{2}}\\\\&(recall~a_i\leq a^i_{1}< b^i_1<\cdots<a^i_j<b^i_j<\cdots<a^i_Q<b^i_Q\leq b_i).\end{array}\end{equation}

\item If $c_{i^*}< a_i,~d_{i^{**}}\leq b_i$ or $d_{i^{**}}> b_i,c_{i^*}\geq a_i,$  without loss of generality, we consider the case $c_{i^*}< a_i,~d_{i^{**}}\leq b_i.$
Then $a_i=a^i_1$ and $~d_{i^{**}}=b^i_Q.$

Recall the definition
$(a^i_1,b^i_1)=(\max\{a_i,c_{i^{*}}\},d_{i^*})$ and
$(a^i_Q,b^i_Q)=(c_{i^{**}},\min\{d_{i^{**}},b_i\})$
and
\begin{equation}\label{zj'}(a^i_j,b^i_j)=(c_{j_i},d_{j_i}),~j=2,3,\cdots,Q(i)-1.\end{equation}

In the current case, $(a^i_Q,b_Q^i)=(c_{i^{**}},d_{i^{**}}).$ Then, \eqref{zhonx*} implies
\begin{equation}\label{4}\vert F(a^i_Q)-F(b^i_Q)\vert=\vert F(c_{i^{**}})-F(d_{i^{**}})\vert\leq 2C_2(d_{i^{**}}-c_{i^{**}})=2C_2(b^i_Q-a^i_Q).\end{equation}

Similar as the proof of \eqref{z} and \eqref{zo}, we have
\begin{equation}\label{z'}\vert F(a^i_1)-F(b^i_1)\vert \leq 2C_2(b^i_1-a^i_1)+\frac{C_1}{2P}\left(\sum\limits_{i=1}^P(b_i-a_i)\right)^{\frac{1}{2}}\end{equation}
and
\begin{equation}\label{zo'}\vert F(a^i_j)-F(b^i_j)\vert =\vert F(c_{j_i})-F(d_{j_i})\vert\leq 2C_2(d_{j_i}-c_{j_i})=2C_2(b^i_j-a^i_j), j=2,3,\cdots,Q(i)-1.\end{equation}

By \eqref{4}, \eqref{z'} and \eqref{zo'}, we obtain
\begin{equation}\label{3}\begin{array}{ll}\sum\limits_{j=1}^{Q(i)}\vert F(a^i_j)-F(b^i_j)\vert&\leq 2C_2(b_i-a_i)+\frac{C_1}{2P}\left(\sum\limits_{i=1}^P(b_i-a_i)\right)^{\frac{1}{2}}.\end{array}\end{equation}
\end{enumerate}

\

In summary, \eqref{1}, \eqref{2} and \eqref{3} together show that in any case we always have
$$\sum\limits_{j=1}^{Q(i)}\vert F(a^i_j)-F(b^i_j)\vert\leq 2C_2(b_i-a_i)+\frac{C_1}{P}\left(\sum\limits_{i=1}^P(b_i-a_i)\right)^{\frac{1}{2}},\quad  1\leq i\leq P.$$
Therefore
$$\begin{array}{ll}&\sum\limits_{i=1}^{P}\sum\limits_{i=1}^{Q(i)}\vert F(a^i_i)-F(b^i_i)\vert\leq 2C_2\sum\limits_{i=1}^P(b_i-a_i)+P\cdot\frac{C_1}{P}\left(\sum\limits_{i=1}^P(b_i-a_i)\right)^{\frac{1}{2}}\\&\leq 2C_2\sum\limits_{i=1}^P(b_i-a_i)+C_1\left(\sum\limits_{i=1}^P(b_i-a_i)\right)^{\frac{1}{2}}\leq 4(C_1+C_2)\left(\sum\limits_{i=1}^P(b_i-a_i)\right)^{\frac{1}{2}},\end{array}$$
which yield \eqref{hold}.
\hfill\qed\end{proof}

\hfill\qed\end{proof}
Finally, by the help of Lemma \ref{c2lemma}, \ref{ab11} and \ref{ab12}, we completes the proof of absolutely $\frac{1}{2}$-H\"older continuity of LE.

\subsubsection{\text{Almost everywhere differentiability}}

Note we have already obtained that $L(t)$ is absolutely $\frac{1}{2}$-H\"older continuous on $[-M,M]$ with any $M>0$, hence

\begin{corollary}\label{coro11} $L(t)$ is almost everywhere differentiable on $[-M,M]$ with any $M>0.$ \end{corollary}

Furthermore  we have the following estimate on the measure of energies for which LE possesses a large derivative. Denote $$\Sigma^*=\{t\in \Sigma^{\lambda}\vert L'(t)~exists \}$$ and $$F_{L}=\{t\in\Sigma^{*}\bigcap \mathcal{FR} \vert |L'(t)|>L\}$$ with $L>0$.

Note $$ \Sigma^{\lambda} \subset [-2+2\inf v, 2+2\sup v]$$ and Corollary \ref{coro11} (taking $M=\max \{|-2+2\inf v|,|2+\sup v|\}$) shows $ Leb\{\Sigma^*\}=Leb\{\Sigma^{\lambda}\}.$
On the other hand, (3) of Proposition \ref{prop14} implies
$Leb\{\mathcal{IR}\}=Leb\{\Sigma^{\lambda}-\mathcal{FR}\}=0,$ which leads that
$$Leb\{\mathcal{FR}\}=Leb\{\Sigma^{\lambda}\bigcap \mathcal{FR}\}=Leb\{\Sigma^{\lambda}\}.$$

Then \begin{equation}\label{almosteve}Leb(\{\Sigma^*\bigcap \mathcal{FR}\})= Leb(\{\Sigma^{\lambda}\bigcap \mathcal{FR}\})\end{equation}
and
     $\lim\limits_{L\rightarrow 0}Leb\{F_{L}\}=Leb\{\Sigma^{\lambda}\}.$

Recall that \eqref{lipschitz-1} of Lemma \ref{liplemma} implies for any $\tilde{t}\in \mathcal{FR},$ it holds that
$$\limsup\limits_{t\rightarrow \tilde{t}}\frac{|L(t)-L(\tilde{t})|}{|t-\tilde{t}|}\leq C\lambda^{-q^{\frac{1}{4}}_{N+s(k_{last}(\tilde{t}))-1}}(dist\{\tilde{t},G_{k_{last}(\tilde{t})}\})^{-\frac{1}{2}}.$$
Therefore for $\tilde{t}\in \Sigma^*\bigcap \mathcal{FR},$
$$L'(\tilde{t})=\limsup\limits_{t\rightarrow \tilde{t}}\frac{|L(t)-L(\tilde{t})|}{|t-\tilde{t}|}\leq C\lambda^{-q^{\frac{1}{4}}_{N+s(k_{last}(\tilde{t}))-1}}(dist\{\tilde{t},G_{k_{last}(\tilde{t})}\})^{-\frac{1}{2}}.$$

Hence \begin{equation}\label{ttttt}\{t\in \Sigma^{*}\bigcap \mathcal{FR} \vert |L'(t)|>L\}\subset \{\tilde{t} \vert L\leq C\lambda^{-q^{\frac{1}{4}}_{N+s(k_{last}(\tilde{t}))-1}}(dist\{\tilde{t},G_{k_{last}(\tilde{t})}\})^{-\frac{1}{2}}\}.\end{equation}

Then \eqref{almosteve} and \eqref{ttttt} imply
$$\begin{array}{ll}&Leb\{F_L\}=Leb \{t\in \Sigma^{\lambda}\bigcap \mathcal{FR} \vert |L'(t)|>L\}=Leb \{t\in \Sigma^{*}\bigcap \mathcal{FR} \vert |L'(t)|>L\}\\
&\leq \sum\limits_{k\in \Z}Leb\{\tilde{t} \vert L\leq C\lambda^{-q^{\frac{1}{4}}_{N+s(k)-1}}(dist\{\tilde{t},G_{k}\})^{-\frac{1}{2}}\}
\\&\leq \sum\limits_{k\in \Z} C\left(\lambda^{-2q^{\frac{1}{4}}_{N+s(k)-1}}L^{-2} \right)
\leq C\left(\sum\limits_{k\in \Z} \lambda^{-2q^{\frac{1}{4}}_{N+s(k)-1}}\right)L^{-2}\leq {C}^* L^{-2}.\end{array}$$

\subsubsection{\text{H\"older continuity for $t\in {\mathcal{IR}}$}}\label{4.3.3}
%The following definition will be used to prove (4) of Theorem \ref{Th1} in Subsection \ref{4.3.3}.

\begin{definition}For any $t\in \R,$ let
$$\label{defbeta}\beta(t)\triangleq\left\{\begin{matrix}\liminf\limits_{n\rightarrow +\infty} \beta_n(t) & t\notin \bigcup\limits_{i\in \Z_+}\{t^{k_i}_-,t^{k_i}_+\}\\ \frac{1}{2}& t\in \bigcup\limits_{i\in \Z_+}\{t^{k_i}_-,t^{k_i}_+\}\end{matrix}\right.,$$
 where  $\beta_n(t)\triangleq\frac{1}{2}+\min\{\frac{\log|t-t^{k_n}_+|}{2\log|t-t^{k_n}_-|},\frac{\log|t-t^{k_n}_-|}{2\log|t-t^{k_n}_+|}\}$ and $t^{k_n}_{\pm}$ is from (2) of Theorem \ref{15}.

\end{definition}

\begin{remark} It is easy to check that $\frac{1}{2}\leq \beta(t)\le 1.$ Later, we will prove $\beta$ is related to the local H\"older exponent of $L(t).$ More precisely, for $t\in \bigcup\limits_{i\in \Z_+}\{t^{k_i}_-,t^{k_i}_+\},$ $L(t)$ is $\frac{1}{2}$-H\"older continuous; for $t\notin \bigcup\limits_{i\in \Z_+}\{t^{k_i}_-,t^{k_i}_+\},$ the exponent is between $\beta(t)-\epsilon$ and $\beta(t)+\epsilon.$
\end{remark}

%By the principle of nested intervals, the definition in (\ref{defbeta}) and (2) of Theorem \ref{15} imply the following Theorem.
\begin{theorem}\label{zhibiaoeq}
For any $\beta\in [\frac{1}{2},1],$ it holds that $\{t\in \Sigma^{\lambda}|\beta(t)=\beta\}\neq \emptyset.$

\end{theorem}

By Theorem \ref{15}, all opening gaps on $[\inf \Sigma^{\lambda}, \sup \Sigma^{\lambda}]$ can be labeled by $\{G_{k_i}\}_{i\in \Z_+}$ and
\begin{equation} \label{pugapp} \lambda^{-C|k_i|}\leq |G_{k_i}|\leq \lambda^{-c|k_i|}.
\end{equation}

To obtain Theorem \ref{zhibiaoeq}, we have to do the following preparations.

\begin{lemma}\label{zblemma} Given any $a,b\in \mathcal{K}(\lambda).$
\begin{enumerate}
\item If $\lambda^{|b|^{c}}+|b|>|a|\geq |b|,$ then
$ dist(G_{a},G_{b})>\lambda^{-|b|^c}.$

\item If $\lambda^{|b|^{c}}+|b|\leq |a|,$ then
$ dist(G_{a},G_{b})>|a|^{-C}.$

\end{enumerate}
\end{lemma}

\begin{proof} Take $\eta=\left[\max\{|a|-|b|, \lambda^{|b|^{c}}\}\right]^{-1}$ and apply \eqref{dgkij} of Theorem \ref{15}.

If $\lambda^{|b|^{c}}+|b|>|a|\geq |b|,$ we have
$$dist(G_{a},G_{b})>\left[\max\{|a|-|b|, \lambda^{|b|^{c}}\}\right]^{-C}\ge \lambda^{-C\left(|b|\right)^{c}}$$  and if $\lambda^{|b|^{c}}+|b|\leq |a|$, we have $$dist(G_{a},G_{b})>\left[\max\{|a|-|b|, \lambda^{|b|^{c}}\}\right]^{-C}=(|a|-|b|)^{-C}>c|a|^{-C}.\hfill\qed$$
\end{proof}

Given any $\kappa,\gamma>0,$ we set $$\tilde{G}_{k_j,\kappa,\gamma}=(t^{k_j}_--\gamma|G_{k_j}|^{\kappa},t^{k_j}_++\gamma|G_{k_j}|^{\kappa}).$$

\begin{lemma}\label{pugapg} For any fixed $\kappa,\gamma,$ there exists $i^*(\kappa,\gamma)\in \Z_+$ such that for any $s$ with $|k_s|\geq |k_{i^*}|,$ it holds that \begin{equation}\label{pugapp1}\Lambda_{\kappa,\gamma,s}\subset \{k_j \vert |k_j|> \lambda^{\left(|k_{s}|\right)^{c}}\}\end{equation}and
\begin{equation}\label{pugapp2}\Lambda^*_{\kappa,\gamma,s} \subset \{k_j \vert |k_j|> \lambda^{\left(|k_{s}|\right)^{c}}\},\end{equation}

 where $$\Lambda_{\kappa,\gamma,s}:=\{k_j~\vert~j\neq s,~{G}_{k_j}\bigcap \tilde{G}_{k_{s,\kappa,\gamma}}\neq \emptyset\},\quad~\Lambda^*_{\kappa,\gamma,s}:=\{k_j \vert j\neq s,~|k_j|\geq |k_{s}|,~\tilde{G}_{k_j,\kappa,\gamma}\bigcap \tilde{G}_{k_{s},\kappa,\gamma}\neq \emptyset\}.$$
\end{lemma}
\begin{proof}

By \eqref{pugapp}, there exists $i^*$ such that for any $|k_j|\geq |k_s|\geq |k_{i^*}|$, we have

\begin{equation}\label{wolgg}\lambda^{-\frac{1}{10}c\kappa|k_{s}|}\geq 2\lambda^{-c\kappa|k_s|}\geq \gamma|G_{k_j}|^{c\kappa}+\gamma|G_{k_{s}}|^{c\kappa}.\end{equation}

For any fixed $|k_s|\geq |k_{i^*}|,$ we consider the following two sets
$$\Lambda_1:=\{k_j \vert j\neq s,~|k_j|< |k_{s}|,~{G}_{k_j}\bigcap \tilde{G}_{k_{s},\kappa,\gamma}\neq \emptyset\},$$
$$\Lambda_2:=\{k_j \vert j\neq s,~|k_j|\geq |k_{s}|,~{G}_{k_j}\bigcap \tilde{G}_{k_{s},\kappa,\gamma}\neq \emptyset\}.$$

Clearly, $\Lambda_{\kappa,\gamma,s}=\Lambda_1\bigcup\Lambda_2$ with $\Lambda_1\bigcap \Lambda_2=\emptyset.$

For $\Lambda_2,$ we apply (1) of Lemma \ref{zblemma} with $b=k_{s}.$
Then if $\lambda^{|k_{s}|^{c}}+|k_{s}|>|k_{j}|\geq |k_{s}|,$ \eqref{wolgg} implies
$ \text{dist}(G_{k_j},G_{k_{s}})>\lambda^{-|k_{s}|^c}\gg \lambda^{-\frac{1}{10}c\kappa|k_{s}|}\geq \gamma|G_{k_j}|^{c\kappa}+\gamma|G_{k_{s}}|^{c\kappa},$
which implies \begin{equation}\label{til}\tilde{G}_{{k_j},\kappa,\gamma}\bigcap \tilde{G}_{k_{s},\kappa,\gamma}= \emptyset.\end{equation}
Therefore $\min\Lambda_2\geq \lambda^{|k_{s}|^{c}}+|k_{s}|>\lambda^{|k_{s}|^{c}}.$ Hence \begin{equation}\label{lambda2} \Lambda_2\subset \{k_j \vert |k_j|> \lambda^{|k_{s}|^{c}}\}.\end{equation}

For $\Lambda_1,$ taking $a=k_{s}$ in Lemma \ref{zblemma}, we have

\begin{enumerate}

\item If $\lambda^{|k_{j}|^{c}}+|k_{j}|>|k_{s}|\geq |k_{j}|,$ then
$ dist(G_{k_{s}},G_{k_{j}})>\lambda^{-|k_{j}|^c}\geq \lambda^{-|k_{s}|^c}\gg \gamma|G_{k_{s}}|^{c\kappa}.$
\item If $\lambda^{|k_{j}|^{c}}+|k_{j}|\leq |k_{s}|,$ then
$ dist(G_{k_{s}},G_{k_{j}})>|k_{s}|^{-C}\geq \lambda^{-C|k_{s}|^c}\gg \gamma|G_{k_{s}}|^{c\kappa}.$
\end{enumerate}

Therefore \begin{equation}\label{lambda1}\Lambda_1=\emptyset.\end{equation}

By \eqref{lambda2} and \eqref{lambda1}, we have (\ref{pugapp1}).
%\begin{equation}\label{zuixi}\Lambda_{\kappa,\gamma,s} \subset \{k_j \vert |k_j|> \lambda^{|k_{s}|^{c}}\}.\end{equation}

And \eqref{til} yields (\ref{pugapp2}).\hfill\qed\end{proof}
%$$\Lambda^*_{\kappa,\gamma,s} \subset \{k_j \vert |k_j|> \lambda^{|k_{s}|^{c}}\}.$$

\begin{lemma}\label{pugape} There exists $i^*(\kappa,\gamma)\in \Z_+$ such that for any $|k_s|\geq |k_{i^*}|$ and an open interval
$$I\subset \left[(t^{k_{s}}_+,t^{k_{s}}_++\gamma|G_{k_{s}}|^{\kappa})\bigcup (t^{k_{s}}_--\gamma|G_{k_{s}}|^{\kappa},t^{k_{s}}_-)\right]$$ satisfying $|I|\geq |G_{k_{s}}|^{100\kappa},$ it holds that
\begin{equation}\label{pugape1}Leb\{{I\bigcap \Sigma^{\lambda}}\}>0.\end{equation}

%Moreover, for any $t\in {I-\bigcup\limits_{|k_j|\geq |k_{i^*}|} \tilde{G}_{k_j,\kappa,\gamma}-\bigcup\limits_{|k_j|< |k_{i^*}|} {G}_{k_j}},$ it holds that
%\begin{equation}\label{pugape2} dist(t, G_{k_j})\geq |G_{k_j}|^{\kappa},~{\rm \ for\ any\ }~|k_j|\geq |k_{i^*}|.
%\end{equation}
\end{lemma}

\begin{proof}

By the help of Lemma \ref{pugapg}, there exists $i^*$ such that
\begin{equation}\label{pugape0}\Lambda_{\kappa,\gamma,s} \subset \{k_j \vert |k_j|> \lambda^{|k_{s}|^{c}}\},~{\rm \ for\ any\ }~|k_s|\geq |k_{i^*}|.\end{equation}
In the following proof, we omit the dependence of $\Lambda$ for simplification.
Note the upper bound of \eqref{pugapp} and \eqref{pugape0} imply
\begin{equation}\label{pugapc}\sum\limits_{k_j\in \Lambda}|\tilde{G}_{k_j,\kappa,\gamma}|\leq \sum\limits_{k_j\in \Lambda}\left(2\gamma|G_{k_j}|^{\kappa}+|G_{k_j}|\right)\leq \lambda^{-\lambda^{|k_{s}|^{c}}}\ll \gamma^{1000}|G_{k_{s}}|^{1000\kappa}.\end{equation}

Then \eqref{pugapc} implies for any open interval $I\subset \tilde{G}_{k_{s},\kappa,\gamma}-G_{k_{s}}$ with $|I|\geq |G_{k_{s}}|^{100\kappa},$
\begin{equation}\label{pugapd} Leb\{I-\bigcup\limits_{k_j\in \Lambda} \tilde{G}_{k_j,\kappa,\gamma}\}>|I|-\gamma^{1000}|G_{k_{s}}|^{1000\kappa}>(1-|G_{k_{s}}|^{800\kappa})|I|>\frac{2}{3}|I|.
\end{equation}

%Note for any fixed $t\in I-\bigcup\limits_{k_j\in \Lambda} \tilde{G}_{k_j,\kappa,\gamma},$ \begin{equation}\label{xia1}dist(t, G_{k_j})\geq |G_{k_j}|^{\kappa},~{\rm \ for\ any\ }~k_j\in \Lambda.\end{equation}

By the definition of $\Lambda$, for $k_j\notin \Lambda,$ we have $G_{k_j}\bigcap \tilde{G}_{k_{s},\kappa,\gamma}=\emptyset,$ which shows $$I\bigcap G_{k_j}=\emptyset,~{\rm \ for\ any\ }~k_j\notin \Lambda.$$

Thus \eqref{pugapd} is equivalent to
$$Leb\{I-\bigcup\limits_{k_j\in \Lambda} \tilde{G}_{k_j,\kappa,\gamma}-\bigcup\limits_{k_j\notin \Lambda} {G}_{k_j}\}>\frac{2}{3}|I|.$$

Hence $$\begin{array}{ll}
&Leb\{{I\bigcap \Sigma^{\lambda}}\}
=Leb\{I-\bigcup\limits_{k_j\in \Lambda} G_{k_j}-\bigcup\limits_{k_j\notin \Lambda} {G}_{k_j}\}
>Leb\{I-\bigcup\limits_{k_j\in \Lambda} \tilde{G}_{k_j,\kappa,\gamma}-\bigcup\limits_{k_j\notin \Lambda} {G}_{k_j}\}>\frac{2}{3}|I|>0.\hfill\qed
\end{array}$$
\end{proof}

% Hence
%\begin{equation}\label{xia2} dist(I, G_{k_j})\geq \frac{1}{3} |G_{k_{s}}|^{\kappa}\gg |G_{k_{j}}|^{c\kappa},~{\rm \ for\ any\ } |k_j|\geq c\lambda^{c\left(|k_{s}|\right)^{c}}+|k_{s}|,~k_j\notin \Lambda.\end{equation}
%\eqref{distkj} implies
%\begin{equation}\label{xia2'} dist(I, G_{k_j})\geq dist(G_{k_{i^*}}, G_{k_j})-dist(G_{k_{i^*}}, I)\geq |G_{k_{i^*}}|^{c\kappa}-\frac{1}{3}|G_{k_{i^*}}|^{\kappa}>c|G_{k_{j}}|^{c\kappa},~{\rm \ for\ any\ } |k_{i^*}|\leq |k_j|< c\lambda^{c\left(|k_{i^*}|\right)^{c}}+|k_{i^*}|,~k_j\notin \Lambda.\end{equation}

 %Then, \eqref{xia2} and \eqref{xia2'} imply
 %$dist(I, G_{k_j})\geq |G_{k_{j}}|^{\kappa}\geq \frac{1}{2}\vert \tilde{G}_{k_j,\kappa}\vert,~{\rm \ for\ any\ } |k_j|\geq |k_{i^*}|,~k_j\notin \Lambda,$ which yields
 %\begin{equation}\label{40}I\bigcap \tilde{G}_{k_j,\kappa}=\emptyset,~|k_j|\geq |k_{i^*}|,~k_j\notin \Lambda.\end{equation} Combining this with \eqref{pugapd}, we have
 %$$Leb\{I-\bigcup\limits_{|k_j|\geq |k_{i^*}|} \tilde{G}_{k_j,\kappa}\}>\frac{2}{3}|I|.$$

%Combining this with \eqref{lambda1}, which means $I\bigcap \left[\bigcup\limits_{|k_j|< |k_{i^*}|} {G}_{k_j}\right]=\emptyset,$ we have
% $$Leb\{I-\bigcup\limits_{|k_j|\geq |k_{i^*}|} \tilde{G}_{k_j,\kappa}-\bigcup\limits_{|k_j|< |k_{i^*}|} {G}_{k_j}\}>\frac{2}{3}|I|.$$
%On the other hand, \eqref{xia1} and \eqref{40} imply that
%\begin{equation}dist(t, G_{k_j})\geq |G_{k_j}|^{\kappa},~{\rm \ for\ any\ }~|k_j|\geq |k_{i^*}|.
%\end{equation}

\begin{lemma}\label{betat} Given $t\in \Sigma^{\lambda},$  the following  hold true.

\

\begin{enumerate}

\item Given $\gamma>0$ and $\kappa>1,$ if there exists some $K\in \Z_+$ and a sequence $|k_{n_j}|\rightarrow +\infty$ such that $$dist(t, G_{k_j})\geq \frac{\gamma}{2}|G_{k_j}|^{\kappa},\quad |k_j|\geq K;$$
$$\frac{3\gamma}{4}|G_{k_{n_j}}|^{\kappa}\geq dist(t, G_{k_{n_j}})\geq \frac{\gamma}{2}|G_{k_{n_j}}|^{\kappa},~j\in \Z_+,$$

then $\beta(t)=\frac{\kappa+1}{2\kappa}.$

\

\item Given $\gamma>0$ and $0<\kappa<1,$ if there exists some $K\in \Z_+$ such that $$dist(t, G_{k_j})\geq \frac{\gamma}{2}|G_{k_j}|^{\kappa},\quad |k_j|\geq K,$$

then $\beta(t)=1.$
\end{enumerate}
\end{lemma}

\begin{proof}Since $t\in \Sigma^{\lambda},$ by the symmetric expression in the definition of $\beta(t)$, without loss of generality, we assume that $t\leq t^{k_j}_{-}.$

\textbf{The proof of (1):}\ Note for large $j$ we have $$1\gg |t^{k_j}_--t^{k_j}_+|>|t-t^{k_j}_-|=dist(t, G_{k_j}).$$
 For any $|k_j|\geq K,$ we have
\begin{equation}\label{xiajiea1}\begin{array}{ll}\left\vert \frac{\log|t-t^{k_j}_+|}{2\log|t-t^{k_j}_-|}\right\vert= \left\vert \frac{\log|t-t^{k_j}_+|}{2\log|t-t^{k_j}_-|}\right\vert= \frac{\log(t^{k_j}_--t+|G_{k_j}|)}{2\log(t^{k_j}_--t)}&\geq \frac{\log(\frac{\gamma}{2}|G_{k_{j}}|^{\kappa}+|G_{k_{j}}|)}{2\log(\frac{\gamma}{2}|G_{k_{j}}|^{\kappa})}=\frac{\log(1+\frac{\gamma}{2}|G_{k_j}|^{\kappa -1})+1}{\frac{2\log (\frac{\gamma}{2})}{\log |G_{k_j}|}+2\kappa}.\end{array}\end{equation}

For any $j\in \{{n_i}\}_{i\in \Z_+},$
\begin{equation}\label{shangjiea1}\begin{array}{ll}\left\vert \frac{\log|t-t^{k_{n_j}}_+|}{2\log|t-t^{k_{n_j}}_-|}\right\vert= \left\vert \frac{\log|t-t^{k_{n_j}}_+|}{2\log|t-t^{k_{n_j}}_-|}\right\vert= \frac{\log(t^{k_{n_j}}_--t+|G_{k_{n_j}}|)}{2\log(t^{k_{n_j}}_--t)}&\leq \frac{\log(\frac{3\gamma}{4}|G_{k_{n_j}}|^{\kappa}+|G_{k_{n_j}}|)}{2\log(\frac{3\gamma}{4}|G_{k_{n_j}}|^{\kappa})}=\frac{\log(1+\frac{3\gamma}{4}|G_{k_j}|^{\kappa -1})+1}{\frac{2\log (\frac{3\gamma}{4})}{\log |G_{k_j}|}+2\kappa}.\end{array}\end{equation}

Then \eqref{xiajiea1} and \eqref{shangjiea1} lead to
$$\frac{1}{2\kappa} \leq \liminf\left\vert \frac{\log|1+\frac{|t^{k_j}_--t^{k_j}_+|}{|t-t^{k_j}_-|}|}{2\log|t-t^{k_j}_-|}\right\vert\leq  \frac{1}{2\kappa}.$$

Hence
$$\begin{array}{ll}&\liminf \left(\min\{\frac{\log|t-t^{k_j}_+|}{2\log|t-t^{k_j}_-|},\frac{\log|t-t^{k_j}_-|}{2\log|t-t^{k_j}_+|}\}\right)=\liminf\frac{\log|t-t^{k_j}_+|}{2\log|t-t^{k_j}_-|}=\liminf\frac{\log|t-t^{k_j}_-+t^{k_j}_--t^{k_j}_+|}{2\log|t-t^{k_j}_-|}\\
&=\liminf\frac{\log(|t-t^{k_j}_-|+|t^{k_j}_--t^{k_j}_+|)}{2\log|t-t^{k_j}_-|}
=\frac{1}{2}+\liminf\left\vert\frac{\log|1+\frac{|t^{k_j}_--t^{k_j}_+|}{|t-t^{k_j}_-|}|}{2\log|t-t^{k_j}_-|}\right\vert= \frac{1}{2}+\frac{1}{2\kappa}.\end{array}$$

Therefore $$\beta(t)=\liminf\beta_{j}(t)=\liminf\left(\frac{1}{2}+\min\{\frac{\log|t-t^{k_j}_+|}{2\log|t-t^{k_j}_-|},\frac{\log|t-t^{k_j}_-|}{2\log|t-t^{k_j}_+|}\}\right)= \frac{1}{2}+\frac{1}{2\kappa}.$$

\

\textbf{The proof of (2):}
Since $t\leq t^{k_j}_{-},$ we have

$$\left\vert \frac{\log|1+\frac{|t^{k_j}_--t^{k_j}_+|}{|t-t^{k_j}_-|}|}{2\log|t-t^{k_j}_-|}\right\vert\leq \left\vert\frac{\frac{|t^{k_j}_--t^{k_j}_+|}{|t-t^{k_j}_-|}}{2\log|t-t^{k_j}_-|}\right\vert\leq C\left\vert\frac{\frac{|G_{k_j}|}{|G_{k_j}|^{\kappa}}}{2\log||G_{k_j}|^{\kappa}|}\right\vert \leq C |G_{k_j}|^{1-\kappa}\rightarrow 0,$$ as $j\rightarrow +\infty.$

Therefore
$$\begin{array}{ll}&\min\{\frac{\log|t-t^{k_j}_+|}{2\log|t-t^{k_j}_-|},\frac{\log|t-t^{k_j}_-|}{2\log|t-t^{k_j}_+|}\}=\frac{\log|t-t^{k_j}_+|}{2\log|t-t^{k_j}_-|}=\frac{\log|t-t^{k_j}_-+t^{k_j}_--t^{k_j}_+|}{2\log|t-t^{k_j}_-|}=\frac{\log(|t-t^{k_j}_-|+|t^{k_j}_--t^{k_j}_+|)}{2\log|t-t^{k_j}_-|}
\\& =\frac{1}{2}+\frac{\log|1+\frac{|t^{k_j}_--t^{k_j}_+|}{|t-t^{k_j}_-|}|}{2\log|t-t^{k_j}_-|}\rightarrow \frac{1}{2},\quad {\rm as~j\rightarrow~}+\infty.\end{array}$$
Hence
$$\beta(t)=\liminf\beta_{j}(t)=\liminf\left(\frac{1}{2}+\min\{\frac{\log|t-t^{k_j}_+|}{2\log|t-t^{k_j}_-|},\frac{\log|t-t^{k_j}_-|}{2\log|t-t^{k_j}_+|}\}\right)= \frac{1}{2}+\frac{1}{2}=1.\hfill\qed$$
\end{proof}

\

Now we are in a position to give the proof of Theorem \ref{zhibiaoeq}.

\begin{proof}[The proof of Theorem \ref{zhibiaoeq}]

\

The definition of $\beta$ implies $\bigcup\limits_{i\in \Z_+}\{t^{k_i}_-,t^{k_i}_+\}\subset \{t\in \Sigma^{\lambda}|\beta(t)=\frac{1}{2}\},$ which shows
$ \{t\in \Sigma^{\lambda}|\beta(t)=\frac{1}{2}\}\neq \emptyset.$

Therefore to obtain what we desire, we only need to prove that for any $\beta\in (\frac12, 1)$,
$ \{t\in \Sigma^{\lambda}|\beta(t)=\beta\}\neq \emptyset$
and
$ \{t\in \Sigma^{\lambda} | \beta(t)=1\}\neq \emptyset.$

Therefore it is enough to  find some $t\in \Sigma^{\lambda}$ such that
$\beta(t)=\beta$ for any fixed $\frac{1}{2}<\beta<1$ and find  $t\in \Sigma^{\lambda}$ such that
$\beta(t)=1.$

Next, we will find such a $t$ {\bf by induction}. More precisely, we claim that for any fixed $\kappa>0$, there exists $t\in \Sigma^{\lambda},$ a monotonically increasing sequence~$|\hat{k}_n|\rightarrow +\infty$ and $K>0$ such that
\begin{equation}\label{xiajie*}dist(t, G_{k_j})\geq 2|G_{k_j}|^{\kappa},~{\rm \ for\ any\ }~|k_j|\geq K\end{equation}
and
\begin{equation}\label{shangjie*}3|G_{\hat{k}_{n+1}}|^{\kappa}\geq dist(t, G_{\hat{k}_{n+1}})\geq 2|G_{\hat{k}_{n+1}}|^{\kappa},~ n\in \Z_+.\end{equation}

%\textbf{Finding $t$ by induction:}

\textbf{From the original step to the first step.}

\

For any fixed $\kappa,\gamma,$ we can take a uniform $i^*$ such that both Lemma \ref{pugapg} and Lemma \ref{pugape} hold true.

Now we set $$\hat{k}_0:=k_{i^*},\quad I_0:=(t^{\hat{k}_0}_--\frac{3\gamma}{4}|G_{\hat{k}_0}|^{\kappa},t^{\hat{k}_0}_--\frac{\gamma}{2}|G_{\hat{k}_0}|^{\kappa}).$$
Then
\eqref{pugape1} of Lemma \ref{pugape} implies $\Sigma^{\lambda}\bigcap I_0\neq \emptyset.$

 %This allow us to take $t\in \Sigma^{\lambda}\bigcap I,$ (without loss of generality) $t\leq t^{k_j}_{-},$ (i.e. $t$ is located at the left side of $G_{k_j}$) and denote $t$ by $t_0$ as the point from original step, $I$ by $I_0$ as the the interval from original step and $k_{i^*}$ by $\hat{k}_0$ as the subscript of gap from original step. In the following, we will define a sequence $t_j,I_j$ and $\hat{k}_j$ by induction.

Recall that $\tilde{G}_{k_j,\kappa,\gamma}=(t^{k_j}_--\gamma|G_{k_j}|^{\kappa},t^{k_j}_++\gamma|G_{k_j}|^{\kappa})$. Let
\begin{equation}\label{defhatk}|\hat{k}_1|=\min{\{k_j \vert  |k_j|\geq |k_{i^*}|, \tilde{G}_{k_j,\kappa,\gamma} \bigcap I_0\neq \emptyset \}}\end{equation} (if $\hat{k}_1,-\hat{k}_1\in \{k_j \vert  |k_j|\geq |k_{i^*}|, \tilde{G}_{k_j,\kappa,\gamma} \bigcap I\neq \emptyset \},$ then we choose the positive one).

Without loss of generality, we assume that \begin{equation}\label{wolg}\frac{t^{\hat{k}_1}_-+t^{\hat{k}_1}_+}{2}<\frac{\inf I_0+\sup I_0}{2}.\end{equation} That is, the center of $\tilde{G}_{k_j,\kappa,\gamma}$ is located at the left side of $I_0.$

By the help of \eqref{pugapp2}, $|\hat{k}_1|\gg |\hat{k}_0|(=|k_{i^*}|).$
Then $|G_{\hat{k}_1}|,|G_{\hat{k}_1}|^{\kappa}\ll |G_{\hat{k}_0}|=|I_0|~(by~\eqref{pugapp}).$ Combining the above with \eqref{wolg} we have $$(t^{\hat{k}_1}_++\frac{\gamma}{2}|G_{\hat{k}_1}|^{\kappa},t^{\hat{k}_1}_++\frac{3\gamma}{4}|G_{\hat{k}_1}|^{\kappa})\subset I_0.$$ Now we denote
$$I_1:=(t^{\hat{k}_1}_++\frac{\gamma}{2}|G_{\hat{k}_1}|^{\kappa},t^{\hat{k}_1}_++\frac{3\gamma}{4}|G_{\hat{k}_1}|^{\kappa}).$$ Note that $\tilde{G}_{\hat{k}_1,\kappa,\gamma}\bigcap I_0\neq \emptyset$ and \eqref{wolg} imply $$dist(I_1, I_0^c)\geq \frac{\gamma}{2}|G_{\hat{k}_1}|^{\kappa}.$$
Since $|I_1|=\frac{\gamma}{4}|G_{\hat{k}_1}|^{\kappa}\geq |G_{\hat{k}_1}|^{100\kappa},$ by the help of \eqref{pugape1} of Lemma \ref{pugape}, we obtain
$$I_1\bigcap \Sigma^{\lambda}\neq \emptyset.$$
And the definition of $\hat{k}_1$ in \eqref{defhatk} implies that for any $|\hat{k}_1|>|k_j|\geq |\hat{k}_{0}|,$ $$\tilde{G}_{k_j,\kappa,\gamma} \bigcap I_0= \emptyset.$$
Therefore
$$dist(I_1, G_{k_j})\geq \frac{\gamma}{2}|G_{\hat{k}_1}|^{\kappa}+\gamma|G_{k_j}|^{\kappa}>\frac{\gamma}{2}|G_{k_j}|^{\kappa} ,~|\hat{k}_1|\geq |k_j|\geq |\hat{k}_{0}|,~$$

(Here, if $\{\hat{k}_1,-\hat{k}_1\}\subset \{k_j \vert  |k_j|\geq |k_{i^*}|, \tilde{G}_{k_j,\kappa,\gamma} \bigcap I_0\neq \emptyset \},$ then we have to check $dist(I_1, G_{-\hat{k}_1})\geq \gamma|G_{\hat{k}_1}|^{\kappa}.$ In fact, by the help of (1) of Lemma \ref{zblemma} and taking $b=\hat{k}_1,$ we have $dist(G_{\hat{k}_1},G_{-\hat{k}_1})\geq \lambda^{-|\hat{k}_1|^c}\gg \gamma|G_{\hat{k}_1}|^{\kappa}.$ Therefore $\text{dist}(I_1, G_{-\hat{k}_1})\geq dist(G_{-\hat{k}_1}, G_{-\hat{k}_1})-dist(G_{\hat{k}_1},I_1)>\lambda^{-|\hat{k}_1|^c}-\gamma|G_{\hat{k}_1}|^{\kappa}\gg \gamma|G_{\hat{k}_1}|^{\kappa}.)$

Hence we have
$$|\hat{k}_1|\gg |\hat{k}_0|;$$
$$I_1\bigcap \Sigma^{\lambda}\neq \emptyset,~|I_1|=\frac{\gamma}{4}|G_{\hat{k}_1}|^{\kappa}~and~dist(I_1,I_0^c)>\frac{\gamma}{2}|G_{\hat{k}_1}|^{\kappa};$$
$$dist(I_1, G_{k_j})\geq \frac{\gamma}{2}|G_{k_j}|^{\kappa},~|\hat{k}_1|\geq |k_j|\geq |\hat{k}_{0}|.$$

\

\textbf{From $n$-th step to $(n+1)$-th step}

Now suppose that we have obtained
$$|\hat{k}_n|\gg |\hat{k}_{n-1}|;$$
$$I_n\bigcap\Sigma^{\lambda}\neq \emptyset,~|I_n|=\frac{\gamma}{4}|G_{\hat{k}_n}|^{\kappa}~and~dist(I_n,I_{n-1}^c)>\frac{\gamma}{2}|G_{\hat{k}_n}|^{\kappa};$$
$$dist(I_n, G_{k_j})\geq \gamma|G_{k_j}|^{\kappa},~|\hat{k}_n|\geq |k_j|\geq |\hat{k}_{n-1}|.$$
Set $$|\hat{k}_{n+1}|=\min{\{k_j \vert  |k_j|\geq |\hat{k}_{n}|, \tilde{G}_{k_j,\kappa,\gamma} \bigcap I_n\neq \emptyset \}};$$

%(If $\hat{k}_{n+1},-\hat{k}_{n+1}\in \{k_j \vert  |k_j|\geq |\hat{k}_{n}|, \tilde{G}_{k_j,\kappa} \bigcap I_n\neq \emptyset \},$ then we choose the positive one.) Without loss of generality, we assume that \begin{equation}\label{wolg1}\frac{t^{\hat{k}_{n+1}}_-+t^{\hat{k}_{n+1}}_+}{2}<\frac{\inf I_{n}+\sup I_{n}}{2}.\end{equation} (The center of $\tilde{G}_{k_j,\kappa,\gamma}$ is located at the left side of $I_{n}.$)

%By the help \eqref{pugapp2} of Lemma \ref{pugapp}, $$|\hat{k}_{n+1}|\gg |\hat{k}_{n}|;$$
%$$|G_{\hat{k}_{n+1}}|,|G_{\hat{k}_{n+1}}|^{\kappa}\ll |I_{n}|~(By~\eqref{pugapp}).$$ Then \eqref{wolg1} implies $(t^{\hat{k}_{n+1}}_++\frac{\gamma}{2}|G_{\hat{k}_{n+1}}|^{\kappa},t^{\hat{k}_{n+1}}_++\frac{3\gamma}{4}|G_{\hat{k}_{n+1}}|^{\kappa})\subset I_{n}.$ Now we denote
$$I_{n+1}:=(t^{\hat{k}_{n+1}}_++\frac{\gamma}{2}|G_{\hat{k}_{n+1}}|^{\kappa},t^{\hat{k}_{n+1}}_++\frac{3\gamma}{4}|G_{\hat{k}_{n+1}}|^{\kappa}).$$

%Note, $\tilde{G}_{\hat{k}_{n+1},\kappa,\gamma}\bigcap I_{n}\neq \emptyset$ and \eqref{wolg1} imply $$dist(I_{n+1}, I_{n}^c)\geq |G_{\hat{k}_{n+1}}|^{\kappa}.$$

%Since $|I_{n+1}|=\frac{\gamma}{4}|G_{\hat{k}_{n+1}}|^{\kappa}\geq |G_{\hat{k}_{n+1}}|^{100\kappa},$ by the help of \eqref{pugape1} of Lemma \ref{pugape}, we obtain
%$$I_{n+1}\bigcap \Sigma^{\lambda}\neq \emptyset.$$

%Then the definition of $\hat{k}_{n+1}$ implies that for any $|\hat{k}_{n+1}|>|k_j|\geq |\hat{k}_{n}|,$ $$\tilde{G}_{k_j,\kappa,\gamma} \bigcap I_{n}= \emptyset.$$

%Therefore
%$$dist(I_{n+1}, G_{k_j})\geq \frac{\gamma}{2}|G_{\hat{k}_{n+1}}|^{\kappa}+\gamma|G_{k_j}|^{\kappa}>\frac{\gamma}{2}|G_{k_j}|^{\kappa} ,~|\hat{k}_{n+1}|\geq |k_j|\geq |\hat{k}_{n}|.~$$

%(Here, if $\hat{k}_{n+1},-\hat{k}_{n+1}\in \{k_j \vert  |k_j|\geq |\hat{k}_{n}|, \tilde{G}_{k_j,\kappa,\gamma} \bigcap I_{n}\neq \emptyset \},$ then we have to check $dist(I_{n+1}, G_{-\hat{k}_{n+1}})\geq \gamma|G_{\hat{k}_{n+1}}|^{\kappa}.$ with the similar argument to the previous case.)

By the same argument as the previous case, we have
$$|\hat{k}_{n+1}|\gg |\hat{k}_n|;$$
$$I_{n+1}\bigcap \Sigma^{\lambda}\neq \emptyset,~|I_{n+1}|=\frac{\gamma}{4}|G_{\hat{k}_{n+1}}|^{\kappa}~and~dist(I_{n+1},I_n^c)>\frac{\gamma}{2}|G_{\hat{k}_{n+1}}|^{\kappa};$$
$$dist(I_{n+1}, G_{k_j})\geq \frac{\gamma}{2}|G_{k_j}|^{\kappa},~|\hat{k}_{n+1}|\geq |k_j|\geq |\hat{k}_{n}|.$$

\textbf{Finishing the induction.}

By induction as above, we define a sequence of $\{\hat{k}_n\}_{n\geq 0}$ and $\{I_{n}\}_{n\geq 0}$ satisfying
\begin{equation}\label{zhiz}|\hat{k}_{n+1}|\gg |\hat{k}_n|;\end{equation}
\begin{equation}\label{bhgx}I_{n+1}\subset I_n;\end{equation} \begin{equation}\label{puf}~I_{n+1}\bigcap \Sigma^{\lambda}\neq \emptyset;\end{equation}~\begin{equation}\label{quc}|I_{n+1}|=\frac{\gamma}{4}|G_{\hat{k}_{n+1}}|^{\kappa};\end{equation}~
\begin{equation}\label{jul}dist(I_{n+1}, G_{k_j})\geq \frac{\gamma}{2}|G_{k_j}|^{\kappa},~|\hat{k}_{n+1}|\geq |k_j|\geq |\hat{k}_{n}|.\end{equation}
\begin{equation}\label{jul'}\frac{3\gamma}{4}|G_{\hat{k}_{n+1}}|^{\kappa}\geq dist(I_{n+1}, G_{\hat{k}_{n+1}})\geq \frac{\gamma}{2}|G_{\hat{k}_{n+1}}|^{\kappa}.\end{equation}

Note that \eqref{bhgx} implies
$dist(I_{n+1}, G_{k_j})\geq dist(I_{n}, G_{k_j}),~ n\geq 0.$
Then \eqref{jul} implies
$$dist(I_{n+1}, G_{k_j})\geq dist(I_{n}, G_{k_j})\geq \cdots \geq dist(I_{l}, G_{k_j})\geq \frac{\gamma}{2}|G_{k_j}|^{\kappa},~|\hat{k}_{l}|\geq |k_j|\geq |\hat{k}_{l-1}|,~{\rm \ for\ any\ }~ 0\leq l\leq n.$$

Therefore \begin{equation}\label{jula}dist(I_{n+1}, G_{k_j})\geq \frac{\gamma}{2}|G_{k_j}|^{\kappa},~|\hat{k}_{n+1}|\geq |k_j|\geq |\hat{k}_{0}|.\end{equation}

By the principle of nested intervals,~\eqref{zhiz},\ \eqref{bhgx}, \eqref{puf},\ \eqref{quc} and \eqref{jula} yield that there exists a unique $t\in \Sigma^{\lambda}$ such that
$$dist(t, G_{k_j})\geq \frac{\gamma}{2}|G_{k_j}|^{\kappa},~|\hat{k}_{n+1}|\geq |k_j|\geq |\hat{k}_{0}|,~{\rm \ for\ any\ } n\geq 0.$$

Hence \eqref{zhiz} implies
\begin{equation}\label{xiajie}dist(t, G_{k_j})\geq \frac{\gamma}{2}|G_{k_j}|^{\kappa},~|k_j|\geq |\hat{k}_{0}|.\end{equation}

And \eqref{jul'} implies
\begin{equation}\label{shangjie}\frac{3\gamma}{4}|G_{\hat{k}_{n+1}}|^{\kappa}\geq dist(t, G_{\hat{k}_{n+1}})\geq \frac{\gamma}{2}|G_{\hat{k}_{n+1}}|^{\kappa},~{\rm \ for\ any\ } n\in \Z_+.\end{equation}

Taking $\gamma=4$ in \eqref{xiajie} and \eqref{shangjie}, we obtain \eqref{xiajie*} and \eqref{shangjie*} as desired.

\

{\bf The proof of the existence of $t\in \Sigma^{\lambda}$ satisfying $\frac{1}{2}<\beta(t)<1$}

Taking $\kappa=\frac{1}{2\beta-1}$ in \eqref{xiajie*} and \eqref{shangjie*}, (1) of Lemma \ref{betat} immediately yields that $\beta(t)=\frac{\frac{1}{2\beta-1}+1}{\frac{2}{2\beta-1}}=\beta.$

\

{\bf The proof of the existence of $t\in \Sigma^{\lambda}$ satisfying $\beta(t)=1$}

Taking $\kappa=\frac{1}{2}(~or~any~other~0<\kappa<1)$ in \eqref{xiajie*}, (2) of Lemma \ref{betat} immediately yields that $\beta(t)=1.$
\hfill\qed\end{proof}
\noindent
\vskip 0.3cm
Recall that $$\mathcal{K}(\lambda)=\{k_i\}_{i\in \Z_+}=\{k\in \Z\vert G_k~is~open\};$$
$$\xi_{k_i}=q^{-\frac{1}{4}}_{N+s(k_i)-1};$$
$$ \mathcal{B}_{\xi_{k_i}}^{k_i}=(t^{k_i}_--|G_{k_i}|^{1+\xi_{k_i}},t^{k_i}_++|G_{k_i}|^{1+\xi_{k_i}}),$$

$$\beta(t)\triangleq\left\{\begin{matrix}\liminf_{n\rightarrow +\infty} \beta_n(t) & t\notin \bigcup\limits_{i\in \Z_+}\{t^{k_i}_-,t^{k_i}_+\}\\ \frac{1}{2}& t\in \bigcup\limits_{i\in \Z_+}\{t^{k_i}_-,t^{k_i}_+\}\end{matrix}\right.$$ and
$$\beta_i(t)=\min\{\frac{1}{2}+\frac{\log|\bar{t}-t^{k_i}_-|}{2 \log |\bar{t}-t^{k_i}_+|},\frac{1}{2}+\frac{\log|\bar{t}-t^{k_i}_+|}{2 \log |\bar{t}-t^{k_i}_-|}\}.$$

In the following proof, we set
$$ \mathcal{B}_{\xi^*_{k_i}}^{k_i}=(t^{k_i}_--|G_{k_i}|^{1+\xi^*_{k_i}},t^{k_i}_++|G_{k_i}|^{1+\xi^*_{k_i}})$$ with
$\xi^*_{k_i}=\xi^{\frac{1}{2}}_{k_i}=q^{-\frac{1}{8}}_{N+s(k_i)-1}.$
Clearly, $ \mathcal{B}_{\xi^*_{k_i}}^{k_i}\subset \mathcal{B}_{\xi_{k_i}}^{k_i}.$

\begin{lemma}\label{lem43}
Given $\bar{t}\in \Sigma^{\lambda},$ we have
\begin{enumerate}

\item If $\bar{t}\notin \mathcal{B}^{k_i}_{\xi^*_{k_i}},$ then $\beta_{k_i}(\bar{t})>1-\lambda^{-q^{\frac{3}{2}}_{N+s(k_i)-1}}.$

\item if $\beta(\bar{t})<1,$ then $\bar{t}\in \bigcap\limits_{j\geq 1}\bigcup\limits_{i\geq j} \mathcal{B}_{\xi^*_{k_i}}^{k_i}.$

\end{enumerate}
\end{lemma}

\begin{proof}

\textbf{The proof of (1):}

\

$\bar{t}\notin \mathcal{B}^{k_i}_{\xi^*_{k_i}}$ implies $\bar{t}<t^{k_i}_--|G_{k_i}|^{1+\xi^*_{k_i}}$ or $\bar{t}>t^{k_i}_++|G_{k_i}|^{1+\xi^*_{k_i}}.$

By symmetry, we only show the former case. By a direct computation, we have

$$\begin{array}{ll}&\frac{1}{2}+\frac{\log|\bar{t}-t^{k_i}_-|}{2 \log |\bar{t}-t^{k_i}_+|}>\frac{1}{2}+\frac{\log|\bar{t}-t^{k_i}_+|}{2 \log |\bar{t}-t^{k_i}_-|}= \frac{1}{2}+\frac{\log |\bar{t}-t^{k_i}_-+|G_{k_i}||}{2 \log|\bar{t}-t^{k_i}_-|}\\&\geq \frac{1}{2}+\frac{\log (|G_{k_i}|(1+|G_{k_i}|^{\xi^*_{k_i}}))}{2\log||G_{k_i}|^{1+\xi^*_{k_i}}|}(~\text{by}~Lemma~\ref{usefullem})\\&=\frac{1}{2}+\frac{\log |G_{k_i}|+\log (1+|G_{k_i}|^{\xi^*_{k_i}})}{2(1+\xi^*_{k_i})\log|G_{k_i}|}
\\&=1+\frac{-\xi^*_{k_i}\log |G_{k_i}|+\log (1+|G_{k_i}|^{\xi^*_{k_i}})}{2(1+\xi^*_{k_i})\log|G_{k_i}|} \\&> 1-\xi^*_{k_i}=1-q^{-\frac{1}{8}}_{N+s(k_i)-1}.\end{array}$$

Therefore we have

$$\beta_{k_i}(\bar{t})=\min\{\frac{1}{2}+\frac{\log|\bar{t}-t^{k_i}_-|}{2 \log |\bar{t}-t^{k_i}_+|},\frac{1}{2}+\frac{\log|\bar{t}-t^{k_i}_+|}{2 \log |\bar{t}-t^{k_i}_-|}\}>1-q^{-\frac{1}{8}}_{N+s(k_i)-1}.$$

\

\textbf{The proof of (2):}

Suppose $\bar{t}\notin \bigcap\limits_{j\geq 1}\bigcup\limits_{i\geq j} \mathcal{B}_{\xi^*_{k_i}}^{k_i}.$ Then
 $\bar{t}\in \bigcup\limits_{j\geq 1}\bigcap\limits_{i\geq j} (\R-\mathcal{B}_{\xi^*_{k_i}}^{k_i}).$
Hence there exists $i_0\in \Z_+$ such that
$\bar{t}\notin \mathcal{B}_{\xi^*_{k_i}}^{k_i},~~i\geq i_0.$

By (1), we have already obtained
$$\beta_{k_i}(\bar{t})>1-q^{-\frac{1}{8}}_{N+s(k_i)-1}.$$

Note $\bar{t}\notin \mathcal{B}_{\xi^*_{k_i}}^{k_i}$ implies $\bar{t}\notin \bigcup\limits_{i\geq i_0}\{t^{k_i}_-,t^{k_i}_+\},$ hence
$$\begin{array}{ll}&\beta(\bar{t})=\liminf_{n\rightarrow +\infty}\beta_n(\bar{t})=\liminf_{n\rightarrow +\infty}\min\{\frac{1}{2}+\frac{\log|\bar{t}-t^{k_i}_-|}{2 \log |\bar{t}-t^{k_i}_+|},\frac{1}{2}+\frac{\log|\bar{t}-t^{k_i}_+|}{2 \log |\bar{t}-t^{k_i}_-|}\}\\&\geq \liminf_{n\rightarrow +\infty}(1-C\lambda^{-cq^{\frac{7}{4}}_{N+s(k_i)-1}})=1.\end{array}$$
This conflicts with the fact $\beta(\bar{t})<1.$
\hfill\qed\end{proof}

\

%\begin{lemma}\label{lem44} For $k\in \{k_i\}_{i\in \Z_+}$ and $t\in \mathcal{B}^k_{\xi^*_k}-G_{k},$ $$\vert\beta_{k}(t)-\left(\frac{1}{2}+\frac{\log|G_{k}|}{2\log |dist\{t,G_k\}|}\right)\vert\leq \lambda^{-q^{\frac{3}{2}}_{N+k-1}}.$$
%\end{lemma}
%\begin{proof} Without loss of generality, we assume that $t<t^k_-.$ Since $t\in \mathcal{B}^k_{\xi^*_k}-G_{k},$ $$|t-t^k_-|\leq |G_k|^{1+\xi^*_k}.$$ Then $$\frac{|t-t^k_-|}{|G_k|}\leq |G_k|^{\xi^*_k}\leq C\lambda^{-ckq^{-\frac{1}{4}}_{N+k-1}}\leq C\lambda^{-cq^{\frac{7}{4}}_{N+k-1}}<\lambda^{-q^{\frac{3}{2}}_{N+k-1}}.$$

%Then $$\begin{array}{ll}&\beta_k(t)\\&=\min\{\frac{1}{2}+\frac{\log|t-t^k_+|}{2\log |t-t^k_-|},\frac{1}{2}+\frac{\log|t-t^k_-|}{2\log |t-t^k_+|}\}\\&=\frac{1}{2}+\frac{\log|t-t^k_+|}{2\log |t-t^k_-|}=\frac{1}{2}+\frac{\log|t-t^k_-+t^k_--t^k_+|}{2\log |t-t^k_-|}\\&=\frac{1}{2}+\frac{\log|t-t^k_+|}{2\log |t-t^k_-|}\\&=\frac{1}{2}+\frac{\log|t^k_--t^k_+|+\log|1+\frac{|t-t^k_-|}{|t^k_+-t^k_-|}|}{2\log |t-t^k_-|}\\&=\frac{1}{2}+\frac{\log|t^k_--t^k_+|}{2\log |t-t^k_-|}+\frac{\log|1+\frac{|t-t^k_-|}{|t^k_+-t^k_-|}|}{2\log |t-t^k_-|}\\&=\frac{1}{2}+\frac{\log|G_k|}{2\log |t-t^k_-|}+\frac{\log|1+\frac{|t-t^k_-|}{|G_k|}|}{2\log |t-t^k_-|}\end{array}$$

%Therefore
%$$\left\vert \beta_k(t)-\left(\frac{1}{2}+\frac{\log|G_k|}{2\log |t-t^k_-|}\right) \right\vert=\frac{\log|1+\frac{|t-t^k_-|}{|G_k|}|}{-2\log |t-t^k_-|}\leq \frac{\frac{|t-t^k_-|}{|G_k|}}{-2\log |t-t^k_-|}\leq \lambda^{-q^{\frac{3}{2}}_{N+k-1}}.$$
%\hfill\qed\end{proof}
Let $\bar{t}$ satisfy $\frac12<\beta(\bar{t})<1$.  Then there must exist some $1\gg \delta_0(\bar{t})>0$ satisfying $\frac12+\delta_0<\beta(\bar{t})<1-\delta_0.$
 We need to show the following conclusion\\
$$\liminf\limits_{t\rightarrow \bar{t}}\frac{\log |L(t)-L(\bar{t})|}{\log |t-\bar{t}|}= \beta(\bar{t}).$$

\noindent

For this purpose, we need consider the following two parts $$\liminf\limits_{t\rightarrow \bar{t}}\frac{\log |L(t)-L(\bar{t})|}{\log |t-\bar{t}|}\geq \beta(\bar{t})\quad(~\bf Part_1)$$ and
$$\liminf\limits_{t\rightarrow \bar{t}}\frac{\log |L(t)-L(\bar{t})|}{\log |t-\bar{t}|}\leq \beta(\bar{t})\quad(~\bf Part_2).$$

\

Note for $t\in \mathcal{FR}$, it holds that $$|\left\{k\in \{k_i\}| t\in \mathcal{B}_{\xi^*_k}^{k}\right\}|=+\infty.$$ Then we denote
$$K_1:=\left\{k\in \{k_i\}_{i\in \Z_+}| t\in \mathcal{B}_{\xi^*_{k}}^{k}\right\}=\{\hat{k}_1(\bar{t}),\hat{k}_2(\bar{t}),\cdots, \hat{k}_i(\bar{t}),\cdots\}$$ and $$K_2=\{k_i\}_{i\in \Z_+}\backslash K_1.$$

\

If $|K_2|=\infty$,  then (1) of Lemma \ref{lem43} implies
$$\liminf_{i\rightarrow +\infty; k_i\in K_2}\beta_{k_i}(\bar{t})=1>1-\delta_0>\beta(\bar{t}).$$

Therefore
\begin{equation}\label{kil}\beta(\bar{t})=\liminf_{i\rightarrow +\infty; k_i\in K_1} \beta_{k_i}(\bar{t})=\liminf_{i\rightarrow +\infty} \beta_{{k}_i}(\bar{t}).\end{equation}

If $|K_2|<\infty,$ then \eqref{kil} automatically holds true.

Note \eqref{kil} implies for any subsequence $\{k_{n_j}\}\subset \{k_i\}_{i\in \Z_+}$ with $|k_{n_j}|\rightarrow +\infty,$ it holds that
$\liminf_{j\rightarrow +\infty} \beta_{k_{n_j}}(\bar{t})=\liminf_{j\rightarrow +\infty; k_{n_j}\in K_1} \beta_{k_{n_j}}(\bar{t})\geq \liminf_{i\rightarrow +\infty} \beta_{{k}_i}(\bar{t})\geq \beta(\bar{t}).$

\

%Now we choose large $i_0$ in \eqref{eps} and \eqref{eps'} such that \begin{equation}\label{min}\min\{\beta(\bar{t})-\epsilon,\delta_0\}\geq 10^{100}\xi^*_{\hat{k}_i(\bar{t})},\  i\geq i_0.~(recall~\xi^*_l=q^{-\frac{1}{4}}_{N+s(l)-1},~q^2_{N+l-1}<l\leq q^2_{N+l})\end{equation}

\textbf{The proof of $\bf Part_1$:}

\

For any sequence $t_n\rightarrow \bar{t},$ we denote

\begin{equation}\label{sn}p(n):=\max\{ i \vert t_n\in \mathcal{B}_{\xi^*_{\hat{k}_i}}^{{k}_i}(\bar{t})\}.\end{equation}

Here $p(n)$ is well-defined for the following reason. If $t_n\neq \bar{t}$ and $|\{ i \vert t_n\in \mathcal{B}_{\xi^*_{\hat{k}_i}}^{{k}_i}(\bar{t})\}|=+\infty,$
then $\bar{\mathcal{B}}_{\xi^*_{{k}_i}}^{{k}_i}(\bar{t})\rightarrow \{t_n\}$ which,
 by the definition of ${k}_i(\bar{t}),$
  conflicts with
$\bar{\mathcal{B}}_{\xi^*_{{k}_i}}^{{k}_i}(\bar{t})\rightarrow \{\bar{t}\}.$

Therefore $|\{ i \vert t_n\in \mathcal{B}_{\xi^*_{{k}_i}}^{{k}_i}(\bar{t})\}|<+\infty$ for any $t_n\neq \bar{t},$ which implies \eqref{sn} is well-defined.

Note $t_n\rightarrow \bar{t}$ implies
\begin{equation}\label{pn}p(n)\rightarrow +\infty~as~n\rightarrow +\infty.\end{equation}

We denote $T_n:=(t_n,\bar{t})(~or~(\bar{t},t_n)).$

By the definition \eqref{sn}, clearly we have $T_n\subset \mathcal{B}_{\xi^*_{{k}_{p(n)}}}^{{k}_{p(n)}}.$
Since $$\lambda^{-q_{N+{k}_{p(n)}-1}}\geq \lambda^{-{k}^{\frac{1}{2}}_{p(n)}}\gg C\lambda^{-c{k}_{p(n)}}\geq |G_{{k}_{p(n)}}|,$$ we have~$$\begin{array}{ll}&\mathcal{B}_{\xi^*_{{k}_{p(n)}}}^{{k}_{p(n)}}\subset(t^{{k}_{p(n)}}_--|G_{{{k}_{p(n)}}}|^{1+\xi^*_{{{k}_{p(n)}}}},t^{{k}_{p(n)}}_++|G_{{{k}_{p(n)}}}|^{1+\xi^*_{{{k}_{p(n)}}}})\subset (t^{{k}_{p(n)}}_--|G_{{{k}_{p(n)}}}|,t^{{k}_{p(n)}}_++|G_{{{k}_{p(n)}}}|)\\&\subset (t^{{k}_{p(n)}}_--\lambda^{-q_{N+{k}_{p(n)}-1}},t^{{k}_{p(n)}}_++\lambda^{-q_{N+{k}_{p(n)}-1}})= \mathcal{B}^{{{k}_{p(n)}}}.\end{array}$$
Hence $T_n\subset \mathcal{B}^{{k}_{p(n)}}.$

By the help of Lemma \ref{homg}, for any $n\in \Z_+,$ there exists $|k^*_n|\geq {k}_{p(n)}$ such that
$$T_n\subset 2\mathcal{B}^{k^*_n}{\rm\ and\ } T_n-\bigcup\limits_{|k_i|>k_n^*}\mathcal{B}^{k_i}\neq \emptyset.$$

Then \eqref{gap_uniform6'} of Lemma \ref{c3lemma} shows that\begin{equation}\label{c31}\left\vert L(t)-L(\bar{t})\right\vert\leq 6C_{k^*_n}\int_{T_n}\vert H(k^*_n,x)\vert dx+\int_{T_n}\Xi(k^*_n,x) dx\leq (6C_{k^*_n}+1)\int_{T_n} (|H(k^*_n,x)|+\Xi(k^*_n,x)) dx.\end{equation}

On the other hand, \eqref{jiandanjf3} of Lemma \ref{jiandanjf1} implies

\begin{equation}\label{c32}\begin{array}{ll}&\int_{T_n} (|H(k^*_n,t)|+\Xi(k^*_n,t)) dt\leq \int_{\bar{t}-|\bar{t}-t_n|}^{\bar{t}+|\bar{t}-t_n|} (|H(k^*_n,t)|+\Xi(k^*_n,t)) dt \leq 2 (|t-\bar{t}|)^{\min\{\beta_{k^*_n}(\bar{t}),1-\xi^*_{k^*_n}\}}.\end{array}\end{equation}

Combining \eqref{c31} with \eqref{c32}, we have
$$\left\vert L(t_n)-L(\bar{t})\right\vert\leq  (100C_{k^*_n}+1)|t-\bar{t}|^{\min\{\beta_{k^*_n}(\bar{t}),1-\xi^*_{k^*_n}\}}.$$

Note $T_n\subset 2\mathcal{B}^{k^*_n(\bar{t})}$ implies $$|t_n-\bar{t}|\leq 2Leb\{\mathcal{B}^{k^*_n}\}\leq 6\lambda^{-q_{N+s(k^*_n)-1}}.$$
Recall that \begin{equation}\label{CK}C_{k^*_n}\leq (\log (k^*_n))^C\leq \lambda^{|{k}_{_{p(n)}}|^c}\leq \left(3\lambda^{q^{\frac{3}{4}}_{N+s(k^*_n)-1}}\right)\le\left(3\lambda^{-q_{N+s(k^*_n)-1}}\right)^{-10\xi^*_{k^*_n}}\leq |t_n-\bar{t}|^{-10\xi^*_{k^*_n}}.\end{equation}
Then
$$\left\vert L(t_n)-L(\bar{t})\right\vert\leq  (\log ({k}_i))^C|t_n-\bar{t}|^{\min\{\beta_{k^*_n}(\bar{t}),1-\xi^*_{k^*_n}\}}\leq |t_n-\bar{t}|^{-10\xi^*_{k^*_n}}||t_n-\bar{t}|^{\min\{\beta_{k^*_n}(\bar{t}),\beta(\bar{t})-\xi^*_{k^*_n}\}}\leq |t_n-\bar{t}|^{\beta_{k^*_n}(\bar{t})-20\xi^*_{k^*_n}}.$$

Therefore \eqref{pn} and \eqref{kil} imply for any $t_n\rightarrow t$, it holds that
\begin{equation}\label{fin}\liminf\limits_{n\rightarrow +\infty}\frac{\log |L(t_n)-L(\bar{t})|}{\log |t_n-\bar{t}|}\geq \liminf\limits_{n\rightarrow +\infty}\left(\beta_{k^*_n}(\bar{t})-20\xi^*_{k^*_n(\bar{t})}\right)=\liminf\limits_{n\rightarrow +\infty}\beta_{k^*_n}(\bar{t})\geq \beta(\bar{t}).\end{equation}

Then \eqref{fin} implies
$$\liminf\limits_{t\rightarrow \bar{t}}\frac{\log |L(t)-L(\bar{t})|}{\log |t-\bar{t}|}\geq \beta(\bar{t}).$$

\

\noindent
\textbf{The proof of $\bf Part_2$:}

\

Since $\bar{t}\in \mathcal{B}_{\xi^*_{{k}_i}}^{{k}_i}-G_{{k}_i}\subset \mathcal{B}^{{k}_i},$ we have
$\bar{t}<t^{{k}_i}_-~~or~~\bar{t}>t^{{k}_i}_+.$
We  construct a sequence  as follows.

\begin{enumerate}
\item If $\bar{t}< t^{{k}_i}_-,$ then we denote $t^*_{i}:=t^{{k}_i}_-+10^{10}|\bar{t}-t^{{k}_i}_-|.$
\item If $\bar{t}> t^{{k}_i}_+,$ then we denote $t^*_{i}:=t^{{k}_i}_+-10^{10}|\bar{t}-t^{{k}_i}_+|.$
\end{enumerate}

Since $$dist\{\bar{t},G_{{k}_i}\}<|G_{{k}_i}|^{1+\xi^*_{{k}_i}}\ll |G_{{k}_i}|^{1+\xi_{{k}_i}},$$ one has $t^*_i\in G_{{k}_i}$ and $$t^*_{i}\in (t^{{k}_i}_-,t^{{k}_i}_-+|G_{{k}_i}|^{1+\xi_{{k}_i}})~~if~~\bar{t}< t^{{k}_i}_-,$$ $$t^*_{i}\in (t^{{k}_i}_+-|G_{{k}_i}|^{1+\xi_{{k}_i}},t^{{k}_i}_+)~~if~~\bar{t}> t^{{k}_i}_+.$$

Recall \eqref{ep12'} and \eqref{ep12'*} imply
$$
\vert L(t)-L(t^{{k}_i}_+)\vert
\ge \frac{C_{{k}_i}}{2}\sqrt{|G_{{k}_i}|}|t-t^{{{k}_i}}_{+}|^{\frac{1}{2}},~ t\in (t^{{k}_i}_+-|G_{{k}_i}|^{1+\xi_{{k}_i}},t^{{k}_i}_+)
$$
and
$$
\vert L(t)-L(t^{{k}_i}_-)\vert
\ge \frac{C_{{k}_i}}{2}\sqrt{|G_{{k}_i}|}|t-t^{{{k}_i}}_{-}|^{\frac{1}{2}},~  t\in (t^{{k}_i}_-,t^{{k}_i}_-+|G_{{k}_i}|^{1+\xi_{{k}_i}})
.$$

On the other hand, by \eqref{shangjiehe} and \eqref{ep21} of Lemma \ref{ckt},
we have
$$
\vert L(t)-L(t^{{k}_i}_+)\vert
\le 24{C_{{k}_i}}\sqrt{|G_{{k}_i}|}|t-t^{{{k}_i}}_{+}|^{\frac{1}{2}},~  t\in (t^{{k}_i}_+-|G_{{k}_i}|^{1+\xi_{{k}_i}},t^{{k}_i}_+)
$$
and
$$
\vert L(t)-L(t^{{k}_i}_-)\vert
\le 24{C_{{k}_i}}\sqrt{|G_{{k}_i}|}|t-t^{{{k}_i}}_{-}|^{\frac{1}{2}},~  t\in (t^{{k}_i}_-,t^{{k}_i}_-+|G_{{k}_i}|^{1+\xi_{{k}_i}})
.$$

Therefore

\begin{enumerate}

\item if $t_i^*=t^{{k}_i}_-+10^{10}|\bar{t}-t^{{k}_i}_-|$ (corresponding with the case $\bar{t}< t^{{k}_i}_-$), since $C_{{k}_i}\geq |t-\bar{t}|^{10\xi_{{k}_i}}$,  by \eqref{CK} we have
\begin{equation}\begin{array}{ll}&\ \ \ \ |L(t_i^*)-L(\bar{t})|\geq |L(t_i^*)-L(t^{{k}_i}_-)|-|L(t^{{k}_i}_-)-L(\bar{t})|\\&\geq  \frac{1}{2}C_{{k}_i}\cdot \sqrt{|G_{{k}_i}}|\sqrt{10^{10}|\bar{t}-t_-^{{k}_i}|}-24C_{{k}_i}\sqrt{|G_{{{k}_i}}|}\sqrt{|\bar{t}-t_-^{{k}_i}|}\\&\geq 10^4 \cdot C_{{k}_i}\cdot \sqrt{|\bar{t}-t_-^{{k}_i}|\cdot |G_{{k}_i}|}  \geq 10^3 C_{{k}_i}\cdot \sqrt{|\bar{t}-t_-^{{k}_i}|\cdot |\bar{t}-t^{{k}_i}_+|}\\ &\geq 10^3 \cdot C_{{k}_i} |\bar{t}-t^{{k}_i}_-|^{\beta_{{k}_i}(\bar{t})}
\geq |\bar{t}-t^{{k}_i}_-|^{\beta_{{k}_i}(\bar{t})+20\xi_{{k}_i}}= \left(\frac{|t_i^*-\bar{t}|}{10^{10}}\right)^{\beta_{{k}_i}(\bar{t})+20\xi_{{k}_i}}.\end{array}
\end{equation}
Then $$\frac{\log |L(t^*_i)-L(\bar{t})|}{\log |t^*_i-\bar{t}|}\leq \left(\beta_{{k}_i}(\bar{t})+20\xi_{{k}_i}\right)(1-\frac{100\log 10}{\log |t^*_i-\bar{t}|} ).$$

\item The case $t_i^*=t^{{k}_i}_+-10^{10}|\bar{t}-t^{{k}_i}_+|$ (corresponding with the case $\bar{t}> t^{{k}_i}_+$) is similar as above.
\end{enumerate}

In summary, we obtain for any $i\in \Z_+,$
$$\frac{\log |L(t^*_i)-L(\bar{t})|}{\log |t^*_i-\bar{t}|}\leq \left(\beta_{{k}_i}(\bar{t})+20\xi_{{k}_i}\right)(1-\frac{100\log 10}{\log |t^*_i-\bar{t}|} ).$$

Then it follows from $$\liminf_{i\rightarrow+\infty}\left(\beta_{{k}_i}(\bar{t})+20\xi_{{k}_i}\right)(1-\frac{100\log 10}{\log |t^*_i-\bar{t}|} )= \liminf_{i\rightarrow+\infty}\beta_{{k}_i}(\bar{t})=\beta(\bar{t})$$ that
$$\liminf_{i\rightarrow+\infty}\frac{\log |L(t^*_i)-L(\bar{t})|}{\log |t^*_i-\bar{t}|}\leq \beta(\bar{t}).$$

Hence
$$\liminf\limits_{t\rightarrow \bar{t}}\frac{\log |L(t)-L(\bar{t})|}{\log |t-\bar{t}|}\leq \beta(\bar{t}).$$

\

Combining Theorem \ref{zhibiaoeq} and the arguments above, we complete the proof of (4) of Theorem \ref{Th1}.

\section{Some preparation for the proof of Lemma \ref{lemma 6.1}}\label{section5.1}
In this section, we give a technical lemma (Lemma \ref{niceset1}) which is crucial for the proof of Lemma \ref{lemma 6.1}.
Let $t,v,\alpha$ be as in Theorem 2. For any $n>i\in \Z_+, x\in \R/\Z,$ if $\|A_{i}(x,t)\|,\|A_{n-i}(x,t)\|>1,$  we define \begin{equation}\label{theta-up-low}\theta^n_{i}(x,t)\triangleq u(A_{i}(x,t))-s(A_{n-i}(x,t)).\end{equation}

For simplification, sometimes we omit the dependence of $\theta_i^n$ on $t$ in the remaining part of this paper.

%For any fixed $x\in \R/\Z, i\in \Z_+$ and fixed $t\in  [\inf v-\frac{2}{\lambda},\sup v+\frac{2}{\lambda}].$

 %to distinguish the difference between the returning time from crtical interval back to critical interval, which is defined as $r_i^{\pm}$ in the Induction Theorem \ref{theorem12}, and the returning time from anywhere back to critical interval we define that
%$$\bar{I}_n:=\frac{I_n}{|I_n|}\cdot q^{-\hat{C}}_{N+n-1}.$$
%$$\bar{r}^{\pm}_i(x,t)=\inf\{r\geq q^2_{N+n-1} \vert x\pm r\cdot\alpha \in \bar{I}_i\}; \inf\limits_{x\in \bar{I}_i}\bar{r}^{\pm}_i(x,t)=\bar{r}^{\pm}_i(t);$$
%$$\sup\limits_{x\in \bar{I}_i}\bar{r}^{\pm}_i(x,t)=\bar{R}^{\pm}_i(t);$$
%$$\bar{r}_i(t)=\min\{\bar{r}_i^+(t),\bar{r}_i^-(t)\};$$
%$$\bar{R}_i(t)=\max\{\bar{R}_i^+(t),\bar{R}_i^-(t)\}.$$

\

\begin{definition}\label{lemma29} Given
$D\subset \R/\Z,$ which consists of union of some intervals, $n\in \Z_+$ and $~\eta_n\ll 1$, for some $~t\in [\inf v-\frac{2}{\lambda},\sup v+\frac{2}{\lambda}],$ we say $A_n(x,t)$ is \textbf{${(\eta_n,+)-nice}$} on $D$, if there exists some universal constant $C>0$ (independent on $n$ and $m$) such that

\begin{equation}\label{result2}\min\limits_{x\in D}\|A_n(x,t)\|\geq \lambda^{\frac{9}{10}n},\end{equation}
\begin{equation}\label{result3}\int_{x\in D}\left\vert\frac{\partial_t \|A_n(x,t)\|}{\|A_n(x,t)\|}\right\vert dx \leq  e^{|\log \eta_n|^{C}}, \end{equation}
\begin{equation}\label{result4}\int_{x\in D}\left\vert\frac{\partial_x \|A_n(x,t)\|}{\|A_n(x,t)\|}\right\vert dx \leq e^{|\log \eta_n|^{C}},\end{equation}
and
\begin{equation}\label{result1}\max\limits_{x\in D}\sum\limits_{p=0}^2|(s(A_n(x,t))-s_{r_m}(x,t))^{(p)}|\leq \|A_n\|^{-2} e^{|\log \eta_n|^{\hat{\epsilon}^{-2}}}.\end{equation}
($\hat{\epsilon}$ comes from \eqref{HN})

 \label{lemma6.1}

${(\eta_n,-)-nice}$ are defined similarly to above by replacing $n$ with $-n$ and $r_m$ with $-r_m.$

 \end{definition}
For $w\in \N$ and $k\in \Z,$ we set $$f_w(x)=\sum\limits_{s(k)+1\leq w}\frac{1}{\sqrt{\lambda^{-8|k|}+(x-c_{w,1})^2}}.$$
Given $n\in \N,$ let $j_y, 1\le y\le m$ be all the forward returning time of $x$  back to $I_{n-1}-I_{n}$ after $q_{N+n-2}^2-1.$ We will show the following result.
\begin{lemma}\label{niceset0} Given $n\in \N,$ $X\in \{+,-\}$ and $x\in I_{n,1},~t\in  [\inf v-\frac{2}{\lambda},\sup v+\frac{2}{\lambda}],$ it holds that for $y,z\in \{1,2,\cdots,m\}$
\begin{equation}\label{66-0} \|A_{j_z}\|\geq \lambda^{(1-c)j_z},
\end{equation}
\begin{equation}\label{66-1}\frac{\|A_{j_z}\|^{(1)}}{\|A_{j_z}\|}\leq \left(e^{ (\log j_z)^{C}}+f_n(x)\right)|I_n|^{-C}\end{equation}
and
\begin{equation}\label{66-2}\sum\limits_{l=0}^2|(s(A_{r^{X}_n(x)})-s(A_{j_y}))^{(l)}|\leq \|A_{j_y}\|^{-2} \left(e^{ (\log j_y)^{C}}+f_n(x)\right)|I_n|^{-C}.\end{equation}
\end{lemma}

\begin{proof} \eqref{66-0} directly follows from Theorem \ref{theorem12}. We prove the remaining one by induction. $n=0$ is trivial. For $n\in \Z_+,$ by symmetry, we only show the case $X=+.$  For convenience, here we assume that $z=m$ (i.e.~$j_z=r^{+}_n(x).$) We have to separately consider the following several cases. Set $\epsilon_n=\sum\limits_{j\leq n}j^{-2},~n\in \Z_+;~\epsilon_0=0.$

Set the inductive hypothesi as follow. For $w\leq n-1,$ it holds that
$\|A_{r^+_w}\|\geq \lambda^{(1-\epsilon_w)j_z}$ and $\frac{\|A_{r^+_w}\|^{(1)}}{\|A_{r^+_w}\|}\leq \left(e^{(\log r^+_w)^{(1+\epsilon_w)C}}+f_w(x)\right)|I_w|^{-C}.$

\begin{enumerate}
\item Step $n$ belongs to Type \textbf{I}. We consider $$A_{j_m}(x)=A_{j_{m}-j_{m-1}}A_{j_{m-1}-j_{m-2}}\cdots A_{j_1}(x).$$ Then by the inductive hypothesis, for $y=1,2,\cdots,m,$ $$\|A_{j_y-j_{y-1}}\|\geq \lambda^{(1-\epsilon_n)(j_y-j_{y-1})}\gg |I_{n-1}|^{-C};$$ $$\frac{\|A_{j_y-j_{y-1}}\|^{(1)}}{\|A_{j_y-j_{y-1}}\|}\leq \left(e^{(\log r^+_{n-1})^{(1+\epsilon_{n-1})C}}+f_{n-1}(x)\right)|I_{n-1}|^{-C}\ll |I_{n}|^{-C}.$$
    Hence there exists $\mu\geq \lambda^{(1-\epsilon_n)(r^+_{n-1})}$ such that $$\|A_{j_y-j_{y-1}}\|\sim_{0,|I_n|^c} \mu;~$$
    It follows from Theorem \ref{theorem12} (which implies the non-degeneracy of $\theta_{{j_{y-1}-i_{v-2}}}^{j_{y}-j_{y-2}}$) and the fact $\text{dist}\{x+j_y\alpha, I_n\}>0$ that
  $$~|\theta_{{j_{y-1}-i_{v-2}}}^{j_{y}-j_{y-2}}|\geq |I_{n}|^C$$ and $$\sum\limits_{l=1}^2|\left(\theta_{{j_{y-1}-j_{y-2}}}^{j_{y}-j_{y-2}}\right)^{(l)}|\leq |I_{n-1}|^{-C}.$$
    Then by repeatedly using Lemma \ref{lemma8} and inductive hypothesis we have \begin{equation}\label{case111}\begin{array}{ll}\sum\limits_{l=1}^2\frac{\|A_{r^{+}_n(x)}\|^{(l)}}{\|A_{r^{+}_n(x)}\|}&\leq (\sum\limits_{v=1}^w e^{[\log (j_y-j_{y-1})]^{(1+\epsilon_{n-1})C}}+f_{n-1}(x))(|I_{n-1}|^{-C}+|I_{n}|^{-C})\\&\leq \left(e^{(\log r^+_n)^{(1+\epsilon_n)C}}+f_{n-1}(x)\right)|I_{n}|^{-C}\\& \leq \left(e^{(\log r^+_n)^{(1+\epsilon_n)C}}+f_{n-1}(x)\right)|I_{n}|^{-C}.\end{array}\end{equation}

\item Step $n$ belongs to $\textbf{II}_{k'}$ with $k'\in \Z$ satisfying $I_{n,1}+k'\alpha \bigcap I_{n,2} \neq \emptyset$ with $1\leq |k'|<q^2_{N+n-1}.$ Then we consider $A=A_{r_n^+-k'}A_{k'}.$ For $A_{k'}$ and $A_{r_n^+-k'},$ similarly to previous condition, we have $$\frac{\|A_{k'}\|^{(1)}}{\|A_{k'}\|}\leq \left(e^{(\log k')^{(1+\epsilon_n)k'}}+f_{n-1}(x)\right)|I_{n}|^{-C},$$ $$\frac{\|A_{r^+_n-k}\|^{(1)}}{\|A_{r^+_n-k'}\|}\leq \left(e^{(\log (r^+_n-k'))^{(1+\epsilon_n)C}}+f_{n-1}(x)\right)|I_{n}|^{-C},$$  $$\|A_{k'}\|\geq \lambda^{(1-\epsilon_n)k'},~\|A_{r^+_n-k'}\|\geq \lambda^{(1-\epsilon_n)(r^+_n-k')},$$ $$\sum\limits_{l=1}^2|\left(\theta^{r^+_n}_{k'}\right)^{(l)}|\leq |I_{n}|^{-C}.$$
    Then combining the above and Lemma \ref{lemma8} implies\begin{equation}\label{case2222}\begin{array}{ll}&\sum\limits_{l=1}^2\frac{\|A_{r^{+}_n(x)}\|^{(l)}}{\|A_{r^{+}_n(x)}\|}\\&\leq 2\left(e^{(\log (r^+_n))^{(1+\epsilon_n)C}}+f_{n-1}(x)\right)|I_{n}|^{-C}+|W(\|A_{r_n^+}\|,\|A_{r_n^+-k'}\|,\theta_{k'}^{r_n^+})|\cdot |\left(\theta_{k'}^{r_n^+}\right)^{(1)}|\\&\leq 2\left(e^{(\log (r^+_n))^{(1+\epsilon_n)C}}+f_{n-1}(x)\right)|I_{n}|^{-C}+\frac{|I_{n}|^{-C}}{\sqrt{\lambda^{-4k'}+\tan^2(\theta^{r_n^+}_{k'})}}.
    \end{array}\end{equation}
    Note in this case, we have $n\geq s(k')+1,$ hence \eqref{case2222} implies
    \begin{equation}\label{case222}\sum\limits_{l=1}^2\frac{\|A_{r^{+}_n(x)}\|^{(l)}}{\|A_{r^{+}_n(x)}\|}\leq 2\left(e^{(\log (r^+_n))^{(1+\epsilon_n)C}}+f_{n}(x)\right)|I_{n}|^{-C}.\end{equation}

\end{enumerate}

 By \eqref{case111} and \eqref{case222} we completes the proof of \eqref{66-1}.

 For \eqref{66-2}, note that by induction as above, we have obtained $|\theta^{r^{+}_n}_{j_y}|\geq |I_n|^C,$ $\sum\limits_{l=1}^2|\left(\theta^{r^+_n}_{j_y}\right)^{(l)}|\leq |I_{n}|^{-C},$ $\|A_{j_y-j_{y-1}}\|\geq \lambda^{(1-\epsilon_n)(j_y-j_{y-1})}\gg |I_{n}|^{-C},$ and $\sum\limits_{l=1}^2\frac{\|A_{j_y}\|^{(1)}}{\|A_{j_y}\|}\leq \left(e^{(\log r^+_{n-1})^{(1+\epsilon_{n-1})C}}+f_{n}(x)\right)|I_{n}|^{-C}.$

Then by the help of remark \ref{remark10}, we have $$\sum\limits_{0}^2\left\vert(s(A_{r^{X}_n(x)})-s(A_{j_y}))^{(l)}\right\vert \leq n^2\|A_{j_y}\|^{-2} \left(e^{ (\log j_y)^{C}}+f_n(x)\right)|I_n|^{-C}\leq \|A_{j_y}\|^{-2} \left(e^{ (\log j_y)^{C}}+f_n(x)\right)|I_n|^{-2C}$$ as desired.
\hfill\qed\end{proof}

More precisely we have,
\begin{lemma}\label{niceset0'} Let everything be defined as in Lemma \ref{niceset0}. Given $y\in \{1,2,\cdots,m-1\}$ and $1\leq b \leq j_{y+1}-j_y,$ it holds that
\begin{equation}\label{66-0'} \|A_{j_y+b}\|\geq \lambda^{(1-\epsilon)(j_y+b)},
\end{equation}
\begin{equation}\label{66-1'}\frac{\|A_{j_y+b}\|^{(1)}}{\|A_{j_y+b}\|}\leq \left(e^{(\log (j_y+b))^{C}}+f_n(x)\right)|I_n|^{-C\hat{\epsilon}^{-1}}\end{equation}
and
\begin{equation}\label{66-2'}\sum\limits_{l=0}^2|(s(A_{r^{X}_n(x)})-s(A_{j_y+b}))^{(l)}|\leq \|A_{j_y+b}\|^{-2}\left(e^{(\log (j_y+b))^{C}}+f_n(x)\right) |I_n|^{-C\hat{\epsilon}^{-1}}.\end{equation}
\end{lemma}

\begin{proof} We prove it by induction. Consider $$A_{j_y+b}(x)=A_{b}(x+j_y\alpha)A_{j_y}(x).$$ $n=0$ is trivial. For $n-1\in \N,$ we assume that everything holds true.
Let $n^*$ satisfy that
$$r^+_{n^*-1}(x+j_y\alpha)\leq b\leq r^+_{n^*}(x+j_y\alpha).$$ Note that $n^*\leq n-1.$
Let $j_{y+1}$ satisfy $j_{y+1}-j_{y}=r^+_{n-1}(x+j_{y}\alpha).$
 By inductive hypothesis we have \begin{equation}\label{66-01}\|A_b(x+j_y\alpha)\|\geq \lambda^{(1-\epsilon_{n-1})b};~\frac{\|A_b(x+j_y\alpha)\|^{(1)}}{\|A_b(x+j_y\alpha)\|}\leq \left(e^{(\log b)^{(1+\epsilon_{n-1})C}}+f_{n-1}(x)\right)|I_{n^*}|^{-C\hat{\epsilon}^{-1}};\end{equation}
\begin{equation}\label{66-001}\sum\limits_{l=0}^2|(s(A_{j_{y+1}-j_y})-s(A_{b}))^{(l)}|\leq \|A_{b}\|^{-2} \left(e^{(\log b)^{(1+\epsilon_{n-1})C}}+f_{n-1}(x)\right)|I_{n^*}|^{-(1+\epsilon_{n-1})C\hat{\epsilon}^{-1}}.\end{equation}
By \eqref{66-001} we have
\begin{equation}\label{66-03}\sum\limits_{l=1}^2\vert(\theta^{j_{y+1}}_{j_y}-\theta^{j_{y}+b}_{j_y})^{(l)}\vert\leq \|A_{b}\|^{-2}e^{(\log b)^{2C}}|I_{n^*}|^{-2C\hat{\epsilon}^{-1}}.\end{equation}
By Lemma \ref{niceset0} we have
\begin{equation}\label{66-02}\|A_{j_y}(x)\|\geq \lambda^{(1-\epsilon_{n-1})j_y};~\frac{\|A_{j_y}(x)\|^{(1)}}{\|A_{j_y}(x)\|}\leq \left(e^{(\log j_y)^{C}}+f_{n-1}(x)\right)|I_{n}|^{-C\hat{\epsilon}^{-1}}.\end{equation}
\begin{equation}\label{66-002}\sum\limits_{l=0}^2|(s(A_{r^+_n})-s(A_{j_{y}}))^{(l)}|\leq \|A_{j_{y}}\|^{-2} |I_{n}|^{-(1+\epsilon_{n-1})C}e^{(\log  j_y)^C}.\end{equation}

Next, we have to consider the following two cases.
\begin{enumerate}
\item $\|A_b(x+j_y\alpha)\|\leq |I_{n-1}|^{-\hat{\epsilon}^{-1}}.$ Then $\|A_b\|\leq |I_{n-1}|^{\hat{\epsilon}^{-1}}\ll c\lambda^{\frac{1}{2}r_{n-1}^+}\leq \|A_{j_y}\|.$ Note that by Theorem \ref{theorem12}, \begin{equation}\label{them1212}\sum\limits_{l=1}^2\vert \left(\theta^{j_{y+1}}_{j_y}\right)^{(l)}\vert\leq |I_n|^{-C}.\end{equation} Then \eqref{66-03} implies $\sum\limits_{l=1}^2\vert \left(\theta^{j_y+b}_{j_y}\right)^{(l)}\vert\leq 2|I_n|^{-C}.$ Then by \eqref{66-01}, \eqref{66-02} and Lemma \ref{lemma8} (similar analysis to \eqref{case2222}) we get \begin{equation}\label{moddd0}\|A_{j_y+b}\|\geq \|A_{j_y}\|\|A_{b}\|^{-1}\geq \lambda^{(1-\epsilon_n)(j_y+b)};\end{equation}
    \begin{equation}\label{moddd}\frac{\|A_{j_y+b}\|^{(1)}}{\|A_{j_y+b}\|}\leq  \left(e^{(\log (j_y+b))^{ (1+\epsilon_n)C}}+f_n(x)\right)|I_n|^{-C{\hat{\epsilon}^{-1}}}.\end{equation}
    On the other hand, (ii)-(2a) of Lemma \ref{lemma8*} implies \begin{equation}\label{jiaoddd'}\begin{array}{ll}&\sum\limits_{l=1}^2|(s(A_{j_{y}})-s(A_{j_{y}+b}))^{(l)}|\\&\leq \|A_{j_{y}}\|^{-2} \|A_b\|^2e^{(\log b)^c}\leq \|A_{j_{y}+b}\|^{-2} \|A_b\|^{C}\leq \|A_{j_{y}+b}\|^{-2} e^{(\log (j_y+b))^{(1+\epsilon_n)C}}|I_n|^{-(1+\epsilon_n)C{\hat{\epsilon}^{-1}}}.\end{array}\end{equation}
    Then \eqref{66-002} and \eqref{jiaoddd'} implies
    \begin{equation}\label{jiaoddd}\sum\limits_{l=1}^2|(s(A_{r^+_n})-s(A_{j_{y}+b}))^{(l)}|\leq \|A_{j_{y}+b}\|^{-2} e^{(\log (j_y+b))^{(1+\epsilon_n)C}}|I_n|^{-(1+\epsilon_n)C{\hat{\epsilon}^{-1}}}.\end{equation}

\item $\|A_b(x+j_y\alpha)\|> |I_{n-1}|^{-\hat{\epsilon}^{-1}}.$ Then \eqref{66-03} implies
$$\sum\limits_{l=1}^2\vert(\theta^{j_{y+1}}_{j_y}-\theta^{j_{y}+b}_{j_y})^{(l)}\vert\leq |I_{n-1}|^{\hat{\epsilon}^{-1}}\ll |I_{n}|^{C}.$$ Thus
\begin{equation}\label{xxxjjj}|\theta^{j_{y}+b}_{j_y}|\geq |\theta^{j_{y+1}}_{j_y}|-|I_{n-1}|^{\hat{\epsilon}^{-1}}>|I_{n}|^{-C}-|I_{n-1}|^{\hat{\epsilon}^{-1}}>c|I_{n}|^{-C}.\end{equation} Note \eqref{them1212}, follows from Theorem \ref{theorem12}, still holds true. In summary, by the help of \eqref{66-01}, \eqref{66-02} and (i) of Lemma \ref{lemma9*} we get \begin{equation}\label{moddd''}\|A_{j_y+b}\|\geq \lambda^{(1-\epsilon_n)(j_y+b)}.\end{equation} and
\begin{equation}\label{moddd'}\sum\limits_{l=1}^2\frac{\|A_{j_y+b}\|^{(l)}}{\|A_{j_y+b}\|}\leq \left(e^{(\log (j_y+b))^{(1+\epsilon_n)C}}+f_{n-1}(x)\right)|I_n|^{-(1+\epsilon_n)C{\hat{\epsilon}^{-1}}}\leq \left(e^{(\log (j_y+b))^{(1+\epsilon_n)C}}+f_{n}(x)\right)|I_n|^{-(1+\epsilon_n)C{\hat{\epsilon}^{-1}}}.\end{equation}
On the other hand, \eqref{66-01}, \eqref{66-02}, \eqref{them1212} and \eqref{xxxjjj} imply \begin{equation}\label{jiaoddd''}\begin{array}{ll}&\sum\limits_{l=1}^2|(s(A_{j_{y}})-s(A_{j_{y}+b}))^{(l)}|\\&\leq \|A_{j_{y}}\|^{-2} (\left(e^{(\log b)^{(1+\epsilon_{n})C}}+f_{n}(x)\right)|I_{n^*}|^{-(1+\epsilon_{n})C\hat{\epsilon}^{-1}}+e^{(\log j_y)^{C}}|I_{n}|^{-C\hat{\epsilon}^{-1}}+|I_n|^{-C\hat{\epsilon}^{-1}})\\&\leq \|A_{j_{y}+b}\|^{-2} \left(e^{(\log b)^{(1+\epsilon_{n})C}}+f_{n}(x)\right)|I_n|^{-(1+\epsilon_n)C\hat{\epsilon}^{-1}}.\end{array}\end{equation}
Then \eqref{66-002} and \eqref{jiaoddd''} implies
\begin{equation}\label{jiaoddd'''}\sum\limits_{l=1}^2|(s(A_{r^+_n})-s(A_{j_{y}+b}))^{(l)}|\leq \|A_{j_{y}+b}\|^{-2} \| \left(e^{(\log b)^{(1+\epsilon_{n})C}}+f_{n}(x)\right)|I_n|^{-(1+\epsilon_n)C\hat{\epsilon}^{-1}}.\end{equation}
\end{enumerate}

Combining \eqref{jiaoddd}, \eqref{moddd0}, \eqref{moddd}, \eqref{moddd''}, \eqref{moddd'} and \eqref{jiaoddd'''}, we finish the induction for the case $n.$
\hfill\qed\end{proof}

By the help of Lemma \ref{niceset0'}, we have the following several results hold true.

Let $p(\cdot): \Z_+\rightarrow \Z_+$ satisfy \begin{equation}\label{qcscale}|I_{p(n)-1}|^{-1}\leq  n\leq |I_{p(n)}|^{-1}.\end{equation}

\begin{lemma}\label{niceset1}For $t\in [\inf v-\frac{2}{\lambda},\sup v+\frac{2}{\lambda}],$ $i\in \Z_+,$ $s(k_i(t))+2\le j< s(k_{i+1}(t))+2,$
 the following hold true.
\begin{enumerate}
\item[1:] For $r^-_j(x,t)\geq n\geq \lambda^{(\log k_i)^c},$ it holds that $A_{n}(x,t)$ is $(\lambda^{-(\log n)^C},-)-\text{nice}$ on $I_{p(n),1}$.
\item[2:] For $k_i\geq n\geq \lambda^{(\log k_i)^c},$ it holds that $A_{n}(x,t)$ is $(\lambda^{-(\log n)^C},+)-\text{nice}$ on $I_{p(n),1}$ and $A_{n}(x,t)$ is $(\lambda^{-(\log n)^C},-)-\text{nice}$ on $I_{p(n),2}$.
\item[3:] For $r_j^+(x_0,t_0)\geq n\geq \lambda^{(\log k_i)^c},$ it holds that $A_{n}(x,t)$ is $(\lambda^{-(\log n)^C},+)-\text{nice}$ on $I_{p(n),2}.$
\item[4:] For $(s(k_i(t))+2\leq )j_i(t)\le j< s(k_{i+1}(t))+2,$ $r_j(x,t)\geq n\geq q^C_{N+j-1},$ it holds that $A_{n}(x,t)$ is $(\lambda^{-(\log n)^C},\pm)-\text{nice}$ on $I_{p(n)}.$
\item[5:] Set $D^{\pm}_{1,l}=\{x\in \R/\Z \vert x\pm s\alpha\notin {I}_{j,l},\  s=1, 2, \cdots, n\}$ and $D^{\pm}_{2,l}=\{x\in \R/\Z \vert x\pm s\alpha\notin {I}_{j,l},\  s=1, 2, \cdots, k_i-1\},$ $l=1,2,$ it holds that $A_{n}(x,t)$ is $(\lambda^{-(\log n)^C},-)-\text{nice}$ on $D^{-}_{1,1}$ and $D^{-}_{2,2};$ $A_{n}(x,t)$ is $(\lambda^{-(\log n)^C},+)-\text{nice}$ on $D^{+}_{1,2}$ and $D^{+}_{2,1}.$
\item[6:] Set $\lambda^{-\left\vert\log |I_j|\right\vert^{\hat{\epsilon}^{-1}}}\cdot I_j=\bar{I}_j,$ for all of the above conclusions, the upper bound of \eqref{result3} and \eqref{result4} can be strengthened to $1$ if we replace the integration interval with $\bar{I}_j.$
\end{enumerate}

\end{lemma}
%\begin{remark}\label{nicesetremark}~Result 1 also holds true if we replace  ``$x_0\in {I}_{j,1}$" by ``$x_0-s\alpha\notin {I}_{j,1},\  s=1, 2, \cdots, n$". Similarly, result 2 also holds true if we replace  ``$x_0\in {I}_{j,1}$" by ``$x_0+s\alpha\notin {I}_{j,1},\  s=1,2,\cdots,k-1$". Result 3 also holds true if we replace ``$x_0\in {I}_{j,2}$" by ``$x_0+s\alpha\notin {I}_{j,2},\  s=1,2,\cdots,n$".   \end{remark}
\begin{proof} The proofs of $1,2,3,4$ and $5$ are quite similar, for the convenience, we only give the proof of 3 for convenience.
Note the definition $k_{i+1}(x_0,t_0)$ implies step $s(k_{i+1}(t_0))+1$ belongs to Type \textbf{I} and step $s(k_{i+2}(t_0))+1$ belongs to Type \textbf{II}. Thus \begin{equation}\label{ki+1low}k_{i+1}(t_0)>r^{+}_{s(k_{i+1}(t_0))+1}(x_0,t_0)\end{equation} (otherwise step $s(k_{i+1}(t_0))+1$ must belong to Type \textbf{II} leading to a contradiction).
On the other hand, note \eqref{k2k1} implies $k_i(t_0)\geq e^{k^c_{i-1}(t_0)}.$ Hence
\begin{equation}\label{ki+1upper} \lambda^{(\log k_i)^c}>\lambda^{k^{c^2}_{i-1}(t_0)}\gg k_{i-1}(t_0). \end{equation} Combining \eqref{ki+1low}, \eqref{ki+1upper} and the fact $r_j^+(x_0,t_0)\geq n\geq \lambda^{(\log k_i)^c}$, we have
\begin{equation}\label{ki+1t00}k_{i+1}(t_0)> n>k_{i-1}(t_0).\end{equation} Combining \eqref{ki+1t00} and \eqref{qcscale} yields that step $p(n)$ is located between $s(k_{i-1})+2$ and $s(k_{i+1})+2.$ Therefore except $s(k_i(t_0))+2,$ no resonant case occurs.

 %where $C=1000C_{\alpha}$ and $C_{\alpha}>0$ satisfies $\|l\alpha\|_{\R/\Z}\geq l^{-C_{\alpha}}.$

  \eqref{qcscale} implies
 $ cq^{\hat{\epsilon}}_{N+p(n)-2}\leq \log n \leq Cq^{\hat{\epsilon}}_{N+p(n)-1}.$
 Hence
$$ c((\log n))^{\hat{\epsilon}^{-1}}\leq q_{N+p(n)-1}\leq C((\log n))^{\hat{\epsilon}^{-1}C^*}.$$

 And the diophantine condition implies there exists some constant $C^*>0$ such that
$$R_{l}\leq |I_{l}|^{-C^*};~q_{N+l}\leq q^{C^*}_{N+l-1},~l\in \Z_+.$$

%Denote $Di(x):=\begin{pmatrix} x& 0 \\ 0 &x^{-1}
%\end{pmatrix}.$ We omit the dependence on $t_0$.

Let $i_l, 1\le l\le w$ be all the returning time of $x$  back to $I_{p(n)-1}-I_{p(n)}$ after $q_{p(n)-1}^2-1$ (this means that when \textbf{I} (non-resonance) occurs, we take the first return time from $I_{p(n)-1}$ to $I_{p(n)-1}$. When \textbf{III} occurs, we take the second return time). Without loss of generality, we assume that $w>2$ (otherwise by the diophantine condition we can always take a suitable constant $\hat{\epsilon}$ in the scale of $I_{p(n)}$ such that $w=3$).

By the help of \eqref{66-0'}, \eqref{66-1'} and \eqref{66-2'} of Lemma \ref{niceset0'}, we have
\begin{equation}\label{66-00'} \|A_{n}\|\geq \lambda^{(1-c)(n)},
\end{equation}
\begin{equation}\label{66-11'}\begin{array}{ll}
&\int_{x\in I_{p(n)}}\frac{\|A_{n}\|^{(1)}}{\|A_{n}\|} dx\leq \int_{x\in I_{p(n)}}\left(e^{(\log n)^{C}}+f_{p(n)}\right)|I_{p(n)}|^{-C\hat{\epsilon}^{-1}} dx\\&\leq
\left(e^{(\log n)^{C}}+\int_{x\in I_{p(n)}}f_{p(n)}(x) dx\right)|I_{p(n)}|^{-C\hat{\epsilon}^{-1}}\leq \left(e^{(\log n)^{C}}+Ck_i\right)|I_{p(n)}|^{-C\hat{\epsilon}^{-1}}\\&\leq \left(e^{(\log n)^{C}}+Cr_{p(n)}\right)|I_{p(n)}|^{-C\hat{\epsilon}^{-1}}\leq \left(e^{(\log n)^{C}}+|I_{p(n)}|^{-C}\right)|I_{p(n)}|^{-C\hat{\epsilon}^{-1}}\leq e^{(\log (n))^{2C}}
\end{array}\end{equation}
and
\begin{equation}\label{66-22'}\sum\limits_{l=0}^2|(s(A_{r^{+}_{p(n)}(x)})-s(A_{n}))^{(l)}|\leq \|A_{n}\|^{-2}e^{(\log n)^{C}} |I_{p(n)}|^{-C\hat{\epsilon}^{-1}}\leq \|A_{n}\|^{-2} e^{(\log n)^{2C}}.\end{equation}

Then \eqref{66-00'}, \eqref{66-11'} and \eqref{66-22'} imply $A_n(x_0,t_0)$ is ${(e^{(\log n)^C},+)-\text{nice}}$ on $I_{p(n)}.$

For $6,$ the proof for the first part is similar to above. For the remaining part, one notes that $e^{\left\vert\log |I_j|\right\vert^{\hat{\epsilon}^{-1}}}\gg \lambda^{c(\log r_j)^C}\geq \lambda^{Ck_i}.$ Then $$\begin{array}{ll}
\int_{x\in I_{p(n)}}\frac{\|A_{n}\|^{(1)}}{\|A_{n}\|} dx&\leq e^{-\left\vert\log |I_j|\right\vert^{\hat{\epsilon}^{-1}}}\cdot \left(\lambda^{(\log r_j)^C}+\lambda^{Ck_i}\right)<1.\hfill\qed
\end{array}$$
\end{proof}

%\begin{remark}\label{remark84}  One can similarly obtain \eqref{result2}-\eqref{result4} for the case ${(\eta,D,-)-nice}$. Moreover \eqref{result1} also holds true if we replace $s$ with $u.$
 %\end{remark}

The key to prove Lemma \ref{lemma18} is to estimate the finite Lyapunov exponent (FLE). Recall  that  for any $n>0$, FLE is defined as
$$
L_{n}(E)=\frac1{n}\int_{\R/\Z}\log\|A_{n}(x,t)\|dx,
$$
where $t=\frac{E}{\lambda}$. Thus we have
\begin{equation}\label{derivative_finite_LE}
\lambda|L'_{n}(E)|=\left|\frac{1}{n}\int_{\R/\Z}\frac{\partial_t\|A_{n}(x,t)\|}{\|A_{n}(x,t)\|}dx\right|.
\end{equation}

We will estimate it in the following sections.
%\begin{remark} The scale of critical interval and the essential critical interval are not ``essential". In fact, \cite{wz2} mentioned that $\lambda^{\eta_0\cdot q^2_{N+n-1}}$ with $0<\eta_0\ll 1$ is almost the smallest scale we can choose. We would like to mention that the difference among choosing the different scale of interval is just taking the different coupling constant $\lambda$ (e.g. for the essential critical interval we can take $\lambda\geq q_N^{q_N}$).
%\end{remark}

\

\section{The proof of (1),(2) in Lemma \ref{lemma 6.1}} \label{proof of 6.1}

\vskip 0.2cm
In this section, we fix  $k\in\Z,Y\in \Z_+$ satisfying $Y\geq s(k)+1.$  $s(k)$  is defined as in \eqref{sk_df}, i.e.~$ q^2_{N+s(k)-1}<|k|\leq q^2_{N+s(k)}.$ Let $t\in \mathcal{B}_{Y+1}(t^{k}_{-})\bigcup\mathcal{B}_{Y+1}(t^{k}_{+}).$ Without loss of generality we assume that $k\geq 0$
and $I_{Y+1,2}(t)\bigcap T^k\left(I_{Y+1,1}(t)\right)\not=\emptyset.$  Next we give some notations which frequently appear in the later proof.

Denote $\tilde{D}_{Y+1}=\left(\bigcup\limits_{t\in J} I_{Y+1}(t)\right)\times J$ with $J=\mathcal{B}_{Y+1}(t^{k}_{-})\bigcup\mathcal{B}_{Y+1}(t^{k}_{+}).$

Let
$$ n= m\cdot\mathcal{N}_{Y+1}(= \lambda^{q_{N+Y}^{\hat{\epsilon}}}\gg k),\quad
n_0=n^{\frac{1}{100}},\quad
  \zeta_n= e^{-(\log n)^{c}},
\quad m=1,2.$$

It is clear that, to prove the first two results of Lemma \ref{lemma 6.1}, it is enough to prove that both \eqref{6.1.1} and \eqref{6.1.2} hold true for $L'_n(t).$
Without loss of generality, we only consider the case $$|t-t^k_+|\leq |t-t^k_-|.$$ Recall that
$$ \mathcal{B}_{Y+1}(t^{k}_{-})\bigcup\mathcal{B}_{Y+1}(t^{k}_{+})\triangleq \left(B(t^k_-,\lambda^{-{q_{N+Y}}})- B(t^k_-,\lambda^{-q^{\log q_{N+Y}}_{N+Y}})\right)\bigcup \left(B(t^k_+,\lambda^{-{q_{N+Y}}})- B(t^k_+,\lambda^{-q^{\log q_{N+Y}}_{N+Y}})\right).$$ It gives
$$\lambda^{-e^{(\log \log n)^C}}\leq \lambda^{-e^{(\log q_{N+Y})^2}}\leq \lambda^{-q_{N+Y}^{\log q_{N+Y}}}\leq |t-t^k_+|\leq \lambda^{-q_{N+Y}}\leq \lambda^{-(\log n)^{\hat{\epsilon}^{-1}}}.$$

Therefore
\begin{equation}\label{t-tk}\lambda^{-e^{(\log \log n)^{\hat{\epsilon}^{-1}}}} \leq |t-t^k_+|\leq \lambda^{-(\log n)^{\hat{\epsilon}^{-1}}}.\end{equation}

Notice that for all \( x \in \mathbb{R}/\mathbb{Z} \), there are at most two values of \( 1\leq l\leq \mathcal{N}_{Y+1} \) such that \( T^{l}x \in I_{Y+1} \). (Recall $|I_{Y+1}|=2\lambda^{-\hat{\epsilon}^{-1}q_{N+Y}^{\hat{\epsilon}}}$) Thus
we can rewrite the right side of \eqref{derivative_finite_LE} as follows:
\begin{equation}\label{qheshi}\begin{array}{ll}
\left|\frac{1}{n}\int_{\R/\Z}\frac{\partial_t\|A_{n}(x,t)\|}{\|A_{n}(x,t)\|}dx\right|\le&\left|\frac{1}{n}\int_{S_1(t)}
\frac{\partial_t\|A_{n}(x,t)\|}{\|A_{n}(x,t)\|}dx\right|+\left|\frac{1}{n}\int_{S_2(t)}\frac{\partial_t\|A_{n}(x,t)\|}
{\|A_{n}(x,t)\|}dx\right|+\left|\frac{1}{n}\int_{S_3(t)}\frac{\partial_t\|A_{n}(x,t)\|}{\|A_{n}(x,t)\|}dx\right|\\
\\
&:=I+II+III,
\end{array}\end{equation}
where
     $$
     \begin{array}{ll}
     S_1(t)&=\{x\in \R/\Z|~T^{l}(x)\not\in I_{Y+1}\ ~for\ any\ ~ 0\leq l\leq n \};\\
      \\
      S_2(t)&=\{x\in \R/\Z|~there\ exists\ \frac{1}{2}n_0\le l\leq n-\frac{1}{2}n_0 ~such~that~ T^{l}(x)\in I_{Y+1,1}\ \};\\
      \\
           S_3(t)&=\R/\Z-S_2-S_1.
      \end{array}
      $$
%From the definitions of $G_k$ and $S_2(t)$,  for $x\in S_2$,  it holds that $T^{l+k}(x)\in I_{Y+1,2}$.

\begin{remark}Generally, by the analysis as above, if we deal with integrals of the form $$INT(A_n)\triangleq\int_{\R/\Z}\frac{\partial_t \|A_n(x,t)\|}{\|A_n(x,t)\|}dx,$$ where $A_{n}(x,t)=A_{n_2}A_{n_1}$ with $\|A_{n_1}\|, \|A_{n_2}\|\gg 1$, we can always divide $INT(A_n)$ into three terms. Two of these terms $\frac{\partial_t\|A_{n_i}(x,t)\|}{\|A_{n_i}(x,t)\|},i=1,2$ can be easily settled. By defining $\theta\triangleq \frac{\pi}{2}-s(A_{n_2})+u(A_{n_1}),$ the remaining one $INT_{M}\triangleq \int_{\R/\Z}\frac{1}{\|A_n(x,t)\|}\frac{\partial\|A_n\|}{\partial \theta}\partial_t\theta dx$ is very complicated, which is basically the most difficult point of this paper. In the following, we will directly calculate the $INT_{M}$ by (to a certain extent) simplifying $INT_{M}$ into a form of the integral of rational functions dependent on $\|A_{n_1}\|,\|A_{n_2}\|, \theta, t-t^k_{\pm},$ which can be quantitatively estimated precisely. As one will see, these three terms will form a certain competitive relationship between each other in the sense that each of them has the possibility to be the dominant term (if $\min\{n_1,n_2\}$ is too small, then $t-t^k_+$ will meaningless since the error caused by $\|A_{\min\{n_1,n_2\}}\|$ might cover it. In this case, it is sufficient to consider only $\|A_{\min\{n_1,n_2\}}\|$).  This is why we give the definition of $S_1,S_2,S_3$ such that in each of them the dominant term is definite.
\end{remark}
\subsection{The estimate on $I$}

For $x\in S_1(t),$ since there is no $0<l<n$ such that $x+l\cdot\alpha\in I_{Y+1},$ by the help of (5) of Lemma \ref{niceset1}, we indeed obtain that $A_n(x,t)$ is $(\lambda^{-({\log n})^C},+)-\text{nice}$ on $S_1.$ Then \eqref{result3} implies
\begin{equation}\label{subi}I\leq \lambda^{(\log n)^{C}}.\end{equation}
\subsection{The estimate on $II$ and $III$}\label{sec5.2.2} \label{section estimate23}
\subsubsection{\text{ An ease case of  $III$}}\quad
First we prove an easy case denoted by $ S_3^*\subset S_3$: there exists only one $0\leq m_0\leq n$ such that $x+m_0\alpha \in I_{Y+1}.$ The uniqueness of $m_0$ implies that $m_0\le k$ and $x+m_0\alpha\in I_{Y+1,2}.$

\begin{lemma}\label{S*3}
It holds that
$$|\int_{S_3^*}\frac{\partial_t\|A_n\|}{\|A_n\|} dx|\leq \lambda^{(\log n)^{C}}.
$$
\end{lemma}
\begin{proof}
Consider $A_n=A_{n-m_0}A_{m_0}.$
Note that Lemma \ref{niceset1} implies that $A_{n-m_0}(x,t)~\text{is}~(\lambda^{-(\log n)^{C}},+)-\text{nice}~\text{on}~S_3^*$ and $A_{m_0}(x,t)~\text{is}~(\lambda^{-(\log n)^{C}},-)-\text{nice}~\text{on}~S_3^*.$ Hence we obtain $\|A_{n-m_0}\|\geq \lambda^{\frac{1}{2}(n-m_0)}\gg \lambda^{Cm_0}\geq  \|A_{m_0}\|.$ Therefore for the case $m_0\leq \lambda^{(\log k)^c},$ with the help of $k<(\log n)^{\hat{\epsilon}^{-1}}$, (ii) of Lemma \ref{lemma9*} implies what we desire.

For the case $m_0\geq \lambda^{|\log k|^c},$
%Assume $k'$ be the last resonance time such that  there is no resonance during $(k'+1,k)$. Thus $Y>{s(k)}>p(m_0)>{s(k')}$.
%Hence without loss of generality, we assume that $m_0\geq |\log n|~(~\geq k^{C^{-1}}).$ (Otherwise, Lemma \ref{jishu2} implies what we desire)
the definition of nice set and the fact $m_0~\le~k~$ yield for each $x\in S^*_3$
\begin{equation}\label{easycase} \int_{x-m_0\alpha\in S_3^*}\frac{\vert\|A_{n-m_0}\|^{(1)}\vert}{\|A_{n-m_0}\|} dx+\int_{x\in S_3^*}\frac{\vert\|A_{m_0}\|^{(1)}\vert}{\|A_{m_0}\|} dx\leq \lambda^{(\log n)^{C}}\end{equation} and \begin{equation}\label{easycase1}\sum\limits_{j=0}^2|\left(\theta^{n}_{m_0}-g_{p(m_0)}\right)^{(j)}|\leq \|A_{m_0}\|^{-1}+\|A_{n-m_0}\|^{-1}\leq \lambda^{-|\log m_0|^{C}}\leq \lambda^{-|\log k|^{C}} .\end{equation}
Lemma \ref{lmg} implies that
$ |\partial_t g_{p(m_0)}|\leq |I_{p(m_0)}|^{-C}\leq m_0^C\leq k^C\leq \lambda^{|\log k|^C}$ and $ |\partial_x g_{p(m_0)}|\geq |I_{p(m_0)}|^C>k^{-C}.$ Therefore \eqref{easycase1} implies (recall the definition of $\theta^{n}_{i}$ in (\ref{theta-up-low})) \begin{equation}\label{easycase2}|\partial_t \theta^{n}_{m_0}|\leq \lambda^{|\log k|^{C}},\quad |\partial_x \theta^{n}_{m_0}|\geq k^{-C}.\end{equation}  Combining \eqref{easycase}, \eqref{easycase2}, Lemma \ref{lemma8} and Lemma~\ref{usefullemma2}, we have
\begin{equation}\label{zuieasy}\begin{array}{ll}&|\int_{S_3^*}\frac{\partial_t\|A_n\|}{\|A_n\|} dx|\\
\\&\leq \sum\limits_{1\leq m_0\leq k}\vert\int_{x+m_0\alpha \in I_{Y+1,2}}\partial_{\theta_{m_0}^{n}}\|A_n\|\cdot \|A_n\|^{-1}\cdot \partial_t \theta_{m_0}^{n}\vert+\|A_{m_0}\|^{-1}|\partial_t \|A_{m_0}\||+\|A_{n-m_0}\|^{-1}|\partial_t \|A_{n-m_0}\||\\&\leq \sum\limits_{1\leq m_0\leq k}\vert\int_{x+m_0\alpha \in I_{Y+1,2}}W(\|A_{m_0}\|,\|A_{n-m_0}\|,\theta_{m_0}^{n})\cdot (\lambda^{|\log k|^C})+6+(\lambda^{(\log n)^{C}})|\\&\leq \lambda^{(\log n)^{C}}\cdot \sum\limits_{1\leq m_0\leq k}\int_{x+m_0\alpha \in I_{Y+1,2}}|W(\|A_{m_0}\|,\|A_{n-m_0}\|,\theta_{m_0}^{n})|\\&\leq  \lambda^{(\log n)^{C}}\cdot \sum\limits_{1\leq m_0\leq k}\int_{x+m_0\alpha \in I_{Y+1,2}}\frac{1}{\sqrt{\lambda^{-Cn}+\tan^2 \theta^{n}_{m_0}}} dx\\&\leq (\lambda^{(\log n)^{C}})\cdot k\cdot \lambda^{|\log k|^{C}}\cdot n~~~\leq \lambda^{(\log n)^{C}}~\qquad (by~\eqref{easycase2}~and~Lemma~\ref{usefullemma2}).\hfill\qed\end{array}\end{equation}
\end{proof}

\begin{remark}\label{zhongk} For $s(k_i(t))+2\leq i^*< j_i(t),$ by the assumption that $I_{s(k_i(t)),1}+k_i\alpha\bigcap I_{s(k_i(t)),2}\neq \emptyset,$ we have  $I_{i^*,1}+k_i\alpha\bigcap I_{i^*,2}\neq \emptyset.$ Moreover, note that
$r^+_{i^*}(x)$ is the second returning time with respect to $I_{i^*},$ which implies that ${r}^+_{i^*}(x)\geq |I_{i^*}|^{-C^*}$. Then by the help of Lemma \ref{lemma8}, there exist $M\in \mathbb{N}$ and $A_{Q_i},~i=1,2,\cdots,M$ such
that $\frac{\partial_t\|A_{{r}^+_{i^*}(x)}\|}{\|A_{{r}^+_{i^*}(x)}\|}\le \sum\limits_{1\le i\le M}\frac{\partial_t\|A_{Q_i}\|}{\|A_{Q_i}\|}+|I_{i^*}|^{-C}.$ Moreover, by the above proof, we indeed obtain that
$\vert \int_{x\in \mathcal{I}_{i^*,1}} \frac{\partial_t\|A_{Q_i}\|}{\|A_{Q_i}\|} dx\vert \leq \lambda^{(\log Q_i)^C}.$ Hence $\vert \int_{x\in \mathcal{I}_{i^*,1}} \frac{\partial_t\|A_{\tilde{r}^+_{i^*}(x)}\|}{\|A_{\tilde{r}^+_{i^*}(x)}\|} dx\vert  \leq M|I_{i^*}|\sum\limits_{i=1}^M\lambda^{(\log Q_i)^C}+|I_{i^*}|^{-C}\leq \lambda^{q^{C\hat{\epsilon}}_{N+i^*-1}}\leq \lambda^{q^{c}_{N+i^*-1}}.$
\end{remark}
\begin{remark}\label{remark45} If $s(k_i(t))+2\leq l < s(k_{i+1}(t))+2$ and $Y_1\leq e^{(\log n)^{c}}\leq Y_2\leq r_{l},$  by the similar analysis as above, Lemma~\ref{usefullemma2} implies that $$\int_{{~I}_{l,1}}|\frac{\partial_t\theta^{Y_1+Y_2}_{Y_1}\tan\theta^{Y_1+Y_2}_{Y_1}}{\sqrt{(\tan\theta^{Y_1+Y_2}_{Y_1})^4+o_1\cdot (\tan\theta^{Y_1+Y_2}_{Y_1})^2+o_2^2}} |dx\leq 2Y_1\lambda^{(\log k_i)^{C}}k_i^C\leq e^{(\log k_i)^{C}+(\log n)^{c}}\leq e^{2(\log n)^{c}},$$
 where $ o_1=\frac{2}{\|A_{Y_1}\|^4}+\frac{2}{\|A_{Y_2}(\cdot+Y_1\alpha)\|^4}$ and\ $o_2=\vert\frac{1}{\|A_{Y_1}\|^4}-\frac{1}{\|A_{Y_2}(\cdot+Y_1\alpha)\|^4}\vert.$
 By the same argument as Remark \ref{zhongk}, the above also holds true if we replace $\theta^{Y_1+Y_2}_{Y_1}$ by $\theta^{{r}^-_l(x)+Y_1}_{{r}^-_l(x)}$ or $\theta^{{r}^+_l(x)+Y_2}_{Y_2}.$
\end{remark}
\subsubsection{\text{ The reduction of the remaining case}}\quad

Now we consider the remaining cases, that is $S_2\bigcup (S_3-S^*_3)$. Based on the above case, by symmetry, in the following we only need to consider the case for which there exist $0\leq n_1<n_2\leq n$ with $n_2-n_1=k (< n^{\frac{1}{2}})$ such that $x+n_i\alpha \in I_{Y+1},i=1,2.$  We write $A_{n-n_2}A_{n_2-n_1}A_{n_1}=A_{n-n_2}A_{k}A_{n_1}.$ The symmetry allows us only need to consider the condition  \begin{equation}\label{baseassume} n_1<n-n_2.\end{equation}  Thus from $n=n_1+(n_2-n_1)+(n-n_2)\le 2(n-n_2)+k$, we have $n-n_2>\frac n3$.

For the case $n_1\leq \lambda^{|\log n|^{c}},$ our target is to prove Lemma \ref{S3-S*3} holds true. With the help of Remark \ref{remark45}, we have $\int_{I_{Y+1,1}}\frac{\partial_t \|(A_{n-n_2}A_{k})(A_{n_1})\|}{\|(A_{n-n_2}A_{k})(A_{n_1})\|} dx\leq \int_{I_{Y+1,1}}\frac{\partial_t \|(A_{n-n_2}A_{k})\|}{\|(A_{n-n_2}A_{k})\|}dx+\int_{I_{Y+1,1}}\frac{\partial_t \|A_{n_1}\|}{\|A_{n_1}\|}dx +e^{(\log n)^{C}},$ which is dominated by $(A_{n-n_2}A_{k})$ and thus is indeed the case $S_3^*$.   Hence we only need to consider the case \begin{equation}\label{tjz}\lambda^{|\log n|^{c}}\leq n_1<n-n_2.\end{equation} For this case, it follows from Lemma \ref{niceset1} that $$\begin{array}{ll}
&A_{n_1}~is~(\lambda^{-|\log n_1|^{C}},-)-nice~on~I_{Y+1,1},\\
 &A_{n_2-n_1}~is~(\lambda^{-|\log k|^{C}},+)-nice~on~I_{Y+1,1},\\
  &A_{n_2-n_1}~is~(\lambda^{-|\log k|^{C}},-)-nice~on~I_{Y+1,2},\\
   &A_{n-n_2}~is~(\lambda^{-|\log n|^{C}},+)-nice~on~I_{Y+1,2}.\end{array}$$

Let $\hat{g}_{X,1}\triangleq u(A_{r_{X}}(x,t))-s(A_k(x,t))$ on $I_{X,1}$ and $\hat{g}_{X,2}\triangleq u({A_{k}(x+k\alpha,t)})-s(A_{r_{X}}(x+k\alpha,t))$ on $I_{X,2},~X=p(n_1)$ or $Y.$
Then the definition of a nice set yields for each $x\in S_2\bigcup (S_3-S^*_3)$,
\begin{equation}\label{diffcase}\begin{array}{ll}& \int_{x\in S_2\bigcup (S_3-S^*_3)}\frac{|\|A_{n_1}\|^{(1)}|}{\|A_{n_1}\|} dx+\int_{x-n_1\alpha\in S_2\bigcup (S_3-S^*_3)}\frac{|\|A_{n_2-n_1}\|^{(1)}|}{\|A_{n_2-n_1}\|} dx+\int_{x-n_2\alpha\in S_2\bigcup (S_3-S^*_3)}\frac{|\|A_{n-n_2}\|^{(1)}|}{\|A_{n-n_2}\|} dx\\&\leq \lambda^{(\log n)^{C}},\end{array}\end{equation}
\begin{equation}\label{diffcase1} \sum\limits_{j=0}^2|(\theta^{n-n_1}_{n_2-n_1}-\hat{g}_{Y,2})^{(j)}|\leq \|A_{n-n_2}\|^{-2}\lambda^{(\log (n-n_2))^{C}},\end{equation}
and
  \begin{equation}\label{bargpn0}\sum\limits_{l=0}^2|\left(u(A_{r_{p(n_1)}})-u(A_{n_1})\right)^{(l)}|\leq \|A_{n_1}\|^{-2}\lambda^{(\log n_1)^{C}}.\end{equation}

 On the other hand, by Theorem \ref{theorem12} and the fact $n_1<cr_{p(n_1)}$ (from \eqref{qcscale}),  we have
 \begin{equation}\label{bargpn1}\sum\limits_{l=0}^2|\left(u(A_{r_{p(n_1)}})-u(A_{r_Y})\right)^{(l)}|\leq \lambda^{-r_{p(n_1)}}\leq \lambda^{-Cn_1}\leq \|A_{n_1}\|^{-2}\lambda^{(\log n_1)^C}.\end{equation}

It then follows from \eqref{bargpn0} and \eqref{bargpn1} that
\begin{equation}\label{diffcase2} \sum\limits_{j=0}^2|(\theta^{n_2}_{n_1}-\hat{g}_{Y,1})^{(j)}|=\sum\limits_{j=0}^2|(u(A_{n_1})-u(A_{r_{Y}}))^{(j)}|\leq \|A_{n_1}\|^{-2}\lambda^{(\log n_1)^C}.\end{equation}

\begin{definition}
For simplification, in the following we define
$$\theta^{n}_{n_1}:=\theta_{n_1},$$
where the function $\theta_i^n$ is defined as (\ref{theta-up-low}) and $n_1$ satisfies (\ref{tjz}).
\end{definition}
\begin{lemma}\label{S3-S*3} Assume (\ref{tjz}) holds true. It holds that
$$|\int_{X}\frac{\partial_t\|A_n\|}{\|A_n\|} dx|\le \lambda^{(\log n)^{C}}+\sum\limits_{k<n_1<n-k}|\mathcal{S}_{I_{Y+1,1},n-n_1,n_1}|,$$
where $\mathcal{X}=S_2\bigcup (S_3-S_3^*)$ and \begin{equation}\label{tarobtain}\mathcal{S}_{I_{Y+1,1},n-n_1,n_1}= \int_{T^{-n_1}(I_{Y+1,1})}\frac{\frac{\partial \|A_{n}\|}{ \partial{\theta_{n_1}}}\frac{\partial {\theta_{n_1}}}{\partial t}}{\|A_{n}\|}dx.\end{equation}
\end{lemma}
\begin{proof} Considering $A_{n-n_1}=A_{n-n_2}A_{n_2-n_1}$. Similar to the argument in the previous case in Subsection 6.2.1, it holds that
\begin{equation}\label{precase}\left|\int_{x-n_1\alpha \in \mathcal{X}}\frac{\partial_t \|A_{n-n_1}\|}{ \|A_{n-n_1}\|}dx\right|\leq  \lambda^{(\log n)^{C}}.\end{equation}
\eqref{diffcase} shows
\begin{equation}\label{precase1}\left|\int_{\mathcal{X}}\frac{\partial_t \|A_{n_1}\|}{ \|A_{n_1}\|}dx\right|\leq \lambda^{(\log n)^{C}}.\end{equation}
Then Lemma \ref{lemma8},~\eqref{precase} and \eqref{precase1} imply that \begin{equation}\label{7.2}\begin{array}{ll}|\int_{X}\frac{\partial_t\|A_n\|}{\|A_n\|} dx|&\leq |\int_{X}\frac{\partial_t\|A_{n-n_1}\|}{\|A_{n-n_1}\|}dx|+|\int_{X}\frac{\frac{\partial \|A_n\|}{ \partial{\theta_{n_1}}}\frac{\partial {\theta_{n_1}}}{\partial t}}{\|A_n\|}dx|+|\int_{X}\frac{\partial_t\|A_{n_1}\|}{\|A_{n_1}\|}dx| \\
\\
&\leq \lambda^{(\log n)^{C}}+\sum\limits_{k<n_1<n-k}|\mathcal{S}_{I_{Y+1,1},n-n_1,n_1}|.\hfill\qed\end{array}\end{equation}
\end{proof}
%\vskip 0.2cm
%\noindent{\it $\bullet$ The simplification of \eqref{tarobtain}}\quad
\vskip 0.2cm \noindent Thus for the cases $II$ and $III$, it is sufficient to estimate \eqref{tarobtain}.
 To simplify the estimate on \eqref{tarobtain}, we use the concept of the equivalent term ``$\sim_{l,\epsilon}$" defined in Section 2.1.
In the remaining part of this paper, the domain of ``$\sim_{l,\epsilon}.$" is often $\tilde{D}_{Y+1},$ $\Pi_1\tilde{D}_{Y+1}$ or $\Pi_2\tilde{D}_{Y+1}.$  In this case, we will not mention the domain of ``$\sim_{l,\epsilon}.$".

By the help of Lemma \ref{lemma8}, we have
\begin{equation}\label{antilde}\frac{\frac{\partial \|A_{n}\|}{ \partial{\theta_{n_1}}}\frac{\partial {\theta_{n_1}}}{\partial t}}{\|A_{n}\|} \sim_{0,\sup\limits_{(x,t)\in \tilde{D}_{Y+1}}\tilde{\epsilon}^2_1} \partial_t(\theta_{n_1})\cdot \mathcal{M}(\tan\theta_{n_1})\end{equation}
with
\begin{equation} \label{k(f)} \mathcal{M}(x)=\frac{x}{\sqrt{x^4+\tilde{\epsilon}_1^4\cdot X^2+\tilde{\epsilon}_2^8}}, \tilde{\epsilon}_1^4=\frac{2}{\|A_{n_1}\|^4}+\frac{2}{\|A_{n-n_1}(\cdot+n_1\alpha)\|^4}, \ \tilde{\epsilon}_2^4=\left\vert\frac{1}{\|A_{n_1}\|^4}-\frac{1}{\|A_{n-n_1}(\cdot+n_1\alpha)\|^4}\right\vert.\end{equation}
We need the following lemma to simplify \eqref{tarobtain}.
\begin{lemma}\label{dominate1}Let $\epsilon>0, F,F_1\in C^0(I), I\subset \R/\Z\times \R$. If $F\sim_{0,\epsilon} F_1,$ then $$\vert\int_{I} F-F_1 dx\vert\leq \epsilon\cdot \int_{I}\vert F_1\vert dx.$$
\end{lemma}

\begin{proof} Note $F\sim_{0,\epsilon} F_1$ implies $\left\vert F-F_1 \right\vert\leq \epsilon|F_1|,$ which immediately implies what we desire.
\hfill\qed\end{proof}

\eqref{antilde} and Lemma \ref{dominate1} imply that

\begin{equation}\label{jifendejianjin}\begin{array}{ll}&\left\vert\int_{T^{-n_1}(I_{Y+1,1})}\frac{\frac{\partial \|A_{n}\|}{ \partial{\theta_{n_1}}}\frac{\partial {\theta_{n_1}}}{\partial t}}{\|A_{n}\|}dx-\int_{T^{-n_1}(I_{Y+1,1})}\partial_t(\theta_{n_1})\cdot \mathcal{M}(\tan\theta_{n_1})dx\right\vert\\&\leq C\sup\limits_{(x,t)\in \tilde{D}_{Y+1}}(\tilde\epsilon_1^2(x,t))\cdot \int_{T^{-n_1}(I_{Y+1,1})}\left\vert\partial_t(\theta_{n_1})\cdot \mathcal{M}(\tan\theta_{n_1})\right\vert dx.\end{array}\end{equation}

Hence to estimate \eqref{tarobtain}, we only need to consider the following two integrals: \begin{equation}\label{tar1}\vert\int_{T^{-n_1}(I_{Y+1,1})} (\partial_t \theta_{n_1})\mathcal{M}(\tan\theta_{n_1}) dx\vert;~\int_{T^{-n_1}(I_{Y+1,1})}\left\vert (\partial_t \theta_{n_1})\mathcal{M}(\tan\theta_{n_1})\right\vert dx.\end{equation}

\

\
\subsubsection{\text{The reduction of $\mathcal{M}(\tan\theta_{n})$}}

Note that $g_{Y+1}$ belongs to Type $\textbf{II}^k_{Y+1}.$ Without loss of generality we assume that step $Y$ belongs to Type $\textbf{I}.$ Our target is to estimate \eqref{tar1}. For this purpose, we will find the equivalent terms of $\theta_{n_1}(x,t)$ and $\tilde{\epsilon}_1$, respectively.

 Now we simplify $\tilde{\epsilon}_1$. Denote the zeros of $\hat{g}_{Y,i}$ by $\hat{c}_{Y,i},i=1,2.$ Then \eqref{lmg'-1} of Corollary \ref{lmg'} implies that there exists a unique $\hat{t}_k\in (t^k_-,t^k_+)$ such that $\hat{c}_{Y,1}(\hat{t}_k)+k\alpha=\hat{c}_{Y,2}(\hat{t}_k).$  Recall the notation $\zeta_n= e^{-(\log n)^c},$
Then Lemma \ref{lmg} and Theorem \ref{theorem12} imply that \begin{equation}\label{gyi}\begin{array}{ll}\hat{g}_{Y,i}(x,t)\sim_{1,\zeta_k}\bar {u}_i (x-\hat{c}_{Y,i}(\hat{t}_k))+\bar{v}_i (t-\hat{t}_k)\end{array};~|\hat{g}_{Y,i}(x,t)|\leq C(\zeta_n^{c})+C\lambda^{-\frac{1}{2}k};~\vert\frac{\partial^2 \hat{g}_{Y,i}(x,t)}{\partial X^2}\vert\leq C(\log k)^C,\end{equation} with $~X\in \{t,~x\}~,~i=1,2,$ where \begin{equation}\label{uhv}0<\bar{v}_i,\bar{v}_i^{-1}\leq (\log k)^C; 0<|\bar{u}_i|,|\bar{u}_i^{-1}|\leq (\log k)^{C}; sgn(\bar{u}_1)=-1=- sgn(\bar{u}_2).\end{equation}
Moreover, $ \log\|A_k(\hat{c}_{Y}(\hat{t}_k),\hat{t}_k)\|\geq ck(\log \lambda).$

Note that for each $x\in S_2\bigcup (S_3-S^*_3),$ it follows from \eqref{diffcase} that
$$\log \|A_k(x,t)\|\sim_{1,\zeta_n} \log\|A_k(\hat{c}_{Y}(\hat{t}_k),\hat{t}_k)\|+p^k_1(x-\hat{c}_{Y}(\hat{t}_k))+p_2^k(t-\hat{t}_k);~|p^k_i|\leq \lambda^{(\log k)^C},~i=1,2.$$ Let \begin{equation}\label{lkdf}\|A_k(\hat{c}_{Y}(\hat{t}_k),\hat{t}_k)\|:=l_k.\end{equation} Clearly $\left\vert\log \|A_k(x,t)\|-\log l_k\right\vert\leq \lambda^{C(\log k)}\cdot \zeta_n\leq C\zeta_n^{\frac{1}{2}}\ll 1$, which implies \begin{equation}\label{def-lklk}\|A_k(x,t)\|\sim_{0,\zeta_n^{\frac{1}{2}}} l_k~\text{on}~\tilde{D}_{Y+1}.\end{equation}

%%And \begin{equation}\label{ak2jiedao}\left|\frac{\partial^2\log \|A_k(x,t)\|}{\partial X^2}\right|\leq \lambda^{c(\alpha,v) k},~X=t,x,~(x,t)\in \tilde{D}_{Y+1},\end{equation} with $0<c(\alpha,v)\ll 1.$

Similarly, we have
\begin{equation}\label{lambda1df}\|A_{n_1}(x,t)\|\sim_{0,\zeta_n^{\frac{1}{2}}} \|A_{n_1}(\hat{c}_{Y}(\hat{t}_k)-n_1\alpha,\hat{t}_k)\|\triangleq \lambda_1~\text{on}~\tilde{D}_{Y+1}\end{equation} and

\begin{equation}\label{lambdadf2}\|A_{n-n_1}(x,t)\|\sim_{0,\zeta_n^{\frac{1}{2}}}\|A_{n-n_1}(\hat{c}_{Y}(\hat{t}_k),\hat{t}_k)\|\triangleq \lambda_2~\text{on}~\tilde{D}_{Y+1}.\end{equation}

Now for simplification, we define
 \begin{equation}\label{epsilon1} \epsilon^4_1:=\frac{2}{\lambda_1^4}+\frac{2}{\lambda_2^4};\end{equation}

 \begin{equation}\label{translation-notation} \ \omega:=\frac{1}{2}(-\frac{\bar{v}_1}{\bar{u}_1}+\frac{\bar{v}_2}{\bar{u}_2}).
 \end{equation}

 \eqref{tjz}, \eqref{def-lklk}, \eqref{lambda1df}, \eqref{epsilon1}, Lemma \ref{niceset1} imply that \begin{equation}\label{ktheta}\tilde{\epsilon}_1\sim_{0,\zeta_n}
\epsilon_1\end{equation} and
 \begin{equation} \label{ktheta1-1}|\partial_{x}\tilde{\epsilon}_2|+|\partial_{t}\tilde{\epsilon}_2|\leq \tilde{\epsilon}_1^{1-}\leq \epsilon_1^{1-}.\end{equation}

 It holds from \eqref{ktheta} that \begin{equation}\label{k-reduction}\mathcal{M}(\tan\theta_{n_1})\sim_{0,\zeta_n} \frac{\tan\theta_{n_1}}{\sqrt{\tan^4\theta_{n_1}+\epsilon_1^4\tan\theta^2_{n_1}+\tilde{\epsilon}_2^8}}~\text{on}~\tilde{D}_{Y+1}.\end{equation}
 To estimate (\ref{tar1}), we only need to consider the following case
 \begin{equation}\label{jiashe000}|\theta_{n_1}(x,t)|\leq \zeta_n.\end{equation}
 (Note if the above case is invalid, then \eqref{tar1} is dominated by $\lambda^{(\log n)^{C}},$ which is ignorable in comparison with the error estimate in Lemma \ref{lemma18}.)

 By the help of (ii)-(3c) of Lemma \ref{lemma8*}, \eqref{jiashe000} implies \begin{equation}\label{jiashe001} Cl_k^{-2}< \min\limits_{C^0(\tilde{D}_{Y+1})}|\hat{g}_{Y,2}|
 \end{equation}

By (ii)-(2b) of Lemma \ref{lemma8*}, \eqref{jiashe000}, \eqref{diffcase1} and \eqref{diffcase2}, we can precisely write
\begin{equation}\label{thetat1}\theta_{n_1}(x,t)\sim_{2,\zeta_n} \tan^{-1}(\|A_{k}(x,t)\|^2[\tan \left(\hat{g}_{Y,2}+\Delta_{n-n_1}(x,t)\right)])-\frac{\pi}{2}+\hat{g}_{Y,1}+\Delta_{n_1}(x,t) \end{equation} with \begin{equation}\label{thetat11}\|\Delta_{j}(x,t)\|_{C^2(\tilde{D}_{Y+1})}\leq C\|A_{j}\|^{-2}\lambda^{(\log j)^{\hat{\epsilon}^{-2}}},~j=n_1~or~n-n_1.\end{equation}

%Clearly, we have $\max\limits_{(x,t)\in \tilde{D}_{Y+1}}|\hat{g}_{Y,1}|\leq 1,$ which, together with \eqref{jiashe000}, \eqref{thetat1}
%and (\ref{thetat11}), implies \begin{equation}\label{thetat2}\vert\|A_{k}(x,t)\|^2\tan \left(\hat{g}_{Y,2}\right)\vert\geq C>1.\end{equation}

By~\eqref{baseassume}, \eqref{thetat11} and \eqref{jiashe001}, it holds that \begin{equation}\label{theta111}\|\Delta_{n-n_1}\|_{C^0(\tilde{D}_{Y+1})}\leq C\lambda^{-\frac{n}{3}}\ll C\lambda^{-2k}\leq C\|A_k\|^{-2}\leq Cl_k^{-2}\leq \min\limits_{C^0(\tilde{D}_{Y+1})}|\hat{g}_{Y,2}|.\end{equation}

\subsubsection{\text{The reduction of $\partial_t\theta_{n_1}$}}\quad
In this sub-subsection, we will reduce the estimate on $\partial_t\theta_{n_1}$ to the one on $\partial_x\theta_{n_1}$, see Lemma
\ref{lemmathetat1}.

Recall the notation $\zeta_n= e^{-(\log n)^{c}}$. By the help of Lemma \ref{thetat1fenjie}, we can estimate $|G_k|$ precisely.
\begin{lemma}\label{pugapgj} Let $\omega$ be defined in \eqref{translation-notation} and ${l}_k$ be as in \eqref{lkdf}. Moreover, let $\bar{u}_1,\ \bar{u}_2$ be as in (\ref{gyi}) and (\ref{uhv}). Then it holds that
 \begin{equation}\label{gapgja}|G_k|=|t^k_--t^k_+|\sim_{0,\zeta_n^{\frac{1}{2}}}\frac{2}{\omega\sqrt{|\bar{u}_1\bar{u}_2|}}\cdot l_k^{-1}.\end{equation}
\end{lemma}

\begin{proof} In Lemma \ref{thetat1fenjie}, we set $$D=\tilde{D}_{Y+1},~F=g_{Y+1,1},~L=\|A_k(x,t)\|,~h_i=g_{Y,i},~i=1,2,~\epsilon=\zeta_n,~l=l_k,~a_i=\bar{u}_i,
~b_i=\bar{v}_i,i=1,2.$$ Then by the help of iii of Lemma \ref{thetat1fenjie}, we obtain two points $x^*_i(t), i=1,2$ satisfying that $$\{x\vert \partial_x g_{Y+1,1}(x,t)=0\}=\{x^*_1(t),x^*_2(t)\}.$$
%Next by ii of Lemma \ref{thetat1fenjie}, we immediately have $\vert\partial_t g_{Y+1,1}(x^*_i(t),t)\vert=\vert B(x^*_i(t),t)\vert.$
And v yields that  $$\vert\partial_t g_{Y+1,1}(x^*_i(t),t)\vert\sim_{0,\zeta_n^{\frac{1}{2}}} 2\omega|\bar{u}_1|>0\ {\rm\ on\ }{\Pi_2 \tilde{D}_{Y+1}},~i=1,2,~$$ which implies that $g_{Y+1,1}(x^*_i(t),t)$ is strictly monotonic with respect to $t$ on $\Pi_2\tilde{D}_{Y+1}.$
Then \eqref{gnrange} of Theorem \ref{theorem12} allows us to define two one-element sets
$\{t\vert g_{Y+1,1}(x^*_i(t),t)=0\}:=\{E_i^*\},\ i=1, 2.$ Then (\rm v) of Lemma \ref{thetat1fenjie} implies that $$\vert g_{Y+1,1}(x^*_2(E^*_1),E^*_1)\vert=\vert g_{Y+1,1}(x^*_1(E^*_1),E^*_1)-g_{Y+1,1}(x^*_2(E^*_1),E^*_1)\vert\sim_{0,\zeta_n^{\frac{1}{2}}} 4\sqrt{\frac{|\bar{u}_1|}{|\bar{u}_2|}}l_k^{-1}.$$

Then Newton-Leibniz's Formula yields that $$|E^*_1-E^*_2|\cdot(2\omega|\bar{u}_1|)\sim_{0,\zeta_n^{\frac{1}{2}}} \vert\int_{E^*_1}^{E^*_2} \partial_t g_{Y+1,1}(x^*_1(t),t) dt\vert\sim_{0,\zeta_n^{\frac{1}{2}}} 4\sqrt{\frac{|\bar{u}_1|}{|\bar{u}_2|}}l_k^{-1}.$$

Hence $|E^*_1-E^*_2|\sim_{0,\zeta_n^{\frac{1}{2}}} 2\frac{l_k^{-1}}{\omega\sqrt{|\bar{u}_1\bar{u}_2|}}.$
%On the other hand, it holds that $r_{Y+1}\gg k,$ since $\lambda^{-cr^c_{Y+1}}\ll \lambda^{-Ck}\leq l_k^{-1}$. Thus by (1) of Lemma \ref{15}??,
Therefore we have $|t^k_--t^k_+|\sim_{0,\zeta_n^{\frac{1}{2}}}|E^*_1-E^*_2|\sim_{0,\zeta_n^{\frac{1}{2}}} 2\frac{l_k^{-1}}{\omega\sqrt{|\bar{u}_1\bar{u}_2|}}$ as desired.
\hfill\qed\end{proof}

The following  conclusions are indeed direct consequences of Lemma \ref{thetat1fenjie}.

Recall (\ref{tjz}) : $\lambda^{|\log n|^c}\leq n_1<n-n_2.$
\begin{lemma}\label{lemmathetat1}

\textbf{A:}~
If $\eqref{tjz}$ holds true, then the following properties hold true.

\begin{enumerate}

\item[i:] For any fixed $t_0\in \Pi_2\tilde{D}_{Y+1},$ $\tan\theta_{n_1}(x,t_0)$ has at most two zeros on $\Pi_1 \tilde{D}_{Y+1}.$
\item[ii:] There exist two functions $h(x,t),w(x,t)\in C^1(\tilde{D}_{Y+1})$ such that
%\begin{equation}\label{thetat1111'}\theta_{n_1}\sim_{2,\zeta_k} \tan^{-1}((l_k^2(\bar{u}_2x'+\bar{v}_2t')))-\frac{\pi}{2}+(\bar{u}_1x'+\bar{v}_1t');\end{equation}
\begin{equation}\label{theta_t1}\partial_t\theta_{n_1}=h(x,t)\cdot\frac{\partial_x (\tan\theta_{n_1})}{1+\tan^2\theta_{n_1}}+w(x,t),~(x,t)\in~\tilde{D}_{Y+1}\end{equation} with $\max\limits_{(x,t)\in \tilde{D}_{Y+1}}|h(x,t)|,\max\limits_{(x,t)\in \tilde{D}_{Y+1}}|w(x,t)|\leq C(\log k)^C.$

\

\item[iii:] There exist two intervals $J_1(t),\ J_2(t)\subset \R/\Z$ satisfying $Leb\{J_i\}\leq Cl_k^{-1}(\zeta_n^{c}+l_k^{-c})^{\frac{1}{4}}$ such that $\partial_x\theta_{n_1}(x,t)$  has exactly one zero $z_i(t)$   on each $J_i,\ i=1,\ 2$ and the following hold true.

\begin{equation}\label{lem50-1-} \{x\vert (x,t)\in \tilde{D}_{Y+1},\ |\partial_x \theta_{n_1}(x,t)|\leq \zeta_n^{c}+l_k^{-c}\}=J_1\bigcup J_2;\end{equation}

\begin{equation}\label{lem50-1'} \vert\frac{\partial^2 \theta_{n_1} }{\partial x^2}(x,t)\vert \sim_{0,(\zeta_n^{c}+l_k^{-c})^{\frac{1}{2}}} 2\bar{u}_2\sqrt{\vert\bar{u}_1 \bar{u}_2\vert}l_k\ {\rm\ on\ }{J_1\bigcup J_2};\end{equation}

 \begin{equation}\label{lem50-1''}\sum_{X,Y\in \{x,t\}}\vert\frac{\partial^2 \theta_{n_1} }{\partial X\partial Y}(x,t)\vert\leq C(\log k)^C l^8_k,\ (x,t)\in \tilde{D}_{Y+1};\end{equation}

\begin{equation}\label{lem50-0} ~|h(x,t)|\sim_{0,(\zeta_n^{c}+l_k^{-c})^{\frac{1}{2}}} \left\vert\frac{\bar{v}_2}{\bar{u}_2}\right\vert,~w(x,t)\sim_{0,(\zeta_n^{c}+l_k^{-c})^{\frac{1}{2}}} (-\frac{\bar{v}_1}{\bar{u}_1}+\frac{\bar{v}_2}{\bar{u}_2})\bar{u}_1
\quad {\rm on\ } {\tilde{D}_{Y+1}};
\end{equation}

\begin{equation}\label{lem50-2} ~\big\vert\tan\theta_{n_1}(z_1(t),t) -\tan\theta_{n_1}(z_2(t),t)\big\vert\sim_{0,(l_k^{-c}+\zeta_n^c)^{\frac{1}{2}}} \pi-4\sqrt{\frac{|\bar{u}_1|}{|\bar{u}_2|}}l_k^{-1}\ {\rm\ on\ } {\Pi_2\tilde{D}_{Y+1}};\end{equation}

\begin{equation}\label{lem50-3} \max\limits_{i=1,2}\inf\limits_{x\in J_i}\vert \tan\theta_{n_1} \vert(~\text{mod}~\pi)\geq \sqrt
{\frac{|\bar{u}_1|}{|\bar{u}_2|}}l_k^{-1}.
\end{equation}
\end{enumerate}

\textbf{B:}~Let $t\in \R,~\tilde{i}\in \Z_+$ and $s(k_{\tilde{i}}(t))+2\leq j<j_{\tilde{i}}(t)$. Then \eqref{theta_t1}-\eqref{lem50-3} also hold true if $\theta_{n_1}$ is replaced with $g_{{j}}(x,t)$. Furthermore  if $k_{\tilde{i}}=k$ and~$j_{\tilde{i}}(t)\leq j< s(k_{\tilde{i}+1}(t))+2$, then  \eqref{theta_t1},~\eqref{lem50-1''} and \eqref{lem50-0} also hold true if $\theta_{n_1}$ is replaced with $g_{{j}}(x,t)$.

Particularly, it holds that \begin{equation}\label{lem49-zh}\frac{l_{k_{\tilde{i}}}}{|\partial_x g_{\bar{j}}(c_{\bar{j},m}(t),t)|}\leq \lambda^{-(\log k_{\tilde{i}})^{C}}\cdot|~I_{j}|^{-1}, \ ~s(k_{\tilde{i}+1}(t))+2> j\geq j_{\tilde{i}}(t),~m=1,2.\end{equation}
\end{lemma}
\begin{proof}
We take in Lemma \ref{thetat1fenjie}

 $D=\tilde{D}_{Y+1},~F=\theta_{n_1}(x,t),~L=\|A_k(x,t)\|,~h_1=g_{Y,1}+\Delta_{n_1},~h_2=g_{Y,2}+\Delta_{n-n_1},~\epsilon=\zeta_n^{c}+l_k^{-c},~l=l_k;~\bar{u}_i=a_i,~\bar{v}_i=b_i,i=1,2.$
Note that \eqref{tjz} and \eqref{thetat11} imply that $$\vert\frac{\partial \Delta_{n_1}(x,t)}{\partial t}\vert+\vert\frac{\partial \Delta_{n_1}(x,t)}{\partial x}\vert\leq C(\zeta_n^{c}+l_k^{-c})\ll \vert u_1\vert,\vert v_1\vert,$$ which leads that $$\frac{\partial ({g}_{Y,1}+\Delta_{n_1}(x,t))}{\partial x}\sim_{0,\epsilon} \bar{u}_1,\quad\frac{\partial ( {g}_{Y,1}+\Delta_{n_1}(x,t))}{\partial t}\sim_{0,\epsilon} \bar{v}_1.$$
Then one can check that \eqref{tjz},\eqref{diffcase},\eqref{thetat11} and \eqref{gyi} imply \eqref{lem48-tj}. Hence by  Lemma \ref{thetat1fenjie}, we conclude \textbf{A}.

Note that by the help of \eqref{iii-3} of Lemma \ref{thetat1fenjie}, we have (recall that $g_{{j}}(\tilde{c}_{{j},m}(t))=0$) \begin{equation}\label{jiest0}|\partial_x g_{j_{\tilde{i}}(t)}(c_{j_{\tilde{i}}(t),m}(t),t)|\geq \lambda^{(\log k_{\tilde{i}})^c}|\tilde{c}_{j_{\tilde{i}}(t),m}(t)-c_{j_{\tilde{i}}(t),m}(t)|,~m=1,2.\end{equation} On the other hand, $|~I_{j_{\tilde{i}}(t)}|\leq \lambda^{-|\log|\tilde{c}_{j_{\tilde{i}}(t),m}(t)-c_{j_{\tilde{i}}(t),m}(t)||^C},$ which leads that
\begin{equation}\label{jiest}|~I_{j_{\tilde{i}}(t)}|^{-1}\geq \lambda^{|\log|\tilde{c}_{j_{\tilde{i}}(t),m}(t)-c_{j_{\tilde{i}}(t),m}(t)||^C}.\end{equation} By Theorem \ref{theorem12},
$$\begin{array}{ll}&|\partial_x g_{j_{\tilde{i}}(t)}(c_{j_{\tilde{i}}(t),m}(t),t)|
\leq |\partial_x g_{s(k_{\tilde{i}})+2}(c_{j_{\tilde{i}}(t),m}(t),t)|+\lambda^{-r_{s(k_{\tilde{i}})+2}}
\\&<C|\partial_x g_{s(k_{\tilde{i}})+2}(c_{j_{\tilde{i}}(t),m}(t),t)|<Cl^{-C}_{k_{\tilde{i}}}\quad(note~that~step~s(k_{\tilde{i}})+2~belongs~to~\textbf{II}^{k_{\tilde{i}}}_{s(k_{\tilde{i}})+2}).\end{array}$$
Then \eqref{jiest0} yields $$|\tilde{c}_{j_{\tilde{i}}(t),m}(t)-c_{j_{\tilde{i}}(t),m}(t)|\leq |I_{j_{\tilde{i}}(t)}|\leq l^{-c}_{k_{\tilde{i}}}$$ and \eqref{jiest} implies $|~I_{j_{\tilde{i}}(t)}|^{-1}\gg l^C_{k_{\tilde{i}}}.$ Combining this with \eqref{jiest0}, for each $s(k_{\tilde{i}+1}(t))+2> l\geq j_{\tilde{i}}(t)$, we have $$|~I_{l}|^{-1}\geq |~I_{j_{\tilde{i}}(t)}|^{-1}\gg l^C_{k_{\tilde{i}}}\cdot \lambda^{c|\log|\tilde{c}_{j_{\tilde{i}}(t),m}(t)-c_{j_{\tilde{i}}(t),m}(t)||^C}\gg l^C_{k_{\tilde{i}}}\cdot |\tilde{c}_{j_{\tilde{i}}(t),m}(t)-c_{j_{\tilde{i}}(t),m}(t)|^{-C}\gg \frac{\lambda^{(\log k_{\tilde{i}})^{C}}l_{k_{\tilde{i}}}}{|\partial_x g_{j}(c_{j,m}(t),t)|},$$ which immediately implies \eqref{lem49-zh}. The remaining part for the proof of \textbf{B} is similar as the proof of \textbf{A}.
\hfill\qed\end{proof}

%\begin{remark}\label{remark-46} It is not difficult to see that \eqref{theta_t1}-\eqref{lem50-3} hold true if we replace $\theta_{n_1}$ with $g_j(x,t),~s(k_{\tilde{i}}(t))\leq j<j_{\tilde{i}}(t)$. Moreover, \eqref{theta_t1} and \eqref{lem50-0} hold true if we replace $\theta_{n_1}$ with $g_j(x,t),~j_{\tilde{i}}(t)\leq j< s(k_{\tilde{i}+1}(t))$ where $k_{\tilde{i}}=k.$ Particularly, by choosing $\epsilon=\zeta_{\mathcal{N}_j}$ in Lemma \ref{thetat1fenjie}, we have $$\begin{array}{ll}&~|h^{j,l}(x,t)|\sim^{~I_{j,l}(t)\times B(t,\lambda^{-q_{N+j-1}})}_{0,\zeta_{\mathcal{N}_j}} \left\vert\frac{\bar{v}^{j,l}_2}{\bar{u}^{j,l}_2}\right\vert;\ \ \ \ \ ~w^{j,l}(x,t)\sim^{~I_j(t)\times B(t,\lambda^{-q_{N+j-1}})}_{0,\zeta_{\mathcal{N}_j}} 2\omega^{j,l}\bar{u}^{j,l}_1,~l=1,2.\\
%\\&|\frac{\partial^2 g_{j}}{\partial x^2}|+|\frac{\partial^2 g_{j}}{\partial x\partial t}|+|\frac{\partial^2 g_{j}}{\partial t^2}|\leq C(\log k_i)^C l_{k_i},~for\ any\ (x,t)\in ~I_j\times \mathcal{B}_j.\end{array}$$ Moreover, since
% $|~I_j|\ll |\partial_x g_{j}(c_{j,m}(t),t)|,~|\partial_x g_{j}(c_{j,m}(t),t)|^{-1}, l_{k_i}^{-1},$ we have  $$\frac{l_{k_i}}{(\partial_x g_{j}(c_{j,m}(t),t))}\leq |~I_{j}|^{-1}, ~for\ any\ ~s(k_{\tilde{i}+1}(t))> j\geq j_{\tilde{i}}(t);~m=1,2.$$
%\end{remark}

\vskip 0.6cm
Let $J_i,i=1,2$ be defined in Lemma \ref{lemmathetat1}.
By \eqref{lem50-3}, in the following proof, without loss of generality, we always assume that \begin{equation}\label{153}\min\limits_{x\in J_2(t)}\vert\tan\theta_{n_1}(x,t)\vert= \vert\tan\theta_{n_1}(z_2(t),t)\vert\geq (\log k)^{-C}\cdot l_k^{-1}.\end{equation}

By the help of Lemma \ref{lemmathetat1} and \eqref{tar1}, to finish the estimate on II and III as in (\ref{qheshi}), we only need to consider the following three integrals:

\begin{equation}\label{intg1}\left\vert\int_{T^{-n_1}(I_{Y+1,1})} h(x,t)\frac{\partial_x(\tan\theta_{n_1})\mathcal{M}(\tan\theta_{n_1})}{(1+tan^2\theta_{n_1})} dx\right\vert;~\int_{T^{-n_1}(I_{Y+1,1})}\vert h(x,t)\frac{\partial_x(\tan\theta_{n_1})\mathcal{M}(\tan\theta_{n_1})}{(1+tan^2\theta_{n_1})}\vert dx;\end{equation} \begin{equation}\label{intg2}\left\vert\int_{T^{-n_1}(I_{Y+1,1})} w(x,t)\mathcal{M}(\tan\theta_{n_1})dx\right\vert;~\int_{T^{-n_1}(I_{Y+1,1})} \left\vert w(x,t)\mathcal{M}(\tan\theta_{n_1})\right\vert dx\end{equation}
with $n_1$ satisfying $\eqref{tjz}$ and  $h(x,t)$, and $w(x,t)$ defined as in (\ref{theta_t1}) satisfying (\ref{lem50-0}).
\vskip 0.3cm
\noindent \subsubsection{\text{The estimate on \eqref{intg1}}}\quad
\vskip 0.2cm
Denote the endpoints of $I_{Y+1}$ by $a_1$ and $a_2,$ then by the uniform non degeneracy of $\tan \theta_{n_1}(x,t),$ we have

 \begin{equation}\label{bjtj}|\tan \theta_{n_1}(a_1,t)|,|\tan \theta_{n_1}(a_2,t)|\geq c|(I_{Y+1,1})|^C\geq c\lambda^{-q^{\hat{\epsilon}}_{N+Y}}\geq c\lambda^{-|\log n|^{\hat{\epsilon}}}\gg c\lambda^{-e^{|\log n|^{\hat{\epsilon}}}}\geq  \epsilon_1.\end{equation}

On the other hand, by \eqref{k-reduction} and \eqref{bjtj}, for simplification, we can assume that \begin{equation}\label{bjtj0}|\tan \theta_{n_1}(a_1,t)|=|\tan \theta_{n_1}(a_2,t)|,\end{equation} since the difference between $\tan \theta_{n_1}(a_1,t)$ and $\tan \theta_{n_1}(a_2,t)$  only brings out an error term $C\lambda^{-|\log n|^{C}}$, which is ignorable in comparison with the error estimate in Lemma \ref{lemma18}.
\begin{lemma}\label{dierjiandan}If $\eqref{tjz}$ holds true, then we have
\begin{equation}\label{dier-1}\left\vert\int_{T^{-n_1}(I_{Y+1,1})}h(x,t)\frac{\partial_x(\tan\theta_{n_1})\mathcal{M}(\tan\theta_{n_1})}{1+\tan^2\theta_{n_1}} dx\right\vert\leq C(\log k)^C\end{equation}

and

\begin{equation}\label{dier-2}\int_{T^{-n_1}(I_{Y+1,1})}\left\vert h(x,t)\frac{\partial_x(\tan\theta_{n_1})\mathcal{M}(\tan\theta_{n_1})}{1+\tan^2\theta_{n_1}}\right\vert dx\leq C\cdot n^2.\end{equation}
\end{lemma}

\begin{proof}
Note that $\vert \mathcal{M}(x)\vert\leq \frac{C}{\sqrt{x^2+\epsilon_1^4}}$ ~(recall the definition of $\epsilon_1\ll 1$ in \eqref{epsilon1}). Therefore $$\begin{array}{ll}&\int_{T^{-n_1}(I_{Y+1,1})}\vert\frac{\partial_x(\tan\theta_{n_1})\mathcal{M}(\tan\theta_{n_1})}{1+\tan^2\theta_{n_1}}\vert dx \leq C\int_{T^{-n_1}(I_{Y+1,1})}\frac{|\partial_x(\tan\theta_{n_1})|}{(1+\tan^2\theta_{n_1})(\sqrt{\tan^2\theta_{n_1}+\epsilon_1^4})} dx\\&=C\int_{\tan\theta_{n_1}(T^{-n_1}(I_{Y+1,1}))}\frac{1}{(1+y^2)(\sqrt{y^2+\epsilon_1^4})} dy=C\left.\frac{\log \left\vert\frac{\sqrt{y^2+\epsilon_1^4}+\sqrt{1-\epsilon_1^4}y}{\sqrt{y^2+\epsilon_1^4}-\sqrt{1-\epsilon_1^4}y}\right\vert}{2\sqrt{1-\epsilon_1^4}}\right\vert_{a}^{b}\leq C\left.\log \left\vert\frac{\sqrt{y^2+\epsilon_1^4}+\sqrt{1-\epsilon_1^4}y}{\sqrt{y^2+\epsilon_1^4}-\sqrt{1-\epsilon_1^4}y}\right\vert\right\vert_{a}^{b}\end{array}$$ with some numbers $a, b$ satisfying $\epsilon_1\ll |T^{-n_1}(I_{Y+1,1})|^C\leq |a|,|b|\ll 1$ (by \eqref{bjtj}).

Hence it follows from the fact $\epsilon_1^C\leq \left\vert\frac{\sqrt{y^2+\epsilon_1^4}+\sqrt{1-\epsilon_1^4}y}{\sqrt{y^2+\epsilon_1^4}-\sqrt{1-\epsilon_1^4}y}\right\vert\leq \epsilon_1^{-C}$ and (\ref{tjz}) that $$\int_{T^{-n_1}(I_{Y+1,1})}\left\vert\frac{\partial_x(\tan\theta_{n_1})\mathcal{M}(\tan\theta_{n_1})}{1+\tan^2\theta_{n_1}}\right\vert dx\leq C|\log \epsilon_1|<Cn.$$ Finally, $|h(x,t)|\leq C(\log k)^C<n,$ which follows from Lemma \ref{lemmathetat1}, concludes \eqref{dier-2}.

%Recall that \eqref{ktheta} implies $\mathcal{k}(X)\sim_{0,|I_{Y+1,1}|^c} \frac{X}{\sqrt{X^4+\epsilon_1^4 X^2 +\tilde{\epsilon}^8_2}}$ with $X=\tan \theta_{n_1}(x,t).$

Next, we denote $\tilde{\epsilon}_2(c_{Y+1,1},t):=\epsilon_2.$ Then \eqref{ktheta1-1} and Lemma \ref{AZH} show that for any interval $I\subset \R/\Z$ centered by $c_{Y+1,1}$,

\begin{equation}\label{tildeep2}\left\vert \int_{I}\frac{\frac{XdX}{1+X^2}}{\sqrt{X^4+\epsilon_1^4 X^2 +\tilde{\epsilon}^8_2}}-\frac{\frac{XdX}{1+X^2}}{\sqrt{X^4+\epsilon_1^4 X^2 +\epsilon^8_2}}\right\vert\leq C\epsilon_1^{\frac{1}{4}}.\end{equation}

\eqref{tildeep2} together with \eqref{k-reduction} implies that \begin{equation}\label{lem53-1}\vert\int_{T^{-n_1}(I_{Y+1,1})}\frac{\partial_x(\tan\theta_{n_1})\mathcal{M}(\tan\theta_{n_1})dx}{1+\tan^2\theta_{n_1}}-\frac{\frac{\tan\theta_{n_1}\partial_x \tan\theta_{n_1}dx}{1+\tan\theta_{n_1}^2}}{\sqrt{\tan\theta_{n_1}^4+\epsilon_1^4 \tan\theta_{n_1}^2 +\epsilon^8_2}}\vert<C \epsilon_1^{\frac{1}{4}}.\end{equation}
%On the other hand, \eqref{bjtj} implies that $|\tan\theta_{n_1}(a_1(t),t)|=|\tan\theta_{n_1}(a_2(t),t)|$, where $a_1$ and $a_2$ are denoted by the endpoints of  $T^{-n_1}(I_{Y+1,1}).$
(\ref{bjtj0}) then together with (\ref{lem53-1}) shows
$$\begin{array}{ll}\left\vert\int_{T^{-n_1}(I_{Y+1,1})}\frac{\frac{\tan\theta_{n_1}\partial_x\tan\theta_{n_1}dx}{1+\tan\theta_{n_1}^2}}{\sqrt{\tan\theta_{n_1}^4+\epsilon_1^4 \tan\theta_{n_1}^2 +\epsilon^8_2}}\right\vert &=\int_{\tan\theta_{n_1}(a_1(t),t)}^{\tan\theta_{n_1}(a_2(t),t)}\frac{\frac{\tan\theta_{n_1} d \tan\theta_{n_1}}{1+\tan\theta_{n_1}^2}}{\sqrt{\tan\theta_{n_1}^4+\epsilon_1^4 \tan\theta_{n_1}^2 +\epsilon^8_2}}=0.\end{array}$$

Therefore \begin{equation}\label{partial-tan-k}\vert\int_{T^{-n_1}(I_{Y+1,1})}\frac{\partial_x(\tan\theta_{n_1})\mathcal{M}(\tan\theta_{n_1})dx}{1+\tan^2\theta_{n_1}}\vert\leq C\epsilon_1^{\frac{1}{4}}.\end{equation}

Next, \eqref{lem50-0} implies that $h(x,t)\sim_{0,|I_{Y+1,1}|^c} \frac{\bar{v}_2}{\bar{u}_2}.$ Combining this with \eqref{uhv}, \eqref{dier-2}, \eqref{lem53-1}  and \eqref{partial-tan-k}, we have
$$\begin{array}{ll}\vert\int_{T^{-n_1}(I_{Y+1,1})}\frac{h(x,t)\partial_x(\tan\theta_{n_1})\mathcal{M}(\tan\theta_{n_1})dx}{1+\tan^2\theta_{n_1}}\vert &\leq \vert\int_{T^{-n_1}(I_{Y+1,1})}\frac{\frac{\bar{u}_2}{\bar{v}_2}\partial_x(\tan\theta_{n_1})\mathcal{M}(\tan\theta_{n_1})dx}{1+\tan^2\theta_{n_1}}\vert+C|I_{Y+1,1}|^{c}(\log k)^Cn^2\\&\leq C(\log k)^C\epsilon_1^{\frac{1}{4}}+C\lambda^{-(\log n)^C}\leq C(\log k)^C.\hfill\qed\end{array}$$ yielding \eqref{dier-1}.
\end{proof}

\begin{remark} For $j_i(t)\leq l< s(k_{i+1}(t))+2,$ $q^{C}_{N+l-1}\leq Y_1,Y_2\leq {r}_{l},$ Lemma \ref{niceset1} leads that  $A_{Y_1}(x-Y_1\alpha)$ is $(\lambda^{-|\log q_{N+l-1}|^{C}},-)-\text{nice}$ on $I_{l}$ and $A_{Y_2}(x)$ is $(\lambda^{-|\log q_{N+l-1}|^{C}},+)-\text{nice}$ on $I_{l,}$ which implies $\|A_{Y_i}\|\geq \lambda^{\frac{1}{2}Y_1}\geq \lambda^{q^{c}_{N+l-1}}\gg |~I_{l}|^{-1}.$ We have the same argument as \eqref{bjtj0} for $\tan\theta^{Y_1+Y_2}_{Y_1}$. Then by a similar calculation to Lemma \ref{dierjiandan}, we have
\begin{equation}\label{htheta}\left\vert\int_{{~I_{l,j}}}h^{l,j}(x,t)\frac{\partial_x(\tan\theta^{Y_1+Y_2}_{Y_1})\mathcal{M}(\tan\theta^{Y_1+Y_2}_{Y_1})}{1+\tan^2\theta^{Y_1+Y_2}_{Y_1}} dx\right\vert \leq C\cdot (\log k_i)^C,~j=1,2.\end{equation} By the same argument as in Remark \ref{zhongk}, the above also holds true if we replace $\theta^{Y_1+Y_2}_{Y_1}$ with $\theta^{{r}^-_l(x)+Y_1}_{{r}^-_l(x)}$ or $\theta^{{r}^+_l(x)+Y_2}_{Y_2}.$
\end{remark}
\begin{remark}\label{remark51}  Suppose $s(k_{i}(t))+2\leq l< j_i(t),$ $Y_1,Y_2\in \Z_+$ satisfy $\max\{Y_1,Y_2\}\geq q^{C}_{N+l-1}$ and $\min\{Y_1,Y_2\}\geq c\lambda^{|\log k_i|^c}$. Then by the fact $\epsilon_1^C\leq \left\vert\frac{\sqrt{y^2+\epsilon_1^4}+\sqrt{1-\epsilon_1^4}y}{\sqrt{y^2+\epsilon_1^4}-\sqrt{1-\epsilon_1^4}y}\right\vert\leq \epsilon_1^{-C}$ and Lemma \ref{niceset1}, one can check that by the similar proof of Lemma \ref{dierjiandan}, we indeed have
\begin{equation}\label{htheta10}\int_{{~I_{l,j}}}\vert(h^{l,j}(x,t))\frac{\partial_x(\tan\theta^{Y_1+Y_2}_{Y_1})\mathcal{M}(\tan\theta^{Y_1+Y_2}_{Y_1})}{1+\tan^2\theta^{Y_1+Y_2}_{Y_1}}\vert dx\leq C\cdot (\log k_i)^C( \min\{Y_1,Y_2\})^C,~j=1,2.\end{equation} By the same argument as Remark \ref{zhongk}, the above also holds true if we replace $\theta^{Y_1+Y_2}_{Y_1}$ by $\theta^{{r}^-_l(x)+Y_1}_{{r}^-_l(x)}$ or $\theta^{{r}^+_l(x)+Y_2}_{Y_2}.$
\end{remark}
%We can also obtain a similar estimate to Lemma \ref{dierjiandan}, which will be used later, as follow.
%\begin{lemma}\label{intgra1}Assume $\eqref{tjz}$ holds true. Then for each $|I_{Y+1}|^C\geq \epsilon>0$, it holds that $$\int_{|\tan\theta_{n_1}|\geq \epsilon}\vert h(x,t)\frac{\partial_x(\tan\theta_{n_1})}{(1+\tan^2\theta_{n_1})\tan\theta_{n_1}}\vert dx\leq C(\log k)^C\cdot |\log\epsilon|.$$
%\end{lemma}
%\begin{proof}Just note the fact $\int \frac{1}{x(1+x^2)}dx=\log|x^{-2}+1|+C$ and $|h(x,t)|\leq C(\log k)^C.$
%\hfill\qed\end{proof}

\noindent \subsubsection{\text{The estimate on \eqref{intg2}}}\quad

To estimate \eqref{intg2},  we separately consider the following two different cases (recall that $\zeta_n= e^{-(\log n)^c}$):

$\mathcal{P}_1(t):=\{x\in T^{-n_1}(I_{Y+1,1})|\vert\partial_x \tan\theta_{n_1}\vert\geq \zeta_n\}$ and $~\mathcal{P}_2(t):=T^{-n_1}(I_{Y+1,1})-\mathcal{P}_1.$
\vskip 0.2cm

\noindent{\it $\lozenge$ The estimate on $\mathcal{P}_1(t)$: }\quad

\begin{lemma}\label{case1easy'}If $\eqref{tjz}$ holds true, for $w$ and $\mathcal{M}(\tan\theta_{n_1})$ defined as in
(\ref{theta_t1}), (\ref{lem50-0}) and (\ref{k(f)}), respectively, we have $$ \int_{\mathcal{P}_1}|w(x,t)\mathcal{M}(\tan\theta_{n_1}) |dx\leq C\cdot n\cdot \zeta_n^{-C}.$$
\end{lemma}

\begin{proof} On one hand, by the help of Lemma \ref{lemmathetat1}, we have $\vert w(x,t) \vert\leq C(\log k)^C.$ On the other hand, for\ any $ x\in \mathcal{P}_1,$  we have $\vert\partial_x \tan\theta_{n_1}\vert=(1+\tan^2\theta_{n_1})\vert\partial_x \theta_{n_1}\vert\geq \zeta_n.$ By the help of (\rm iii) of Lemma \ref{lemmathetat1}, there exist  at most $3$ intervals denoted by $H_j, 1\leq j\leq N\le 3$ such that $\mathcal{P}_1=\bigcup\limits_{j=1}^{N} H_j.$ Hence there exists a piecewise linear function $P(x)$ such that $$|\tan\theta_{n_1}|\geq |P(x)|,\quad |P'(x)|\geq \zeta_n^{2},\ \ \ \ \ \ \  ~x\in \bigcup\limits_{j=1}^{N} H_j.$$ Then, by the fact $\vert k(x)\vert\leq \frac{1}{\sqrt{x^2+\epsilon_1^4}}$ and Lemma \ref{usefullemma2}, we have $$\int_{\mathcal{P}_1}|w(x,t)\mathcal{M}(\tan\theta_{n_1}) |dx\leq  C\cdot |\log k|^C\cdot\left( 3C\zeta_n^{-2}|\log \epsilon_1|\right)\leq Cn\zeta_n^{-C}.\hfill\qed$$
\end{proof}
\begin{remark} By the similar proof and Lemma \ref{lemmathetat1}, it is not difficult to see that for $s(k_{i}(t))+2\leq l< j_i(t),$ we have
\begin{equation}\label{htheta11}\begin{array}{ll}&\int_{{~I_{l,j}}\bigcap \{\vert\partial_x \theta_{Y_1}^{Y_1+Y_2}\vert\geq \zeta_{\mathcal{N}_l}\}}\vert(w^{l,j}(x,t))\mathcal{M}(\tan\theta_{Y_1}^{Y_1+Y_2})\vert dx\\&\leq C\cdot (\log k_i)^C \min\{Y_1,Y_2\}^C \cdot \zeta_{\mathcal{N}_l}^{-C}\leq C\min\{Y_1,Y_2\}^C\cdot (\lambda^{(\log \mathcal{N}_{j_i(t)})^{C}}),~j=1,2.\end{array}\end{equation}  By the same argument as Remark \ref{zhongk}, the above also holds true if we replace $\theta^{Y_1+Y_2}_{Y_1}$ with $\theta^{{r}^-_l(x)+Y_1}_{{r}^-_l(x)}$ or $\theta^{{r}^+_l(x)+Y_2}_{Y_2}.$
\end{remark}
%\begin{lemma}\label{case1easy'''}If $\eqref{tjz}$ holds true, then for any $|I_{Y+1}|^C\geq \epsilon>0$ we have \begin{equation}\label{case1easy}  \int_{\mathcal{P}_1\bigcap \{ |\tan\theta_{n_1}|\geq \epsilon\}}|w(x,t)\cot\theta_{n_1} |+|\cot \theta_{n_1}|dx\leq C\cdot (\log k)^C\cdot \zeta_n^{-C}\cdot |\log \epsilon|.\end{equation}
%\end{lemma}
%\begin{proof} One only need to note the fact $\int \frac{1}{|x|} dx = \log |x|+C$ and the remaining is similar to the proof of Lemma \ref{case1easy'}.
%\hfill\qed\end{proof}

\noindent{\it $\lozenge$ The estimate on $\mathcal{P}_2(t)$: }\quad
Note that by \eqref{jiashe001} and \eqref{theta111} we have $$g_{Y+1,1}\sim_{2,\zeta_n}\tan^{-1}(\|A_{k}(x,t)\|^2\tan \left(\hat{g}_{Y,2}\right))-\frac{\pi}{2}+\hat{g}_{Y,1}:=\breve{g}_{Y+1,1},$$ which implies \begin{equation}\label{gcwc}\|\breve{g}_{Y+1,1}-\theta_{n_1}\|_{C^0(\tilde{D}_{Y+1})}\leq C(\|\Delta_{n_1}\|_{C^0(\tilde{D}_{Y+1})}+\|\Delta_{n-n_1}\|_{C^0(\tilde{D}_{Y+1})}).\end{equation}
\begin{lemma}\label{property1}Let $z_1(t)$ and $\omega$ be defined as in Lemma \ref{lemmathetat1} and (\ref{translation-notation}), respectively. If
$\|\Delta_{n_1}\|_{C^0}\leq \zeta_n|t-t^k_+|$ with $\Delta_{n_1}$  defined as in (\ref{thetat1}) and (\ref{thetat11}), then \begin{equation}\label{duandianshuzhi} |\tan \theta_{n_1}(z_1(t),t)|\sim_{0,\zeta_n} 2\omega\bar{u}_1|t-t^k_{+}|.\end{equation}
\end{lemma}

\begin{proof}Note that $\vert \tan\theta_{n_1}\vert+\vert \partial_x\tan\theta_{n_1}\vert\leq \zeta^c_{n}$ for any $x\in \mathcal{P}_{2}.$ Combining this with \eqref{theta_t1} and \eqref{lem50-0}, we have
$$\begin{array}{ll}
\partial_t \tan\theta_{n_1}(x,t)&=(-1-\tan^2\theta_{n_1})\partial_t\theta_{n_1}=h(x,t)\partial_x\tan\theta_{n_1}-(1+\tan^2\theta_{n_1})w(x,t)\\
\\&\sim_{0,\zeta^c_n}-w(x,t)\sim_{0,\epsilon^c_{n}}-2\omega\bar{u}_1.\end{array}$$ Therefore
\begin{equation}\label{gcwc1}|\tan \theta_{n_1}(z_1(t),t)-\tan \theta_{n_1}(z_1(t^k_+),t^k_+)|\sim_{0,\zeta_n^c} 2\omega\bar{u}_1|t-t^k_{+}|.\end{equation} On the other hand, by \eqref{gcwc} we have $$\|g_{Y+1,1}-\theta_{n_1}\|_{C^2}\leq C\left(\|\Delta_{n_1}\|_{C^0}+\|\Delta_{n-n_1}\|_{C^0}\right) \leq 2C\|\Delta_{n_1}\|_{C^0}\leq C\zeta_n|t-t^k_+|.$$  Moreover it is clear that $$r_{Y}\gg n;~\zeta_n,~|t-t^k_+|\geq c\lambda^{-e^{(\log\log n)^{\hat{\epsilon}^{-1}}}}\gg \lambda^{-\hat{\epsilon}^{-1}n}.$$ Combining these with (1) of Theorem \ref{15}, we have \begin{equation}\label{gcwc2}\begin{array}{ll}\vert \tan\theta_{n_1}(z_1(t^k_+),t^k_+)\vert &\leq |g_{Y+1,1}(\tilde{c}_{Y+1,1}(t^k_+))|+|g_{Y+1,1}(\tilde{c}_{Y+1,1}(t^k_+))-\tan\theta_{n_1}(z_1(t^k_+),t^k_+)|\\& \leq \lambda^{-\hat{\epsilon}^{-1}n}+C\zeta_n|t-t^k_+|\leq 2\hat{\epsilon}^{-1}\zeta_n|t-t^k_+|.\end{array}\end{equation}
Then, by \eqref{gcwc1} and \eqref{gcwc2}, we immediately obtain \eqref{duandianshuzhi}.\hfill\qed\end{proof}

\begin{lemma}\label{property2}If $\|\Delta_{n_1}\|_{C^0}> \zeta_n|t-t^k_+|$, then $\epsilon_1^2\geq c\lambda^{-(\log n)^c}|t-t^k_+|,$  where $\epsilon_1$ is defined by (\ref{epsilon1}).\end{lemma}
\begin{proof} By \eqref{thetat11}, we have $$ c(\lambda^{-(\log n)^c})|t-t^k_+|\leq \zeta_n|t-t^k_+|<\|\Delta_{n_1}\|_{C^0}\leq  C\lambda_1^{-2}\lambda^{(\log n_1)^{\hat{\epsilon}^{-2}}}.$$ Hence \begin{equation}\label{lambda12}\lambda_1^{-2}\geq c\lambda^{-(\log n_1)^{\hat{\epsilon}^{-3}}}|t-t^k_+|.\end{equation} On the other hand, by $\zeta_n|t-t^k_+|\leq \|\Delta_{n_1}\|_{C^0}\leq C\lambda^{-n_1}$ and $\lambda^{-e^{(\log \log n)^{\hat{\epsilon}^{-2}}}} \leq |t-t^k_+|\leq \lambda^{-(\log n)^{\hat{\epsilon}^{-2}}}$ (from \eqref{t-tk}), we have $$n_1\leq C(|\log (\zeta_n)|+e^{(\log \log n)^{\hat{\epsilon}^{-2}}})\leq 2Ce^{(\log \log n)^{\hat{\epsilon}^{-2}}}.$$ Therefore \eqref{lambda12} implies that $\lambda_1^{-2}\geq ce^{-(\log\log n)^{{\hat{\epsilon}^{-10}}}}|t-t^k_+|\geq c\lambda^{-(\log n)^{2c}}|t-t^k_+|$ and the definition of $\epsilon_1$ implies that $\epsilon_1^2\geq c(\lambda_1^{-2})\geq c\lambda^{-(\log n)^{c}}|t-t^k_+|$ as desired.  \hfill\qed\end{proof}

Combining \eqref{153}, Lemma \ref{property1} with Lemma \ref{property2}, we immediately have

\begin{lemma}\label{property3}Let $z_1(t)$ and $z_2(t)$ be defined as above. We have
$$\sum\limits_{i=1,2}\frac{1}{\sqrt{\epsilon_1^2+\vert\tan\theta_{n_1}(z_i(t),t)\vert}}\leq \lambda^{(\log n)^c}\cdot \frac{1}{\sqrt{\vert t- t^k_+ \vert}}.$$
\end{lemma}

Now we estimate $\int_{\mathcal{P}_{2}}\vert w(x,t)\mathcal{M}(\tan\theta_{n_1}) \vert dx$ and $|\int_{\mathcal{P}_{2}} w(x,t)\mathcal{M}(\tan\theta_{n_1})dx|$.
\begin{lemma}\label{hexinl}Let $z_i(t),i=1,2$ be defined as above and $l_k$ is defined as in (\ref{lkdf}). It holds that

\begin{equation}\label{eq312121}\int_{\mathcal{P}_{2}}\vert w(x,t)\mathcal{M}(\tan\theta_{n_1}) \vert dx\leq C(\vert\log \epsilon_1\vert)\cdot(\log k) \sum\limits_{i=1,2}\left(\sqrt{l_k\cdot \vert\tan\theta_{n_1}(z_i(t),t)\vert+l_k\cdot \epsilon_1^2}\right)^{-1}.\end{equation}

Moreover, with the notation $\Omega(t)=\sqrt{\left|{\tan\theta_{n_1}}(z_1(t),t)\cdot (\frac{\partial^2 \tan\theta_{n_1}}{\partial x^2}(z_1(t),t))^{-1}\right|}$, we have

\begin{enumerate}

\item[Case 1:] If  $\vert\tan\theta_{n_1}(z_1(t),t)\vert \geq \zeta_{n}l_k^{-1},$ then \begin{equation}\label{eq312}\int_{\mathcal{P}_{2}}|w(x,t)\mathcal{M}(\tan\theta_{n_1}) |dx\leq C\zeta_n^{-C}\cdot n ;\end{equation}
\begin{equation}\label{eq312''}\int_{\mathcal{P}_{2}\bigcap\{|\tan\theta_{n_1}|\geq \zeta_{n}\cdot          \Omega(t)\}}(|w(x,t)\cot\theta_{n_1}|+|\cot\theta_{n_1}|)dx\leq C\zeta_n^{-C}\cdot n;\end{equation}
\item[Case 2:] If  $\vert\tan\theta_{n_1}(z_1(t),t)\vert< \zeta_{n}l_k^{-1}$ and $\frac{\epsilon_1^2}{\vert\tan\theta_{n_1}(z_1(t),t)\vert}\leq (\lambda^{-cn^c})$, then it holds that

\begin{enumerate}
\item[$2_a:$] If $\tan\theta_{n_1}$ has zeros on $P_{2}$, then we have \begin{equation}\label{eq31212}\vert\int_{\mathcal{P}_{2}}w(x,t)\mathcal{M}(\tan\theta_{n_1}) dx\vert\leq \zeta_n^{-\frac{1}{10}}(l_k\cdot \vert\tan\theta_{n_1}(z_1(t),t)\vert)^{-\frac{1}{2}}+C\zeta_n^{-C}\cdot n;\end{equation}
    \begin{equation}\label{eq3121211}\vert\int_{\mathcal{P}_{2}\bigcap\{|\tan\theta_{n_1}|\geq \zeta_{n}\cdot \Omega(t)\}}w(x,t)\cot\theta_{n_1}dx\vert \leq \zeta_n^{-\frac{1}{10}}(l_k\cdot \vert\tan\theta_{n_1}(z_1(t),t)\vert)^{-\frac{1}{2}}+C\zeta_n^{-C}\cdot n;\end{equation}
      \begin{equation}\label{eq31212111}\int_{\mathcal{P}_{2}\bigcap\{|\tan\theta_{n_1}|\geq \zeta_{n}\cdot \Omega(t)\}}\vert \cot\theta_{n_1}\vert dx \leq C(\log l_k+\vert \log \tan\theta_{n_1}(z_1(t),t)\vert)(l_k\cdot \vert\tan\theta_{n_1}(z_1(t),t)\vert)^{-\frac{1}{2}}+C\zeta_n^{-C}\cdot n.\end{equation}

\item[$2_b:$]  if $\tan\theta_{n_1}$ has no zeros on $P_{2}$, then we have \begin{equation}\label{eq31212'}\vert\int_{\mathcal{P}_{2}}w(x,t)\mathcal{M}(\tan\theta_{n_1}) dx-M_k(t)\vert \leq C\zeta_k^{-C}\cdot n,\quad \vert M_k(t)\vert \sim_{0,\zeta_n^{c}} (l_k\cdot \vert\tan\theta_{n_1}(z_1(t),t)\vert)^{-\frac{1}{2}}, \end{equation} where $\vert M_k(t)\vert= C_{k}(t)\frac{2\vert t-\frac{t^k_-+t^k_+}{2}\vert}{\sqrt{|(t-t^k_+)(t-t^k_-)|}}$ and $C_{k}(t)\sim_{0, ck^{-1}} C_k^*$ with $0<(C_{k}^*)^{-1}\leq C(\log k)^C.$
    \end{enumerate}
\end{enumerate}
\end{lemma}

\begin{proof} Note that $\vert w(x,t)\vert\leq C(\log k)^C\leq Ck \leq C\vert\log \epsilon_1\vert$ and $|\mathcal{M}(\tan\theta_{n_1})|\leq \frac{1}{\sqrt{\tan^2\theta_{n_1}+\epsilon_1^4}}\leq \frac{\sqrt{2}}{|\tan\theta_{n_1}|+\epsilon_1^2}.$ By the help of Lemma \ref{useful111}, to obtain \eqref{eq312121}, it is enough to prove that \begin{equation}\label{mubiaogj}\vert\tan \theta_{n_1}\vert\geq c\sum_{i=1}^2\vert A l_k(x-z^*)^2+B_i \vert\end{equation} for $x\in \mathcal{P}_{2}$ with some $z^*\in \R/\Z,$ $(\log k)^{-C}l_k\leq |A|\leq (\log k)^{C}l_k$ and $|B_i|\sim_{0,\zeta_n^c} |\tan(\theta_{n_1}(z_i))|,~i=1,2.$

Then Lemma \ref{lm104} implies \eqref{mubiaogj} as desired.

\eqref{eq312}-\eqref{eq31212'} can be obtained from an easy corollary from Lemma \ref{usefullemma1} and \eqref{lem50-1'}.
\hfill\qed\end{proof}

Recall that $n_0=n^{\frac{1}{100}}$. By Lemma \ref{hexinl}, we have the following precise estimates.
\begin{corollary}\label{hexinle}Let $z_i(t),i=1,2$ be defined as above and assume that $\eqref{tjz}$ holds true.  Then  the following hold true.

\begin{enumerate}

\item[A:]If $n_1\leq n_0,$ then it holds that

\begin{equation}\label{eq312121*}\int_{\mathcal{P}_{2}}\vert w(x,t)\mathcal{M}(\tan\theta_{n_1}) \vert dx\leq Cn^{\frac{1}{50}}\cdot \frac{\vert t-\frac{t^k_-+t^k_+}{2}\vert}{\sqrt{|(t-t^k_+)(t-t^k_-)|}}.\end{equation}

\item[B:]If $n_1> n_0$, then the following two conclusions hold true.

\begin{enumerate}
\item[$B_1:$] If $\vert\tan\theta_{n_1}(z_1(t),t)\vert > \zeta_{n}l_k^{-1}$, then
\begin{equation}\label{142} \vert\int_{\mathcal{P}_{2}}w(x,t)\mathcal{M}(\tan\theta_{n_1}) dx\vert\leq C(\lambda^{(\log n)^C});\end{equation}
\begin{equation}\label{143}\int_{\mathcal{P}_{2}\bigcap\{|\tan\theta_{n_1}|\geq \zeta_{n}\cdot \Omega(t)\}}(\vert w(x,t)\cot\theta_{n_1} \vert +\vert\cot\theta_{n_1}\vert) dx\leq C(\lambda^{(\log n)^C}).\end{equation}
\item[$B_2:$] If $\vert\tan\theta_{n_1}(z_1(t),t)\vert \leq \zeta_{n}l_k^{-1}$, then it holds that

\begin{equation}\label{eq31212*}\vert\int_{\mathcal{P}_{2}}w(x,t)\mathcal{M}(\tan\theta_{n_1}) dx\vert\leq \lambda^{-\frac{1}{20}k}\frac{\vert t-\frac{t^k_-+t^k_+}{2}\vert}{\sqrt{|(t-t^k_+)(t-t^k_-)|}}+C\lambda^{(\log n)^C};\end{equation}
\begin{equation}\label{eq144}\vert\int_{Q}w(x,t)\cot\theta_{n_1} dx\vert \leq \lambda^{-\frac{1}{20}k}\frac{\vert t-\frac{t^k_-+t^k_+}{2}\vert}{\sqrt{|(t-t^k_+)(t-t^k_-)|}}+C\lambda^{(\log n)^C};\end{equation}
\begin{equation}\label{eq149}\int_{Q}|\cot\theta_{n_1}| dx\leq C(\vert\log|t-t_+^k|\vert+\vert\log|t-t_-^k|\vert)\frac{\vert t-\frac{t^k_-+t^k_+}{2}\vert}{\sqrt{|(t-t^k_+)(t-t^k_-)|}}+C\lambda^{(\log n)^C},
\end{equation}
where $Q=\mathcal{P}_{2}\bigcap\{|\tan\theta_{n_1}|\geq \zeta_{n}\cdot \Omega(t)\}.$
\vskip 0.15cm
\noindent Particularly, for $t\in (t^k_-,t^k_+)\bigcap \left(\mathcal{B}_{Y+1}(t^{k}_{-})\bigcup\mathcal{B}_{Y+1}(t^{k}_{+})\right)$, we have \begin{equation}\label{eq31212'*}\vert\int_{\mathcal{P}_{2}}w(x,t)\mathcal{M}(\tan\theta_{n_1}) dx-M_k(t)\vert \leq C\lambda^{(\log n)^C}, \end{equation} where $ M_k(t)$ is defined as in Lemma \eqref{hexinl}.
\end{enumerate}\end{enumerate}
\end{corollary}

\begin{proof} Lemma \ref{property3}, Lemma \ref{hexinl} and the fact $\epsilon_1\leq C\lambda^{-n_0}= C\lambda^{-n^{\frac{1}{100}}}$ clearly imply that \begin{equation}\label{jichugj}\int_{\mathcal{P}_{2}}\vert w(x,t)\mathcal{M}(\tan\theta_{n_1}) \vert dx\leq Cn^{\frac{1}{100}}(\lambda^{C(\log n)^c})\cdot \frac{1}{\sqrt{l_k|t-t^k_+|}}\leq Cn^{\frac{1}{80}}\cdot \frac{1}{\sqrt{l_k|t-t^k_+|}}.\end{equation} Note that if $|t-t^k_+|\leq \zeta_{n}l_k^{-1}$, then it follows from \eqref{gapgja} of Lemma \ref{pugapgj} that \begin{equation}\label{jinsishuzhi}\frac{|t-\bar{t}^k|}{\sqrt{|t-t^k_-|}}\sim_{0,\zeta_n} d^*l_k^{-\frac{1}{2}}\end{equation} with $c(\log k)^{-C}<d^*,(d^*)^{-1}\leq C(\log k)^C.$ Then the right side of \eqref{jichugj} is less than $Cn^{\frac{1}{50}}\cdot \frac{\vert t-\frac{t^k_-+t^k_+}{2}\vert}{\sqrt{|(t-t^k_+)(t-t^k_-)|}}.$ If $|t-t^k_+|> \epsilon_{n}l_k^{-1}$, then the right side is less than $Cn^{\frac{1}{50}}\cdot \zeta_n^{-C}\leq C\lambda^{(\log n)^C}.$ Therefore \eqref{eq312121*} holds true.

For the remaining  inequalities, on one hand we have $$\|\Delta_{n_1}\|_{C^0} \leq C\epsilon_1\leq C\lambda^{-n^{\frac{1}{100}}}\ll c\lambda^{-e^{(\log \log n)^{C}}}\leq  \zeta_n|t-t^k_+|.$$ (Note \eqref{t-tk} implies $\lambda^{-e^{(\log \log n)^{C}}} \leq |t-t^k_+|\leq \lambda^{-(\log n)^{C}}$) Then, by the help of Lemma \ref{property1}, \eqref{duandianshuzhi} holds true.  On the other hand, we have $$\epsilon_1^2\leq C\lambda^{-n^{\frac{1}{100}}}\ll c\lambda^{-e^{(\log \log n)^{C}}}\leq  C|t-t^k_+|,$$ which implies that \eqref{eq31212} holds true. Combining \eqref{duandianshuzhi} and our assumption $\vert\tan\theta_{n_1}(z_i(t),t)\vert \leq \zeta_nl_k^{-1}$, we immediately have $|t-t^k_+|\leq \zeta_nl_k^{-1},$ which yields \eqref{jinsishuzhi}. Then, combining \eqref{duandianshuzhi}, \eqref{eq31212} and \eqref{jinsishuzhi}, we immediately obtain \eqref{eq31212*}, \eqref{eq144} and \eqref{eq149}. Combining \eqref{duandianshuzhi}, \eqref{eq31212'} and \eqref{jinsishuzhi}, we obtain \eqref{eq31212'*}. \eqref{142} and \eqref{143} directly follow from case 1 of Lemma \ref{hexinl} by the fact $\zeta_n^{-1}\leq n.$
\hfill\qed\end{proof}
\begin{remark}\label{remark-62} Note that by a proof similar to the one for \eqref{eq312121*} (replace $n,n_0,n_1$ with $r_l+Y,Y,r_l$ and apply the fact that $r_l\geq \lambda^{q^c_{N+l-1}}\gg q^C_{N+l-1}$), it indeed holds that for $q^c_{N+l-1}\leq Y\leq q^C_{N+l-1}$ and $s(k_i(t))+2\leq l<s(k_{i+1}(t))+2,~j=1,2$: \begin{equation}\label{1234511}\int_{~I_{l,j}\bigcap \{\vert\partial_x \tan\theta^{{r}_l+Y}_{{r}_l}\vert\leq \zeta_{\mathcal{N}_l}\}}\vert w^{l,j}(x,t)\mathcal{M}(\tan\theta^{{r}_l+Y}_{{r}_l}) \vert dx\leq Cq^{C}_{N+l-1}\cdot \frac{\vert t-\frac{t^{k_i}_-+t^{k_i}_+}{2}\vert}{\sqrt{|(t-t^{k_i}_+)(t-t^{k_i}_-)|}}.\end{equation}
Combining \eqref{htheta11} with \eqref{1234511} (note that in the case $j_i(t)\leq l<s(k_{i+1}(t))+2,$ we do not need to consider $\mathcal{P}_1$ since $\vert\partial_x g_{\bar{l}}\vert\geq \vert~I_l \vert^c\geq \zeta_{\mathcal{N}_l},\  x\in ~I_l,$ which follows from the definition of $\bar{l}$ at the beginning of Section 4), we obtain \begin{equation}\label{1234512}\int_{~I_{l,j}}\vert w^{l,j}(x,t)\mathcal{M}(\tan\theta^{{r}_l+Y}_{{r}_l}) \vert dx\leq Cq^{C}_{N+l-1}\cdot \left(\frac{\vert t-\frac{t^{k_i}_-+t^{k_i}_+}{2}\vert}{\sqrt{|(t-t^{k_i}_+)(t-t^{k_i}_-)|}}+\lambda^{(\log \mathcal{N}_{j_i(t)})^C}\right).\end{equation}

Then \eqref{htheta10} and \eqref{1234512} imply
\begin{equation}\label{12345121}\int_{~I_{l,j}}\vert \partial_t \theta^{{r}_l+Y}_{{r}_l} \mathcal{M}(\tan\theta^{{r}_l+Y}_{{r}_l}) \vert dx\leq Cq^{C}_{N+l-1}\cdot \left(\frac{\vert t-\frac{t^{k_i}_-+t^{k_i}_+}{2}\vert}{\sqrt{|(t-t^{k_i}_+)(t-t^{k_i}_-)|}}+\lambda^{(\log \mathcal{N}_{j_i(t)})^C}\right).\end{equation}

And Remark \ref{remark45} implies the above holds true for all $Y\leq q^C_{N+l-1}.$ By the same argument as Remark \ref{zhongk}, the above also holds true if we replace $\theta^{{r}_l+Y}_{{r}_l}$ with $\theta^{{r}^-_l(x)+Y}_{{r}^-_l(x)}$ or $\theta^{{r}^+_l(x)+Y}_{Y}.$
\end{remark}
%\begin{remark} Note that in \eqref{eq312121*}, if we don't have the assumption $n_1\leq n_0,$ then we still have $\int_{\mathcal{P}_{2}}\vert w(x,t)\mathcal{K}(\tan\theta_{n_1}) \vert dx\leq C(n^{2})\cdot \frac{\vert t-\frac{t^k_-+t^k_+}{2}\vert}{\sqrt{|(t-t^k_+)(t-t^k_-)|}}+C\lambda^{C(\log n)^C}.$ More generally, we indeed have that: for any $t$ and $s(k_i(t))\leq l < s(k_{i+1}(t)),$  assuming that $~I_{s(k_i),1}+k_i\alpha\bigcap ~I_{s(k_i),2}\neq \emptyset,$ if we denote $\theta^{Y_1,Y_2}$ by $s(A_{Y_1})-u(A_{Y_2})$ satisfying $C\left|\log \vert\zeta_{k_i(t)}\vert\right|<Y_1,Y_2\leq r_{l},$ then by the absolutely same proof as above we obtain that $\int_{\mathcal{P}_{2}}\vert w^{j,l}(x,t)\mathcal{K}(\tan\theta^{Y_1,Y_2}) \vert dx\leq C(\log|\min \{Y_1,Y_2\}|)\cdot \frac{\vert t-\frac{t^{k_i}_-+t^{k_i}_+}{2}\vert}{\sqrt{|(t-t^{k_i}_+)(t-t^{k_i}_-)|}}+C\zeta_{k_i}^{-C}.$

%\end{remark}
\begin{remark}\label{remark-60} By \textbf{B} of Lemma \ref{lemmathetat1}, Corollary \ref{hexinle} also holds true if we replace $\theta_{n_1}$ with $g_{\bar{j}},$ where $s(k_{\tilde{i}}(t))+2\leq j< j_{\tilde{i}}(t)$ with $\tilde{i}$ satisfying $k_{\tilde{i}}=k.$ Particularly, it is not difficult to see that \begin{equation}\label{pxgj}\sum\limits_{l=1}^2\frac{1}{|\partial_x g_{\bar{j}}(c_{\bar{j},l}(t),t)|}\leq C(\log k_i)^C\frac{\vert t-\frac{t^{k_i}_-+t^{k_i}_+}{2}\vert}{\sqrt{|(t-t^{k_i}_+)(t-t^{k_i}_-)|}},\  j_i(t)\leq j< s(k_{i+1}(t))+2.\end{equation}

(The case $c_{\bar{j},l}(t)\in \mathcal{P}_2$ follows from the proof of Corollary \ref{hexinle} and the case $c_{\bar{j},l}(t)\in \mathcal{P}_1$ is trivial since the left summation is dominated by the second term of the right side.)
\end{remark}
\subsubsection{The estimate on II and III}
 Combining Lemma \ref{dierjiandan}, Lemma \ref{case1easy'} and Corollary \ref{hexinle}, we have that \eqref{eq312121*}, \eqref{eq31212*} and \eqref{eq31212'*} hold true by replacing $w(x,t)\mathcal{M}(\tan\theta_{n_1})$ with $\partial_t\theta_{n_1}\mathcal{K}(\tan\theta_{n_1})$ and $\mathcal{P}_{2}$ with $~I_{Y+1}.$ Then, \eqref{jifendejianjin} and the fact $\max\limits_{(x,t)\in \tilde{D}_{Y+1}}\bar{\epsilon}_1(x,t)\leq \epsilon_1^c\ll C(\log n)$ if $n_1\geq n_0$ imply the following Lemma.
%(Let $\epsilon$ in Lemma \ref{intgra1} and \ref{case1easy'''} be $\zeta_{n}\cdot \sqrt{\frac{\vert\tan\theta_{n_1}(z_1(t),t)\vert}{\frac{\partial^2 \tan\theta_{n_1}(x,t)}{\partial x^2}(z_1(t),t)}},$ combining this with the analysis as above we can obtain \eqref{pujideguji2} and \eqref{pujideguji3}.)

\begin{lemma}\label{zhongyao}

For $0\leq n_1<n^{\frac{1}{100}}$ and $n-n^{\frac{1}{100}}<n_2<n-k,$ it holds that
 \begin{equation}\label{universalestimate}
\vert\mathcal{S}_{I_{Y+1,1},n-n_1,n_1}(t)\vert \leq C(n^{\frac{1}{50}})\cdot \frac{\vert t-\frac{t^k_-+t^k_+}{2}\vert}{\sqrt{|(t-t^k_+)(t-t^k_-)|}}+C\lambda^{(\log n)^C}.\end{equation}

Moreover, if $n^{\frac{1}{100}}<n_1\leq n-n^{\frac{1}{100}}$, then it holds that

\begin{equation}\label{pujideguji}\vert\mathcal{S}_{I_{Y+1,1},n-n_1,n_1}(t)\vert\leq C\lambda^{-\frac{1}{20}k}\frac{\vert t-\frac{t^k_-+t^k_+}{2}\vert}{\sqrt{|(t-t^k_+)(t-t^k_-)|}}+C\lambda^{(\log n)^C},\end{equation}
\begin{equation}\label{pujideguji2}\vert\int_{T^{-n_1}(I_{Y+1,1})\bigcap \{\vert\theta_{n_1}\vert\geq \zeta_{n}\cdot \Omega(t)\}}\cot \theta_{n_1}\cdot \partial_t\theta_{n_1} dx\vert\leq C\lambda^{-\frac{1}{20}k}\frac{\vert t-\frac{t^k_-+t^k_+}{2}\vert}{\sqrt{|(t-t^k_+)(t-t^k_-)|}}+C\lambda^{(\log n)^C},\end{equation} and
\begin{equation}\label{pujideguji3}\int_{T^{-n_1}(I_{Y+1,1})\bigcap \{\vert\theta_{n_1}\vert\geq \zeta_{n}\cdot \Omega(t)\}}\vert\cot \theta_{n_1}\vert dx\leq C(\vert\log|t-t_+^k|\vert+\vert\log|t-t_-^k|\vert)\frac{\vert t-\frac{t^k_-+t^k_+}{2}\vert}{\sqrt{|(t-t^k_+)(t-t^k_-)|}}+C\lambda^{(\log n)^C}.\end{equation}
Particularly, for $n^{\frac{1}{100}}<n_1\leq n-n^{\frac{1}{100}}$ and $t\in (t^k_-,t^k_+)\bigcap \left(\mathcal{B}_{Y+1}(t^{k}_{-})\bigcup\mathcal{B}_{Y+1}(t^{k}_{+})\right)$, we have \begin{equation}\label{pugapdeguji}\vert\mathcal{S}_{I_{Y+1,1},n-n_1,n_1}(t)-M_k(t)\vert \leq C\lambda^{(\log n)^C} \end{equation} where $\vert M_k(t)\vert= C_{k}(t)\frac{2\vert t-\frac{t^k_-+t^k_+}{2}\vert}{\sqrt{|(t-t^k_+)(t-t^k_-)|}}$ and $C_{k}(t)\sim_{0, (Ck)^{-1}} C_k^*$ with $0<C^*_k,(C_{k}^*)^{-1}\leq C(\log k)^C.$
\end{lemma}

\begin{remark}\label{remark-61} By the help of \textbf{B} of Lemma \ref{lemmathetat1} and all that arguments in the proof of Lemma \ref{zhongyao}, \eqref{pujideguji2} and \eqref{pujideguji3} also hold true if we replace $\theta_{n_1}$ with $g_j,$ $s(k_{\tilde{i}}(t))+2\leq j< j_{\tilde{i}}(t)$ where $k_{\tilde{i}}=k.$ In particular, let $\tilde{i}=i$ and choosing $j=j_i(t)-1$ and $\zeta_{\mathcal{N}_j}$ satisfying $\zeta_{\mathcal{N}_j}\cdot \sqrt{\frac{\vert\tan g_{\bar{j}}(z^j_1(t),t)\vert}{\frac{\partial^2 \tan g_{\bar{j}}}{\partial x^2}(z^j_1(t),t)}}=|~I_{j_i}|$,
from \eqref{pujideguji2}, \eqref{pujideguji3} and \eqref{pugapdeguji}, we obtain that

\begin{equation}\label{lip-11} \vert\int_{~I_{j_i(t)-1}-~I_{j_i(t)}}\cot g_{{j}_i(t)-1}\cdot \partial_t g_{{j}_i(t)-1} dx\vert\leq C\frac{\vert t-\frac{t^{k_i}_-+t^{k_i}_+}{2}\vert}{\sqrt{|(t-t^{k_i}_+)(t-t^{k_i}_-)|}}+C\lambda^{\left(\log (\mathcal{N}_{j_i(t)})\right)^{C}}.
\end{equation}
\end{remark}

By Lemma \ref{lipuseful} and a similar proof as above, we have the following result.

  \begin{lemma}\label{lipschitz111} For $t\in [\inf v-\frac{2}{\lambda},\sup v+\frac{2}{\lambda}],$ $j_i(t)\leq j< s(k_{i+1}(t))+2$ and $r_{j}\geq Y_1,Y_2\geq q^C_{N+j-2}$, it holds that \begin{equation}\label{lip-12}\vert\int_{~I_j-~I_{j+1}} \cot g_{{j}}\partial_t g_{{j}} dx\vert,\ \vert\int_{~I_j}\frac{\partial_{\theta_{Y_1}^{Y_1+Y_2}} \|A_{Y_1+Y_2}\|}{\|A_{Y_1+Y_2}\|}\partial_t \theta_{Y_1}^{Y_1+Y_2} dx\vert \leq C\frac{\vert t-\frac{t^{k_i}_-+t^{k_i}_+}{2}\vert}{\sqrt{|(t-t^{k_i}_+)(t-t^{k_i}_-)|}}+C\lambda^{(\log k_i)^C}.\end{equation}
  \end{lemma}

  \begin{proof} For $~j_i(t)\leq j< s(k_{i+1}(t))+2,$ by \textbf{B} of Lemma \ref{lemmathetat1}, we have \begin{equation}\label{gjt}\partial_t g_{j}\vert_{I_{j,l}}=h^{j,l}(x,t)\cdot \partial_x g_{j}+w^{j,l}(x,t),l=1,2\end{equation} with \begin{equation}\label{gjxt}h^{j,l}(x,t)\sim_{0,\zeta_{\mathcal{N}_j}} h^{j,l}(c_{j,l}(t),t)\ {\rm\ on\ } {I_{j,l}};~w^{j,l}(x,t)\sim_{0,\zeta_{\mathcal{N}_j}} w^{j,l}(c_{j,l}(t),t)\ {\rm \ on\ }{I_{j,l}};~|h^{j,l}(c_{j,l}(t),t)|,|w^{j,l}(c_{j,l}(t),t)|\leq C(\log k)^C\end{equation}

  and

  \begin{equation}\label{12190}|\frac{\partial^2 g_{j}}{\partial x^2}|+|\frac{\partial^2 g_{j}}{\partial x\partial t}|+|\frac{\partial^2 g_{j}}{\partial t^2}|\leq C(\log k_i)^C l_{k_i},\ (x,t)\in I_{j,l}\times B(t_0,\lambda^{-q_{N+j-1}}).\end{equation}

 Note that \eqref{12190} implies that \begin{equation}\label{par2gj1}\vert g_{j}(x,t)-\partial_x g_{j}(c_{j,l}(t),t)(x-c_{j,l}(t))\vert \leq C(\log k_i)^Cl_{k_i}\cdot |x-c_{j,l}(t)|^2;
~\partial_x g_{j}(x,t)\sim_{0,\zeta_{\mathcal{N}_j}}\partial_x g_{j}(c_{j,l}(t),t), {\rm\ on\ } {I_{j,l}}~\end{equation}$l=1,2.$

  On the other hand, it follows from \eqref{lem49-zh} that \begin{equation}\label{12191}\frac{l_{k_i}}{|\partial_x g_{j}(c_{j,m}(t),t)|}\leq C\lambda^{-(\log k_i)^{C}} |~I_{j}|^{-1},~m=1,2,~j_i(t)\leq j< s(k_{i+1}(t))+2.\end{equation}

  Note that for any fixed $x\in ~I_j$, it holds from result 4 of Lemma \ref{niceset1} that $A_{Y_1}$ is $(\lambda^{-|\log q_{N+j-1}|^{C}},+)-\text{nice}$ on $I_j$ and $A_{Y_2}$ is $(\lambda^{-|\log q_{N+j-1}|^{C}},-)-\text{nice}$ on $I_j.$ Hence it follows from \eqref{result1} that $$\vert(s(A_{Y_1})-u(A_{Y_2}))-g_{j}\vert_{C^2(~I_j)}\leq C\left(\min\{\|A_{Y_1}\|,\|A_{Y_2}\|\}\right)^{-1}\leq C\lambda^{-q^c_{N+j-2}}\ll |~I_j|\leq \zeta_{\mathcal{N}_j},|~I_{j_i(t)}|.$$ Therefore \eqref{gjt}-\eqref{12191} also hold true if we replace $g_{j}$ with $\theta_{Y_1}^{Y_1+Y_2}.$

  Combining Lemma \ref{lemma8}, \eqref{htheta} and Lemma \ref{lipuseful}, we immediately get

  $$\begin{array}{ll}&\vert\int_{~I_j-~I_{j+1}}\cot g_{j}\partial_t g_{j} dx\vert,\vert\int_{~I_j}\frac{\partial_{\theta_{Y_1}^{Y_1+Y_2}} \|A_{Y_1+Y_2}\|}{\|A_{Y_1+Y_2}\|}\partial_t \theta_{Y_1}^{Y_1+Y_2} dx\vert \leq \sum\limits_{m=1}^2C(\log {k_i})^C\frac{l_{k_i}|~I_{j}|}{(\partial_x g_{j}(c_{j,m}(t),t))^2}+C(\log k_i)^C\\
  &\leq \sum\limits_{m=1}^2C(\lambda^{-(\log k_i)^C})\frac{1}{\vert\partial_x g_{j}(c_{j,m}(t),t)\vert}+C(\log k_i)^C\qquad (by ~\eqref{gjt}-~\eqref{12191}).\end{array}$$ Finally, we finish the proof by \eqref{pxgj}.
 \hfill\qed\end{proof}

 Note that \eqref{lip-11} and \eqref{lip-12} imply that $\sum\limits_{j=j_i(t)-1}^{l}\vert\int_{~I_j-~I_{j+1}} \cot g_{j}\partial_t g_{j} dx\vert \leq C\frac{\vert t-\frac{t^{k_i}_-+t^{k_i}_+}{2}\vert}{\sqrt{|(t-t^{k_i}_+)(t-t^{k_i}_-)|}}+C\lambda^{(\log k_i)^C}$ for any $j_i(t)\leq l<s(k_{i+1}(t))+2.$
 By a similar argument to Remark \ref{zhongk}, we indeed get
  \begin{lemma}\label{remark67}  For $t\in [\inf v-\frac{2}{\lambda},\sup v+\frac{2}{\lambda}],$ $j_i(t)\leq l< s(k_{i+1}(t))+2$ and $r_{j}\geq Y_1,Y_2\geq q^C_{N+j-2}$, it holds that $$\sum\limits_{j=j_i(t)-1}^{l}\left\vert\int_{~I_j-~I_{j+1}} \cot \theta^{{r}^-_j(x)+{r}^+_j(x)}_{{r}^-_j(x)}\partial_t \theta^{{r}^-_j(x)+{r}^+_j(x)}_{{r}^-_j(x)}dx\right\vert,\quad \left\vert\int_{~I_l}\frac{\partial_{\theta_{{r}_l^-(x)}^{Y_1+{r}_l^-(x)}} \|A_{Y_1+{r}_l^-(x)}\|}{\|A_{Y_1+{r}_l^-(x)}\|}\partial_t \theta_{{r}_l^-(x)}^{Y_1+{r}_l^-(x)} dx\right\vert,$$ $$ \left\vert\int_{~I_l}\frac{\partial_{\theta_{Y_2}^{Y_2+{r}_l^+(x)}} \|A_{Y_2+{r}_l^+(x)}\|}{\|A_{Y_2+{r}_l^+(x)}\|}\partial_t \theta_{Y_2}^{Y_2+{r}_l^+(x)} dx\right\vert\leq C\frac{\vert t-\frac{t^{k_i}_-+t^{k_i}_+}{2}\vert}{\sqrt{|(t-t^{k_i}_+)(t-t^{k_i}_-)|}}+C\lambda^{(\log k_i)^C}.$$
  \end{lemma}

By \eqref{zuieasy}, \eqref{7.2} and \eqref{universalestimate}, it holds that
\begin{equation}\label{III1} n\cdot III\leq 2n_0\cdot \left(C(n^{\frac{1}{50}})\cdot \frac{\vert t-\frac{t^k_-+t^k_+}{2}\vert}{\sqrt{|(t-t^k_+)(t-t^k_-)|}}+C\lambda^{(\log n)^{C}}\right)\leq n^{\frac{1}{10}}\cdot \frac{\vert t-\frac{t^k_-+t^k_+}{2}\vert}{\sqrt{|(t-t^k_+)(t-t^k_-)|}}+C\lambda^{(\log n)^{C}}.\end{equation}
By \eqref{7.2} and \eqref{pujideguji}, it holds that
\begin{equation}\label{II1}n\cdot II\leq (n-2n_0)\cdot (C\lambda^{-\frac{1}{20}k}\frac{\vert t-\frac{t^k_-+t^k_+}{2}\vert}{\sqrt{|(t-t^k_+)(t-t^k_-)|}}+C\lambda^{(\log n)^{C}})\leq n\cdot (M_k(t)+C\lambda^{(\log n)^{C}}),\end{equation}
where $M_k(t)$ is defined as in Lemma \ref{hexinl}.

\vskip 0.4cm
{\it The proof of (1) and (2) of Lemma \ref{lemma 6.1}}
\vskip 0.2cm
Now we are in a position  to prove (1) and (2) of Lemma \ref{lemma 6.1}.

By \eqref{qheshi},~\eqref{subi}, Lemma \ref{S*3}, \eqref{II1} and \eqref{III1}, we obtain that
$$\begin{array}{ll}&\left|\frac{1}{n}\int_T\frac{\partial_t\|A_{n}(x,t)\|}{\|A_{n}(x,t)\|}dx\right|\leq I+II+III\\&\leq (1+o(n^{-\frac{1}{5}}))\cdot |M_k(t)|+C\lambda^{(\log n)^{C}}~(note~\bar{C}_k\geq (\log k)^{-C}\gg n^{-1})\\&=\tilde{C}_k(t)\cdot\frac{2|t-\frac{t^k_-+t^k_+}{2}|}{\sqrt{|(t-t^k_-)(t^k_+-t)|}}+C\lambda^{(\log n)^{C}},\end{array}$$
 where $\tilde{C}_k$ satisfies  $\tilde{C}_k\sim_{0, (Ck)^{-1}} C_k^*$ with $0<C^*_k,(C_{k}^*)^{-1}\leq C(\log k)^C,$ which is indeed \eqref{6.1.1} of Lemma \ref{lemma 6.1}.
\eqref{6.1.2} of Lemma \ref{lemma 6.1} can be similarly obtained.

%%%%%%%%%%%%%%%%%%%%%%%%%%%%%%%%%%%%%%%%%%%%%%%%%%%%%%%%%%%%%%%%%%%%%%%%%%%%%%%%%%%%%%%%%%%%%%%%%%%%%%%%%%%%%%%%%%%%%%%%
%%%%%%%%%%%%%%%%%%%%%%%%%%%%%%%%%%%%%%%%%%%%%%%%%%%%%%%%%%%%%%%%%%%%%%%%%%%%%%%%%%%%%%%%%%%%%%%%%%%%%%%%%%%%%%%%%%%%%%%%%

\section{The proof of (3),~(4) of Lemma \ref{lemma 6.1}}\label{section5.3}

\subsection{Some lemmas}
We denote
$$
\frac{\vert t_0-\frac{t^{k_i}_-+t^{k_i}_+}{2}\vert}{\sqrt{|(t_0-t^{k_i}_+)(t_0-t^{k_i}_-)|}}+\lambda^{\left(\log (\mathcal{N}_{j_i(t_0)})\right)^{C}}:=\mathcal{G}_{j_i(t_0)}.$$
Recall that $$~I_i(t):=B(c_{i,1}(t),\lambda^{-\hat{\epsilon}^{-1}q^{\hat{\epsilon}}_{N+i-1}})\bigcup B(c_{i,2}(t),\lambda^{-\hat{\epsilon}^{-1}q^{\hat{\epsilon}}_{N+i-1}}):=~I_{i,1}\bigcup ~I_{i,2}.$$

Our main target is to prove that
\begin{equation}\label{tarobtain-lip}\left|\int_{x\in \R/\Z}\frac{d L_{\mathcal{N}_{l}}(t_0)}{d t} dx\right|\leq C\cdot \mathcal{G}_{j_i(t_0)}\end{equation} for any $j_i(t_0)-1\leq l< s(k_{i+1}(t_0))+2$ (the estimate on $\int_{x\in \R/\Z}\frac{d L_{2\mathcal{N}_{l}}(t_0)}{d t} dx$ can be similarly obtained).

 For this purpose, we need the following  lemmas.
\begin{lemma}\label{dAdlambda}

Consider
$A_n(x,t)=A_{n-l}(x,t)A_l(x).$ We define $\|A_{n-l}(x,t)\|:=\lambda_1(x,t),~\|A_l(x,t)\|:=\lambda_2(x,t)$~and $~\theta_l(x,t):=\frac{\pi}{2}-s(A_{n-l}(x,t))+u(A_l(x,t)).$
Assume that there exists $\mu_i\gg 1,~i=1,2$ such that
$$\mu_i^{\frac{100}{99}}\geq \lambda_i\geq \mu_i^{\frac{99}{100}},~i=1,2.$$
Then the following several results hold true.
\begin{enumerate}
\item It holds that
\begin{equation}\label{tj0}\frac{1}{\|A_n\|}\left|\frac{d\|A_n\|}{d\lambda_i}\right|<C\lambda_i^{-1},\quad i=1,\ 2.\end{equation}

\item If \begin{equation}\label{tj1}\lambda_2^2\leq \lambda_1\ (respetively\ \lambda_1^2\leq\lambda_2),\end{equation}

then we have
\begin{equation}\label{tj3}\frac{1}{\|A_n\|}\frac{d\|A_n\|}{d\lambda_1}
\sim_{0,\mu_1^{-1}}\lambda_1^{-1}\quad  (respetively\ \frac{1}{\|A_n\|}\frac{d\|A_n\|}{d\lambda_2}
\sim_{0,\mu_2^{-1}}\lambda_2^{-1}).\end{equation}

\item If
\begin{equation}\label{tj2}|\theta_l|\geq \lambda_m^{-\eta}, ~where~ \lambda_m=\min\{\lambda_1,\lambda_2\}~ and ~\eta\ll 1,\end{equation}

then we have
\begin{equation}\label{tj3'}\frac{1}{\|A_n\|}\frac{d\|A_n\|}{d\lambda_1}
\sim_{0,\mu_1^{-1}}\lambda_1^{-1}\quad  and~\frac{1}{\|A_n\|}\frac{d\|A_n\|}{d\lambda_2}
\sim_{0,\mu_2^{-1}}\lambda_2^{-1}.\end{equation}
\end{enumerate}

%More precisely, for all the two cases,
\end{lemma}
\begin{proof}
Without loss of generality, we only consider the estimates on $\frac{1}{\|A_n\|}\frac{d\|A_n\|}{d\lambda_1}$. By \eqref{lemma8,1} and a direct calculation, we have

\begin{align*}
&\frac{1}{\|A_n\|}\frac{\p\|A_n\|}{\p\lambda_1}
= & \frac{sgn(\cot \theta_l )\lambda_1^{-1}\left((1-\frac{1}{\lambda_1^4\lambda_2^4})\cot\theta_l+(\frac{1}{\lambda_2^4}-\frac{1}{\lambda_1^4})\tan\theta_l\right)}{\sqrt{(1-\frac{1}{\lambda_1^4\lambda_2^4})^2
\cot^2\theta_l+(\frac{1}{\lambda_1^4}-\frac{1}{\lambda_2^4})^2\tan^2\theta_l+2(1+\frac{1}{\lambda_1^4\lambda_2^4})
(\frac{1}{\lambda_1^4}+\frac{1}{\lambda_2^4})-\frac{8}{\lambda_1^4\lambda_2^4}}}
=\lambda_1^{-1}\frac{a_1\cdot sgn(\cot\theta_l)}{\sqrt{a_1^2+a_2^2}},
\end{align*}
where $a_1=(1-\frac{1}{\lambda_1^4\lambda_2^4})\cot\theta_l+(\frac{1}{\lambda_2^4}-\frac{1}{\lambda_1^4})\tan\theta_l$
and $a_2=\frac{2}{\lambda_1^2}(1-\frac{1}{\lambda_2^4})$.

Thus it is easy to obtain $\left|\frac{1}{\|A_n\|}\frac{d\|A_n\|}{d\lambda_1}\right|<C\lambda_1^{-1}.$

Assume the condition (\ref{tj1}) holds, then $\frac{1}{\lambda_1^4}\ll\frac{1}{\lambda_2^4}$, which implies
two terms of $a_1$ share the same sign. Thus
$$
a_1=\cot\theta_l+\frac{1}{\lambda_2^4}\tan\theta_l+o(a_1)>\frac{1}{2\lambda_2^2}.
$$
Since $a_2=\frac{2}{\lambda_1^2}+o(a_2)$, with the help of (\ref{tj1}), we have $a_2\ll|a_1|,$ which implies (\ref{tj3}) holds true.
On the other hand, assume the condition (\ref{tj2}) holds, then $$\left|(1-\frac{1}{\lambda_1^4\lambda_2^4})\cot\theta_l\right|\gg
\max\{\left|(\frac{1}{\lambda_2^4}-\frac{1}{\lambda_1^4})\tan\theta_l\right|,\ a_2\}.$$ Thus (\ref{tj3'}) holds true.
\hfill\qed\end{proof}

Next we fix $l$ with $j_{i^*}(t_0)-1\leq l< s(k_{i^*+1}(t_0))+2.$
Now we give some definitions.

\begin{definition} Let $s\in \Z_+,~x_0\in~\R/\Z$ and $t_0\in [\inf v-\frac{2}{\lambda},\sup v+\frac{2}{\lambda}].$

  If step $s$ is of Type ${I_s}$ ,   we denote $\{x_0+l\alpha \vert x_0+l\alpha\in ~I_{s},~1\leq l\leq n \}$ by $\{x_{i^j_{s}}\}_{j=1}^{h(s,n,x_0,t_0)}$ with some $h(s,n,x_0,t_0)\in \Z_+.$

  If step $s$ is of Type~${II^{k^*}_s}$ satisfying $~I_{s,1}+k^*\alpha\bigcap ~I_{s,2}\neq \emptyset$ for some $k^*\in \Z_+$, then we denote $\{x_0+l\alpha~\vert x_0+l\alpha\in ~I_{s,1},~1\leq l\leq n \}$ by $\{x_{i^j_{s}}\}_{j=1}^{h(s,n,x_0,t_0)}$ with some $h(s,n,x_0,t_0)\in \Z_+.$

   If step $s$ is of Type~${II^{k^*}_s}$ satisfying $~I_{s,1}-k^*\alpha\bigcap ~I_{s,2}\neq \emptyset$ for some $k^*\in \Z_+$, then we denote $\{x_0+l\alpha \vert x_0+l\alpha\in ~I_{s,2},~1\leq l\leq n\}$ by $\{x_{i^j_{s}}\}_{j=1}^{h(s,n,x_0,t_0)}$ with some $h(s,n,x_0,t_0)\in \Z_+.$
\end{definition}

For simplicity, we sometimes omit the dependence of $h$ on $n,t,x$. Moreover without loss of generality, in the following proof, we suppose
%\begin{equation}\label{jiashe1**}~s(k_{i^*+1}(t_0))+2\geq j_{i^*}(t_0).\end{equation}
\begin{equation}\label{jiashe1*}i_s^1<i_{s+1}^1;~i_s^{h(s,n,x,t)}>i_{s+1}^{h(s,n,x,t)},\  n>s\in \Z_+,~x\in \R/\Z,~t\in [\inf v-\frac{2}{\lambda},\sup v+\frac{2}{\lambda}].\end{equation}

Define $\mathcal{\Theta}_{i}^{j}:=\frac{\partial_{\theta^j_{i}} \|A_{j}\|}{\|A_{j}\|} \cdot \partial_t(\theta^j_{i})\cdot sgn(\theta^{j}_{i}),\ i, j\in \Z_+.$ Let $\hat{\Theta}_{s,j}=\cot \theta^{i^{j+1}_{s}-i^{j-1}_{s}}_{i^{j}_{s}-i^{j-1}_{s}}(x_{i^j_{s}})\cdot\partial_t \theta^{i^{j+1}_{s}-i^{j-1}_{s}}_{i^{j}_{s}-i^{j-1}_{s}}(x_{i^j_{s}}),$ $ {\Theta}_{i_s^1}=\mathcal{\Theta}_{i^1_{s}}^{i^2_{s}}.$
\begin{lemma}\label{jib''} For any fixed $m\in \Z_+,~x\in~\R/\Z$ and $t\in [\inf v-\frac{2}{\lambda},\sup v+\frac{2}{\lambda}],$ let $\{x_{i^j_{s}}\}_{j=1}^{h(s)}$ be defined as above. Assume that there exists some $i^*\in \Z_+$ such that $k_{i^*}$ satisfies $~I_{u,1}+k_{i^*}\alpha \bigcap ~I_{u,2}\neq \emptyset$ with $u=j_{i^*}(t)-1$. Suppose $s(k_{i^*+1}(t))+2>m\geq u.$ Then it holds that
$$ \begin{array}{ll}&\left\vert \frac{\partial_t\|A_{{r}^+_m(x,t)}\|}{\|A_{{r}^+_m(x,t)}\|}-\sum\limits_{s=u}^{m-1}(\sum\limits_{j=2}^ {h(s)}\hat\Theta_{s,j}+\Theta_{i_s^1})\right\vert\leq C\lambda^{-cq_{N+u-1}}+C\sum\limits_{j=0}^ {h(s)}\frac{\vert\partial_t\|A_{i_{u}^{j+1}-i_{u}^{j}}\|\vert}{\|A_{i_{u}^{j+1}-i_{u}^{j}}\|}.\end{array}
$$
\end{lemma}
\begin{proof} Note  $s(k_{i^*+1}(t_0))+2>m\geq u$ means there is no  resonance occur between the $u$-th step and $m$-th step.
We consider $A_{{r}^+_m}=A_{i^{h(m-1)+1}_{m-1}-i_{m-1}^{h(m-1)}}\cdots A_{i_{m-1}^{2}-i_{m-1}^{1}}A_{i_{m-1}^{1}},$ where $i^{h(m-1)+1}_{m-1}:={r}^+_m.$~The assumption \eqref{jiashe1*} implies that $h(p)\geq 2,~u\leq p\leq m-1.$
  We have to separately consider the following two cases.

We set $i_{m-1}^{0}=0.$
Next, we will gradually decompose the large matrix $A_{r^+_m}.$ To reduce repetitive statements, we will only expand on two steps.

\textbf{Step 1:}
If $i_{m-1}^1(x)\geq q^C_{N+m-2},$ then it holds from Lemma \ref{niceset1} that for $j\geq 0,$
$$A_{i_{m-1}^{j+1}-i_{m-1}^{j}}(x_{i_{m-1}^j})\ {\rm is}\ (\lambda^{-|\log q_{N+m-1}|^{C}},+)-\text{nice~on}~I_m.$$ Hence \eqref{result1} and the fact $|I_m|=2\lambda^{q^{\hat{\epsilon}}_{N+m-1}}$ with $\hat{\epsilon}\ll c \leq 1$ imply $$|\Theta_{i_{m-1}^1}(x_{i_{m-1}^1})|>|I_m|^C-\lambda^{-q_{N+m-2}^c}>|I_m|^{2C}$$ and $$\|A_{i_{m-1}^{j+1}-i_{m-1}^{j}}\|\geq c\lambda^{(1-c){r}_{m-1}}\geq c\lambda^{(1-c)q^2_{N+m-2}}\gg c\lambda^{Cq^{\hat{\epsilon}}_{N+m-2}}\geq |I_m|^{-C}.$$ Then by the help of \eqref{tj3'} and repeatedly using Lemma \ref{lemma8}, we obtain

\begin{equation}
\label{jib}\begin{array}{ll}&\left\vert \frac{\partial_t\|A_{{r}^+_m(x_0,t_0)}\|}{\|A_{{r}^+_m(x_0,t_0)}\|} -\sum\limits_{j=0}^{h(m-1)}\frac{\partial_t\|A_{i_{m-1}^{j+1}-i_{m-1}^{j}}\|}{\|A_{i_{m-1}^{j+1}-i_{m-1}^{j}}\|}-\sum\limits_{j=2}^{h(m-1)}\left(\hat\Theta_{m-1,j} +\Theta_{i^1_{m-1}} \right)\right\vert\\&\leq C(h(m-1))\lambda^{-\frac{1}{4}{r}_{m-1}}\end{array}.\end{equation}
If $i_{m-1}^1(x)< q^C_{N+m-2},$   then we consider $A_{i^{h(m-1)+1}_{m-1}-i_{m-1}^{h(m-1)}}\cdots A_{i_{m-1}^2}.$ By diophantine condition, we have $i_{m-1}^2\geq |I_{m-1}|^{-C^*}\gg q^C_{N+m-2}>i_{m-1}^1.$ Then $$c\lambda^{\frac{9}{10}i_{m-1}^1}\leq \|A_{i_{m-1}^1}\|\leq C\lambda^{i_{m-1}^1}\ll C\lambda^{q^C_{N+m-2}}\ll\|A_{i_{m-1}^2-i_{m-1}^1}\|.$$ Therefore, we obtain that $\|A_{i_{m-1}^2}\|\geq \lambda^{\frac{1}{2}{r}_{m-1}}\gg \lambda^{cq_{N+m-2}}.$ Hence by (ii)-(2a) of Lemma \ref{lemma8*}, $|\theta^{i_s^3}_{i_s^2}(x_{i_s^2})|$ and $|\theta^{i_s^3}_{i_s^2-i^s_1}(x_{i_s^2})|$ has the same lower bound $|I_m|^{C}$. Then, again by the help of \eqref{tj3'},~Lemma \ref{lemma8} and induction, we obtain
\begin{equation}\begin{array}{ll}&
\label{jib11}\left\vert \frac{\partial_t\|A_{{r}^+_m(x_0,t_0)}\|}{\|A_{{r}^+_m(x_0,t_0)}\|} -\sum\limits_{j=2}^{h(m-1)}\frac{\partial_t\|A_{i_{m-1}^{j+1}-i_{m-1}^{j}}\|}{\|A_{i_{m-1}^{j+1}-i_{m-1}^{j}}\|}-\frac{\partial_t\|A_{i_{m-1}^{2}}\|}{\|A_{i_{m-1}^{2}}\|}-\sum\limits_{j=2}^{h(m-1)}\hat\Theta_{m-1,j} \right\vert\\&\leq C(h(m-1))\lambda^{-\frac{1}{4}{r}_{m-1}}\leq C\lambda^{-cq_{N+m-2}}.\end{array}\end{equation}

\textbf{Step 2:}
Now we repeat the above process on each $\frac{\partial_t\|A_{i_{m-1}^{j+1}-i_{m-1}^{j}}\|}{\|A_{i_{m-1}^{j+1}-i_{m-1}^{j}}\|},~j\geq 0$ in \eqref{jib} and $\frac{\partial_t\|A_{i_{m-1}^{j+1}-i_{m-1}^{j}}\|}{\|A_{i_{m-1}^{j+1}-i_{m-1}^{j}}\|},~j\geq 1$  in \eqref{jib11}. For $\frac{\partial_t\|A_{i_{m-1}^{2}}\|}{\|A_{i_{m-1}^{2}}\|}$ in \eqref{jib11}. Note that if $i_{m-2}^1\geq q^C_{N+m-3}$ then we consider $$A_{i_{m-1}^1}=A_{i_{m-1}^1-i_{m-2}^{j^*}}\cdots A_{i_{m-2}^2-i_{m-2}^1}A_{i_{m-2}^1}$$ for some $j^*\geq 1$ (from \eqref{jiashe1*}). Similarly to the case $i_{m-1}^1\geq q^C_{N+m-2}$ in Step 1. We can decompose $A_{i_{m-1}^1}$ like $\frac{\partial_t\|A_{i_{m-1}^{j+1}-i_{m-1}^{j}}\|}{\|A_{i_{m-1}^{j+1}-i_{m-1}^{j}}\|}.$ For the case $i_{m-2}^1< q^C_{N+m-3},$ similarly to the second case $i_{m-1}^1< q^C_{N+m-2}$ in Step 1. We keep $A_{i_{m-2}^2}$ and will consider how to decompose it in the next step.

Repeat the above process we get either

%Since $\|A_{i_{m-1}^1}\|\leq \lambda^{q^C_{N+m-2}}\ll \lambda^{c\lambda^{q_{N+m-2}^{\hat{\epsilon}}}}\leq \|A_{i_{m-1}^2-i_{m-1}^1}\|,$ by the help of Lemma \ref{dAdlambda} we obtain that
%\begin{equation}\label{im1}\begin{array}{ll}&\left\vert \frac{\partial_t\|A_{i_{m-1}^{2}}\|}{\|A_{i_{m-1}^{2}}\|} -\Theta_{i_{m-1}^1}\cdot\chi_{\mathcal{S}_1}(m-1)-\frac{\partial_t\|A_{i_{m-1}^{1}}\|}{\|A_{i_{m-1}^{1}}\|}-\frac{\partial_t\|A_{i_{m-1}^{2}-i_{m-1}^{1}}\|}{\|A_{i_{m-1}^{2}-i_{sm-1}^{1}}\|}\right\vert\\&\leq C\lambda^{-\frac{1}{4}{r}_{m-1}}+C\left\vert\frac{\partial_t\|A_{i_{m-1}^{1}}\|}{\|A_{i_{m-1}^{1}}\|}\right\vert \leq C\lambda^{-cq_{N+m-2}}+C\left\vert\frac{\partial_t\|A_{i_{m-1}^{1}}\|}{\|A_{i_{m-1}^{1}}\|}\right\vert .\end{array}\end{equation}

%Combining this with \eqref{jib11}, we obtain that
%\begin{equation}\label{jib111}\begin{array}{ll}&
%\left\vert \frac{\partial_t\|A_{{r}^+_m(x_0,t_0)}\|}{\|A_{{r}^+_m(x_0,t_0)}\|} %-\sum\limits_{j=0}^{h(m-1)}\frac{\partial_t\|A_{i_{m-1}^{j+1}-i_{m-1}^{j}}\|}{\|A_{i_{m-1}^{j+1}-i_{m-1}^{j}}\|}-\sum\limits_{j=2}^{h(m-1)}\hat\Theta_{m-1,j}-\Theta_{i_{m-1}^1} \right\vert\\&\leq C\lambda^{-cq_{N+m-2}}+C\left\vert\frac{\partial_t\|A_{i_{m-1}^{1}}\|}{\|A_{i_{m-1}^{1}}\|}\right\vert.\end{array}\end{equation}

%Repeat the above process on each $\frac{\partial_t\|A_{i_{m-1}^{j+1}-i_{m-1}^{j}}\|}{\|A_{i_{m-1}^{j+1}-i_{m-1}^{j}}\|},~j\geq 0.$ Then we have
\begin{equation}\label{im0}\begin{array}{ll}\left\vert \frac{\partial_t\|A_{{r}^+_m(x_0,t_0)}\|}{\|A_{{r}^+_m(x_0,t_0)}\|}-\sum\limits_{s=u}^{m-1}(\sum\limits_{j=2}^ {h(s)}\hat\Theta_{s,j}+\Theta_{i_s^1})-\sum\limits_{j\geq 0}\frac{\partial_t\|A_{i_{u}^{j+1}-i_{u}^{j}}\|}{\|A_{i_{u}^{j+1}-i_{u}^{j}}\|}\right\vert &\leq C\lambda^{-cq_{N+u-1}}\end{array}\end{equation} for case $i_u^1\geq q^C_{N+u-1}$
or
\begin{equation}\label{im0*}\begin{array}{ll}\left\vert \frac{\partial_t\|A_{{r}^+_m(x_0,t_0)}\|}{\|A_{{r}^+_m(x_0,t_0)}\|}-\sum\limits_{s=u}^{m-1}(\sum\limits_{j=2}^ {h(s)}\hat\Theta_{s,j})-\sum\limits_{s=u+1}^{m-1}\Theta_{i_s^1}-\sum\limits_{j\geq 2}\frac{\partial_t\|A_{i_{u}^{j+1}-i_{u}^{j}}\|}{\|A_{i_{u}^{j+1}-i_{u}^{j}}\|}-\frac{\partial_t\|A_{i^2_{u}}\|}{\|A_{i^2_{u}}\|}\right\vert &\leq C\lambda^{-cq_{N+u-1}}\end{array}\end{equation} for the case $i_u^1<q^C_{N+u-1}.$
Since $\|A_{i_{u}^1}\|\leq \lambda^{q^C_{N+u-1}}\ll \lambda^{c\lambda^{q_{N+u-1}^{\hat{\epsilon}}}}\leq \|A_{i_{u}^2-i_{u}^1}\|,$ by the help of Lemma \ref{dAdlambda} we obtain that
\begin{equation}\label{im1}\begin{array}{ll}&\left\vert \frac{\partial_t\|A_{i_{u}^{2}}\|}{\|A_{i_{u}^{2}}\|} -\Theta_{i_{u}^1}\cdot\chi_{\mathcal{S}_1}(u)-\frac{\partial_t\|A_{i_{u}^{1}}\|}{\|A_{i_{u}^{1}}\|}-\frac{\partial_t\|A_{i_{u}^{2}-i_{u}^{1}}\|}{\|A_{i_{u}^{2}-i_{u}^{1}}\|}\right\vert\\&\leq C\lambda^{-cq_{N+u-1}}+C\left\vert\frac{\partial_t\|A_{i_{u}^{1}}\|}{\|A_{i_{u}^{1}}\|}\right\vert
\\&=C\lambda^{-cq_{N+u-1}}+C\left\vert\frac{\partial_t\|A_{i_{u}^{1}-i_{u}^{0}}\|}{\|A_{i_{u}^{1}-i_{u}^{0}}\|}\right\vert .\end{array}\hfill\qed\end{equation}\end{proof}

By the help of \eqref{im0}, \eqref{im0*} and \eqref{im1}, we completes the proof.

\begin{remark}\label{remark--} Lemma \ref{jib''} also holds true for ${r}_m^-$ by the same proof as above if the orbit $\{T^jx,\ j\ge 0\}$ is replaced by $\{T^jx,\ j\le 0\}$.
\end{remark}
\begin{remark} \label{remark--'} Consider the case $n< {r}^+_{m}$ satisfying $h(m-1,n,x_0,t_0)\geq 2$ and the case $~i_m^2\geq n\geq {r}^+_{m}$ satisfying $h(m-1,n,x_0,t_0)\geq 2$ and $\min\{{r}^+_{m},n-{r}^+_{m}\}\leq q^C_{N+m-1}$. By the proof of Lemma \ref{jib''} and Remark \ref{remark--}, it is not difficult to see that for these two cases, by considering $A_n=A_{n-i_{m-1}^{h}}\cdots A_{i_{m-1}^3-i_{m-1}^2}A_{i_{m-1}^2}$ or $ A_n= A_{n-i_{m-1}^{h-1}}\cdots A_{i_{m-1}^2-i_{m-1}^1}A_{i_{m-1}^1}$, it holds that
$$ \begin{array}{ll}&\left\vert \frac{\partial_t\|A_{n}\|}{\|A_{n}\|}-\sum\limits_{s= u}^{m-1}\sum\limits_{j=2}^{h(s)-1}\hat\Theta_{s,j}-\sum\limits_{s=u}^{m-1}(\Theta_{i_s^1}+{\Theta}_{i_s^h})\right\vert \leq C\lambda^{-cq_{N+u-1}}+C\sum\limits_{j\geq 0}\frac{\vert\partial_t\|A_{i_{u}^{j+1}-i_{u}^{j}}\|\vert}{\|A_{i_{u}^{j+1}-i_{u}^{j}}\|},\end{array}
$$
where ${\Theta}_{i_s^h}=\mathcal{\Theta}_{i^{h-1}_{s}}^{i^{h}_{s}+i^{h-1}_{s}}$ and $\mathcal{S}_h=\{s|i_s^h> \mathcal{N}_l- q^C_{N+s-1}\}.$
\end{remark}

 Let $\Theta_{j}^{\mathcal{N}_l}=\mathcal{\Theta}_{j}^{\mathcal{N}_l}$ for any $q^C_{N+l-2}\leq j\leq \mathcal{N}_l-q^C_{N+l-2}.$ By the fact $j_{i^*}(t_0)-1\leq l< s(k_{i^*+1}(t_0))+2,$
there exists at most one $j$ with $k_{i^*}< j\leq \mathcal{N}_l\leq \lambda^{q^{\hat{\epsilon}}_{N+l-1}}\leq \lambda^{q^{\hat{\epsilon}}_{N+s(k_{i^*+1}(t_0))}}<r_{s(k_{i^*+1})+2}\leq k_{i^*+1}$ such that $x+j\alpha\in ~I_{l}$ (Otherwise, there would exist some \( \tilde{k} \) between \( k_{i^*} \) and \( k_{i^*+1} \) such that some step \( \tilde{j} \) belongs to \( III_{\tilde{j}}^{\tilde{k}} \), which contradicts the definition of \( k_{i^*} \) and \( k_{i^*+1} \).).
Let $\mathcal{S}_{l}=\{q^C_{N+l-2}\le s\le \mathcal{N}_l-q^C_{N+l-2}|s=i_{l}^1\}.$
We will show that
\begin{lemma}\label{lip-m} For $x_0\in \R/\Z$ and $t_0\in [\inf v-\frac{2}{\lambda},\sup v+\frac{2}{\lambda}],$ it holds that
\begin{equation}\label{r-r-r-r} \begin{array}{ll}&\left\vert \frac{\partial_t\|A_{\mathcal{N}_l}\|}{\|A_{\mathcal{N}_l}\|}-\sum\limits_{s= j_i(t)-1}^{l-1}(\sum\limits_{j=2}^{h(s)-1}\hat\Theta_{s,j}+
\Theta_{i_s^1}+{\Theta}_{i_s^h})-\sum\limits_{s\in \mathcal{S}_{l}}\Theta_{s}^{\mathcal{N}_l}
\right\vert\\&\leq C\lambda^{-cq_{N+j_i(t)-2}}+C\sum\limits_{j\geq 0}\left\vert\frac{\partial_t\|A_{i_{j_i(t)-1}^{j+1}-i_{j_i(t)-1}^{j}}
\|}{\|A_{i_{j_i(t)-1}^{j+1}-i_{j_i(t)-1}^{j}}\|}\right\vert+\chi_{x_s\in \bar{I}_{l}}\cdot\left(\left\vert\frac{\partial_t\|A_{i_{l}^1}\|}{\|A_{i_{l}^1}\|}\right\vert+
    \left\vert\frac{\partial_t\|A_{\mathcal{N}_l-i_{l}^1}\|}{\|A_{\mathcal{N}_l-i_{l}^1}\|}\right\vert\right).\end{array}
\end{equation}
\end{lemma}
\begin{proof}
We have to consider the following 3 cases:

\begin{enumerate}

\item If there is no $1\leq s \leq \mathcal{N}_l$ such that $x_s\in ~I_{l}$ , then Remark \ref{remark--'} implies \eqref{r-r-r-r}.
\item If there do exist one $s<q^C_{N+l-2}$ or  $\mathcal{N}_l-s<q^C_{N+l-2}$ such that $x_s\in ~I_{l,1},$ then Remark \ref{remark--'} also implies \eqref{r-r-r-r}.
\item If there do exist one $q_{N+l-2}^C\leq s \leq \mathcal{N}_l-q_{N+l-2}^C$ such that $x_s\in ~I_{l,1},$ then by the fact $ \mathcal{N}_l\ll r_{{l}}$ we know that $q_{N+l-2}^C\leq i_{l}^1\leq \mathcal{N}_l-q_{N+l-2}^C$ and $i_l^2$ is absent. Then we consider $A_{\mathcal{N}_l}=A_{i_{l}^1}A_{\mathcal{N}_l-i_{l}^1}.$ By the help of Lemma \ref{lemma8}, \eqref{tj0} and the facts $A_{\mathcal{N}_l-i_{l}^1}(x_{i_{l}^1})$ is $(\lambda^{-|\log q_{N+l-1}|^{C}},+)-\text{nice}$ on $I_{l}$ and $A_{i_{l}^1}(x_{i_{l}^1})$ is $(\lambda^{-|\log q_{N+l-1}|^{C}},-)-\text{nice}$ on $I_{l
    }$( by Lemma \ref{niceset1}),
    it holds that for $x\notin \bar{I}_{l}:=\lambda^{-\left\vert \log |I_l| \right\vert^{\hat{\epsilon}^{-1}}}\cdot I_{l},$
    $$|\Theta^{\mathcal{N}_l}_{s}(x)|>|\bar{I}_{l,1}|^{C}-\lambda^{-cq^C_{N+l-1}}\geq |\bar{I}_{l,1}|^{C}-\lambda^{-\left\vert \log |I_l| \right\vert^{C\hat{\epsilon}^{-1}}}\geq |\bar{I}_{l,1}|^{\frac{1}{2}C}\gg \left(\min\{\|A_{s}\|,\|A_{\mathcal{N}_l-s}\|\}\right)^{-c}.$$
    By (3) of Lemma \ref{dAdlambda},
    $$\left\vert\frac{\partial_t\|A_{\mathcal{N}_l}\|}{\|A_{\mathcal{N}_l}\|}-\frac{\partial_t\|A_{i_{l}^1}\|}{\|A_{i_{l}^1}\|}-
    \frac{\partial_t\|A_{\mathcal{N}_l-i_{l}^1}\|}{\|A_{\mathcal{N}_l-i_{l}^1}\|}-\Theta_{s}^{\mathcal{N}_l}
    \right\vert\leq C\lambda^{-cq_{N+l-1}}+\chi_{x_s\in \bar{I}_{l}}\cdot\left(\left\vert\frac{\partial_t\|A_{i_{l}^1}\|}{\|A_{i_{l}^1}\|}\right\vert+
    \left\vert\frac{\partial_t\|A_{\mathcal{N}_l-i_{l}^1}\|}{\|A_{\mathcal{N}_l-i_{l}^1}\|}\right\vert\right).$$

    %and for $s\not=i_{l}^1$ it holds that
     %$$\left\vert\frac{\partial_t\|A_{\mathcal{N}_l}\|}{\|A_{\mathcal{N}_l}\|}-\frac{\partial_t\|A_{i_{l}^1}\|}{\|A_{i_{l}^1}\|}-
    %\frac{\partial_t\|A_{\mathcal{N}_l-i_{l}^1}\|}{\|A_{\mathcal{N}_l-i_{l}^1}\|}
   % \right\vert\leq C\lambda^{-cq_{N+l-1}}.$$

     Finally, by applying Lemma \ref{jib''} and Remark \ref{remark--} on $A_{\mathcal{N}_l-i_{l}^1}$ and $A_{i^1_{l}},$ we obtain \eqref{r-r-r-r} as desired.\hfill\qed
\end{enumerate}
\end{proof}

\subsection{The proof of \eqref{tarobtain-lip}}

Now we come back to our target \eqref{tarobtain-lip}, which will implies (3) of Lemma \ref{section5.3}. Note that from $i_s^1\leq q^C_{N+s-1}$, it follows that $x+M\alpha\in ~I_s$ with some $M\leq q^C_{N+s-1}.$ By this fact and  \eqref{r-r-r-r} of Lemma \ref{lip-m}, we have
\begin{equation}\label{1-1-1-1}  \begin{array}{ll}&\left\vert \int_{\R/\Z} \frac{\partial_t\|A_{\mathcal{N}_l}\|}{\|A_{\mathcal{N}_l}\|} dx\right\vert  \leq \left\vert \int_{\R/\Z}(\sum\limits_{s= j_i(t)-1}^{l-1}\sum\limits_{j=2}^{h(s)-1}\hat\Theta_{s,j}+\sum\limits_{s\in [j_i(t)-1, l-1]-\mathcal{S}_1} {\Theta}_{s,1}+\sum\limits_{s\in [j_i(t)-1, l-1]-\mathcal{S}_h}{\Theta}_{s,h})dx\right\vert \\ &+\left\vert\int_{\R/\Z}(\sum\limits_{s\in [j_i(t)-1, l-1]\bigcap\mathcal{S}_1}\Theta_{s,1}+\sum\limits_{s\in \mathcal{S}_h\bigcap[j_i(t)-1, l-1]}\Theta_{s,h})dx\right\vert+\left\vert\int_{\R/\Z}\sum\limits_{s\in \mathcal{S}_{l}}\Theta_{s}^{\mathcal{N}_l}dx\right\vert\\&+C\int_{\R/\Z} \sum\limits_{j\geq 0}\frac{\left\vert\partial_t\|A_{i_{j_i(t)-1}^{j+1}-i_{j_i(t)-1}^{j}}\|\right\vert}{\|A_{i_{j_i(t)-1}^{j+1}-i_{j_i(t)-1}^{j}}\|} dx +\left(C\lambda^{-cq_{N+j_i(t)-2}}+\left(\int_{\bar{I}_{l}}\left\vert\frac{\partial_t\|A_{i_{l}^1}\|}{\|A_{i_{l}^1}\|}\right\vert dx+
    \int_{\bar{I}_{l}}\left\vert\frac{\partial_t\|A_{\mathcal{N}_l-i_{l}^1}\|}{\|A_{\mathcal{N}_l-i_{l}^1}\|}\right\vert dx\right)\right)\\& := P_1+P_2+P_3+P_4+P_5.\end{array}
\end{equation}

First by (6) of Lemma \ref{niceset1} we have $P_5\leq 1\leq C\cdot \mathcal{G}_{j_i(t_0)}.$

Next note that the definition implies that for $2\leq j\leq h-1.$ Thus we have $\hat{\Theta}_{s,j}=\theta^{{r}^+_s(x_{i_s^j})+{r}^-_s(x_{i_s^j})}_{{r}^-_s(x_{i_s^j})}.$  On the other hand, for any $X\in \{q^C_{N+s-1},q^C_{N+s-1}+1,\cdots,\mathcal{N}_l-q^C_{N+s-1}\}$ and $~j_i(t)-1\leq s\leq l-1$, there must exist some $x'\in \R/\Z$ and an unique $b$ with $1\leq b(X,x',s)\leq h(s)$ such that $i_s^{b(X,x',s)}(x')=X.$ If $\min\{X,\mathcal{N}_l-X\}<q^C_{N+l-2}$, let $m=m(X)$ satisfy that $q^C_{N+m-1}\leq \min\{X,\mathcal{N}_l-X\}< q^C_{N+m}$ and $R_{l}=+\infty.$ If $\min\{X,\mathcal{N}_l-X\}\geq q^C_{N+l-2}$, we define $m(X):=l-1$. Denote $S_{X,1}=\{s|i_s^1=X\}$
and $S_{X,b}=\{s|i_s^b=X\}$.
By Lemma \ref{remark67}, for any $j_i(t)-1\leq p\leq l-1$ it holds that $$\vert\sum\limits_{s= j_i(t)-1}^{p}\int_{\R/\Z}F_{X,s}(x)dx\vert=\vert\sum\limits_{j_i(t)-1\leq s \leq p-1}\int_{~I_s-~I_{s+1}}F_{X,s}(x) dx+\int_{~I_p}F_{X,p}(x) dx\vert\le C\cdot \mathcal{G}_{j_i(t_0)},$$

 where $F_{X,s}(x)=\hat\Theta_{s,b(X,x,s)}(x_{i_s^b})\cdot\chi_{\mathcal{S}_{X,b}}(s)+\Theta_{i_s^1}\cdot
 \chi_{\mathcal{S}_{X,1}-\mathcal{S}_1}(s)+\Theta_{i_s^h}\cdot \chi_{\mathcal{S}_{X,h}-\mathcal{S}_h}(s)$. Then the definition of $b(X,x,s)$ implies that $$\begin{array}{ll}P_1&
 =\vert\int_{x\in \R/\Z}\sum\limits_{s=j_i(t)-1}^{l-1}\sum\limits_{X=R_s}^{\mathcal{N}_l-R_s}F_{X,s}(x)dx\vert\leq \sum\limits_{X=R_{j_i(t)-1}}^{\mathcal{N}_l-R_{j_i(t)-1}}\vert\int_{x\in \R/\Z}\sum\limits_{s=j_i(t)-1}^{m(X)}F_{X,s}(x) dx\vert  \leq C\cdot \mathcal{G}_{j_i(t_0)}.\end{array}$$

By \eqref{12345121} of Remark \ref{remark-62} and the definitions of $\Theta_{s,1}$ and $\Theta_{s,h}$ for $j_i(t)-1\le s\le l-1$, we have $$\vert\int_{\R/\Z}(\sum\limits_{i_s^1=1}^{q^C_{N+s-1}}\Theta_{s,1}(x_{i^1_s})+\sum\limits_{i_s^{h}=\mathcal{N}_l-q^C_{N+s-1}}^{\mathcal{N}_l}\Theta_{s,h}(x_{i^h_s})) dx \vert \leq C\cdot \left( 2q^C_{N+s-1}\right)\cdot (q^C_{N+s-1}) \mathcal{G}_{j_i(t_0)}\leq C\cdot (\log \mathcal{N}_l)^C\mathcal{G}_{j_i(t_0)}.$$

Hence it holds that $P_2\leq C\cdot \sum\limits_{s=j_i(t)-1}^{l-1}q^C_{N+s-1}\cdot (\log \mathcal{N}_l)^C\cdot \mathcal{G}_{j_i(t_0)}\leq C(\log \mathcal{N}_l)^{2C}\cdot \mathcal{G}_{j_i(t_0)}.$

By Lemma \ref{lipschitz111}, taking $Y_1=s,~Y_2=\mathcal{N}_l-s,$ we have $\vert\int_{\R/\Z}\Theta_{i_l^1}^{\mathcal{N}_l} dx\vert\leq C\cdot \mathcal{G}_{j_i(t_0)}.$ Hence $$P_3=\left\vert\int_{\R/\Z}\sum\limits_{s\in \mathcal{S}_{l}}\Theta_{s}^{\mathcal{N}_l} dx\right\vert\leq C\cdot \mathcal{N}_l\cdot \mathcal{G}_{j_i(t_0)}.$$

By remark \ref{zhongk}, we have $P_4=\int_{\R/\Z} \frac{\left\vert\partial_t\|A_{i_{j_i(t)-1}^{j+1}-i_{j_i(t)-1}^{j}}\|\right\vert}{\|A_{i_{j_i(t)-1}^{j+1}-i_{j_i(t)-1}^{j}}\|} dx\leq C\lambda^{(\log {\mathcal{N}_{j_i(t)}})^{C}}\leq C\cdot \mathcal{G}_{j_i(t_0)}.$

Combining all these above with \eqref{1-1-1-1}, we immediately obtain $\eqref{tarobtain-lip}$.

\vskip 0.3cm
\subsection{The proof of (3),~(4) of Lemma \ref{lemma 6.1}:}
 $\eqref{tarobtain-lip}$ directly implies (3) of Lemma \ref{lemma 6.1}. For (4), note that $\mathcal{UH}$ implies that $\|u(A_n)-u(A_{n+1})\|_{C^2}+\|s(A_n)-s(A_{n+1})\|_{C^2}\leq C\lambda^{-n},\  n\geq \mathcal{N}_{j_i(t)}.$ Then by all the analysis as above, (4) holds true.

\section{{Appendix}}
\subsection{The proof of Theorem \ref{theorem12} (the induction Theorem)}

\subsubsection{The second step}\
 From the definition, it is not difficult to obtain the following results for $r_1^{\pm}(x,t)$.

\begin{lemma}\label{step1r}

\begin{enumerate}

\item For Type $\textbf{II}_{1},$ it holds that $r_1^{\pm}(x,t)=\min\{|n| \vert (I_{1}+n\alpha)\bigcap I_{1}\neq \emptyset\}.$
    % That means $r_1^{\pm}(x,t)$ is exactly the first forward~(backward~respectively) returning time of $x\in I_1(t)$.

\item \quad For Type $\textbf{I}_{1},$ one of the following three cases holds true.
\begin{enumerate}
\item $r_1^{\pm}(x,t)=m_1^{\pm}(t)\ {\rm for}\ x\in I_{1,1}$
and
$r_1^{\pm}(x,t)=m_1^{\mp}(t)\ {\rm\ for}\ x\in I_{1,2}.$\vskip 0.2cm
\item $r_1^{+}(x,t)=\min\{n \vert (I_{1}+n\alpha)\bigcap I_{1}\neq \emptyset,~n> m_1^+(t)\};~r_1^-(x,t)=m_1^-(t)\
\ {\rm for}\ x\in I_{1,1}$ and $$r_1^{-}(x,t)=\min\{n \vert (I_{1}+n\alpha)\bigcap I_{1}\neq \emptyset,~n> m_1^-(t)\};~r_1^+(x,t)=m_1^+(t)\ {\rm for}\ x\in I_{1,2}.$$

\item $r_1^{+}(x,t)=\min\{n \vert (I_{1}+n\alpha)\bigcap I_{1}\neq \emptyset,~n> m_1^-(t)\};~r_1^-(x,t)=m_1^+(t)\ {\rm\ for}\ x\in I_{1,2}$ and $$r_1^{-}(x,t)=\min\{n \vert (I_{1}+n\alpha)\bigcap I_{1}\neq \emptyset,~n> m_1^+(t)\};~r_1^+(x,t)=m_1^-(t)\ {\rm for}\ x\in I_{1,1}.$$

\end{enumerate}
\end{enumerate}
\end{lemma}

\begin{remark}
    (a) means $r_1^{\pm}(x,t)$ is exactly the first forward~(backward~respectively) returning time of $x\in I_1(t)$.

    (b) means for $x\in I_{1,1},$ $m_1< q_N^2\le r_1^{+}(x,t)$ is exactly the second forward returning time of $x\in I_1(t)$ and $r_1^{-}(x,t)$ is exactly the first backward returning time of $x\in I_1(t)$. For $x\in I_{1,2},$ $r_1^{+}(x,t)$ is exactly the first forward returning time of $x\in I_1(t)$ and $r_1^{-}(x,t)$ is exactly the second backward returning time of $x\in I_1(t)$.
The situation of (c) is similar.
Particularly, in case (b) and (c), by the Diophantine condition,
    $$r_1(t)=\min\limits_{X\in\{+,-\}}\min\limits_{x\in I_1(t)}\{r^{X}_1(x,t)\}>\mathcal{N}_1^c.$$

\end{remark}

\
We give the following definitions.
\begin{enumerate}
\item  The \textbf{angle function} for the second step
$$g_2(x,t)=s_{r_1(t)}(x,t)-u_{r_1(t)}(x,t):D_1\rightarrow\R\mathbb{P}^1,$$
where we define
$$D_1:=\{(x,t):x\in I_1(t),t\in [\inf v-\frac{2}{\lambda},\sup v+\frac{2}{\lambda}]\}.$$

We define the \textbf{critical~points} for the $2$nd step $(j=1,2)$
$$C_2(t)=\{x'\in I_1\vert |g_2(x',t)|=\min\limits_{x\in I_1}|g_2(x,t)|\}=\{c_{2,1},c_{2,2}\}.$$
For the one-element case, we assume $c_{2,1}=c_{2,2}$. (This will be shown in Lemma \ref{step12})

\item The \textbf{critical interval} of 2nd step is denoted by
 $$I_{2,j}(t)=\{x:|x-c_{2,j}(t)|\leq {\mathcal{N}_2}^{-{1}}\},\ j=1, 2~and~I_2(t)=I_{2,1}(t)\cup I_{2,2}(t).$$
For $t\in  [\inf v-\frac{2}{\lambda},\sup v+\frac{2}{\lambda}]$ $$D_2(t):=\{(x,t'):x\in I_2(t'),t'\in (t-\lambda^{-q_{N+1}},t+\lambda^{-q_{N+1}})\}.$$

\item The $\textbf{returning~time}$ for the third step:

Let
$$ r_2^{\pm}(x,t)\ge \max\{q_{N+1}^2,r_1\}:I_2(t)\rightarrow \Z^+$$
be the first forward (backward) returning time of $x\in I_2(t)$ back to $I_2(t)$ {\it after $\max\{q_{N+1}^2,r_1\}-1.$}  Let $r_2(t)=\min\{r_2^+(t),r_2^-(t)\}$ with $r_2^{\pm}(t)=\min_{x\in I_2(t)}r_2^{\pm}(x,t).$
\end{enumerate}

Here we have to guarantee that $s_{r_1(t)}(x,t)$ and $u_{r_1(t)}(x,t)$ is well defined. It's sufficient to prove $$\|A_{r_1}(x,t)\|>1.$$ More precisely, we have

\begin{lemma}\label{step12}
It holds that
\begin{equation}\label{step1-2}\|A_{\pm r_1}(x,t)\|\geq \lambda^{(1-(\log \lambda_0)^{-\frac{1}{2}})r_1}.\end{equation}

Let $X,Y\in\{x,t\}$ and $j=1,2$. Then we have
\begin{equation}\label{step1-2*}
\begin{aligned}
\left\vert \|A_{\pm r_1}(x,t)\|^{-1}\partial_X \|A_{\pm r_1}(x,t)\| \right\vert
&\leq r_1 e^{(\log \|A_{\pm r_1}(x,t)\|)^c}, \\
\left\vert \|A_{\pm r_1}(x,t)\|^{-1} \partial^2_{XY} \|A_{\pm r_1}(x,t)\| \right\vert
&\leq r_1^2 e^{(\log \|A_{\pm r_1}(x,t)\|)^c}.
\end{aligned}
\end{equation}

\begin{equation}\label{ci-1i}|c_{1,j}(t)-c_{2,j}(t)|<C\lambda^{-\frac{3}{4}}.\end{equation}

Moreover, the following hold true.

\begin{enumerate}

\item If the first step belongs to $\textbf{I}_1$ and (a) of (2) of Lemma \ref{step1r} holds true, then $g_2(x,t)$ has exactly two zeros $c_{2,1}$ and $c_{2,2}$ with $g_2(x,t')$~satisfies~$\mathcal{N}_2^{-{\hat{\epsilon}^{-2}}}-\textbf{non-resonant}~condition~on~D_2(t)$
\ {\rm and}\ $$\partial_x g_{2}(c_{2,1},t)\cdot \partial_y g_{2}(c_{2,2},t)<0.$$

\item If the first step belongs to $\textbf{II}^0_1,$ then  $\left\vert\{x\in I_{2} \vert |\partial_x g_2(x,t)|=0\}\right\vert=1$ and $g_2(x,t)$ has at most $2$ zeros.

     \begin{enumerate}

    \item If $I_{2,1}\bigcap I_{2,2}=\emptyset,$ then $g_2(x,t')~satisfies~\mathcal{N}_2^{-{\hat{\epsilon}^{-2}}}-\textbf{non-resonant}~condition~on~D_2(t)$
\ {\rm and}\ $$\partial_x g_{2}(c_{2,1},t)\cdot \partial_y g_{2}(c_{2,2},t)<0.$$

\item If $I_{2,1}\bigcap I_{2,2}\neq \emptyset,$ then $\{x\in I_{2} \vert |\partial_x g_2(x,t)|=0\}=\{\tilde{c}_2\}$ and on~$I_{2,1}\bigcup I_{2,2}$ we have
$$|g_2(x,t)-g_2(\tilde{c}_2,t)|>\frac{c_0}{2}(1-(\log \lambda_0)^{-\frac{1}{2}})(x-\tilde{c}_2)^2.$$

\end{enumerate}

   \item If the first step belongs to $\textbf{I}_1$ and (b) or (c) of (2) of Lemma \ref{step1r} holds true, then $\partial_x g_2(x,t)$ possesses one or two zeroes on $I_{2,j}$,
     denoted by $\tilde{c}_{2,j}$ and $\tilde{c}^*_{2,j}$ (it is possible that $\tilde{c}_{2,j}=\tilde{c}^*_{2,j}$). That is,
         \begin{equation}\label{g22*}\{x\in I_{2,j} \vert |\partial_x g_2(x,t)|=0\}=\{\tilde{c}_{2,j},\tilde{c}^*_{2,j}\}.\end{equation} Moreover,
         \begin{equation}\label{c0xj}c_0\leq \vert \partial_t g_2(\tilde{c}_{2,j}(t),t)\vert,\partial_t g_2(\tilde{c}^*_{2,j}(t),t)\vert\leq C_0^3.\end{equation}

     On each interval $I_{2,j}$, \begin{equation}\label{g21}g_2(x,t)~has~at~most~2~zeros:~c_{2,j},c'_{2,i},~1\leq j\neq i\leq 2.\end{equation} And there exists $1\leq |k_1(t)|<q_N^2$ ($|k_1(t)|$ is indeed the first returning time) such that for the case $c_{2,j}\neq c'_{2,i}$, we have
         \begin{equation}\label{g22}|c_{2,1}+k_1\alpha-c'_{2,1}|+|c_{2,2}-k_1\alpha-c'_{2,2}|\leq \lambda^{-\frac{1}{100}r_1}.\end{equation}
         In addition, there exists some $l_{k_1}>0$ such that for any $(x,t')\in D_2(t),$ \begin{equation}\label{lkmo}\frac{l_{k_1}}{2}<\|A_{k_1}\|< 2l_{k_1}~with~\lambda^{\frac{4}{3}|k_1|}>l_{k_1}>\lambda^{\frac{3}{4}|k_1|}\end{equation}
       and
\begin{equation}\label{g21c2}\|g_{2,j}\|_{C^2}\leq Cl^8_{k_1}.\end{equation}

        Moreover, one of the following inequality holds true. $$c_{2,2}'\leq \tilde{c}_{2,1}\leq c_{2,1}; c_{2,2}\leq \tilde{c}_{2,2}\leq c_{2,1}'\quad~or~\quad c_{2,2}'\geq \tilde{c}^*_{2,1}\geq c_{2,1}; c_{2,2}\geq \tilde{c}^*_{2,2}\geq c_{2,1}'.$$ (if $\{x\in I_{2,j} \vert g_2(x,t)=0\}=\emptyset,$ $c_{2,i}'=c_{2,j}=\tilde{c}_{2,j}~or~\tilde{c}^*_{2,j},~j\neq i$)

\vskip 0.2cm
       \noindent  Let
         $\tilde{I}_{2,j}=(\tilde{c}_{2,j}-l_{k_1}^{-1}\mathcal{N}_2^{-2\hat{\epsilon}^{-1}},\tilde{c}_{2,j}+l_{k_1}^{-1}\mathcal{N}_2^{-2\hat{\epsilon}^{-1}}),\quad
         \tilde{I}^*_{2,j}=(\tilde{c}^*_{2,j}-l_{k_1}^{-1}\mathcal{N}_2^{-2\hat{\epsilon}^{-1}},\tilde{c}^*_{2,j}+l_{k_1}^{-1}\mathcal{N}_2^{-2\hat{\epsilon}^{-1}}).$
Then
\begin{equation}\label{g24}\begin{array}{ll}&\vert g_2(\tilde{c}_{2,j}(t),t)-g_2(x,t) \vert\sim_{2,\mathcal{N}_2^{-1}} d'_2(x-\tilde{c}_{2,j}(t))^2,\quad x\in \tilde{I}_{2,j},
\end{array}\end{equation} where $\lambda^{\frac{1}{2}|k_1|}\leq d'_2\leq \lambda^{2 |k_1|}$
and
\begin{equation}\label{g24*}\begin{array}{ll}&\vert g_2(\tilde{c}^*_{2,j}(t),t)-g_2(x,t) \vert\sim_{2,\mathcal{N}_2^{-1}} d''_2(x-\tilde{c}^*_{2,j}(t))^2,\quad x\in\tilde{I}^*_{2,j},
\end{array}\end{equation} where $\lambda^{\frac{1}{2}|k_1|}\leq d''_2\leq \lambda^{2 |k_1|}.$

Finally,
\begin{equation}\label{g25}\pi-\lambda^{-(\log \mathcal{N}_2)^C}l_{k_1}^{-1}\leq \vert g_2(\tilde{c}^*_{2,j}(t),t)-g_2(\tilde{c}_{2,j}(t),t)\vert<\pi-\lambda^{(\log \mathcal{N}_2)^C}l_{k_1}^{-1}\end{equation}
and the following hold true.
          \begin{enumerate}
          \item If $c_{2,2}'\leq \tilde{c}_{2,1}\leq c_{2,1},c_{2,2}\leq \tilde{c}_{2,2}\leq c_{2,1}'$ and $|g_2(\tilde{c}_{2,1}(t),t)|\leq \min\{\mathcal{N}_2^{-\hat{\epsilon}^{-1}},l_{k_1}^{-8}\},$ then
          $$|g_2(\tilde{c}^*_{2,j})|>l_{k_1}^{-2}>|g_2(\tilde{c}_{2,j})|$$ and on $I_{2,j}-\tilde{I}_{2,j},$ we have
$$|\partial_x g_{2,1}(x,t)|,|g_{2,1}(x,t)|>[\min\{\mathcal{N}_2^{-\hat{\epsilon}^{-1}},l_{k_1}^{-8}\}]^2.$$

\item If $c_{2,2}'\leq \tilde{c}_{2,1}\leq c_{2,1}; c_{2,2}\leq \tilde{c}_{2,2}\leq c_{2,1}'$ and $|g_2(\tilde{c}_{2,1}(t),t)|>  \min\{\mathcal{N}_2^{-\hat{\epsilon}^{-1}},l_{k_1}^{-8}\},$ then

    \begin{enumerate}

    \item if $\{x\in I_{2,j} \vert g_2(x,t)=0\}\neq \emptyset,$ $g_2(x,t')$~satisfies~$$ \min\{\mathcal{N}_2^{-\hat{\epsilon}^{-2}},l_{k_1}^{-8}\}-\textbf{non-resonant}~condition~on~D_2(t)
\ {\rm and}\ \partial_x g_{2}(c_{2,1},t)\cdot \partial_y g_{2}(c_{2,2},t)<0.$$

\item if $\{x\in I_{2,j} \vert g_2(x,t)=0\}= \emptyset,$ then
$$\min\limits_{(x,t')\in D_2(t)}|g_2(x,t')|>\min\{\mathcal{N}_2^{-\hat{\epsilon}^{-1}},l_{k_1}^{-8}\}.$$

\end{enumerate}

 \item If $c_{2,2}'\geq \tilde{c}^*_{2,1}\geq c_{2,1}; c_{2,2}\geq \tilde{c}^*_{2,2}\geq c_{2,1}'$ and $|g_2(\tilde{c}^*_{2,j}(t),t)|\leq  \min\{\mathcal{N}_2^{-\hat{\epsilon}^{-1}},l_{k_1}^{-8}\},$ then
    $$|g_2(\tilde{c}^*_{2,j})|<|g_2(\tilde{c}_{2,j})|.$$
    And
on $I_{2,j}-\tilde{I}^*_{2,j},$ we have
$$|\partial_x g_{2,j}(x,t)|,|g_{2,j}(x,t)|> [\min\{\mathcal{N}_2^{-\hat{\epsilon}^{-1}},l_{k_1}^{-8}\}]^2.$$

\item If $c_{2,2}'\leq \tilde{c}^*_{2,1}\leq c_{2,1}; c_{2,2}\leq \tilde{c}^*_{2,2}\leq c_{2,1}'$ and $|g_2(\tilde{c}^*_{2,j}(t),t)|>  \min\{\mathcal{N}_2^{-\hat{\epsilon}^{-1}},l_{k_1}^{-8}\},$ then

    \begin{enumerate}
    \item if $\{x\in I_{2,j} \vert g_2(x,t)=0\}\neq \emptyset,$ $g_2(x,t')$~satisfies~$$ \min\{\mathcal{N}_2^{-\hat{\epsilon}^{-2}},l_{k_1}^{-8}\}-\textbf{non-resonant}~condition~on~D_2(t)
\ {\rm and}\ \partial_x g_{2}(c_{2,1},t)\cdot \partial_y g_{2}(c_{2,2},t)<0.$$
\item if $\{x\in I_{2,j} \vert g_2(x,t)=0\}= \emptyset,$ then
$$\min\limits_{(x,t')\in D_2(t)}|g_2(x,t')|>\min\{\mathcal{N}_2^{-\hat{\epsilon}^{-1}},l_{k_1}^{-8}\}.$$
\end{enumerate}
\end{enumerate}

\end{enumerate}

\end{lemma}
\subsubsection{Proof of Lemma \ref{step12}}
\begin{proof} Clearly, we have to consider the following two cases: the first step belongs to Type $\textbf{II}^0_{1}$ and Type $\textbf{I}_{1}$.
\vskip 0.2cm
\textbf{Proof for Type \(\textbf{II}^0_1\)}

We begin by considering the case where the first step belongs to Type \(\textbf{II}^0_1\). From Lemma \ref{step1r}, we have the following definitions for \(r_1^+(x,t)\) and \(r_1^-(x,t)\)
\[
r_1^+(x,t) = \min\{n \, | \, (I_1 + n\alpha) \cap I_1 \neq \emptyset, ~ n \geq 1\}, \quad r_1^-(x,t) = \min\{|n| \, | \, (I_1 + n\alpha) \cap I_1 \neq \emptyset, ~ n \leq -1\}.
\]
Thus it follows that \(I_1 + l\alpha \cap I_1 = \emptyset\) for any \(1 \leq |l| < r_1\), where
\[
r_1 = \min\limits_{X = +,-} \min\limits_{x \in I_1} \{r_1^+(x,t), r_1^-(x,t)\}.
\]
Therefore for any \(x \in I_1\) and \(1 \leq |l| < r_1\), we have \(x + l\alpha \notin I_1\). Consequently, by \eqref{2jfttt}, we obtain
\[
|g_1(x + l\alpha)| \geq c_0 |I_1|^2 \geq \mathcal{N}_1^{-3\hat{\epsilon}^{-1}} = e^{-3\hat{\epsilon}^{-1}(\log \lambda)^{\hat{\epsilon}}} \geq e^{-(\log \lambda)^{2\hat{\epsilon}}}.
\]

Now, consider the matrix \(A_{r_1(t)}(x,t)\) given by:
\[
A_{r_1(t)}(x,t) = \prod_{l=0}^{r_1-1} \Lambda_l(x,t) R_{\frac{\pi}{2} - g_1(x + l\alpha,t)},
\]
where:
\[
\Lambda_l(x,t) = \left[\begin{array}{cc} \|A(x + l\alpha,t)\| & 0 \\ 0 & \|A(x + l\alpha,t)\|^{-1} \end{array} \right].
\]

From Lemma \ref{lemma4}, equation \eqref{oc}, we recall that for any \(x \in \mathbb{R}/\mathbb{Z}\) and \(t \in \mathbb{R}\), the following holds:
\[
\|A(x,t)\| = (1 + O(\lambda^{-4})) \lambda, \quad \left| \partial_X \|A(x,t)\| \right| + \left| \partial^2_{XY} \|A(x,t)\| \right| \leq C \lambda, \quad X, Y = x, t.
\]

Define \(\lambda_0 = \min_{1 \leq l < r_1} \min_{x \in I_0} \|A(x + l\alpha, t)\|\). If \((a, b) \subset \mathbb{R}/\mathbb{Z}\) with \(b - a = \eta \ll 1\) and \(\tilde{x} \in (a, b)\) fixed, the above implies
\[
\|A(x,t)\| \sim_{0,\eta^{\frac{1}{2}}} \|A(\tilde{x},t)\| \geq \lambda_0 \quad \text{on } (a, b) \times \mathbb{R},
\]
and
\[
\sup_{x \in I_0} \left| \|A(x,t)\|^{-1} \partial_X \|A(x,t)\| \right|, \quad \sup_{x \in I_0} \left| \|A(x,t)\|^{-1} \partial^2_{XY} \|A(x,t)\| \right| \leq C < e^{(\log \lambda_0)^{c}}.
\]

From the cos-type condition, we also know
\[
\sup_{x \in I_0} \left| \partial_x g_1 \right|, \sup_{x \in I_0} \left| \partial_t g_1 \right|, \sup_{x \in I_0} \left| \partial^2_{xx} g_1 \right|, \sup_{x \in I_0} \left| \partial^2_{xt} g_1 \right|, \sup_{x \in I_0} \left| \partial^2_{tt} g_1 \right| < C_0 < e^{(\log \lambda)^c}.
\]
Thus conditions \eqref{jfact1}, \eqref{jfact2}, \eqref{jfact3}, and \eqref{jfact4} from Lemma \ref{jishu1} are satisfied. Consequently, from \eqref{re1} and \eqref{re2} of Lemma \ref{jishu1}, we obtain
\begin{equation}\label{step1m}
\|A_{r_1(t)}(x,t)\| \sim_{0,r_1 l_0^{-1}} \prod_{l=0}^{r_1-1} \|A(x + l\alpha,t)\| \cdot \prod_{l=1}^{r_1-1} |\sin g_1(x + l\alpha,t)| \geq \lambda^{(1 - (\log \lambda_0)^{-\frac{1}{2}}) r_1},
\end{equation}
which implies \(\boxed{\eqref{step1-2}}\).

For \(X, Y = x, t\), we also have
\begin{equation}\label{step1md}
\begin{aligned}
\left| \partial_X \|A_{r_1}\| \right| &\leq r_1 \cdot \|A_{r_1}\| \cdot e^{(\log \lambda_0)^{5\hat{\epsilon}}} \leq r_1 \cdot \|A_{r_1}\| \cdot e^{(\log \|A_{r_1}\|)^{\hat{\epsilon}}}, \\
\left| \frac{\partial^2 \|A_{r_1}\|}{\partial X \partial Y} \right| &\leq r_1^2 \cdot \|A_{r_1}\| \cdot e^{(\log \lambda_0)^{5\hat{\epsilon}}} \leq r_1^2 \cdot \|A_{r_1}\| \cdot e^{(\log \|A_{r_1}\|)^{\hat{\epsilon}}},
\end{aligned}
\end{equation}
which implies \(\boxed{\eqref{step1-2*}}\).

Furthermore  from \eqref{re3} of Lemma \ref{jishu1}, we have
\begin{equation}\label{step1jd}
\|s_{r_1}(x,t) - g_1(x,t)\|_{C^2(D_1)}, \quad \|u_{r_1}(x,t)\|_{C^2(D_1)} \leq \lambda^{-2} e^{(\log \lambda_0)^{5\hat{\epsilon}}},
\end{equation}
which leads to:
\begin{equation}\label{g1-g2}
\begin{aligned}
\|g_2 - g_1\|_{C^2(D_1)} &\leq \|s_{r_1}(x,t) - u_{r_1}(x,t) - g_1\|_{C^2(D_1)} \\
&\leq \|s_{r_1}(x,t) - g_1(x,t)\|_{C^2(D_1)} + \|u_{r_1}(x,t)\|_{C^2(D_1)} \\
&\leq 2 \lambda^{-2} e^{(\log \lambda_0)^{5\hat{\epsilon}}}.
\end{aligned}
\end{equation}
This gives \(\boxed{\eqref{ci-1i}}\).

Moreover, \eqref{g1-g2}, together with \eqref{2jfttt}, shows that
\begin{equation}\label{step22}
\min_{x \in I_1(t)} \left| \frac{\partial^2 g_2(x,t)}{\partial x^2} \right| > \frac{1}{2} c_0 - \lambda^{-1} > \frac{1}{2} (1 - (\log \lambda_0)^{-\frac{1}{2}}) c_0.
\end{equation}
Thus by the definition of Type \(\textbf{II}^0_1\) and \eqref{step22}, \(g_2(x,t)\) has only one extreme point \(\tilde{c}_{2,j}\) (for \(j = 1, 2\)) on \(I_{1,j}\), with
\[
|\tilde{c}_{2,j} - \tilde{c}_{1,j}| \leq C \lambda^{-2} e^{(\log \lambda_0)^{5\hat{\epsilon}}}.
\]

Finally, the cos-type condition and \eqref{g1-g2} yield
$$
\frac{\partial g_2(x,t)}{\partial t} > c_0 - \lambda^{-1} > (1 - (\log \lambda_0)^{-\frac{1}{2}}) c_0.
$$

Recall that \(c_{2,1}(t), c_{2,2}(t) \in I_2 \subset I_1\) satisfy \( |g(c_{2,j}(t),t)| = \min_{x \in I_{2,j}} |g(x,t)| \). If \( |c_{2,1}(t) - c_{2,2}(t)| > 2 \mathcal{N}_2^{-\hat{\epsilon}^{-1}} \), then \(g_2(x,t)\) satisfies the \( \mathcal{N}_2^{-\hat{\epsilon}^{-2}}\)-\textbf{non-resonant} condition, and \(\partial_x g_2(c_{2,1},t) \cdot \partial_y g_2(c_{2,2},t) < 0\), which implies \(\boxed{(a)~\text{of case (2)}}\) of Lemma \ref{step12}.

Otherwise, if \( |c_{2,1}(t) - c_{2,2}(t)| \leq 2 \mathcal{N}_2^{-\hat{\epsilon}^{-1}} \), then we have \( \left| \{x \in I_2 \, | \, |\partial_x g_2(x,t)| = 0\} \right| = 1 \), which, together with \eqref{step22}, implies \(\boxed{(b)~\text{of case (2)}}\) of Lemma \ref{step12}.

This concludes the proof for the case of Type \(\textbf{II}^0_1\).

\

\textbf{Proof for Type~$I_1$:}

Now, we consider the case where the first step belongs to Type $\textbf{I}_{1}$. To address case (1) of Lemma \ref{step12}, it is sufficient to examine case (a) of (2) in Lemma \ref{step1r}, which is denoted by \textbf{lm13.2.a}. Similarly, to address case (3) of Lemma \ref{step12}, it is sufficient to examine cases (b) and (c) of (2) in Lemma \ref{step1r}, which are denoted by \textbf{lm13.2.b} and \textbf{lm13.2.c}, respectively. Let $m_1^+$ and $m_1^-$ be defined as in Lemma \ref{step1r}.
     \begin{enumerate}
     \item[\textbf{lm13.2.a}:] In this case, we have $r_1 = \min\{m_1^+, m_1^-\}$. Therefore for any $x \in I_1$ and $1 \leq |l| < r_1$, we have $x + l \alpha \notin I_1$. This, together with \eqref{1jdfthh0} and \eqref{1jdfthh}, implies that
$$|g_1(x + l \alpha)| \geq c_0 \mathcal{N}_1^{-1} |I_1| \geq \mathcal{N}_1^{-3}.$$

Thus similar to the first case, \eqref{step1m}, \eqref{mojinsi11}, \eqref{mojinsi11*}, \eqref{step1md}, \eqref{step1jd}, and \eqref{g1-g2} hold true. Hence we obtain \boxed{\eqref{step1-2},~\eqref{step1-2*}~and~\eqref{ci-1i}}. By \eqref{1jdfthh0}, \eqref{1jdfthh}, and \eqref{g1-g2}, we have
$$|\partial_x g_2(c_{2,j}(t),t)|,\ |\partial_y g_2(c_{2,j}(t),t)| > c_0 - \lambda^{-1} > (1 - \lambda^{-\frac{1}{2}})c_0.$$
Furthermore  $g_2(x,t)$ satisfies the $\mathcal{N}_1^{-2}$- \textbf{non-resonant} condition on $D_2$ and $\partial_x g_{2}(c_{2,1},t) \cdot \partial_y g_{2}(c_{2,2},t) < 0$. This implies \boxed{case~(1)} of Lemma \ref{step12}.

    \item[\textbf{lm13.2.b}:] In this case, for $x \in I_{1,1}(t)$, there exists an integer $0 < k_1 < q_N^2$ such that $I_{1,1} + k_1 \alpha \cap I_{1,2} \neq \emptyset$. Moreover, for $1 \leq l < k_1$ or $k_1 < l < r_1$, we have
\begin{equation}\label{ffff}
I_{1,1} + l \alpha \cap I_{1,2} = \emptyset.
\end{equation}

Consider
\[ A_{r_1}(x,t) = [A_{r_1-k_1}(x+k_1\alpha,t)][A_{k_1}(x,t)] \text{ and } A_{-r_1}(x,t). \]
Clearly, \eqref{ffff} implies that $A_{r_1-k_1}(x+k_1\alpha,t)$ and $A_{k_1}(x,t)$ possess estimates similar to the first case. For example, we have
\begin{equation}\label{akmo}
\|A_{r_1-k_1}(x+k_1\alpha,t)\| \geq \lambda^{(r_1-k_1)(1 - (\log \lambda_0)^{-\frac{1}{2}})}, \quad \|A_{k_1}(x,t)\| \geq \lambda^{k_1(1 - (\log \lambda_0)^{-\frac{1}{2}})}.
\end{equation}
Since $r_1 > \mathcal{N}_1^c \gg q_N^2 > k_1$, we have
\[ \lambda^{k_1(1 - (\log \lambda_0)^{-\frac{1}{2}})} \leq \|A_{k_1}(x,t)\| \leq \lambda^{2k_1} \ll \lambda^{(r_1-k_1)(1 - (\log \lambda_0)^{-\frac{1}{2}})} \leq \|A_{r_1-k_1}(x+k_1\alpha,t)\|. \]

For $\eta > 0$ and $\tilde{I}_2 \subset I_1$ with $\eta, |\tilde{I}_2| \leq e^{-(\log \lambda_0)^{5000\hat{\epsilon}}}$, we define
\[ \tilde{D}_2(t) = \{ (x, t') \mid x \in \tilde{I}_2, t' \in (t - \eta, t + \eta) \}. \]
Notice that
\[ e^{(\log \lambda_0)^{5000\hat{\epsilon}}} \gg e^{5(\log \lambda_0)^{\hat{\epsilon}}} e^{(\log \lambda_0)^{50\hat{\epsilon}}}. \]

Then, for any fixed $(\tilde{x}, \tilde{t}) \in \tilde{D}_2$, it follows from \eqref{axyxy} that
\begin{equation}\label{mojinsi11}
\|A_{r_1}(x,t)\| \sim_{0, e^{-\frac{1}{2} (\log \lambda_0)^{5000\hat{\epsilon}}}} \|A_{r_1}(\tilde{x}, \tilde{t})\| \text{ on } \tilde{D}_2.
\end{equation}

In particular, since $D_2(t) = \{ (x, t') \mid x \in I_2(t'), t' \in (t - \lambda^{-q_{N+1}}, t + \lambda^{-q_{N+1}}) \}$, we have
\begin{equation}\label{mojinsi11*}
\|A_{r_1}(x,t)\| \sim_{0, \mathcal{N}_2^{-\frac{1}{2}}} \|A_{r_1}(c_{2,1}(\tilde{t}), \tilde{t})\| \text{ on } D_2 \text{ for fixed } \tilde{t}.
\end{equation}
Similar to \eqref{mojinsi11}, for any fixed $(\tilde{x}, \tilde{t}) \in \tilde{D}_2$, we have
$$
\|A_{k_1}\| \sim_{0, e^{-\frac{1}{2} (\log \lambda_0)^{5000\hat{\epsilon}}}} \|A_{k_1}(\tilde{x}, \tilde{t})\|, \quad \|A_{r_1-k_1}\| \sim_{0, e^{-\frac{1}{2} (\log \lambda_0)^{5000\hat{\epsilon}}}} \|A_{r_1-k_1}(\tilde{x} + k_1 \alpha, \tilde{t})\| \text{ on } \tilde{D}_2.
$$

Hence we denote
\begin{equation}\label{r1k1}
\|A_{k_1}(\tilde{x}, \tilde{t})\| = l_{k_1}, \quad \|A_{r_1-k_1}(\tilde{x} + k_1 \alpha, \tilde{t})\| = l_{r_1 - k_1}.
\end{equation}

Clearly, \eqref{akmo} implies
\[ l_{k_1} > \lambda^{\frac{3}{4} |k_1|}, \]
which leads to \boxed{\eqref{lkmo}}.

By (2) of Lemma \ref{lemma9*} and the fact that $r_1 > \mathcal{N}_1^c \gg q_N^2 > k_1$, we obtain
\[
\begin{array}{ll}
\|A_{r_1}(x,t)\| & \geq (1 - \|A_k(x,t)\|^{-1}) \|A_{r_1-k_1}(x+k_1 \alpha, t)\| \|A_k(x,t)\|^{-1} \\
& \geq \lambda^{(r_1-k_1)(1 - (\log \lambda_0)^{-\frac{1}{2}}) - 2k_1} \\
& \geq \lambda^{(r_1-k_1)(1 - (\log \lambda_0)^{-\frac{1}{3}})},
\end{array}
\]
which implies \boxed{\eqref{step1-2}}. On the other hand, similar to the previous case, since for $l \neq k_1$, $0 < l \leq r_1$ we have $I_{1,1} + l \alpha \cap I_{1,2} = \emptyset$, it follows that
\begin{equation}\label{partxxx}
\begin{array}{ll}
\left\vert \partial_X \|A_{k_1}\| \right\vert & \leq k_1 \cdot \|A_{k_1}\| \cdot e^{(\log l_{k_1})^{\hat{\epsilon}}}, \\
\left\vert \frac{\partial^2 \|A_{r_1-k_1}\|}{\partial X \partial Y} \right\vert & \leq (r_1 - k_1)^2 \cdot \|A_{r_1-k_1}\| \cdot e^{(\log l_{r_1 - k_1})^{\hat{\epsilon}}}.
\end{array}
\end{equation}

Now, we denote
\[
\hat{g}_{2,2} = s(A_{r_1-k_1}(x+k_1\alpha, t)) - s(A_{-k_1}(x+k_1\alpha, t)), \quad \hat{g}_{2,1} = s(A_{k_1}(x, t)) - s(A_{-r_1}(x, t)).
\]
Let
\[
\Lambda^m_k(x, t) = \left[\begin{array}{cc}
\|A_{m}(x + k \alpha, t)\| & 0 \\
0 & \|A_{m}(x + k \alpha, t)\|^{-1}
\end{array}\right].
\]
Then
\[
\begin{array}{ll}
A_{r_1}(x, t) &= R_{s(A_{-(r_1-k_1)}(x+k_1\alpha, t))} \cdot \Lambda^{r_1-k_1}_{k_l}(x, t) \cdot R_{\frac{\pi}{2} - \hat{g}_{2,2}} \cdot \Lambda^{k_1}_{0}(x, t) \\
& \cdot R_{\frac{\pi}{2} - \hat{g}_{1,2}} \cdot \Lambda^{-r_1}_{0}(x, t) \cdot R_{\frac{\pi}{2} - s((A_{-r_1})^{-1})}.
\end{array}
\]

By \eqref{jishu1},
\begin{equation}\label{g22h}
\|\hat{g}_{2,2} - g_{1,2}\|_{C^2(I_{2,2})} \leq \|\frac{\pi}{2} - s(A_{r_1-k_1}(x+k_1\alpha, t))\|_{C^2(I_{2,2})} + \|s(A_{-r_1})\|_{C^2(I_{2,2})} \leq 2\lambda^{-2} e^{(\log \lambda_0)^{5{\hat{\epsilon}}}}.
\end{equation}
This implies
\begin{equation}\label{g222}
\|\hat{g}_{2,2}\|_{C^2} \leq 2\lambda^{-2} e^{(\log \lambda_0)^{5{\hat{\epsilon}}}} + \|g_{1,2}\|_{C^2} \leq C.
\end{equation}
Combining \eqref{partxxx} with \eqref{g222}, Lemma \ref{lemma9*} implies \boxed{\eqref{step1-2*}} holds true.

Similarly to \eqref{g22h}, we have
\begin{equation}\label{g22h*}
\|\hat{g}_{2,1} - g_{1,1}\|_{C^2(I_{2,1})} \leq 2\lambda^{-2} e^{(\log \lambda_0)^{5{\hat{\epsilon}}}}.
\end{equation}

By \eqref{1jdfthh0} and \eqref{1jdfthh}, for $j = 1, 2$, there exists $\hat{c}_{2,j}$ such that
\[ \{\hat{c}_{2,2}\} = \{x \in (I_{1,1} + k_1 \alpha) \cup I_{1,2} \mid \hat{g}_{2,j}(x,t) = 0\}, \]
\[ \{\hat{c}_{2,1}\} = \{x \in (I_{1,2} - k_1 \alpha) \cup I_{1,1} \mid \hat{g}_{2,j}(x,t) = 0\}, \]
and
\begin{equation}\label{xiajjja}
(1 + \lambda^{-\frac{1}{2}}) C_0 > |\partial_x \hat{g}_{2,j}(c_{1,j}(t), t)| > (1 - \lambda^{-\frac{1}{2}}) c_0.
\end{equation}
Moreover, $\hat{g}_{2,j}(x, t')$ satisfies the $\mathcal{N}_1^{-{\hat{\epsilon}}^{-2}}$- \textbf{non-resonant} condition on $[(\hat{c}_{2,j} - \eta, \hat{c}_{2,j} + \eta) \times (t - \eta, t + \eta)]$, which yields that for $0 < \eta \leq \mathcal{N}_1^{-{\hat{\epsilon}}^{-2}}$ and on $[(\hat{c}_{2,j} - \eta, \hat{c}_{2,j} + \eta) \times (t - \eta, t + \eta)]$,
\begin{equation}\label{eta111}
\hat{g}_{2,j}(x, t') \sim_{1, \eta^{\frac{1}{2}}} \partial_x \hat{g}_{2,j}(\hat{c}_{2,j}(t), t) (x - \hat{c}_{2,j}(t)) + \partial_t \hat{g}_{2,j}(\hat{c}_{2,j}(t), t) (t' - t),
\end{equation}
and
\[ \partial_x \hat{g}_{2,1}(\hat{c}_{2,1}(t), t) \cdot \partial_x \hat{g}_{2,2}(\hat{c}_{2,2}(t), t) < 0, \]
\[ \partial_t \hat{g}_{2,j}(\hat{c}_{2,j}(t), t) > (1 - \lambda^{-\frac{1}{2}}) c_0 > 0. \]

Now, consider
\[
\Lambda^{r_1-k_1}_{k_l}(x,t) \cdot R_{\frac{\pi}{2} - \hat{g}_{2,2}} \cdot \Lambda^{k_1}_{0}(x,t).
\]

Since \(\|A_{r_1}(x,t)\| > 1\), it can be uniquely expressed as
\[
R_{\phi_1} \cdot \Lambda^{r_1}_{0}(x,t) \cdot R_{\frac{\pi}{2} - \phi_2},
\]
where
\[
\phi_2 = s \left( \Lambda^{r_1-k_1}_{k_l}(x,t) \cdot R_{\frac{\pi}{2} - \hat{g}_{2,2}} \cdot \Lambda^{k_1}_{0}(x,t) \right),
\]
and
\[
\phi_1 = s \left( \left( \Lambda^{r_1-k_1}_{k_l}(x,t) \cdot R_{\frac{\pi}{2} - \hat{g}_{2,2}} \cdot \Lambda^{k_1}_{0}(x,t) \right)^{-1} \right).
\]

Since \( l_{r_1 - k_1} \gg l_{k_1} \), applying Lemma \ref{lemma8*} gives us the following results:

\begin{enumerate}
    \item If \( |\tan \hat{g}_{2,2}(x,t)| > e^{-(\log l_{k_1})^{2\hat{\epsilon}}} \), then
    \begin{equation}
    \label{jhbbx}
    \|\frac{\pi}{2} - \phi_2\|_{C^2}, \|\phi_1\|_{C^2} \leq C e^{(\log l_{k_1})^{3\hat{\epsilon}}} l_{k_1}^{-2}.
    \end{equation}

    \item If \( |\tan \hat{g}_{2,2}(x,t)| \leq e^{-(\log l_{k_1})^{2\hat{\epsilon}}} \), then, given that \( l_{r_1 - k_1} \gg l_{k_1} \), we have
    \[
    \|\phi_1\|_{C^2} \leq l_{k_1}^{-\frac{3}{2}}.
    \]

    \item If \( e^{-(\log l_{k_1})^{2\hat{\epsilon}}} \geq |\tan \hat{g}_{2,2}| \geq l_{k_1}^{-2} e^{(\log l_{k_1})^{2\hat{\epsilon}}} \), then
    \begin{equation}
    \label{sar1}
    \begin{array}{ll}
    \phi_2 \pmod{\pi} & \sim_{2, l_{r_1 - k_1}^{-1}} \frac{\pi}{2} - \arctan[l_{k_1}^{-2} \cot \hat{g}_{2,2}] \pmod{\pi} \\
    & = \arctan[l_{k_1}^{2} \tan \hat{g}_{2,2}] \pmod{\pi}.
    \end{array}
    \end{equation}
    Since \( \|g_{1,2}\|_{C^2} \leq C \), combining \eqref{sar1} with \eqref{g22h} yields
    \begin{equation}
    \label{phi22}
    \|\phi_2\|_{C^2} \leq C l_{k_1}^8.
    \end{equation}

    \item If \( |\tan \hat{g}_{2,2}| \leq l_{k_1}^{-2} e^{(\log l_{k_1})^{2\hat{\epsilon}}} \), then
    \begin{equation}
    \label{daosxjj}
    \left|\frac{\pi}{2} - \phi_2\right| \geq e^{-(\log l_{k_1})^{4\hat{\epsilon}}}, \quad \left|\partial_X \phi_2\right| \geq l_{k_1}.
    \end{equation}
\end{enumerate}

Recall the definition of \( g_2 \):
\begin{equation}
\label{g2bds}
g_{2,1} = \phi_2 - \frac{\pi}{2} + \hat{g}_{2,1}.
\end{equation}

Noting that \eqref{phi22} and \eqref{g2bds} imply
\[
\|g_{2,1}\|_{C^2} \leq C l_{k_1}^8,
\]
which leads to
$
\boxed{\eqref{g21c2}}.
$

If \( l_{k_1}^{-2} e^{(\log l_{k_1})^{2\hat{\epsilon}}} \leq |\tan \hat{g}_{2,2}(x,t)| \leq e^{-(\log l_{k_1})^{2\hat{\epsilon}}} \), then \eqref{g2bds} and \eqref{sar1} imply
$$
g_{2,1}(x,t) + \frac{\pi}{2} - \hat{g}_{2,1}(x,t) \sim_{2, l_{r_1 - k_1}^{-1}} \arctan[l_{k_1}^{2} \tan \hat{g}_{2,2}(x + k_1 \alpha, t)].
$$
If \( l_{k_1}^{-2} e^{(\log l_{k_1})^{2\hat{\epsilon}}} > |\tan \hat{g}_{2,2}(x,t)| \), then by \(\left|\partial_X g_1\right| \ll l_0 \leq l_{k_1}\), \eqref{daosxjj} and \eqref{g2bds} yield
\begin{equation}
\label{g2xjjjj}
|g_{2,1} - \hat{g}_{2,1}| > e^{-(\log l_{k_1})^{4\hat{\epsilon}}}, \quad \left|\partial_X g_{2,1}\right| \geq l_{k_1}.
\end{equation}

Additionally, if \( |\tan \hat{g}_{2,2}(x,t)| > e^{-(\log l_{k_1})^{2\hat{\epsilon}}} \), then \eqref{jhbbx} and \eqref{g2bds} yield
\begin{equation}
\label{hatgg}
\|g_{2,1} - \hat{g}_{2,1}\|_{C^2} \leq C e^{(\log l_{k_1})^{3\hat{\epsilon}}} l_{k_1}^{-2}.
\end{equation}

Now, let \( 0 < \bar{\epsilon} = 1000000\hat{\epsilon} \ll 1 \). Notice that
\[
e^{(\log l_{k_1})^{8\bar{\epsilon}}} \geq e^{(\log \lambda_0)^{8\bar{\epsilon}}} \geq e^{(\log \lambda_0)^{8000000\hat{\epsilon}}} \geq \mathcal{N}_1.
\]

Hence \eqref{eta111} implies that for \( j = 1, 2 \),
\[
\hat{g}_{2,j}(x,t) \sim_{1, e^{-(\log l_{k_1})^{8\bar{\epsilon}}}} \partial_x \hat{g}_{2,j}(\hat{c}_{2,j}(t), t) (x - \hat{c}_{2,j}(t)) \text{ on } (\hat{c}_{2,j} - e^{-(\log l_{k_1})^{8\bar{\epsilon}}}, \hat{c}_{2,j} + e^{-(\log l_{k_1})^{8\bar{\epsilon}}}).
\]

Define
\[
\hat{I}_{2,j} = (\hat{c}_{2,j} - e^{-(\log l_{k_1})^{8\bar{\epsilon}}}, \hat{c}_{2,j} + e^{-(\log l_{k_1})^{8\bar{\epsilon}}}),
\]
\[
\hat{D}_{2,1}(t) = (\hat{I}_{2,1} \cup \hat{I}_{2,2} - k_1 \alpha) \times (t - e^{-(\log l_{k_1})^{8\bar{\epsilon}}}, t + e^{-(\log l_{k_1})^{8\bar{\epsilon}}}),
\]
\[
\hat{D}_{2,2}(t) = (\hat{I}_{2,2} \cup \hat{I}_{2,1} + k_1 \alpha) \times (t - e^{-(\log l_{k_1})^{8\bar{\epsilon}}}, t + e^{-(\log l_{k_1})^{8\bar{\epsilon}}}).
\]

Clearly, by the choice of \(\bar{\epsilon}\), we have \(\hat{I}_{2,j} \subset I_{1,j}\) for \(j = 1, 2\).

Therefore
\[
(\hat{I}_{2,1} + k_1 \alpha) \cup \hat{I}_{2,2} \subset [(I_{1,1} + k_1 \alpha) \cup I_{1,2}].
\]

Then we consider the following two cases.
\begin{enumerate}
\item \((\hat{I}_{2,1} + k_1 \alpha) \cap \hat{I}_{2,2} = \emptyset\)

In this case, for any \(x \in \hat{I}_{2,1}\), it follows from \eqref{xiajjja} that
\[
|\tan \hat{g}_{2,2}(x + k_1 \alpha, t)| > e^{-(\log l_{k_1})^{9 \bar{\epsilon}}}.
\]
Applying \eqref{hatgg} and noting that \(l_{k_1} > \lambda^{\frac{1}{2} k_1} \gg e^{(\log \lambda_0)^{9 \bar{\epsilon}}}\), we find that \(g_2\) has exactly one zero on each interval \(\hat{I}_{2,j}\) for \(j = 1, 2\). These zeros are denoted by \(c_{2,j}\) and satisfy
\begin{equation}\label{c2jjj}
|c_{2,j} - \hat{c}_{2,j}| \leq l_{k_1}^{-\frac{3}{2}}.
\end{equation}
Combining \eqref{g22h}, \eqref{g22h*}, and \eqref{c2jjj}, we obtain \(\boxed{\eqref{ci-1i}}\).

Additionally, for \(X = x, t\), we have
\[
\partial_X g_{2} \sim_{0, e^{-(\log l_{k_1})^{\bar{\epsilon}}}} \partial_X \hat{g}_{2,1} \text{ on } \hat{I}_{2,1},
\]
\[
\partial_X g_{2} \sim_{0, e^{-(\log l_{k_1})^{\bar{\epsilon}}}} \partial_X \hat{g}_{2,2} \text{ on } \hat{I}_{2,2}.
\]
Thus \(g_2 \vert_{\hat{D}_{2,1} \cup \hat{D}_{2,2}}\) satisfies the \(e^{-2(\log l_{k_1})^{\bar{\epsilon}}}\)-\textbf{non-resonant} condition.

\item \((\hat{I}_{2,1} + k_1 \alpha) \cap \hat{I}_{2,2} \neq \emptyset\)

In this scenario, the interval \(\hat{I}_{2,1} \cup (\hat{I}_{2,2} - k_1 \alpha)\) has length
\begin{equation}\label{case22222}
\vert \hat{I}_{2,1} \cup (\hat{I}_{2,2} - k_1 \alpha) \vert \leq 2 e^{-(\log l_{k_1})^{8 \bar{\epsilon}}}.
\end{equation}

We aim to show that the zeros of \(g_{2,1}\) occur within the intersection \((\hat{I}_{2,1} + k_1 \alpha) \cap \hat{I}_{2,2}\). Using \eqref{g22h}, \eqref{g22h*}, and \eqref{case22222}, we deduce \(\boxed{\eqref{ci-1i}}\).

Additionally, the upper bound from \eqref{xiajjja} gives:
\[
|\tan \hat{g}_{2,2}(x + k_1 \alpha, t)| \leq 2 \mathcal{N}_1^2 e^{-(\log l_{k_1})^{8 \bar{\epsilon}}} \leq e^{-(\log l_{k_1})^{7 \bar{\epsilon}}}.
\]
For \(j = 1, 2\), the interval
\[
\hat{I}^*_{2,j} := \{ x \mid l_{k_1}^{-2} e^{(\log l_{k_1})^{2 \bar{\epsilon}}} > |\tan \hat{g}_{2,j}(x, t)| \}
\]
is contained within \(\hat{I}_{2,j}\). From \eqref{g2xjjjj}, for \(x \in \hat{I}^*_{2,2} - k_1 \alpha\), we have:
\[
\begin{array}{ll}
|g_{2,1}(x, t)| & > \left| \arctan[l_{k_1}^{2} \tan \hat{g}_{2,2}(x + k_1 \alpha, t)] - \frac{\pi}{2} \right| - |\hat{g}_{2,1}| \\
& > e^{-(\log l_{k_1})^{4 \bar{\epsilon}}} - e^{-(\log l_{k_1})^{8 \bar{\epsilon}}} > e^{-(\log l_{k_1})^{5 \bar{\epsilon}}}.
\end{array}
\]
Thus for \(x \in \hat{I}^*_{2,2} - k_1 \alpha\), it follows that
\[
|g_{2,1}(x, t)| > e^{-(\log l_{k_1})^{5 \bar{\epsilon}}}.
\]
Additionally,
\[
\vert \partial_x g_{2,1}(x, t) \vert > l_{k_1}.
\]
These information helps us determine the geometric properties of \(g_{2,1}\) on \(\hat{I}^*_{2,2} - k_1 \alpha\).

For \(x \notin \hat{I}^*_{2,2} - k_1 \alpha\), we have
\[
l_{k_1}^{-2} e^{(\log l_{k_1})^{2 \bar{\epsilon}}} \leq |\tan \hat{g}_{2,2}(x + k_1 \alpha, t)| \leq e^{-(\log l_{k_1})^{7 \bar{\epsilon}}}.
\]
On the interval \([\hat{I}_{2,1} \cup (\hat{I}_{2,2} - k_1 \alpha)] - [\hat{I}^*_{2,2} - k_1 \alpha]\), we have
\begin{equation}\label{g2zx}
g_2 = (1 + o(l_{r_1 - k_1}^{-1})) \arctan[l_{k_1}^{2} \tan \hat{g}_{2,2}] - \frac{\pi}{2} + \hat{g}_{2,1}.
\end{equation}
Since \(l_{r_1 - k_1} \gg l_{k_1}\), this implies that \(g_2\) and \(\arctan[l_{k_1}^{2} \tan \hat{g}_{2,2}] - \frac{\pi}{2} + \hat{g}_{2,1}\) share the same geometric properties.

According to \eqref{eta111} for \(j = 1, 2\) and \((x, t') \in \hat{D}_{2,j}\), we have:
\begin{equation}\label{hatggg}
\hat{g}_{2,j}(x, t') \sim_{1, \eta} \partial_x \hat{g}_{2,j}(\hat{c}_{2,j}(t), t) (x - \hat{c}_{2,j}(t)) + \partial_t \hat{g}_{2,j}(\hat{c}_{2,j}(t), t) (t' - t),
\end{equation}
\[
(1 + \lambda^{-1}) C_0 > \partial_t \hat{g}_{2,j}(x, t'), \quad \left\vert \partial_x \hat{g}_{2,j}(x, t') \right\vert \geq (1 - \lambda^{-1}) c_0 > 0,
\]
\[
|\partial^2_{xt} \hat{g}_{2,j}| + |\partial^2_{x^2} \hat{g}_{2,j}| + |\partial^2_{t^2} \hat{g}_{2,j}| < (1 + \lambda_1^{-1}) C_0.
\]

These, along with \eqref{mojinsi11}, \eqref{r1k1}, and \eqref{partxxx}, confirm (i) of Lemma \ref{thetat1fenjie}. Thus for any \(t' \in (t - e^{-(\log l_{k_1})^{8 \bar{\epsilon}}}, t + e^{-(\log l_{k_1})^{8 \bar{\epsilon}}})\), the function \(\arctan[l_{k_1}^{2} \tan \hat{g}_{2,2}(x + k_1 \alpha, t')] - \frac{\pi}{2} + \hat{g}_{2,1}(x, t')\) has at most two zeros on \(\hat{I}_{2,1} \cup [\hat{I}_{2,2} - k_1 \alpha]\), denoted by \(d_{2,1}\) and \(d'_{2,2}\) (if they exist) with \(d_{2,1} \leq d'_{2,2}\).

Similarly, \(\arctan[l_{k_1}^{2} \tan \hat{g}_{2,1}(x - k_1 \alpha, t')] - \frac{\pi}{2} + \hat{g}_{2,2}(x, t')\) also has at most two zeros on \(\hat{I}_{2,2} \cup [\hat{I}_{2,1} + k_1 \alpha]\), denoted by \(d_{2,2}\) and \(d'_{2,1}\) (if they exist) with \(d_{2,2} \geq d'_{2,1}\). This implies \(\boxed{\eqref{g21}}\).

From \eqref{g2zx}, we conclude that \(g_2\) has at most two zeros on \(\hat{I}_{2,1} \cup [\hat{I}_{2,2} - k_1 \alpha]\), denoted by \(c_{2,1}\) and \(c'_{2,2}\) (if they exist) with:
\begin{equation}\label{d21}
|d_{2,1} - c_{2,1}|, \quad |d'_{2,2} - c'_{2,2}| \leq l_{r_1 - k_1}^{-\frac{1}{2}}.
\end{equation}

Similarly, \(g_2\) also has at most two zeros on \(\hat{I}_{2,2} \cup [\hat{I}_{2,1} + k_1 \alpha]\), denoted by \(c_{2,2}\) and \(c'_{2,1}\) (if they exist) with:
\begin{equation}\label{d22}
|d_{2,2} - c_{2,2}|, \quad |d'_{2,1} - c'_{2,1}| \leq l_{r_1 - k_1}^{-\frac{1}{2}}.
\end{equation}

On the other hand, for \(y =x + k_1 \alpha\), the following are equivalent:
\[
\begin{array}{ll}
& \arctan[l_{k_1}^{2} \tan \hat{g}_{2,2}(x + k_1 \alpha, t')] - \frac{\pi}{2} + \hat{g}_{2,1}(x, t') = 0 \\
\Leftrightarrow & \arctan[l_{k_1}^{2} \tan \hat{g}_{2,1}(x, t')] - \frac{\pi}{2} + \hat{g}_{2,2}(x + k_1 \alpha, t') = 0 \\
\Leftrightarrow & \arctan[l_{k_1}^{2} \tan \hat{g}_{2,1}(y - k_1 \alpha, t')] - \frac{\pi}{2} + \hat{g}_{2,2}(y, t') = 0.
\end{array}
\]
Therefore
\begin{equation}\label{d2122}
d_{2,1} + k_1 \alpha = d'_{2,2}, \quad d_{2,2} - k_1 \alpha = d'_{2,1}.
\end{equation}

Using \eqref{d21}, \eqref{d22}, and \eqref{d2122}, we obtain:
\[
|c_{2,1} + k_1 \alpha - c'_{2,1}|, \quad |c_{2,2} - k_1 \alpha - c'_{2,2}| < l_{r_1 - k_1}^{-\frac{1}{3}} < \lambda^{-\frac{1}{4}(r_1 - k_1)} < \lambda^{-\frac{1}{100}r_1}.
\]
This suggests \(\boxed{\eqref{g22}}\).

According to (iii-a) of Lemma \ref{thetat1fenjie}, we have:
\[
\{x \in \hat{I}_{2,1} \cup [\hat{I}_{2,2} - k_1 \alpha] \mid |\partial_x g_2(x, t)| = 0\} = \{\tilde{c}_{2,1}, \tilde{c}^*_{2,1}\}, \quad \tilde{c}_{2,1} \geq \tilde{c}^*_{2,1},
\]
\[
\{x \in \hat{I}_{2,2} \cup [\hat{I}_{2,1} + k_1 \alpha] \mid |\partial_x g_2(x, t)| = 0\} = \{\tilde{c}_{2,2}, \tilde{c}'_{2,2}\}, \quad \tilde{c}_{2,2} \leq \tilde{c}^*_{2,2}.
\]
Thus \(\boxed{\eqref{g22*}}\).

By applying (v) of Lemma \ref{thetat1fenjie}, on \(\Pi_2 D_2\), we obtain:
\begin{equation}\label{jinsijieguo}
\left| g_2(\tilde{c}_{2,1}(t), t) - g_2(\tilde{c}^*_{2,1}(t), t) \right| \sim_{0, \mathcal{N}_1^{-1}} \pi - 4 \sqrt{\frac{|\partial_x \hat{g}_{2,1}(\hat{c}_{2,1}(t), t)|}{|\partial_x \hat{g}_{2,2}(\hat{c}_{2,2}(t), t)|}} l_{k_1}^{-1}.
\end{equation}
Combining this with \eqref{hatggg} gives \(\boxed{\eqref{g25}}\).

Finally, (iv) of \eqref{thetat1fenjie} implies:
\[
\partial^2_{x^2} g_2(\tilde{c}_{2,j}(t), t) = -2 (|\partial_x \hat{g}_{2,2}(\hat{c}_{2,2}(t), t)|)^{\frac{1}{2}} (|\partial_x \hat{g}_{2,1}(\hat{c}_{2,1}(t), t)|)^{\frac{3}{2}} l_{k_1},
\]
\[
\partial^2_{x^2} g_2(\tilde{c}^*_{2,j}(t), t) = 2 (|\partial_x \hat{g}_{2,2}(\hat{c}_{2,2}(t), t)|)^{\frac{1}{2}} (|\partial_x \hat{g}_{2,1}(\hat{c}_{2,1}(t), t)|)^{\frac{3}{2}} l_{k_1}.
\]
Combining these results with \eqref{hatggg}, we obtain \(\boxed{\eqref{g24}}\) and \(\boxed{\eqref{g24*}}\).

If \(\{x \in [\hat{I}_{2,1} \cup [\hat{I}_{2,2} - k_1 \alpha]] \cup [\hat{I}_{2,2} \cup [\hat{I}_{2,1} + k_1 \alpha]] \mid g_2(x, t) = 0\} \neq \emptyset\), then we have either:
\begin{equation}\label{youduandian}
\tilde{c}^*_{2,1} \leq c'_{2,2} \leq \tilde{c}_{2,1} \leq c_{2,1}, \quad c_{2,2} \leq \tilde{c}_{2,2} \leq c'_{2,1} \leq c^*_{2,2},
\end{equation}
or:
\begin{equation}\label{zuoduandian}
c_{2,1} \leq \tilde{c}^*_{2,1} \leq c'_{2,2} \leq \tilde{c}_{2,1}, \quad c^*_{2,2} \leq c'_{2,1} \leq \tilde{c}_{2,2} \leq c_{2,2}.
\end{equation}

Therefore we need to consider the following cases:

\begin{enumerate}
\item[i:] Suppose \(\{x \in [\hat{I}_{2,1} \cup (\hat{I}_{2,2} - k_1 \alpha)] \cup [\hat{I}_{2,2} \cup (\hat{I}_{2,1} + k_1 \alpha)] \mid g_2(x,t) = 0\} \neq \emptyset\). In this scenario, either \(\eqref{youduandian}\) or \(\eqref{zuoduandian}\) holds. Consequently, (iii-c) of Lemma \ref{thetat1fenjie} provides the bound:
$$ c_0 \eta \leq |c_{2,1} - c'_{2,2}| \leq \min\{4 C_0 \mathcal{N}_1^{-3\hat{\epsilon}^{-1}}, 2 \mathcal{N}_1^{\hat{\epsilon}} \eta^{\frac{1}{2}}\}, $$
where
$$ \eta = \min\{\left\vert g_2(\tilde{c}_{2,1}(t), t)\right\vert, \left\vert g_2(\tilde{c}^*_{2,1}(t), t)\right\vert\}. $$

\item[ii:] Suppose \(\{x \in [\hat{I}_{2,1} \cup (\hat{I}_{2,2} - k_1 \alpha)] \cup [\hat{I}_{2,2} \cup (\hat{I}_{2,1} + k_1 \alpha)] \mid g_2(x,t) = 0\} = \emptyset\). In this case, there are no zeros of \(g_2\) in the specified intervals.

\end{enumerate}

Therefore
\begin{enumerate}
\item In case (i), without loss of generality, assume
$$|g_2(\tilde{c}_{2,1})| = \min\{\left\vert g_2(\tilde{c}_{2,1}(t),t)\right\vert, \left\vert g_2(\tilde{c}^*_{2,1}(t),t)\right\vert\}.$$
Let
$$\min\{\mathcal{N}_2^{-2\hat{\epsilon}^{-1}}, l_{k_1}^{-8}\} = m^*.$$

    \textbf{A1:} If \( |g_2(\tilde{c}_{2,1})| > m^* \), it follows that
    $$|c_{2,1} - c'_{2,2}| > c_0 m^* > 2(m^*)^2.$$
    On the other hand, (iii-d) of Lemma \ref{thetat1fenjie} yields
    $$|\partial_x g_2(c_{2,1}(t),t)| > \mathcal{N}_1^{-\hat{\epsilon}^{-1}} |\tilde{c}_{2,1}(t) - c_{2,1}(t)| > c_0 m^* \mathcal{N}_1^{-\hat{\epsilon}^{-1}} > (m^*)^2.$$
    Thus by (iii-e) of Lemma \ref{thetat1fenjie}, for \(X = x, t\),
    $$|\partial_X g_2(x,t)| \sim_{0,m^*} |\partial_X g_2(c_{2,1}(t),t)| \text{ on } \tilde{I}_{2,1},$$
    where \(\tilde{I}_{2,1} = (c_{2,1} - l_{k_1}^{-9} \mathcal{N}_2^{-2\hat{\epsilon}^{-1}]}, c_{2,1} + l_{k_1}^{-9} \mathcal{N}_2^{-2\hat{\epsilon}^{-1}}
    \).
    This implies that \(g_2\) satisfies the \(m^*\)-\textbf{non-resonant} condition, completing the proof for \boxed{case~(b)-(i)~and~(d)-(i)}.

    \textbf{A2:} If \( |g_2(\tilde{c}_{2,1})| \leq m^* \), the condition
    $$[I_{2,1}(t) + k_1 \alpha] \cap I_{2,2}(t) \neq \emptyset$$
    may occur. However, with \(\tilde{I}_{2,1} = (\tilde{c}_{2,1} - l_{k_1}^{-1} \mathcal{N}_2^{-2\hat{\epsilon}^{-1}}, \tilde{c}_{2,1} + l_{k_1}^{-1} \mathcal{N}_2^{-2\hat{\epsilon}^{-1}})\), we have the following estimate.

    On \(\tilde{I}_{2,1}\),
    $$\begin{array}{ll}
    \vert g_2(\tilde{c}_{2,1}(t), t) - g_2(x, t) \vert \\
    \sim_{2, \mathcal{N}_2^{-1}} \left[2 \left(|\partial_x \hat{g}_{2,2}(\hat{c}_{2,2}(t), t)|\right)^{\frac{1}{2}} \left(|\partial_x \hat{g}_{2,1}(\hat{c}_{2,1}(t), t)|\right)^{\frac{3}{2}} l_{k_1}\right] (x - \tilde{c}_{2,1}(t))^2
    \end{array}.$$

    On \(I_{2,1} - \tilde{I}_{2,1}\), we have
    $$|\partial_x g_{2,1}(x, t)|, |g_{2,1}(x, t)| > (m^*)^2.$$
    Furthermore  \eqref{jinsijieguo} yields
    $$|g_2(\tilde{c}_{2,1}(t), t)| < l_{k_1}^{-8} < l_{k_1}^{-2} < |g_2(\tilde{c}^*_{2,1}(t), t)|.$$
    This completes the proof for \boxed{case~(a)~and~(c)}.

    Additionally, in both Case (A1) and Case (A2), by (iii-b) of Lemma \ref{thetat1fenjie}, for \(1 \leq i \neq j \leq 2\),
    $$c_0 < |\partial_t \hat{g}_{2,j}(\hat{c}_{2,j}(t))| \leq \vert \partial_t g_2(\tilde{c}_{2,j}(t), t)\vert \leq |\partial_t \hat{g}_{2,j}(\hat{c}_{2,j}(t))| + |\partial_t \hat{g}_{2,i}(\hat{c}_{2,i}(t))| \frac{|\partial_x \hat{g}_{2,i}(\hat{c}_{2,i}(t))|}{|\partial_x \hat{g}_{2,j}(\hat{c}_{2,j}(t))|} < C_0 + C_0 \frac{C_0}{c_0} < C_0^3.$$
    Similarly,
    $$c_0 \leq \vert \partial_t g_2(\tilde{c}^*_{2,j}(t), t)\vert \leq C_0^3.$$
    This completes the proof of \boxed{\eqref{c0xj}}.

\item If \(\{x \in [\hat{I}_{2,1} \cup (\hat{I}_{2,2} - k_1 \alpha)] \cup [\hat{I}_{2,2} \cup (\hat{I}_{2,1} + k_1 \alpha)] \mid g_2(x, t) = 0\} = \emptyset\), then
$$\min\limits_{(x, t') \in D_2(t)} |g_{2,1}| = \min\{g_{2,1}(\tilde{c}^*_{2,j}(t), t), g_{2,1}(\tilde{c}_{2,j}(t), t)\}.$$
Thus \eqref{g25}, which was previously obtained, implies \boxed{case~(b)-(ii)~and~(d)-(ii)}.

\vskip 0.3cm
\end{enumerate}

\end{enumerate}

\item[\textbf{lm13.2.c}:] The proof is similar to the case \textbf{lm13.2.b}.

    \end{enumerate}
Then we finish the proof.
\hfill\qed\end{proof}

\begin{remark} Although $g_1$  is first-order and second-order non-degenerate at each non-extreme or extreme point from the cos-type condition, to ensure the induction can go forward, we have to worry about whether $g_i,\ i\ge 2$ is still non-degenerate due to the existence of so-called resonance between critical points. In fact, the resonance is inevitable by the ergodicity of base dynamics.
To see the relationship between the resonance and the shape of $g_{i+1}$, let us consider the following baby model:
 \begin{example} Let  $h_1(x)= -(x+\frac14),\ h_2(x)=x-\frac14$. Then $c_i=\mp \frac14$
 is the unique zero of $h_i$,\ $i=1,\ 2$. Then $\arctan(\lambda^2\tan h_2(x+d))-\frac{\pi}{2}$ is a translation of the graph of  $\arctan(\lambda^2\tan  x)$ with $-\frac12<x<0$, $0\le d<\frac12$ and $\lambda\gg 1$. In particular,  $\arctan(\lambda^2\tan(h_2(x+d))-\frac{\pi}{2}\thickapprox 0\ {\rm\ (mod\ \pi)}$ out of a O($(1/\lambda)^{2-}$)-neighbor  of $\frac14-d$. while it jumps rapidly from $-\pi$ to $0$ in this small neighbor (we call it a pulse). Now for $0<\lambda^{-1}\ll \delta\ll 1$, we consider
 $$h(x)=h_1(x)+\arctan(\lambda^2\tan(h_2(x+d))-\frac{\pi}{2}\ {\rm\ (mod\ \pi)},\qquad -\frac14-\delta\le x <-\frac14+\delta.$$
 Then the shape of the graph of $h$ depends heavily on $d$. In fact, if $d\not\thickapprox \frac12$, that is, $|c_1+d-c_2|\gtrsim\delta$, then $h(x)\thickapprox h_1(x)$ on the whole interval $(-\frac14-\delta, -\frac14+\delta)$. However, if $d\thickapprox \frac12$, that is  $c_1+d\thickapprox c_2$, then $h(x)$ is a superposition of  $h_1$ and a pulse in some O($(1/\lambda)^{2-}$) subinterval although $h(x)\thickapprox h_1(x)$ elsewhere. We will find that the role of the resonance plays on the shape of $g_{i+1}$ is similar  as $d\thickapprox \frac12$ does on $h$ here.

  \end{example}

 \end{remark}

By the help of Lemma \ref{step12}, we give the following definition, which determines the types of step 2.

\

\

%From Lemma ?? in \cite{wz1}, we have
%$$g_2(x)=\arctan(l^2\tan g_1(x))-\frac{\pi}{2}+g_1(T^{r_1}x).$$
%It is easy to see that for each $t\in J$, $g_1(\cdot,t)=\tan^{-1}(t-v(\cdot))$ is either of type I or II. And for the second step, there may exist some $t\in J$, $g_2(\cdot,t)$ is of type III. Moreover, for any $n\geq 2$ and any $t\in J$, \cite{wz1} tells us that $g_n(\cdot,t)$ belongs to one of these three types.

\subsubsection{The $(i+1)$-step}\
Now we will show the following holds true for step $(i+1).$
\begin{lemma}\label{stepi+11}

$$\|A_{\pm r_i}(x,t)\|\geq \lambda^{(1-\sum\limits_{j=0}^{i-1}(r_{j}\log \lambda_0)^{-\frac{1}{2}})r_i}.$$

For $X,Y\in \{x,t\},$ it holds that
\begin{align*}
&\left\vert \|A_{\pm r_i}(x,t)\|^{-1} \partial_X \|A_{\pm r_i}(x,t)\| \right\vert \leq r_i e^{(\log \|A_{\pm r_i}(x,t)\|)^{\hat{\epsilon}}}, \\
&\left\vert \|A_{\pm r_i}(x,t)\|^{-1} \partial^2_{XY} \|A_{\pm r_i}(x,t)\| \right\vert \leq r_i^2 e^{(\log \|A_{\pm r_i}(x,t)\|)^{\hat{\epsilon}}}.
\end{align*}

$$|c_{i-1,j}(t)-c_{i,j}(t)|<C\lambda^{-\frac{3}{4}r_{i-2}},j=1,2.$$

For $g_{i+1}$, there exists the following several cases.

\begin{enumerate}

\item The $i$th step belongs to $\textbf{I}_i$ and both $r_i^+(x,t)$ and $r_i^-(x,t)$ are exactly the first returning time after $\min\{q^2_{N+i-1},r_{i-1}\}$ back to $I_i$. It holds that $g_{i+1}(x,t)$ exactly has two zeros $c_{i+1,1}$ and $c_{i+1,2}$ with $g_{i+1}$ satisfies $\lambda^{-(\log \mathcal{N}_{i+1})^C}-\textbf{non-resonant}~condition~on~D_{i+1}(t)$
\ {\rm and}\ $$\partial_x g_{i+1}(c_{i+1,1},t)\cdot \partial_y g_{i+1}(c_{i+1,2},t)<0.$$

%\item If the $i$th step belongs to $\textbf{II}_i(III_i^0),$ then  $\left\vert\{x\in I_{i+1} \vert |\partial_x g_{i+1}(x,t)|=0\}\right\vert=1$ and $g_{i+1}(x,t)$ has at most $2$ zeros.

  %   \begin{enumerate}

   % \item If $I_{i+1,1}\bigcap I_{i+1,2}=\emptyset,$

  %  then $g_{i+1}~satisfies~(C_0,\lambda^{-(\log \mathcal{N}_{i+1})^C})-\textbf{non-resonant}~condition~on~D_{i+1}(t)
%\ {\rm and}$\ $$\partial_x g_{i+1}(c_{2,1},t)\cdot \partial_y g_{i+1}(c_{i+1,2},t)<0.$$

%\item If $I_{i+1,1}\bigcap I_{i+1,2}\neq \emptyset,$ we denote $\{x\in I_{i+1} \vert |\partial_x g_{i+1}(x,t)|=0\}=\{\tilde{c}_{i+1}\}$ then
%on~$I_{i+1,1}\bigcup I_{i+1,2}$ we have
%$$|g_{i+1}(x,t)-g_{i+1}(\tilde{c}(t),t)|>\frac{c_0}{2}(1-\sum\limits_{j=0}^{i-1}(r_{j}\log \lambda_0)^{-\frac{1}{2}})(x-\tilde{c}(t))^2.$$

%\end{enumerate}

   \item The $i$th step belongs to $\textbf{I}_i,$ and $r_i^+(x,t)$ or $r_i^-(x,t)$ is the first returning time after $\min\{q^2_{N+i-1},r_{i-1}\}-1$ back to $I_{i+1}$. Then the following hold true.

      % For $j=1,2,$\ $1\le\left\vert\{x\in I_{i+1,j} \vert |\partial_x g_{i+1}(x,t)|=0\}\right\vert=2$  and on each interval $I_{i+1,j}$, $g_{i+1}(x,t)$ has at most $2$ zeros.

        For each $t$ and $j$, the function $|g_{i+1,j}(x,t){\rm\ mod}\ \pi|$ composes of
one or two minimum points, denoted by $C^{(n+1,j)}=\{c_{n+1,j},\ \ c'_{n+1,j}\}$ (for the one-element case, we assume
$c_{n+1,j}=c'_{n+1,j})$. For $C^{(n+1,j)}$, there exists $k_*(t)\in \R$ satisfying $\max\{r_{i-1},q^2_{N+i-2}\}\leq |k_*(t)|<q^2_{N+i-1}$ such that
         $$|c_{i+1,1}+k_*\alpha-c'_{i+1,1}|+|c_{i+1,2}-k_*\alpha-c'_{i+1,2}|\leq \lambda^{-\frac{1}{100}r_{i}}.$$
        Furthermore, $\partial_x g_{i+1}(x,t)$ possesses one or two zeroes for each $t$ and $j$, denoted by
         $\{\tilde{c}_{i+1,j},\tilde{c}^*_{i+1,j}\}$ (for the one-element case, we assume
$\tilde{c}_{n+1,j}=\tilde{c}^*_{n+1,j})$. Then
         $$c_0\leq \vert \partial_t g_{i+1}(\tilde{c}_{i+1,j}(t),t)\vert, \vert\partial_t g_{i+1}(\tilde{c}^*_{i+1,j}(t),t)\vert\leq q^C_{N+i}.$$
         Moreover,  we have
         either $c_{i+1,2}'\leq \tilde{c}_{i+1,1}\leq c_{i+1,1}; c_{i+1,2}\leq \tilde{c}_{i+1,2}\leq c_{i+1,1}'$ or $c_{i+1,2}'\geq \tilde{c}^*_{i+1,1}\geq c_{i+1,1}; c_{i+1,2}\geq \tilde{c}^*_{i+1,2}\geq c_{i+1,1}'$ (if $\{x\in I_{i+1,j} \vert g_{i+1}(x,t)=0\}=\emptyset,$ $c_{i+1,j'}'=c_{i+1,j}=\tilde{c}_{i+1,j}~or~\tilde{c}^*_{i+1,j},~j\neq j'$).

          In addition, there exists ${k_*}>0$ satisfying $k_*\le\min\{q^2_{N+i-1},r_{i-1}\}$ such that for any $(x,t')\in D_{i+1}(t),$ it follows that $$\left\{\begin{array}{ll}
          &\frac{l_{k_*}}{2}<\|A_{k_*}\|< 2l_{k_*}~with~\lambda^{\frac{4}{3}|k_*|}>l_{k_*}>\lambda^{\frac{3}{4}|k_*|},\\
          &\|g_{i+1,j}\|_{C^2}\leq Cl^8_{k_*},\\
          &\vert g_{i+1}(\tilde{c}^*_{i+1,j}(t),t)-g_{i+1}(\tilde{c}_{i+1,j}(t),t)\vert<\pi-\lambda^{(\log \mathcal{N}_{i+1})^C}l_{k_*}^{-1},\\
          &\pi-\lambda^{-(\log \mathcal{N}_{s(k_*)+1})^C}l_{k_*}^{-1}\leq \vert g_{i+1}(\tilde{c}^*_{s(k_*)+1,j}(t),t)-g_{i+1}(\tilde{c}_{s(k_*)+1,j}(t),t)\vert.\end{array}\right.$$

   Additionally,       on~$\tilde{I}_{i+1,j}=(\tilde{c}_{i+1,j}-l_{k_*}^{-1}\mathcal{N}_{i+1}^{-2\hat{\epsilon}^{-1}},\tilde{c}_{i+1,j}+l_{k_*}^{-1}\mathcal{N}_{i+1}^{-2\hat{\epsilon}^{-1}})$
there exists $d'_{i+1}\in \R$ satisfying $\lambda^{\frac{1}{2}|k_*|}\leq d'_{i+1}\leq \lambda^{2 |k_*|}$ such that $$\begin{array}{ll}&\vert g_{i+1}(\tilde{c}_{i+1,j}(t),t)-g_{i+1}(x,t) \vert\sim_{2,\mathcal{N}_{i+1}^{-1}} d'_{i+1}(x-\tilde{c}_{i+1,j}(t))^2
\end{array}$$
and
on~$\tilde{I}^*_{i+1,j}=(\tilde{c}^*_{i+1,j}-l_{k_*}^{-1}\mathcal{N}_{i+1}^{-2\hat{\epsilon}^{-1}},\tilde{c}^*_{i+1,j}+l_{k_*}^{-1}\mathcal{N}_{i+1}^{-2\hat{\epsilon}^{-1}})$
there exists $d''_{i+1}\in \R$ satisfying $\lambda^{\frac{1}{2}|k_*|}\leq d''_{i+1}\leq \lambda^{2 |k_*|}$ such that $$\begin{array}{ll}&\vert g_{i+1}(\tilde{c}^*_{i+1,j}(t),t)-g_{i+1}(x,t) \vert\sim_{2,\mathcal{N}_{i+1}^{-1}} d''_{i+1}(x-\tilde{c}^*_{i+1,j}(t))^2.
\end{array}$$

Finally, we have the following four cases.

          \begin{enumerate}
          \item If $c_{i+1,2}'\leq \tilde{c}_{i+1,1}\leq c_{i+1,1}; c_{i+1,2}\leq \tilde{c}_{i+1,2}\leq c_{i+1,1}'$ and $|g_{i+1}(\tilde{c}_{i+1,1}(t),t)|\leq \min\{\mathcal{N}_{i+1}^{-\hat{\epsilon}^{-1}},l_{k_*}^{-8}\},$ then
          $$|g_{i+1}(\tilde{c}^*_{i+1,j})|>|g_{i+1}(\tilde{c}_{i+1,j})|>l_{k_*}^{-2}$$ and on $I_{i+1,j}-\tilde{I}_{i+1,j},$ we have
$$|\partial_x g_{i+1,j}(x,t)|,|g_{i+1,j}(x,t)|>[\min\{\mathcal{N}_{i+1}^{-\hat{\epsilon}^{-1}},l_{k_*}^{-8}\}]^2.$$

\item If $c_{i+1,2}'\leq \tilde{c}_{i+1,1}\leq c_{i+1,1},\ c_{i+1,2}\leq \tilde{c}_{i+1,2}\leq c_{i+1,1}'$ and $|g_{i+1}(\tilde{c}_{i+1,1}(t),t)|>  \min\{\mathcal{N}_{i+1}^{-\hat{\epsilon}^{-1}},l_{k_*}^{-8}\},$

     then $g_{i+1}(x,t')~satisfies~\min\{\mathcal{N}_{i+1}^{-\hat{\epsilon}^{-2}},l_{k_*}^{-8}\}-\textbf{non-resonant}~condition~on~D_2(t)
\ {\rm and}$\ $$\partial_x g_{i+1}(c_{i+1,1},t)\cdot \partial_y g_{i+1}(c_{i+1,2},t)<0.$$

 \item If $c_{i+1,2}'\geq \tilde{c}^*_{i+1,1}\geq c_{i+1,1},\ c_{i+1,2}\geq \tilde{c}^*_{i+1,2}\geq c_{i+1,1}'$ and $|g_{i+1}(\tilde{c}^*_{i+1,j}(t),t)|\leq  \min\{\mathcal{N}_{i+1}^{-\hat{\epsilon}},l_{k_*}^{-8}\},$ then
    $$|g_{i+1}(\tilde{c}^*_{i+1,j})|>l^{-2}_{k^*}>|g_{i+1}(\tilde{c}_{i+1,j})|.$$
    And
on $I_{i+1,j}-\tilde{I}^*_{i+1,j},$ we have
$$|\partial_x g_{i+1,j}(x,t)|,|g_{i+1,j}(x,t)|> [\min\{\mathcal{N}_{i+1}^{-\hat{\epsilon}^{-1}},l_{k_*}^{-8}\}]^2.$$

\item If $c_{i+1,2}'\leq \tilde{c}_{i+1,1}\leq c_{i+1,1}, c_{i+1,2}\leq \tilde{c}_{i+1,2}\leq c_{i+1,1}'$ and $|g_{i+1}(\tilde{c}^*_{i+1,j}(t),t)|>  \min\{\mathcal{N}_{i+1}^{-\hat{\epsilon}^{-1}},l_{k_*}^{-8}\},$

     then $g_{i+1}(x,t')~satisfies~ \min\{\mathcal{N}_{i+1}^{-\hat{\epsilon}^{-2}},l_{k_*}^{-8}\}-\textbf{non-resonant}~condition~on~D_2(t)
\ {\rm and}$\ $$\partial_x g_{i+1}(c_{i+1,1},t)\cdot \partial_y g_{i+1}(c_{i+1,2},t)<0.$$
\end{enumerate}

\item The $i$th step belongs to $\textbf{II}^{k^*}_i.$ Then all results in case (2) still hold true by replacing $k_*$ by $k^*$.
%\begin{enumerate}
%%\item (a) of (3) holds true by replacing $k_*$ by $k^*;$
%\item (b) of (3) holds true by replacing $k_*$ by $k^*;$
%\item (c) of (3) holds true by replacing $k_*$ by $k^*;$
%\item (d) of (3) holds true by replacing $k_*$ by $k^*.$
%\end{enumerate}
\end{enumerate}
Moreover we have
\begin{equation}\label{k^*ki} \log |k_*|\geq 2c\hat{\epsilon}^{-1}(k^*)^{\frac{\hat{\epsilon}}{2C_{\alpha}}}.
\end{equation}
\end{lemma}

\begin{proof} We first consider the case $i=s(k_*)$. Then other than \eqref{k^*ki}, one can see that the proof is quite similar to the proof of Lemma \ref{step12}. In fact we only need to consider the induction that from type $\textbf{I}_i$  and $\textbf{II}^{k^*}_i$ to the type $\textbf{I}_{i+1}$  and $\textbf{II}^{k_*}_{i+1}.$ And all the analysis is similar to the induction from step 1 to step 2.

For  \eqref{k^*ki}, one notes the diophantine condition guarantee that
 $$|k_*|>r_{i^*-1}>|\mathcal{N}_{i^*-1}^{2c\hat{\epsilon}^{-1}}|\geq e^{2c\hat{\epsilon}^{-1}q^{\hat{\epsilon}}_{N+i^*-2}}\geq e^{2c\hat{\epsilon}^{-1}q^{\frac{\hat{\epsilon}}{C_{\alpha}}}_{N+i^*-1}}\geq e^{2c\hat{\epsilon}^{-1}(k^*)^{\frac{\hat{\epsilon}}{2C_{\alpha}}}}$$ as desired.\hfill\qed\end{proof}\vskip 0.3cm
 % On the other hand,
%if \begin{equation}\label{ki111}|k_*|\leq e^{|k^*|^{c_2}},\end{equation} with $0<c_2<2c_1,$  the above inequality implies $$|k^*|>q^{\frac{c_1}{c_2}}_{N+i^*-2}>q^{2}_{N+i^*}.$$ Then the definition of $\textbf{II}^{k^*}_{j}$ with some $j\in \Z_+$ implies $(r_{j-1}<)|k^*|<q^2_{N+j-1}.$ Therefore $j>i^*+1.$ Then $$|k^*|>r_{j-1}>r_{i^*}>|k_*|,$$ which is impossible. Thus \eqref{k^*ki} holds true.

%For the case $i>s(k_*)$, by the property of $k_*$ in conclusion (2) and the Diophantine condition on $\alpha$, we obtain
%5$A_{r_i-k_*}(T^{k_*}(\cdot), t)$ on $I_i$ possesses a non-resonant property described by (\ref{jfact2}) by setting
% $\lambda_0=l_{k_*}$, $\lambda_k=\|A_{j_{k}(x,t)-j_{k-1}(x,t)}(T^{j_{k-1}(x,t)}x, t)\|$ and $f_k=\theta_{j_{k}(x)}^{j_{k-1}(x)}$ according to the definition in (\ref{theta-up-low}), where $j_{k}(x,t), k=1, \ 2,\cdots$ are the returning time for $T^{k_*}(x,t)$ with respect to $I_{s({k}_*)}$. Thus the situation on $A_{r_i-k_*}(T^{k_*}(\cdot), t)$ is similar as the one on Type \(\textbf{II}^0_1\). Hence applying Lemma \ref{jishu1}, we have $\|g_{i+1}-g_{s(k_*)}\|_{C^2}\ll \mathcal{N}_{k_*}^{-1}.$ Thus $g_{i+1}$ shares the same property as $g_{k_*}$ restricted on $I_{i+1}$. Hence we complete the proof.

\subsubsection{ The proof of Theorem \ref{theorem12}}
Theorem \ref{theorem12} can be obtained directly from Lemma \ref{stepi+11} by setting $k=k_*$ or $k=k^*$ for the case Type $\mathbf{II}^k_{i}$.
%\square
\qed

\subsection{Proof of Lemma \ref{thetat1fenjie}}

\begin{proof}
\textbf{The proof of ${\rm i}$:}
We are tasked with proving that the equation
\[
F(x, y) = \tan^{-1}\left(L^2(x, y) \tan h_2(x, y)\right) - \frac{\pi}{2} + h_1(x, y) = 0 \ (\text{mod} \ \pi)
\]
has at most two solutions for any fixed \( y \in \Pi_2 D \). This equation can be rewritten as
\[
T(x, y) := \tan h_1(x, y) \tan h_2(x, y) - L^{-2}(x, y) = 0.
\]

From assumption \eqref{epepep}, we know that for \( i = 1, 2 \),
\[
\max_{(x, y) \in D} |\tan h_i(x, y)| \leq \epsilon^{\frac{2}{3}} \leq \lambda^{-\frac{2}{3}(\log k)^{\hat{\epsilon}^{-1}}} \ll (\log k)^{-10} \leq \Gamma^{-10}.
\]

Next, we analyze the second derivative of \( T(x, y) \) with respect to \( x \).
\[
\frac{\partial^2 T(x, y)}{\partial x^2} = 2(\tan h_1)(\tan h_2)(1 + \tan^2 h_1)(h_1')^2  + (1 + \tan^2 h_1)\tan h_2 h_1''  + 2(1 + \tan^2 h_1)(1+\tan^2 h_2) h_1' h_2'
\]
\[
+ 2(\tan h_2)(\tan h_1)(1 + \tan^2 h_2)(h_2')^2 + (1 + \tan^2 h_2)\tan h_1h_2''  - 6 \frac{L'^2}{L^4} + 2 \frac{L''}{L^3}.
\]
\[:=2(1 + \tan^2 h_1)(1+\tan^2 h_2) h_1' h_2'+\mathcal{R}.
\]

 %, we first isolate the term \( 2 |h_1' h_2'| \) and estimate the remaining terms
Note that \( |h_1'|, |h_2'|, |h_1'|^{-1}, |h_2'|^{-1} \leq \Gamma \) and $|\tan h_j(x,y)|\leq \epsilon^{\frac{2}{3}}\leq e^{-\frac{2}{3}\Gamma}=o(\Gamma^{-6}),~j=1,2.$ Thus from the assumptions of the lemma, we have
\[\left| \frac{\partial^2 T(x, y)}{\partial x^2} \right| \geq 2 |h_1' h_2'|-\mathcal{R}
\geq \Gamma^{-2} - o(\Gamma^{-6})- \left\vert 6 \frac{L'^2}{L^4}\right\vert - \left\vert 2 \frac{L''}{L^3}\right\vert.
\]
Simplifying further with \( \eqref{lem48-tj111} \) and \( \eqref{lem48-tj} \), we obtain a lower bound for the above
\[
\geq \Gamma^{-2} - o(\Gamma^{-6}) - l^{-2} \cdot e^{|\log \epsilon|^{C}}.
\]
Since \( l = e^k \gg (\log k)^C \geq \Gamma \) and \( e^{|\log \epsilon|^{C}} \leq e^{|\log k|^{C}} \ll e^{Ck} \leq l \), we conclude that
$
\left| \frac{\partial^2 T(x, y)}{\partial x^2} \right| \geq c \Gamma^{-2}.
$

Thus \( T(x, y) \) has at most two zeros on \( D\), which completes the proof of part {\rm i}.

\hfill \(\square\)

\textbf{Proof of ${\rm ii}$:}
Using the fact that \( \Gamma \ll \epsilon^{-1} \), we obtain the following bound
\[
|\tan h_2|^2 + \left| \frac{2 (\partial_X \log L) \tan h_2}{\partial_X h_2} \right| \leq \epsilon^{\frac{4}{3}} + \frac{e^{(\log k)^{C}}\epsilon^{\frac{2}{3}}}{\Gamma^{-1}} \leq \epsilon^{\frac{4}{3}} + \frac{e^{\Gamma^{c}}\epsilon^{\frac{2}{3}}}{\Gamma^{-1}} \leq \epsilon^{\frac{1}{2}} (~\text{note}~\epsilon= e^{-\Gamma}),
\]
which leads to the following relation {on} $D$
\begin{equation}\label{RX}
R_X(x, y) := \partial_X h_2 \left( 1 + \tan^2 h_2 + \frac{2 (\partial_X \log L) \tan h_2}{\partial_X h_2} \right) \sim_{0, \epsilon^{\frac{1}{2}}} \partial_X h_2, \quad X\in\{x,\ y\}.
\end{equation}

Note \eqref{RX} and the fact $a_2,b_2>0$ imply
\begin{equation}\label{RXRXRX}\Gamma^{-1}\leq R_X(x, y)\leq \Gamma \end{equation}

Now let us compute the derivative \( \partial_Y R_X(x, y) \) for \( X, Y \in \{x, y\} \). A direct calculation yields:
\[
\partial_Y R_X(x, y) = 2 \frac{\partial^2 \log L}{\partial X \partial Y} \tan h_2 + 2 (\partial_X \log L + \tan h_2)(1 + \tan^2 h_2) \partial_X h_2 + (1 + \tan^2 h_2) \frac{\partial^2 h_2}{\partial X \partial Y}.
\]
By using the assumptions, we estimate this as follows
\[
\left| \partial_Y R_X(x, y) \right| \leq 2 e^{|\log k|^C} \epsilon^{\frac{2}{3}} + 2(e^{|\log k|^C} + \epsilon^{\frac{2}{3}})(1 + \epsilon^{\frac{4}{3}}) \Gamma + (1 + \epsilon^{\frac{4}{3}}) \Gamma.
\]
By $\epsilon=e^{-\Gamma}$ and $\Gamma=(\log k)^{\hat{\epsilon}^{-1}}$, we obtain
\begin{equation}\label{partialxRX}
\left| \partial_Y R_X(x, y) \right| \leq 100 e^{2|\log k|^{\hat{\epsilon}^{-1}}} \ll e^k = l.
\end{equation}

Next, we compute the first and second derivatives of \( F(x, y) \). By a direct calculation, we have
\begin{equation}\label{parx}
\partial_X F = \frac{L^2 R_X}{1 + L^4 \tan^2 h_2} + \partial_X h_1, \quad X \in \{x, y\},
\end{equation}
and
\begin{equation}\label{parx2}
\begin{aligned}
\frac{\partial^2 F}{\partial X\partial Y} &= \frac{L^2 (2 \partial_Y (\log L)) R_X}{1 + L^4 \tan^2 h_2}
+ \frac{L^2 (\partial_Y R_X)}{1 + L^4 \tan^2 h_2}  - R_X \frac{4L^6 (\partial_Y (\log L)) \tan^2 h_2 + 2(1 + \tan^2 h_2)(\partial_Y h_2) L^6 \tan h_2}{(1 + L^4 \tan^2 h_2)^2} \\
&\quad + \frac{\partial^2 h_1}{\partial X\partial Y}.
\end{aligned}
\end{equation}

 Note $L\leq Cl=Ce^{k}$, $\Gamma\leq (\log k)^{\hat{\epsilon}^{-1}}$ and $\epsilon= e^{-\Gamma}$. Thus by the estimates from \eqref{RXRXRX}, \eqref{partialxRX}, \eqref{parx} and \eqref{parx2}, we obtain
\[
\left| F \right| + \left| \frac{\partial F}{\partial X} \right| + \left| \frac{\partial^2 F}{\partial X \partial Y} \right| \leq \pi + (\Gamma L^2 + \Gamma) + C \Gamma (e^{|\log \epsilon|^{\hat{\epsilon}}}) L^6 \epsilon^{\frac{4}{3}} \leq C l^8.
\]
This implies \eqref{2jiedaos} holds true.

Next, we turn to the estimates for the terms involving \( R_x \) and \( \partial_x h_1 \). Recall that we already have
\begin{equation}\label{RX123}
-\Gamma<\partial_x h_1 \sim_{0, \epsilon^{\frac{1}{2}}} a_1 < -\Gamma^{-1}, \quad \Gamma>\partial_x h_2 \sim_{0, \epsilon^{\frac{1}{2}}} a_2 > \Gamma^{-1}, \quad R_x \sim_{0, \epsilon^{\frac{1}{2}}} \partial_x h_2 = a_2(~by~\eqref{RX}).
\end{equation}
Assume that
\begin{equation}\label{assum}
\left| \partial_x F \right| < \Gamma^{-2}.
\end{equation}
One can obtain from \eqref{parx} and \eqref{RX123} that
\[
\frac{1}{2} \Gamma^{-1} <-\Gamma^{-2}+\Gamma^{-1}< -\Gamma^{-2} - \partial_x h_1 < \frac{L^2 R_x}{1 + L^4 \tan^2 h_2}=\partial_X F-\partial_X h_1 < \Gamma^{-2} - \partial_x h_1 < \Gamma^{-2} + \Gamma < 2 \Gamma.
\]
This yields (by \eqref{RXRXRX} and \eqref{RX123})
\[
\frac{1}{2} l^2 \Gamma^{-2} = \frac{1}{2} \Gamma^{-1} l^2 \Gamma^{-1} \leq \frac{1}{2}\Gamma^{-1} L^2R_x\leq 1 + L^4 \tan^2 h_2\leq 2\Gamma L^2R_x \leq 2 \Gamma l^2 \Gamma \leq 2 \Gamma^2 l^2,
\]
implying that
\begin{equation}\label{yjdth}
\frac{\sqrt{3}}{3} \Gamma^{-1} l^{-1} \leq |\tan h_2| \leq \sqrt{2} \Gamma l^{-1}.
\end{equation}

Next we estimate $\left| \frac{\partial^2 F}{\partial x^2} \right|$ by taking $X=Y=x$ in \eqref{parx2}. By \eqref{yjdth}, we have
\[
\mathcal{M}_1:=\left| R_x \frac{2(1 + \tan^2 h_2)(\partial_x h_2) L^6 \tan h_2}{(1 + L^4 \tan^2 h_2)^2} \right| \geq \frac{2(1 + \frac{1}{3} \Gamma^{-2} l^{-2})(\Gamma^{-1}) l^6 \frac{\sqrt{3}}{3} \Gamma^{-1} l^{-1}}{2(1+ 4\Gamma^4 l^{4})} \Gamma^{-1} \geq \frac{\Gamma^{-7} l}{100} > l^{\frac{1}{2}}.
\]
Similarly for the remaining terms of \eqref{parx2}, we obtain
$$\begin{array}{ll}&\mathcal{M}_2:=\left\vert\frac{L^2(2\partial_x(\log L))R_x}{1+L^4\tan^2h_2}+\frac{L^2(\partial_x R_x)}{1+L^4\tan^2h_2}-R_x\frac{4L^6(\partial_x(\log L))\tan^2h_2}{(1+L^4\tan^2h_2)^2}+\frac{\partial^2 h_1}{\partial x^2}\right\vert \\&\leq
 \left\vert\frac{2l^2(e^{|\log \epsilon|^{\hat{\epsilon}}})\Gamma}{1+l^4\frac{1}{3}\Gamma^{-2} l^{-2}}\right\vert+ \left\vert\frac{l^2100e^{|\log k|^C}}{1+l^4\frac{1}{3}\Gamma^{-2} l^{-2}}\right\vert(by~\eqref{partialxRX})+\left\vert \Gamma\frac{4l^6(e^{|\log \epsilon|^{\hat{\epsilon}}})2\Gamma^{2} l^{-2}}{1+l^8\frac{1}{9}\Gamma^{-4} l^{-4}}\right\vert+\Gamma
 \quad \\&\leq 100\Gamma^{100}(e^{|\log \epsilon|^{\hat{\epsilon}}})(e^{|\log k|^C})\leq 100\Gamma^{100}(e^{200|\log k|^C})\ll e^{\frac{1}{3}k}\leq l^{\frac{1}{2}}.\end{array}$$

Therefore combining these terms gives that if \eqref{assum} holds, then
\begin{equation}\label{erjdxj}
\left| \frac{\partial^2 F}{\partial x^2} \right| \geq \mathcal{M}_1-\mathcal{M}_2\geq \frac{l^{\frac{1}{2}}}{100} \gg \Gamma^2 \geq \Gamma^{-2}.
\end{equation}

Finally, \eqref{erjdxj} shows
\[
\min_{x \in \Pi_1 D} \left( \left| \frac{\partial F}{\partial x} \right| + \left| \frac{\partial^2 F}{\partial x^2} \right| \right) > \Gamma^{-2},
\]
which proves \eqref{fthx2} as required.

Using equation \eqref{parx} and the fact that \( R_X \neq 0 \), which follows from \eqref{RX}, we can write
\[
\begin{aligned}
\partial_t F &= \frac{L^2 R_t}{1 + L^4 \tan^2 h_2} + \partial_t h_1 = \frac{L^2 R_x \frac{R_t}{R_x}}{1 + L^4 \tan^2 h_2} + \frac{R_t}{R_x} \partial_x h_1 - \frac{R_t}{R_x} \partial_x h_1 + \partial_t h_1 \\
&= \frac{R_t}{R_x} \left( \frac{L^2 R_x}{1 + L^4 \tan^2 h_2} + \partial_x h_1 \right) + \partial_t h_1 - \frac{R_t}{R_x} \partial_x h_1 = \frac{R_t}{R_x} \partial_x F + \partial_t h_1 - (\partial_x h_1) \frac{R_t}{R_x}:= A \cdot \partial_x F + B,
\end{aligned}
\]
where
\[
A(x, y) = \frac{R_t}{R_x}, \quad B(x, y) = \partial_t h_1 - (\partial_x h_1) \frac{R_t}{R_x}.
\]

By \eqref{RX}, we obtain the following estimates for \( (x, y) \in D \):
\[
A \sim_{0, \epsilon^{\frac{1}{2}}} \frac{b_2}{a_2}, \quad B \sim_{0, \epsilon^{\frac{1}{2}}} -a_1 \left( \frac{b_2}{a_2} - \frac{b_1}{a_1} \right),
\]
which corresponds to  \eqref{ABAB}. This completes the proof of part {\rm ii}.

\

\textbf{Proof of {\rm v}}

For a fixed \( y \in \Pi_2 D \), we solve the equation \( \partial_x F = 0 \). This is equivalent to solving
$$
(\partial_x F=)L^2 R_x + (1 + L^4 \tan^2 h_2) \partial_x h_1 = 0.
$$
Substituting $R_x$ by \eqref{RX} and divide both sides by $L^2$, the above equation is equivalent to
\begin{equation}\label{shiz121}
((\partial_x h_1) L^2 + \partial_x h_2) \tan^2 h_2 + 2 (\partial_x \log L) \tan h_2 + (\partial_x h_1) L^{-2} + \partial_x h_2 = 0.
\end{equation}

This is a problem involving finding the roots of a quadratic function in \( \tan h_2 \).

By $sgn(a_1)=-sgn(a_2)=-1,$ we observe that
\begin{equation}\label{parx113}- (\partial_x h_1)(\partial_x h_2)(L^2 + L^{-2})\sim_{0,\epsilon^{\frac{1}{2}}}|a_1||a_2|(l^2),\end{equation}
\begin{equation}\label{parx111}
\begin{aligned}
&\frac{(\partial_x (\log L))^2+|\partial_x h_1|^2+|\partial_x h_2|^2}{|a_1||a_2|l^2}
\leq \frac{e^{2|\log \epsilon|^{C}}+|a_1|^2+|a_2|^2}{ \frac{1}{2}\Gamma^{-2}e^{2k}}
\\&\leq \frac{e^{2|\log k|^C}+\Gamma^2}{ \frac{1}{2}\Gamma^{-2}e^{2k}}
\leq \frac{e^{2|\log k|^C}+(\log k)^2}{ \frac{1}{2}\Gamma^{-2}e^{2k}}
\leq e^{-\frac{1}{2}k}\ll \epsilon^{\frac{1}{2}}.
\end{aligned}
\end{equation}

\eqref{parx113} and \eqref{parx111} imply
\begin{equation}\label{delta} (\partial_x (\log L))^2+|\partial_x h_2|^2-|\partial_x h_1|^2- (\partial_x h_1)(\partial_x h_2)(L^2 + L^{-2})\sim_{0,\epsilon^{\frac{1}{2}}}|a_1||a_2|l^2.\end{equation}

Similarly, we also have
\begin{equation}\label{a1a2}-\partial_x (\log L) \pm \sqrt{(\partial_x (\log L))^2 - |\partial_x h_1|^2 - |\partial_x h_2|^2 - (\partial_x h_1)(\partial_x h_2)(L^2 + L^{-2})}\sim_{0,\epsilon^{\frac{1}{2}}}\pm \sqrt{|a_1||a_2|} l\end{equation}
and
\begin{equation}\label{a1a11}(\partial_x h_1) L^2 + \partial_x h_2\sim_{0,\epsilon^{\frac{1}{3}}} a_1l^2.\end{equation}
\eqref{delta} guarantees that the quadratic equation has two different roots. \eqref{a1a2} and \eqref{a1a11} imply the roots of this quadratic, denoted by \( H_{\pm} \), i.e.
\[
H_{\pm} = \frac{-\partial_x (\log L) \pm \sqrt{(\partial_x (\log L))^2 - |\partial_x h_1|^2 - |\partial_x h_2|^2 - (\partial_x h_1)(\partial_x h_2)(L^2 + L^{-2})}}{(\partial_x h_1) L^2 + \partial_x h_2},
\]
satisfy that for any \( (x, y) \in D \),
\begin{equation}\label{H+-}
H_{\pm} \sim_{0,\epsilon^{\frac{1}{2}}}\pm \frac{\sqrt{|a_2|}}{\sqrt{|a_1|}} l^{-1}.
\end{equation}

From \eqref{shiz121}, \( \tan h_2 = 0 \) implies \( \partial_x F =L^2R_x> 0 \) (~recall $R_x\sim_{0,\epsilon^{\frac{1}{2}}} a_2>0$). Thus to solve $\partial_x F=0$, we only need to consider the case $\tan h_2 \neq 0$ and $\partial_x F=0,$ which implies $-G(x,y):=\frac{\partial_x F}{\tan h_2}=0.$
From \eqref{shiz121},
$$~G(x,y)=-[(\partial_x h_1) L^2 + \partial_x h_2] \tan h_2 - 2 (\partial_x \log L)  - [(\partial_x h_1) L^{-2} + \partial_x h_2]\cot h_2=0.$$

Then a direct calculation yields
\begin{equation}\label{abab}
\begin{aligned}
\partial_x G &= ((-\partial_x h_1) L^2 - \partial_x h_2) (1 + \tan^2 h_2) \partial_x h_2 + \partial_x((-\partial_x h_1) L^2 - \partial_x h_2) \tan h_2 \\
&\quad - 2 \frac{\partial^2 \log L}{\partial x^2} + (1 + \cot^2 h_2) \partial_x h_2 ((\partial_x h_1) L^{-2} + \partial_x h_2) + \partial_x((-\partial_x h_1) L^{-2} - \partial_x h_2) \cot h_2 \\
&= [(-\partial_x h_1) L^2 - \partial_x h_2] \partial_x h_2 + \tan h_2 H(x,t) - 2 \frac{\partial^2 \log L}{\partial x^2} + (\partial_x h_2)^2 (1 + \cot^2 h_2) \\
&\quad + L^{-2} M(x,t) - \frac{\partial^2 h_2}{\partial x^2} \cot h_2,
\\&=\left[(-\partial_x h_1)(\partial_x h_2) L^2+(\partial_x h_2)^2\cot^2 h_2\right]+\left[\tan h_2 H(x, t) - 2 \frac{\partial^2 \log L}{\partial x^2} + L^{-2} M(x, t) - \frac{\partial^2 h_2}{\partial x^2} \cot h_2\right],
\end{aligned}
\end{equation}
where
\[
H(x, t) = [(-\partial_x h_1) L^2 - \partial_x h_2] \tan h_2 - \frac{\partial^2 h_1}{\partial x^2} L^2 - 2 L^2 (\partial_x (\log L)) \partial_x h_1 - \frac{\partial^2 h_2}{\partial x^2}
\]
and
\[
M(x, t) = (1 + \cot^2 h_2) \partial_x h_2 (\partial_x h_1) - \frac{\partial^2 h_1}{\partial x^2} + 2 \partial_x h_1 (\partial_x \log L).
\]

Since for \( i = 1, 2 \), \( |\partial_x h_i|, \left| \frac{\partial^2 h_i}{\partial x^2} \right| \leq \Gamma \), \( |\tan h_i| \leq \epsilon^{\frac{2}{3}}\leq Ce^{-\frac{2}{3}\Gamma} \)~(~which~implies~$|\cot h_i|\geq \epsilon^{-\frac{2}{3}}$) and \( |\partial_x (\log L)| \leq e^{(\log k)^C}\leq e^{|\log \epsilon|^{C\hat{\epsilon}}} \), we have
\(
|H(x, y)| \leq 100 \Gamma L^2 e^{|\log \epsilon|^{C\hat{\epsilon}}}
\)
and
\[
\left\vert\frac{M(x, y)}{(\partial_x h_1)(\partial_x h_2)\cot^2 h_2}-1\right\vert \leq \frac{ |(\partial_x h_2)(\partial_x h_1)|+|\frac{\partial^2 h_1}{\partial x^2}|+|2 \partial_x h_1 (\partial_x \log L)|}{(\partial_x h_1)(\partial_x h_2)\cot^2 h_2}\leq \frac{\Gamma^2+\Gamma+2\Gamma e^{|\log \epsilon|^{\hat{\epsilon}}}}{\epsilon^{-\frac{4}{3}}\Gamma^2}\ll \epsilon^{\frac{1}{2}}.
\]
It implies
$$M\sim_{0,\epsilon^{\frac{1}{2}}} (\partial_x h_1)(\partial_x h_2)\cot^2 h_2.$$
Thus we obtain
\[
\begin{array}{ll}&
\left| \frac{\tan h_2 H(x, t) - 2 \frac{\partial^2 \log L}{\partial x^2} + L^{-2} M(x, t) - \frac{\partial^2 h_2}{\partial x^2} \cot h_2}{(-\partial_x h_1)(\partial_x h_2) L^2 + \cot^2 h_2 (\partial_x h_2)^2} \right|\\& \leq C\frac{L^2\Gamma^2 |\tan h_2|+L^{-2}\Gamma^2 |\cot h_2|^2+\Gamma |\cot h_2|}{\Gamma^{-2}L^2+\Gamma^{-2}|\cot h_2|^2}
\\&\leq \epsilon^{\frac{1}{2}}.
\end{array}
\]

Then from \eqref{abab}, we conclude that
\[
\partial_x G \sim_{0,\epsilon^{\frac{1}{2}}} (-\partial_x h_1)(\partial_x h_2) L^2 + \cot^2 h_2 (\partial_x h_2)^2
\sim_{0,\epsilon^{\frac{1}{2}}} |a_1| |a_2| l^2 + |a_2|^2 \cot^2 h_2(x, y) > 0.
\]
Hence
\begin{equation}\label{pg}\partial_x G>0.\end{equation}
On the other hand,
\begin{equation}\label{a222}
\partial_x (\tan h_2) = (1 + \tan^2 h_2) \partial_x h_2 \sim_{0,\epsilon^{\frac{1}{2}}} a_2 > 0,
\end{equation}
and \( [-\epsilon, \epsilon] \subseteq \text{Ran}(\tan h_2) \) implies there exist points \( z_- < z_0 < z_+ \) such that
\[
\tan h_2(z_-) = -\epsilon, \quad \tan h_2(z_0) = 0, \quad \tan h_2(z_+) = \epsilon.
\]
Note $\lim\limits_{\tan h_2\rightarrow 0^{\mp}}G=\pm\infty.$
Thus \( G(z_-) < 0 \), \( \lim\limits_{z\rightarrow z_0^-}G(z) = +\infty \), \( \lim\limits_{z\rightarrow z_0^+}G(z) = -\infty \), and \( G(z_+) > 0 \). Then \eqref{pg} implies for any \( y \in \Pi_2 D \), \( \partial_x F \) has exactly two roots $x^*_1$ and $ x^*_2$ in \( \Pi_1 D \) with \( z_- < x^*_1 < z_0 < x^*_2 < z_+ \).

From equation \eqref{H+-}, we deduce that
\begin{equation}\label{tanh2}
\tan h_2(x^*_1(y), y)\sim_{0,\epsilon^{\frac{1}{2}}}h_2(x^*_1(y), y) \sim_{0,\epsilon^{\frac{1}{2}}} -\frac{\sqrt{a_2}}{\sqrt{|a_1|}} l^{-1};
 \end{equation}
 \begin{equation}\label{tanh2'}
 \tan h_2(x^*_2(y), y)\sim_{0,\epsilon^{\frac{1}{2}}}h_2(x^*_2(y), y) \sim_{0,\epsilon^{\frac{1}{2}}} \frac{\sqrt{a_2}}{\sqrt{|a_1|}} l^{-1}.
\end{equation}

By \eqref{tanh2} and \eqref{tanh2'},
\begin{equation}\label{tanh2''}h_2(x^*_1(y), y)-h_2(x^*_1(y), y)\sim_{0,\epsilon^{\frac{1}{2}}} -2\frac{\sqrt{a_2}}{\sqrt{|a_1|}} l^{-1}.
\end{equation}

Since $\partial_x h_2\sim_{0,\epsilon^{\frac{1}{2}}} a_2,$ \eqref{tanh2''} shows
\begin{equation}\label{x1x2}x^*_1(y)-x^*_2(y)\sim_{0,\epsilon^{\frac{1}{2}}} \frac{-2\frac{\sqrt{a_2}}{\sqrt{|a_1|}} l^{-1}}{a_2}=-2\frac{l^{-1}}{\sqrt{|a_1||a_2|}}.   \end{equation}
Next, let us consider
\[
\begin{array}{ll}
&\left\vert F(x^*_1(y), y) - F(x^*_2(y), y) \right\vert \\
&= \left\vert \tan^{-1}\left( L^2(x^*_1(y), y) \tan h_2(x^*_1(y), y) \right) - \tan^{-1}\left( L^2(x^*_2(y), y) \tan h_2(x^*_2(y), y) \right) \right. \\& \left.+ h_1(x^*_1(y), y) - h_1(x^*_2(y), y) \right\vert.
\end{array}
\]

If we choose the domain of \( \tan^{-1}(\cdot) \) as \( (0, \pi] \), then applying \eqref{tanh2} and \eqref{x1x2} for \( y \in \Pi_2 D \), we have
\[
\tan^{-1}\left(L^{-2}(x^*_1(y), y) \cot h_2(x^*_1(y), y)\right) \sim_{0, \epsilon^{\frac{1}{2}}} \pi - \frac{\sqrt{|a_1|}}{\sqrt{|a_2|}} l^{-1},
\]
\[
\tan^{-1}\left(L^{-2}(x^*_2(y), y) \cot h_2(x^*_2(y), y)\right) \sim_{0, \epsilon^{\frac{1}{2}}} \frac{\sqrt{|a_1|}}{\sqrt{|a_2|}} l^{-1},
\]
\[
h_1(x^*_1(y), y) - h_1(x^*_2(y), y) \sim_{0, \epsilon^{\frac{1}{2}}}a_1(x^*_1(y)-x^*_2(y))\sim_{0,\epsilon^{\frac{1}{2}}} 2\frac{\sqrt{|a_1|}}{\sqrt{|a_2|}} l^{-1}.
\]

Therefore (note \( a_1 < 0 \)) for \( y \in \Pi_2 D \),
$$
\left\vert F(x^*_1(y), y) - F(x^*_2(y), y) \right\vert \sim_{0, \epsilon^{\frac{1}{2}}} \pi - 4 \frac{\sqrt{|a_1|}}{\sqrt{a_2}} l^{-1}.
$$

In other words, for any \( y \in \Pi_2 D \),
\[
\left| \{ F(x, y) \mid x \in \Pi_1 D \} \right| \sim_{0, \epsilon^{\frac{1}{2}}} \pi - 4 \sqrt{\frac{|a_1|}{|a_2|}} l^{-1},
\]
which indeed confirms statement {\rm v}.

\

\textbf{Proof of {\rm vi}:}

Recall that
\begin{equation}\label{F}
F(x,y) = \tan^{-1}\left( L^2(x,y) \tan h_2(x,y) \right) - \frac{\pi}{2} + h_1(x,y) := F(h_2,h_1),
\end{equation}
where we emphasize that \( F \) is a function with respect to \( h_1 \) and \( h_2 \). We denote $
\tilde{F} = F(h_1,h_2).
$

We claim that
\begin{equation}\label{abcdef} \{ y \mid \min\limits_{x} |F| \geq l^{-100C} \}\subset \{ y \mid \min\limits_{x} |\tilde{F}| \geq l^{-200C} \}\end{equation}
 and $$ \{ y \mid \min\limits_{x} |\tilde{F}| \geq l^{-100C} \}\subset \{ y \mid \min\limits_{x} |F| \geq l^{-200C} \}.$$
By symmetry, we only shows \eqref{abcdef}. For fix \( y \in \{ y \mid \min\limits_{x} |F| \geq l^{-100C} \} \), if
$
\min\limits_{x} |\tilde{F}(x,y)| \leq l^{-200C},
$
then there exists some \( |\delta| \leq l^{-200C} \) such that
\[
\{x \mid F(h_1,h_2 + \delta) = 0\} = \{x \mid F(h_1,h_2) + \delta = 0\} = \{x \mid \tilde{F}(x,y) + \delta = 0\} \neq \emptyset.
\]
Note that
\[
0 = F(h_1, h_2 + \delta) = \tan^{-1} \left( L^2(x,y) \tan h_1(x,y) \right) - \frac{\pi}{2} + h_2(x,y) + \delta
\]
is equivalent to
\[
(\tan h_1)(\tan (h_2 + \delta)) = L^{-2},
\]
which is also equivalent to
\[
0 = F(h_2 + \delta, h_1) = \tan^{-1} \left( L^2(x,y) \tan (h_2(x,y) + \delta) \right) - \frac{\pi}{2} + h_1(x,y).
\]

Note \eqref{F} implies $$|\partial_{h_2} F|=\left\vert \frac{L^2(1+\tan^2 h_2)}{1+L^4\tan^2 h_2}\right\vert\leq Cl^2.$$ Then we have
\[
|F(h_2 + \delta, h_1) - F(h_2, h_1)| \leq l^3 \delta.
\]
By the inequality above and the fact that \( |\delta| < l^{-200C} \), we obtain
\[
l^{-150C} < l^{-100C} - l^3 \delta = \min\limits_{x} |F(x,y)| - l^3 \delta = \min\limits_{x} |F(h_2,h_1)| - l^3 \delta \leq \min\limits_{x} |F(h_2 + \delta, h_1)| = 0,
\]
which leads to a contradiction. Therefore \eqref{abcdef} holds true. This completes the proof of {\rm vi}.

\

\textbf{Proof of {\rm iii-a} and {\rm iv}:}

By \eqref{a222}, we observe that for \((x,y) \in D\),
\[
|\tan h_2(x,y) - \tan h_2(x^*_i(y),y)| \leq a_2 |x - x^*_i(y)|.
\]
Therefore for \(x \in \left(x_1^*(y) - \epsilon^{\frac{1}{2}} \frac{l^{-1}}{\sqrt{|a_1||a_2|}}, x_1^*(y) + \epsilon^{\frac{1}{2}} \frac{l^{-1}}{\sqrt{|a_1||a_2|}}\right)\) and \(y \in \Pi_2D\), we have
\[
\tan h_2(x,y) \sim_{0,\epsilon^{\frac{1}{2}}} \tan h_2(x^*_1(y),y) \sim_{0,\epsilon^{\frac{1}{2}}} -\frac{\sqrt{a_2}}{\sqrt{|a_1|}} l^{-1},
\]
and for \(x \in \left(x_2^*(y) - \epsilon^{\frac{1}{2}} \frac{l^{-1}}{\sqrt{|a_1||a_2|}}, x_2^*(y) + \epsilon^{\frac{1}{2}} \frac{l^{-1}}{\sqrt{|a_1||a_2|}}\right)\) and \(y \in \Pi_2D\),
\begin{equation}\label{tan222}
\tan h_2(x,y) \sim_{0,\epsilon^{\frac{1}{2}}} \tan h_2(x^*_2(y),y) \sim_{0,\epsilon^{\frac{1}{2}}} \frac{\sqrt{a_2}}{\sqrt{|a_1|}} l^{-1}.
\end{equation}

For \(x \in \left(x_1^*(y) - \epsilon^{\frac{1}{2}} \frac{l^{-1}}{\sqrt{|a_1||a_2|}}, x_1^*(y) + \epsilon^{\frac{1}{2}} \frac{l^{-1}}{\sqrt{|a_1||a_2|}}\right)\) and \(y \in \Pi_2D\), using \eqref{RX} and direct calculation, we obtain
\begin{equation}\label{h221}
\begin{aligned}
\left| \frac{L^2(2 \partial_x (\log L)) R_x}{1 + L^4 \tan^2 h_2(x,y)} \right| &\leq \frac{l^2 e^{|\log \epsilon|^{{\hat{\epsilon}}}} \Gamma}{l^4 \frac{\sqrt{a_2}}{\sqrt{|a_1|}} l^{-1}} \leq \frac{l^2 e^{|\log \epsilon|^{{\hat{\epsilon}}}} \Gamma}{l^4 \frac{\Gamma^{-1}}{\Gamma} l^{-1}} \leq \Gamma^3 e^{|\log \epsilon|^{{\hat{\epsilon}}}} l^{-1} \leq l^{-\frac{1}{2}} \leq l^{\frac{1}{2}},
\end{aligned}
\end{equation}

\begin{equation}\label{h221'}
\left| \frac{L^2(2 \partial_x (R_x))}{1 + L^4 \tan^2 h_2(x,y)} \right| \leq \frac{l^3}{l^4 \frac{\sqrt{a_2}}{\sqrt{|a_1|}} l^{-1}} \leq \Gamma^2 \leq l^{\frac{1}{2}},
\end{equation}
\begin{equation}\label{h221''}
\frac{\partial^2 h_1(x,y)}{\partial x^2} \leq \Gamma \leq l^{\frac{1}{2}},
\end{equation}
\begin{equation}\label{h221'''}
\left| R_x \frac{4 L^6 (\partial_x (\log L)) \tan^2 h_2(x,y)}{(1 + L^4 \tan^2 h_2(x,y))^2} \right| \leq \Gamma C(e^{|\log \epsilon|^{\hat{\epsilon}}}) l^{-2} \cot^2 h_2(x,y) \leq \Gamma C(e^{|\log \epsilon|^{\hat{\epsilon}}}) \Gamma^2 (\text{by}~\eqref{tan222})\leq l^{\frac{1}{2}},
\end{equation}
\begin{equation}\label{h222}
R_x \frac{2(1 + \tan^2 h_2(x,y)) (\partial_x h_2(x,y)) L^6 \tan h_2(x,y)}{(1 + L^4 \tan^2 h_2(x,y))^2} \sim_{0,\epsilon^{\frac{1}{2}}}\frac{2 a_2^2 l^6 \left( \frac{\sqrt{|a_2|}}{\sqrt{|a_1|}} l^{-1} \right)}{l^8 \left( \frac{\sqrt{|a_2|}}{\sqrt{|a_1|}} l^{-1} \right)^4} = 2 |a_2|^{\frac{1}{2}} |a_1|^{\frac{3}{2}} l.
\end{equation}
Thus from \eqref{h221}-\eqref{h222} we obtain
\begin{equation}\label{l2222}
|\mathcal{L}|:=\left| \frac{\frac{L^2(2 \partial_x (\log L)) R_x}{1 + L^4 \tan^2 h_2} + \frac{L^2(\partial_x R_x)}{1 + L^4 \tan^2 h_2} + \frac{\partial^2 h_1}{\partial x^2} - R_x \frac{4 L^6 (\partial_x (\log L)) \tan^2 h_2}{(1 + L^4 \tan^2 h_2)^2}}{\frac{2 R_x(1 + \tan^2 h_2)(\partial_x h_2) L^6 \tan h_2}{(1 + L^4 \tan^2 h_2)^2}} \right| \leq \frac{4l^{\frac{1}{2}}}{2\Gamma^{-2}l} \leq l^{-\frac{1}{3}} \ll \epsilon^{\frac{1}{2}}.
\end{equation}
On the other hand, from \eqref{parx2} we have
\begin{equation}\label{l3333}
\begin{aligned}
\frac{\partial^2 F}{\partial x^2}(x,y) &= \frac{L^2(2 \partial_x (\log L)) R_x}{1 + L^4 \tan^2 h_2}
+ \frac{L^2(\partial_x R_x)}{1 + L^4 \tan^2 h_2} \\
&\quad - R_x \frac{4 L^6 (\partial_x (\log L)) \tan^2 h_2
+ 2(1 + \tan^2 h_2)(\partial_x h_2) L^6 \tan h_2}{(1 + L^4 \tan^2 h_2)^2}
+ \frac{\partial^2 h_1}{\partial x^2} \\
&= -\frac{2 R_x(1 + \tan^2 h_2)(\partial_x h_2)L^6 \tan h_2}{(1 + L^4 \tan^2 h_2)^2}\ \left(
1 -\mathcal{L}
\right).
\end{aligned}
\end{equation}

Combining \eqref{h222}, \eqref{l2222} and \eqref{l3333}, we conclude that for \(x \in \left(x_1^*(y) - \epsilon^{\frac{1}{2}} \frac{l^{-1}}{\sqrt{|a_1||a_2|}}, x_1^*(y) + \epsilon^{\frac{1}{2}} \frac{l^{-1}}{\sqrt{|a_1||a_2|}}\right)\) and \(y \in \Pi_2D\),
\begin{equation}\label{+++}
\frac{\partial^2 F(x,y)}{\partial x^2} \sim_{0,\epsilon^{\frac{1}{2}}} -2 |a_2|^{\frac{1}{2}} |a_1|^{\frac{3}{2}} l.
\end{equation}
Similarly, for \(x \in \left(x_2^*(y) - \epsilon^{\frac{1}{2}} \frac{l^{-1}}{\sqrt{|a_1||a_2|}}, x_2^*(y) + \epsilon^{\frac{1}{2}} \frac{l^{-1}}{\sqrt{|a_1||a_2|}}\right)\) and \(y \in \Pi_2D\),
$$
\frac{\partial^2 F(x,y)}{\partial x^2} \sim_{0,\epsilon^{\frac{1}{2}}} 2 |a_2|^{\frac{1}{2}} |a_1|^{\frac{3}{2}} l.
$$

This implies that for any \(y \in \Pi_2D\), the function \(F\) has one local maximum and one local minimum. More precisely, \(F(x,y)\) is strictly decreasing with respect to $x$ on \(\Pi_1D - (x_1^*(y), x_2^*(y))\) and strictly increasing on \((x_1^*(y), x_2^*(y))\).

Therefore a direct calculation shows that for any \(y \in \Pi_2D\),
\begin{equation}\label{34}
\left\{ x \mid |\partial_x F(x,y)| \leq \epsilon^{\frac{3}{4}} \right\} = (x^*_1 - \epsilon^*_{1,-} l^{-1}, x^*_1 + \epsilon^*_{1,+} l^{-1}) \cup (x^*_2 - \epsilon^*_{2,-} l^{-1}, x^*_2 + \epsilon^*_{2,+} l^{-1}) := J_1 \cup J_2,
\end{equation}
with
\[
\epsilon^*_{j,\Xi} \sim_{0,\epsilon^{\frac{1}{2}}} \frac{1}{2} |a_2|^{-\frac{1}{2}} |a_1|^{-\frac{3}{2}} \epsilon^{\frac{3}{4}}, \quad j = 1, 2, \, \Xi \in\{+,-\}.
\]

This completes the proof of {\rm iii-a} and {\rm iv}.

\

\textbf{Proof of {\rm iii-b}:}

Recall that
\begin{equation}\label{RXZX}
\partial_X F = \frac{L^2 R_X}{1 + L^4 \tan^2 h_2} + \partial_X h_1, \quad X \in \{x, y\},
\end{equation}
with the following asymptotics:
\[
R_x \sim_{0, \epsilon^{\frac{1}{2}}} \partial_x h_2 \sim_{0, \epsilon^{\frac{1}{2}}} a_2 > 0, \quad R_y \sim_{0, \epsilon^{\frac{1}{2}}} \partial_y h_2 \sim_{0, \epsilon^{\frac{1}{2}}} b_2 > 0,
\]
\[
\partial_x h_1 \sim_{0, \epsilon^{\frac{1}{2}}} a_1 < 0, \quad \partial_y h_1 \sim_{0, \epsilon^{\frac{1}{2}}} b_1 > 0.
\]

Note that for \(x < x_1^*(y)\) or \(x > x_2^*(y)\), \(F(x, y)\) is strictly decreasing.

Hence for \(x > x_2^*(y)\),
$
\partial_x F \leq \partial_x F(x_2^*(y), y) = 0.
$ Therefore for \(x > x_2^*(y)\), using the fact that \(\partial_x h_1 \sim_{0, \epsilon^{\frac{1}{2}}} a_1 < 0\), we have
\[
\partial_x F =  \frac{L^2 R_x}{1 + L^4 \tan^2 h_2} - (-\partial_x h_1)\leq 0.
\]
Thus
\[
|\partial_x F| = \left| \frac{L^2 R_x}{1 + L^4 \tan^2 h_2} - (-\partial_x h_1) \right| = (-\partial_x h_1) - \frac{L^2 R_x}{1 + L^4 \tan^2 h_2}.
\]
Since \(\partial_x h_1 \sim_{0, \epsilon^{\frac{1}{2}}} a_1 < 0\) and $R_x>0$, we obtain
$
|\partial_x F| < -\partial_x h_1\sim_{0, \epsilon^{\frac{1}{2}}} |a_1|.
$
This implies inequality \eqref{iii-2}.

Furthermore  taking $X=y$ in \eqref{RXZX} and by the fact $\partial_x h_2 \sim_{0,\epsilon^{\frac{1}{2}}} a_2>0$ (which implies $\tan^2 h_2(x,,y)>\tan^2 h_2(x_2^*(y),y)$),  we have
\[
\partial_y h_1 < \partial_y F < \frac{l^2 b_2}{1 + l^4 \tan^2 h_2(x_2^*(y), y)} + b_1\sim_{0, \epsilon^{\frac{1}{2}}} \frac{l^2 b_2}{l^4 \frac{a_2}{|a_1|} l^{-2}} + b_1 = b_1 + b_2 \cdot \frac{|a_1|}{a_2}.
\]
This implies inequality \eqref{iii-2'}.
The case \(x < x_1^*(y)\) follows similarly as above.

Thus we complete the proof of {\rm iii-b}.

\

\textbf{Proof of {\rm iii-c}:}

The result of {\rm iii-a} shows that for any \(y \in \Pi_2 D\), the function \(F(x, y)\) is strictly decreasing on \(\left( -\infty, x_1^*(y) \right) \cap \Pi_1 D\), strictly increasing on \(\left( x_1^*(y), x_2^*(y) \right)\), and strictly decreasing on \(\left( x_2^*(y), +\infty \right) \cap \Pi_1 D\).

On the other hand, by result \(\rm i\), we know that \(F = 0 \, (\mathrm{mod} \, \pi)\) has at most two roots, denoted by $\tilde{z}_1(y)$ and $\tilde{z}_2(y)$. Hence we must have one of the following two cases:
\[
x_2^*(y) < \tilde{z}_1(y) \leq x_2^*(y) \leq \tilde{z}_2(y),
\]
or
\[
\tilde{z}_1(y) \leq x_1^*(y) \leq \tilde{z}_2(y) < x_2^*(y).
\]

Without loss of generality, in the following proof, we assume in the following proof that
\begin{equation}\label{assf}
x_2^*(y) < \tilde{z}_1(y) \leq x_2^*(y) \leq \tilde{z}_2(y).
\end{equation}

From \eqref{34}, we have shown that for \(J_i = \left( x_i^* - \epsilon_{i,-}^* l^{-1}, x_i^* + \epsilon_{i,+}^* l^{-1} \right)\), \(i = 1, 2\), the following holds:
\[
\left( \Pi_1 D - J_1 \cup J_2 \right) \subset \left\{ x \mid |\partial_x F| > \epsilon^{\frac{3}{4}} \right\}.
\]
Moreover, by \eqref{+++}, we have for \(x \in J_1 \cup J_2\),
\begin{equation}\label{>>>>}
\frac{\partial^2 F(x, y)}{\partial x^2} \sim_{0, \epsilon^{\frac{1}{2}}} -2 |a_2|^{\frac{1}{2}} |a_1|^{\frac{3}{2}} l.
\end{equation}

Thus for \(x \in J_2=\left( x_2^* - \epsilon_{2,-}^* l^{-1}, x_2^* + \epsilon_{2,+}^* l^{-1} \right)\), we obtain
\[
\left| \partial_x F(x, y) \right| = \left| \partial_x F(x, y) - \partial_x F(x_2^*(y), y) \right| \geq 2 |a_2|^{\frac{1}{2}} |a_1|^{\frac{3}{2}} l |x - x_2^*(y)|\geq \Gamma^{-10} l (x - x_2^*(y)) > \epsilon |x - x_2^*(y)|.
\]

For \(x \in (x_1^* + \epsilon_{1,+}^* l^{-1}, x_2^* - \epsilon_{2,-}^* l^{-1}) \cup \left( (x_2^* + \epsilon_{2,+}^* l^{-1}, +\infty) \cap \Pi_1 D \right)\), by \eqref{>>>>}, we have
\[
\left| \partial_x F(x, y) \right| > \epsilon^{\frac{3}{4}} > \epsilon^{\frac{3}{4}} |x - x_2^*(y)| > \epsilon |x - x_2^*(y)|.
\]

Therefore for \(\tilde{z}_2(y) > x_2^*(y)\) and \(x_1^*(y) \leq \tilde{z}_1(y) \leq x_2^*(y)\), we have
\begin{equation}\label{partialxf}
|\partial_x F(\tilde{z}_i(y), y)| \geq \epsilon |x_2^*(y) - \tilde{z}_i(y)|, \quad i = 1, 2,
\end{equation}
which completes the proof of \(\rm iii-c\).

\

\textbf{Proof of \(\rm iii-d\):}
Recall that $$
\left\vert F(x^*_1(y), y) - F(x^*_2(y), y) \right\vert \sim_{0, \epsilon^{\frac{1}{2}}} \pi - 4 \frac{\sqrt{|a_1|}}{\sqrt{a_2}} l^{-1}.
$$
Without loss of generality, we assume
\[
\min \left\{ \left| F(x_1^*(y), y) \, (\mathrm{mod} \, \pi) \right|, \left| F(x_2^*(y), y) \, (\mathrm{mod} \, \pi) \right| \right\} = \left| F(x_2^*(y), y) \, (\mathrm{mod} \, \pi) \right|.
\]
Denote \(F(x_2^*(y), y) = \eta\).

From \eqref{partialxf}, we have the following
\begin{equation}\label{eta11}
\eta = |\eta - 0| = \left| F(x_2^*(y), y) - F(\tilde{z}_2(y), y) \right| \geq \frac{1}{2} \epsilon |x_2^*(y) - \tilde{z}_2(y)|^2.
\end{equation}

On the other hand, using \eqref{iii-2}, we obtain
\begin{equation}\label{eta22}
\eta = |\eta - 0| = \left| F(x_2^*(y), y) - F(\tilde{z}_2(y), y) \right| \leq |a_2| |x_2^*(y) - \tilde{z}_2(y)| \leq \Gamma |x_2^*(y) - \tilde{z}_2(y)|.
\end{equation}

By \eqref{eta22} and  \eqref{assf} we obtain
\begin{equation}\label{assf1}
|\tilde{z}_2(y) - \tilde{z}_1(y)| >|\tilde{z}_2(y) - x_2^*(y)| \geq \eta \Gamma^{-1}.\end{equation}

By \eqref{eta11} and \eqref{assf}, it holds that
\begin{equation}\label{assf2}
\begin{array}{ll}&|\tilde{z}_2(y) - \tilde{z}_1(y)| \leq |\tilde{z}_2(y) - x_2^*(y)| + |x_2^*(y) - \tilde{z}_1(y)| \leq |\tilde{z}_2(y) - x_2^*(y)| + |x_2^*(y) - x_1^*(y)|\\
&\leq \sqrt{2}\eta^{\frac{1}{2}} \epsilon^{-\frac{1}{2}} + \Gamma^5 l^{-1} \leq 2 \eta^{\frac{1}{2}} \epsilon^{-\frac{1}{2}}.
\end{array}\end{equation}

On the other hand, by \eqref{epepep}, we  have
\begin{equation}\label{assf3}
\eta \Gamma^{-1} \leq |\tilde{z}_2(y) - \tilde{z}_1(y)| \leq |\Pi_1 D| \leq 4 \Gamma^2 \epsilon^{\frac{2}{3}}.
\end{equation}

Therefore we conclude from \eqref{assf1}, \eqref{assf2} and \eqref{assf3} that
$
\eta \Gamma^{-1} \leq |\tilde{z}_2(y) - \tilde{z}_1(y)| \leq \min \left\{ 4 \Gamma^2 \epsilon^{\frac{2}{3}}, 2 \eta^{\frac{1}{2}} \epsilon^{-\frac{1}{2}} \right\}.
$
This completes the proof of \(\rm iii-d\).

\

\textbf{Proof of \(\rm iii-e\):}

Recall the definition $\tilde{I}(y) := \left\{ x \in \Pi_1 D \mid |F(x,y)| < \epsilon^3 l^{-16} e^{-|\log |x^*_2(y) -\tilde{z}_2(y)| |^C} \right\}.$ For convenience, we take $C=8.$ By \eqref{assf} and \eqref{assf3}, \begin{equation}\label{x2*}|x_2^*(y) - \tilde{z}_2(y)|\leq |\tilde{z}_1(y) - \tilde{z}_2(y)|\leq  \Gamma^2 \epsilon^{\frac{2}{3}}(\ll 1).\end{equation}

Set $U:=\left\vert\log |x_2^*(y) - \tilde{z}_2(y)|\right\vert.$ We obtain from \eqref{x2*} that
$$X\geq \frac{2}{3}|\log \epsilon|-2\log \Gamma\geq \frac{2}{3}\Gamma-2\log \Gamma>\frac{1}{3}\Gamma(\gg 1).$$
Hence
$$X^8-2X\geq \frac{1}{2}X^8\geq \frac{1}{3^8}\Gamma^8>0>-2\Gamma+2\ln 2-16 k$$
$$=3\log \epsilon-\log \epsilon-\log 2- 16 \log l=\log (\epsilon^3)-\log (l^{16})-\log (\frac{\epsilon}{2}).$$
Then applying the results from \eqref{eta11}, \eqref{eta22} and above computation, we obtain
\[
\Gamma |x_2^*(y) - \tilde{z}_2(y)| \geq \left| F(x_2^*(y), y) - F(\tilde{z}_2(y), y) \right| \geq \frac{\epsilon}{2} |x_2^*(y) - \tilde{z}_2(y)|^2 \gg \epsilon^3 l^{-16} e^{-|\log |x_2^*(y) - \tilde{z}_2(y)||^8}.
\]
Thus \(\tilde{I}(y)\) consists of two distinct open intervals and
\begin{equation}\label{ced1}
\Gamma^{-1} \epsilon^3 l^{-16} e^{-|\log |x_2^*(y) - \tilde{z}_2(y)||^8} \leq |\tilde{I}(y)| \leq 2 \epsilon l^{-8} e^{-|\log |x_2^*(y) - \tilde{z}_2(y)||^8}.
\end{equation}

Clearly, \(\tilde{z}_i(y) \in \tilde{I}(y)\) for \(i = 1, 2\). From \eqref{iii-3*}, we have
$
\left| \partial_x F(\tilde{z}_i(y), y) \right| \geq \epsilon |x_2^*(y) - \tilde{z}_2(y)|.
$

On the other hand, from \eqref{2jiedaos} and \eqref{ced1}, we obtain for any \(x \in \tilde{I}(y)\) that
\[
\begin{aligned}
&\left| \partial_x F(\tilde{z}_2(y), y) - \partial_x F(x, y) \right| \leq C l^8 |x_2^*(y) - \tilde{z}_2(y)| \\
&\leq C l^8 |\tilde{I}(y)| \leq C l^8 (2 \epsilon l^{-8} e^{-|\log |x_2^*(y) - \tilde{z}_2(y)||^8}) \leq 2 C \epsilon e^{-|\log |x_2^*(y) - \tilde{z}_2(y)||^8}.
\end{aligned}
\]

Thus we have
\[
\begin{aligned}
&\frac{\left| \partial_x F(\tilde{z}_2(y), y) - \partial_x F(x, y) \right|}{\left| \partial_x F(\tilde{z}_i(y), y) \right|} \leq \frac{2 C \epsilon e^{-|\log |x_2^*(y) - \tilde{z}_2(y)||^8}}{\epsilon |x_2^*(y) - \tilde{z}_2(y)|} \\
&\leq e^{-\frac{1}{2} |\log |x_2^*(y) - \tilde{z}_2(y)||^8} = \Gamma^{\frac{1}{2}} \epsilon^{-\frac{3}{2}} l^8 \left( \Gamma^{-\frac{1}{2}} \epsilon^{\frac{3}{2}} l^{-8} e^{-\frac{1}{2} |\log |x_2^*(y) - \tilde{z}_2(y)||^8} \right) \\
&\leq \Gamma^{\frac{1}{2}} \epsilon^{-\frac{3}{2}} l^8 |\tilde{I}(y)|^{\frac{1}{2}} \quad \text{(by \eqref{ced1})} \leq l^9 |\tilde{I}(y)|^{\frac{1}{2}}.
\end{aligned}
\]

Hence for \(x \in \tilde{I}(y)\), we obtain
\[
\partial_x F(x, y) \sim_{0, l^9 |\tilde{I}(y)|^{\frac{1}{2}}} \partial_x F(\tilde{z}_2(y), y),
\]
which gives \eqref{iii-3}.

Similarly, from \eqref{iii-2}, we have
\[
\partial_y F(x, y) > b_1 > \Gamma^{-1} \gg \epsilon > \epsilon |x_2^*(y) - \tilde{z}_2(y)|.
\]
Using the same argument as above, for \(x \in \tilde{I}(y)\), we obtain
\[
\partial_y F(x, y) \sim_{0, l^9 |\tilde{I}(y)|^{\frac{1}{2}}} \partial_y F(\tilde{z}_2(y), y),
\]
which gives \eqref{iii-3'}.
This completes the proof of \(\rm iii-e\).

Then we finish all the proof.
\hfill\qed\end{proof}

\subsection{The proof of Lemma \ref{derivative-t-gn}}

\textbf{The proof of a:}~Notice that $\textbf{I}_{i}\rightarrow \textbf{I}_{i+1}$ and $\textbf{II}_{i}\rightarrow \textbf{I}_{i+1}$ has little impact on the lower bound and upper bound. And the worst case occur in the case $\textbf{I}_{i}\rightarrow \textbf{II}_{i+1}.$ Then \eqref{lm17-main} directly follows from \eqref{partxj*} as desired.

\textbf{The proof of b and c:}
For each $i$, we denote ${n}^{-}_i$  the smallest inductive step $n$ such that step $n$ belongs to Type $\textbf{II}_n^{\hat{k}_i},$ $({n}^-_{i}<){n}^{+}_i(< {n}^-_{i+1})$ by the smallest  inductive step $n$ such that step $n$ belongs to Type $\textbf{I}_n.$ Clearly, step $l$ belongs to Type $\textbf{I}_l$ for any $n^+_{i}\leq l< n^{-}_{i+1}$ and for any $i\in \Z_+.$ And step $m$ belongs to Type $\textbf{II}_m$ for any $n^-_{i}\leq m<n^{+}_{i}.$

By Theorem \ref{theorem12}, all the results hold true for all $l-th$ step,~$n^-_0=n^+_0=1\leq l < n^-_1.$

 We begin from the $(n^+_{2}-1)$-th step. (since the steps between the $(n^-_1)$-th~and the~$(n^+_{2}-2)$-th have the same type as $(n^+_{2}-1),$ only little impact exists on the lower bound and upper bound.)  Let $d_{n^+_2-1}=\|c'_{n^+_2-1,2}-c_{n^+_2-1,1}\|.$ By  Theorem \ref{theorem12} we know that $$\begin{array}{ll}&d_{n^+_2-1}\sim_{0,\Upsilon} \|c_{n^+_2-1,1}+\hat{k}_i\alpha-c_{n^+_2-1,2}\|\\&\sim_{0,\Upsilon} \|c_{n^+_2-1,1}-c'_{n^+_2-1,2}\|\leq \|c_{n^+_2-1,1}-\tilde{c}_{n^+_2-1,2}\|+\|c'_{n^+_2-1,2}-\tilde{c}_{n^+_2-1,2}\|\leq \|c_{n^+_2-1,1}-\tilde{c}_{n^+_2-1,2}\|+\lambda^{-c\hat{k}_1},\end{array}$$
  where $\Upsilon:=\lambda^{-cr_{n^+_2-2}}.$ The Diophantine condition ensures that $\hat{k}_{2}\geq c|{d}_{n^+_2-1}|^{-C}.$ Therefore the above estimate implies that
\begin{equation}\label{(1)-1}\hat{k}_{2}\geq cd_{n^+_2-1}^{-C}\geq c(\|c_{n^+_2-1,1}-\tilde{c}_{n^+_2-1,2}\|+\lambda^{-c\hat{k}_1})^{-C}.\end{equation}

On the other hand, Theorem \ref{theorem12} and \eqref{iii-4} imply that
\begin{equation}\label{(1)-2}\|c_{n^+_2-1,1}-\tilde{c}_{n^+_2-1,2}\|\leq C\lambda^{-c\hat{k}^c_{1}}.
\end{equation}

Combining \eqref{(1)-1} and \eqref{(1)-2},  we obtain (c) as desired.

Next, we consider $n^+_2-th$ step. Note Theorem \ref{theorem12} implies that
$\|\tilde{c}_{l,1}-c_{l,1} \| \sim_{0,\lambda^{-ck_1^c}}\|\tilde{c}_{n^+_2-1,1}-c_{n^+_2-1,1} \|,$ $n^+_2-1\leq l\leq n^-_3-1.$ Then \eqref{iii-3*} yields

\begin{equation}\label{(1)-3}\vert \partial_x g_{n^+_2}(c_{n^+_2,1}(t),t) \vert\geq c(\lambda^{-|\log \hat{k}_1|^C})\|\tilde{c}_{n^+_2,1}-c_{n^+_2,1}\|
\geq c(\lambda^{-|\log \hat{k}_1|^C})\|{c}'_{n^+_2-1,2}-c_{n^+_2-1,1}\|.\end{equation}

Then the first inequality of \eqref{(1)-1}, (c) and \eqref{(1)-3} yield that $\vert \partial_x g_{n^+_2}(c_{n^+_2,1}(t),t) \vert\geq c\hat{k}^{-C}_{2},$ which is the lower bound of $|a_{n,i}|$ in (b). The upper bound of $|a_{n,i}|$ follows from \eqref{iii-2'}. The estimates on $b_{n,i}$ directly follows from \eqref{iii-2'} and the estimate on $|a_{n,i}|.$ \eqref{gnjinsss} directly follows from Theorem \ref{theorem12} since $g_n$ satisfies $\lambda^{-(\log \mathcal{N}_n)^C}$ non-resonant condition.

%that $c\leq |b_{n^-_{i+1},i}|\leq C\hat{k}^{C}_{i+1}\leq Cq_{N+n^-_{i+1}-1}^C.$ By Theorem \ref{theorem12} and its Remark, the above implies that $c\leq |b_{l,i}|\leq C\hat{k}^{C}_{i+1}\leq Cq_{N+n^-_{i+1}-1}^C$ for any $n^-_{i+1}< l< n^-_{i+2},$ which implies $c\leq b_{n,j}\leq Cq_{N+n-1}^C,~j=1,2$ for any $n\in \Z_+.$

\subsection{Some useful inequalities}

In this subsection, we introduce some elementary integral inequalities which have been frequently used in the proof of Lemma \ref{lemma 6.1}.

\begin{definition}\label{def-H} Given $\zeta>0,$ compact intervals $I_j\subset \R,~j=1,2,\cdots,M$ and $f(x)\in C^0(\bigcup\limits_{j=1}^M I_j)$ with $Range(f)\subset [-100,100]$ if there exists some function $p(x)\in C^1(\bigcup\limits_{j=1}^M I_j)$ satisfying $\min\limits_{x\in \bigcup\limits_{j=1}^M I_j}\vert p'(x)\vert\geq \zeta$ and $\vert f(x)\vert \geq \vert p(x)\vert,\ ~\forall x\in \bigcup\limits_{j=1}^M I_j,$ then we say $f\in \mathcal{H}(M,\zeta).$
\end{definition}

%\noindent{\it $\bullet$ Useful Lemmas: }\quad

\begin{lemma}\label{usefullemma2}Given compact intervals $I_j\subset [-1,1],~j=1,2,\cdots,N$ and $0<\gamma_1,\gamma_2\leq 10^{-6}$, if $f\in \mathcal{H}(N,\gamma_1)$, then we have $$\int_{\bigcup\limits_{j=1}^N I_j}\frac{1}{\sqrt{f^2+\gamma_2}}\leq CN\gamma_1^{-1}\vert \log \gamma_2\vert.$$\end{lemma}
\begin{proof} We only prove the case $N=1$ and the case $N>1$ can be similarly proved. By our assumption we have $|p(x)|\geq \gamma_1x+c$ with some suitable constant $|c|\leq 100.$ Clearly, $\frac{1}{\sqrt{x^2+\gamma_2}}$ is monotonically decreasing as $x\rightarrow +\infty.$
Note for $10^{-6}>B>0,|x|<200$, we have $$\begin{array}{ll}&\vert\log (\sqrt{x^2+B}+x) \vert\leq \max\{\vert\log (\sqrt{40000+B}-200) \vert, \vert\log (\sqrt{40000+B}+200) \vert \}\\&\leq \max\{\vert\log \frac{B}{800} \vert, 200 \}\leq 10^{-10} |\log B|. \end{array}$$

One also notes that for $a, b>0$, it holds that $\int \frac{1}{\sqrt{(ax+c)^2+b}}=\frac{ \log \left( \sqrt{(ax+c)^2+b}+ax+c \right)}{a}+C.$
Then by a direct calculation, we have $$\begin{array}{ll}\int_{I}\frac{1}{\sqrt{f^2+\gamma_2}}&\leq \int_{I}\frac{1}{\sqrt{p^2+\gamma_2}}\leq  \int_{I}\frac{1}{\sqrt{(\gamma_1x+c)^2+\gamma_2}}\leq C\gamma_1^{-1}\vert \log \gamma_2\vert .\hfill\qed\end{array}$$
\end{proof}

\begin{lemma}\label{useful111} For $a,b\in \R$ satisfying $0<|a|,b\ll 1,$ it holds  that
$$\int_{\R}\frac{1}{|x^2-a|+b} dx\leq C\frac{\vert\log b\vert}{\sqrt{|a|+b}}.$$
\end{lemma}
\begin{proof}

\begin{enumerate}

\item For the case $a\leq 0,$  we have $$\int_{\R}\frac{1}{|x^2-a|+b} dx=\int_{\R}\frac{1}{x^2+(-a+b)} dx=\frac{\pi}{\sqrt{b+|a|}}<\frac{\pi\cdot |\log b|}{\sqrt{b+|a|}}.$$

\item For the case $a\leq 0,$  we have $$\int_{\R}\frac{1}{|x^2-a|+b} dx\leq \int_{(-\sqrt{a},\sqrt{a})}\frac{1}{|x^2-(a+b)|} dx+\int_{\R-(-\sqrt{a},\sqrt{a})}\frac{1}{|x^2+(-a+b)|} dx:=INT_1+INT_2.$$

On one hand, by a direct calculation and the fact $\sqrt{b}<\sqrt{a}+\sqrt{a+b}\ll 1$, we have \begin{equation}\label{intt1}INT_1=\frac{\vert\log \vert\frac{\sqrt{a}+\sqrt{a+b}}{\sqrt{a}-\sqrt{a+b}}\vert \vert}{\sqrt{a+b}}=\frac{2\vert\log \vert\sqrt{a}+\sqrt{a+b}\vert \vert+\vert\log b\vert}{\sqrt{a+b}}\leq \frac{2\vert\log \vert\sqrt{b}\vert \vert+\vert\log b\vert}{\sqrt{a+b}}\leq 3\frac{\vert\log b\vert}{\sqrt{a+b}}.\end{equation}

On the other hand, for $INT_2$ we have to consider the following three cases:

\begin{enumerate}

\item If $0<a<b,$ then $$INT_2=2\frac{\vert \frac{\pi}{2}-\arctan (\frac{\sqrt{a}}{\sqrt{b-a}})\vert}{\sqrt{b-a}}.$$ Then by  $$\frac{\vert \frac{\pi}{2}-\arctan (\sqrt{\frac{1}{x-1}})\vert}{\sqrt{1-\frac{2}{x+1}}}\leq \frac{2}{\pi},\  x>1$$ and taking $x=\frac{b}{a}$, we immediately obtain $$INT_2\leq \frac{\frac{4}{\pi}}{\sqrt{a+b}}< \frac{100\vert \log b\vert}{\sqrt{a+b}}.$$

\item If $a=b,$ then $INT_2=\frac{2}{\sqrt{a}}=\frac{2\sqrt{2}}{\sqrt{a+b}}.$

\item If $a>b,$ then $$INT_2=\frac{\vert \log \vert\frac{\sqrt{a}-\sqrt{a-b}}{\sqrt{a}+\sqrt{a-b}}\vert\vert}{\sqrt{a-b}}.$$ We claim that \begin{equation}\label{int22}INT_2\leq \min\{\frac{3\vert \log b\vert}{\sqrt{a-b}},\frac{2}{\sqrt{a}-\sqrt{a-b}}\}.\end{equation} In fact, on one hand, the fact $1\geq \sqrt{a}+\sqrt{a-b}\geq \sqrt{a}>\sqrt{b}$ yields that $$\vert \log \vert\frac{\sqrt{a}-\sqrt{a-b}}{\sqrt{a}+\sqrt{a-b}}\vert\vert\leq \vert 2\log \vert \sqrt{a}+\sqrt{a-b} \vert \vert+ \vert \log b\vert \leq 3\vert\log b\vert,$$ therefore $INT_2\leq \frac{3\vert \log b\vert}{\sqrt{a-b}}.$ On the other hand, the fact $\log (1+\frac{2\sqrt{a-b}}{\sqrt{a}-\sqrt{a-b}})\leq \frac{2\sqrt{a-b}}{\sqrt{a}-\sqrt{a-b}}$ yields that $$INT_2\leq \frac{\vert \log (1+\frac{2\sqrt{a-b}}{\sqrt{a}-\sqrt{a-b}})\vert}{\sqrt{a-b}}\leq \frac{\frac{2\sqrt{a-b}}{\sqrt{a}-\sqrt{a-b}}}{\sqrt{a-b}}=\frac{2}{\sqrt{a}-\sqrt{a-b}}.$$

By \eqref{int22}, we have $$INT_2\leq \frac{3\vert \log b\vert}{\sqrt{a-b}}\leq \frac{3\sqrt{2}\vert \log b\vert}{\sqrt{a}}= \frac{6\vert \log b\vert}{\sqrt{a+a}}< \frac{6\vert \log b\vert}{\sqrt{a+b}}$$ for $a>2b$ and $$INT_2\leq \frac{2}{\sqrt{a}-\sqrt{a-b}}=\frac{2(\sqrt{a}+\sqrt{a+b})}{b}\leq \frac{2(1+\sqrt{2})\sqrt{a}}{\frac{a}{2}}\leq \frac{8\sqrt{2}}{\sqrt{a}}\leq \frac{16}{\sqrt{a+b}}<\frac{16\vert \log b\vert}{\sqrt{a+b}}$$ for $b<a\leq 2b$. Therefore \begin{equation}\label{intt2}INT_2\leq \frac{16\vert \log b\vert}{\sqrt{a+b}}.\end{equation}
\end{enumerate}
\end{enumerate}
Finally, \eqref{intt1} and \eqref{intt2} complete the proof.
\hfill\qed\end{proof}

\begin{lemma}\label{useful221221} For $a,b,c\in \R,$ it holds that \begin{equation}\label{usef233}\frac{|a-b|}{2}\min\{|a-c|,|b-c|\}\leq |a-c||b-c|\end{equation}and
\begin{equation}\label{usef233'}|c-\frac{a+b}{2}|\leq \min\{|a-c|,|b-c|\}+\frac{|b-a|}{2}.\end{equation}
\end{lemma}

\begin{proof} The case $a=b$ is trivial. For $a\neq b,$ without loss of generality, we can assume that
$a<b$ and $c\geq \frac{a+b}{2}.$
One notes that
$$|c-a||b-c|>|\frac{a+b}{2}-a||b-c|=\frac{b-a}{2}|b-c|=\frac{|b-a|}{2}\min\{|b-c|,|a-c|\},$$ which yields \eqref{usef233}.

One also notes
$$|c-\frac{a+b}{2}|\leq |c-a|+|a-\frac{a+b}{2}|=|c-a|+\frac{b-a}{2}=\min\{|a-c|,|b-c|\}+\frac{|b-a|}{2},$$ which obtains \eqref{usef233'}.
\hfill\qed\end{proof}

\begin{lemma}\label{AZH}Given $1\gg \delta_1,\gamma >0,$ $\bar{\delta}_2(x)\in C^2((-\gamma,\gamma))$ satisfying
$|\bar{\delta}_2(0)|=\delta_2,$
$\delta_1\geq \max\limits_{x\in I}|\bar{\delta}_2(x)|,$
$\gamma\geq (\delta_1)^{\frac{1}{40000}}$
 and $ \vert\bar{\delta}_2'(x)\vert\leq \delta_1^{1-\frac{1}{10000}},$  it holds that
$$\left\vert\int_{0}^{\gamma}\frac{x dx}{\sqrt{x^4+2\delta^4_1 x^2+\bar{\delta}_2^8(x)}}-\frac{x dx}{\sqrt{x^4+2\delta^4_1 x^2+\delta_2^8}}\right\vert\leq C\delta_1^{\frac{1}{4}}.$$\end{lemma}

\begin{proof} Denote $a=\delta_1^4$. Then from $\vert \bar{\delta}_2(x)-\delta_2\vert\leq \delta_1^{1-\frac{1}{10000}}|x|$ and $\delta_1\geq \max\limits_{x\in I}|\bar{\delta}_2(x)|\geq \delta_2$, we have that $$|\bar{\delta}^8_2(x)-\delta_2^8|\leq 8|\tilde{\delta}_2|^7|\tilde{\delta}_2'||x|\leq 8\delta_1^{8-\frac{1}{10000}}|x|\leq 8a^{2-\frac{1}{40000}}|x|.$$ Note \begin{equation}\label{a14}\gamma>\delta_1=a^{\frac{1}{160000}}.\end{equation} It holds that
\begin{equation}\label{a141}
\begin{array}{ll}&|\int_{0}^{\gamma}\frac{x dx}{\sqrt{x^4+2a x^2+\bar{\delta}^8_2(x)}}-\frac{x dx}{\sqrt{x^4+2a x^2+\delta_2^8}}|
\leq |\int_{0}^{\gamma}\frac{x dx}{\sqrt{x^4+2a x^2}}-\frac{x dx}{\sqrt{x^4+2a x^2+|\bar{\delta}_2^8-\delta_2^8|}}|\\&
\leq |\int_{0}^{a^{\frac{3}{4}}\gamma}\frac{x dx}{\sqrt{x^4+2a x^2}}-\frac{x dx}{\sqrt{x^4+2a x^2+(8a^{2-\frac{1}{40000}})x}}|+|\int_{a^{\frac{3}{4}}\gamma}^{\gamma}\frac{x dx}{\sqrt{x^4+2a x^2}}-\frac{x dx}{\sqrt{x^4+2a x^2+8(a^{2-\frac{1}{40000}})x}}|\\& := \bar{P}_1+\bar{P}_2
\end{array}
\end{equation}

On one hand, by a direct calculation we have \begin{equation}\label{a412}\begin{array}{ll}&|\bar{P}_1|\leq \int_{0}^{a^{\frac{3}{4}}\gamma}|\frac{x dx}{\sqrt{x^4+2a x^2}}|+\int_{0}^{a^{\frac{3}{4}}\gamma}|\frac{x dx}{\sqrt{x^4+2a x^2+(8a^{2-\frac{1}{40000}})x}}|\leq \int_{0}^{a^{\frac{3}{4}}\gamma}|\frac{x dx}{\sqrt{2a x^2}}|+\int_{0}^{a^{\frac{3}{4}}\gamma}|\frac{x dx}{\sqrt{(8a^{2-\frac{1}{40000}})x}}|\\&\leq C(a^{\frac{1}{4}}\gamma)+(a^{-1+\frac{1}{80000}})\int_{0}^{a^{\frac{3}{4}}\gamma}x^{\frac{1}{2}}dx\leq C(a^{\frac{1}{4}}\gamma+a^{\frac{1}{8}+\frac{1}{80000}}\gamma^{\frac{3}{2}})\ (note~0<a,\gamma\ll 1)\leq C a^{\frac{1}{16}}\leq C \delta_1^{\frac{1}{4}}.\end{array}\end{equation}

On the other hand, if $x\geq a^{\frac{3}{4}}\gamma,$ then by \eqref{a14} we have $$\begin{array}{ll}&0<x^4+ax^2+a^{2-\frac{1}{40000}}x=(x^4+ax^2)(1+\frac{a^{2-\frac{1}{40000}}}{x^3+ax})\leq (x^4+ax^2)(1+\frac{a^{2-\frac{1}{40000}}}{(a^{\frac{3}{4}}\gamma)^3+a^{\frac{7}{4}}\gamma })\\&\leq (x^4+ax^2)(1+\frac{a^{2-\frac{1}{40000}}}{(a^{\frac{3}{4}+\frac{1}{160000}})^3+a^{\frac{7}{4}+\frac{1}{160000}} })\leq (1+a^{\frac{1}{8}})(x^4+ax^2).\end{array}$$

Therefore on ${[a^{\frac{3}{4}}\gamma,\gamma]}$, it holds that $\sqrt{x^4+ax^2+a^{2-\frac{1}{40000}}x}\sim_{0,a^{\frac{1}{8}}}\sqrt{x^4+ax^2}.$

Hence
\begin{equation}\label{a4122}|\bar{P}_2|\leq a^{\frac{1}{8}}\cdot \int_{0}^{1}\frac{x dx}{\sqrt{x^4+2a x^2}}\leq a^{\frac{1}{8}}\cdot C |\log a|\leq C a^{\frac{1}{16}}\leq C\delta_1^{\frac{1}{4}}.\end{equation}

By \eqref{a141}, \eqref{a412} and \eqref{a4122}, we complete the proof.
\hfill\qed\end{proof}

\

\begin{lemma}\label{usefullemma1}Given a compact interval $I\subset \R/\Z,$ $f(x),\bar{\delta}_2(x)\in C^2(I)$  and \begin{equation}\label{dammm}M\geq \gamma^{-10^5}\gg 1\gg \delta_1>\max\limits_{x\in I}|\bar{\delta}_2(x)|,\end{equation} we assume that $\vert\frac{d\bar{\delta}_2}{dx}\vert\leq (\vert \log \delta_1\vert^{\left(\log\vert \log \delta_1\vert\right)^C})\delta_1,$  \begin{equation}\label{f'''}f''(x)\sim_{0,\gamma} 2M\end{equation} and $$M|I|^2\geq \gamma^{10^{-3}}.$$ Suppose $x^*$ is the center of $I$ satisfying $f'(x^*)=0$ and \begin{equation}\label{etaeta111}\frac{(\vert \log \delta_1\vert^{\left(\log\vert \log \delta_1\vert\right)^C})\delta^2_1}{|f(x^*)|}:=\eta\leq \gamma^{10}.\end{equation}
Let $J_{\gamma}:=\{x\in I \vert |f|\geq \gamma^{\frac{1}{4}}\sqrt{\frac{f(x^*)}{M}}\}.$ Then the following holds true.

\begin{enumerate}
\item[i:] If $\vert f(x^*)\vert> \gamma^{\frac{1}{4}} M^{-1}$, then we have \begin{equation}\label{jiandangujia}\int_{I}\left\vert\frac{f}{\sqrt{f^4+2\delta^4_1 f^2+\bar{\delta}^8_2(x)}}\right\vert dx< C\gamma^{-100}\cdot \vert \log \delta_1 \vert\end{equation}
and
    \begin{equation}\label{jiandangujia1}\int_{J_{\gamma}}\frac{dx}{|f|}< C\gamma^{-100}\log M.\end{equation}

\item[ii:] If $\vert f(x^*)\vert\leq \gamma^{\frac{1}{4}} M^{-1}$, then the following holds true.
\begin{enumerate}
\item[$ii_a:$] If $\{x\in I\vert f(x)=0\}\neq \emptyset,$ then
 %\begin{equation}\label{useeq1}\int_{I}\frac{|f|}{\sqrt{f^4+2\delta^4_1 f^2+\delta^8_2}}\leq C_1\epsilon^{-\frac{1}{2}}\cdot\vert \log \delta_1\vert;\end{equation}
 \begin{equation}\label{useeq2}\left\vert\int_{I}\frac{f dx}{\sqrt{f^4+2\delta^4_1 f^2+\bar{\delta}^8_2(x)}}\right\vert< C\gamma^{\frac{1}{10}}(M\cdot |f(x^*)|)^{-\frac{1}{2}};\end{equation}
\begin{equation}\label{useeq3}\left\vert\int_{J_{\gamma}}\frac{dx}{f}\right\vert< C\gamma^{\frac{1}{10}}(M\cdot |f(x^*)|)^{-\frac{1}{2}};\end{equation}
\begin{equation}\label{useeq4}\int_{J_{\gamma}}\frac{dx}{|f|}< C(\big|\log \gamma|+|\log (M\cdot |f(x^*)|\big|)(M\cdot |f(x^*)|)^{-\frac{1}{2}}.\end{equation}
 % \begin{equation}\label{useeq3}\vert\int_{I}\frac{f}{\sqrt{f^4+2\delta^4_1 f^2+\delta^8_2}}\vert\sim^{I}_{0,M^{-\frac{1}{4}}} \pi\cdot \epsilon^{-\frac{1}{2}}~when~f(x_0)=M^{-1}\epsilon\end{equation}

  \item[$ii_b:$] If  $\{x\in I\vert f(x)=0\}= \emptyset,$ then  \begin{equation}\label{useeq2*}\left\vert\int_{I}\frac{fdx}{\sqrt{f^4+2\delta^4_1 f^2+\bar{\delta}^8_2(x)}}\right\vert\sim_{0, \gamma^{\frac{1}{10}}} \pi \cdot(M\cdot |f(x^*)|)^{-\frac{1}{2}}.\end{equation}

  \end{enumerate}
  \end{enumerate}\end{lemma}

 \begin{proof} For some fixed $w\in I,$ let $P(f,w):=\frac{f}{\sqrt{Q(f,w)}}$ and $Q(f,w):=f^4+2\delta^4_1 f^2+\tilde{\delta}^8_2(w).$
%We first prove \eqref{useeq2} and \eqref{useeq2*}.

  \textbf{The proof of $ii_a$:}~In this case, the assumption implies \begin{equation}\label{assumpt00}0>f(x^*)\geq -\gamma^{\frac{1}{4}} M^{-1}.\end{equation} Note \eqref{f'''} and \eqref{dammm} imply that \begin{equation}\label{liangbbb}(1-\gamma)M(x-x^*)^2+f(x^*)<f(x)\leq (1+\gamma)M(x-x^*)^2+f(x^*).\end{equation} The fact $M|I|^2\geq \gamma^{10^{-3}}(\gg \gamma^{\frac{1}{4}} M^{-1})$ implies \begin{equation}\label{yibianbbb}M(\frac{|I|}{2})^2>\frac{1}{4}\gamma^{10^{-3}}\gg \gamma^{10^4}\geq \gamma^{\frac{1}{4}} M^{-1}>-f(x^*).\end{equation}
  Hence $Range(f(x))\subset (f(x^*),a)$ for some $a\gg -f(x^*)>0.$

  Hence there exist two distinct zeros $x_1< x^*< x_2$ on $I$ (i.e. $f(x_1)=f(x_2)=0$).

  Now we define $$x^*-x_1=l_1,~x_2-x^*=l_2 ~f'(x_i)=d_i,~i=1,2.$$ By \eqref{liangbbb} we have \begin{equation}\label{di}l_1\sim_{0,\gamma^{\frac{1}{2}}} l_2;~d_i\sim_{0, \gamma^{\frac{1}{2}}} 2Ml_i\end{equation}
  and\begin{equation}\label{fx*}|f(x^*)|\sim_{0, \gamma^{\frac{1}{2}}} M(l_i)^2 \sim_{0, \gamma^{\frac{1}{2}}} M(\frac{l_1+l_2}{2})^2,~i=1,2.\end{equation}

  By \eqref{yibianbbb} and \eqref{fx*} we have
  \begin{equation}\label{qujianbianjie}|I|\geq M^{-\frac{1}{2}} \gamma^{\frac{1}{2000}}\geq \gamma^{-\frac{1}{3}} M^{-\frac{1}{2}}\gamma^{\frac{1}{2}}\geq \gamma^{-\frac{1}{3}}\sqrt{\vert f(x^*)\vert}\geq \gamma^{-\frac{1}{3}}M^{\frac{1}{2}}\frac{(l_1+l_2)}{2}\geq \frac{1}{2}\gamma^{-\frac{1}{4}}M^{\frac{1}{2}} l_i,~i=1,2.\end{equation}
 Let $I_i$ be the interval with $x_i$ as the center satisfying $|I_i|=\gamma^{\frac{1}{4}} l_i,~i=1,2.$  Then {\rm\ on\ } ${I_i}$ it holds that \begin{equation}\label{fgfgfg}|f(x)-d_i(x-x_i)|\leq M(x-x_i)^2,\qquad~f(x)\sim_{0,\gamma^{\frac{1}{2}}} d_i(x-x_i)\end{equation}and
$$~f(x)\sim_{0,\gamma^{\frac{1}{2}}} d_i(x-x_i)\ .
$$

  Then a direct calculation yields that (note $(f\cdot d_i(x-x_i))>0,~x\in I_i$)

  \begin{equation}\label{zhgj1}\begin{array}{ll}&\vert\sqrt{Q(f,x_i)}\sqrt{Q(d_i(x-x_i),x_i)}(f\sqrt{Q(d_i(x-x_i),x_i)}+d_i(x-x_i)\sqrt{Q(f,x_i)})\vert \\&\geq \vert fQ(d_i(x-x_i),x_i)\sqrt{Q(f,x_i)}\vert+\vert (d_i(x-x_i))Q(f,x_i)\sqrt{Q(d_i(x-x_i),x_i)}\vert \\&\geq c\left(|d_i(x-x_i)|\cdot |d_i(x-x_i)|^4\cdot |d_i(x-x_i)|^2+|d_i(x-x_i)|\cdot\tilde{\delta}_2^8(x_i)\cdot|d_i(x-x_i)|^2\right)\\&=c\left(\vert d_i\vert^7|x-x_i|^7+\tilde{\delta}_2^{8}(x_i)|d_i|^3\vert x-x_i\vert^3\right);\end{array}\end{equation}

   \begin{equation}\label{zhgj2}\begin{array}{ll}&\vert f^2Q(d_i(x-x_i),x_i)-d_i^2(x-x_i)^2Q(f,x_i)\vert
   \\&=\vert f^2Q(f,x_i)-d_i^2(x-x_i)^2Q(f,x_i)-[f^2Q(f,x_i)-f^2Q(d_i(x-x_i),x_i)]\vert
   \\&= \vert \left[f^2d_i^2(x-x_i)^2+\delta_2^8\right](d_i(x-x_i)+f)(d_i(x-x_i)-f) \vert
   \\& \leq \left\vert f^2d_i^2(x-x_i)^2\cdot (d_i(x-x_i)-f)(d_i(x-x_i)+f)\right\vert+\left\vert\tilde{\delta}_2^8(x_i)(d_i(x-x_i)-f)(d_i(x-x_i)+f) \right\vert
   \\&\leq C\left\vert \vert d_i (x-x_i)\vert^5\cdot M\cdot \vert x-x_i\vert^2+\tilde{\delta}_2^8(x_i) |d_i(x-x_i)|\cdot M\cdot \vert x-x_i\vert^2\right\vert(~\text{by}~\eqref{fgfgfg})
   \\&= C\left\vert\vert d_i \vert^5\cdot\vert (x-x_i)\vert^7\cdot M+\tilde{\delta}_2^8(x_i)\cdot\vert d_i\vert\cdot\vert (x-x_i)|^3\cdot M\right\vert
   \\&\leq C\cdot \frac{M}{|d_i|^2}\cdot \left(\vert d_i\vert^7|x-x_i|^7+\tilde{\delta}_2^{8}(x_i)|d_i|^3\vert x-x_i\vert^3\right).
   \end{array}\end{equation}

Combining \eqref{zhgj1} and \eqref{zhgj2} we immediately have
  \begin{equation}\label{cfgj}\begin{array}{ll}\vert P(f,x_i)-P(d_i(x-x_i),x_i)\vert &= \vert\frac{f^2Q(d_i(x-x_i),x_i)-d_i^2(x-x_i)^2Q(f,x_i)}{\sqrt{Q(f,x_i)}\sqrt{Q(d_i(x-x_i),x_i)}(f\sqrt{Q(d_i(x-x_i),x_i)}+(d_i(x-x_i))\sqrt{Q(f,x_i)})} \vert\leq C\left\vert\frac{M}{d_i^2}\right\vert.\end{array}\end{equation}
  On the other hand,
  since $d_i(x-x_i)$ is odd on $I_i,$ we have \begin{equation}\label{Pdi}\int_{I_i} P(d_i(x-x_i),x_i) dx=0,~i=1,2.\end{equation}

  Note that \eqref{di}, \eqref{assumpt00} and \eqref{fx*} imply
  \begin{equation}\label{mli}\gamma^{\frac{1}{4}}\frac{Ml_i}{d_i^2}\sim_{0,\gamma^{\frac{1}{2}}}\gamma^{\frac{1}{4}}\frac{1}{Ml_i}>c\gamma^{\frac{1}{4}}\frac{1}{\sqrt{|f(x^*)|M}}>c\gamma^{\frac{1}{8}}>\gamma^{\frac{5}{2}}.\end{equation}

  \eqref{etaeta111} and \eqref{fx*} lead to
  \begin{equation}\label{f*>0} M(l_i)^2\sim_{0,\gamma^{\frac{1}{2}}}|f(x^*)|>\delta_1^{\frac{5}{2}}\gamma^{-10}>0\end{equation} and
  \begin{equation}\label{gamma>eta} \gamma>\eta^{\frac{1}{10}}>\frac{\delta_2^{\frac{1}{10}}}{|f(x^*)|^{\frac{1}{10}}}\geq \frac{\delta_1^{\frac{1}{10}}}{M^{-\frac{1}{10}}\gamma^{
  \frac{1}{40}}}>\delta_1^{\frac{1}{10}}(~by~M>1>\gamma).\end{equation}
  Note that \eqref{f*>0} guarantees that $d_i\neq 0.$
  Then for $i=1,2,$ \begin{equation}\label{eqfinal1*}\begin{array}{ll}&\left\vert\int_{I_i} P(f,x) dx \right\vert\leq \left\vert\int_{I_i} P(f,x)-P(f,x_i) dx \right\vert +\left\vert\int_{I_i} P(f,x_i) dx \right\vert\\& \leq \left\vert\int_{I_i} P(f,x)-P(f,x_i) dx \right\vert +\left\vert\int_{I_i} P(f,x_i)-P(d_i(x-x_i),x_i) dx \right\vert+\left\vert\int_{I_i} P(d_i(x-x_i),x_i) dx \right\vert
  \\&\leq C\delta_1^{\frac{1}{4}}+C|I_i| \left\vert\frac{M}{d_i^2}\right\vert(~\text{by}~\eqref{cfgj}, \eqref{Pdi}~\text{and~Lemma}~\ref{AZH})
  \end{array}
  \end{equation}

  Then \eqref{eqfinal1*} implies
  \begin{equation}\label{eqfinal1}\begin{array}{ll}&\left\vert\int_{I_i} P(f,x) dx \right\vert
  \leq \gamma^{\frac{5}{2}}+C\gamma^{\frac{1}{4}} l_i M d_i^{-2}(~by~\eqref{gamma>eta})
  \leq 2C\gamma^{\frac{1}{4}} l_i M d_i^{-2}(~by~\eqref{mli})\\
  \\&<C\gamma^{\frac{1}{4}} M^{-1}(l_1+l_2)^{-1}(~by~\eqref{di}~and~\eqref{fx*})
  =C \gamma^{\frac{1}{4}} (M(l_1+l_2))^{-1}
  \leq C\gamma^{\frac{1}{10}}(M\cdot |f(x^*)|)^{-\frac{1}{2}}(~\text{by}~\eqref{fx*}).\end{array}\end{equation}

  Now we consider $I-\left(I_1\bigcup I_2\right).$ Note that for $x\in I-\left(I_1\bigcup I_2\right),$
  \begin{equation}\label{I1I1I22} |M(x-x_1)(x-x_2)|\geq \gamma^{\frac{1}{4}}M l_1^2\geq c\gamma^{\frac{1}{4}}|f(x^*)|
  \end{equation} and
  \begin{equation}\label{I1I1I22*} \min\{|x-x_1|,|x-x_2|\}\geq \gamma^{\frac{1}{4}}\min\{l_1,l_2\}.
  \end{equation}
  And one also notes \eqref{di} and \eqref{I1I1I22*} imply
 $$|l_1-l_2|\leq \gamma^{\frac{1}{2}}\min\{l_1,l_2\}\leq \gamma^{\frac{1}{4}}\min\{|x-x_1|,|x-x_2|\}
$$
and
\begin{equation}\label{**112*}\begin{array}{ll}&\left\vert \frac{(x-\frac{x_2+x_1}{2})^2-(x-x^*)^2}{(x-x_1)(x-x_2)} \right\vert=\left\vert \frac{(x-\frac{x_2+x_1}{2})^2-(x-x^*)^2}{(x-x_1)(x-x_2)} \right\vert=\left\vert (x^*-\frac{x_1+x_2}{2})\frac{2x-\frac{x_1+x_2}{2}-x^*}{(x-x_1)(x-x_2)}\right\vert
\\&= \left\vert (x^*-\frac{x_1+x_2}{2})\frac{2x-\frac{x_1+x_2}{2}-\frac{x_1+x_2}{2}+\frac{x_1+x_2}{2}-x^*}{(x-x_1)(x-x_2)}\right\vert
\leq \left\vert (x^*-\frac{x_1+x_2}{2})\frac{2|x-\frac{x_1+x_2}{2}|+|\frac{x_1+x_2}{2}-x^*|}{(x-x_1)(x-x_2)}\right\vert
\\&\leq \frac{1}{2}(|l_2-l_1|) \left\vert\frac{2|x-\frac{x_1+x_2}{2}|+\frac{1}{2}(|l_2-l_1|)}{(x-x_1)(x-x_2)}\right\vert
\\&\leq \frac{1}{2}(|l_2-l_1|) \left\vert\frac{2\min\{|x-x_1|,|x-x_2|\}+(l_1+l_2)+\frac{1}{2}(|l_2-l_1|)}{\frac{l_1+l_2}{2}\min\{|x-x_1|,|x-x_2|\}}\right\vert(~\text{by~Lemma}~\ref{useful221221})
\\&\leq 2\frac{|l_1-l_2|}{l_1+l_2}+\frac{|l_2-l_1|}{\min\{|x-x_1|,|x-x_2|\}}+\frac{|l_2-l_1|^2}{2(l_1+l_2)\min\{|x-x_1|,|x-x_2|\}}
\\&\leq 2\frac{|l_1-l_2|}{2\min\{l_1,l_2\}}+\frac{|l_2-l_1|}{\min\{|x-x_1|,|x-x_2|\}}+\frac{|l_2-l_1|\cdot |l_2-l_1|}{4\min\{l_1,l_2\}\min\{|x-x_1|,|x-x_2|\}}
\\&\leq \gamma^{\frac{1}{2}}+\gamma^{\frac{1}{4}}+\gamma^{\frac{1}{2}}\gamma^{\frac{1}{4}}
\leq 3\gamma^{\frac{1}{4}}.\end{array}\end{equation}

  By the definition and \eqref{etaeta111}, we have

  \begin{equation}\label{611}\begin{array}{ll}&\left\vert\frac{f-M(x-x_1)(x-x_2)}{M(x-x_1)(x-x_2)}\right\vert
  =\left\vert\frac{f+M(\frac{x_2-x_1}{2})^2-M(x-\frac{x_1+x_2}{2})^2}{M(x-x_1)(x-x_2)}\right\vert
  \\&\leq \left\vert\frac{f+M(\frac{x_2-x_1}{2})^2-M(x-x^*)^2}{M(x-x_1)(x-x_2)}\right\vert+\left\vert \frac{M(x-\frac{x_2+x_1}{2})^2-M(x-x^*)^2}{M(x-x_1)(x-x_2)} \right\vert
  \\&= \left\vert\frac{f(x^*)+\int_{x^*}^{x}f''(t)(x-t)dt+M(\frac{x_2-x_1}{2})^2-M(x-x^*)^2}{M(x-x_1)(x-x_2)}\right\vert+\gamma^{\frac{1}{4}}(~\text{by~Taylor~and}~\eqref{**112*})
  \\&\leq \left\vert\frac{f(x^*)-(-M(\frac{x_2-x_1}{2})^2)+\int_{x^*}^{x}(f''(t)-2M)(x-t)dt}{M(x-x_1)(x-x_2)}\right\vert+\gamma^{\frac{1}{4}}
  \\&\leq \left\vert\frac{C\gamma^{\frac{1}{2}}|M(\frac{x_2-x_1}{2})^2|+C\gamma^{\frac{1}{2}}M(x-x^*)^2}{M(x-x_1)(x-x_2)}\right\vert(~\text{by}~f''(x)\sim_{0,\gamma} 2M~and~\eqref{fx*})+\gamma^{\frac{1}{4}}
  \\&\leq C\gamma^{\frac{1}{2}}\left\vert\frac{M(\frac{x_2-x_1}{2})^2-M(x-x^*)^2}{M(x-x_1)(x-x_2)}\right\vert+C\gamma^{\frac{1}{2}}\left\vert\frac{2M(\frac{x_2-x_1}{2})^2}{M(x-x_1)(x-x_2)}\right\vert+\gamma^{\frac{1}{4}}
  \\&\leq C\gamma^{\frac{1}{2}}+C\gamma^{\frac{1}{2}}\left\vert\frac{2|f(x^*)|}{M(x-x_1)(x-x_2)}\right\vert+\gamma^{\frac{1}{4}}
  \\&\leq C\gamma^{\frac{1}{2}}+C\gamma^{\frac{1}{2}}\left\vert\frac{2|f(x^*)|}{c\gamma^{\frac{1}{4}}|f(x^*)|}\right\vert(~\text{by}~\eqref{I1I1I22})+\gamma^{\frac{1}{4}}
  \leq C\gamma^{\frac{1}{4}}.\end{array}\end{equation}

Note that, by \eqref{I1I1I22} and \eqref{611},
$$|f(x)|>(1-\gamma^{\frac{1}{4}})M|(x-x_1)(x-x_2)|>c\gamma^{\frac{1}{4}}|f(x^*)|.$$
Hence by \eqref{etaeta111}, for any $w\in I$ and $x\in {I-\left(I_1\bigcup I_2\right)}$ it holds that
$$\left\vert \frac{1}{\sqrt{1+2\frac{\delta_1^4}{f^2}+\frac{\tilde{\delta}^8_2(w)}{f^4}}}-1\right\vert\leq 2\frac{\delta_1^4}{f^2}+\frac{\tilde{\delta}^8_2(w)}{f^4}\leq 2\frac{\delta_1^4}{f^2}+\frac{{\delta}^8_1}{f^4}\leq 3\frac{\delta_1^4}{f^2}\leq C\gamma^{-\frac{1}{4}} \frac{\delta_1^4}{f^2(x^*)}< \gamma^{10}.$$
Thus
\begin{equation}\label{603}P(f,w)=\frac{f}{\sqrt{f^4+2\delta_1^4 f^2 +\tilde{\delta}_2^8(w)}}\sim_{0,\gamma^{10}} \frac{1}{f}\quad {\rm\ on\ } {I-\left(I_1\bigcup I_2\right)}.\end{equation}

Hence by \eqref{603} and \eqref{611}, it holds that \begin{equation}\label{eq31231213}~P(f,x)\sim_{0, \gamma^{\frac{1}{4}}} \frac{1}{f}\sim_{0, \gamma^{\frac{1}{4}}} \frac{1}{M(x-x_1)(x-x_2)}\quad {\rm\ on\ } {I-\left(I_1\bigcup I_2\right)}.\end{equation} Therefore

  \begin{equation}\label{dierbf}\begin{array}{ll}&\vert\int_{I-\left(I_1\bigcup I_2\right)}P(f,x) dx \vert \leq \vert\int_{I-\left(I_1\bigcup I_2\right)} \frac{1}{M(x-x_1)(x-x_2)} dx \vert+ C\gamma^{\frac{1}{4}}\int_{I-\left(I_1\bigcup I_2\right)} \frac{1}{M|(x-x_1)(x-x_2)|} dx .\end{array}\end{equation}

   Note that \eqref{qujianbianjie} implies $$\left\vert\log \left\vert \frac{x-x_2}{x-x_1}\right\vert\right\vert\big\vert_{\partial I}\leq C \gamma^{\frac{1}{4}}\cdot M^{-\frac{1}{2}}\leq C\gamma^{\frac{1}{4}}.$$  Therefore from $$\int \frac{1}{M(x-x_1)(x-x_2)}=\frac{1}{M(l_1+l_2)}\log \left\vert \frac{x-x_2}{x-x_1}\right\vert+C,$$ we have
 $$\begin{array}{ll}&\left\vert\int_{I-\left(I_1\bigcup I_2\right)}\frac{1}{M(x-x_1)(x-x_2)} dx \right\vert \leq \left\vert \frac{\log\left\vert \frac{(l_1+l_2-\gamma^{\frac{1}{4}} l_1)(l_1+l_2-\gamma^{\frac{1}{4}} l_2)}{(l_1+l_2+\gamma^{\frac{1}{4}} l_1)(l_1+l_2+\gamma^{\frac{1}{4}} l_2)}\right\vert}{M(l_1+l_2)}\right\vert+C\left\vert\frac{\gamma^{\frac{1}{4}}}{{M(l_1+l_2)}}\right\vert\leq C\vert \frac{\gamma^{\frac{1}{8}}}{M(l_1+l_2)}\vert\end{array}$$ and $$C\gamma^{\frac{1}{4}}\int_{I-\left(I_1\bigcup I_2\right)}\frac{1}{|M(x-x_1)(x-x_2)|} dx\leq C \gamma^{\frac{1}{4}}\vert \frac{ \vert \log \gamma\vert}{M(l_1+l_2)}\vert<C\gamma^{\frac{1}{8}}(M(l_1+l_2))^{-1}.$$ Therefore by \eqref{dierbf} and \eqref{fx*} we have
  \begin{equation}\label{eqfinal2}\left\vert\int_{I-\left(I_1\bigcup I_2\right)}P(f,x) dx \right\vert \leq C \gamma^{\frac{1}{8}}(M(l_1+l_2))^{-1}\leq C\gamma^{\frac{1}{10}}(M\cdot |f(x^*)|)^{-\frac{1}{2}}.\end{equation}
  Then by the help of \eqref{eq31231213} we immediately obtain \boxed{\eqref{useeq3}}. For \eqref{useeq4}, by \eqref{fx*} we have
  $$\begin{array}{ll}&\left\vert\int_{I-\left(I_1\bigcup I_2\right)}\frac{1}{M|(x-x_1)(x-x_2)|} dx \right\vert \\&\leq \left\vert \frac{\log\left\vert \frac{\gamma^{\frac{1}{4}} l_1}{(l_1+l_2+\gamma^{\frac{1}{4}} l_1)}\right\vert+\log\left\vert \frac{\gamma^{\frac{1}{4}} l_1}{(l_1+l_2-\gamma^{\frac{1}{4}} l_1)}\right\vert+\log\left\vert \frac{\gamma^{\frac{1}{4}} l_2}{(l_1+l_2+\gamma^{\frac{1}{4}} l_2)}\right\vert+\log\left\vert \frac{\gamma^{\frac{1}{4}} l_2}{(l_1+l_2-\gamma^{\frac{1}{4}} l_2)}\right\vert}{M(l_1+l_2)}\right\vert+C\left\vert\frac{\gamma^{\frac{1}{4}}}{{M(l_1+l_2)}}\right\vert
  \\&\leq C\vert \frac{|\log \gamma|}{M(l_1+l_2)}\vert<C\vert \frac{|\log \gamma|+|\log (M\cdot |f(x^*)|)|}{M(l_1+l_2)}\vert\leq  C(\big|\log \gamma|+|\log (M\cdot |f(x^*)|\big|)(M\cdot |f(x^*)|)^{-\frac{1}{2}},\end{array}$$ which yields \boxed{\eqref{useeq4}}.

  By \eqref{fx*}, \eqref{eqfinal1} and \eqref{eqfinal2} we have $$\left\vert\int_{I} P(f,x) dx\right\vert\leq \left\vert\int_{I_1\bigcup I_2} P(f,x) dx\right\vert+\left\vert\int_{I-(I_1\bigcup I_2)} P(f,x) dx\right\vert< C\gamma^{\frac{1}{10}}(M(l_1+l_2))^{-1}\leq C\gamma^{\frac{1}{10}}(M\cdot |f(x^*)|)^{-\frac{1}{2}},$$ which yields \boxed{\eqref{useeq2}}.

  \textbf{The proof of $ii_b$:}
  Note in this case we have $\min\limits_{x\in I}|f|=f(x^*)>0.$ Then by \eqref{etaeta111} for any $w,x\in I$ it holds that
  \begin{equation}\label{leflefl}\left\vert \frac{1}{\sqrt{1+2\frac{\delta_1^4}{f^2}+\frac{\tilde{\delta}^8_2(w)}{f^4}}}-1\right\vert\leq 2\frac{\delta_1^4}{f^2}+\frac{\tilde{\delta}^8_2(w)}{f^4}\leq 2\frac{\delta_1^4}{f^2}+\frac{{\delta}^8_1}{f^4}\leq 3\frac{\delta_1^4}{f^2}\leq 3\frac{\delta_1^4}{\min\limits_{x\in I}f^2}\leq 3\frac{\delta_1^4}{f^2(x^*)}<3\gamma^{10}.\end{equation}
  On the other hand, it holds that

  $$\begin{array}{ll}&\left\vert\frac{f}{M(x-x^{*})^2+f(x^*)}-1\right\vert=\left\vert\frac{f(x^*)+\int_{x^*}^{x}f''(t)(x-t)dt-(M(x-x^{*})^2+f(x^*))}{M(x-x^{*})^2+f(x^*)}\right\vert(~\text{by~Taylor})
  \\&=\left\vert\frac{\int_{x^*}^{x}(f''(t)-2M)(x-t)dt}{M(x-x^{*})^2+f(x^*)}\right\vert
  \leq \left\vert\frac{C\gamma^{\frac{1}{2}}M(x-x^*)^2}{M(x-x^{*})^2+f(x^*)}\right\vert=C\gamma^{\frac{1}{2}}(~\text{by}~M,f(x^*)>0).\end{array}$$

  Therefore \begin{equation}\label{ibb}f(x)\sim_{0, \gamma^{\frac{1}{2}}}M(x-x^{*})^2+f(x^*)~\text{on}~I.\end{equation}
   Note \eqref{leflefl} and \eqref{ibb} imply
$$P(f,x)\sim_{0,\gamma^{\frac{1}{4}}} \frac{1}{M(x-x^*)^2+f(x^*)}.$$

Hence $$\left\vert\frac{\int_{I} P(f,x)}{\int_{I}\frac{1}{M(x-x^*)^2+f(x^*)}}-1\right\vert\leq \gamma^{\frac{1}{8}}.$$

 Moreover \eqref{qujianbianjie} implies that $\left\vert 1-\frac{\left\vert\arctan(\frac{{M^{\frac{1}{2}}(x-x^*)}}{\sqrt{f(x^*)}})\right\vert}{\frac{\pi}{2}}\right\vert
 \big\vert_{\partial I} \leq \gamma^{\frac{1}{4}}M^{-\frac{1}{2}}\leq \gamma^{\frac{1}{8}}.$ Finally, by the fact $$\int \frac{1}{M(x-x^*)^2+f(x^*)}=\frac{\arctan(\frac{M^{\frac{1}{2}}(x-x^*)}{\sqrt{f(x^*)}})}{\sqrt{(M f(x^*))}}+C,$$ we immediately obtain \boxed{\eqref{useeq2*}}.

 \textbf{The proof of $i$:}
  It remains to prove \eqref{jiandangujia} and \eqref{jiandangujia1}.  Note that by the assumption $|f(x^*)|>\gamma^{\frac{1}{4}} M^{-1},$ we have $l_i\geq c\gamma^{\frac{1}{8}} M^{-1},~i=1,2 .$
  And we need to consider the following two cases:

  \begin{enumerate}
  \item[(1):] If $f(x^*)>0,$ then since $f''(x)\sim_{0,\gamma^{\frac{1}{2}}} 2M>0\ {\rm\ on\ } {I},$ we immediately have $f(x)\geq M(x-x^*)^2+f(x^*).$ Hence we get $$P(|f|,x)\leq \frac{1}{|f|}\leq \frac{1}{M(x-x^*)^2+f(x^*)}.$$
      By the same calculation as in the proof of \eqref{useeq2*}, the fact $M\geq \gamma^{-10000}$ and \eqref{603}, we obtain \eqref{jiandangujia}.
  \item[(2):] If $f(x^*)<0,$ then it holds from Lemma \ref{useful221221} and
   $\vert f(x^*)\vert\leq 100 \cdot M(\frac{|x_1-x_2|}{2})^2$
  that \begin{equation}\label{rf2}|f(x)|\geq \vert\frac{1}{4}M(x-x_1)(x-x_2)\vert\geq \frac{1}{8}M|x_1-x_2|\min\{|x-x_1|,|x-x_2|\}\geq \frac{5}{8} \gamma^{\frac{1}{8}}\min\{|x-x_1|,|x-x_2|\}.\end{equation}   Hence $f\in \mathcal{H}(1,\gamma^{\frac{1}{8}}).$ Combining this with the fact $P(|f|)\leq \frac{1}{\sqrt{f^2+\delta_1^4}},$ Lemma \ref{usefullemma2} yields \eqref{jiandangujia}.
  \end{enumerate}

  For \eqref{jiandangujia1}, we only need to consider the case $(2)$ above.
  %Since $|x-x_1|<1,$ we have $$|f(x)|\leq M|x-x_1||x-x_2|\leq M \min\{|x-x_1|,|x-x_2|\}$$
Note $$|f|\geq\gamma^{\frac{1}{4}}\sqrt{\frac{|f(x^*)|}{M}}\geq \gamma^{\frac{3}{8}}M^{-1},$$ which together with \eqref{rf2}, implies
  $$\frac{1}{|f|}=\frac{\sqrt{2}}{\sqrt{f^2+f^2}}\leq \frac{C}{\sqrt{(\gamma^{\frac{1}{8}}\min\{|x-x_1|,|x-x_2|\})^2+\gamma^{\frac{3}{4}}M^{-2}}}.$$

  Then again by Lemma \ref{usefullemma2} we obtain \eqref{jiandangujia1} as desired.
 \hfill\qed\end{proof}
  \begin{lemma}\label{lipuseful} Let $d\in \R_+$ and $M\gg 1\gg \delta_1>0$. Consider two closed intervals $I_1\subset I_2\subset \R$ centered at $x_0$ satisfying $\delta_1^{10^{-9}}\leq |I_1|^{\frac{1}{200000}}\leq |I_2|^{10^{-5}}\leq \min\{M^{-1}, d, d^{-1}\}$ and $M|I_2|\leq 10^{-7}d$.
   Assume $f(x),\bar{\delta}_2(x)\in C^2(I_2)$ satisfying $\delta_1>\max\limits_{x\in I_2}|\bar{\delta}_2(x)|>0,$  $\vert\bar{\delta}_2'(x)\vert\leq \delta_1^{1-10^{-4}},$
   and $|f(x)-d(x-x_0)|\leq M(x-x_0)^2.$ Then we have
    $$\left\vert\int_{I_2-I_1}\frac{f dx}{\sqrt{f^4+2\delta^4_1 f^2+\bar{\delta}^8_2(x)}}\right\vert,\quad
    \left\vert \int_{I_2-I_1} \frac{1}{f} dx \right\vert\leq C\frac{M(|I_2|-|I_1|)}{d^2}$$
       and
   $$\int_{I_2-I_1}\frac{|f| dx}{\sqrt{f^4+2\delta^4_1 f^2+\bar{\delta}^8_2(x)}}\leq Cd^{-1}|\log \delta_1|.$$
  \end{lemma}

\begin{proof}
By \eqref{eqfinal1*}, one has $\left\vert\int_{I_2-I_1}\frac{f dx}{\sqrt{f^4+2\delta^4_1 f^2+\bar{\delta}^8_2(x)}}\right\vert\leq C\frac{M(|I_2|-|I_1|)}{d^2}+C\delta_1^4.$
By the assumption, one notes that
$$\frac{M(|I_2|-|I_1|)}{d^2}\geq \frac{1}{2}|I_2|^3\geq \frac{1}{2}|I_1|^{\frac{3}{2}}\geq \frac{1}{2}\delta_1^{10}.$$
Then
$$\left\vert\int_{I_2-I_1}\frac{f dx}{\sqrt{f^4+2\delta^4_1 f^2+\bar{\delta}^8_2(x)}}\right\vert\leq C\frac{M(|I_2|-|I_1|)}{d^2}.$$

Furthermore, the assumption $M|I_2|\leq 10^{-7}d$ and  $|f(x)-d(x-x_0)|\leq M(x-x_0)^2$ imply that $$|f|\geq d(x-x_0)-M(x-x_0)^2\geq [d-M(x-x_0)]\geq [d-M|I_2|](x-x_0)\geq \frac{1}{2} d(x-x_0).$$ Then
$$\begin{array}{ll}\left\vert\frac{1}{f}-\frac{1}{d(x-x_0)}\right\vert\leq \left\vert\frac{f-d(x-x_0)}{f d(x-x_0)}\right\vert\leq \frac{M(x-x_0)^2}{\frac{1}{2}d^2(x-x_0)^2}\leq 2\frac{M}{d^2},\end{array}$$ which implies the second inequality.
For the last one, notes that
$$\frac{|f| dx}{\sqrt{f^4+2\delta^4_1 f^2+\bar{\delta}^8_2(x)}}\leq \frac{1}{\sqrt{f^2+2\delta_1^4}}.$$ Then Lemma \ref{usefullemma2} directly implies what we desire.
  \hfill\qed\end{proof}

\begin{lemma}\label{lm104} Given an open interval $I\subset \R$ centered by $x_0,$ $0<\epsilon\ll 1$ and $f\in C^2(I),$ assume that $f\sim_{2,\epsilon} a+b(x-x_0)^2$ on $I$ and $|a|\leq \epsilon^{10^5},$ then there exists
$A,B\in \R$ such that
$$|f|\geq \frac{1}{10}|A+B(x-x^*)^2|$$
with $x^*\in I,$ $A\sim_{0,\epsilon} a$ and $B\sim_{0,\epsilon} b.$\end{lemma}

\begin{proof} We separately consider the following two cases.

For the case {\bf $ab\leq 0$}, $f$ has two distinct zeros denoted by $z_1,z_2$ and one extreme point $x_0.$ Without loss of generality, we assume that
$z_1\leq x_x \leq z_2;~a<0;~b>0.$ A direct calculation yields that $$\frac{z_2-z_1}{2}\sim_{0,\epsilon} z_2-x_0 \sim_{0,\epsilon} x_0-z_1;~\frac{b(z_1-z_2)^2}{4}\sim_{0,\epsilon} -a;~f'(z_1)\sim_{0,\epsilon}-f'(z_2)\sim_{0,\epsilon} -b(z_2-z_1).$$
Note that
\begin{itemize}
\item[(i)] if $x\leq z_1+\frac{1}{2000}\frac{z_2-z_1}{2},$ then it holds that $f'(x)\sim_{0,\epsilon} -b(z_2-z_1)+2b(x-z_1)\leq -\frac{1}{10}b(z_2-z_1).$\vskip 0.2cm
\item[(ii)] if $x\geq z_2-\frac{1}{2000}\frac{z_2-z_1}{2},$ then it holds that $f'(x)\sim_{0,\epsilon}b(z_2-z_1)+2b(x-z_2)\geq \frac{1}{10}b(z_2-z_1).$\vskip 0.2cm
\item[(iii)] if $z_1+\frac{1}{2000}\frac{z_2-z_1}{2}<x< z_2-\frac{1}{2000}\frac{z_2-z_1}{2},$ then it holds that $|b(x-z_1)(x-z_2)|,|f(x)|\geq \frac{1}{200000}|a|.$ Note
$$\begin{array}{ll}&\left\vert f(x)-b(x-z_1)(x-z_2)\right\vert \leq \left\vert a+b(x-x_0)^2-\frac{b(z_2-z_1)^2}{4}-b(x-\frac{z_1+z_2}{2})^2\right\vert+\epsilon|f(x)|
\\&\leq \left\vert b(x_0-\frac{z_1+z_2}{2})(2x-x_0-\frac{z_1+z_2}{2})\right\vert+\left\vert \epsilon a\right\vert+\epsilon|f(x)|
\leq 1000\epsilon b\vert z_1-z_2\vert^2+|\epsilon a|+\epsilon|f(x)|
\leq 10000\epsilon |a|+\epsilon|f(x)|.
\end{array}$$
Therefore
$$\begin{array}{ll}
&f(x)-\frac{1}{10}b(x-z_1)(x-z_2)
=\frac{9}{10} f(x)+ (\frac{1}{10}(f(x)-b(x-z_1)(x-z_2)))\\
\\&\leq \frac{9}{10} f(x)+\frac{1}{10}(10000\epsilon |a|+\epsilon|f(x)|)
\leq \frac{9}{10} f(x)+10^{10}\epsilon |f(x)|
\leq \frac{4}{5} f(x)<0.
\end{array}$$
\end{itemize}

Combining (i),(ii) and (iii), we obtain
$$|f(x)|\geq \frac{1}{10}\left\vert b(x-z_1)(x-z_2)\right\vert=\frac{1}{10}\left\vert-\frac{b(z_1-z_2)^2}{4}+b(x-\frac{z_1+z_2}{2})^2 \right\vert.$$

Then taking $A=-\frac{b(z_1-z_2)^2}{4},$ $B=b$ and $x^*=\frac{z_1+z_2}{2}$ completes the proof.
\vskip 0.2cm
For the case {\bf $ab\geq 0,$}\  without loss of generality we assume that $a,b\geq 0.$ Then Directly taking $A=a,$ $B=b$ and $x^*=x_0$ completes the proof.
\hfill\qed\end{proof}

\noindent\bf{\footnotesize Acknowledgements}\quad\rm
{\footnotesize The second author was partially supported by NSF of China (Grants 12371185) and the Fundamental Research Funds for the Central Universities (the start-up fund), Peking University. The third author is supported by the NSF of China (Grants 12271245).}\\[4mm]

\end{document}